\newtheorem{theorem}{Theorem}[section]
\newtheorem*{theoremA}{Theorem A}
\newtheorem*{theoremB}{Theorem B}
\newtheorem*{theoremC}{Theorem C}
\newtheorem*{theoremD}{Theorem D}
\newtheorem*{theoremE}{Theorem E}
\newtheorem{lemma}[theorem]{Lemma}
\newtheorem{prop}[theorem]{Proposition}
\newtheorem{cor}[theorem]{Corollary}
\theoremstyle{definition}
\newtheorem{definition}[theorem]{Definition}
\theoremstyle{remark}
\newtheorem*{remarks}{Remarks}
\newtheorem*{remark}{Remark}
\newtheorem*{example}{Example}
\newtheorem*{examples}{Examples}
\newtheorem*{question}{Question}
\newcommand{\bbold}{\mathbb}
\newcommand{\abs}[1]{\lvert#1\rvert}
\renewcommand\hat\widehat
\newcommand{\supp}{\operatorname{supp}}
\newcommand{\R}{{\bbold R}}
\newcommand{\Q}{{\bbold Q}}
\newcommand{\Z}{{\bbold Z}}
\newcommand{\N}{{\bbold N}}
\newcommand{\C}{{\bbold C}}
\newcommand{\F}{{\bbold F}}
\newcommand{\ZpX}{\Z_p\langle X\rangle}
\newcommand{\QpX}{\Q_p\langle X\rangle}
\renewcommand{\k} {{{\boldsymbol{k}}}}
\newcommand{\kX} {\k\langle X\rangle}
\newcommand{\kXprime} {\k\langle X'\rangle}
\newcommand{\RX} {R\langle X\rangle}
\newcommand{\RXprime} {R\langle X'\rangle}
\newcommand{\fm}{\mathfrak m}
\newcommand{\Frac}{\operatorname{Frac}}
\renewcommand{\geq}{\geqslant}
\renewcommand{\leq}{\leqslant}
\renewcommand{\bar}{\overline}
\newcommand{\res}{\operatorname{res}}
\DeclareFontFamily{OMS}{smallo}{}
\DeclareFontShape{OMS}{smallo}{m}{n}{<->s*[.65]cmsy10}{}
\DeclareSymbolFont{smallo@m}{OMS}{smallo}{m}{n}
\DeclareMathSymbol{\smallo}{\mathord}{smallo@m}{79}
\DeclareMathAlphabet{\mathbf}{OML}{cmm}{b}{it}
\numberwithin{equation}{section}
\author[Aschenbrenner]{Matthias Aschenbrenner}
\address{Kurt G\"odel Research Center for Mathematical Logic\\
Universit\"at Wien\\
1090 Wien\\ Austria}
\email{matthias.aschenbrenner@univie.ac.at}
\author[Srhir]{Ahmed Srhir}
\address{
D\'epartement de Math\'ematique \\
Facult\'e des Sciences \\
Universit\'e Ibn Tofail\\
K\'enitra\\ Morocco}
\email{ahmedsrhir@hotmail.com}
\begin{document}

\title[Analytic Nullstellens\"atze]{Analytic Nullstellens\"atze \\ and the Model Theory of Valued Fields}
\date{March 2024}

\begin{abstract}
We present a uniform framework for establishing Nullstellens\"atze for power series rings using quantifier elimination
results for valued   fields. As an application we obtain   Nullstellens\"atze for $p$-adic   power series (both formal and convergent) analogous to
R\"uckert's complex and Risler's real Nullstellensatz, as well as a $p$-adic analytic version of Hilbert's 17th Problem.
Analogous statements for restricted power series, both real and $p$-adic, are also considered.
\end{abstract}

\maketitle

\section*{Introduction}

\noindent
R\"uckert's Nullstellensatz~\cite{Rueckert} 
establishes a one-to-one correspondence between
radical ideals in rings of germs of complex analytic functions  and the zero sets of these ideals.
This fundamental theorem plays   a role  in complex analytic geometry which is similar to that of  Hilbert's Nullstellensatz in classical
algebraic geometry.
A  counterpart   of Hilbert's Nullstellensatz for   polynomials over real closed fields (instead of algebraically closed fields) is due to Ris\-ler~\cite{Risler70}, who
also established
a version of R\"uckert's theorem   for germs of real analytic functions~\cite{Risler72} as well as an analogue of
Hilbert's 17th Problem in this setting.
An adaptation of Risler's theorem for formal power series over real closed fields was shown by Merrien~\cite{Merrien72, Merrien73}
 (with simplifications in \cite{Lassalle}) and used for applications to germs of real $C^\infty$-functions.

It was noted early in the history of model theory   that there is a close relationship between
Nullstellensatz-type statements and A.~Robinson's concept of model completeness: for example, Hilbert's Nullstellensatz is essentially
equivalent to the model completeness of the theory of algebraically closed fields, whereas Risler's Nullstellensatz
for polynomials  corresponds to the model completeness of the theory of real closed fields.
(This connection has been formalized in \cite{Weispfenning77}.)

A.~Robinson~\cite{Robinson} established R\"uckert's  Nullstellensatz using non-standard methods, and asked for a   model-theoretic proof. This was provided by Weispfenning~\cite{Weispfenning75}, who also gave a proof of Risler's theorem for real analytic function germs using similar methods.
His arguments, which were   translated into somewhat more algebraic language by Robbin~\cite{RobbinI, RobbinII},
 combine the ubiquitous Weierstrass Preparation Theorem  
 (which permits an induction on the number of indeterminates) with  model completeness   of algebraically closed fields
 respectively  real closed fields.
 
In this paper we establish a uniform method for obtaining Nullstellens\"atze in power series rings  by 
 relying on quantifier elimination theorems for certain theories of {\it valued}\/ fields instead of model completeness   for {\it fields}\/ in order to prove the requisite specialization theorems.
 The advantage of this approach is that it naturally suggests how to also prove Nullstellens\"atze and   versions of Hilbert's 17th Problem for $p$-adic power series, which seem to be new. (See \cite[Conjectures~4.3, 4.4]{FS09}.)
 
\subsection*{Nullstellens\"atze for power series over the $p$-adics}
To formulate these, consider the rational function 
$$\gamma_p(x) := \frac{1}{p} \left( \wp(x)- \frac{1}{\wp(x)} \right)^{-1}\in\Q(x)$$
where $\wp(x):=x^p-x$ (the Artin-Schreier operator). 
One calls $\gamma=\gamma_p$ the {\em $p$-adic Kochen operator.}\/
It plays a role in   $p$-adic algebra similar to that of the squaring operator $x^2$ in the real world.
To substantiate this, we remark that $\gamma$ is {\it $p$-integral definite,} that is, $\abs{\gamma(a)}_p\leq 1$ for all
$a\in\Q_p$.  (Here and below $\abs{\,\cdot\,}_p$ denotes the $p$-adic absolute value on~$\Q_p$.)
As a consequence, setting~$F:=\Q_p(X)$ 
where~$X=(X_1,\dots,X_m)$ is a tuple of distinct indeterminates,
every rational function of the form $\gamma(f)$ where~$f\in F$ is $p$-integral definite, and hence  so is every element of 
$$\Lambda_F := \left\{ \frac{f}{1-pg}: f,g\in\Z[\gamma(F)]\right\}.$$
Kochen~\cite{Kochen}    proved that conversely, the subring
$\Lambda_F$ of $F$ contains {\it every}\/ $p$-integral definite rational function in $F$;
this may be seen as a $p$-adic analogue of Hilbert's 17th Problem. Kochen  also established a $p$-adic
Nullstellensatz for   ideals in~$\Q_p[X]$, similar to Risler's Nullstellensatz for ideals in $\R[X]$, the statement of which also involves the ring $\Lambda_F$. Our first main theorem is an analogous result
for ideals in the ring~$\Q_p\{X\}$  of convergent power series over $\Q_p$ in the indeterminates~$X$.
In the following, we let $A$ be a subring of~$\Q_p[[X]]$ with fraction field~$F=\Frac(A)$.
Then~$1\neq pg$ for each
$g\in\Z[\gamma(F)]$, and so we may define the subring $\Lambda_F$ of $F$ as above.
 We also let 
 $\Lambda_F A$  denote the subring of $F$ generated by $\Lambda_F$ and $A$.
 With these notations, here is our $p$-adic analytic Nullstellensatz:

\begin{theoremA}
Suppose $A=\Q_p\{X\}$, and let~$f,g_1,\dots,g_n\in A$. If
$$g_1(a)=\cdots=g_n(a)=0 \ \Longrightarrow\  f(a)=0\quad\text{for each $a\in \Q_p^m$ sufficiently close to $0$,}$$ 
then there are   $h_1,\dots,h_n\in \Lambda_F A$   and $k\geq 1$  such that $f^k=g_1h_1+\cdots+g_nh_n$.
\end{theoremA}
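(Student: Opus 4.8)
The strategy is the one announced in the introduction: reduce the analytic Nullstellensatz to a quantifier-elimination (specialization) theorem for a suitable theory of valued fields, using Weierstrass preparation to run an induction on the number $m$ of indeterminates. First I would set up the algebraic framework. Let $A=\Q_p\{X\}$, $F=\Frac(A)$, and let $\mathfrak{p}$ be the ideal $(g_1,\dots,g_n)A$; replacing $f$ by its image, the hypothesis says $f$ vanishes on the germ at $0$ of the zero set of $\mathfrak{p}$ in $\Q_p^m$. The desired conclusion $f^k\in(g_1,\dots,g_n)\Lambda_F A$ is equivalent to saying that $f$ lies in the radical of the ideal $(g_1,\dots,g_n)\Lambda_F A$ of the ring $\Lambda_F A$, or equivalently that $f$ vanishes on every $\Q_p$-algebra homomorphism (or, more precisely, every place/valuation-compatible homomorphism) from $\Lambda_F A$ into an extension of $\Q_p$ that kills $g_1,\dots,g_n$. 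So the real content is an abstract Positivstellensatz-type statement: if $f$ is not in that radical, produce a point — an actual $\Q_p$-rational point near $0$ — at which the $g_i$ vanish but $f$ does not. This is where model theory enters.

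Next I would invoke the relevant quantifier-elimination theorem (which the paper must establish earlier, in the sections on valued fields): the theory of $p$-adically closed fields, or rather an appropriate analytic expansion, admits a form of quantifier elimination / model completeness in a language containing the Kochen operator $\gamma$ and a predicate for the valuation ring. The abstract point not in the radical gives a prime ideal $\mathfrak{q}\supseteq\mathfrak{p}$ of $\Lambda_F A$ with $f\notin\mathfrak{q}$, hence a homomorphism into a field $K=\Frac(\Lambda_F A/\mathfrak{q})$. Because $\Lambda_F$ is the Kochen ring, $K$ carries a natural $p$-valuation (the Kochen ring being exactly the "holomorphy ring" forcing $p$-integrality), and $K$ together with this valuation embeds into a model of the theory of $p$-adically closed valued fields. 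Model completeness of that theory then lets me pull the witness back down: the element of $K$ that the $g_i$ annihilate and $f$ does not gives, after passing to $\Q_p$ itself (which is existentially closed in its $p$-adically closed extensions in the analytic language), an honest point $a\in\Q_p^m$; the convergence/restricted-power-series structure has to be encoded so that "$a$ is in the domain of convergence of the $g_i$ and $f$, close to $0$" is an existential condition preserved under this descent.

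The induction on $m$ via Weierstrass preparation is what makes the analytic case go through rather than just the polynomial case: given the ideal $(g_1,\dots,g_n)$ in $\Q_p\{X_1,\dots,X_m\}$, after a generic linear change of coordinates one of the $g_i$ (say $g_1$) becomes regular in $X_m$, so by Weierstrass preparation $g_1=u\cdot P$ with $u$ a unit and $P\in\Q_p\{X_1,\dots,X_{m-1}\}[X_m]$ a Weierstrass polynomial; then division by $P$ reduces statements about the whole ring to statements about $\Q_p\{X_1,\dots,X_{m-1}\}[X_m]$, which is module-finite over $\Q_p\{X_1,\dots,X_{m-1}\}$, and one applies the inductive hypothesis together with the polynomial ($p$-adic Kochen) Nullstellensatz in the last variable. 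I would organize this exactly as in the Weispfenning–Robbin treatment of the real/complex analytic case, substituting the $p$-adic quantifier-elimination input for model completeness of real (or algebraically) closed fields.

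The main obstacle, I expect, is the matching of the analytic data across the model-theoretic transfer: one must be sure that the abstract place produced from the prime $\mathfrak{q}$ is compatible both with the $p$-valuation (so that the Kochen ring really does map into the valuation ring, which is what $\Lambda_F$ is designed to guarantee) and with the requirement that the resulting point be \emph{close to $0$} and within the polydisc of convergence of all the power series involved — i.e.\ that "sufficiently close to $0$" on the hypothesis side and "$a\in\Q_p^m$ with the $g_i$ defined and vanishing" on the conclusion side are captured by the same existential formula in the analytic language, so that existential closedness of $\Q_p$ applies. Handling the ideal-theoretic bookkeeping ($\Lambda_F A$ may be badly non-Noetherian, and one needs its radical ideals to be detected by these valuation-compatible homomorphisms) is the technical heart, and is presumably where the earlier sections' abstract Nullstellensatz for valued fields does the work.
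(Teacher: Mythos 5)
Your overall architecture---a specialization theorem obtained from quantifier elimination for a $p$-adic theory of valued fields, an induction on $m$ via Weierstrass preparation in the style of Weispfenning--Robbin, and Kochen-ring bookkeeping reducing to primes with formally $p$-adic residue field---is indeed the paper's announced strategy. The genuine gap is in the step you yourself flag as the main obstacle: how ``sufficiently close to $0$'' and the analytic data are moved across the model-theoretic transfer. You propose to pull the witness down to an honest point $a\in\Q_p^m$ by appealing to existential closedness of $\Q_p$ in its $p$-adically closed extensions ``in the analytic language'', with convergence and closeness to $0$ encoded existentially. Model completeness of $p\operatorname{CF}$ says nothing about the power series $f,g_1,\dots,g_n$, which are not in that language; to make your descent legitimate you would need a Denef--van den Dries type analytic quantifier elimination for $\Q_p$ \emph{and} an argument that the abstract homomorphism $\sigma$ coming from a prime $\mathfrak q\supseteq(g_1,\dots,g_n)$ is compatible with that analytic structure---precisely the alternative route the authors mention under ``Related work'' and say they do not know how to carry out. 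The paper never descends witnesses to $\Q_p$: it introduces the theory $p\operatorname{CVF}$ of $p$-adically closed fields with a second, coarser $p$-convex valuation, proves the specialization proposition by the Weierstrass induction there, and obtains witnesses in the infinitesimal neighborhood $\smallo^m$ of a nonstandard model such as $\Q_p\{\!\{t^*\}\!\}$, i.e.\ as curves $a\in\Q_p\{t\}^m$ with $a(0)=0$; the hypothesis about actual points of $\Q_p^m$ near $0$ is transferred to such curve points by a one-variable continuity/identity-theorem argument---so the transfer runs in the opposite direction from the one you propose. (The closest the paper comes to your ``honest point'' picture is the restricted-power-series route of Section~\ref{sec:rest}, where Macintyre's QE and the same induction give witnesses in $\Z_p^m$ and Theorem~A follows by rescaling $X\mapsto p^kX$; but there too the work is done by the induction, not by analytic model completeness of $\Q_p$.)

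Two further points are asserted rather than proved in your plan. First, nothing in your construction forces the place obtained from $\mathfrak q$ to respect the germ hypothesis: in the paper this is Lemma~\ref{lem:indets infinitesimal, p-adic}, which shows that for \emph{any} ring morphism $\sigma\colon \Q_p\{X\}\to\Omega$ into a $p$-valued field the images $\sigma(X_i)$ are automatically infinitesimal for the dominance relation of the $p$-convex hull of $\sigma(\Q_p)$ (because $1+pg^2$ is a square of a $1$-unit, by henselianity of $\Q_p\{X\}$), so that $\sigma$ becomes a local morphism into the valuation ring of a model of $p\operatorname{CVF}$ and the specialization proposition applies; your proposal has no analogue of this mechanism. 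Second, the equivalence you take for granted---that $f^k\notin(g_1,\dots,g_n)\Lambda_F A$ for all $k$ yields a homomorphism into a formally $p$-adic field killing the $g_i$ but not $f$---is exactly the $p$-adic ideal theory the paper develops (the $p$-radical, $p$-adic minimal primes, Corollary~\ref{cor:P p-adic => F form p-adic}, and Proposition~\ref{prop:p-adic prime ideals}, which uses regularity of $\Q_p\{X\}$); it is part of the proof, not a black box, and residue fields at arbitrary primes of $\Lambda_F A$ need not carry $p$-valuations without it.
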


\noindent
The converse of the implication in this theorem holds trivially.
Next, an analytic version of Kochen's $p$-adic Hilbert's 17th Problem:

\begin{theoremB}
Suppose $A=\Q_p\{X\}$, and let~$f,g\in A$. Then
$\abs{f(a)}_p \leq \abs{g(a)}_p$ for all~${a\in \Q_p^m}$ sufficiently close to $0$ if and only if  $f\in g\Lambda_F$.
\end{theoremB}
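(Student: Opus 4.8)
I would treat the two implications separately. \emph{The ``if'' direction} is routine: writing $f=g\lambda$ with $\lambda=u/(1-pw)$ and $u,w\in\Z[\gamma(F)]$, one checks that for $a\in\Q_p^m$ close to $0$ every $\gamma(h)$ occurring in $u,w$ has $\abs{\gamma(h)(a)}_p\le1$ --- the $p$-integral definiteness of the Kochen operator, which persists near the poles of $h$ (there $1/\wp(h)$ vanishes) and elsewhere because $\wp(h(a))\ne\pm1$ for $h(a)\in\Q_p$. Hence $u(a)\in\Z_p$ and $1-pw(a)\in 1+p\Z_p\subseteq\Z_p^\times$, so $\abs{\lambda(a)}_p\le1$ and $\abs{f(a)}_p=\abs{g(a)}_p\,\abs{\lambda(a)}_p\le\abs{g(a)}_p$; as $\lambda$ is then bounded near $0$ it is regular there, so the inequality holds at all $a$ near $0$, including zeros of $g$.

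For the forward direction, one might try Theorem~A with $g_1=g$, but that gives only $f^k\in g\Lambda_FA$ for some $k$ --- the wrong ring, and a spurious power --- so I would argue directly, in parallel with the proof of Theorem~A, through the specialization theorem that drives the paper's framework. Assume $\abs{f(a)}_p\le\abs{g(a)}_p$ for all $a$ near $0$; we may take $g\ne0$ (otherwise $f$ vanishes near $0$, so $f=0$). After the substitution $X_i=p^NY_i$ for $N$ large, $f$ and $g$ become Tate power series $\tilde f,\tilde g\in\Q_p\langle Y\rangle$ with $\abs{\tilde f(b)}_p\le\abs{\tilde g(b)}_p$ for all $b\in\Z_p^m$; since the Kochen operator commutes with the $\Q_p$-embedding $\Frac(\Q_p\langle Y\rangle)\hookrightarrow F$ sending $Y_i\mapsto p^{-N}X_i$, it is enough to prove $\tilde f/\tilde g\in\Lambda_{\tilde F}$ for $\tilde F:=\Frac(\Q_p\langle Y\rangle)$. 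By Kochen's theorem \cite{Kochen}, $\Lambda_{\tilde F}$ is the $p$-adic holomorphy ring $\bigcap_wO_w$ of $\tilde F$, with $w$ ranging over its $p$-valuations of $p$-rank one; it thus suffices to show $w(\tilde f)\ge w(\tilde g)$ for each such $w$. Any such $w$ has $w(Y_i)\ge0$ (elements of $\Q_p\langle Y\rangle$ are bounded on the unit polydisc, which forces this); passing to a $p$-adic closure $(L,w)$ of $(\tilde F,w)$ and letting $y\in L^m$ be the image of $Y$, one has $y\in O_L^m$ with $L$ $p$-adically closed over $\Q_p$. The specialization theorem now furnishes an analytic structure on $L$, compatible with that of $\Q_p$, under which $\Q_p\preceq L$ and $\tilde f,\tilde g$ are interpreted by the analytic function symbols naming them; the first-order statement ``$\forall b\,(\max_i\abs{b_i}_p\le1\to\abs{\tilde f(b)}_p\le\abs{\tilde g(b)}_p)$'', true in $\Q_p$, applied at $b=y$ in $L$ gives $w(\tilde f)\ge w(\tilde g)$.

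The real content, which I expect to be the main obstacle, is the specialization theorem itself --- proved once and used equally for Theorem~A. Its proof should rest on: (i) a quantifier-elimination theorem for $\Q_p$ as a \emph{valued} field carrying restricted analytic structure, in the manner of Denef and van den Dries; (ii) the completeness --- indeed model completeness --- of that analytic theory, which is what transports the analytic structure, the ground field $\Q_p$, and the functions $\tilde f,\tilde g$ faithfully into $L$, so that ``$\tilde f(y)$'' in $L$ genuinely computes $w(\tilde f)$; and (iii) the Weierstrass Preparation Theorem in $\Q_p\langle Y\rangle$, underlying the induction on the number $m$ of indeterminates on which the quantifier elimination rests. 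The delicate feature throughout --- and the reason one cannot just invoke model completeness of $p$-adically closed fields, as Weispfenning and Robbin do for the real and complex cases \cite{Weispfenning75, RobbinI, RobbinII} --- is that (restricted) power series do not a priori evaluate at points of arbitrary valued field extensions of $\Q_p$; it is exactly the quantifier elimination for the valued field with analytic structure that supplies such evaluations.
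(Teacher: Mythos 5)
Your reduction for the forward direction is the paper's own second route: rescale $X_i\mapsto p^NY_i$ to land in $\QpX$, use the Prestel--Roquette holomorphy-ring description $\Lambda_{\tilde F}=\bigcap_w\mathcal O_w$ to reduce to showing $w(\tilde f)\geq w(\tilde g)$ for every $p$-valuation $w$ of $\tilde F$, and then specialize from a $p$-adically closed extension back to a point of $\Z_p^m$. But the mechanism you propose for that last, crucial step is exactly the one the paper declines to use and explicitly says it does not know how to make work: you want to transport the \emph{analytic} structure of $\Q_p$ into $L$ via model completeness of the Denef--van den Dries theory and then evaluate the universal analytic sentence at $y$. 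There is no reason an arbitrary $p$-adic closure $L$ of $(\tilde F,w)$ admits an expansion to a model of $\operatorname{Th}(\Q_p^{\operatorname{an}})$ at all, let alone one in which the interpretation of the function symbol $\tilde f$ at $y$ coincides with the image of $\tilde f$ under the abstract embedding $\QpX\hookrightarrow\tilde F\hookrightarrow L$ --- this compatibility is precisely what would need to be proved, and model completeness (a statement about extensions \emph{of models}) does not supply it. The paper's specialization result (Proposition~\ref{prop:p-adic spec, rest}) sidesteps the issue entirely by running in the opposite direction: $\sigma\colon\Q_p\lfloor X\rfloor\to\Omega$ is an arbitrary ring morphism into an arbitrary model of $p\operatorname{CF}$, the formula $\varphi$ lives in Macintyre's language of pure $p$-adically closed fields and is evaluated at the abstract elements $\sigma(f_j)$, and an induction on $m$ using Weierstrass Preparation in $\QpX$ reduces everything to polynomials, where Macintyre's QE for $p\operatorname{CF}$ (no analytic structure anywhere) finishes. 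So the "main obstacle" you correctly identify is not overcome by the tools you cite.

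Two smaller points. First, your claim $w(Y_i)\geq 0$ is true but your justification ("elements of $\QpX$ are bounded on the unit polydisc, which forces this") is not an argument about an abstract $p$-valuation of $\tilde F$; the correct reason is Hensel's lemma in the pair $(\Z_p\langle Y\rangle,p\Z_p\langle Y\rangle)$: $1+pY_i^2$ (resp.\ $1+pY_i^3$ for $p=2$) is a square (cube) in $\QpX$, so its image under any embedding into a $p$-valued field is one too, forcing $\sigma(Y_i)\preceq_p1$ --- this is the paper's Lemma~\ref{lem:indets infinitesimal, p-adic, rest}. Second, in the "if" direction the passage "as $\lambda$ is then bounded near $0$ it is regular there, so the inequality holds ... including zeros of $g$" glosses over the poles of the rational functions in $\Z[\gamma(F)]$ and the zero set of $g$; the paper handles this uniformly by the valuation-theoretic Lemma~\ref{lem:p-adic prime ideals}, applied to the local morphisms $h\mapsto h(a)$ (using regularity of $\Q_p\{X\}$), which avoids any pointwise limiting argument.
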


\noindent
We also have a $p$-adic version of Merrien's theorem for formal power series, where we now let $t$ be a single indeterminate over $\Q_p$:
 
\begin{theoremC}
Suppose $A=\Q_p[[X]]$, and let 
$f,g_1,\dots,g_n\in A$ be such that 
$$g_1(a)=\cdots=g_n(a)=0 \ \Longrightarrow\  f(a)=0\quad\text{for each $a\in \Q_p[[t]]^m$ with $a(0)=0$.}$$ 
Then  there are   $h_1,\dots,h_n\in \Lambda_F A$   and $k\geq 1$  such that $f^k=g_1h_1+\cdots+g_nh_n$.
\end{theoremC}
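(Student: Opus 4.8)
\emph{The approach.} The plan is to run the same machine as for Theorem~A, but with the ``test field'' $\Q_p$ replaced by $\Q_p((t))$ carrying a two-step ($p$-adic after $t$-adic) valuation, so that the quantifier elimination needed is the purely algebraic one for henselian valued fields rather than an analytic one. As usual I would first reduce to a radical membership statement, then to producing a single bad test point, with the Weierstrass Preparation Theorem for $\Q_p[[X]]$ supplying the induction on $m$.

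\emph{Step 1 (reduction to a point).} Since the converse is trivial, it suffices to show $f\in\sqrt{(g_1,\dots,g_n)\,\Lambda_F A}$, the radical computed in $\Lambda_F A$. Suppose not; as a radical ideal is the intersection of the primes above it, fix a prime $\mathfrak p$ of $\Lambda_F A$ with $g_1,\dots,g_n\in\mathfrak p$, $f\notin\mathfrak p$, and set $R:=\Lambda_F A/\mathfrak p$, $K:=\Frac R$, $\mathfrak q:=\mathfrak p\cap A$, $S:=A/\mathfrak q\subseteq R$, writing $\bar{\,\cdot\,}$ for images. Because $\Q_p[[X]]$ is local with maximal ideal $(X_1,\dots,X_m)$, $S$ is a complete Noetherian local domain with residue field $\Q_p$; in it $\bar g_i=0\neq\bar f$. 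It is then enough to produce a \emph{local} $\Q_p$-algebra homomorphism $\psi\colon S\to\Q_p[[t]]$ with $\psi(\bar f)\neq0$: setting $a:=(\psi(\bar X_1),\dots,\psi(\bar X_m))\in(t\Q_p[[t]])^m$ we get $a(0)=0$, and the continuous composite $A\twoheadrightarrow S\xrightarrow{\psi}\Q_p[[t]]$ sends $h\mapsto h(a)$, so $g_i(a)=\psi(\bar g_i)=0$ while $f(a)=\psi(\bar f)\neq0$ --- contradicting the hypothesis.

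\emph{Step 2 (the $p$-adic valuation and a dimension reduction).} Here $\Lambda_F$, not just $A$, earns its keep. Since $\Lambda_F$ is $\Z[\gamma(F)]$ with the elements $1-pg$ ($g\in\Z[\gamma(F)]$) inverted, these are units of $R$, whence $1\neq p\bar g$ in $R$; as $\gamma=\gamma_p$ is built from $\wp(x)=x^p-x$, Kochen's theorem forces $K$ to carry a $p$-valuation $w$ of $p$-rank~$1$ (residue field $\F_p$) whose valuation ring contains the image of $\Lambda_F$ and which restricts to the $p$-adic valuation on $\Q_p\subseteq K$. This $p$-rank-one constraint is exactly what the Kochen ring in the conclusion encodes: e.g.\ if $f=W$, $g=W^2-aXW-bX^2$ with $X^2-aX-b$ irreducible over $\Q_p$, then $f$ does lie in the radical precisely because $\Q_p[\sqrt{a^2+4b}]\subseteq\Frac(A/(g))$ rules out any such $w$. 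Next, a routine prime-avoidance argument --- using that, after inverting suitable elements, $\Lambda_F A$ is a subring of a localization of the Noetherian catenary ring $A$, so we may enlarge $\mathfrak p$ while keeping $f\notin\mathfrak p$ --- lets us assume $\dim R\leq1$. If $\dim R=0$ then $S=\Q_p[[X]]/(X_1,\dots,X_m)=\Q_p$, so $g_i(0)=0\neq f(0)$ and $a=0$ already gives a contradiction; otherwise Cohen's structure theorem presents $S$ as module-finite over a one-variable power series subring $\Q_p[[Y]]$, and $K$ is finite over $\Q_p((Y))$.

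\emph{Step 3 (specialization via quantifier elimination).} It remains to build $\psi$, which is the heart of the matter. I would choose $w$ to dominate $S$, so that $\bar X_1,\dots,\bar X_m$ lie in the maximal ideal of the $Y$-adic coarsening of $w$ and $w$ restricts on $\Q_p((Y))$ to the valuation $v$ refining the $Y$-adic one by the $p$-adic valuation of its residue field $\Q_p$ --- a $p$-valuation of $p$-rank~$1$ with value group $\Z\oplus\Z$ ordered lexicographically ($v(p)$ spanning the convex subgroup, $v(Y)$ the quotient), and $(\Q_p((Y)),v)\cong(\Q_p((t)),v)$. Realize $(K,w)$ inside a sufficiently saturated model of the theory of $(\Q_p((t)),v)$ over $(\Q_p((Y)),v)$; quantifier elimination for this valued field (the Ax--Kochen--Ershov transfer for the $Y$-adic coarsening relative to its $p$-adically closed residue field, together with Macintyre's theorem, in the form provided by the framework) pushes the existential statement ``$g_1(x)=\dots=g_n(x)=0$, $f(x)\neq0$, and each $x_j$ lies in the maximal ideal of the coarsening'', witnessed there by $(\bar X_j)$, down to $(\Q_p((t)),v)$ itself --- and that is the point $a$. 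The Weierstrass Preparation Theorem for $\Q_p[[X]]$ is what makes the underlying specialization theorem go through by induction on $m$: a generic linear change of the $X_j$ turns each $g_i,f$ into a unit times a Weierstrass polynomial in $X_m$, converting questions about zeros in $\Q_p[[t]]^m$ into ones about zeros of polynomials over a ring of restricted power series in one fewer variable. I expect the genuine difficulty to be exactly this interlocking of the Weierstrass induction with the valued-field transfer --- above all, keeping the $p$-valuation of $p$-rank~$1$ throughout (where Kochen's sharp identification of $\Lambda_F$, and the choice of $\gamma_p$, are essential), so that the transferred solution has its coordinates in $\Q_p[[t]]$ and not merely in some finite ramified extension.
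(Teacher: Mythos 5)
Your Steps 1 and the first half of Step 2 track the paper's strategy: reduce to a prime $\mathfrak p$ of $\Lambda_F A$ (equivalently, a $p$-adic prime of $A$) avoiding $f$, and use the Kochen ring to equip $K=\Frac(R)$ with a $p$-valuation of $p$-rank $1$ restricting to the $p$-adic valuation on $\Q_p$ (this is the paper's Corollary~\ref{cor:P p-adic => F form p-adic} together with Theorem~\ref{thm:holomorphy}). But Step 3 has a genuine gap. The paper's specialization theorem (Proposition~\ref{prop:p-adic spec}) rests on QE for $p\operatorname{CVF}$, whose models are $p$-adically closed fields with a $p$-convex valuation whose \emph{coarse value group is divisible}; the test model is the Puiseux series field $\Q_p(\!(t^*)\!)$, not $\Q_p(\!(t)\!)$. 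The field $(\Q_p(\!(t)\!),v)$ you want to transfer into is \emph{not} a model of $p\operatorname{CVF}$ (coarse value group $\Z$), and its theory is not model complete in the language of $p$-convexly valued fields: $\Q_p(\!(t^{1/2})\!)$ is an isomorphic copy containing $\Q_p(\!(t)\!)$ non-elementarily, since $\exists z\,(z^2\asymp t)$ holds above but not below. This is not a technicality you can defer: after your Cohen presentation, $K$ is typically \emph{ramified} over $\Q_p(\!(Y)\!)$ (take $S=\Q_p[[X_1,X_2]]/(X_1^2-X_2^3)$, $Y=X_2$), so $\Q_p(\!(Y)\!)$ is not existentially closed in $K$ and no transfer over $(\Q_p(\!(Y)\!),v)$ fixing $Y$ can produce the witness --- the correct point sends $X_2$ to $t^2$, not to $t$. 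The paper resolves exactly this by finding the witness in $\bigcup_d t^{1/d}\Q_p[[t^{1/d}]]$ and then applying the automorphism $a(t)\mapsto a(t^d)$ of the full structure on $\Q_p(\!(t^*)\!)$ to land in $t\Q_p[[t]]^m$; your proposal has no substitute for this de-ramification step.

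Two further remarks. First, the dimension reduction to $\dim R\leq 1$ is not part of the paper's proof at all (the Weierstrass induction on $m$ inside the specialization theorem does the work for arbitrary primes), and your ``routine prime-avoidance'' in the non-noetherian B\'ezout ring $\Lambda_F A$, preserving both $f\notin\mathfrak p$ and the formal $p$-adicity of the residue field, is asserted rather than proved. Second, and ironically, your Step 2 already contains the seed of a correct, model-theory-free endgame that you then abandon: once $S$ is a one-dimensional complete local domain and the $p$-valuation forces the normalization $\widetilde S$ (the integral closure of $\Q_p[[Y]]$ in $K$) to have residue field exactly $\Q_p$, Cohen's theorem gives $\widetilde S\cong\Q_p[[t]]$ as $\Q_p$-algebras, and the injection $S\hookrightarrow\widetilde S\cong\Q_p[[t]]$ \emph{is} the desired local morphism $\psi$, ramification and all. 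Either carry that normalization argument out (a Merrien-style route, genuinely different from the paper's), or replace Step 3 by the paper's mechanism ($p\operatorname{CVF}$, Puiseux series, $t\mapsto t^d$); as written, the proof does not go through.
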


\noindent
In our arguments we systematically work in the framework of {\it Weierstrass systems}\/ from~\cite{DL}, which axiomatize the algebraic properties of the ring $A$ needed for the proofs to go through
(crucially among them, the Weierstrass Division Property). As a consequence, for example, Theorem~C also holds for the subring~$A=\Q[[X]]$ of~$\Q_p[[X]]$, 
the ring~$A=\Q_p[[X]]^{\operatorname{a}}$ of power series in $\Q_p[[X]]$ which are algebraic over~$\Q_p(X)$,
or the ring~$A= \Q_p[[X]]^{\operatorname{da}}$ of differentially algebraic power series.
(These facts can also be deduced from the Artin Approximation Theorem as proved in \cite{DL}, but this would be overkill.)

\subsection*{Restricted analytic variants}
Both Theorems~A and B above deal with {\it germs}\/ of $p$-adic analytic functions at~$0$.
The subring $\QpX$ of $\Q_p\{X\}$ consisting of those power series which converge for all $a\in\Q_p^m$ with $\abs{a}_p\leq 1$
also satisfies a Weierstrass Division Theorem (albeit of a slightly different kind than   in  $\Q_p\{X\}$). Our  method is adaptable to this setting, and so we   also obtain
versions of Theorems~A and~B for the ring~$A=\QpX$ of {\it restricted}\/ (sometimes called {\it strictly convergent}\/) power series  over $\Q_p$,
where the condition~``for each~$a\in\Q_p^m$ sufficiently close to $0$'' in both cases is replaced by ``for each~$a\in\Q_p^m$ with~${\abs{a}_p\leq 1}$''.
These theorems also hold for certain subrings of~$\QpX$ such as the ring~$\QpX^{\operatorname{a}}:=\Q_p[[X]]^{\operatorname{a}}\cap\QpX$.
(See Corollaries~\ref{cor:p-adic H17 rest} and \ref{cor:p-adic NS rest} for the precise statements.)

The same methods also yield a restricted analytic Nullstellensatz
and Hil\-bert's 17th Problem for real closed fields $\k$ equipped with a complete ultrametric absolute value~$\abs{\,\cdot\,}$, such as the Levi-Civita
field: the completion of the   group algebra~$\R[t^{\Q}]$ equipped with the
ultrametric absolute value satisfying~$\abs{at^q}=\operatorname{e}^{-q}$ for~$a\in\R^\times$,~${q\in\Q}$. (Cf.~\cite[Chapter~1, \S{}7]{LR}.)
Let $\kX$ denote the subring of~$\k[[X]]$ consisting 
of those  power series which converge for all $a\in\k^m$ with $\abs{a}\leq 1$.

\begin{theoremD}
Let $f,g_1,\dots,g_n\in \kX$. Then
\begin{enumerate}
\item  $g_1(a)=\cdots=g_n(a)=0 \Rightarrow f(a)=0$ for all $a\in \k^m$ with $\abs{a}\leq 1$  if and only if there are $k\geq 1$ and~$b_1,\dots,b_l,h_1,\dots,h_n\in \kX$   such that~$f^{2k}+b_1^2+\cdots+b_l^2=g_1h_1+\cdots+g_nh_n$; and
\item $f(a)\geq 0$ for all $a\in\k^m$ with $\abs{a}\leq 1$ if and only if
there are~$g,h_1,\dots,h_k\in \kX\setminus\{0\}$  such that
$g^2f  =  h_1^2+\cdots+h_k^2$.
\end{enumerate}
\textup{(}Similarly with $\kX^{\operatorname{a}}:=\k[[X]]^{\operatorname{a}}\cap\kX$ in place of $\kX$.\textup{)}
\end{theoremD}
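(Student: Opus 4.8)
The plan is to treat Theorem~D exactly as the authors presumably treat Theorems~A--C: as an instance of a general Nullstellensatz for a Weierstrass system, transported through a quantifier-elimination result for an appropriate theory of valued fields. First I would set up the ground: $\k$ is a real closed field with a complete ultrametric absolute value $\abs{\,\cdot\,}$, whose valuation ring $D$ has residue field, say, $\bar D$, and the ring $\kX$ of strictly convergent power series is a Weierstrass system over $D$ in the sense of \cite{DL}, satisfying Weierstrass Division. The role model here is the theory of the Levi-Civita field as a \emph{real closed valued field with an analytic structure} coming from $\kX$: one wants quantifier elimination (or at least a suitable model-completeness/density statement) for this theory in a language with the absolute value, the ordering, and function symbols for the elements of $\kX$. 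Granting that, the key move is the \emph{specialization theorem}: if $f,g_1,\dots,g_n\in\kX$ and the implication $\bigwedge_i g_i(a)=0\Rightarrow f(a)=0$ holds for all $a$ in the closed unit polydisc of $\k^m$, then by QE the same implication holds with $a$ ranging over the unit polydisc of \emph{every} model of the theory, in particular over a suitably saturated real closed valued field $L\supseteq \kX$ equipped with an extension of the analytic structure; this is the analogue of "model completeness of RCF" in Weispfenning's argument, now replaced by QE for the valued analytic theory.

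Next I would reduce both parts to a statement about the quotient ring. For part~(1), let $R:=\kX/(g_1,\dots,g_n)$; the hypothesis says every $\k$-point (in the unit polydisc) of $\operatorname{Spec}R$ kills $f$, and by the specialization theorem the same holds over all real closed valued $L$-points. One then wants to conclude that $f$ lies in the real radical of $(g_1,\dots,g_n)$ relative to $\kX$, i.e.\ that $f^{2k}+\sum b_j^2\in(g_1,\dots,g_n)$ for some $k$ and $b_j\in\kX$: the standard Positivstellensatz/real-Nullstellensatz machinery (Prestel's abstract real Nullstellensatz applied to the ring $R$, or a direct Stengle-type argument) gives this, \emph{provided} one knows that $R$ has "enough" real valued points --- which is exactly what the specialization theorem delivers by transferring from points over $\k$ to points over arbitrary real closed valued extensions, including the real closure of the fraction field of any prime quotient of $R$. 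For part~(2), the condition $f(a)\ge0$ on the unit polydisc transfers, via the same QE, to $f\ge0$ at every real closed valued point of $\kX$; hence $f$ is a sum of squares in the real closure of $\Frac(\kX)$ (or, more carefully, nonnegative at every ordering of $\Frac(\kX)$ compatible with the analytic structure), and clearing denominators --- together with the fact that $\kX$ is, like a polynomial ring, "square-dense" enough that a nonnegative element becomes a sum of squares after multiplying by a square --- yields $g^2 f=\sum h_i^2$ with $g\ne0$. The Weierstrass-system axioms are used to guarantee that the denominators $g$ and the auxiliary $b_j$ can be chosen \emph{inside} $\kX$ rather than merely in its fraction field; this is where an induction on the number $m$ of indeterminates via Weierstrass Division enters, exactly as in the proofs of Theorems~A--C, and it is also what makes the parenthetical remark about $\kX^{\operatorname a}$ automatic, since $\kX^{\operatorname a}$ is again a Weierstrass system.

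The main obstacle I expect is establishing the right quantifier-elimination / model-completeness input for $\k$ as a \emph{real closed valued field with $\kX$-analytic structure} and, relatedly, showing that the analytic structure extends to the nonstandard elementary extensions used in the specialization step --- i.e.\ that one can evaluate elements of $\kX$ at points of the unit polydisc of an arbitrary such $L$ in a way compatible with Weierstrass Division. In the $p$-adic cases (Theorems~A--C) the corresponding statement is the analytic QE for $\Q_p$ (Denef--van den Dries, and its refinements), and for restricted series over $\Q_p$ one uses the QE for $\QpX$-analytic structure; here the analogue should be a "real closed valued field with restricted analytic functions" theory, whose QE is in the spirit of the work on $\R_{\mathrm{an}}$ but in the ultrametric, non-archimedean setting of the Levi-Civita field. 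Once this analytic QE is in hand, the passage from the geometric hypotheses to the algebraic conclusions is the standard combination of (i) the transfer principle, (ii) the abstract real Nullstellensatz and Positivstellensatz for the ring $\kX$, and (iii) the Weierstrass-division induction to keep all witnesses inside $\kX$ --- none of which I expect to present serious difficulty beyond careful bookkeeping. I would therefore organize the proof as: fix the valued-analytic language and cite/prove its QE; derive the specialization theorem; then deduce (1) and (2) by the real Nullstellensatz and Hilbert-17 arguments respectively, with the Weierstrass system $\kX$ (or $\kX^{\operatorname a}$) supplying the final descent of coefficients.
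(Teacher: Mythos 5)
There is a genuine gap, and it sits exactly where you placed your ``main obstacle'': your entire argument is routed through a quantifier elimination (or model completeness) for $\k$ as a \emph{real closed valued field with $\kX$-analytic structure}, together with the ability to extend that analytic structure to the auxiliary models in the specialization step --- e.g.\ to the real closure of $\Frac\big(\kX/P\big)$ equipped with a convex valuation, so that the images $X_i+P$ become unit-polydisc points at which all elements of $\kX$ can be evaluated compatibly with the quotient map. You do not prove (or cite a usable form of) either ingredient, and the paper explicitly disclaims knowing how to make such analytic QE results (e.g.\ Cubides Kovacsics--Haskell~\cite{CH}) do this work: see the ``Related work'' discussion. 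Note also that your transfer is stated in the upward direction (``the implication holds over every model''), whereas what the Nullstellensatz argument actually needs is the downward, contrapositive form --- from an abstract ordered point where $g_1=\dots=g_n=0$, $f\neq 0$ (or $f<0$) to an honest point $a\in R^m$ of $\k$ --- and that passage is precisely the unestablished content, since an ordering of $\Frac(\kX/P)$ does not come packaged as a point of a model of any analytic theory.

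The paper avoids this entirely: its model-theoretic input is only the Cherlin--Dickmann QE for $\operatorname{RCVF}$ (Theorem~\ref{thm:CD}), in the pure language $\mathcal L_{\operatorname{OR},\preceq}$ with no analytic function symbols. The specialization statement (Proposition~\ref{prop:Risler spec, rest}, then Corollary~\ref{cor:Risler spec, rest}) takes an \emph{arbitrary} ring morphism $\sigma\colon\k\lfloor X\rfloor\to\Omega$ into a real closed (valued) field with $\sigma(R\lfloor X\rfloor)\preceq 1$, $\sigma(\fm)\prec 1$, and produces $a\in R^m$; it is proved by induction on $m$, where Weierstrass Preparation writes each series as (1-unit)$\times$(polynomial in $X_m$ with coefficients in $\k\lfloor X'\rfloor$), the $1$-units are harmless because they are squares (Corollary~\ref{cor:roots, rest}), and QE for $\operatorname{RCVF}$ reduces everything to polynomial conditions on the coefficients, handled by the inductive hypothesis; Lemma~\ref{lem:sigma(fm)prec1} supplies $\sigma(\fm)\prec 1$ when $\Omega$ carries the convex-hull valuation of $\sigma(R\lfloor X\rfloor)$. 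So Weierstrass Division is not a bookkeeping device for descending witnesses to $\kX$ --- it \emph{is} the mechanism replacing your analytic QE. Granted the specialization corollary, part~(2) follows by the ordering argument of Corollary~\ref{cor:Risler H17} (no representation $g^2f=\sum h_i^2$ yields an ordering of $\Frac(\k\lfloor X\rfloor)$ with $f<0$, then specialize), and part~(1) by the real-radical and real-prime-decomposition argument of Theorem~\ref{thm:Risler}; the $\kX^{\operatorname{a}}$ case is automatic because those rings form a restricted Weierstrass system, as you anticipated. To repair your proposal you would either have to prove the analytic extension/QE input you assume, or replace it by the Weierstrass-induction specialization argument --- at which point you have the paper's proof.
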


\noindent
A version of part (2) of this theorem for $\k[X]$ in place of $\kX$ was shown by Dickmann~\cite[Theorem~2]{Dickmann}.
For our final  theorem (an analogue of Theorem~B for $\kX$) we let $H_K$ be the subring of~$K:=\Frac(\kX)$ generated by the elements~$1/(1+h)$ where~$h$ is sum of squares in~$K$; this   ``real'' analogue of the $p$-adic Kochen ring~$\Lambda_F$ from above was  introduced by Sch\"ulting~\cite{Schuelting}. We also let $R:=\{a\in\k:\abs{a}\leq 1\}$ be the valuation ring of $\k$,
with maximal ideal~$\fm:=\{a\in\k:\abs{a}<1\}$,
and we let~$\RX$ be the subring of $\kX$ consisting of all~$f\in\kX$ with 
$\abs{f(a)}\leq 1$ for all~$a\in R^m$.
The ring~$H_K$ is a Pr\"ufer domain; hence so is the subring~$H_K\RX$ of $K$ generated by~$H_K$ and $\RX$. (In particular, $H_K\RX$ is integrally closed.)

\begin{theoremE}
Let $f,g\in\kX$. Then 
\begin{align*}
\text{$\abs{f(a)}\leq\abs{g(a)}$ for all $a\in R^m$}&\quad\Longleftrightarrow\quad f\in g\,H_K\RX, \\
\text{$\abs{f(a)}<\abs{g(a)}$ for all $a\in R^m$}&\quad\Longleftrightarrow\quad
f\in g\,\fm H_K\RX.
\end{align*}
\end{theoremE}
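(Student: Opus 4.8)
Below is a sketch of how I would attack Theorem~E.

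\medskip

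The plan is to transpose to this ordered setting the method used to prove Theorem~B: the engine is quantifier elimination for real closed valued fields equipped with the restricted-analytic functions of the Weierstrass system~$\kX$ \textup{(}one of the valued-field quantifier-elimination theorems underlying our method\textup{)}, and the proof runs a specialization argument at the ``generic point''~$X=(X_1,\dots,X_m)$. Write $K=\Frac(\kX)$ and $D:=H_K\RX$, a Pr\"ufer domain by hypothesis.

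The implications from right to left are the easy ones. Suppose $f=gu$ with $u\in D$ \textup{(}the case $g=0$ is trivial: then $f$ vanishes on~$R^m$, so $f=0$\textup{)}. Then $u$ is a polynomial combination of elements of~$\RX$ — each of absolute value $\leq 1$ everywhere on~$R^m$ by definition — and of generators $1/(1+h)$ of~$H_K$, $h$ a sum of squares in~$K$; as $1+h(a)\geq 1$ wherever such a generator is defined at $a\in R^m$, and every valuation ring of the real closed field~$\k$ is convex, we get $1/(1+h(a))\in(0,1]\subseteq R$. By the ultrametric inequality $|u(a)|\leq 1$ on the set of $a\in R^m$ where~$u$ is defined; this set is dense \textup{(}a non-zero element of~$\kX$ has dense non-vanishing locus on~$R^m$, seen by restricting to a generic line and invoking Weierstrass preparation in one variable\textup{)}, and $f,g$ are continuous, so $|f(a)|\leq|g(a)|$ throughout~$R^m$. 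If moreover $u\in\fm D$, then $|u(a)|<1$ on that dense set, giving the strict inequality.

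For the converse, assume $|f(a)|\leq|g(a)|$ for all $a\in R^m$ and take $g\neq 0$. As $D$ is integrally closed, $f/g\in D$ will follow once $v(f)\geq v(g)$ is established for every valuation~$v$ of~$K$ with $\mathcal O_v\supseteq D$; moreover, replacing~$v$ by a suitable refinement, one may assume that $v$ restricts to the valuation of~$\k$ \textup{(}here one uses that $1+t\in R^\times\subseteq D^\times$ for $t\in\fm$, together with the Pr\"ufer structure of~$D$, to pin down the relevant~$v$\textup{)} and that $v$ is a real valuation \textup{(}were $-1$ a sum of squares in its residue field, some $1+h$ with $h$ a sum of squares in~$K$ would lie in~$\mathfrak m_v$, forcing $1/(1+h)\in H_K$ outside~$\mathcal O_v$\textup{)}. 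Fix such a~$v$; then $(K,v)\supseteq(\k,R)$. Choosing an ordering of~$K$ with $\mathcal O_v$ convex and passing to a maximally complete real closed valued field $(\mathcal K,w)$ extending $(K,v)$ \textup{(}obtained, e.g., by real-closing and then forming a maximal immediate extension\textup{)}, we find by the base-change axioms for Weierstrass systems that the restricted-analytic functions of~$\kX$ are interpreted on~$\mathcal K$, so $(\mathcal K,w)$ and $(\k,R)$, each with its analytic structure, are models of one and the same complete theory~$T$. Since $\RX\subseteq D\subseteq\mathcal O_v\subseteq\mathcal O_w$, the series defining~$f$ and~$g$ converge in the maximally complete field~$\mathcal K$, necessarily to~$f$ and~$g$; thus the function symbols $f,g$ of~$T$, evaluated at the tuple~$X$, return $f$ and $g$ themselves. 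Now the hypothesis is the $T$-sentence $\forall x\,\bigl(\bigwedge_i|x_i|\leq 1\rightarrow|f(x)|\leq|g(x)|\bigr)$, true in~$\k$ and hence in~$\mathcal K$; applied at~$x=X$ — legitimate because $X_i\in\RX$ forces $|X_i|\leq 1$ — it gives $|f|_w\leq|g|_w$, that is $v(f)\geq v(g)$. The strict assertion follows the same way from $\forall x\,\bigl(\bigwedge_i|x_i|\leq 1\rightarrow|f(x)|<|g(x)|\bigr)$, which yields $v(f)>v(g)$ for all these~$v$; to deduce $f\in g\,\fm D$ one must make this uniform, i.e.\ find one $t\in\fm\setminus\{0\}$ with $v(f)\geq v(t)+v(g)$ for all such~$v$. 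The strict hypothesis already forces $g$ not to vanish on~$R^m$, so $\inf_{a\in R^m}|g(a)|>0$, and a maximum-modulus argument for~$\kX$ \textup{(}again reducing the number of variables by Weierstrass preparation\textup{)} shows $\sup_{a\in R^m}|f(a)|/|g(a)|$ is attained and $<1$; any $t\in\fm$ with $|t|$ just above this supremum works.

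The step I expect to be hardest is ensuring that the restricted-analytic structure genuinely transfers to the constructed field~$(\mathcal K,w)$, so that quantifier elimination applies and the universal sentences above descend from~$\k$ to~$\mathcal K$; intertwined with this is the valuation-theoretic bookkeeping — singling out, among all valuations of~$K$ finite on~$D$, the real ones that restrict to the valuation of~$\k$, and checking that their possibly higher rank is harmless. By contrast, the reduction of membership in~$D$ to inequalities of valuations is formal, coming only from the Pr\"ufer property, and the extra twist in the strict case is the self-contained maximum-modulus fact.
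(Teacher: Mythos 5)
Your reduction of the problem is in the right spirit (it is essentially Becker's description of $H_K\RX$ as the intersection of the real valuation rings of $K=\Frac(\kX)$ containing $\RX$, which is also the paper's starting point), but the central step of your argument has a genuine gap: the transfer of the universal sentence from $\k$ to a maximally complete real closed valued field $(\mathcal K,w)$ extending $(K,v)$, followed by evaluation of the function symbols at the generic point $X$. The claim that ``the series defining $f$ and $g$ converge in $\mathcal K$, necessarily to $f$ and $g$'' is not justified and is false in general: convergence of $\sum_\alpha f_\alpha X^\alpha$ at the point $X\in\mathcal O_w^m$ would require the terms to tend to $0$ in the $w$-topology, but $w(f_\alpha X^\alpha)$ only tends to infinity inside the archimedean group $v(\k^\times)$, which need not be cofinal in $\Gamma_w$ --- and for the valuations actually produced here (convex hulls of $\RX$ with respect to orderings in which some $X_i-c$, $c\in R$, is infinitesimal over $\k$) it typically is not; maximal completeness does not rescue a series whose terms do not tend to $0$. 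Consequently neither the existence of an analytic structure on $\mathcal K$ nor the key identity $f^{\mathcal K}(X)=f$ comes for free; you would have to construct a separated analytic structure on $\mathcal K$ compatible with the embedding $K\subseteq\mathcal K$ and then invoke a completeness/QE theorem for analytic $\operatorname{RCVF}$ in the style of Cubides Kovacsics--Haskell. That is precisely the route the paper flags as open (``we do not know whether these QE theorems for valued fields with analytic structure can be used to give alternative proofs of our Theorems A--E''). The paper replaces your transfer step by the specialization result Proposition~\ref{prop:Risler spec, rest}: given a real valuation ring $\mathcal O_1\supseteq\RX$ with $f/g\notin\mathcal O_1$, one orders $F$ so that $\mathcal O_1$ is convex (Baer--Krull), passes to the convex hull of $\RX$ (where Lemma~\ref{lem:sigma(fm)prec1}, via Corollary~\ref{cor:roots, rest}, gives $\fm\prec 1$), extends to some $\Omega\models\operatorname{RCVF}$, and then the specialization proposition --- proved by induction on $m$ using Weierstrass preparation and plain Cherlin--Dickmann QE, with no analytic structure on $\Omega$ --- produces a point $a\in R^m$ with $\abs{f(a)}>\abs{g(a)}$. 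Your ``refinement'' step (arranging $\mathcal O_v\cap\k=R$, $v$ real) needs exactly this convex-hull/Baer--Krull argument; the observation $1+t\in D^\times$ for $t\in\fm$ does not by itself produce the refined valuation.

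The strict half has a second gap: you infer $\inf_{a\in R^m}\abs{g(a)}>0$ from non-vanishing of $g$ and assert that $\sup_{a}\abs{f(a)}/\abs{g(a)}$ is attained and $<1$ by a ``maximum-modulus argument''. Since $\k$ is not locally compact and $R^m$ is not compact, neither statement is automatic, and Lemma~\ref{lem:max principle} only covers $\abs{f}$ itself, not quotients; in the paper this is exactly what Proposition~\ref{prop:closure, real} supplies (closedness of $\{\abs{f(a)/g(a)}:a\in R^m\}$ in the value group, proved by a further model-theoretic argument with an elementary extension and a coarsening), after which one finds $\varepsilon\in\fm$ with $\abs{f(a)}\leq\abs{\varepsilon g(a)}$ and applies the non-strict case. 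Two smaller repairs: your easy direction invokes ``every valuation ring of the real closed field $\k$ is convex'', which is false in general (there are valuation rings of real closed fields with non-real residue fields); what is true and needed is that $R$ is henselian, being the ring of a complete rank-one absolute value, hence convex --- alternatively, the paper's Lemma~\ref{lem:HK} gives the right-to-left implications without any density/continuity argument, as in Proposition~\ref{prop:int valued, kX}. Your argument that every valuation ring containing $H_K$ is real is fine.
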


\noindent
The theorems above permit   various further potential generalizations which we have not attempted here.
For example, let  $E$ be a finite extension of $\Q_p$. Jarden-Ro\-quette~\cite{JR} proved a Nullstellensatz for ideals in $E[X]$
and established the $p$-adic version of Hilbert's 17th Problem for rational functions in $E(X)$.
(Here, the definition of the $p$-adic Kochen operator  needs to be suitably modified.)
It should be routine to adapt Theorems~A, B, C to this setting. Less routine would be an adaptation of our
theorems to germs of analytic functions on basic $p$-adic  subsets of analytic varieties, 
analogous to~\cite[Theorems~1.2, 7.2]{JR}. (See also \cite[\S{}7]{PR}.) It would also be interesting to generalize Theorems~D and~E
in a similar way. (For polynomials this was studied in \cite{HY,Lavi}.)
It would also be interesting to use Theorem~C in a similar way as Merrien used his Nullstellensatz for formal
power series over the reals in order to obtain a $p$-adic analogue of his Nullstellensatz for germs of $C^\infty$-functions;
see \cite[Conjecture~4.3]{FS09}. (Here one should employ the concept of {\it strict differentiability}\/ for multivariate $p$-adic functions
developed in~\cite{BGN,Gloeckner,deSmedt}, in order to avoid the well-known disadvantages of the usual notion of differentiability 
over the $p$-adics caused by the failure  of the Mean Value Theorem.)





\subsection*{Organization of the paper}
After a first preliminary section, in Section~\ref{sec:W-systems} we recall basic facts about Weierstrass systems
and establish the setting of infinitesimal Weierstrass structures, which provide a convenient framework
to state theorems like the ones discussed earlier in this introduction.
As a warm-up,   since these cases are conceptually easier than the $p$-adic version, in Sections~\ref{sec:Rueckert} and~\ref{sec:Risler} we then revisit  R\"uckert's and Risler's Nullstellens\"atze using our method. Here, the relevant model-theoretic results are quantifier elimination theorems for
the theory $\operatorname{ACVF}$ of algebraically closed non-trivially valued fields (A.~Robinson) and for 
the theory $\operatorname{RCVF}$ of real closed non-trivially convexly valued fields (Cherlin-Dickmann), respectively.
The model-theoretic ingredient for the proofs of Theorems~A,~B,~C is an analogous elimination theorem
for the theory~$p\operatorname{CVF}$ of {\it $p$-adically closed non-trivially $p$-convexly valued fields.}\/ We discuss this theory and the relevant QE theorem (which is less prominent in the literature than those for  $\operatorname{ACVF}$ and
 $\operatorname{RCVF}$: but see \cite{Belair,Guzy}) in Section~\ref{sec:pCVF}, before giving the proofs of our Theorems~A,~B,~C in Section~\ref{sec:p-adic NS}.
Finally,  Section~\ref{sec:rest} contains the  restricted analytic   adaptations of these results, including   proofs of Theorems~D and~E.

\subsection*{Related work}
The  Weierstrass systems used here are close to those of~\cite{DL,vdD88}. Cluckers-Lipshitz~\cite{CL} introduced a very general class of subrings of formal power series rings stable under Weierstrass Division, 
 called ``separated Weierstrass systems'',
which in a sense amalgamates the Weierstrass systems used here and our 
restricted Weierstrass systems from Definition~\ref{def:restricted W-system}. 
Given a separated Weierstrass system~$\mathcal A$, this leads to the concept of a ``separated analytic $\mathcal A$-structure'' on a henselian valued field, expanding our Definition~\ref{def:W-structure}, and in the case of characteristic zero, to a quantifier elimination in a certain multi-sorted
language~\cite[Theorem~6.3.7]{CL}. In the algebraically closed case (without assumptions on the characteristic) one also
has quantifier elimination  in a one-sorted language \cite[Theorem~4.5.15]{CL}. A one-sorted
QE for the valued field~$\Q_p$ expanded by primitives for the functions defined by
the restricted power series was proved by Denef and van den Dries~\cite{DD}.
Cubides Kovacics and Haskell~\cite{CH} similarly show that 
the real closed ordered valued field~$\k$ from the context of Theorems~D and~E with a separated analytic $\mathcal A$-structure where~$\mathcal A$ contains the rings~$\RX$ of restricted power series,
admits quantifier elimination in a natural one-sorted language. 
Bleybel~\cite{Bleybel} proves a quantifier elimination theorem for the $p$-adic analogue of the Levi-Civita field
(i.e., the completion of the group algebra
$\Q_p[t^\Q]$ equipped with the ultrametric absolute value satisfying~$\abs{at^q}=\operatorname{e}^{-q}$ for~$a\in\Q_p^\times$,~$q\in\Q$)
in a $3$-sorted language which contains function symbols for (among other things) 
the functions on the valuation ring given by
overconvergent $p$-adic power series. It would be interesting to know whether these     QE theorems for valued fields with analytic structure can be used to give alternative proofs of our Theorems~A--E. But we think that the advantage of our approach is that it shows that 
the relevant Nullstellens\"atze and related results can already be obtained from
the classical {\it algebraic}\/ QE theorems for valued fields (together with easy uses of Weierstrass Division), rather than their more involved {\it analytic}\/ counterparts.

\subsection*{Notational conventions}
Throughout this paper $k$, $l$, $m$, $n$ range over the set~$\N=\{0,1,2,\dots\}$ of natural numbers, and $p$ is a   prime number.
In this paper all rings are commutative with $1$. Let $R$ be a ring.
We let $R^\times$ be the group of units of~$R$, and if $R$ is an integral domain, then we denote the fraction field  of   $R$ by $\Frac(R)$.

\subsection*{Acknowledgments}
We thank the anonymous referees for the careful reading of the paper and for numerous suggestions (for example, to include Section~\ref{sec:mtvf}) which improved  the paper.

\section{Valuation-Theoretic Preliminaries}\label{sec:val th}

\noindent
In this paper a {\it valued field}\/ is   a field equipped with one of its valuation rings. Let~$K$ be a valued
field. Unless   specified otherwise we let $\mathcal O$ be the distinguished valuation ring of $K$,
with maximal ideal $\smallo$. The residue field of $K$ is $\res(K):=\mathcal O/\smallo$, with residue morphism $a\mapsto\overline{a}=a+\smallo\colon\mathcal O\to\res(K)$. 
The value group of $K$ is the ordered abelian group $\Gamma=K^\times/\mathcal O^\times$, written additively, with
ordering 
$a\mathcal O^\times \leq b\mathcal O^\times$ iff $b/a\in\mathcal O$. The map~$a\mapsto va=a\mathcal O^\times\colon K^\times\to\Gamma$ is a valuation on $K$.
If we need to indicate the dependence on $K$ we attach a subscript $K$, so $\mathcal O=\mathcal O_K$, $v=v_K$, etc.
If $K\subseteq L$ is an extension of valued fields (that is, $K\subseteq L$ is a field extension and~$\mathcal O=\mathcal O_L\cap K$),
then we identify $\res(K)$ with a subfield of $\res(L)$ and $\Gamma$ with an ordered subgroup of $\Gamma_L$ in the natural way.
For~$a,b\in K$ we set 
$$a\preceq b\, :\Leftrightarrow\, va\geq vb, \quad a\prec b \, :\Leftrightarrow\,  va>vb,\quad a\asymp b \, :\Leftrightarrow\, va =vb, \quad 
a\sim b \, :\Leftrightarrow\, a-b\prec a.$$
The binary relation $\preceq$   is an example of a dominance relation on $K$:

\begin{definition}
A {\em dominance relation} on an integral domain $R$ is a binary relation~$\preceq$ on~$R$ such that for all $f,g,h\in R$:
\begin{list}{*}{\setlength\leftmargin{1em}}
\item[(D1)] $1\not\preceq 0$;
\item[(D2)] $f\preceq f$;
\item[(D3)] $f\preceq g \text{ and } g\preceq h \Rightarrow f\preceq h$;
\item[(D4)] $f\preceq g$ or $g\preceq f$;
\item[(D5)] $f\preceq g \Leftrightarrow fh\preceq gh$, provided $h\ne 0$;
\item[(D6)] $f\preceq h \text{ and } g\preceq h \Rightarrow f+g \preceq h$.
\end{list}
\end{definition}

\noindent
Thus, if $\mathcal O$ is a valuation ring of $K$, we obtain a
dominance relation on $K$ via
\begin{equation}\label{domval}
f\preceq g \ :\Longleftrightarrow\  vf\geq vg \ \Longleftrightarrow\ 
\text{$f=gh$ for some $h\in \mathcal O$.}
\end{equation}
Conversely, if $\preceq$ is a dominance relation on $K$, then clearly
$$\mathcal O:=\{f\in K:f\preceq 1\}$$ is a valuation ring of $K$,
and if $v$
denotes the corresponding valuation on $K$, 
then the equivalence~\eqref{domval}
holds, for all $f,g\in K$. We call $\mathcal O$ the valuation ring {\em associated}\/ to
the dominance relation $\preceq$. This yields a one-to-one
correspondence between dominance relations on $K$ and valuation rings of $K$.

\medskip
\noindent
Let $R$ be an integral domain.
The {\em trivial}\/  dominance relation 
$\preceq_{\operatorname{t}}$  on $R$ is the one with~$r \preceq_{\operatorname{t}} s$ for all $r,s\in R$ with~$s\neq 0$.
For any dominance relation $\preceq$ on $R$ there is a unique dominance relation $\preceq_F$
on $F=\Frac(R)$ such that $(R,{\preceq})\subseteq (F,{\preceq_F})$:
$$\frac{r_1}{s} \preceq_F \frac{r_2}{s}\quad\Longleftrightarrow\quad r_1\preceq r_2
\qquad (r_1,r_2,s\in R,\ s\neq 0).$$

\begin{example} 
The non-trivial dominance relations on $\Z$ are exactly the $p$-adic dominance relations
$\preceq_p$, given by
$a \preceq_p b$ iff for all $n$: $b\in p^n\Z\Rightarrow a\in p^n\Z$.
\end{example}

\subsection{Model-theoretic treatment of valued fields}\label{sec:mtvf}
We now briefly explain basic model-theoretic terminology and notation, focussing on valued fields. A concise presentation of  all the general model theory needed here is in
\cite[Appendix~B]{ADH}. A textbook  which also pays attention to valued fields is \cite{PrestelDelzell};
for more specialized expositions of the model theory of valued fields  see~\cite{Chatzidakis,vdD}.

A (one-sorted) language $\mathcal L$ is a pair $(\mathcal L^{\operatorname{r}},\mathcal L^{\operatorname{f}})$
where $\mathcal L^{\operatorname{r}}$, $\mathcal L^{\operatorname{f}}$ are disjoint (possibly empty,  or infinite) collections of {\it relation symbols}\/ and  
 {\it function symbols}\/, respectively. Each of these symbols has an associated {\it arity}\/ (a   natural number);
 symbols of arity~$n$ are also said to be   {\it $n$-ary.}\/  A $0$-ary
 function symbol is called a {\it constant symbol.}\/
It is customary to present a language as a disjoint union $\mathcal L = \mathcal L^{\operatorname{r}}\cup\mathcal L^{\operatorname{f}}$ of its sets of relation and function symbols, while separately specifying the arities of the
 various   symbols. 
For example, $\mathcal L_{\operatorname{R}}=\{0,1,{-},{+},{\,\cdot\,}\}$ is the language  of rings: here $0$, $1$ are constant symbols, $-$ is a unary function symbol, and~$+$,~$\cdot$ are binary function symbols (so~$\mathcal L_{\operatorname{R}}$ has no relation symbols).  Another   example,   relevant for this paper, is
the expansion~$\mathcal L_{\preceq}=\mathcal L_{\operatorname{R}}\cup \{\preceq\}$ of $\mathcal L_{\operatorname{R}}$  by a binary relation symbol~$\preceq$. 
Here,   $\mathcal L$ is an {\it expansion}\/ of a language $\mathcal L_0$, and $\mathcal L_0$ a {\it reduct}\/ of $\mathcal L$,
if  $\mathcal L_0^{\operatorname{r}}\subseteq  \mathcal L^{\operatorname{r}}$ and~$\mathcal L_0^{\operatorname{f}}\subseteq  \mathcal L^{\operatorname{f}}$, with unchanged arities for the symbols in $\mathcal L_0$.

 Let $\mathcal L$ be a language. An {\it $\mathcal L$-structure}\/ $\mathbf M$ consists of its {\it underlying set}\/ $M\neq\emptyset$ together with 
  {\it interpretations}\/ of each symbol of $\mathcal L$ in $\mathbf M$: for each $m$-ary
  $R\in \mathcal L^{\operatorname{r}}$,   a relation~$R^{\mathbf M}\subseteq M^m$,  and for $n$-ary $f\in\mathcal L^{\operatorname{f}}$, a function~$f^{\mathbf M}\colon M^n\to M$.
(So each constant symbol $c$ is interpreted by a function~$M^0\to M$,   identified with the unique element $c^{\mathbf M}$ in its image,  $M^0$ being a singleton.)
In practice the superscript~$\mathbf M$ is often suppressed, thus a structure and its universe are denoted by the same letter, and so are the symbols of~$\mathcal L$ and their interpretations in $\mathbf M$.
{\it Morphisms,}\/ {\it embeddings,}\/ and {\it substructures}\/ of $\mathcal L$-structures are defined in the natural way; e.g., an $\mathcal L$-structure $\mathbf M$ is a substructure of an $\mathcal L$-structure~$\mathbf N$ if~$M\subseteq N$ and~$R^{\mathbf M}=R^{\mathbf N}\cap M^m$ and
$f^{\mathbf N}|_{M^n}=f^{\mathbf M}$ for each $m$-ary~$R\in  \mathcal L^{\operatorname{r}}$ and
$n$-ary~$f\in\mathcal L^{\operatorname{f}}$. If $\mathcal L_0$ is a reduct of $\mathcal L$, then $\mathbf M|\mathcal L_0$ is the {\it $\mathcal L_0$-reduct}\/ of $\mathbf M$: the $\mathcal L_0$-structure with same underlying set $M$ and the same interpretations of the symbols in $\mathcal L_0$ as in $\mathbf M$.

Each ring $R$ can be construed as an  $\mathcal L_{\operatorname{R}}$-structure   by 
interpreting~$0$,~$1$ by the additive and multiplicative identity of $R$, respectively, $-$ by the function $r\mapsto -r$,
and $+$, $\cdot$ by~$(r,s)\mapsto r+s$ and~$(r,s)\mapsto r\cdot s$, respectively.
Morphisms, embeddings, and substructures of $\mathcal L_{\operatorname{R}}$-structures then correspond to
morphisms, embeddings, and substructures of rings, respectively.
(But of course we are, in principle, free to also view $R$ as 
an  $\mathcal L_{\operatorname{R}}$-structure in an unnatural way, by
choosing the interpretations of the symbols of $\mathcal L_{\operatorname{R}}$
any way we like.)
In this paper we also always construe each valued field $(K,\mathcal O)$ as an $\mathcal L_{\preceq}$-structure by 
interpreting~$\preceq$ as the dominance
relation   on $K$ given by \eqref{domval}.
Then ``valued subfield'' corresponds to ``$\mathcal L_{\preceq}$-substructure.''
(This way of treating valued fields as model theoretic structures is essentially equivalent to
that in \cite[Chapter~4]{PrestelDelzell}, which employs valuation divisibilities instead of dominance relations. For 
other choices see \cite{Chatzidakis,vdD}.)

The purpose of a language $\mathcal L$ is to generate {\it $\mathcal L$-formulas,}\/ which 
express properties of (finite tuples of) elements of $\mathcal L$-structures.
They  are (well-formed, in a certain precise sense) finite words on an alphabet consisting of
 $\mathcal L^{\operatorname{r}}\cup \mathcal L^{\operatorname{f}}$ together with the symbols
 $\top$, $\perp$, $\neg$, $\vee$, $\wedge$, $=$, $\forall$, $\exists$, 
 to be thought of as {\it true,}\/ {\it false,}\/ {\it not,} {\it or,} {\it and,} {\it equals,} {\it there exists,} and {\it for all,}\/ respectively,
 together with an infinite  collection of {\it variables}\/ (usually written $x$, $y$, $z$, \dots),
 thought of as ranging over the universe of an $\mathcal L$-structure (and not over, say, subsets thereof).
 Variables which do not appear within the scope of a quantifier in an $\mathcal L$-formula $\varphi$ are said to be {\it free}\/
 in~$\varphi$, and $\mathcal L$-formulas without free variables are {\it $\mathcal L$-sentences.}\/
 For example, the commutativity of addition in a ring is expressed by the $\mathcal L_{\operatorname{R}}$-sentence
 $\forall x\forall y(x+y=y+x)$. Strictly speaking, this should be $\forall x\forall y =+xy+yx$; but as usual we take the liberty of writing binary  symbols, including $=$, in infix 
 rather than prefix notation, and of freely employing parentheses to enhance   readability.
Below~$\varphi$,~$\psi$,~$\theta$ range over $\mathcal L$-formulas and $\sigma$ over $\mathcal L$-sentences.
 We also use common abbreviations like   $\forall x$ for~$\forall x_1\cdots\forall x_m$,
where $x=(x_1,\dots,x_m)$ is a tuple of  distinct variables, and~$\varphi\leftrightarrow\psi$ in place of $(\varphi\wedge\psi)\vee (\neg\varphi\wedge\neg\psi)$.

  Given  $x$ as above, writing $\varphi(x)$   indicates that $x$ contains all free variables in~$\varphi$.
For an $\mathcal L$-structure~$\mathbf M$ and $a\in M^m$ one then defines, by induction on the construction of~$\varphi$, when {\it $\mathbf M$ satisfies $\varphi(a)$}\/---in symbols: $\mathbf M\models\varphi(a)$---in the natural way.
The subset of $M^m$ {\it defined by $\varphi(x)$}\/ in~$\mathbf M$ is~$\varphi^{\mathbf M}:=\big\{a\in M^m:\mathbf M\models\varphi(a)\big\}$.
(For example, for a valued field~$K$ and the $\mathcal L_{\preceq}$-formulas~$\varphi(x)$, $\psi(x)$, $\theta(x)$, where $x$ is  a single variable, given by $x\preceq 1$, ${x\preceq 1}\wedge {\neg 1\preceq x}$, and~${x\preceq 1}\wedge\exists y({x\cdot y=1}\wedge y\preceq 1)$, respectively, we have~$\mathcal O=\varphi^K$, $\smallo=\psi^K$, and~$\mathcal O^\times=\theta^K$.)
Note: if $y$ is a variable which neither occurs in $x$ nor free in $\varphi$, then 
the $(m+1)$-tuple $(x,y)$ also contains all free variables in~$\varphi$, so we could write
$\varphi=\varphi(x,y)$ according to our earlier convention; however,
for~$a\in M^m$,~$b\in M$ we have $\mathbf M\models\varphi(a,b)$ iff~$\mathbf M\models\varphi(a)$, and thus
in particular, for a $\mathcal L$-sentence $\sigma$ the status of $\mathbf M\models\sigma$
does not depend on~$a$.   The $\mathcal L$-structures~$\mathbf M$,~$\mathbf N$ are said to be {\it elementarily
equivalent}\/ ($\mathbf M\equiv\mathbf N$) if for each $\sigma$ we have~$\mathbf M\models\sigma$ iff~$\mathbf N\models \sigma$. 
   
Let $\Sigma$ be a set of $\mathcal L$-sentences. Then $\mathbf M$ is a {\it model}\/ of $\Sigma$ (in symbols: $\mathbf M\models\Sigma$) if~$\mathbf M\models\sigma$
for all $\sigma\in\Sigma$. We also write $\Sigma\models\sigma$ if every model of $\Sigma$ satisfies~$\sigma$.  If~$\Sigma$ contains each $\sigma$ with $\Sigma\models\sigma$, then $\Sigma$ is   an {\it $\mathcal L$-theory.}\/ 
  Clearly $\Sigma^{\models}:={\{\sigma:\Sigma\models\sigma\}}$ is the smallest $\mathcal L$-theory containing $\Sigma$. 
If $\Sigma$ has a model, then $\mathbf M\equiv\mathbf N$ for all~${\mathbf M,\mathbf N\models\Sigma}$ iff
for all $\sigma$, either~$\Sigma\models\sigma$ or~$\Sigma\models\neg\sigma$; in this case, $\Sigma$ is said to be
{\it complete.}\/  
Taking~$\mathcal L=\mathcal L_{\preceq}$ and for $\Sigma$ the set of $\mathcal L_{\preceq}$-sentences
  which consists of the axioms for fields (which can be formulated in the sublanguage~$\mathcal L_{\operatorname{R}}$ of~$\mathcal L_{\preceq}$) together with (universal) $\mathcal L_{\preceq}$-sentences formalizing the properties~(D1)--(D6),
  we obtain the $\mathcal L_{\preceq}$-theory $\operatorname{VF}:=\Sigma^{\models}$ of valued fields.
The $\mathcal L_{\preceq}$-theory $\operatorname{VF}$ is not complete since (for example) it 
has models of different characteristic.

If $A\subseteq M$, we let $\mathcal L_A$ be the expansion of $\mathcal L$ by a 
new constant symbol $\underline{a}$ for each~$a\in A$, and $\mathbf M_A$ be the $\mathcal L_A$-structure with $\mathcal L$-reduct $\mathbf M$ where each $\underline{a}$ is interpreted by $a$.
We say that $\mathbf M$ is an {\it elementary substructure}\/ of $\mathbf N$, and $\mathbf N$ an {\it elementary extension}\/ of $\mathbf M$,
if~$M\subseteq N$ and $\mathbf M_M\equiv\mathbf N_M$. If every extension of models of $\Sigma$ is elementary, then
$\Sigma$ is said to be {\it model complete.}\/ 
Finally, we say that  $\varphi$ is quantifier-free if $\forall$, $\exists$ do not occur in $\varphi$, and~$\Sigma$ is said
to admit {\it quantifier elimination}\/ (QE) if for each  $\varphi(x)$, where $x=(x_1,\dots,x_m)$, there is a quantifier-free~$\psi(x)$ with $\Sigma\models\forall x(\varphi\leftrightarrow\psi)$.
In this case, $\Sigma$ is also model complete. Moreover, $\Sigma$ admits QE iff it is {\it substructure complete}\/:
if $\mathbf M,\mathbf N\models\Sigma$ and $\mathbf A$ is a common substructure of both $\mathbf M$ and $\mathbf N$,
then $\mathbf M_A\equiv\mathbf N_A$.
 (See, e.g., \cite[Corollary~B.11.6]{ADH} or \cite[remark after Theorem~3.4.1]{PrestelDelzell}.)

\subsection{Extension of valuations}
Let  $A$ be a local subring of a field~$K$ with maximal ideal~$\fm=\fm_A$  and residue field $\k=A/\fm$.
We recall that any maximal element of the class of all local subrings of $K$ lying over $A$, partially ordered by~$B\leq B' :\Longleftrightarrow$ $B'$ lies over $B$, is a valuation ring of $K$.
 (Krull; cf., e.g.,~\cite[Pro\-po\-si\-tion~3.1.13]{ADH}.) 
 In particular, by Zorn, there is always a valuation ring of $K$ lying over $A$. Under suitable hypotheses on $A$ we can
be more precise (see, e.g.,~\cite[Lem\-ma~A1]{JR}):

\begin{lemma}\label{lem:existence of places}
If   $A$ is a regular local ring and $K=\Frac(A)$,
then there is a valuation ring~$\mathcal O$ of~$K$ lying over $A$ such that the natural inclusion $A\to\mathcal O$
induces an isomorphism $\k\to\mathcal O/\smallo$.
\end{lemma}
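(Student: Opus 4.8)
\emph{The plan is to argue by induction on the Krull dimension $d:=\dim A$.} When $d=0$, the regular local ring $A$ is a field, so $K=A$ and $\mathcal O:=K$ does the job. For the inductive step ($d\geq 1$), I would fix an element $x$ of a regular system of parameters of $A$. Then $A':=A/xA$ is again a regular local ring, now of dimension $d-1$, with maximal ideal $\fm_A/xA$ and residue field $\k$; in particular $A'$ is a domain, so $\mathfrak p:=xA$ is a height-one prime of $A$, and $B:=A_{\mathfrak p}$ is a one-dimensional regular local ring, hence a discrete valuation ring, with $\Frac(B)=\Frac(A)=K$ and residue morphism $\pi\colon B\to L$ onto $L:=\Frac(A')$.

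Next I would apply the inductive hypothesis to $A'$ (with $L=\Frac(A')$ in the role of $K$), obtaining a valuation ring $\mathcal O'$ of $L$, with maximal ideal $\smallo'$, lying over $A'$ and inducing an isomorphism $\k\to\res(\mathcal O')$. Set $\mathcal O:=\pi^{-1}(\mathcal O')$. By the standard theory of composite places, $\mathcal O$ is a valuation ring of $K$ with maximal ideal $\smallo=\pi^{-1}(\smallo')$, and $\pi$ induces an isomorphism $\res(\mathcal O)\to\res(\mathcal O')$, so that $\res(\mathcal O)\cong\k$. To finish I would check that $\mathcal O$ lies over $A$: from $\mathcal O'\supseteq A'$ we get $\mathcal O\supseteq\pi^{-1}(A')\supseteq A$, and for $a\in A$ one has $a\in\smallo$ iff $\bar a\in\smallo'$, which, since $\mathcal O'$ lies over $A'$ (so $\smallo'\cap A'=\fm_A/xA$), holds iff $a\in\fm_A$; hence $\smallo\cap A=\fm_A$. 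Finally, the induced field embedding $\k=A/\fm_A\to\mathcal O/\smallo=\res(\mathcal O)$ is surjective because it is the composite of the surjections $A\twoheadrightarrow A'\twoheadrightarrow\res(\mathcal O')$ with the isomorphism $\res(\mathcal O')\cong\res(\mathcal O)$ furnished by $\pi$, and so it is an isomorphism.

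\emph{The main obstacle is organizational rather than deep.} The commutative-algebra inputs---that a regular local ring is a domain, that the quotient by and the localization at the prime generated by an element of a regular system of parameters are again regular local, and that a one-dimensional regular local ring is a discrete valuation ring---are all standard, as is the composite-place construction $\mathcal O=\pi^{-1}(\mathcal O')$ and its effect on residue fields. The delicate point, and the reason for descending one parameter at a time rather than producing $\mathcal O$ in a single step, is that the residue field must remain equal to $\k$ at every stage; keeping track of the chain $\res(\mathcal O)\cong\res(\mathcal O')\cong\cdots\cong\k$ down the induction is exactly what secures this.
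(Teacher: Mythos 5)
Your proof is correct. The paper itself gives no written proof of this lemma---it cites \cite[Lemma~A1]{JR}---but the parenthetical remark immediately following it describes precisely the valuation your induction produces: composing the $x$-adic discrete valuation of $A_{xA}$ (for $x$ a member of a regular system of parameters) with a valuation of the residue field $\Frac(A/xA)$ obtained from the remaining parameters yields the valuation ring with value group $\Z v(x_1)\oplus\cdots\oplus\Z v(x_d)$ ordered lexicographically. So your inductive composite-place argument is a correct, self-contained rendering of the intended construction; the ingredients you invoke (that $A/xA$ and $A_{xA}$ are again regular local, that $xA_{xA}\cap A=xA$ so the image of $A$ in $\Frac(A/xA)$ is $A/xA$, that a one-dimensional regular local ring is a DVR, and the standard facts about $\mathcal O=\pi^{-1}(\mathcal O')$ and its residue field) are all standard and used correctly, and your check that $\smallo\cap A=\fm_A$ and that $\k\to\mathcal O/\smallo$ is onto closes the argument.
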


\noindent
(Indeed, for each regular system of parameters  $x_1,\dots,x_d$ of $A$ there is a unique
valuation ring $\mathcal O$ as in
Lemma~\ref{lem:existence of places}  with
$\Gamma=\Z v(x_1)\oplus\cdots\oplus \Z v(x_d)$, ordered lexicographically.)

\subsection{Coarsening and specialization}
Let $K$ be a valued field and
$\Delta$ be a convex subgroup of
$\Gamma=v(K^\times)$. We have the ordered quotient group~$\dot{\Gamma}=\Gamma/\Delta$
and the valuation 
$$\dot{v}=v_\Delta \colon
K^\times\to\dot{\Gamma}, \qquad
\dot{v}(a):=v(a) + \Delta\text{ for $a\in K^\times$}$$ on the field $K$, 
the 
{\em $\Delta$-coarsening of $v$}.
The valuation ring of $\dot{v}$ is
$$\dot{\mathcal{O}}\ =\ \mathcal O_\Delta\ :=\ \{a\in K: \text{$va\geq \delta$  for some  $\delta\in \Delta$} \},$$
which has $\mathcal{O}$ as a subring and has maximal ideal  
$$\dot{\smallo}\ :=\ \{a\in K: va>\Delta\}\subseteq \smallo.$$ 
The valued field $(K,\dot{\mathcal O})$  is called the {\it $\Delta$-coarsening of~$K$}. 
Put $\dot K:=\dot{\mathcal{O}}/\dot{\smallo}$, 
the residue field of $\dot{\mathcal{O}}$, and   
$\dot{a}:= a + \dot{\smallo} \in \dot{K}$ for 
$a\in \dot{\mathcal{O}}$. 
Then for $a\in \dot{\mathcal{O}}\setminus\dot{\smallo}$ 
the value~$va$ depends only on the residue class $\dot a\in \dot K$. This gives
the valuation~$v\colon \dot K^{\times} \to \Delta$, $v\dot a:= va$
with valuation ring~$\mathcal{O}_{\dot K} = \{\dot{a}: a\in \mathcal{O}\}$, having maximal ideal $\smallo_{\dot K} = \{\dot{a}:a\in \smallo\}$. Throughout 
$\dot{K}$ stands for the valued field $(\dot{K},\mathcal{O}_{\dot K})$, called the {\em $\Delta$-specialization} of~$K$.
The composed map 
$$\mathcal{O} \to \mathcal{O}_{\dot K} \to \res(\dot K)=
\mathcal{O}_{\dot K}/\smallo_{\dot K}$$
has kernel $\smallo$, and thus 
induces a field isomorphism 
$\res(K) \overset{\cong}{\longrightarrow} \res(\dot K)$, and
we identify~$\res(K)$ and $\res(\dot K)$ via this map.

Every  subring of $K$ containing $\mathcal O$ is a valuation ring of $K$.
In fact,   every such subring~$\mathcal O_1\supseteq\mathcal O$  of $K$
arises as the valuation ring   of the $\Delta_1$-coarsening of $v$ for some convex subgroup $\Delta_1$ of~$\Gamma$, namely
  $\Delta_1:=v(\mathcal O_1\setminus\{0\})$. The map $\mathcal O_1\mapsto\Delta_1$ is an inclusion-preserving
bijection from the set of subrings of $K$ containing $\mathcal O$ onto the set of convex subgroups of $\Gamma$. (Cf., e.g., \cite[Lemmas~3.1.4, 3.1.5]{ADH}.)
In particular, the collection of subrings of $K$ containing~$\mathcal O$ is totally ordered under inclusion, and we have $\mathcal O_1\supset\mathcal O$ iff $\Delta_1\neq\{0\}$.
The following is easy to verify:

\begin{lemma}\label{lem:lift val spec}
Let $\k:=\res(K)$ and $\mathcal O_{\k}$ be a valuation ring of  $\k$. Then $\mathcal O_0 := {\{ a\in\mathcal O: \overline{a}\in\mathcal O_{\k} \}}$ is a valuation ring of $K$.  
If~$\Delta_0$ is the convex subgroup of the value group of~$\mathcal O_0$  with~$\dot{\mathcal O_0}=\mathcal O$,
then  the $\Delta_0$-specialization of~$(K,\mathcal O_0)$
is $(\k,\mathcal O_{\k})$.
\end{lemma}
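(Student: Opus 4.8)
The plan is to establish the two assertions in turn, each by a direct unwinding of the definitions of coarsening, specialization, and residue morphism recalled in the preceding subsections.

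First I would verify that $\mathcal O_0$ is a valuation ring of $K$. It is a subring of $\mathcal O$, being the preimage of the subring $\mathcal O_{\k}$ of $\k=\mathcal O/\smallo$ under the residue morphism $a\mapsto\overline a\colon\mathcal O\to\k$ (in particular $\smallo\subseteq\mathcal O_0$). To see it is a valuation ring of $K$, let $a\in K^\times$ and distinguish cases: if $a\notin\mathcal O$ then $a^{-1}\in\smallo\subseteq\mathcal O_0$; if $a\in\smallo$ then $\overline a=0\in\mathcal O_{\k}$, so $a\in\mathcal O_0$; and if $a\in\mathcal O^\times$ then $\overline a\in\k^\times$, so, since $\mathcal O_{\k}$ is a valuation ring of $\k$, either $\overline a\in\mathcal O_{\k}$, giving $a\in\mathcal O_0$, or $\overline a^{-1}\in\mathcal O_{\k}$, giving $a^{-1}\in\mathcal O_0$ (using $a^{-1}\in\mathcal O^\times$ and $\overline{a^{-1}}=\overline a^{-1}$). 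Thus $a\in\mathcal O_0$ or $a^{-1}\in\mathcal O_0$ in every case.

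Since $\mathcal O_0\subseteq\mathcal O$, the inclusion-preserving bijection between the subrings of $K$ containing a given valuation ring and the convex subgroups of its value group (stated just before the lemma), applied to the valued field $(K,\mathcal O_0)$ and its overring $\mathcal O$, yields the convex subgroup $\Delta_0$ of the value group of $(K,\mathcal O_0)$ featuring in the statement, characterized by $\dot{\mathcal O_0}=\mathcal O$. Next I would compute the $\Delta_0$-specialization $\dot K$ of $(K,\mathcal O_0)$ directly from its definition. By definition $\dot K$ is the residue field of the coarsening $\dot{\mathcal O_0}$; since $\dot{\mathcal O_0}=\mathcal O$ has unique maximal ideal $\smallo$, this gives $\dot K=\mathcal O/\smallo=\k$, and the residue class of $a\in\mathcal O=\dot{\mathcal O_0}$ in $\dot K$ is just $\overline a$. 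The valuation ring of $\dot K$ is, again by definition, $\mathcal O_{\dot K}=\{\,\overline a:a\in\mathcal O_0\,\}$, i.e.\ the image of $\mathcal O_0$ under the residue morphism $\mathcal O\to\k$; as that morphism is surjective and $\mathcal O_0$ is by construction the full preimage of $\mathcal O_{\k}$, this image is exactly $\mathcal O_{\k}$. Hence the $\Delta_0$-specialization of $(K,\mathcal O_0)$ is $(\k,\mathcal O_{\k})$, as asserted.

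There is no substantial obstacle: the argument is entirely bookkeeping with the notions from the ``Coarsening and specialization'' subsection. The two points that deserve a moment's attention are the identification of the maximal ideal of $\dot{\mathcal O_0}=\mathcal O$ with $\smallo$ — so that forming the coarsening $\mathcal O$ of $\mathcal O_0$ and then its residue field literally returns $\res(K)$ — and the observation that the canonical map $\mathcal O_0\to\mathcal O_{\dot K}$ implicit in the definition of the specialization is the restriction to $\mathcal O_0$ of the residue morphism $\mathcal O\to\k$, whose image on $\mathcal O_0$ is $\mathcal O_{\k}$ precisely because $\mathcal O_0$ was defined as the full preimage of $\mathcal O_{\k}$.
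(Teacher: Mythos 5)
Your verification is correct and is exactly the routine unwinding of definitions that the paper omits (it states the lemma with only the remark that it is "easy to verify"): the case analysis showing $\mathcal O_0$ is a valuation ring, the identification $\dot{\mathcal O_0}=\mathcal O$ with maximal ideal $\smallo$ so that $\dot K=\k$, and the observation that the image of the full preimage $\mathcal O_0$ under the surjective residue morphism is $\mathcal O_{\k}$ are all the lemma requires. No gaps.
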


\subsection{Convexity} \label{sec:convexity}
Let $K$ be an ordered field. Recall that 
a subring  of $K$ is convex iff it contains the interval $[0,1]$, and as a consequence,
every convex subring of $K$ is a valuation ring of $K$. (See, e.g., \cite[Lemma~3.5.10]{ADH}.)
Let $R$ be a subring  of $K$; then the convex hull 
$$A := \big\{ a\in K: \text{$\abs{a}\leq \varepsilon$ for some $\varepsilon\in R$, $\varepsilon>0$} \big\}$$ 
of $R$ in $K$ is a subring of $K$. This is the smallest convex subring of $K$ containing~$R$,
with maximal ideal
$$\fm=\big\{a\in K:\text{$\abs{a}\leq 1/\varepsilon$ for all $\varepsilon\in R$, $\varepsilon>0$} \big\}.$$
Let $\mathcal O_0$ be  the convex hull of $\Q$ in $K$. Then
a subring of $K$ is convex iff it contains~$\mathcal O_0$; in particular, the 
collection of convex subrings of $K$ is totally ordered under inclusion.
We   generalize these concepts to an arbitrary valued field.
Thus   let now $K$ be a valued field with valuation ring $\mathcal O$ and associated dominance relation~$\preceq$.

\begin{definition}
We say that a subring  of $K$ is {\em convex}\/ (in the valued field~$K$) if it contains~$\mathcal O$. 
Clearly $\mathcal O$ and $K$ are convex subrings of $K$; we call these the {\em trivial}\/  convex   subrings of $K$.
\end{definition}

\noindent
Note that every  convex subring of $K$ is a valuation ring of $K$. In fact, for each convex subgroup $\Delta$ of the value group of $K$, the valuation ring of the $\Delta$-coarsening of   $K$ is a convex subring of $K$, and each convex subring of $K$ arises this way.  Thus the  
set of convex subrings of $K$ is totally ordered under inclusion.

\begin{lemma}\label{lem:convex hull}
Let~$R$ be a subring of   $K$. Then the subring $A=\mathcal OR$ of $K$ generated by $\mathcal O$ and $R$ 
is the smallest convex subring of $K$ which contains $R$. We have
$$A = \big\{ a\in K: \text{$a \preceq \varepsilon$ for some  $\varepsilon\in R$} \big\},$$
with maximal ideal
$$\fm  = \big\{ a\in K: \text{$a\prec 1/\varepsilon$ for all non-zero $\varepsilon\in R$}\big\}.$$
\end{lemma}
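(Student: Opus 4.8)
The plan is to treat the two assertions of the lemma separately, the first being essentially a tautology and the second requiring a short computation with the dominance relation. For the first: since $A=\mathcal OR$ contains $\mathcal O$, it is a convex subring of $K$ by definition, and it contains $R$; conversely, any convex subring of $K$ containing $R$ contains $\mathcal O$ by definition of convexity, hence contains the subring generated by $\mathcal O\cup R$, which is $A$. So $A$ is indeed the smallest convex subring containing $R$, and---being convex---it is a valuation ring of $K$. The substance is therefore the two displayed formulas, and I will abbreviate their right-hand sides as $B:=\{a\in K:a\preceq\varepsilon\text{ for some }\varepsilon\in R\}$ and $C:=\{a\in K:a\prec 1/\varepsilon\text{ for all non-zero }\varepsilon\in R\}$.

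First I would prove $A=B$. Since $\mathcal O$ and $R$ are subrings of $K$, every element of $A$ is a finite sum $a=\sum_{i=1}^n c_i r_i$ with $c_i\in\mathcal O$ and $r_i\in R$ (group the $\mathcal O$- and $R$-factors of each monomial). For $B\subseteq A$: if $a\preceq\varepsilon$ with $\varepsilon\in R$, then either $\varepsilon=0$, which forces $a=0\in A$, or $\varepsilon\ne 0$, in which case $a/\varepsilon\preceq 1$ by (D5), so $a/\varepsilon\in\mathcal O$ and $a=(a/\varepsilon)\varepsilon\in\mathcal OR$. For $A\subseteq B$: in the representation above, (D4) lets me pick $j$ with $r_i\preceq r_j=:\varepsilon$ for all $i$; then $c_i\preceq 1$ together with (D5) gives $c_i r_i\preceq r_i\preceq\varepsilon$ when $r_i\ne 0$ (and $c_i r_i=0\preceq\varepsilon$ otherwise), so (D6) yields $a\preceq\varepsilon\in R$, i.e.\ $a\in B$.

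Finally I would identify the maximal ideal. Since $A$ is a valuation ring, $\fm=A\setminus A^\times$, where $a\in A^\times$ iff $a$ and $a^{-1}$ both lie in $A=B$, i.e.\ (for $a\ne 0$) iff there are non-zero $\varepsilon_1,\varepsilon_2\in R$ with $1/\varepsilon_2\preceq a\preceq\varepsilon_1$. Consequently a non-zero $a\in A$ fails to be a unit precisely when no such $\varepsilon_2$ exists, i.e.\ when $a\prec 1/\varepsilon$ for every non-zero $\varepsilon\in R$; and conversely any $a$ with this property already lies in $A$, since taking $\varepsilon=1$ gives $a\prec 1$, hence $a\in\mathcal O\subseteq A$. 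As $0$ lies in both $\fm$ and $C$, this gives $\fm=C$. I do not expect a genuine obstacle here: the only points needing care are the bookkeeping of the degenerate cases $\varepsilon=0$ and $a=0$, and keeping the translation between $\preceq$, $\prec$ and the valuation consistent (e.g.\ that for $\varepsilon\ne 0$ the negation of $a^{-1}\preceq\varepsilon$ is exactly $a\prec 1/\varepsilon$).
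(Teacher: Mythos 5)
Your proof is correct and follows the same route the paper intends: verify the displayed description of $A=\mathcal{O}R$ directly from the axioms of a dominance relation, and then identify $\fm$ with the set of non-units of the (local, indeed valuation) ring $A$, which is exactly the paper's hint. The paper leaves both steps as an easy verification, and your write-up simply supplies the details.
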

\begin{proof}
The first statement is easy to verify; for the second statement use that the maximal ideal of the local ring~$A$
is given by $\fm=\{a\in A:\text{$a=0$ or $a\notin A^\times$}\}$.
\end{proof}

\noindent
Given a subring $R$ of $K$, we call   $A=\mathcal OR$  the
{\it convex hull}\/ of $R$ in $K$, denoted by~$\operatorname{conv}_K(R)$, or by $\operatorname{conv}(R)$ if $K$ is clear from the context.
(If~$K$ comes equipped with a field ordering and $\mathcal O=\mathcal O_0$  as above, then this agrees with
the convex hull of $R$ in the ordered field~$K$.)
The map $R\mapsto \operatorname{conv}(R)$ is a closure operator on the collection of subrings of $K$, that is,
$$R\subseteq \operatorname{conv}(R),\quad R\subseteq S\ \Longrightarrow\ \operatorname{conv}(R)\subseteq \operatorname{conv}(S),
\quad \operatorname{conv}(\operatorname{conv}(R))=\operatorname{conv}(R).$$
Let $L$ be a valued field extension of $K$ and $R$ be a convex subring of $K$.
It is easily seen that  
$\operatorname{conv}_L(R)$ lies over $R$.
Moreover~(cf.~\cite[remarks after Proposition~3.1.20]{ADH}):

\begin{lemma}\label{lem:unique convex ext}
If $L$ is algebraic over $K$, then $\operatorname{conv}_L(R)$ is the unique
convex subring of $L$ lying over $R$.
\end{lemma}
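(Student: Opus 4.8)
The plan is to reduce everything to the correspondence between convex subrings of a valued field and convex subgroups of its value group, using that $\Gamma_L/\Gamma$ is torsion because $L/K$ is algebraic (cf.\ \cite[remarks after Proposition~3.1.20]{ADH}). The existence half is already in hand: by the observation immediately preceding the lemma, $A := \operatorname{conv}_L(R) = \mathcal{O}_L R$ is a convex subring of $L$ lying over $R$. So only uniqueness requires an argument.

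Fix an arbitrary convex subring $B$ of $L$ lying over $R$. First I would record two elementary facts. (i) $A \subseteq B$: since $B$ is convex in $L$ we have $\mathcal{O}_L \subseteq B$, and since $B$ lies over $R$ we have $R \subseteq B$ and $\mathfrak{m}_R \subseteq \mathfrak{m}_B$; hence $A = \mathcal{O}_L R \subseteq B$. (ii) $B \cap K = R$ (and likewise $A \cap K = R$): the inclusion $R \subseteq B \cap K$ is clear, and if $a \in (B \cap K) \setminus R$ then, $R$ being a valuation ring of $K$, $a^{-1} \in \mathfrak{m}_R \subseteq \mathfrak{m}_B$ while $a \in B$, contradicting that $\mathfrak{m}_B$ is proper.

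Next, $A$ and $B$ are convex subrings of $L$ containing $\mathcal{O}_L$, hence are coarsenings of $\mathcal{O}_L$: write $A = (\mathcal{O}_L)_{\Delta_A}$ and $B = (\mathcal{O}_L)_{\Delta_B}$ for convex subgroups $\Delta_A \subseteq \Delta_B$ of $\Gamma_L$ (the inclusion by (i)). For $a \in K^\times$ one has $v_L a = v_K a \in \Gamma$, so $a$ is a unit of $A$ (resp.\ of $B$) exactly when $v_K a \in \Delta_A \cap \Gamma$ (resp.\ $\Delta_B \cap \Gamma$); thus under the inclusion-preserving bijection between convex subrings of $K$ and convex subgroups of $\Gamma$ recalled after Lemma~\ref{lem:lift val spec}, the convex subrings $A \cap K$ and $B \cap K$ of $K$ correspond to $\Delta_A \cap \Gamma$ and $\Delta_B \cap \Gamma$. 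By (ii) these subrings both equal $R$, so $\Delta_A \cap \Gamma = \Delta_B \cap \Gamma$.

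Finally, let $\gamma \in \Delta_B$ with $\gamma > 0$. Since $\Gamma_L/\Gamma$ is torsion, $n\gamma \in \Gamma$ for some $n \geq 1$; then $n\gamma \in \Delta_B \cap \Gamma = \Delta_A \cap \Gamma \subseteq \Delta_A$, and $0 < \gamma \leq n\gamma$ together with convexity of $\Delta_A$ forces $\gamma \in \Delta_A$. Hence $\Delta_B \subseteq \Delta_A$, so $\Delta_A = \Delta_B$ and $B = A$. I do not expect a genuine obstacle; the one step meriting care is the identification in the previous paragraph of the contraction to $K$ of a coarsening of $\mathcal{O}_L$ with the intersection of the corresponding convex subgroup with $\Gamma$ — the rest is bookkeeping.
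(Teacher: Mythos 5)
Your argument is correct. Note that the paper does not prove this lemma at all — it simply refers to \cite[remarks after Proposition~3.1.20]{ADH} — and your route (pass to the coarsening/convex-subgroup correspondence, contract to $K$ to get $\Delta_A\cap\Gamma=\Delta_B\cap\Gamma$ from $A\cap K=B\cap K=R$, then use that $\Gamma_L/\Gamma$ is torsion together with convexity of $\Delta_A$) is precisely the standard argument behind that citation, and every step, including the one you flag about identifying the contraction of a coarsening with the trace of its convex subgroup, checks out. For what it's worth, there is a slightly shorter variant bypassing the group bookkeeping: for nonzero $b\in B$ choose $n\geq 1$ with $nv_L(b)\in\Gamma$, write $b^n=cu$ with $c\in K^\times$ and $u\in\mathcal O_L^\times$; then $c=b^nu^{-1}\in B\cap K=R$, so $b^n\in\mathcal O_LR=\operatorname{conv}_L(R)$, and since $\operatorname{conv}_L(R)$ is a valuation ring of $L$, hence integrally closed in $L$, already $b\in\operatorname{conv}_L(R)$, giving $B\subseteq\operatorname{conv}_L(R)$ directly.
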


\noindent
We conclude this section with some reminders about  $\Z$-groups.

\subsection*{$\Z$-groups}
Let $\Gamma$ be an ordered abelian group, written additively.  We   let~$\Gamma^>:={\{\gamma\in \Gamma:\gamma>0\}}$, and similarly with $\geq$ instead of $>$.
If $\Gamma^>$ has a  smallest   element, denoted by $1$, then we   identify $\Z$ with an ordered   subgroup of $\Gamma$
via the embedding~$r\mapsto r\cdot 1\colon \Z\to\Gamma$, making~$\Z$ the smallest non-zero  convex subgroup of $\Gamma$.
We recall that $\Gamma$ is said to be a {\it $\Z$-group}\/ if $\Gamma^>$ has a smallest element $1$ and for each~$n\geq 1$ and $\gamma\in\Gamma$ there are some~$i\in\{0,\dots,n-1\}$ such that~$\gamma-i\in n\Gamma$.
The ordered abelian group $\Z$ of integers 
is a $\Z$-group, and every $\Z$-group is elementarily equivalent to $\Z$ in the language $\{0,{-},{+},{\leq}\}$
of ordered abelian groups. (See, e.g.,~\cite[\S{}4.1]{PrestelDelzell}.)
For future reference here are a
few other well-known (and easy to prove) facts :

\begin{lemma}\label{lem:Z-gps}
Let $\Delta\neq\{0\}$ be a convex subgroup of $\Gamma$.
Then $\Gamma$ is a $\Z$-group iff $\Delta$ is a $\Z$-group and $\Gamma/\Delta$ is divisible.
\end{lemma}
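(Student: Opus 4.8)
The plan is to verify both implications by unwinding the definition of $\Z$-group, the crucial elementary input being that a non-zero convex subgroup $\Delta$ of $\Gamma$ contains the smallest non-zero convex subgroup of $\Gamma$; so as soon as $\Gamma$ (or just $\Delta$) has a least positive element $1$, we have $\Z=\Z\cdot 1\subseteq\Delta$ and $1$ is simultaneously $\min\Gamma^>$ and $\min\Delta^>$. I would also isolate at the outset the following remark, valid whenever $1=\min\Gamma^>$ exists: for $n\geq 1$ and $\gamma\in\Gamma$, if $n\gamma\in\Delta$ then $\gamma\in\Delta$, because $\gamma$ lies between $0$ and $n\gamma$ (in one order or the other, according to the sign of $\gamma$) and $\Delta$ is convex. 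It is convenient to reformulate the $\Z$-group condition as: $\Gamma^>$ has a least element $1$, and for each $n\geq 1$ the map $\Z\to\Gamma/n\Gamma$ induced by $r\mapsto r\cdot 1$ is surjective. This is the form I will check.

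For the forward direction, assume $\Gamma$ is a $\Z$-group, so $1=\min\Gamma^>$ lies in $\Delta$ and hence is also $\min\Delta^>$. To see that $\Delta$ is a $\Z$-group, take $n\geq 1$ and $\delta\in\Delta$, and write $\delta-i=n\gamma$ with $i\in\{0,\dots,n-1\}$ and $\gamma\in\Gamma$; since $i\in\Z\subseteq\Delta$ we get $n\gamma\in\Delta$, whence $\gamma\in\Delta$ by the remark, so $\delta-i\in n\Delta$. To see that $\Gamma/\Delta$ is divisible, take $\bar\gamma\in\Gamma/\Delta$ and $n\geq 1$, and write $\gamma-i=n\eta$ in the same way; again $i\in\Z\subseteq\Delta$, so reducing modulo $\Delta$ gives $\bar\gamma=n\bar\eta$.

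For the backward direction, assume $\Delta$ is a $\Z$-group with $1=\min\Delta^>$ and $\Gamma/\Delta$ divisible. First I would check that $1=\min\Gamma^>$: an element $\gamma$ with $0<\gamma<1$ would lie in $\Delta$ by convexity (as $0,1\in\Delta$), contradicting minimality of $1$ in $\Delta^>$. Then, given $n\geq 1$ and $\gamma\in\Gamma$, divisibility of $\Gamma/\Delta$ produces $\eta\in\Gamma$ with $\gamma-n\eta=:\delta\in\Delta$, and the $\Z$-group property of $\Delta$ produces $i\in\{0,\dots,n-1\}$ and $\delta'\in\Delta$ with $\delta-i=n\delta'$; combining these, $\gamma-i=n(\eta+\delta')\in n\Gamma$, as required. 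I do not anticipate a genuine obstacle: the whole argument rests on the convexity remark, and the only point demanding care is to keep the ``remainder'' $i$ inside $\Z\subseteq\Delta$ throughout, so that the relevant congruences are preserved when passing to $\Delta$ or to $\Gamma/\Delta$.
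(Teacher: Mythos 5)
Your argument is correct and complete: the convexity remark (that $n\gamma\in\Delta$ forces $\gamma\in\Delta$) together with $\Z\subseteq\Delta$ is exactly what is needed, and both directions go through as you write them. The paper states this lemma without proof as a well-known fact, and your proof is the standard one, so there is nothing to reconcile.
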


\begin{lemma}\label{lem:Z-subgp}
Let $\Delta$ be a subgroup of a $\Z$-group $\Gamma$ which contains the smallest  element of~$\Gamma^>$. Then
$\Delta$ is a $\Z$-group iff $\Gamma/\Delta$ is torsion-free.
\end{lemma}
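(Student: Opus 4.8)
The plan is to verify both implications directly from the definition of $\Z$-group, using throughout that $1$ is the least element of $\Gamma^>$ (equivalently, every positive element of $\Gamma$ is $\geq 1$). The one thing to keep in mind is that $\Delta$ is \emph{not} assumed convex in $\Gamma$, so Lemma~\ref{lem:Z-gps} is not directly available; instead the ``Euclidean division'' clause in the definition has to be applied in $\Gamma$ and in $\Delta$ in turn.

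First I would prove that if $\Gamma/\Delta$ is torsion-free then $\Delta$ is a $\Z$-group. Since $1\in\Delta$ lies in $\Gamma^>$, the set $\Delta^>=\Delta\cap\Gamma^>$ is non-empty with least element $1$. For the divisibility clause, fix $n\geq 1$ and $\delta\in\Delta$; applying the $\Z$-group property of $\Gamma$ yields $i\in\{0,\dots,n-1\}$ and $\gamma\in\Gamma$ with $\delta-i=n\gamma$. Then $n\gamma=\delta-i\in\Delta$, so $\gamma+\Delta$ is $n$-torsion in $\Gamma/\Delta$, hence $\gamma\in\Delta$ by hypothesis, giving $\delta-i\in n\Delta$. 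Thus $\Delta$ is a $\Z$-group.

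For the converse, assume $\Delta$ is a $\Z$-group and let $\gamma\in\Gamma$ and $n\geq 1$ with $n\gamma\in\Delta$; I must show $\gamma\in\Delta$. Set $\delta:=n\gamma\in\Delta$ and apply the $\Z$-group property of $\Delta$: there are $i\in\{0,\dots,n-1\}$ and $\delta'\in\Delta$ with $\delta-i=n\delta'$, i.e.\ $n(\gamma-\delta')=i$. I would then rule out $\gamma-\delta'\neq 0$: if $\gamma-\delta'>0$ then $\gamma-\delta'\geq 1$, so $i=n(\gamma-\delta')\geq n$, contradicting $i\leq n-1$; and $\gamma-\delta'<0$ symmetrically forces $i\leq -n<0$, again impossible. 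Hence $\gamma=\delta'\in\Delta$, and so $\Gamma/\Delta$ is torsion-free.

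I expect no genuine obstacle here: both directions are a couple of lines once one notices that the division-with-remainder clause must be invoked directly (rather than via convexity and Lemma~\ref{lem:Z-gps}), and that the hypothesis $1\in\Delta$ is exactly what forces the ``remainder'' $i$ to vanish in the converse direction.
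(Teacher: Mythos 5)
Your proof is correct, and both directions are carried out cleanly; the only implicit point (that the least positive element of $\Delta$ coincides with that of $\Gamma$, so the two uses of ``$i$'' mean the same thing) is immediate from the hypothesis $1\in\Delta$ and you address it in the first direction. The paper states this lemma as a well-known fact without proof, and your argument is exactly the standard one it has in mind.
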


\section{Weierstrass Systems and Infinitesimal Analytic Structures}\label{sec:W-systems}

\noindent
In this section we let $\k$ be a field, of arbitrary characteristic unless stated otherwise. We let $(X_m)_{m\geq 1}$ be a sequence of distinct indeterminates
over $\k$. As usual~$\k[[X_1,\dots,X_m]]$ denotes the ring of formal power series over $\k$ in the indeterminates $X_1,\dots,X_m$.
In the following we let $X=(X_1,\dots,X_m)$,  we let $\alpha$, $\beta$   range over $\N^m$, and for~$\alpha=(\alpha_1,\dots,\alpha_m)$ put~$\abs{\alpha}:=\alpha_1+\cdots+\alpha_m$ and $X^\alpha:=X_1^{\alpha_1}\cdots X_m^{\alpha_m}$.
Consider an element
$$f=\sum_\alpha f_\alpha X^\alpha  \qquad (f_\alpha\in\k)$$
 of $\k[[X]]$.
If $f\neq 0$, then
the {\it order}\/ of $f$ is the smallest $d$ such that  $f_\alpha\neq 0$ for some~$\alpha$ with~$\abs{\alpha}=d$.
Suppose now $m\geq 1$.
One says that $f\in \k[[X]]$ is \emph{regular of order~$d\in\N$ in $X_{m}$}\/ if~$f(0,X_{m})\in \k[[X_{m}]]$ is non-zero of order $d$.
Note that if~$f=f_1\cdots f_n$ where~$f_1,\dots,f_n\in\k[[X]]$, then $f$ is regular in $X_m$ of order $d$ iff   $f_i$ is regular in $X_m$ of order $d_i$ for $i=1,\dots,n$,
with $d_1+\cdots+d_n=d$.

\medskip
\noindent
Let $d\in\N$, $d\geq 1$.
Then we have a $\k$-algebra automorphism $\tau_d$ of  $\k[[X]]$ 
such that~$\tau_d(X_i)=X_i+X_m^{d^{m-i}}$ for $i=1,\dots,m-1$ and~$\tau_d(X_m)=X_m$.
(See,  e.g.,~\cite[I, \S{}4, Satz~3 and the remark following it]{GR}.)
For non-zero~$f\in\k[[X]]$ there is always some $d\in\N$, $d\geq 1$ such that~$\tau_d(f)$ is regular in~$X_m$ (of some order). In particular,
given non-zero $f_1,\dots,f_n\in \k[[X]]$,   there is some $d\in\N$, $d\geq 1$
such that~$\tau_d(f_1),\dots,\tau_d(f_n)$   are all regular in $X_m$.

\medskip 
\noindent
We also recall that $\k[[X]]$  is a complete local noetherian integral domain with maximal ideal  generated by $X_1,\dots,X_m$. 
Thus  for $f\in \k[[X]]$ we have
$f\in \k[[X]]^\times$ iff $f(0)\in\k^\times$;
we say that $f$ is a {\em $1$-unit}\/ if $f(0)=1$.

\subsection{Weierstrass systems}
The following definition is a special instance of a concept introduced by Denef-Lipshitz~\cite{DL}:

\begin{definition} \label{def:DL}
A {\em Weierstrass system over $\k$} is a family of rings $(W_m)_{m\geq 0}$ 
such that for all $m$ we have, with $X=(X_1,\dots,X_m)$:
\begin{itemize}
\item[(W1)] $\k[X] \subseteq  W_m \subseteq \k[[X]]$;
\item[(W2)] for each
permutation $\sigma$ of $\{1,\dots,m\}$,   the $\k$-algebra automorphism
$f(X) \mapsto  f(X_{\sigma(1)},\dots,X_{\sigma(m)})$ of $\k[[X]]$ maps $W_m$ onto itself;
\item[(W3)] 
$W_m \cap \k[[X']] = W_{m-1}$ for $X'=(X_1,\dots,X_{m-1})$, $m\geq 1$; and
\item[(W4)] if $g\in W_m$ is regular of order $d$ in $X_m$ ($m\geq 1$),
 then for every $f\in 
W_m$ there are~$q\in W_m$ and
a polynomial $r\in W_{m-1}[X_{m}]$ of degree $<d$ (in $X_{m}$) such that
$f=qg+r$. (Weierstrass Division.)
\end{itemize}
(In the case where $\k$ has positive characteristic,  \cite{DL} includes further axioms, but those are not needed for our purposes.)
\end{definition}

\noindent
Examples of Weierstrass systems include 
\begin{enumerate}
\item the system 
$\bigl(\k[[X_1,\dots,X_m]]\bigr)$
consisting of all formal power series rings;
\item the system $\bigl(\k[[X_1,\dots,X_m]]^{\operatorname{a}}\bigr)$, where $\k[[X_1,\dots,X_m]]^{\operatorname{a}}$ is
the ring of power series in~$\k[[X_1,\dots,X_m]]$ which are algebraic over $\k[X_1,\dots,X_m]$ (cf.~\cite{Lafon});
\item if $\operatorname{char}\k=0$, the system $\bigl(\k[[X_1,\dots,X_m]]^{\operatorname{da}}\bigr)$, where $\k[[X_1,\dots,X_m]]^{\operatorname{da}}$ is
the ring of all power series  $f\in \k[[X]]=\k[[X_1,\dots,X_m]]$ which are \emph{differentially algebraic} over $\k$,
that is, the fraction field of the subring $\k\big[\partial^{\abs{\beta}} f/ \partial X^\beta\big]$ of~$\k[[X]]$
generated by the partial derivatives of $f$
has finite transcendence degree over $\k$ (see \cite[\S{}5]{vdD88});
\item assuming that $\k$ comes equipped with a complete absolute value, 
the system~$\bigl(\k\{X_1,\dots,X_m\}\bigr)$, where~$\k\{X_1,\dots,X_m\}$
consists of all power series in~$\k[[X_1,\dots,X_m]]$ that converge in some 
neighborhood of the origin. (See, e.g.,~\cite[Kapitel~I, \S{}4]{GR}.)
\end{enumerate}
{\it In the rest of this section we let  $W=(W_m)$  be a Weierstrass system over $\k$.}\/ Note that the definition of  Weierstrass system given above implicitly  depends on our fixed choice of the sequence $(X_m)$ of indeterminates.
If~$Y=(Y_1,\dots,Y_m)$ is an arbitrary tuple of distinct indeterminates, then we
let $\k \lfloor Y \rfloor := \big\{f\in \k[[Y]]: f(X)\in W_m\big\}$.
Here as before   $X=(X_1,\dots,X_m)$. We also denote by $(X)$ the ideal of~$W_m=\k\lfloor X\rfloor$
generated by $X_1,\dots,X_m$. 
In the rest of this subsection we  also let
$Y=(Y_1,\dots,Y_n)$ be a tuple of distinct indeterminates disjoint from~$X$.

With these conventions, we now list  some basic facts about Weierstrass systems which we will be using.

\begin{lemma}\label{lem:units}
$W_m \cap \k[[X]]^\times =W_m^\times$.
\end{lemma}
\begin{proof}
Let $g\in W_m \cap \k[[X]]^\times$. Then $X_m g$ is regular of order $1$ in $X_m$, hence   (W4) yields~$X_m=(X_m g)q+r$ where $q\in W_m$, $r\in W_{m-1}\subseteq\k[[X']]$, thus $r=0$ and so~$1=gq$.
\end{proof}

\noindent
By \cite[Remarks~1.3(2)]{DL} we have:

\begin{lemma}\label{lem:DL subst}
Let $f\in \k\lfloor X,Y\rfloor$, $g\in (X)^n$.
Then $f(X,g(X))\in \k\lfloor X\rfloor$.
\end{lemma}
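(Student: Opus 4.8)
The plan is to turn the substitution into a finite sequence of Weierstrass divisions. The one genuine idea is the following: for $p\geq 1$ and $\phi\in\k\lfloor Z_1,\dots,Z_{p-1}\rfloor=W_{p-1}$ with zero constant term, the element $Z_p-\phi$ — which lies in $W_p$ by (W1), (W3) — is regular of order $1$ in $Z_p$, since its value at $Z_1=\cdots=Z_{p-1}=0$ equals $Z_p$. So first I would prove the single-variable statement: for every $F\in\k\lfloor Z\rfloor=W_p$ and every such $\phi$, the power series $F(Z_1,\dots,Z_{p-1},\phi)$ — which is well-defined in $\k[[Z_1,\dots,Z_{p-1}]]$ because $\phi$ has no constant term — lies in $W_{p-1}$. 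Indeed, Weierstrass Division (W5) applied to $F$ and $Z_p-\phi$ (of order $d=1$) yields $q\in W_p$ and $r\in W_{p-1}[Z_p]$ of degree $<1$, i.e.\ $r\in W_{p-1}$, with $F=q\cdot(Z_p-\phi)+r$; applying the continuous $\k$-algebra homomorphism $\k[[Z]]\to\k[[Z_1,\dots,Z_{p-1}]]$ that fixes $Z_1,\dots,Z_{p-1}$ and sends $Z_p\mapsto\phi$ kills the first summand and fixes $r$, whence $F(Z_1,\dots,Z_{p-1},\phi)=r\in W_{p-1}$.

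Next I would prove the lemma by induction on $n$, peeling off the indeterminates $Y_n,Y_{n-1},\dots$ one at a time. For $n=0$ there is nothing to prove. For $n\geq 1$: by the very definition of $\k\lfloor\,\cdot\,\rfloor$ we may view $f$ as an element of $W_{m+n}=\k\lfloor X,Y_1,\dots,Y_n\rfloor$ and, via the inclusions furnished by iterating (W3), view $g_n\in(X)\subseteq W_m$ as an element of $W_{m+n-1}=\k\lfloor X,Y_1,\dots,Y_{n-1}\rfloor$ with zero constant term. The single-variable statement (with $Y_n$ playing the role of the last indeterminate) then gives
$$f_1:=f(X,Y_1,\dots,Y_{n-1},g_n(X))\ \in\ W_{m+n-1}=\k\lfloor X,Y_1,\dots,Y_{n-1}\rfloor.$$
Since $g_1,\dots,g_{n-1}\in(X)$ involve only the indeterminates $X$, the inductive hypothesis applied to $f_1$ and $(g_1,\dots,g_{n-1})$ gives $f_1(X,g_1(X),\dots,g_{n-1}(X))\in\k\lfloor X\rfloor$; and this power series equals $f(X,g(X))$, because substitution of the $X$-power series $g_1,\dots,g_n$ into the respective slots of $f$ may be carried out in any order.

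I do not anticipate a serious obstacle: the argument is almost entirely bookkeeping. The only points requiring care are keeping track of which ring each object lives in (handled by the inclusions in (W3)) and the fact that the power-series substitutions used are legitimate continuous $\k$-algebra homomorphisms between formal power series rings — which holds precisely because every series substituted (the $g_i$, the $\phi$) has vanishing constant term — and compose as expected.
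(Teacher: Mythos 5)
Your proposal is correct and is essentially the paper's own argument: the paper likewise performs repeated Weierstrass Division (W5) by the order-one divisors $Y_i-g_i(X)$, obtaining $f=u_n(Y_n-g_n)+\cdots+u_1(Y_1-g_1)+R_n(X)$ and concluding $f(X,g(X))=R_n\in\k\lfloor X\rfloor$; you merely package the same iteration as a one-variable substitution lemma plus induction on $n$. The bookkeeping points you flag (regularity of $Y_i-g_i$ in the last variable, zero constant terms making the substitutions legitimate) are exactly the ones the paper's proof uses implicitly.
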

\begin{proof}
For later use we include the proof. With $g=(g_1,\dots,g_n)$, repeated application of (W4) in $\k\lfloor X,Y\rfloor$ yields
\begin{align*}
f(X,Y) 	&= u_n(X,Y) (Y_n-g_n(X))+R_1(X,Y_1,\dots,Y_{n-1}) \\
		&= u_n(X,Y) (Y_n-g_n(X))+u_{n-1}(X,Y)(Y_{n-1}-g_{n-1}(X))  \\
		&\qquad\qquad +R_2(X,Y_1,\dots,Y_{n-2}) \\
		&\qquad \vdots \\
		&= u_n(X,Y) (Y_n-g_n(X))+\cdots+ u_1(X,Y)(Y_1-g_1(X)) + R_n(X)
\end{align*}
and hence $f(X,g(X))=R_n(X)\in \k\lfloor X\rfloor$.
\end{proof}

\begin{cor}\label{cor:DL subst}
For all $f\in \k\lfloor Y\rfloor$ and $g\in (X)^n$  we have
$f(g(X))\in \k\lfloor X\rfloor$.
\end{cor}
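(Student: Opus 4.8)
The plan is to deduce this from Lemma~\ref{lem:DL subst}. Given $f\in\k\lfloor Y\rfloor$, the idea is to regard $f$ as an element $\tilde f$ of $\k\lfloor X,Y\rfloor$ via the natural inclusion $\k[[Y]]\hookrightarrow\k[[X,Y]]$; that is, $\tilde f$ is the power series in the variables $X,Y$ which does not actually involve any of the $X_i$ and agrees with $f$ in the $Y_j$. Once we know $\tilde f\in\k\lfloor X,Y\rfloor$, Lemma~\ref{lem:DL subst} applied to $\tilde f$ and $g\in(X)^n$ gives $f(g(X))=\tilde f\bigl(X,g(X)\bigr)\in\k\lfloor X\rfloor$, which is exactly the claim. (Substitution makes sense here because the $g_i$ have zero constant term.)

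Hence the only thing to verify is that $f\in\k\lfloor Y\rfloor$ implies $\tilde f\in\k\lfloor X,Y\rfloor$. Unwinding the definition of $\k\lfloor\,\cdot\,\rfloor$ (which is relative to the fixed sequence $(X_m)$ of indeterminates), this amounts to the implication: $f(X_1,\dots,X_n)\in W_n$ implies $f(X_{m+1},\dots,X_{m+n})\in W_{m+n}$. First, iterating (W3) shows $W_{j-1}\subseteq W_j$ for every $j\geq 1$, so $W_n\subseteq W_{m+n}$; in particular $h:=f(X_1,\dots,X_n)\in W_{m+n}$. Next, choose a permutation $\sigma$ of $\{1,\dots,m+n\}$ with $\sigma(i)=m+i$ for $i=1,\dots,n$ — for instance $\sigma(n+j)=j$ for $j=1,\dots,m$ — and invoke (W2): the induced $\k$-algebra automorphism of $\k[[X_1,\dots,X_{m+n}]]$ maps $W_{m+n}$ onto itself and sends $h$ to $f(X_{m+1},\dots,X_{m+n})$, so the latter lies in $W_{m+n}$, as required.

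I do not anticipate a real obstacle; the argument is pure bookkeeping with the Weierstrass system axioms. The one point requiring care is the relabeling of variables implicit in the notation $\k\lfloor X,Y\rfloor$: since the proof of Lemma~\ref{lem:DL subst} carries out Weierstrass division successively in the last $n$ indeterminates, one must use (W2) to transport the block $Y$ of variables into the slots $X_{m+1},\dots,X_{m+n}$ after embedding $W_n$ into $W_{m+n}$ via (W3), rather than leaving $Y$ identified with $X_1,\dots,X_n$.
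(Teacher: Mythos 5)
Your proposal is correct and is essentially the paper's own proof: one first uses (W3) to embed $W_n$ into $W_{m+n}$, then (W2) with a permutation sending $i\mapsto m+i$ for $i=1,\dots,n$ to move the $Y$-block into the last $n$ slots, and finally applies Lemma~\ref{lem:DL subst}. The point you flag about the relabeling of variables is exactly the care the paper takes as well.
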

\begin{proof}
By (W3) we have
$$f':=f(X_1,\dots,X_n)\in   \k\lfloor X_1,\dots,X_{m+n}\rfloor.$$ 
Let $\sigma$ be a permutation of $\{1,\dots,m+n\}$ with $\sigma(i)=m+i$ for $i=1,\dots,n$; then
$$h:=f'(X_{\sigma(1)},\dots,X_{\sigma(m+n)})\in \k\lfloor X_1,\dots,X_{m+n}\rfloor$$ by (W2).
So by Lemma~\ref{lem:DL subst} applied to $h$ in place of $f$ and $Y=(X_{m+1},\dots,X_{m+n})$ we get
$f(g(X))=h(X,g(X))\in  \k\lfloor X\rfloor$.
\end{proof}

\noindent
By~\cite[Remarks~1.3(1),(7),(8)]{DL}, we have:

\begin{lemma}\label{lem:W-Lemma}
The ring
$\k\lfloor X\rfloor$ is   noetherian regular  local; its unique maximal ideal  is~$(X)$, and  its  completion is  $\k[[X]]$.
\end{lemma}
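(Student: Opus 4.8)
The plan is to establish all four assertions — local, maximal ideal $(X)$, Noetherian, regular, completion $\k[[X]]$ — by a single induction on $m$, with Weierstrass Division (W5) as the main tool; the base case $m=0$ is immediate since then $\k\lfloor X\rfloor=\k$. Write $W_m=\k\lfloor X\rfloor$ for the distinguished tuple $X=(X_1,\dots,X_m)$; the case of an arbitrary tuple $Y$ of distinct indeterminates reduces to this via the $\k$-algebra isomorphism $\k\lfloor Y\rfloor\iso W_m$ sending $Y_i\mapsto X_i$. I will also carry along through the induction the auxiliary claim that $(X)^n_{W_m}=W_m\cap(X)^n_{\k[[X]]}$ for all $n$, which is what is needed for the regularity part. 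Put $X'=(X_1,\dots,X_{m-1})$, so $W_{m-1}=W_m\cap\k[[X']]$ by (W3).

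First I would settle the \emph{local} property: by (W4) an element of $W_m$ is a unit in $W_m$ iff it is a unit in $\k[[X]]$, that is, iff $f(0)\neq 0$, so the non-units form the ideal $\fm:=\{f\in W_m:f(0)=0\}$ and $W_m$ is local with maximal ideal $\fm$. To see $\fm=(X)$, the inclusion $(X)\subseteq\fm$ is trivial, and for the reverse I would divide by $X_m$, which is regular of order $1$ in $X_m$: (W5) writes any $f\in W_m$ as $f=qX_m+r$ with $q\in W_m$ and $r\in W_{m-1}$; evaluating all indeterminates at $0$ gives $r(0)=f(0)$, so if $f\in\fm$ then $r$ lies in the maximal ideal of $W_{m-1}$, which is $(X')$ by the inductive hypothesis, whence $f\in(X_1,\dots,X_m)$.

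Next, for \emph{Noetherianity}, I would first record a point that is used but is not literally among (W1)--(W5): each automorphism $\tau_d$ of $\k[[X]]$ restricts to an automorphism of $W_m$. Indeed $\tau_d(f)=f(\tau_d(X_1),\dots,\tau_d(X_m))$ with every $\tau_d(X_i)\in(X)$, so $\tau_d(W_m)\subseteq W_m$ by Corollary~\ref{cor:DL subst}, and the inverse of $\tau_d$ is a substitution of the same strictly triangular form. Given a nonzero ideal $I$ of $W_m$, choose $0\neq g\in I$; after replacing $I$ by $\tau_d(I)$ for suitable $d$ I may assume $g$ is regular in $X_m$ of some order $d$. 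By (W5), $W_m=gW_m+\sum_{i<d}W_{m-1}X_m^i$, so $W_m/gW_m$ is finitely generated as a $W_{m-1}$-module, hence Noetherian over $W_{m-1}$ since $W_{m-1}$ is Noetherian by the inductive hypothesis; then the $W_m$-submodule $I/gW_m$ is finitely generated over $W_{m-1}$, hence over $W_m$, and adjoining $g$ gives a finite generating set for $I$.

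Finally, for \emph{regularity} and the \emph{completion}, the heart of the matter is the auxiliary claim $(X)^n_{W_m}=\{f\in W_m:\operatorname{ord}f\geq n\}$, which I would prove by an inner induction on $n$ (the inclusion $\subseteq$ being clear). For $f\in W_m$ with $\operatorname{ord}f\geq n+1$, writing $f=qX_m+r$ as above we have $r=f(X',0)$, of order $\geq n+1$ in $\k[[X']]$, so $r\in(X')^{n+1}_{W_{m-1}}\subseteq(X)^{n+1}_{W_m}$ by the outer inductive hypothesis, while $qX_m=f-r$ forces $\operatorname{ord}q\geq n$, so $q\in(X)^n_{W_m}$ by the inner induction, giving $f=qX_m+r\in(X)^{n+1}_{W_m}$. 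With this in hand, the inclusion $W_m\hookrightarrow\k[[X]]$ induces isomorphisms $W_m/(X)^n\xrightarrow{\sim}\k[[X]]/(X)^n$ for all $n$ (surjectivity since $\k[X]\subseteq W_m$); passing to the limit gives $\widehat{W_m}=\k[[X]]$, and on associated graded rings it gives $\operatorname{gr}_{(X)}W_m\iso\k[X_1,\dots,X_m]$, so $W_m$ — being Noetherian local with residue field $\k$ and having a polynomial ring in $m$ variables as associated graded ring — is regular of dimension $m$. The one step I expect to require real care rather than bookkeeping is the closure of $W_m$ under the $\tau_d$, since this is what licenses the reduction to $W_{m-1}$ in the Noetherian argument; the remainder is just organizing the nested inductions on $m$ and $n$ around division by the single indeterminate $X_m$.
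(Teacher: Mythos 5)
Your proof is correct. Note, however, that the paper does not actually prove this lemma: it is quoted verbatim from Denef--Lipshitz, with the ``proof'' consisting of the citation \cite[Remarks~1.3(1),(7),(8)]{DL}. What you have written is a self-contained reconstruction, and it is the standard Weierstrass-division argument one expects to find behind that citation. All the delicate points are handled properly: locality follows from (W4) together with the characterization of units of $\k[[X]]$; the identification of the maximal ideal with $(X)$ and the key inclusion $W_m\cap(X)^n_{\k[[X]]}\subseteq (X)^n_{W_m}$ both come from dividing by $X_m$ (regular of order $1$) and invoking the inductive hypothesis for $W_{m-1}$ via (W3); noetherianity is the usual module-finiteness of $W_m/gW_m$ over $W_{m-1}$ after preparing $g$; and the closure of $W_m$ under $\tau_d$ and $\tau_d^{-1}$, which you correctly flag as the step not literally contained in (W1)--(W5), is exactly what Corollary~\ref{cor:DL subst} provides (the paper itself records this consequence just before Corollary~\ref{cor:shear}). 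Your identification of $\operatorname{gr}_{(X)}W_m$ with $\k[X]$ then gives regularity and the completion simultaneously; an equivalent shortcut, once $\widehat{W_m}=\k[[X]]$ is known, is to quote the fact that a noetherian local ring is regular iff its completion is.
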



\noindent
Together with \cite[Theorem~8.14]{Matsumura} this yields:

\begin{cor}\label{cor:fflat}
The ring $\k[[X]]$ is a faithfully flat   $\k\lfloor X\rfloor$-module.
\end{cor}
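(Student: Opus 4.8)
The plan is to read this off directly from Lemma~\ref{lem:W-Lemma} together with the cited result of Matsumura. Set $A:=\k\lfloor X\rfloor$ and $\m:=(X)$. By Lemma~\ref{lem:W-Lemma} the ring $A$ is noetherian local with maximal ideal $\m$, and its $\m$-adic completion is $\widehat A=\k[[X]]$. So it suffices to recall that the $\m$-adic completion $\widehat A$ of a noetherian local ring $(A,\m)$ is always a faithfully flat $A$-module, which is precisely \cite[Theorem~8.14]{Matsumura}. This is the whole argument; the role of Lemma~\ref{lem:W-Lemma} is just to put us in that situation.

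If one prefers to spell it out in two steps: first, since $A$ is noetherian, $\widehat A$ is flat over $A$; second, a flat module $M$ over a local ring $A$ with maximal ideal $\m$ is faithfully flat as soon as $\m M\neq M$, and here $M=\widehat A=\k[[X]]$ is itself local with maximal ideal generated by $X_1,\dots,X_m$, so $\m\widehat A$ is a proper ideal of $\widehat A$. Hence $\k[[X]]$ is faithfully flat over $\k\lfloor X\rfloor$.

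There is essentially no obstacle here -- the corollary is a formal consequence of Lemma~\ref{lem:W-Lemma}. The one point that might deserve a word is the identification of the abstract $\m$-adic completion of $A$ with $\k[[X]]$ as topological rings, i.e.\ that $\k[[X]]=\varprojlim_j A/\m^j$; but by (W1) we have $A/\m^j=\k[X]/(X_1,\dots,X_m)^j=\k[[X]]/(X_1,\dots,X_m)^j$ for each $j$, so this is immediate (and is in any case part of what Lemma~\ref{lem:W-Lemma} asserts).
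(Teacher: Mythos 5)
Your argument is exactly the paper's: Lemma~\ref{lem:W-Lemma} identifies $\k\lfloor X\rfloor$ as a noetherian local ring with completion $\k[[X]]$, and \cite[Theorem~8.14]{Matsumura} then gives faithful flatness of the completion. The extra two-step unwinding and the remark on identifying the completion are fine but not needed beyond what the cited results already supply.
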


\noindent
The following is \cite[Remarks~1.3(4)]{DL}:

\begin{lemma}[Implicit Function Theorem]\label{lem:IFT}
Let $f=(f_1,\dots,f_n)\in \k\lfloor X,Y\rfloor^n$ be
such that 
$$f\equiv 0\mod (X,Y)\quad\text{and}\quad \det \left( \partial f_i/\partial Y_j \right)\not\equiv 0 \mod (X,Y).$$ 
Then there
is some $y=(y_1,\dots,y_n)\in (X)^n$  such that $f(X,y)=0$.
\end{lemma}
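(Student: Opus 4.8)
The plan is to argue by induction on $n$; the case $n=0$, where $y$ is the empty tuple, is trivial, and the inductive step will reduce the problem from $n$ to $n-1$ unknowns by solving for a single one of them via the Weierstrass Preparation Theorem. Recall that Preparation is the standard consequence of Weierstrass Division~(W5): if $Z=(Z_1,\dots,Z_r)$ with $r\geq 1$, $Z'=(Z_1,\dots,Z_{r-1})$, and $h\in\k\lfloor Z\rfloor$ is regular of order $e\geq 1$ in $Z_r$, then dividing $Z_r^e$ by $h$ gives $Z_r^e=qh+\rho$ with $q\in\k\lfloor Z\rfloor$ and $\rho\in\k\lfloor Z'\rfloor[Z_r]$ of degree $<e$; reducing this identity modulo the maximal ideal $(Z')$ — which by (W3) places it in $\k[[Z_r]]$ — and comparing orders of power series (using that $h(0,\dots,0,Z_r)$ has order exactly $e$ while $\rho(0,\dots,0,Z_r)$ has degree $<e$) forces $\rho\in(Z')[Z_r]$ and $q(0)\ne 0$, so $q\in\k\lfloor Z\rfloor^\times$ by (W4) and $h=q^{-1}\bigl(Z_r^e-\rho\bigr)$ with $Z_r^e-\rho$ a Weierstrass polynomial.

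For the inductive step I proceed as follows. Let $n\geq 1$ and write $Y=(Y',Y_n)$ with $Y'=(Y_1,\dots,Y_{n-1})$. The Jacobian $A:=\bigl(\partial f_i/\partial Y_j(0)\bigr)_{1\leq i,j\leq n}\in\k^{n\times n}$ is invertible; replacing each $f_i$ by $\sum_{j}(A^{-1})_{ij}f_j$ — a $\k$-linear substitution on the tuple $(f_1,\dots,f_n)$ that keeps each entry in $\k\lfloor X,Y\rfloor$, preserves the congruence $f\equiv 0\bmod(X,Y)$, and does not change the zero set $\{y:f(X,y)=0\}$ — we may assume $A$ is the identity matrix. (Without this normalization one ends up with the Schur-complement identity $\det(\text{new Jacobian})=\det A/\bigl(\partial f_n/\partial Y_n\bigr)(0)$ instead, which works equally well.) Now $f_n(0,\dots,0,Y_n)$ has zero constant term, since $f\equiv 0\bmod(X,Y)$, and linear coefficient $\partial f_n/\partial Y_n(0)=1$, so $f_n$ is regular of order $1$ in $Y_n$, and Preparation, with the variables $X,Y'$ in the role of $Z'$, yields a unit $u\in\k\lfloor X,Y\rfloor$ and $\phi\in\k\lfloor X,Y'\rfloor$ with $\phi\in(X,Y')$ and $f_n=u\cdot(Y_n-\phi)$. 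Substituting $Y_n=\phi(X,Y')$ into $f_1,\dots,f_{n-1}$, Lemma~\ref{lem:DL subst} gives that $\tilde f_i(X,Y'):=f_i\bigl(X,Y',\phi(X,Y')\bigr)$ lies in $\k\lfloor X,Y'\rfloor$ and satisfies $\tilde f_i\equiv 0\bmod(X,Y')$; and the chain rule gives, for $1\leq i,j\leq n-1$, $\partial\tilde f_i/\partial Y_j(0)=\partial f_i/\partial Y_j(0)+\partial f_i/\partial Y_n(0)\cdot\partial\phi/\partial Y_j(0)=\delta_{ij}$, because $\partial f_i/\partial Y_n(0)=\delta_{in}=0$ for $i\leq n-1$. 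Hence $(\tilde f_1,\dots,\tilde f_{n-1})$ satisfies the hypotheses of the lemma with $n-1$ in place of $n$, so the inductive hypothesis yields $y'=(y_1,\dots,y_{n-1})\in(X)^{n-1}$ with $\tilde f_i(X,y')=0$ for $i<n$. Then I put $y_n:=\phi(X,y')$; by Lemma~\ref{lem:DL subst} it lies in $\k\lfloor X\rfloor$, and in fact in $(X)$ since $\phi\in(X,Y')$ and $y'\in(X)^{n-1}$. Finally $f_i(X,y',y_n)=\tilde f_i(X,y')=0$ for $i<n$ and $f_n(X,y',y_n)=u(X,y',y_n)\cdot\bigl(y_n-\phi(X,y')\bigr)=0$, so $y:=(y',y_n)\in(X)^n$ is as required. (For $n=1$ this is just one application of Preparation.)

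The step I expect to be the main obstacle is the Weierstrass Preparation ingredient itself — that is the only genuinely non-formal point, and isolating from the division identity the unit-ness of the quotient and the vanishing of the constant coefficients of the remainder requires the reduction modulo the maximal ideal into $\k[[Z_r]]$ together with an order comparison; it is exactly here that one needs $h$ to be regular of order \emph{exactly} the relevant value, and — for the conclusion $y\in(X)^n$ rather than merely the existence of some zero of $f$ — that $f\equiv 0\bmod(X,Y)$. Everything else is routine bookkeeping: the $\k$-linear normalization of the Jacobian (or, alternatively, the Schur-complement computation), the chain-rule identity, the appeal to (W2) to take $Y_n$ as the distinguished last variable when applying Preparation, and the repeated use of Lemma~\ref{lem:DL subst} to keep all composites inside the Weierstrass system.
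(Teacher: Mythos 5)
Your proof is correct. Note, however, that the paper does not prove Lemma~\ref{lem:IFT} at all: it is quoted from Denef--Lipshitz [DL, Remarks~1.3(4)], so there is no in-text argument to compare against. What you supply is the standard deduction of the implicit function theorem from Weierstrass Preparation: normalize the Jacobian at the origin to the identity by a $\k$-linear change of the tuple $f$ (harmless, since $W_{m+n}$ is a $\k$-algebra and the zero set and the congruence modulo $(X,Y)$ are preserved), observe that $f_n$ is then regular of order exactly $1$ in $Y_n$, prepare $f_n=u\cdot(Y_n-\phi)$ with $\phi\in(X,Y')$, substitute $Y_n=\phi$ into $f_1,\dots,f_{n-1}$ via Lemma~\ref{lem:DL subst}, check by the chain rule at the origin that the reduced $(n-1)\times(n-1)$ Jacobian is again invertible, and induct. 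All of this goes through using only (W1)--(W5) (and works in any characteristic), and the two steps you leave implicit --- the formal chain rule evaluated at $0$ and associativity of substitution of power series into the maximal ideal --- are standard and unproblematic. Two small economies: your opening paragraph re-derives Preparation from Division, but the paper already records this as Lemma~\ref{lem:WP}, so you can simply cite it; and since $Y_n$ is already the last variable of the tuple $(X,Y)$, the appeal to (W2) is not needed at that point. So your argument is a sound self-contained replacement for the paper's citation, and it is essentially the proof the cited source has in mind.
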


\noindent
Using a special case of  the previous lemma we obtain:

\begin{lemma}\label{lem:henselian}
The local ring $R=\k\lfloor X\rfloor$ is henselian.
\end{lemma}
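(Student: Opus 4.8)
The plan is to derive henselianity of $R = \k\lfloor X\rfloor$ from the Implicit Function Theorem (Lemma~\ref{lem:IFT}) applied to a single equation. Recall that a local ring $R$ with maximal ideal $(X)$ and residue field $\k$ is henselian if and only if every monic polynomial $P(T)\in R[T]$ whose reduction $\bar P(T)\in\k[T]$ factors as $\bar P = \bar G_0\bar H_0$ with $\bar G_0,\bar H_0$ monic and coprime admits a corresponding factorization $P = GH$ over $R$ lifting this one; equivalently (and more conveniently here), it suffices to check Hensel's lemma in the form that a simple root of $\bar P$ in $\k$ lifts to a root of $P$ in $R$. So first I would fix a monic $P(T) = T^d + a_{d-1}T^{d-1}+\cdots+a_0 \in R[T]$ with $a_i\in R = \k\lfloor X\rfloor$, and suppose $c\in\k$ satisfies $\bar P(c) = 0$ and $\bar P'(c)\neq 0$ in $\k$.

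Next I would reduce to the case $c = 0$ by the translation $T\mapsto T + \tilde c$, where $\tilde c\in\k\subseteq R$ is the constant lifting $c$ (using $\k[X]\subseteq W_m$ from (W1), so constants lie in $R$); this replaces $P$ by another monic polynomial in $R[T]$, still with coefficients in $R$, whose reduction now has $0$ as a simple root. So without loss we have $a_0 \equiv 0 \bmod (X)$ and $a_1 \not\equiv 0 \bmod (X)$ (the latter because $\bar P'(0) = \bar a_1 \neq 0$). Now set $f(X,Y) := P(Y) = Y^d + a_{d-1}(X)Y^{d-1} + \cdots + a_1(X)Y + a_0(X)$ as an element of $R[Y]\subseteq \k\lfloor X,Y\rfloor$ (it lies in $\k\lfloor X,Y\rfloor$ since each $a_i\in \k\lfloor X\rfloor$ and $\k\lfloor X\rfloor[Y] \subseteq \k\lfloor X,Y\rfloor$ by (W1)–(W3)). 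Then $f \equiv a_0(X) \equiv 0 \bmod (X,Y)$ in $\k\lfloor X,Y\rfloor$, and $\partial f/\partial Y = dY^{d-1} + \cdots + a_1(X) \equiv a_1(X) \not\equiv 0 \bmod (X,Y)$. Applying Lemma~\ref{lem:IFT} with $n = 1$ yields some $y\in(X)\subseteq R$ with $f(X,y) = P(y) = 0$, i.e.\ a root of $P$ in $R$ reducing to $0$, hence a root of the original $P$ reducing to $c$. This establishes Hensel's lemma for simple roots, hence $R$ is henselian.

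The only point requiring care is the standard reduction from the simple-root form of Hensel's lemma to henselianity of the local ring: one must argue that for a local ring, lifting simple residue roots of monic polynomials is equivalent to the full coprime-factorization property. This is classical (see e.g.\ the characterizations of henselian local rings in the literature); alternatively, one can avoid it entirely by taking ``henselian'' to mean precisely the simple-root lifting property, which is one of the standard equivalent definitions. I do not anticipate any genuine obstacle here — the content of the lemma is really just the observation that (W1)–(W5), through Lemma~\ref{lem:IFT}, already encode Hensel's lemma, and the proof is a one-variable specialization of the Implicit Function Theorem.
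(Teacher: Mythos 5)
Your proof is correct and takes essentially the same route as the paper: both reduce henselianity to a standard single-polynomial criterion and then apply Lemma~\ref{lem:IFT} with $n=1$ after a translation in the variable $Y$. The paper uses the ``henselian polynomial'' criterion (finding a root of $1+Y+ab_2Y^2+\cdots+ab_dY^d$ with $a\in(X)$, via the shift $Y\mapsto Y-1$) while you use simple-root lifting (shifting by the lifted root $\tilde c$); both are classical equivalent characterizations, and the paper likewise invokes its criterion without proof, so your acknowledged reliance on that equivalence matches the paper's level of detail.
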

\begin{proof}
Let $Y$ be a single indeterminate over $R$ and
$$P(Y)=1+Y+b_2Y^2+\cdots+b_dY^d\in R[Y]$$
where $d\geq 2$, $b_2,\dots,b_d\in (X)$.
It suffices to show that there is some~$y\in R$ such that $P(y)=0$.
To see this consider $f(X,Y):=P(Y-1)\in  R[Y]\subseteq \k\lfloor X,Y\rfloor$.
Then~$f\equiv 0\bmod (X,Y)$ and $\partial f/\partial Y\equiv 1\bmod (X,Y)$. Now use Lemma~\ref{lem:IFT}.
\end{proof}

\noindent
 The local ring $\k[[X]]^{\operatorname{a}}$ is the henselization of its local subring $\k[X]_{\fm}$ where~$\fm$ is the
 ideal of $\k[X]$ generated by $X_1,\dots,X_m$. (See, e.g., \cite[Th\'eor\`eme~4]{Lafon63}.)
  From this fact in combination with the universal property of henselizations
 and Lemma~\ref{lem:henselian}, we obtain $\k[[X]]^{\operatorname{a}}\subseteq \k\lfloor X\rfloor$. This is not used later,
 in the contrast to the following, more  immediate consequence of Lemma~\ref{lem:henselian}:
 
\begin{cor}\label{cor:roots}
Let $f\in\k \lfloor X \rfloor$, $k\geq 1$, and suppose $\operatorname{char}\k=0$ or $\operatorname{char}\k=p>0$ and~$p\!\not|k$. Then
$$\text{$f=g^k$ for some $g\in \k \lfloor X \rfloor^\times$}\quad\Longleftrightarrow\quad\text{$f(0)=a^k$ for some $a\in\k^\times$.}$$
In particular, the group of $1$-units of $\k\lfloor X\rfloor$ is $k$-divisible.
\end{cor}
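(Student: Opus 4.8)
The plan is to derive the equivalence from the henselianity of $R:=\k\lfloor X\rfloor$ recorded in Lemma~\ref{lem:henselian}. Recall (Lemma~\ref{lem:W-Lemma}) that $R$ is local with maximal ideal $\fm=(X)$ and residue field $R/\fm=\k$, so reduction modulo $\fm$ is the evaluation map $f\mapsto f(0)$ and the units of $R$ are exactly the elements with nonzero constant term.

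The implication ``$\Rightarrow$'' is immediate: if $f=g^k$ with $g\in R^\times$, then $a:=g(0)\in\k^\times$ and $f(0)=g(0)^k=a^k$. For the converse, suppose $f(0)=a^k$ with $a\in\k^\times$; in particular $f(0)\neq 0$, hence $f\in R^\times$. I would then apply the root-finding form of henselianity to the polynomial
$$Q(Y):=(Y+a)^k-f\in R[Y].$$
Since $\overline{a^k-f}=a^k-f(0)=0$ in $\k$, we have $Q(0)=a^k-f\in\fm$, whereas $Q'(0)=k a^{k-1}$ is a nonzero element of $\k$ --- this is the one place where the hypothesis $\operatorname{char}\k=0$ or $p\nmid k$ is used, together with $a\neq 0$ --- and hence a unit of $R$. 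By Lemma~\ref{lem:henselian} there is $y\in\fm$ with $Q(y)=0$; then $g:=y+a$ satisfies $g^k=f$ and $g(0)=a\in\k^\times$, so $g\in R^\times$, as desired.

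For the final assertion, observe that the $1$-units of $R$ form the kernel of the group homomorphism $R^\times\to\k^\times$, $g\mapsto g(0)$. Given a $1$-unit $f$, we have $f(0)=1=1^k$, and running the previous argument with the choice $a=1$ produces $g\in R^\times$ with $g^k=f$ and $g(0)=1$, i.e.\ a $1$-unit $g$ with $g^k=f$; hence the group of $1$-units is $k$-divisible. The argument presents no real obstacle; the only points needing a little care are to invoke the root-finding (rather than the factorization) formulation of henselianity and to note that the invertibility of $Q'(0)$ genuinely uses that $k$ is nonzero in $\k$.
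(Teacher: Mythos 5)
Your proof is correct and follows exactly the route the paper intends: the corollary is stated there as an immediate consequence of the henselianity of $\k\lfloor X\rfloor$ (Lemma~\ref{lem:henselian}), and your application of the simple-root-lifting form of Hensel's lemma to $Q(Y)=(Y+a)^k-f$, with the hypothesis on $\operatorname{char}\k$ entering only through the invertibility of $Q'(0)=ka^{k-1}$, is the standard way to carry this out. (One could equally well apply the Implicit Function Theorem, Lemma~\ref{lem:IFT}, directly to $(Y+a)^k-f$, but that is the same idea.)
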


\noindent 
Recall that   $\k$  is called \textit{euclidean}\/ if $a^2+b^2\neq -1$ for all $a,b\in\k$, and
for all $a\in\k$ there is some $b\in\k$ with $a=b^2$ or $a=-b^2$.
In this case $\k$ has a unique ordering making it an ordered field, given by $a\geq 0\Leftrightarrow a=b^2$ for some $b\in\k$, and below we then always
view $\k$ as an ordered field this way.
(For example,  each real closed field is euclidean, as is the field of real constructible numbers; but neither   $\Q$ nor any algebraic number field is euclidean~\cite[Corollary~5.12]{Lam}.) 
 
\begin{lemma}\label{lem:u>0}
Suppose $\k$ is euclidean.
Let $K$ be an ordered field and $\sigma\colon  \k\lfloor X\rfloor\to K$ be a ring morphism, and
let $u\in  \k\lfloor X\rfloor$. Then 
$$u(0)>0\quad\Longleftrightarrow\quad u\in  \k\lfloor X\rfloor^\times\text{ and } \sigma(u)>0.$$
\end{lemma}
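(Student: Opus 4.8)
The plan is to prove the non-trivial direction ($\Rightarrow$), since the converse is immediate: if $u\in\k\lfloor X\rfloor^\times$ then $u(0)\in\k^\times$, and $\sigma(u)>0$ forces $u(0)>0$ because $\sigma(u)$ and $u(0)$ have the same sign (see below), so $u(0)$ is a nonzero square in $\k$, i.e. $u(0)>0$ in the unique ordering of the euclidean field $\k$. For the forward direction, suppose $u(0)>0$. First I would observe that $u(0)\in\k^\times$, so $u\in\k[[X]]^\times$, and then (W4) gives $u\in\k\lfloor X\rfloor^\times$. It remains to show $\sigma(u)>0$ in $K$.

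The key step is to write $u$ as a constant times a $1$-unit and extract a square root of the $1$-unit part inside $\k\lfloor X\rfloor$. Set $c:=u(0)\in\k^{>}$; since $\k$ is euclidean, $c=a^2$ for some $a\in\k^\times$. Then $v:=u/c\in\k\lfloor X\rfloor$ is a $1$-unit (note $1/c=1/a^2\in\k\subseteq\k\lfloor X\rfloor$, and $\k\lfloor X\rfloor$ is a $\k$-algebra). By Corollary~\ref{cor:roots} applied with $k=2$ (here $\operatorname{char}\k=0$ since $\k$ is euclidean, hence ordered), the group of $1$-units of $\k\lfloor X\rfloor$ is $2$-divisible, so there is $w\in\k\lfloor X\rfloor^\times$ with $v=w^2$; in fact $w$ is again a $1$-unit. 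Therefore
\[
u \;=\; c\,v \;=\; a^2 w^2 \;=\; (aw)^2,
\]
so $u$ is a square in $\k\lfloor X\rfloor$. Applying the ring morphism $\sigma$, we get $\sigma(u)=\sigma(aw)^2$, a square in $K$. Since $u\in\k\lfloor X\rfloor^\times$ and $\sigma$ is a ring morphism, $\sigma(u)$ is a unit in $K$, hence $\sigma(u)\neq 0$; thus $\sigma(u)=\sigma(aw)^2>0$ in the ordered field $K$. This completes the forward direction.

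For the converse direction I also need the fact, used above, that if $u\in\k\lfloor X\rfloor^\times$ and $\sigma(u)>0$ then $u(0)>0$: writing as before $u(0)=c$, if $c<0$ then $-c>0$ is a square in $\k$, so by the same argument $-u=(-c)(u/{-c})$ is a square in $\k\lfloor X\rfloor$ (the quotient $u/{-c}$ is again a $1$-unit, hence a square by Corollary~\ref{cor:roots}), whence $\sigma(-u)\geq 0$, i.e. $\sigma(u)\leq 0$, and since $\sigma(u)$ is a unit, $\sigma(u)<0$, contradicting $\sigma(u)>0$. Hence $c>0$. The main point to get right is that all the algebraic manipulations (dividing by the constant term, taking square roots) stay inside $\k\lfloor X\rfloor$ rather than merely in $\k[[X]]$ — this is exactly what (W4) and Corollary~\ref{cor:roots} are for — and that $\sigma(u)\neq 0$, which uses that $u$ is a unit in $\k\lfloor X\rfloor$ and $\sigma$ preserves units. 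I do not expect any real obstacle here; the only care needed is to invoke $\operatorname{char}\k=0$ (automatic for euclidean $\k$) so that Corollary~\ref{cor:roots} applies with $k=2$.
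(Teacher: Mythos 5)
Your proof is correct and follows essentially the same route as the paper: show that $u(0)>0$ makes $u$ a square of a unit in $\k\lfloor X\rfloor$ via Corollary~\ref{cor:roots} (so $\sigma(u)=\sigma(g)^2>0$ since $\sigma(u)\neq 0$), and get the converse by applying this to $-u$. The only cosmetic difference is that you re-derive the square root by factoring $u$ into a constant times a $1$-unit, whereas the paper invokes Corollary~\ref{cor:roots} directly.
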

\begin{proof}
Suppose $u(0)>0$. Then   $u\in\k\lfloor X\rfloor^\times$, and since $u(0)$ is a square in $\k$, $u$ is a square in 
$\k\lfloor X\rfloor$ by Corollary~\ref{cor:roots}, so $\sigma(u)>0$. Conversely, 
suppose~$u\in  \k\lfloor X\rfloor^\times$ and
$\sigma(u)>0$. Then $u(0)\neq 0$; if $u(0)<0$ then $\sigma(u)<0$, by what we just showed applied to~$-u$ in place of $u$, a contradiction. Thus $u(0)>0$.
\end{proof}

\begin{lemma}[Weierstrass Preparation]\label{lem:WP}
Suppose $m\geq 1$.
Let $g\in  \k\lfloor X\rfloor$ be regular in $X_m$ of order~$d$, and set $X':=(X_1,\dots,X_{m-1})$. Then there are a unit $u$ of $\k\lfloor X\rfloor$
and a polynomial
$$w=X_m^d+w_1 X_m^{d-1} + \cdots + w_d\qquad\text{where $w_1,\dots,w_d\in (X')$}$$
such that $g=uw$.
\end{lemma}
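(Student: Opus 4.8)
The plan is to derive Weierstrass Preparation from Weierstrass Division (W5), mimicking the classical argument. First I would apply (W5) with the divisor $g$ (regular of order $d$ in $X_m$) to the dividend $f=X_m^d$: this produces $q\in\k\lfloor X\rfloor$ and a polynomial $r\in\k\lfloor X'\rfloor[X_m]$ of degree $<d$ such that $X_m^d=qg+r$. Set $w:=X_m^d-r$, so $w=qg$ and $w$ is a monic polynomial of degree $d$ in $X_m$ with coefficients in $\k\lfloor X'\rfloor$; write $w=X_m^d+w_1X_m^{d-1}+\cdots+w_d$ with $w_1,\dots,w_d\in\k\lfloor X'\rfloor$.

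Next I would show $q$ is a unit and that each $w_i$ lies in the ideal $(X')$ of $\k\lfloor X'\rfloor$. For the latter, reduce the identity $w=qg$ modulo $(X')$: passing to $\k[[X_m]]$ via $X'\mapsto 0$, the image of $g$ has order exactly $d$ (since $g$ is regular of order $d$ in $X_m$), and $w(0,X_m)=X_m^d+w_1(0)X_m^{d-1}+\cdots+w_d(0)$. Since $w(0,X_m)=q(0,X_m)\cdot g(0,X_m)$ and $g(0,X_m)$ has order $d$, comparing orders in $\k[[X_m]]$ forces $w(0,X_m)$ to have order $\geq d$, hence $=X_m^d$ (it is monic of degree $d$), so $w_i(0)=0$ for all $i$, i.e. $w_i\in(X')$; and then $q(0,X_m)$ is a nonzero constant, so $q(0)\neq 0$, whence $q\in\k\lfloor X\rfloor^\times$ by (W4) (equivalently by Lemma~\ref{lem:W-Lemma}, as $\k\lfloor X\rfloor$ is local with maximal ideal $(X)$). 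Setting $u:=q^{-1}$ gives $g=uw$, as desired.

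The main obstacle, such as it is, is the bookkeeping around orders of power series in one variable after the substitution $X'\mapsto 0$: one must be careful that $w_i\in\k\lfloor X'\rfloor$ (not merely $\k[[X']]$), which is guaranteed because (W5) outputs the remainder $r$ in $W_{m-1}[X_m]=\k\lfloor X'\rfloor[X_m]$, and that the regularity hypothesis on $g$ is exactly the statement that $g(0,X_m)\in\k[[X_m]]$ is nonzero of order $d$, so the order comparison is legitimate. Everything else is a routine transcription of the classical proof; no deep input beyond (W4) and (W5) is needed.
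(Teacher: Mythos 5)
Your proof is correct, and it is the standard derivation of Preparation from Division: divide $X_m^d$ by $g$ via (W5), set $w:=X_m^d-r=qg$, and read off from the specialization $X'\mapsto 0$ (using that $g(0,X_m)$ has order exactly $d$ and $w(0,X_m)$ is monic of degree $d$) that $w(0,X_m)=X_m^d$ and $q(0)\neq 0$, whence $q$ is a unit by (W4). The paper itself gives no proof — it cites \cite[Remarks~1.3(9)]{DL} — and that reference's argument is exactly the one you wrote, so there is nothing to reconcile.
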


\noindent
This is   \cite[Remarks~1.3(9)]{DL}.
Finally, we note:

\begin{lemma}\label{lem:Nagata}
Suppose $\k$ is perfect.
If  $P$ is a prime ideal of $\k\lfloor X\rfloor$, then the ideal~$P\k[[X]]$ of $\k[[X]]$ generated by $P$ is also prime.  
\end{lemma}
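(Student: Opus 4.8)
Lemma~\ref{lem:Nagata}: if $\k$ is perfect and $P$ is a prime ideal of $R=\k\lfloor X\rfloor$, then $P\k[[X]]$ is a prime ideal of $\k[[X]]$.

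The plan is to exploit that $\k[[X]]$ is the completion $\hat R$ of the noetherian local ring $R$ (Lemma~\ref{lem:W-Lemma}), together with the fact that $R\to\hat R$ is faithfully flat (Corollary~\ref{cor:fflat}). The key general principle is: for a noetherian local ring $R$ with completion $\hat R$ and a prime $P\subset R$, the extended ideal $P\hat R$ is prime as soon as $R/P$ has \emph{geometrically integral} completion — equivalently, once one knows $(R/P)\otimes_R\hat R=\widehat{R/P}$ is a domain. So I would first reduce to the case $P=(0)$ by passing to the quotient: since completion commutes with quotients, $\hat R/P\hat R\cong\widehat{R/P}$, and $R/P$ is again a noetherian local ring that is a domain with fraction field a finitely generated extension of $\k$; moreover by (W1)--(W5) one checks $R/P$ is still (essentially) of the same type, or at least that $\widehat{R/P}$ embeds into a formal power series ring over $\k$ in fewer variables after a Noether normalization. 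Concretely: it suffices to show that whenever $A$ is a reduced excellent (or just G-ring) local domain essentially of finite type flavor with $\k$ perfect as a subfield over which $\Frac(A)$ is separable, the completion $\hat A$ is a domain.

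The cleanest route, and the one I would take, uses \emph{excellence}. The ring $R=\k\lfloor X\rfloor$ is excellent: this follows because its completion $\k[[X]]$ is excellent (it is a complete local ring) and $R\to\k[[X]]$ is faithfully flat with geometrically regular fibers — in fact, by Lemma~\ref{lem:W-Lemma}, $R$ is a regular local ring, and one has the regular (indeed formally smooth) map $R\to\k[[X]]$ built into the axioms; alternatively cite that Weierstrass systems are excellent, e.g. via the standard criterion that a noetherian local ring all of whose formal fibers are geometrically regular is a G-ring, and regular $\Rightarrow$ excellent once it is a G-ring. Granting $R$ excellent, for any prime $P$ of $R$ the formal fiber $\hat R\otimes_R\operatorname{Frac}(R/P)$ is geometrically regular over $\operatorname{Frac}(R/P)$; in particular it is \emph{regular}, hence \emph{reduced and normal}, and being the localization of $\widehat{R/P}$ at its generic point, this forces $\widehat{R/P}=\hat R/P\hat R$ to have a unique minimal prime, i.e. to be connected with irreducible spectrum. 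Since $\widehat{R/P}$ is also reduced (a regular local ring is reduced, or use: $R/P$ excellent $\Rightarrow$ its completion reduced because reducedness is preserved under the completion of an excellent ring), a reduced ring with irreducible spectrum is a domain. That gives $P\hat R$ prime.

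I expect the main obstacle to be establishing excellence of $\k\lfloor X\rfloor$ cleanly within the paper's self-contained style — the paper has so far only recorded that $\k\lfloor X\rfloor$ is noetherian regular local with completion $\k[[X]]$ and that $\k[[X]]$ is faithfully flat over it. The honest fix is either (a) to cite Denef--Lipshitz~\cite{DL} or a standard reference that Weierstrass systems (in char $0$, or more generally under their extra axioms in char $p$, which is exactly why perfectness of $\k$ appears) yield excellent rings, and then invoke the excellence criterion above; or (b) to give a direct argument: $R$ is regular, hence a domain; a regular local ring which is a G-ring (equivalently, has geometrically regular formal fibers) has the property that $P\hat R$ is prime for every prime $P$ — and the G-ring property for $R$ follows because any prime $Q$ of $\hat R$ with $Q\cap R=P$ can, after the Weierstrass-division change of variables $\tau_d$, be analyzed via a finite extension of $\k\lfloor X'\rfloor$-algebras where the induction hypothesis on $m$ applies. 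Given the paper's declared preference for ``not overkill,'' I would present the short version: cite that $\k\lfloor X\rfloor$ is excellent (this is classical for all the example systems and is where $\k$ perfect is used), conclude the formal fibers are geometrically regular, hence $\widehat{R/P}$ is reduced with irreducible spectrum, hence a domain, hence $P\k[[X]]$ is prime. If a fully elementary proof is wanted, the fallback is the induction on $m$ using (W5) and Weierstrass preparation to write $R/P$ as module-finite over some $\k\lfloor X_1,\dots,X_r\rfloor$ and tracking primality through the completion of that finite extension, which is where the bulk of any detailed write-up would go.
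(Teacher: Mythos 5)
Your headline argument has a genuine gap at the step where you pass from regularity of the formal fibers to irreducibility of $\operatorname{Spec}\big(\widehat{R/P}\big)$. Excellence of $R=\k\lfloor X\rfloor$ would indeed give that the generic formal fiber $\hat R\otimes_R\Frac(R/P)$ is geometrically regular, and hence that $\widehat{R/P}$ is reduced; but a regular ring is merely a finite product of regular domains and can perfectly well have several minimal primes, so nothing in your argument forces $\widehat{R/P}$ to have a unique minimal prime. The implication ``excellent local domain $\Rightarrow$ analytically irreducible'' is in fact false: the local ring of a plane nodal cubic, $A=\big(\Q[x,y]/(y^2-x^2-x^3)\big)_{(x,y)}$, is an excellent regular-formal-fiber local domain whose completion $\Q[[x,y]]/(y^2-x^2-x^3)$ has two minimal primes, since $y^2-x^2-x^3$ factors as $\big(y-x\sqrt{1+x}\big)\big(y+x\sqrt{1+x}\big)$ in $\Q[[x,y]]$. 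The missing ingredient is henselianness of $\k\lfloor X\rfloor$ (Lemma~\ref{lem:henselian}), which your proof never uses: for a henselian excellent local domain $A$, the normalization $A'$ is module-finite over $A$ and therefore local, $\hat{A'}$ is then a complete local \emph{normal} ring and hence a domain, and the injection $\hat A\hookrightarrow\hat{A'}$ (flatness of completion) shows $\hat A$ is a domain. Applied to $A=R/P$ this closes the gap, but as written the excellence route does not.

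For comparison, the paper avoids excellence entirely: it quotes \cite[Theorem~2.1]{DL} together with Lemmas~\ref{lem:W-Lemma} and~\ref{lem:henselian} to conclude that $\k\lfloor X\rfloor$ is a Weierstrass ring in the sense of Nagata \cite[\S{}45]{Nagata}, and then invokes \cite[(45.1)]{Nagata}, which is precisely the statement that prime ideals of a Weierstrass ring extend to prime ideals of the completion. That is essentially your fallback~(b) packaged as a citation: Nagata's notion is built on Weierstrass division and preparation in finite extensions, which is exactly where (W5), henselianness, and the perfectness of $\k$ (to control inseparability of residue field extensions in characteristic $p$) all enter. If you want to keep your structure, either establish excellence \emph{and} add the henselian normalization step above, or follow the paper and cite Nagata directly.
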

\begin{proof}
By \cite[Theorem~2.1]{DL} in combination with Lemmas~\ref{lem:W-Lemma} and~\ref{lem:henselian}, $\k\lfloor X\rfloor$ is a Weierstrass ring in the sense of Nagata \cite[{\S{}45}]{Nagata}, so we can use~\cite[(45.1)]{Nagata}.
\end{proof}

\subsection{Infinitesimal $W$-struc\-tures}
{\it In this subsection we let $K$ be  a  valued field with valuation ring $\mathcal O=\mathcal O_K$ and maximal ideal~$\smallo=\smallo_K$.}\/ 
The following definition is modeled on \cite[axioms C1)--C3) in (2.1)]{DMM}:

\begin{definition} \label{def:W-structure}
An {\bf infinitesimal $W$-struc\-ture}  on
$K$ is  a family
$(\phi_m)_{m\geq 0}$ of ring morphisms
$$\phi_m\colon W_m \to\big\{\text{ring of  functions   $\smallo^m\to \mathcal O$}\big\}$$
such that:
\begin{itemize}
\item[(A1)] $\phi_m(X_i)=\text{$i$-th coordinate
function on $\smallo^m$}$, for each $i=1,\dots,m$;
\item[(A2)] the map $\phi_{m+1}$ extends $\phi_{m}$, if we identify in the 
obvious way functions on~$\smallo^m$ with functions on $\smallo^{m+1}$ that do not depend on the last
coordinate;
\item[(A3)] for all $f\in W_m$, permutations $\sigma$ of $\{1,\dots,m\}$, and $a=(a_1,\dots,a_m)\in\smallo^m$:
$$\phi_m\bigl(f(X_{\sigma(1)},\dots,X_{\sigma(m)})\bigr)(a)=\phi_m(f)\bigl(a_{\sigma(1)},\dots,a_{\sigma(m)}\bigr).$$
\end{itemize}
\end{definition}

\noindent
Note   that  $\smallo^0=\{\text{pt}\}$ is a singleton, and the map which sends     $\alpha\colon \smallo^0\to \mathcal O$ to the element~$\alpha(\text{pt})$ of $\mathcal O$
is an isomorphism  from
the ring of functions $\smallo^0\to \mathcal O$ to the ring~$\mathcal O$. We identify this ring with $\mathcal O$ via the isomorphism $\alpha\mapsto \alpha(\text{pt})$. Then~$\phi_0$ is a ring embedding $\k\to \mathcal O$. Hence the valued field $K$ is necessarily
of equicharacteristic $\operatorname{char}\k$. Moreover,
if $f\in\k[X]\subseteq W_m$, then $\phi_m(f)\colon\smallo^m\to\mathcal O$ is given by $a\mapsto f(a)$.

\begin{lemma}\label{lem:phin(f)(0)}
For an infinitesimal $W$-structure  $(\phi_m)$ on  $K$ and $f\in \k\lfloor X\rfloor$:
$$\phi_m(f) = \phi_0(f(0))+\phi_m(g)\quad\text{ where $g\in (X)$;}$$
in particular $\phi_m(f)(\smallo^m)\subseteq \phi_0(f(0)) + \smallo$ and
$\phi_m(f)(0)=\phi_0(f(0))$.
\end{lemma}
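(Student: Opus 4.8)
The statement to prove is Lemma~\ref{lem:phin(f)(0)}: for an infinitesimal $W$-structure $(\phi_m)$ on $K$ and $f\in\k\lfloor X\rfloor$ we have $\phi_m(f)=\phi_0(f(0))+\phi_m(g)$ for some $g\in(X)$, whence $\phi_m(f)(\smallo^m)\subseteq\phi_0(f(0))+\smallo$ and $\phi_m(f)(0)=\phi_0(f(0))$. The plan is to write $f=f(0)+g$ where $g:=f-f(0)\in\k\lfloor X\rfloor$ lies in the maximal ideal $(X)$ of $\k\lfloor X\rfloor$ (using Lemma~\ref{lem:W-Lemma}, which identifies this maximal ideal), and then simply apply the ring morphism $\phi_m$, which is additive and sends the constant $f(0)\in\k\subseteq W_m$ to $\phi_0(f(0))$ by property (A2) (the compatibility of the $\phi_m$ with $\phi_0$, together with the remark after Definition~\ref{def:W-structure} that $\phi_m$ restricted to $\k[X]$ is evaluation). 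This gives the first displayed identity.

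For the two consequences, first I would observe that $g\in(X)$ means $g$ is a $W_m$-linear combination $g=\sum_{i=1}^m g_i X_i$ with $g_i\in W_m$, so $\phi_m(g)=\sum_{i=1}^m\phi_m(g_i)\cdot\phi_m(X_i)$, and by (A1) each $\phi_m(X_i)$ is the $i$-th coordinate function on $\smallo^m$. Hence for $a\in\smallo^m$ the value $\phi_m(g)(a)=\sum_i\phi_m(g_i)(a)\,a_i$ lies in $\smallo$, since each $a_i\in\smallo$ and each $\phi_m(g_i)(a)\in\mathcal O$ and $\smallo$ is an ideal of $\mathcal O$; this is exactly $\phi_m(g)(\smallo^m)\subseteq\smallo$, which combined with the first identity yields $\phi_m(f)(\smallo^m)\subseteq\phi_0(f(0))+\smallo$. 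Evaluating at $a=0$ gives $\phi_m(g)(0)=0$, hence $\phi_m(f)(0)=\phi_0(f(0))$.

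There is no real obstacle here: the lemma is essentially a bookkeeping statement unwinding the axioms (A1), (A2) for an infinitesimal $W$-structure together with the identification of $(X)$ as the maximal ideal of $\k\lfloor X\rfloor$ from Lemma~\ref{lem:W-Lemma}. The only point requiring a word of care is that $f(0)\in\k$ is being viewed inside $W_m$ via the inclusion $\k\subseteq\k[X]\subseteq W_m$, so that $\phi_m(f(0))$ makes sense and equals $\phi_0(f(0))$ by (A2) — this is the content of the remark immediately following Definition~\ref{def:W-structure}, and I would just cite it rather than re-derive it.
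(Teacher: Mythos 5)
Your proof is correct and follows essentially the same route as the paper, which disposes of the lemma by noting it follows from (A1), (A2), and the decomposition $f=f(0)+g$ with $g=f-f(0)\in(X)$. Your additional unwinding of $g=\sum_i g_iX_i$ to see $\phi_m(g)(\smallo^m)\subseteq\smallo$ is exactly the intended use of (A1) and the fact that $\phi_m(g_i)$ takes values in $\mathcal O$ while $\smallo$ is an ideal of $\mathcal O$.
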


\noindent
This follows immediately from (A1), (A2), and the fact that $f-f(0)\in(X)$.
 
\begin{cor}\label{cor:phin(f)(0)}
Suppose $\mathcal O=K$, and let~$\iota\colon\k\to K$ be an embedding. Then there is a unique infinitesimal  $W$-structure~$(\phi_m)$ on $K$ such that~$\iota=\phi_0$.  
\end{cor}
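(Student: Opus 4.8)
The plan is to note that when $\mathcal{O}=K$ we have $\smallo=\{0\}$, so each ring of functions $\smallo^m\to\mathcal{O}=K$ is canonically identified with $K$ itself (evaluation at the unique point $0\in\smallo^m=\{0\}$), and under this identification any ring morphism $\phi_m\colon W_m\to K$ satisfying (A1) must send $X_i\mapsto 0$ for every $i$. First I would observe that, via the identification above, an infinitesimal $W$-structure $(\phi_m)$ on $K$ is nothing but a compatible family of ring morphisms $\phi_m\colon W_m\to K$ with $\phi_m(X_i)=0$ for $i=1,\dots,m$, compatible in the sense of (A2) and symmetric in the sense of (A3); and by Lemma~\ref{lem:phin(f)(0)} each such $\phi_m$ is then forced to be $f\mapsto\phi_0(f(0))$, since $f-f(0)\in(X)$ and every element of $(X)$ is a $W_m$-combination of the $X_i$, hence maps to $0$. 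This already gives uniqueness: if $\iota=\phi_0$, then $\phi_m(f)=\iota(f(0))$ is determined.

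For existence, I would simply \emph{define} $\phi_m\colon W_m\to K$ by $\phi_m(f):=\iota(f(0))$, where $f(0)\in\k$ is the constant term of $f\in W_m\subseteq\k[[X]]$, and then verify the axioms. That $\phi_m$ is a ring morphism is immediate because $f\mapsto f(0)$ is a ring morphism $\k[[X]]\to\k$ (evaluation at the maximal ideal), restricted to $W_m$, followed by $\iota$. Axiom (A1) holds since $X_i$ has constant term $0$ and the $i$-th coordinate function on $\smallo^m=\{0\}$ is the zero map, which corresponds to $0\in K$ under our identification. Axiom (A2) holds because the constant term of $f\in W_m\subseteq W_{m+1}$ is unchanged when $f$ is viewed in $W_{m+1}$, matching the identification of a function on $\smallo^m$ with one on $\smallo^{m+1}$ independent of the last coordinate. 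Axiom (A3) holds because permuting the indeterminates does not change the constant term of a power series, while on $\smallo^m=\{0\}$ the permutation of coordinates is the identity map on the one-point set; so both sides equal $\iota(f(0))$.

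There is essentially no obstacle here: the content is entirely the observation that $\mathcal{O}=K$ forces $\smallo=\{0\}$, collapsing the function rings to $K$ and collapsing the $W$-structure data to a single ring morphism $\phi_0=\iota\colon\k\to K$ together with the canonical extensions $f\mapsto\iota(f(0))$. The only point that needs a word of care is the identification of the "ring of functions $\smallo^m\to\mathcal{O}$" with $K$ and the checking that (A1)--(A3), read through this identification, are exactly the trivial statements above; this is routine and already half-done in the paragraph preceding Lemma~\ref{lem:phin(f)(0)} (the case $m=0$) and in Lemma~\ref{lem:phin(f)(0)} itself.
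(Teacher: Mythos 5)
Your proposal is correct and matches the paper's own proof: both define $\phi_m(f)$ as the constant function on $\smallo^m=\{0\}^m$ with value $\iota(f(0))$, check the axioms (which are trivial here), and derive uniqueness from Lemma~\ref{lem:phin(f)(0)}. The extra detail you give on why $\phi_m$ kills $(X)$ is exactly the content of that lemma, so nothing is missing.
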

\begin{proof}
We have $\smallo=\{0\}$, therefore
the family $(\phi_m)$, where $\phi_m(f)$ is the function~$\{0\}^m\to K$ with value $\iota(f(0))\in K$,  for $f\in W_m$, is an infinitesimal
$W$-structure on $K$ with~$\iota=\phi_0$. By the previous lemma this is also the only infinitesimal $W$-structure~$(\phi_m)$ on $K$ with~$\iota=\phi_0$.
\end{proof}

\noindent
Let $K_0$ be a valued subfield of $K$, with valuation ring $\mathcal O_0$ and maximal ideal $\smallo_0$; so~$\mathcal O_0=\mathcal O\cap K_0$ and $\smallo_0=\smallo\cap K_0$.
We say that the infinitesimal $W$-struc\-ture~$(\phi_m)$ on~$K$ {\it restricts}\/ to an infinitesimal $W$-structure on $K_0$
if $\phi_m(f)(\smallo_0^m) \subseteq \mathcal O_0$ for each~$m$ and $f\in W_m$. In this case, the family $(\psi_m)$, where
for each $m$ and $f\in W_m$ we let~$\psi_m(f)$ be the restriction of $\phi_m(f)\colon\smallo^m\to\mathcal O$ to a map $\smallo_0^m\to\mathcal O_0$,
is an infinitesimal $W$-structure on~$K_0$. We call $(\psi_m)$ the {\it restriction}\/ of the $W$-structure on $K$ to $K_0$.

\begin{examples} Here are a few examples of infinitesimal $W$-struc\-tures:
\begin{enumerate}
\item Let $\Gamma$ be an ordered abelian group, written additively, and   let $t^\Gamma$ be a multiplicative copy
of $\Gamma$, with isomorphism $\gamma\mapsto t^\gamma\colon \Gamma\to t^\Gamma$.
 Let~$\k(\!(t^\Gamma)\!)$ be the    field of Hahn series with coefficients in~$\k$
 and monomials in~$t^\Gamma$. Its elements are the formal series 
 $$f=\sum_{\gamma\in\Gamma} f_{\gamma}t^\gamma \qquad (f_\gamma\in\k)$$
 whose support $\operatorname{supp} f:=\{\gamma\in\Gamma:f_\gamma\neq 0\}$ is a well-ordered subset of~$\Gamma$, added and
 multiplied in the natural way. The field $\k(\!(t^\Gamma)\!)$ carries a valuation~$v\colon \k(\!(t^\Gamma)\!)^\times\to\Gamma$ given by
 $$f\mapsto v(f) := \min \supp f,$$
 which we call the $t$-adic valuation on $\k(\!(t^\Gamma)\!)$.
 Its valuation ring is
 $$\mathcal O = \big\{ f\in \k(\!(t^\Gamma)\!): \supp f \geq 0 \big\}.$$
 The kernel of the surjective ring morphism $f\mapsto f_0\colon\mathcal O\to\k$   is the maximal ideal
 $$\smallo = \big\{ f\in \k(\!(t^\Gamma)\!) : \supp f >0 \big\}$$
 of $\mathcal O$, so this morphism induces an isomorphism from the residue field~$\mathcal O/\smallo$ of $\mathcal O$
 onto the coefficient field $\k$.
For $f\in\k[[X]]$ and~$a\in\smallo^m$, the series~$f(a)$ makes sense in $\k(\!(t^\Gamma)\!)$.
For~$f\in W_m$ let $\phi_m(f)$  be the function~$a\mapsto f(a)\colon\smallo^m\to \mathcal O$.
Then the family $(\phi_m)$  is
an infinitesimal $W$-struc\-ture on~$\k(\!(t^\Gamma)\!)$ where~$\phi_0\colon\k\to \mathcal O$ is the natural embedding.
\item Equip the field
$$\k(\!(t^*)\!):=\bigcup_{d\geq 1} \k(\!(t^{1/d})\!)$$ 
of Puiseux series over $\k$ with the $t$-adic  valuation ring   $$ \k[[t^*]]:=\bigcup_{d\geq 1} \k[[t^{1/d}]],$$
with maximal ideal  
$$  \bigcup_{d\geq 1} t^{1/d}\k[[t^{1/d}]].$$
This is a valued subfield of the Hahn field $\k(\!(t^{\Q})\!)$.
The infinitesimal $W$-structure on $\k(\!(t^{\Q})\!)$ described in (1) restricts to one on~$\k(\!(t^*)\!)$. 
\item Suppose $\Gamma$ is archimedean. Then the completion of the $\k$-subalgebra $\k[t^\Gamma]$ of~$\k(\!(t^\Gamma)\!)$ under the $t$-adic valuation is the subfield
$$\hskip5em \operatorname{cl}(\k[t^\Gamma]) := \left\{ f\in \k(\!(t^\Gamma)\!)\, :\, \parbox{18em}{$\operatorname{supp}f$ is finite or $\operatorname{supp}f=\{\gamma_0,\gamma_1,\dots\}$ with $\gamma_n\to\infty$ as $n\to\infty$}\right\}$$
of $\k(\!(t^\Gamma)\!)$. (See \cite[Example~3.2.19]{ADH}.) We have $\k(\!(t^*)\!) \subseteq \operatorname{cl}(\k[t^\Q])$.
The infinitesimal $W$-structure on $\k(\!(t^{\Q})\!)$ from (1) restricts to one on $\operatorname{cl}(\k[t^\Gamma])$.
\item Suppose $\k$ comes equipped with a complete absolute value, and let
 $K:=\k\{\!\{t^*\}\!\}$ be the   subfield of $\k(\!(t^*)\!)$ consisting of the convergent Puiseux series over $\k$, equipped with the 
 valuation ring $\mathcal O_t:=\k[[t^*]]\cap K$, with maximal ideal $\smallo_t$.  Suppose $W=\bigl(\k\{X_1,\dots,X_m\}\bigr)$ is the Weierstrass system of convergent power series rings. Then for $f\in\k\{X_1,\dots,X_m\}$ and $a\in\smallo_t^m$ we have~$f(a)\in K$, so the infinitesimal $W$-structure on~$\k(\!(t^*)\!)$ from (2) above restricts to an
 infinitesimal    $W$-structure on $K$.
\end{enumerate}
\end{examples}

\noindent
{\it In the rest of this subsection we let $(\phi_m)$ be an infinitesimal $W$-struc\-ture on $K$.}\/ (The existence of such a $(\phi_m)$
is a strong assumption on $K$: see Lemma~\ref{lem:K henselian} below.)  We  always
identify~$\k$ with a subfield of~$\mathcal O$ via the embedding $\phi_0\colon\k\to \mathcal O$. Note that then each $\phi_m$ is a $\k$-algebra morphism from $W_m=\k \lfloor X\rfloor$ to the $\k$-algebra of  functions~$\smallo^m\to \mathcal O$.
Thus by  Lemma~\ref{lem:phin(f)(0)}, for $f\in W_m$ we have $\phi_m(f)(0)=f(0)$.
If~$Y=(Y_1,\dots,Y_m)$ is an arbitrary tuple of distinct indeterminates, then  
for~$f=f(Y)\in \k\lfloor Y\rfloor$ we let $\phi_m(f):=\phi_m(f(X))$. In the next lemma and its corollary
$Y=(Y_1,\dots,Y_n)$ is a tuple of distinct indeterminates disjoint from $X=(X_1,\dots,X_m)$.

\begin{lemma}[Substitution]\label{lem:W-structure subst}
Let $f\in \k\lfloor X,Y\rfloor$ and $g=(g_1,\dots,g_n)\in \k\lfloor X\rfloor^n$ with~$g(0)=0$. Then  $f(X,g(X))\in \k\lfloor X\rfloor$ and
$$\phi_m\bigl(f(X,g(X))\bigr)(a)=\phi_{m+n}(f)\bigl(a,\phi_m(g)(a)\bigr)\qquad\text{for all $a\in\smallo^m$.}$$
\end{lemma}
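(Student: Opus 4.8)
The plan is to reduce everything to the explicit Weierstrass decomposition that is already produced inside the proof of Lemma~\ref{lem:DL subst}, and then push that decomposition forward along the ring morphism $\phi_{m+n}$, evaluating at a point engineered so that most terms vanish.

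First I would observe that $g(0)=0$ means $g\in(X)^n$, so Lemma~\ref{lem:DL subst} already gives the membership $f(X,g(X))\in\k\lfloor X\rfloor$; more usefully, collecting the displayed chain of equations in its proof we obtain $u_1,\dots,u_n\in\k\lfloor X,Y\rfloor$ and $R:=f(X,g(X))\in\k\lfloor X\rfloor$ with
$$f(X,Y)\ =\ \sum_{i=1}^n u_i(X,Y)\,\bigl(Y_i-g_i(X)\bigr)+R(X)$$
in $\k\lfloor X,Y\rfloor$, where I identify $\k\lfloor X,Y\rfloor$ with $W_{m+n}$ by relabelling $(X_1,\dots,X_m,Y_1,\dots,Y_n)$ as $(X_1,\dots,X_{m+n})$; by (A3) the induced map $\phi_{m+n}$ on $\k\lfloor X,Y\rfloor$ does not depend on this relabelling up to permuting coordinates. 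Since $\phi_{m+n}$ is a ring morphism, applying it to the displayed identity gives the same identity among functions $\smallo^{m+n}\to\mathcal O$.

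Next I would fix $a\in\smallo^m$ and evaluate this functional identity at $b:=\bigl(a,\phi_m(g)(a)\bigr)$. One first checks $b\in\smallo^{m+n}$: by Lemma~\ref{lem:phin(f)(0)} and $g_i(0)=0$ we have $\phi_m(g_i)(a)\in\phi_0(g_i(0))+\smallo=\smallo$ for each $i$. Now (A1) gives $\phi_{m+n}(Y_i)(b)=\phi_m(g_i)(a)$ (the $(m+i)$-th coordinate of $b$), while (A2) applied $n$ times shows that $\phi_{m+n}$ agrees with $\phi_m$ on the subring $\k\lfloor X\rfloor\subseteq\k\lfloor X,Y\rfloor$ once functions on $\smallo^m$ are regarded as functions on $\smallo^{m+n}$ ignoring the last $n$ coordinates; hence $\phi_{m+n}(g_i(X))(b)=\phi_m(g_i)(a)$ and $\phi_{m+n}(R)(b)=\phi_m(R)(a)=\phi_m\!\bigl(f(X,g(X))\bigr)(a)$. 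Therefore $\phi_{m+n}\bigl(Y_i-g_i(X)\bigr)(b)=0$ for every $i$, the entire sum over $i$ collapses, and we are left with $\phi_m\!\bigl(f(X,g(X))\bigr)(a)=\phi_{m+n}(f)\bigl(a,\phi_m(g)(a)\bigr)$, which is the assertion.

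The only part demanding genuine care is the bookkeeping of identifications in the last step: that $\phi_{m+n}$ on $\k\lfloor X,Y\rfloor$ is unambiguously defined via (A3), that (A2) really lets us read $\phi_{m+n}$ restricted to $\k\lfloor X\rfloor$ as $\phi_m$, and that the evaluation point $b$ actually lands in the domain $\smallo^{m+n}$ (this is exactly where Lemma~\ref{lem:phin(f)(0)} is needed). Once these conventions are pinned down the computation is purely formal---$\phi_{m+n}$ is a ring morphism and the factors $Y_i-g_i(X)$ were built precisely to vanish at $b$---so I do not anticipate any real obstacle beyond this bookkeeping.
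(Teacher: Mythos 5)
Your proposal is correct and follows essentially the same route as the paper's proof: both extract the decomposition $f(X,Y)=\sum_i u_i(X,Y)(Y_i-g_i(X))+R(X)$ from the proof of Lemma~\ref{lem:DL subst}, evaluate at $\bigl(a,\phi_m(g)(a)\bigr)$ so the factors $Y_i-g_i(X)$ vanish, and invoke (A1), (A2) to identify the remaining term $\phi_{m+n}(R)$ with $\phi_m\bigl(f(X,g(X))\bigr)$. Your explicit check that $\phi_m(g)(a)\in\smallo^n$ (via Lemma~\ref{lem:phin(f)(0)} and $g(0)=0$) is a point the paper leaves implicit, and is exactly where the hypothesis $g(0)=0$ enters.
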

\begin{proof}
Lemma~\ref{lem:DL subst} shows $f(X,g(X))\in \k\lfloor X\rfloor$.
As in the proof of that lemma take~$u_1,\dots,u_n\in \k\lfloor X,Y\rfloor$ and $R\in \k\lfloor X\rfloor$ such that
$$f(X,Y) = u_n(X,Y) (Y_n-g_n(X))+\cdots+ u_1(X,Y) (Y_1-g_1(X)) + R(X).$$
Then $f(X,g(X))=R(X)$ and so $$\phi_{m}\big(f(X,g(X))\big)=\phi_m(R)=\phi_{m+n}(R)$$ by (A2).
Let $a\in\smallo^m$ and set $b:=\phi_m(g)(a)$; by (A2) we also have $b=\phi_{m+n}(g)(a,b)$ and so using (A1),
\[\phi_{m+n}(f)\bigl(a,\phi_m(g)(a)\bigr)=\phi_{m+n}(R)(a,b)=\phi_m\bigl(f(X,g(X))\bigr)(a).\qedhere\] 
\end{proof}

\begin{cor}\label{cor:W-structure subst}
Let $f\in \k\lfloor Y\rfloor$ and $g=(g_1,\dots,g_n)\in \k\lfloor X\rfloor^n$ with~$g(0)=0$. Then
$$\phi_m\bigl(f(g(X))\bigr) =\phi_{n}(f)\circ \phi_m(g).$$
\end{cor}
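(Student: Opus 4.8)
The plan is to follow the reduction used in the proof of Corollary~\ref{cor:DL subst}, but now carrying the maps $\phi_\bullet$ along, so that the desired composition identity drops out of the Substitution Lemma~\ref{lem:W-structure subst} together with (A2) and (A3). First I would put $f':=f(X_1,\dots,X_n)$, so that $f'\in\k\lfloor X_1,\dots,X_{m+n}\rfloor$ by (W3), and fix a permutation $\sigma$ of $\{1,\dots,m+n\}$ with $\sigma(i)=m+i$ for $i=1,\dots,n$ (such $\sigma$ exists for any $m,n$, the remaining $m$ indices being sent bijectively onto $\{1,\dots,m\}$). By (W2) the power series $h:=f'(X_{\sigma(1)},\dots,X_{\sigma(m+n)})$ again lies in $\k\lfloor X_1,\dots,X_{m+n}\rfloor$, and since $f'$ involves only $X_1,\dots,X_n$ we have $h(X_1,\dots,X_{m+n})=f(X_{m+1},\dots,X_{m+n})$.

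Next I would apply Lemma~\ref{lem:W-structure subst} to $h$, regarding $\k\lfloor X_1,\dots,X_{m+n}\rfloor$ as $\k\lfloor X,Y'\rfloor$ with $X=(X_1,\dots,X_m)$ and $Y'=(X_{m+1},\dots,X_{m+n})$, and with the same $g=(g_1,\dots,g_n)$. Substituting $X_{m+i}\mapsto g_i(X)$ into $h$ produces exactly $f(g(X))$, so the lemma gives $f(g(X))=h(X,g(X))\in\k\lfloor X\rfloor$ (recovering the assertion of Corollary~\ref{cor:DL subst}) together with
$$\phi_m\bigl(f(g(X))\bigr)(a)=\phi_{m+n}(h)\bigl(a,\phi_m(g)(a)\bigr)\qquad\text{for all }a\in\smallo^m,$$
where $\phi_m(g)(a)\in\smallo^n$ by Lemma~\ref{lem:phin(f)(0)} (since $g(0)=0$), so that $\bigl(a,\phi_m(g)(a)\bigr)\in\smallo^{m+n}$ as required.

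It remains to simplify the right-hand side. For $c\in\smallo^{m+n}$, axiom (A3) applied to $h=f'(X_{\sigma(1)},\dots,X_{\sigma(m+n)})$ gives $\phi_{m+n}(h)(c)=\phi_{m+n}(f')(c_{\sigma(1)},\dots,c_{\sigma(m+n)})$, while iterating (A2) gives $\phi_{m+n}(f')(d)=\phi_n(f)(d_1,\dots,d_n)$ because $f'$ does not involve $X_{n+1},\dots,X_{m+n}$. Taking $c=(a,b)$ with $b=\phi_m(g)(a)$ and recalling $\sigma(i)=m+i$ for $i\leq n$, we get $(c_{\sigma(1)},\dots,c_{\sigma(n)})=b$, hence $\phi_{m+n}(h)(a,b)=\phi_n(f)(b)$. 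Combining this with the displayed identity yields $\phi_m(f(g(X)))(a)=\phi_n(f)(\phi_m(g)(a))$ for every $a\in\smallo^m$, i.e.\ $\phi_m(f(g(X)))=\phi_n(f)\circ\phi_m(g)$. The argument involves no genuine difficulty; the one point needing care is this last step, making sure that the permutation $\sigma$ together with (A2) strips $h$ back down to $\phi_n(f)$ evaluated at $\phi_m(g)(a)$.
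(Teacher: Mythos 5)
Your proposal is correct and is essentially identical to the paper's proof: both take $f'$, $\sigma$, $h$ as in the proof of Corollary~\ref{cor:DL subst}, apply Lemma~\ref{lem:W-structure subst} to $h$, and then use (A3) and (A2) to reduce $\phi_{m+n}(h)\bigl(a,\phi_m(g)(a)\bigr)$ to $\phi_n(f)\bigl(\phi_m(g)(a)\bigr)$. The only difference is that you spell out the bookkeeping with the permutation explicitly, which the paper leaves implicit.
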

\begin{proof}
Let $f'$,   $\sigma$, $h$ be as in the proof of Corollary~\ref{cor:DL subst}. Then
for $a\in\smallo^m$ we have
\begin{align*}
\phi_m\bigl(f(g(X))\bigr)(a)	&= \phi_m\bigl(h(X,g(X))\bigr)(a) \\
								&= \phi_{m+n}(h)\big(a,\phi_m(g)(a)\big) \\
								&= \phi_{m+n}(f')\big(\phi_m(g)(a),b\big) \quad\text{for some $b\in\smallo^m$}\\
								&= \phi_n(f)\big(\phi_m(g)(a)\big),
\end{align*}
where we used Lemma~\ref{lem:W-structure subst} for the second, (A3) for the third, and (A2) for the fourth equality.
\end{proof}

\noindent
Let now  $d\in\N$, $d\geq 1$.
By Corollary~\ref{cor:DL subst}, the  $\k$-algebra automorphism $\tau_d$  of $\k[[X]]$ restricts to a $\k$-algebra automorphism of $\k\lfloor X\rfloor$.
Now also consider the automorphism of the $\k$-linear space $\smallo^m$, also  denoted by~$\tau_d$,
given by 
$$(a_1,\dots,a_m)\mapsto\big(a_1+a_m^{d^{m-1}},\dots,a_{m-1}+a_m^d,a_m\big).$$
By (A1) and the previous corollary, we obtain:

\begin{cor}\label{cor:shear}
For each $f\in \k\lfloor X\rfloor$ and   $d\in\N$, $d\geq1$: $\phi_m(\tau_d(f))=\phi_m(f)\circ\tau_d$.
\end{cor}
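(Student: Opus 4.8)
The plan is to recognize $\tau_d$ as a substitution operator and then quote the Substitution Corollary~\ref{cor:W-structure subst}. Put $g=(g_1,\dots,g_m)$ with $g_i:=X_i+X_m^{d^{m-i}}$ for $i=1,\dots,m-1$ and $g_m:=X_m$; then $g\in\k[X]^m\subseteq\k\lfloor X\rfloor^m$ and $g(0)=0$, and by the very definition of the automorphism $\tau_d$ of $\k\lfloor X\rfloor$ (which restricts from $\k[[X]]$ by Corollary~\ref{cor:DL subst}) one has $\tau_d(f)=f(g(X))$ for every $f\in\k\lfloor X\rfloor$.

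Next I would fit this into the hypotheses of Corollary~\ref{cor:W-structure subst}: take a tuple $Y=(Y_1,\dots,Y_m)$ of indeterminates disjoint from $X$ and write $f=F(Y)$ for the corresponding series $F\in\k\lfloor Y\rfloor$, recalling the convention $\phi_m(F):=\phi_m(F(X))=\phi_m(f)$. Applying Corollary~\ref{cor:W-structure subst} with $n=m$ then gives
$$\phi_m(\tau_d(f))=\phi_m\bigl(F(g(X))\bigr)=\phi_m(F)\circ\phi_m(g)=\phi_m(f)\circ\phi_m(g).$$
It remains to identify $\phi_m(g)$ with the linear-space automorphism $\tau_d$ of $\smallo^m$. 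Since $\phi_m$ is a ring morphism, (A1) yields, for every $a=(a_1,\dots,a_m)\in\smallo^m$, that $\phi_m(g_i)(a)=a_i+a_m^{d^{m-i}}$ for $i<m$ and $\phi_m(g_m)(a)=a_m$; in other words $\phi_m(g)=\tau_d$ as a map $\smallo^m\to\smallo^m$, the inclusion $\phi_m(g)(\smallo^m)\subseteq\smallo^m$ being guaranteed by $g(0)=0$ together with Lemma~\ref{lem:phin(f)(0)}. Plugging this into the display above gives $\phi_m(\tau_d(f))=\phi_m(f)\circ\tau_d$, as desired.

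There is no serious obstacle here: the argument is essentially bookkeeping once Corollary~\ref{cor:W-structure subst} is in place. The only two points deserving a moment's attention are the passage to a disjoint copy $Y$ of the variables so as to literally match the statement of that corollary, and the verification that $\tau_d$ preserves $\smallo^m$ (so that the composite $\phi_m(f)\circ\tau_d$ is well defined in the first place); both are immediate from $g(0)=0$ via Lemma~\ref{lem:phin(f)(0)}.
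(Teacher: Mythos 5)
Your proof is correct and is exactly the argument the paper intends: the paper derives Corollary~\ref{cor:shear} with the single phrase ``By (A1) and the previous corollary,'' i.e.\ by writing $\tau_d(f)=f(g(X))$ for $g=(X_1+X_m^{d^{m-1}},\dots,X_{m-1}+X_m^d,X_m)$, applying Corollary~\ref{cor:W-structure subst} (legitimate since $g(0)=0$), and using (A1) to identify $\phi_m(g)$ with the map $\tau_d$ on $\smallo^m$. Your write-up merely fills in the same bookkeeping explicitly.
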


\noindent 
To illustrate Corollary~\ref{cor:shear} we prove 
that if $\mathcal O\neq K$, then each ring morphism~$\phi_m$ is an embedding; more precisely:
 
\begin{lemma}\label{lem:phim embedding}
Suppose the valuation of $K$ is non-trivial. Then for each non-zero~$f\in \k\lfloor X\rfloor$ and   non-empty open $U\subseteq\smallo^m$ 
there is some $a\in U$ with $\phi_m(f)(a)\neq 0$.
\end{lemma}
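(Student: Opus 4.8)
The plan is to reduce, by a shearing change of coordinates followed by Weierstrass Preparation, to the case where $f$ is a monic polynomial in $X_m$, and then to exploit that a nonzero polynomial in one variable over the field $K$ has only finitely many roots, while --- and this is where non-triviality of the valuation enters --- every nonempty open subset of $\smallo^m$ contains infinitely many points with prescribed first $m-1$ coordinates.

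First I would dispose of the trivial case: if $f\in\k\lfloor X\rfloor^\times$, then $\phi_m(f)\cdot\phi_m(f^{-1})=\phi_m(1)=1$ in the ring of functions $\smallo^m\to\mathcal O$, so $\phi_m(f)$ vanishes nowhere and any point of $U$ works (this covers $m=0$, where $\k\lfloor X\rfloor=\k$). So assume $f$ is not a unit; then $f(0)=0$ and $m\geq 1$. Pick $d\geq 1$ with $\tau_d(f)$ regular in $X_m$; since the $\k$-algebra automorphism $\tau_d$ of $\k\lfloor X\rfloor$ fixes its maximal ideal $(X)$ (Lemma~\ref{lem:W-Lemma}), $\tau_d(f)$ is regular in $X_m$ of some order $\geq 1$. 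The map $\tau_d\colon\smallo^m\to\smallo^m$ and its inverse are given by polynomials, hence $\tau_d$ is a homeomorphism of $\smallo^m$ in the valuation topology; and $\phi_m(\tau_d(f))=\phi_m(f)\circ\tau_d$ by Corollary~\ref{cor:shear}. So it suffices to find a point of the nonempty open set $\tau_d^{-1}(U)$ at which $\phi_m(\tau_d(f))$ is nonzero; replacing $f$ by $\tau_d(f)$ and $U$ by $\tau_d^{-1}(U)$, we may assume $f$ is regular in $X_m$ of order $d\geq 1$.

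Now Weierstrass Preparation (Lemma~\ref{lem:WP}) gives $f=uw$ with $u\in\k\lfloor X\rfloor^\times$ and $w=X_m^d+w_1X_m^{d-1}+\cdots+w_d$, all $w_i\in(X')$, $X'=(X_1,\dots,X_{m-1})$. Since $\phi_m$ is multiplicative and $\phi_m(u)$ vanishes nowhere, it is enough to find $a\in U$ with $\phi_m(w)(a)\neq 0$. Shrinking $U$, we may take $U=\prod_{i=1}^m\{y\in K:v(y-b_i)>\gamma\}$ for some $b=(b_1,\dots,b_m)\in\smallo^m$ and some $\gamma\geq 0$ in $\Gamma=v(K^\times)$ (the condition $\gamma\geq 0$ forcing the box into $\smallo^m$, hence into the original open set). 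Fix $a':=(b_1,\dots,b_{m-1})$. Using (A1), (A2), multiplicativity of $\phi_m$, and Lemma~\ref{lem:phin(f)(0)}, for every $a_m\in\smallo$ we obtain $\phi_m(w)(a',a_m)=P(a_m)$, where $P:=T^d+c_1T^{d-1}+\cdots+c_d\in\mathcal O[T]$ with $c_i:=\phi_{m-1}(w_i)(a')\in\smallo$; being monic of degree $d\geq 1$, $P$ is nonzero and has at most $d$ roots in $K$. On the other hand the $m$-th factor $\{y\in K:v(y-b_m)>\gamma\}$ of $U$ is infinite: taking $\pi\in K^\times$ with $v\pi>0$ (possible since the valuation is non-trivial) and $c\in K^\times$ with $vc=\gamma+v\pi>\gamma$, the elements $b_m+c\pi^n$ ($n\geq 0$) are pairwise distinct and all lie in that factor. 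Hence some $a_m$ in it satisfies $P(a_m)\neq 0$, and then $a:=(a',a_m)\in U$ has $\phi_m(w)(a)=P(a_m)\neq 0$, so $\phi_m(f)(a)\neq 0$, completing the proof.

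I do not anticipate a serious obstacle. The two points that want a little care are the legitimacy of the coordinate change --- that $\tau_d$ and its inverse are continuous on $\smallo^m$, so that $\tau_d^{-1}(U)$ is again nonempty open, together with Corollary~\ref{cor:shear} --- and the verification that basic open subsets of $K$ are infinite, which is exactly where the hypothesis that the valuation be non-trivial is used.
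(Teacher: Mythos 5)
Your proof is correct and follows essentially the same route as the paper's: reduce via the shear $\tau_d$ and Corollary~\ref{cor:shear} to the case where $f$ is regular in $X_m$, apply Weierstrass Preparation, fix the first $m-1$ coordinates, and use that a monic polynomial of degree $d$ has at most $d$ roots while the relevant open slice of $\smallo$ is infinite (the only place non-triviality of the valuation is needed). Your version merely spells out the details the paper leaves implicit (the unit/$m=0$ case, the box neighborhoods, and the infinitude of basic open sets).
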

\begin{proof}
The case $m=0$ was already observed, so suppose $m\geq 1$.  Take $\tau=\tau_d$ (where $d\in\N$, $d\geq 1$) such that $\tau(f)$ is regular in $X_m$.
Replacing $f$, $U$ by $\tau(f)$, $\tau^{-1}(U)$ and using Corollary~\ref{cor:shear} we arrange that~$f$ is regular in $X_m$.
Lemma~\ref{lem:WP} yields~$u\in  \k\lfloor X\rfloor^\times$ and a  monic~$w\in  \k\lfloor X'\rfloor[X_m]$, where
$X'=(X_1,\dots,X_{m-1})$, such that~$f=uw$. 
Take~$a'\in\smallo^{m-1}$ with $U\cap (\{a'\}\times\smallo)\neq\emptyset$.
Since~$\smallo$ is infinite, we get an $a_m\in\smallo$ such that $a:=(a',a_m)\in U$ and $\phi_m(w)(a)\neq 0$,
and then also~$\phi_m(f)(a)\neq 0$.
\end{proof}

\noindent
From now on we often drop $\phi_m$ from the notation: for $a\in\smallo^m$, $f\in\k\lfloor X\rfloor$  we write~$f(a)$
instead of $\phi_m(f)(a)$. By (A1) we have $g(\smallo^m)\subseteq\smallo$ for   $g\in (X)$ and
 hence
$f(\smallo^m)\subseteq f(0)+\smallo\subseteq\mathcal O$ for   $f\in\k\lfloor X\rfloor$, by Lemma~\ref{lem:phin(f)(0)}.
Hence given~$a\in\smallo^m$, the map~$f\mapsto f(a)\colon \k\lfloor X\rfloor\to\mathcal O$ is a local ring morphism.
The presence of an infinitesimal $W$-structure on a valued field imposes a significant constraint:

\begin{lemma}\label{lem:K henselian}
The valued field $K$ is henselian.
\end{lemma}
\begin{proof}
Let   $a_1,\dots,a_m\in\smallo$ and $P(Y):=1+Y+a_1Y^2+\cdots+a_{m}Y^{m+1}$ where~$Y$ is a single new indeterminate. It suffices to show that $P$ has a zero in $\mathcal O$.
Put
$$Q(X,Y) := 1+Y+X_1Y^2+\cdots+X_mY^{m+1} \in \k\lfloor X\rfloor[Y].$$
Since $\k\lfloor X\rfloor$ is henselian (Lemma~\ref{lem:henselian}), there is some $f\in \k\lfloor X\rfloor$ with $Q(X,f(X))=0$. Then $P(b)=Q(a,b)=0$
for $a:=(a_1,\dots,a_m)$ and $b:=f(a)\in\mathcal O$.
\end{proof}

\noindent
This way we reprove the well-known fact that the valued field 
$\k(\!(t^\Gamma)\!)$ and its valued subfield
$\operatorname{cl}(\k[t^\Gamma])$ (when $\Gamma$ is archimedean) are henselian. Similarly, so are the valued fields $\k(\!(t^*)\!)$ and~$\k\{\!\{t^*\}\!\}$ (when $\k$ is equipped with a complete absolute value).

\begin{lemma}\label{lem:W-restrict}
Let $L$ be a valued field extension of $K$ equipped with an infinitesimal $W$-structure which restricts to that of $K$, and let $K_0$ be a valued
subfield of $L$ containing $K$ which is algebraic over $K$. Then the  infinitesimal $W$-structure of $L$ restricts to an
infinitesimal $W$-structure on $K_0$.
\end{lemma}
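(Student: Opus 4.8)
The plan is to show, by induction on $m$, that $\phi_m^L(f)(a)\in\mathcal{O}_0$ for all $f\in W_m$ and $a\in\smallo_0^m$, where $\mathcal{O}_0:=\mathcal{O}_L\cap K_0$ and $\smallo_0:=\smallo_L\cap K_0$; by the definition of restriction this is exactly the claim. Since $\phi_m^L(f)(a)\in\mathcal{O}_L$ always, it suffices to prove $\phi_m^L(f)(a)\in K_0$. Fixing $f$ and $a$, the finitely many coordinates $a_1,\dots,a_m$ generate a finite subextension of $K_0/K$, so we may assume $[K_0:K]<\infty$. The $W$-structure of $L$ restricts to $K$ by hypothesis, so $K$ carries an infinitesimal $W$-structure and is therefore henselian (as shown just above for any such field); as $K_0/K$ is finite, it follows by standard valuation theory that $\mathcal{O}_0$ is the unique valuation ring of $K_0$ lying over $\mathcal{O}_K$ and that $K_0$ is henselian.

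For $m=0$ the claim holds because $\phi_0^L(\k)\subseteq\mathcal{O}_K\subseteq\mathcal{O}_0$, again since the structure of $L$ restricts to $K$. Now let $m\geq 1$, assume the claim for $m-1$, and fix $f\in W_m$ and $a=(a',a_m)\in\smallo_0^{m-1}\times\smallo_0$; we may assume $f\neq 0$. Each shear automorphism $\tau_d$ of $\k\lfloor X\rfloor$ maps $\smallo_0^m$ into itself, since its coordinate functions are polynomials over $\k\subseteq\mathcal{O}_0$ in variables ranging over the ideal $\smallo_0$; moreover $\phi_m(\tau_d(g))=\phi_m(g)\circ\tau_d$ by Corollary~\ref{cor:shear}. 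Replacing $f$ by $\tau_d(f)$ and $a$ by $\tau_d^{-1}(a)$ for a suitable $d$, we reduce to the case that $f$ is regular in $X_m$, of some order $e$. By Weierstrass Preparation (Lemma~\ref{lem:WP}), $f=uw$ with $u\in\k\lfloor X\rfloor^\times$ and $w=X_m^e+w_1X_m^{e-1}+\cdots+w_e$ where $w_1,\dots,w_e\in(X')\subseteq W_{m-1}$. The inductive hypothesis, applied to the $w_i\in W_{m-1}$ and $a'\in\smallo_0^{m-1}$, gives $w_i(a')\in\mathcal{O}_0$, hence $w(a)=a_m^e+w_1(a')a_m^{e-1}+\cdots+w_e(a')\in\mathcal{O}_0\subseteq K_0$.

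It remains to prove $u(a)\in K_0$, which I expect to be the main obstacle: the unit $u$ lies in $W_m$ and is not directly reachable through the inductive hypothesis for $W_{m-1}$. My plan for this step is as follows. Iterated Weierstrass division by $X_m$ (using (W5) together with $W_m\cap\k[[X']]=W_{m-1}$) expands $u=\sum_{j\geq 0}t_jX_m^j$ with all $t_j\in W_{m-1}$; the inductive hypothesis then gives $t_j(a')\in\mathcal{O}_0$, and dividing $u-\sum_{j<N}t_jX_m^j$ by $X_m^N$ inside $W_m$ yields
\[
u(a)-\sum_{j<N}t_j(a')\,a_m^j\ \in\ a_m^N\mathcal{O}_L\qquad\text{for every }N .
\]
Thus $u(a)$, and likewise its inverse $u(a)^{-1}=(u^{-1})(a)$ (with $u^{-1}\in\k\lfloor X\rfloor^\times$), is a pseudo-limit of a sequence drawn from $\mathcal{O}_0$; residually $u(a)=u(0)\in\k^\times$, and the relevant breadth is generated by the values $e\,va_m,\,(e{+}1)\,va_m,\dots$, so this pseudo-Cauchy sequence is of algebraic type over $K_0$. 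Since $K_0$ is henselian and, as an algebraic extension of the henselian base field $K$, is algebraically maximal, the sequence has a pseudo-limit in $K_0$; by henselianity of $L$ an algebraic-type pseudo-Cauchy sequence has a unique pseudo-limit, so that pseudo-limit coincides with $u(a)$. Hence $u(a)\in\mathcal{O}_0$, and therefore $f(a)=u(a)w(a)\in K_0$, which closes the induction.

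The delicate point — where I would expect to spend the most care — is the last paragraph: verifying that the approximating sequence $\bigl(\sum_{j<N}t_j(a')a_m^j\bigr)_N$ is genuinely of algebraic type and that $K_0$ captures its pseudo-limit. An alternative and possibly cleaner route, which I would pursue in parallel, is to characterize $u(a)$ as the unique element of $u(0)\bigl(1+\smallo_L\bigr)$ satisfying a fixed equation with coefficients in $\mathcal{O}_0$ — extracted from the Weierstrass data for $u$ via the Implicit Function Theorem (Lemma~\ref{lem:IFT}) for $\k\lfloor X\rfloor$ — and then to produce a solution in $K_0$ by Hensel's Lemma applied directly in the henselian field $K_0$, uniqueness in $L$ again forcing it to equal $u(a)$.
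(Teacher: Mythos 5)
Your reduction steps (to $[K_0:K]<\infty$, henselianity of $K$ and $K_0$, shears, Weierstrass Preparation, and the treatment of $w(a)$ by the inductive hypothesis) are fine, but the step you yourself flag as the main obstacle — $u(a)\in K_0$ — is where the proof breaks, and the pseudo-convergence argument offered there does not close it. First, the approximation property $u(a)-\sum_{j<N}t_j(a')a_m^j\in a_m^N\mathcal O_L$ with partial sums in $\mathcal O_0$ can never by itself force $u(a)\in K_0$: an element of $\mathcal O_L$ pseudo-approximated by elements of a non-complete subfield need not lie in it (a power series evaluated at $t$ over the henselization of $\k(t)$ inside $\k(\!(t)\!)$ is the standard picture), so any correct argument must use the algebraicity of $a_m$ over $K$ in a way that interacts with $f$; your induction on $m$, with all coordinates in $K_0$, never does this for the unit. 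Second, the assertion that the sequence is of algebraic type over $K_0$ is exactly as strong as what you are trying to prove and is not justified by the remark about the breadth. Third, ``$K_0$ is algebraically maximal because it is an algebraic extension of the henselian field $K$'' is not a valid inference (algebraic extensions of henselian fields are henselian, not algebraically maximal), and algebraic maximality can genuinely fail here since $\operatorname{char}\k=p>0$ is allowed. Fourth, pseudo-limits are not unique unless the value set $\{N\,va_m\}$ is cofinal in $\Gamma_L$, so even a pseudo-limit found in $K_0$ need not equal $u(a)$; henselianity of $L$ does not help. The alternative sketch via the Implicit Function Theorem has the same problem: it produces power-series relations, not a polynomial equation over $\mathcal O_0$ satisfied by $u(a)$.

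The paper's proof avoids all of this by strengthening the statement and changing the induction variable: it proves, by induction on $n$, that $f(b,c)\in\mathcal O_{K_0}$ for every $f\in\k\lfloor X,Y\rfloor$, $b\in\smallo_K^m$, $c\in\smallo_{K_0}^n$. For the last $K_0$-coordinate $c_n$ one takes its minimal polynomial $Z^d+a_1Z^{d-1}+\cdots+a_d$ over $K$ (its coefficients lie in $\smallo_K$ by henselianity of $K$), views $g(U,Y_n)=Y_n^d+U_1Y_n^{d-1}+\cdots+U_d$ as regular of order $d$ in $Y_n$, and Weierstrass-divides $f$ by $g$ in $\k\lfloor U,X,Y\rfloor$; evaluating at $(a,b,c)$ kills the $qg$ term and leaves a polynomial of degree $<d$ in $c_n$ whose coefficients are evaluations at one fewer $K_0$-coordinate and extra $K$-coordinates $a\in\smallo_K^d$ — this is where the hypothesis that the structure restricts to $K$ enters. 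In your framework this amounts to allowing mixed tuples and inducting on the number of $K_0$-coordinates rather than on $m$; without that strengthening, the infinite expansion in powers of $a_m$ leaves a limit that $K_0$ has no reason to contain.
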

\begin{proof}
By induction on $n$ we show:
if $f\in \k\lfloor X,Y\rfloor$ where $Y=(Y_1,\dots,Y_n)$ is a tuple of distinct indeterminates disjoint from $X$ and $b\in\smallo^m$, $c\in\smallo_{K_0}^n$, then
 $f(b,c)\preceq 1$.
For $n=0$ this is clear, so suppose $n\geq 1$.  
Let $$P=Z^d+a_{1}Z^{d-1}+\cdots+a_d\in K[Z]\qquad (a_1,\dots,a_d\in K)$$ be the minimum polynomial of $c_n$ over $K$.
Then $a:=(a_1,\dots,a_d)\in\smallo^d$ since $K$ is henselian and~$c_n\prec 1$. 
(See, e.g., \cite[1.3.12, 3.3.11, 3.3.15]{ADH}.)
We have $g(a,c_n)=0$ where
$$g(U,Y_n) := Y_n^d+U_{1}Y_n^{d-1}+\cdots+U_d \in \Z[U,Y_n]\subseteq \k\lfloor U,Y\rfloor \quad (U=(U_1,\dots,U_d)).$$
Let $Y'=(Y_1,\dots,Y_{n-1})$.
Weierstrass Division in $\k\lfloor U, X, Y\rfloor$ yields $q\in \k\lfloor U,X,Y\rfloor$ and $r\in \k\lfloor U,X,Y'\rfloor[Y_n]$ of degree~$<d$
with $f=qg+r$. Then $f(b,c)=r(a,b,c)\preceq 1$ by the inductive hypothesis applied to the coefficients of $r$ and~$(a,b)$,~$c'$ in place of~$b$,~$c$, respectively.
\end{proof}

\noindent
Let $\k^{\operatorname{a}}$ be an algebraic closure of $\k$; then  $\k^{\operatorname{a}}(\!(t^\Q)\!)$ is algebraically closed.
By the previous lemma,  
the infinitesimal $W$-structure of $\k^{\operatorname{a}}(\!(t^\Q)\!)$ restricts to an infinitesimal $W$-structure on
the algebraic closure of the field $\k(\!(t)\!)$ of Laurent series over~$\k$ inside $\k^{\operatorname{a}}(\!(t^\Q)\!)$.

\subsection{Infinitesimal $W$-struc\-tures as model-theoretic structures}\label{sec:inf W-structures as structures}
 Let $\mathcal L$ be an expansion of the language~$\mathcal L_{\preceq}= \{0,1,{-},{+},{\,\cdot\,},{\preceq}\}$ of rings augmented by
 a binary relation symbol~$\preceq$. (See Section~\ref{sec:val th}.)
We then let $\mathcal L_W$ be the expansion of~$\mathcal L$ by  a new $m$-ary function symbol, for each~$m$ and $f\in W_m$, also denoted by~$f$.
Given an $\mathcal L$-theory $T$ containing the $\mathcal L_{\preceq}$-theory of valued fields we 
let~$T_W$ be the $\mathcal L_W$-theory whose models are the $\mathcal L_W$-structures~$\boldsymbol K=(K;\dots)$ 
whose   $\mathcal L$-reduct is a model of~$T$, 
each~$f\in W_m$ is interpreted by a function~${f^{\boldsymbol K}\colon K^m\to K}$ which is identically zero on the complement of~$\smallo^m$ and
satisfies $f^{\boldsymbol K}(\smallo^m)\subseteq\mathcal O$, and such that the family~$(\phi_m)$ of maps given by~$\phi_m(f)=f^{\boldsymbol K}|{\smallo^m}$ is an infinitesimal $W$-structure on $K$. (Note: the underlying valued field of each model of $T_W$ is henselian of equicharacteristic~$\operatorname{char}\k$.)

\begin{example}
Let $q  \in \Q^>$. We then have an automorphism $a(t)\mapsto a(t^q)$ of the  valued field $K=\k(\!(t^{\Q})\!)$ over $\k$
(with inverse $b(t)\mapsto b(t^{1/q})$). 
For $\mathcal L=\mathcal L_{\preceq}$,
this is also an automorphism of the $\mathcal L_W$-structure $\boldsymbol{K}=(K;\dots)$
 with $\mathcal L$-reduct $K$ described above.
This  automorphism restricts to an automorphism
of the substructure $\big(\k(\!(t^*)\!);\dots\big)$ of $\boldsymbol{K}$.
Given $a\in \k(\!(t^*)\!)$ there is some $d\geq 1$ such that $a(t^d)\in \k(\!(t)\!)$,
and then 
$$a\preceq 1\  \Longleftrightarrow\  a(t^d)\in \k[[t]],\qquad a\prec 1\ \Longleftrightarrow\  a(t^d)\in t\k[[t]].$$
Suppose $\k$ comes equipped with a complete absolute value and $W$ is the Weierstrass system of convergent power series over $\k$. Then the automorphism
$a(t)\mapsto a(t^q)$ of the $\mathcal L_W$-structure~$\big(\k(\!(t^*)\!);\dots\big)$ further restricts to an automorphism
of its substructure $\big(\k\{\!\{t^*\}\!\};\dots\big)$.
\end{example}

\subsection{Zero sets and vanishing ideals}
Let $(\phi_m)$ be an infinitesimal $W$-structure on a valued field $K$, and let $I$ be an ideal  of $W_m=\k\lfloor X\rfloor$. We let
$$\operatorname{Z}_K(I) := \big\{ a\in\smallo^m: \text{$f(a)=0$ for all $f\in I$}\big\}$$
be the {\it zero set}\/ of $I$ in $K$.
One verifies easily that if $J$ is another ideal of $\k\lfloor X\rfloor$, then~$\operatorname{Z}_K(I)\subseteq \operatorname{Z}_K(J)$ if
$I\supseteq J$, and
$$\operatorname{Z}_K(IJ)=\operatorname{Z}_K(I\cap J)=\operatorname{Z}_K(I)\cup \operatorname{Z}_K(J), \qquad
\operatorname{Z}_K(I+J)=\operatorname{Z}_K(I)\cap \operatorname{Z}_K(J).$$
Let also $S$ be a subset of $\smallo^m$. We let
$$\operatorname{I}(S) := \big\{ f\in \k\lfloor X\rfloor: \text{$f(a)=0$ for all $a\in S$} \big\}.$$
Then $\operatorname{I}(S)$ is an ideal of $\k\lfloor X\rfloor$, called the {\it vanishing ideal}\/ of $S$. If also $T\subseteq\smallo^n$, then~$\operatorname{I}(S)\subseteq  \operatorname{I}(T)$ if $S\supseteq T$, and~$\operatorname{I}(S\cup T) = \operatorname{I}(S)\cap \operatorname{I}(T)$.
 Clearly the ideal
$\operatorname{I}(S)$ is radical, so
$\operatorname{I}\!\big(\!\operatorname{Z}_K(I)\big)\supseteq\sqrt{I}$. Also note~$\operatorname{Z}_K(I) = \operatorname{Z}_K(\sqrt{I})$.

\subsection{Germs of analytic functions}
Suppose $\k$ is equipped with a complete absolute value.
Recall that analytic  functions $f\colon U\to\k$ and $g\colon V\to\k$, where~$U,V\subseteq\k^m$ are open neighborhoods of $0$, are said to have the same {\it germ}\/ at $0$ if there is an open neighborhood~$W$ of $0$ contained in $U\cap V$ such that $f(a)=g(a)$ for all~$a\in W$.
The relation of having the same germ at~$0$ is an equivalence relation on the collection of all analytic functions $f\colon U\to\k$ 
where~$U\subseteq\k^m$ is an open neighborhood of~$0$; we denote the equivalence class of~$f$ by~$[f]$. The     equivalence classes of this equivalence relation
form a commutative ring such that $[f]+[g]=[f+g]$ and~$[f]\cdot [g]=[f\cdot g]$ for all analytic functions~$f,g\colon U\to\k$ defined on an open neighborhood $U\subseteq\k^m$ of~$0$. The map sending the germ of~$f$ to its Taylor series at $0$ is a
ring isomorphism from this ring onto $\k\{X\}=\k\{X_1,\dots,X_m\}$, via which we identify these two rings.

\section{A Proof of R\"uckert's Nullstellensatz}\label{sec:Rueckert}

\noindent
In this section $\mathcal L=\mathcal L_{\preceq}$, and we let $\operatorname{ACVF}$ be the $\mathcal L$-theory whose models are the  algebraically closed non-trivially valued fields. We recall a fundamental theorem of the model theory of algebraically closed valued fields.
(See, for example, \cite[Theorem~3.6.1]{ADH} or \cite[Theorem~4.4.2]{PrestelDelzell} for a proof.)

\begin{theorem}[{A.~Robinson~\cite{RobinsonComplete}}]\label{thm:Robinson}
$\operatorname{ACVF}$ has \textup{QE}.
\end{theorem}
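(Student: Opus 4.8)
The plan is to verify the standard algebraic test for quantifier elimination: it suffices to show that whenever $\boldsymbol K,\boldsymbol L\models\operatorname{ACVF}$ with $\boldsymbol L$ being $\abs{K}^+$-saturated, $\boldsymbol A$ is an $\mathcal L_{\preceq}$-substructure of $\boldsymbol K$, and $h\colon\boldsymbol A\to\boldsymbol L$ is an embedding, then $h$ extends to an embedding $\boldsymbol K\to\boldsymbol L$. By a routine transfinite recursion (Zorn over chains of partial embeddings) this reduces to extending $h$ to $\boldsymbol A\langle a\rangle$ for a single element $a\in K$.

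First I would put $\boldsymbol A$ into a convenient form. An $\mathcal L_{\preceq}$-substructure of $\boldsymbol K$ is an integral domain carrying the restricted dominance relation, i.e.\ a valued domain; since $\boldsymbol L$ is a field, $h$ extends uniquely to the fraction field of $A$ computed inside $K$, equipped with the induced dominance relation, so we may assume $A$ is a valued subfield of $K$. Next, using that $\boldsymbol L$ is algebraically closed together with the conjugacy of the extensions of a valuation to an algebraic extension, $h$ extends to the relative algebraic closure of $A$ in $K$; thus we may further assume that $A$ is an \emph{algebraically closed} valued subfield of $K$. In particular $\Gamma_A$ is divisible, $\operatorname{res}(A)$ is algebraically closed, and if $a\notin A$ then $a$ is transcendental over $A$.

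The heart of the matter is then the classical trichotomy for a simple transcendental valued-field extension $A(a)$ of $A$: either (i) $va\in\Gamma_A$ and, after multiplying $a$ by a suitable element of $A^\times$, its residue is transcendental over $\operatorname{res}(A)$; or (ii) $va\notin\Gamma_A$; or (iii) $A(a)\mid A$ is immediate and $a$ is a pseudolimit of a pseudo-Cauchy sequence $(a_\rho)_\rho$ in $A$ with no pseudolimit in $A$ (which, $A$ being algebraically closed, is necessarily of transcendental type). In each case $A(a)$ is determined up to $A$-isomorphism by data carried by $A$, by $\Gamma_A$, by $\operatorname{res}(A)$, and by $A$ respectively, and I would invoke $\abs{K}^+$-saturation of $\boldsymbol L$ to produce $b\in L$ realizing the corresponding data over $h(A)$: in case (i) an element with $vb\in\Gamma_{h(A)}$ prescribed and residue transcendental over $\operatorname{res}(h(A))$; in case (ii) an element whose value realizes the cut of $va$ over $\Gamma_{h(A)}$ in $\Gamma_L$; in case (iii) a pseudolimit of $(h(a_\rho))_\rho$ in $L$. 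Each of these is expressed by a partial type over $h(A)$ that is finitely satisfiable in $\boldsymbol L$, hence realized by saturation, and one checks that $a\mapsto b$ induces the desired valued-field embedding $\boldsymbol A\langle a\rangle=A(a)\to\boldsymbol L$ extending $h$.

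I expect the main obstacle to be case (iii): proving, following Kaplansky's theory of pseudo-convergence, that any two immediate simple transcendental valued extensions of $A$ realized as pseudolimits of one pseudo-Cauchy sequence of transcendental type are $A$-isomorphic, and that an $a$ transcendental over the algebraically closed $A$ with $A(a)\mid A$ immediate indeed arises as such a pseudolimit. The field-theoretic reductions and the saturation bookkeeping are comparatively routine, though one must also track non-triviality of the valuation throughout — it is what rules out the degenerate configurations and ensures that $\operatorname{ACVF}$ as formulated is the correct complete theory for which the equivalence $\forall x(\varphi\leftrightarrow\psi)$ can be arranged.
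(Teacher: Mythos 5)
The paper does not actually prove this theorem: it quotes Robinson's result and refers to \cite[Theorem~3.6.1]{ADH} and \cite[Theorem~4.4.2]{PrestelDelzell}, and your sketch is essentially the argument given in those sources (saturation embedding test, reduction to a relatively algebraically closed valued subfield, and the residual/valuational/immediate trichotomy settled by Kaplansky's theory of pseudo-convergence for sequences of transcendental type). The one point needing repair is your statement of the trichotomy for the element $a$ itself: it is not exhaustive as written, since for instance $a=c+t$ with $c\in A$ and $vt\notin\Gamma_A$ may satisfy none of your clauses (i)--(iii); one must first translate by elements of $A$, i.e.\ run the case division on the set $\{v(a-c):c\in A\}$ --- according to whether it attains a maximum lying in $\Gamma_A$ (residual case for a normalized $a-c$), attains a value outside $\Gamma_A$ (valuational case for $a-c$), or is strictly increasing without maximum (the pseudo-Cauchy case). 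With that correction the sketch is the standard proof.
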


\noindent
In particular, the theory $\operatorname{ACVF}$ is substructure complete, that is, if $K,L\models\operatorname{ACVF}$ and $A$ is a substructure of both $K$ and $L$, then $K_A\equiv L_A$. (Here $K_A$ is the natural expansion of $K$ to an $\mathcal L_A$-structure
where $\mathcal L_A$ extends the language $\mathcal L$ by new constant symbols for the elements of $A$, and likewise with $L_A$ in place of $K_A$: see
Section~\ref{sec:mtvf}.)
Moreover,
 $\operatorname{ACVF}$ is the model completion of the $\mathcal L$-theory
of pairs $(R,{\preceq})$ where $R$ is an integral domain and $\preceq$ is a dominance relation on $R$  \cite[Corollary~3.6.4]{ADH}.

\medskip
\noindent
In the following we let   $W$ be a Weierstrass system over a field~$\k$. 
We also let~$X=(X_1,\dots,X_m)$ be a tuple of distinct indeterminates and~$y=(y_1,\dots,y_n)$ be a tuple of distinct $\mathcal L$-variables.
Let  $K\models \operatorname{ACVF}_W$ and~$\mathcal O=\mathcal O_K$, $\smallo=\smallo_K$.
Recall that for~$a\in\smallo^m$, the map
 $f\mapsto f(a)$ is a local ring morphism~${\k\lfloor X\rfloor\to \mathcal O}$, and that we view~$\k$ as a subfield of $\mathcal O$.
 We  now use Theorem~\ref{thm:Robinson} to prove the following  specialization property:  
 
\begin{prop} \label{prop:Rueckert spec}
Let $\varphi(y)$ be an  $\mathcal L$-formula, $f=(f_1,\dots,f_n)\in \k\lfloor X\rfloor^n$,
$\Omega\models\operatorname{ACVF}$, and $\sigma \colon \k\lfloor X\rfloor\to \mathcal O_\Omega$ be a 
 local ring morphism  such that $\Omega\models\varphi(\sigma(f))$. Then
$K\models\varphi(f(a))$ for some $a\in\smallo^m$.
\end{prop}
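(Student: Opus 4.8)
The plan is to reduce, using Theorem~\ref{thm:Robinson}, to the case in which $\varphi$ is quantifier-free---and then, after putting it in disjunctive normal form and discarding disjuncts, to the case in which $\varphi$ is a conjunction of atomic and negated atomic $\mathcal L_{\preceq}$-formulas---and to argue by induction on $m$ with the help of Weierstrass Preparation, using the substructure completeness of $\operatorname{ACVF}$ (a consequence of Theorem~\ref{thm:Robinson}) where the classical arguments of Weispfenning and Robbin use model completeness of $\operatorname{ACF}$. After this reduction, $\Omega\models\varphi(\sigma(f))$ asserts finitely many conditions $\sigma(g)=0$, $\sigma(g)\neq 0$, $\sigma(g)\preceq\sigma(h)$, $\sigma(g)\not\preceq\sigma(h)$, with $g,h$ ranging over the finite set of elements $P(f)\in\k\lfloor X\rfloor$, $P$ a polynomial occurring in $\varphi$, and the goal is to find $a\in\smallo^m$ for which the analogous conditions hold for $g(a),h(a)$ in $K$. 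This goal is the truth in $K$ of a single $\mathcal L_W$-sentence, so we are free along the way to replace $K$ by any $\mathcal L_W$-elementary extension of it. When $m=0$ one has $\k\lfloor X\rfloor=\k$ with the trivial dominance relation, $\smallo^0$ is a point, $f(a)$ is the tuple $\phi_0(f)$, and the copy of $\k$ with its trivial dominance is a common $\mathcal L_{\preceq}$-substructure of $\Omega$ (via $\sigma$) and of $K$ (via $\phi_0$) over which $\Z[\sigma(f)]\cong\Z[\phi_0(f)]$; substructure completeness then gives $\varphi(\sigma(f))\Leftrightarrow\varphi(\phi_0(f))$.

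For the inductive step, first use a permutation of the indeterminates and a shear $\tau_d$ (Corollary~\ref{cor:shear})---operations that change neither the hypothesis nor the conclusion---to arrange that the finitely many relevant nonzero elements $g=P(f)$, together with a finite set of generators $p_1,\dots,p_s$ of the prime ideal $\ker\sigma$ (finite since $\k\lfloor X\rfloor$ is noetherian, Lemma~\ref{lem:W-Lemma}), are all regular in $X_m$. Weierstrass Preparation (Lemma~\ref{lem:WP}) then writes each of these elements as a unit of $\k\lfloor X\rfloor$ times a monic polynomial in $X_m$ over $\k\lfloor X'\rfloor$, where $X'=(X_1,\dots,X_{m-1})$. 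Since units map to units of $\mathcal O_\Omega$ and $\mathcal O_K$, and comparisons with units are trivial, the conditions defining $\varphi(\sigma(f))$ become conditions on the $\sigma$-images of polynomials in $X_m$ with coefficients in $\k\lfloor X'\rfloor$---that is, polynomial conditions on $\sigma(X_m)$ over the subring $\sigma(\k\lfloor X'\rfloor)\subseteq\mathcal O_\Omega$. One then produces the required point as follows: apply the inductive hypothesis to $\sigma|_{\k\lfloor X'\rfloor}$ (with a suitable quantifier-free formula in the $X'$-variables encoding the behaviour of all the coefficient series that occur) to obtain $a'\in\smallo^{m-1}_K$ matching the $(m-1)$-variable data; pass to a large $\mathcal L_W$-elementary extension $L$ of $K$ and adjoin to $K$ a root $a_m$ of the monic polynomial coming from some $p_l\neq 0$ evaluated at $a'$---a root chosen, like $\sigma(X_m)$, to avoid the finitely many prohibited equalities and to match the required dominance relations, which is possible by substructure completeness of $\operatorname{ACVF}$ since the residue field and value group of $\Omega$ are matched there. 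By Lemma~\ref{lem:W-restrict} the infinitesimal $W$-structure of $L$ restricts to the algebraic extension $K_0=K(a_m)$, so $a=(a',a_m)\in\smallo^m_{K_0}\subseteq\smallo^m_L$; since $\varphi$ is quantifier-free, $K_0\models\varphi(f(a))$ transfers to $L\models\varphi(f(a))$, and hence to $K$. (If $\ker\sigma=0$ there is no algebraic constraint and one instead takes $a_m$ infinitesimal and transcendental.)

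The main obstacle is precisely the bookkeeping of this inductive step. A general local morphism $\sigma$ need not be ``evaluation at a point'' of any valued field, so $\sigma(X)$ cannot simply be transported into $K$; the point of the shear/Weierstrass-Preparation reduction is that it isolates a single new coordinate $X_m$ that becomes \emph{algebraic, of bounded degree,} over the data in the remaining variables---so that the extension lemma (Lemma~\ref{lem:W-restrict}) applies---while the transcendence residing in $\sigma|_{\k\lfloor X'\rfloor}$ is absorbed into the inductive hypothesis. Verifying that each of the finitely many defining conditions of $\varphi(\sigma(f))$---in particular the equalities $\sigma(g)=0$, which must become genuine vanishing $g(a)=0$ rather than merely ``$g(a)$ infinitely small''---is correctly tracked through the preparation, and that all of them can be met simultaneously by a single choice of $a$, is the technical heart of the argument; substructure completeness of $\operatorname{ACVF}$ enters at each level exactly to match residue fields and value groups.
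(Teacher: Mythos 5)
Your overall skeleton coincides with the paper's: induction on $m$, quantifier elimination for $\operatorname{ACVF}$ to reduce to a conjunction of (negated) atomic conditions on finitely many $g=P(f)\in\k\lfloor X\rfloor$, the base case via substructure completeness with $\k$ trivially valued, and then a shear $\tau_d$ plus Weierstrass Preparation to replace each relevant series by a polynomial in $X_m$ over $\k\lfloor X'\rfloor$ (units being harmless because $\sigma$ is local). But your inductive step has a genuine gap. You propose a two-stage procedure: first apply the inductive hypothesis to $\sigma|_{\k\lfloor X'\rfloor}$ to get $a'\in\smallo^{m-1}$ matching ``the behaviour of all the coefficient series,'' and then separately find $a_m$ over $a'$ satisfying the required equalities, inequalities and dominance relations, ``by substructure completeness.'' The inductive hypothesis only ever secures \emph{finitely many} quantifier-free conditions on the tuple $w(a')=(w_{jk}(a'))$, and no fixed finite set of such conditions guarantees that the system of constraints on the last coordinate is solvable with $a_m\prec 1$: whether, say, $w_1(a',\cdot)$ has a root in $\smallo$ at which $w_2(a',\cdot)$ does not vanish and the prescribed $\preceq$-relations hold is itself a nontrivial existential condition on the coefficients, and substructure completeness cannot be invoked to transfer it without exhibiting a common substructure over which that existential statement is decided --- which would require $a'$ to have been chosen to realize precisely (a quantifier-free equivalent of) that statement. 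Tying $a_m$ to a root of one generator $p_l$ of $\ker\sigma$ also does not control the vanishing of the other elements of $\ker\sigma$ occurring in $\varphi$; and since $K\models\operatorname{ACVF}$ is algebraically closed, the detour through an extension $L$ and Lemma~\ref{lem:W-restrict} is both unnecessary and, as written, leaves $a_m$ possibly outside $K$, whereas the conclusion requires $a\in\smallo_K^m$.

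The repair is exactly the paper's device, and it is worth isolating: the proposition is stated for \emph{arbitrary} $\mathcal L$-formulas, not just quantifier-free ones, so the search for the last coordinate can be folded into the formula handed to the inductive hypothesis. Writing $w_j=\sum_k w_{jk}X_m^k$ with $w_{jk}\in\k\lfloor X'\rfloor$ and $t_j(u,v)=\sum_k u_{jk}v^k$, one forms
$$\psi(u)\ :=\ \exists v\big(v\prec 1\wedge\varphi\big(t_1(u,v),\dots,t_n(u,v)\big)\big),$$
notes that $\Omega\models\psi(\sigma(w))$ (witnessed by $v=\sigma(X_m)\prec 1$, again because $\sigma$ is local), and applies the inductive hypothesis to $\psi$, the coefficient tuple $w=(w_{jk})$, and $\sigma|_{\k\lfloor X'\rfloor}$. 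This yields $b\in\smallo^{m-1}$ with $K\models\psi(w(b))$, which \emph{is} the existence of $a_m\in\smallo$ with $K\models\varphi(f(b,a_m))$ --- no extension of $K$, no matching of residue fields or value groups, and no separate solvability argument is needed. Your proposal contains all the ingredients for this but stops short of the one step that makes the induction close.
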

\begin{proof}
We proceed by induction on the length $m$ of   $X$. 
Note that if we equip $\k$ with the trivial dominance relation, then the restriction of $\sigma$ to an embedding $\k\to\Omega$
as well as the natural inclusion $\k\to K$ are $\mathcal L$-embeddings. Hence
the case $m=0$ follows from substructure completeness of $\operatorname{ACVF}$. Suppose~$m\geq 1$. By
Theorem~\ref{thm:Robinson} we   arrange that $\varphi$ is quantifier-free, and we may then assume that $\varphi$ has the form
$$ \bigwedge_{i\in I} P_i(y) = 0 \wedge Q(y)\neq 0 \wedge \bigwedge_{j\in J} R_j(y)\, \square_j\, S_j(y)$$
where $I$, $J$ are finite index sets, $P_i,Q,R_j,S_j\in\Z[Y_1,\dots,Y_n]$, and each $\square_j$ is one of the symbols $\preceq$ or $\prec$.
Now note that for each~$P\in \Z[Y_1,\dots,Y_n]$ we have  $P(f)\in\k\lfloor X\rfloor$ as well as $P(\sigma(f))=\sigma(P(f))$ and $P(f(a))=P(f)(a)$ for every $a\in\smallo^m$.
Hence by suitably modifying  $\varphi$, $f$ (which may include increasing $n$) we can arrange
that the polynomials $P_i$, $Q$, $R_j$, $S_j$ are just distinct elements of~$\{Y_1,\dots,Y_n\}$.
For   example, for $n=4$, our formula
 $\varphi$ may have the form
$$y_1 = 0 \ \wedge\ y_2  \neq 0  \ \wedge\ y_3 \,\square\, y_4\quad\text{where $\square$ is either $\preceq$ or $\prec$.}$$
(The general case is only notationally more involved.) We may also clearly arrange that the formal power series $f_1,\dots,f_n$ are all non-zero. 
Take $d\in\N$, $d\geq 1$,  such that with $\tau:=\tau_d$, each~$\tau(f_j)$ is regular in $X_m$.
For each $g\in \k\lfloor X\rfloor$ and~$a\in\smallo^m$  we have $\tau(g)(a)=g(\tau(a))$.
(Corollary~\ref{cor:shear}.)
Hence
replacing each~$f_j$ by~$\tau(f_j)$  and~$\sigma$ by $\sigma\circ\tau^{-1}$ we arrange that
$f_j$ is regular in $X_m$, for~$j=1,\dots,n$.
Weierstrass Preparation in~$\k\lfloor X\rfloor$  (Lemma~\ref{lem:WP}) yields a $1$-unit~$u_j\in\k\lfloor X\rfloor$  and a   polynomial~$w_j\in\mathcal \k\lfloor X'\rfloor[X_m]$, where $X'=(X_1,\dots,X_{m-1})$, such that~$f_j=u_j w_j$, for~$j=1,\dots,n$. 
We have $\sigma(u_j)\sim 1$ since $\sigma$ is local.
Hence we may replace each~$f_j$ by $w_j$ to arrange that $f_j=w_j$ is a polynomial.
Take $e\in\N$ and $w_{jk}\in  \k\lfloor X'\rfloor$~($j=1,\dots,n$, $k=0,\dots,e$) such that
$$w_j = w_{j0} + w_{j1} X_m +\cdots + w_{je}X_m^e,$$
and set $w:=(w_{jk})$.
Let  $u:=(u_{jk})$ be a tuple of distinct new variables and $v$ be a new variable; then~$\sigma(w_j)=t_j(\sigma(w),\sigma(X_m))$ for the $\mathcal L_{\operatorname{R}}$-term
$$t_j(u,v) := u_{j0}+ u_{j1} v +\cdots + u_{je}v^e.$$
Consider
 the existential $\mathcal L$-formula
$$\psi(u) := \exists v \big(v\prec 1 \wedge \varphi\big(t_1(u,v),\dots,t_n(u,v)\big)\big).$$
Then
\begin{align*}
\Omega\models \varphi(\sigma(f)) &\quad\,\Longrightarrow\quad \Omega\models \psi(\sigma(w)) \\
& \quad\, \Longrightarrow\quad K \models\psi(w(b)) \text{ for some $b\in\smallo^{m-1}$} \\
& \quad\Longleftrightarrow\quad K \models \varphi(f(a)) \text{ for some $a\in\smallo^{m}$,}
\end{align*}
where for the second implication we used the inductive hypothesis applied to $\psi$, $w$, and the restriction of $\sigma$ to~$\k\lfloor X'\rfloor$ in place of $\varphi$, $f$, $\sigma$, respectively.
\end{proof}

\noindent
The following corollary (not used later) captures the model-theoretic essence of Proposition~\ref{prop:Rueckert spec}:

\begin{cor}\label{cor:exists-complete}
Let $\theta$ be an existential  $\mathcal L_W$-sentence. If $K\models  \theta$, then $L\models \theta$ for each~$L\models \operatorname{ACVF}_W$.  
\end{cor}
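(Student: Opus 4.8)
The plan is to reduce the statement to Proposition~\ref{prop:Rueckert spec}, with $L$ playing the role of the model called $K$ there and with the local morphism $\sigma$ taken to be evaluation at a witness point drawn from $K$. Concretely, I would first show that any existential $\mathcal L_W$-sentence $\theta$ is equivalent, modulo $\operatorname{ACVF}_W$, to a finite disjunction $\bigvee_t\theta_t$ of sentences of the shape
$$\theta_t\ =\ \exists a\,\Bigl(\bigwedge_{i=1}^{m}a_i\prec 1\ \wedge\ \varphi_t\bigl(f^{(t)}_1(a),\dots,f^{(t)}_{n_t}(a)\bigr)\Bigr),$$
where $a=(a_1,\dots,a_m)$, each $f^{(t)}_j$ lies in $\k\lfloor X\rfloor$ with $X=(X_1,\dots,X_m)$, and $\varphi_t$ is an $\mathcal L$-formula with $n_t$ free variables. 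Such a disjunction is exactly the kind of assertion Proposition~\ref{prop:Rueckert spec} is designed to transfer.

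To obtain the normal form I would proceed by the usual syntactic manipulations. Write $\theta=\exists\bar z\,\chi(\bar z)$ with $\chi$ quantifier-free in $\mathcal L_W$, put $\chi$ into disjunctive normal form, and flatten all $\mathcal L_W$-terms by introducing fresh existentially quantified variables for their subterms, so that every function symbol $g\in W_p$ occurs only inside atoms $w=g(u_1,\dots,u_p)$ with $w,u_1,\dots,u_p$ variables. Next, split each such atom according to whether $(u_1,\dots,u_p)\in\smallo^p$ --- a condition $\bigwedge_iu_i\prec 1$ expressible in $\mathcal L_{\preceq}$: in the negative case $g^{\boldsymbol K}$ vanishes there, so the atom collapses to $w=0$ together with $\mathcal L_{\preceq}$-literals, while in the positive case one retains the conjuncts $u_i\prec 1$ and $g$ is governed by the infinitesimal $W$-structure. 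Since $\exists$ distributes over $\vee$, this produces a finite disjunction. In each disjunct I would collect all variables occurring as arguments of $W$-symbols into a single tuple $a$ (reordering and padding with dummies via (W2), (W3)), resolve nested occurrences of $W$-symbols by iterated substitution using the Substitution results, Corollaries~\ref{cor:DL subst} and~\ref{cor:W-structure subst} --- legitimate precisely because the retained conjuncts $u_i\prec 1$ force the inner $W$-symbols to have zero constant term (otherwise, by Lemma~\ref{lem:phin(f)(0)}, the corresponding value is a unit, contradicting $\prec 1$, so the disjunct is empty and may be discarded) --- and then substitute the resulting element $\tilde g(a)\in\k\lfloor X\rfloor$ for each variable $w$ occurring as a left-hand side $w=g(u)$. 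Finally, I would include the coordinate functions $X_i\in\k\lfloor X\rfloor$ among the $f^{(t)}_j$, so that $\mathcal L_{\preceq}$-atoms referring to the $a_i$ directly can likewise be written in terms of the $f^{(t)}_j(a)$, and absorb the remaining unconstrained existential variables into $\varphi_t$ as inner quantifiers.

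Granting the normal form, the conclusion is immediate. Fix $L\models\operatorname{ACVF}_W$. From $K\models\theta$ and $K\models\operatorname{ACVF}_W$ we get $K\models\theta_t$ for some $t$, hence a point $a^\ast\in\smallo_K^m$ with $K\models\varphi_t\bigl(f^{(t)}(a^\ast)\bigr)$. Take $\Omega:=K$, which is a model of $\operatorname{ACVF}$ as the $\mathcal L$-reduct of a model of $\operatorname{ACVF}_W$, and let $\sigma\colon\k\lfloor X\rfloor\to\mathcal O_\Omega$ be the local ring morphism $g\mapsto g(a^\ast)$; then $\sigma\bigl(f^{(t)}_j\bigr)=f^{(t)}_j(a^\ast)$, so $\Omega\models\varphi_t\bigl(\sigma(f^{(t)})\bigr)$. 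Proposition~\ref{prop:Rueckert spec}, applied with $L$ in place of $K$, now yields some $a'\in\smallo_L^m$ with $L\models\varphi_t\bigl(f^{(t)}(a')\bigr)$, whence $L\models\theta_t$ and therefore $L\models\theta$.

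I expect the only genuinely technical point to be the normal-form step: keeping track of which variables must be forced into $\smallo$, and collapsing compositions of $W$-function symbols into single elements of $\k\lfloor X\rfloor$ --- a routine but somewhat fiddly induction on term structure. All the substance already resides in Proposition~\ref{prop:Rueckert spec}; the corollary merely records that existential $\mathcal L_W$-sentences can be brought into the single-point-evaluation form that proposition addresses, and that evaluation of $\k\lfloor X\rfloor$ at a point of $\smallo^m$ supplies exactly the local morphism its hypothesis demands.
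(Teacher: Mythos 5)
Your proof is correct and follows essentially the same route as the paper's: unnest the $\mathcal L_W$-terms, case-split on which arguments lie in $\smallo$, and transfer via Proposition~\ref{prop:Rueckert spec} applied with $\Omega=K$ and $\sigma$ the local evaluation morphism at a witness point. The paper compresses the normal-form step by citing standard unnesting lemmas and making a single global case split on $x\in\smallo^m$ (handling the non-infinitesimal disjunct directly with Theorem~\ref{thm:Robinson}), whereas you split per atom and re-collapse composed $W$-symbols via the substitution corollaries, but the substance is identical.
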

\begin{proof}
Take a tuple $x=(x_1,\dots,x_m)$ of distinct new variables, for some $m$, and a boolean combination $\psi(x)$ of 
unnested atomic $\mathcal L_W$-formulas 
with $\models\theta \leftrightarrow \exists x \psi$; here   $\exists x$ abbreviates  $\exists x_1\cdots\exists x_m$.
(See \cite[Lemmas~B.4.5 and~B.5.5]{ADH}.)
Then~$\psi(x)=\varphi(x,f(x))$ for some quantifier-free $\mathcal L$-formula
$\varphi(x,y)$ and~$f\in\k\lfloor X\rfloor^n$. 
Now 
$$\operatorname{ACVF}_W\models \theta \leftrightarrow \exists x\big( (1\preceq x_1\vee\cdots\vee 1\preceq x_m) \wedge\varphi(x,0)\big) \vee \exists x \big(x_1\prec 1\wedge\cdots\wedge x_m\prec 1\wedge 
\psi\big),$$
hence the corollary follows from Theorem~\ref{thm:Robinson} and Proposition~\ref{prop:Rueckert spec}.
\end{proof}

\begin{cor}\label{cor:spec}
Let $\varphi(y)$ be an $\mathcal L_{\operatorname{R}}$-formula, 
     $f\in \k\lfloor X\rfloor^n$, and let~$\sigma \colon \k\lfloor X\rfloor\to \Omega$ be a ring morphism  to an algebraically
  closed field $\Omega$ such that~$\Omega\models\varphi(\sigma(f))$. Then~$K\models\varphi(f(a))$ for some $a\in\smallo^m$.
\end{cor}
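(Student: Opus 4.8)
The plan is to reduce Corollary~\ref{cor:spec} to Proposition~\ref{prop:Rueckert spec}. Note first that the $\mathcal L_{\operatorname{R}}$-formula $\varphi(y)$ is in particular an $\mathcal L$-formula, since $\mathcal L_{\operatorname{R}}\subseteq\mathcal L=\mathcal L_{\preceq}$, and that $K\models\operatorname{ACVF}_W$; so the only thing missing from the hypotheses of Proposition~\ref{prop:Rueckert spec} is a valuation on (an algebraically closed extension of) $\Omega$ making $\sigma$ into a \emph{local} ring morphism into a model of $\operatorname{ACVF}$.

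First I would equip $\Omega$ with a valuation dominating the subring $A:=\sigma(\k\lfloor X\rfloor)$. Writing $\mathfrak q:=\ker\sigma$, this is a proper (hence, since $\k\lfloor X\rfloor$ is local, prime) ideal contained in $(X)$, so $A\cong\k\lfloor X\rfloor/\mathfrak q$ is local with maximal ideal $\sigma\bigl((X)\bigr)$ and residue field $\k$. By Krull's theorem (recalled before Lemma~\ref{lem:existence of places}) there is a valuation ring $\mathcal O$ of $\Omega$ lying over $A$; then $\sigma^{-1}(\smallo_{\mathcal O})$ is a proper ideal of $\k\lfloor X\rfloor$ containing $(X)$, hence equals $(X)$, so $\sigma\colon\k\lfloor X\rfloor\to\mathcal O$ is a local ring morphism. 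This valuation on $\Omega$ may well be trivial (for instance if $A=\k=\Omega$ with $\k$ algebraically closed and carrying only the trivial valuation), so next I would embed the valued field $(\Omega,\mathcal O)$ into a model $\Omega'\models\operatorname{ACVF}$, which is possible because $\operatorname{ACVF}$ is the model completion of the theory of pairs consisting of an integral domain with a dominance relation. Since $\mathcal O=\mathcal O_{\Omega'}\cap\Omega$ and $\smallo_{\mathcal O}=\smallo_{\Omega'}\cap\Omega$, the composite $\sigma\colon\k\lfloor X\rfloor\to\mathcal O\subseteq\mathcal O_{\Omega'}$ is again local; and as $\Omega\subseteq\Omega'$ are algebraically closed fields and $\operatorname{ACF}$ is model complete, $\Omega\preceq\Omega'$ as $\mathcal L_{\operatorname{R}}$-structures, whence $\Omega'\models\varphi(\sigma(f))$.

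With these preparations in place, Proposition~\ref{prop:Rueckert spec} applied to $\varphi$, $f$, $\Omega'$, and $\sigma$ produces some $a\in\smallo^m$ with $K\models\varphi(f(a))$, which is the assertion. I expect the only point that needs genuine care to be the passage making $\sigma$ local: one must notice that $\sigma$ need not be local a priori and that $\Omega$ itself may admit no non-trivial valuation, so the domination argument via Krull together with the detour through $\Omega'$ is really doing work; the transfer of $\varphi$ from $\Omega$ to $\Omega'$ and the verification that locality persists under the embedding are then routine applications of the facts recalled in Sections~\ref{sec:val th} and~\ref{sec:Rueckert}.
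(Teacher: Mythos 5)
Your proof is correct, and its core is the same as the paper's: dominate the local image $A=\sigma(\k\lfloor X\rfloor)$ by a valuation ring of $\Omega$ obtained from Krull/Zorn and feed the resulting local morphism into Proposition~\ref{prop:Rueckert spec}. Where you diverge is in how the possible triviality of that valuation is handled. The paper splits into cases on $\ker\sigma$: if $\ker\sigma=(X)$, the image is the field $\k$ and the conclusion follows directly from substructure completeness of the $\mathcal L_{\operatorname{R}}$-theory of algebraically closed fields with $a=0$; if $\ker\sigma\neq(X)$, then $A$ is local but not a field, so any valuation ring of $\Omega$ lying over $A$ is automatically non-trivial and no extension of $\Omega$ is needed. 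Your uniform route instead allows the dominating valuation to be trivial and then embeds $(\Omega,\mathcal O)$ into some $\Omega'\models\operatorname{ACVF}$ (using that $\operatorname{ACVF}$ is the model completion of domains with a dominance relation), transferring $\varphi$ from $\Omega$ to $\Omega'$ by model completeness of the theory of algebraically closed fields; this costs two extra routine ingredients but spares the case distinction, and both versions are equally valid. One small slip: your parenthetical ``proper, hence prime since $\k\lfloor X\rfloor$ is local'' is not a correct justification --- $\ker\sigma$ is prime because $\Omega$ is a domain, not because the ring is local --- but primality is never actually used: all you need is that $\ker\sigma$ is a proper ideal, so that $A$ is local with maximal ideal $\sigma\bigl((X)\bigr)$.
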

\begin{proof}
Suppose first that $\ker\sigma=(X)$; then $\sigma(g)=\sigma(g(0))$ for every $g\in  \k\lfloor X\rfloor$, and the corollary follows
from substructure completeness of the $\mathcal L_{\operatorname{R}}$-theory   of algebraically closed fields by taking $a:=0\in\smallo^m$. 
Now suppose $\ker\sigma\neq(X)$.
The image~$A$ of $\sigma$ is a local subring   of $\Omega$ (cf.~Lemma~\ref{lem:W-Lemma}) but not a field. Take a valuation ring of $\Omega$ lying over $A$  and equip $\Omega$ with the associated dominance relation. Then~$\Omega\models\operatorname{ACVF}$ and   $\sigma \colon \k\lfloor X\rfloor\to \mathcal O_\Omega$ is a local ring morphism, so the previous proposition applies. 
\end{proof}

\noindent
Corollary~\ref{cor:spec} now implies a    general version of R\"uckert's Nullstellensatz:

\begin{theorem}[R\"uckert's Nullstellensatz]\label{thm:Rueckert}
Let $I$ be an ideal of~$\k\lfloor X\rfloor$. Then
$$\operatorname{I}\!\big(\!\operatorname{Z}_K(I)\big)=\sqrt{I}.$$
\end{theorem}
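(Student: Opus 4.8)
The plan is to prove the two inclusions separately. The inclusion $\sqrt I\subseteq\operatorname{I}\!\big(\!\operatorname{Z}_K(I)\big)$ is immediate from remarks already made: every element of $I$ vanishes on $\operatorname{Z}_K(I)$, so $\operatorname{I}\!\big(\!\operatorname{Z}_K(I)\big)$ is a radical ideal containing $I$, hence contains $\sqrt I$. For the reverse inclusion I would argue by contraposition: given $f\in\k\lfloor X\rfloor$ with $f\notin\sqrt I$, the goal is to exhibit a point $a\in\operatorname{Z}_K(I)$ with $f(a)\neq 0$, which shows $f\notin\operatorname{I}\!\big(\!\operatorname{Z}_K(I)\big)$.

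To produce such a point, first I would use that $\k\lfloor X\rfloor$ is noetherian (Lemma~\ref{lem:W-Lemma}) to write $I=(g_1,\dots,g_n)$, and that $\sqrt I$ is the intersection of the prime ideals of $\k\lfloor X\rfloor$ containing $I$; since $f\notin\sqrt I$, there is a prime $P\supseteq I$ with $f\notin P$. Let $\Omega$ be an algebraic closure of $\Frac(\k\lfloor X\rfloor/P)$ and $\sigma\colon\k\lfloor X\rfloor\to\Omega$ the composite of the residue map $\k\lfloor X\rfloor\to\k\lfloor X\rfloor/P$ with the inclusion into $\Omega$; then $\sigma(g_1)=\cdots=\sigma(g_n)=0$ while $\sigma(f)\neq 0$. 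Next I would apply Corollary~\ref{cor:spec} to the $\mathcal L_{\operatorname{R}}$-formula $\varphi(y_0,y_1,\dots,y_n)$ given by $y_1=0\wedge\cdots\wedge y_n=0\wedge y_0\neq 0$ and the tuple $(f,g_1,\dots,g_n)\in\k\lfloor X\rfloor^{n+1}$: since $\Omega\models\varphi\big(\sigma(f),\sigma(g_1),\dots,\sigma(g_n)\big)$, the corollary yields $a\in\smallo^m$ with $K\models\varphi\big(f(a),g_1(a),\dots,g_n(a)\big)$, i.e.\ $g_1(a)=\cdots=g_n(a)=0$ and $f(a)\neq 0$. Because $h\mapsto h(a)$ is a ring morphism $\k\lfloor X\rfloor\to\mathcal O$, vanishing of the generators $g_i$ at $a$ forces $h(a)=0$ for every $h\in I$, so $a\in\operatorname{Z}_K(I)$; together with $f(a)\neq 0$ this completes the contraposition.

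I do not expect a serious obstacle at this stage: the entire model-theoretic content — the specialization of a solution over an algebraically closed $\Omega$ to a solution in $\smallo^m$ — has already been packaged into Corollary~\ref{cor:spec}, which rests on Proposition~\ref{prop:Rueckert spec} and Robinson's quantifier elimination for $\operatorname{ACVF}$ (Theorem~\ref{thm:Robinson}). The only points requiring a little care are the standard Rabinowitsch-type reduction to a single prime ideal containing $I$ but avoiding $f$, and the observation that membership of a point $a\in\smallo^m$ in $\operatorname{Z}_K(I)$ can be tested on a finite generating set of $I$, using that evaluation $h\mapsto h(a)$ is a ring homomorphism.
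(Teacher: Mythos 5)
Your proposal is correct and follows essentially the same route as the paper: both reduce to a prime ideal $P\supseteq I$ (the paper by decomposing $\sqrt I$ into its minimal primes, you by choosing for each $f\notin\sqrt I$ a prime containing $I$ but not $f$ — a purely organizational difference), and both then apply Corollary~\ref{cor:spec} to the quantifier-free formula $y_0\neq 0\wedge y_1=\cdots=y_n=0$ with $\Omega$ the algebraic closure of $\Frac(\k\lfloor X\rfloor/P)$. No gaps.
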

\begin{proof}
If we have some $a\in\operatorname{Z}_K(I)$, then the kernel of the
ring morphism $f\mapsto f(a)\colon\k\lfloor X\rfloor\to K$ is a  prime ideal of~$\k\lfloor X\rfloor$ containing $I$. Hence we may assume that~$I\neq \k\lfloor X\rfloor$.
Recall: the ring~$\k\lfloor X\rfloor$ is noetherian (Lemma~\ref{lem:W-Lemma}). This yields prime ideals~$P_1,\dots,P_k$~($k\geq 1$) of $\k\lfloor X\rfloor$
such that
$\sqrt{I}=P_1\cap\cdots\cap P_k$.
Then
$$\operatorname{I}\!\big(\!\operatorname{Z}_K(I)\big)=\operatorname{I}\!\big(\!\operatorname{Z}_K(\sqrt{I})\big)=
\operatorname{I}\!\big(\!\operatorname{Z}_K(P_1)\big)\cap\cdots\cap\operatorname{I}\!\big(\!\operatorname{Z}_K(P_k)\big),$$
so it suffices to treat the case where $I$ is  prime. So from now on assume~$I$ is a prime ideal of $\k\lfloor X\rfloor$;
we need to show that then $\operatorname{I}\!\big(\!\operatorname{Z}_K(I)\big)=I$. 
Let $\Omega$ be an algebraic closure of the fraction field   of the integral domain $A:=\k\lfloor X\rfloor/I$, and let~$\sigma\colon \k\lfloor X\rfloor\to \Omega$ be the
composition of the residue morphism $f\mapsto f+I\colon\k\lfloor X\rfloor\to A$ with the natural inclusion~$A\subseteq  \Omega$.
Let $f\in \k\lfloor X\rfloor\setminus I$ and let   $g_1,\dots,g_n$ generate~$I$.
Then $\Omega\models\varphi\big(\sigma(f),\sigma(g_1),\dots,\sigma(g_n)\big)$ where $\varphi(y_0,\dots,y_{n})$ is the quantifier-free $\mathcal L_{\operatorname{R}}$-formula
$y_0\neq 0 \wedge y_{1} = \cdots = y_n=0$. Corollary~\ref{cor:spec}   yields an~$a\in\smallo^m$ with~$f(a)\neq 0$ and $g_1(a)=\cdots=g_n(a)=0$, so
$f\notin \operatorname{I}\!\big(\!\operatorname{Z}_K(I)\big)$. Thus~$\operatorname{I}\!\big(\!\operatorname{Z}_K(I)\big)\subseteq I$ as required.
\end{proof}

\noindent
Let now $\k^{\operatorname{a}}$ be an algebraic closure of $\k$ and $K=\k(\!(t)\!)^{\operatorname{a}}$ be the algebraic closure
of~$\k(\!(t)\!)$ inside $\k^{\operatorname{a}}(\!(t^\Q)\!)$, equipped with the restriction of the dominance relation of~$\k^{\operatorname{a}}(\!(t^\Q)\!)$. Expand $K$ to an $\mathcal L_W$-structure
where   $W$ is the Weierstrass system of formal power series over $\k$. 
(See the remarks after Lemma~\ref{lem:W-restrict}.)
Then $K\models\operatorname{ACVF}_W$, hence by  Theorem~\ref{thm:Rueckert} we obtain the
``formal'' version of R\"uckert's Nullstellensatz:

\begin{cor} \label{cor:Rueckert, 0}
Let $f, g_1,\dots,g_n\in \k[[X]]$ with~${f(a)=0}$ for all $a\in \smallo^m$ such that~$g_1(a)=\cdots=g_n(a)=0$. Then there are~$h_1,\dots,h_n\in \k[[X]]$ and some~$k\geq 1$ such that $f^k=g_1h_1+\cdots+g_nh_n$. 
\end{cor}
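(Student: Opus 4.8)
The plan is to deduce the corollary from Theorem~\ref{thm:Rueckert}, applied to the valued field $K=\k(\!(t)\!)^{\operatorname{a}}$ and the Weierstrass system $W$ of formal power series over $\k$. For this particular $W$ one has $W_m=\k[[X_1,\dots,X_m]]$, and hence $\k\lfloor X\rfloor=\k[[X]]$, so Theorem~\ref{thm:Rueckert} concerns exactly the ring in which our hypothesis lives. First I would recall (as in the remarks following Lemma~\ref{lem:W-restrict}) that the infinitesimal $W$-structure on $\k^{\operatorname{a}}(\!(t^\Q)\!)$ restricts to one on $K$, and that, with the restriction of the dominance relation of $\k^{\operatorname{a}}(\!(t^\Q)\!)$, the valued field $K$ is algebraically closed and non-trivially valued; thus $K\models\operatorname{ACVF}_W$, which is what makes Theorem~\ref{thm:Rueckert} available in $K$.

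Next, set $I:=(g_1,\dots,g_n)$, the ideal of $\k[[X]]$ generated by $g_1,\dots,g_n$. Since $f\mapsto f(a)$ is a ring morphism $\k[[X]]\to K$ for each $a\in\smallo^m=\smallo_K^m$, we have $\operatorname{Z}_K(I)=\{a\in\smallo^m: g_1(a)=\cdots=g_n(a)=0\}$, and therefore the vanishing hypothesis on $f$ says exactly that $f\in\operatorname{I}\!\big(\!\operatorname{Z}_K(I)\big)$. By Theorem~\ref{thm:Rueckert} we have $\operatorname{I}\!\big(\!\operatorname{Z}_K(I)\big)=\sqrt I$, so $f\in\sqrt I$; that is, $f^k\in I$ for some $k\geq 1$. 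Unwinding membership in $I$ produces $h_1,\dots,h_n\in\k[[X]]$ with $f^k=g_1h_1+\cdots+g_nh_n$, which is the claim.

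I do not expect a genuine obstacle here: the entire content resides in Theorem~\ref{thm:Rueckert}, and the corollary is obtained simply by choosing the right model $K$ together with the right Weierstrass system, plus the bookkeeping identifications $\k\lfloor X\rfloor=\k[[X]]$ and $\smallo=\smallo_K$. The one point worth a sentence of care is that the set of $a\in\smallo^m$ with $g_1(a)=\cdots=g_n(a)=0$ coincides with $\operatorname{Z}_K(I)$ as defined---common zeros of the generators versus of the whole ideal---which is immediate from multiplicativity of evaluation. One may also remark that the converse implication in the corollary is trivial, so the statement in fact characterizes membership in $\sqrt I$.
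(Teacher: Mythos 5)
Your proposal is correct and is exactly the paper's argument: the paper obtains Corollary~\ref{cor:Rueckert, 0} by observing that $K=\k(\!(t)\!)^{\operatorname{a}}$ with the restricted dominance relation and the formal power series Weierstrass system is a model of $\operatorname{ACVF}_W$, and then invoking Theorem~\ref{thm:Rueckert}. The bookkeeping you supply (identifying $\k\lfloor X\rfloor=\k[[X]]$, $\operatorname{Z}_K(I)$ with the common zero set of the generators, and unwinding $f\in\sqrt I$) is exactly what the paper leaves implicit.
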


\noindent
{\it 
In the rest of this subsection we assume   $\operatorname{char}\k=0$.}\/ 
Then   $K=\k^{\operatorname{a}}(\!(t^*)\!)$   (see, e.g., \cite[Example~3.3.23]{ADH}).
Hence using the automorphisms~$a(t)\mapsto a(t^d)$ of the $\mathcal L_W$-structure~$K$ (where $d\geq 1$), from Corollary~\ref{cor:Rueckert, 0} we obtain:

\begin{cor} \label{cor:Rueckert, 1}
Let $f, g_1,\dots,g_n\in \k[[X]]$ with~${f(a)=0}$ for all $a\in t\k^{\operatorname{a}}[[t]]^m$ such that $g_1(a)=\cdots=g_n(a)=0$. Then there are power series~$h_1,\dots,h_n\in \k[[X]]$ and some $k\geq 1$ such that $f^k=g_1h_1+\cdots+g_nh_n$. 
\end{cor}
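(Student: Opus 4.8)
The plan is to derive this from Corollary~\ref{cor:Rueckert, 0} by the substitution $t\mapsto t^d$. Recall that under the running assumption $\operatorname{char}\k=0$, the valued field $K=\k(\!(t)\!)^{\operatorname{a}}$ carrying the $\mathcal L_W$-structure above equals $\k^{\operatorname{a}}(\!(t^*)\!)$, so its distinguished maximal ideal $\smallo$ has $\smallo^m=\bigcup_{d\geq1}\bigl(t^{1/d}\k^{\operatorname{a}}[[t^{1/d}]]\bigr)^m$, which contains $(t\k^{\operatorname{a}}[[t]])^m$. Thus the hypothesis of the present corollary only constrains $f$ at the points of $(t\k^{\operatorname{a}}[[t]])^m$, whereas Corollary~\ref{cor:Rueckert, 0} (applied with this $K$) would give the conclusion as soon as we know $f(a)=0$ for \emph{every} $a\in\smallo^m$ with $g_1(a)=\cdots=g_n(a)=0$. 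So the only thing to do is to promote the vanishing hypothesis from $(t\k^{\operatorname{a}}[[t]])^m$ to all of $\smallo^m$; then the identity $f^k=g_1h_1+\cdots+g_nh_n$ with $h_j\in\k[[X]]$, $k\geq1$, comes out of Corollary~\ref{cor:Rueckert, 0} with no further work.

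To carry out this promotion I would fix $a=(a_1,\dots,a_m)\in\smallo^m$ with $g_j(a)=0$ for all $j$, pick $d\geq1$ with each $a_i\in\k^{\operatorname{a}}(\!(t^{1/d})\!)$, and let $\sigma_d$ be the automorphism $b(t)\mapsto b(t^d)$ of the $\mathcal L_W$-structure $K$ discussed in the Example of Section~\ref{sec:inf W-structures as structures} (used here over $\k^{\operatorname{a}}$ in place of $\k$: it fixes $\k$, is $t$-adically continuous, and therefore commutes with substitution of elements of $\smallo^m$ into series from $W_m$). Since $\sigma_d$ multiplies $t$-adic valuations by $d$, each $\sigma_d(a_i)$ lies in $t\k^{\operatorname{a}}[[t]]$, so $\sigma_d(a)\in(t\k^{\operatorname{a}}[[t]])^m$. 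As $\sigma_d$ is an $\mathcal L_W$-automorphism of $K$, we get $g_j(\sigma_d(a))=\sigma_d(g_j(a))=0$ for every $j$; the hypothesis of the corollary then applies to the point $\sigma_d(a)$ and gives $f(\sigma_d(a))=0$, i.e. $\sigma_d(f(a))=0$, and hence $f(a)=0$ because $\sigma_d$ is injective. This verifies the hypothesis of Corollary~\ref{cor:Rueckert, 0}, and we are done.

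Since every step is routine, I do not expect a genuine obstacle; the one point deserving a moment's care is precisely the compatibility of $\sigma_d$ with the interpretations of the function symbols coming from $W$ — equivalently, the identity $\sigma_d(h(a))=h(\sigma_d(a))$ for $h\in W_m=\k\lfloor X\rfloor$ and $a\in\smallo^m$ — which is exactly what makes $\sigma_d$ an automorphism of the $\mathcal L_W$-structure and which has already been recorded in Section~\ref{sec:inf W-structures as structures}.
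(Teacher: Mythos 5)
Your argument is correct and is exactly the paper's intended proof: the paper derives Corollary~\ref{cor:Rueckert, 1} from Corollary~\ref{cor:Rueckert, 0} precisely "using the automorphisms $a(t)\mapsto a(t^d)$ of the $\mathcal L_W$-structure $K=\k^{\operatorname{a}}(\!(t^*)\!)$," and you have simply written out the details of that reduction (moving an arbitrary common zero $a\in\smallo^m$ into $(t\k^{\operatorname{a}}[[t]])^m$ via $\sigma_d$ and using that $\sigma_d$ commutes with the interpretations of the symbols from $W$). No gaps.
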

\begin{proof}
Let $a\in\smallo^m$ be such that~$g_1(a)=\cdots=g_n(a)=0$. Taking $d\geq 1$ such that~$b:=a(t^d)$ lies in $t\k^{\operatorname{a}}[[t]]^m$, we
then also have~${g_1(b)=\cdots=g_n(b)=0}$, so~${f(b)=0}$ by the hypothesis of the corollary,   thus $f(a)=0$.
Hence the corollary follows from Co\-rol\-lary~\ref{cor:Rueckert, 0}.
\end{proof}

\noindent
For $m=1$ the previous corollary holds without assuming $\operatorname{char}\k=0$, since~${\smallo\setminus\{0\}}$ is an orbit of the group of continuous automorphisms
of $\k^{\operatorname{a}}(\!(t^\Q)\!)$ over $\k^{\operatorname{a}}$ 
 \cite[Theorem~3.4]{KP}.
But we do not know whether this remains true for~$m>1$.
(See \cite{Kedlaya01,Kedlaya17} for an explicit description of the subfield~$\k(\!(t)\!)^{\operatorname{a}}$ of $\k^{\operatorname{a}}(\!(t^\Q)\!)$.)

\begin{remark}
If in the context of Corollary~\ref{cor:Rueckert, 1} we have $f,g_1,\dots,g_n\in\k[[X]]^{\operatorname{a}}$, then we can take $h_1,\dots,h_n\in \k[[X]]^{\operatorname{a}}$. 
This follows either from Theorem~\ref{thm:Rueckert} applied to the Weierstrass system of   algebraic formal power series, or the previous corollary and   Corollary~\ref{cor:fflat}.
Similarly
with $\k[[X]]^{\operatorname{a}}$ replaced by $\k[[X]]^{\operatorname{da}}$. 
\end{remark}

\noindent
{\it Now we also suppose that $\k$  is algebraically closed and comes equipped with a complete absolute value.}\/ (For example, $\k=\C$ or $\k=\C_p$ with their usual absolute values.) Then the field~$\k\{\!\{t^*\}\!\}$ of convergent Puiseux series over $\k$ is algebraically closed.
Arguing as in the proof of Corollary~\ref{cor:Rueckert, 1}, with
$\k\{\!\{t^*\}\!\}$ in place of $\k(\!(t^*)\!)$ and $W=$~the Weierstrass system of
convergent power series over $\k$, we obtain the classical version of R\"uckert's Nullstellensatz for convergent power series:

\begin{cor} \label{cor:Rueckert, 2}
Let 
$f,g_1,\dots,g_n\in \k\{X\}$ and suppose  that 
for all $a\in \k\{t\}^m$ with~${a(0)=0}$ we have $f(a)=0$ whenever~${g_1(a)=\cdots=g_n(a)=0}$. Then there are~$h_1,\dots,h_n\in \k\{X\}$ and some~$k\geq 1$ such that $f^k=g_1h_1+\cdots+g_nh_n$.
\end{cor}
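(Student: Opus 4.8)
\noindent
The plan is to follow the proof of Corollary~\ref{cor:Rueckert, 1}, now taking as model of $\operatorname{ACVF}_W$ the field $K:=\k\{\!\{t^*\}\!\}$ of convergent Puiseux series over $\k$, equipped with its $t$-adic valuation ring $\mathcal O_t=\k[[t^*]]\cap K$ and maximal ideal $\smallo_t$, and with $W$ the Weierstrass system of convergent power series over $\k$. Since $\k$ is algebraically closed and carries a complete absolute value, $K$ is algebraically closed (as recalled just before the statement) and non-trivially valued (its value group being $\Q$), and by Example~(4) in Section~\ref{sec:W-systems} it carries an infinitesimal $W$-structure; hence $K\models\operatorname{ACVF}_W$. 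Applying Theorem~\ref{thm:Rueckert} to the ideal $I:=(g_1,\dots,g_n)$ of $W_m=\k\{X\}$ then gives $\operatorname{I}\!\big(\!\operatorname{Z}_K(I)\big)=\sqrt I$.

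Next I would show that the hypothesis forces $f\in\operatorname{I}\!\big(\!\operatorname{Z}_K(I)\big)$. Let $a=(a_1,\dots,a_m)\in\operatorname{Z}_K(I)$, so $a\in\smallo_t^m$ and $g_i(a)=0$ for $i=1,\dots,n$. Choosing $d\geq1$ to be a common denominator of the exponents occurring in $a_1,\dots,a_m$, we have $a_j\in t^{1/d}\k\{t^{1/d}\}$ for all $j$, so $b:=\big(a_1(t^d),\dots,a_m(t^d)\big)\in(t\k\{t\})^m\subseteq\k\{t\}^m$ with $b(0)=0$. The map $\rho\colon c(t)\mapsto c(t^d)$ is an automorphism of the $\mathcal L_W$-structure $\big(\k\{\!\{t^*\}\!\};\dots\big)$ (see the Example at the end of Section~\ref{sec:inf W-structures as structures}), and $\rho$ carries $a$ to $b$ componentwise. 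Since an $\mathcal L_W$-automorphism commutes with the interpretations of the function symbols in $W$ and preserves $\smallo_t$, we get $g_i(b)=\rho(g_i(a))=0$ for all $i$, whence $f(b)=0$ by hypothesis, and therefore $f(a)=\rho^{-1}(f(b))=0$. Thus $f$ vanishes on all of $\operatorname{Z}_K(I)$, i.e.\ $f\in\operatorname{I}\!\big(\!\operatorname{Z}_K(I)\big)=\sqrt I$.

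It follows that $f^k\in I$ for some $k\geq1$, that is, $f^k=g_1h_1+\cdots+g_nh_n$ for suitable $h_1,\dots,h_n\in W_m=\k\{X\}$, which is the desired conclusion. There is no real obstacle here, the substance of the statement being already contained in Theorem~\ref{thm:Rueckert}; the only points that need any care are the verification that $K\models\operatorname{ACVF}_W$ (resting on the classical fact that the convergent Puiseux series field over an algebraically closed complete valued field is algebraically closed, together with the well-definedness of the substitution operation underlying Example~(4)) and the bookkeeping in the orbit argument showing that every point of $\operatorname{Z}_K(I)$ is $\rho$-equivalent, for a suitable $d$, to a point of $(t\k\{t\})^m$.
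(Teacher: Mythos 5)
Your proposal is correct and is essentially the paper's own argument: the paper obtains Corollary~\ref{cor:Rueckert, 2} by applying Theorem~\ref{thm:Rueckert} to $K=\k\{\!\{t^*\}\!\}$ with $W$ the convergent Weierstrass system, and reduces zeros in $\smallo_t^m$ to zeros in $(t\k\{t\})^m$ via the automorphisms $a(t)\mapsto a(t^d)$ of the $\mathcal L_W$-structure, exactly as you do. The only remark is that your "common denominator" bookkeeping is the intended content of the phrase "arguing as in the proof of Corollary~\ref{cor:Rueckert, 1}," so nothing is missing.
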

 
\noindent
As usual this yields a version for germs of analytic functions.

\begin{cor}\label{cor:Rueckert fns}
Let $f,g_1,\dots,g_n\colon U\to\k$ be analytic functions, where $U$ is an open neighborhood of $0$ in $\k^m$, such that
for all $a\in U$,
$$g_1(a)=\cdots=g_n(a)=0 \quad\Longrightarrow\quad f(a)=0.$$ Then there are analytic functions $h_1,\dots,h_n\colon V\to\k$,
for some open neighborhood~$V\subseteq U$ of $0$ in $\k^m$, and some $k\geq 1$ such that $$f^k=g_1h_1+\cdots+g_nh_n\qquad\text{ on $V$.}$$
\end{cor}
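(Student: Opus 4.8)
The plan is to reduce this corollary to Corollary~\ref{cor:Rueckert, 2} by passing to germs at $0$. First I would replace $f,g_1,\dots,g_n$ by their germs at $0$, which we regard as elements of $\k\{X\}=\k\{X_1,\dots,X_m\}$ under the identification of germs of analytic functions at $0$ with their Taylor series recalled above; after shrinking $U$ we may assume that on $U$ each of $f,g_1,\dots,g_n$ is represented by the corresponding element of $\k\{X\}$, and that all the power series involved converge on $U$.

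Next I would verify that the hypothesis of Corollary~\ref{cor:Rueckert, 2} holds for these power series. So let $a=(a_1,\dots,a_m)\in\k\{t\}^m$ with $a(0)=0$ be such that $g_1(a)=\cdots=g_n(a)=0$ in $\k\{t\}$; the task is to show $f(a)=0$. Viewing $a_1,\dots,a_m$ as analytic functions near $0$ vanishing at $0$, there is $\rho>0$ such that $a(t)\in U$ for all $t\in\k$ with $\abs{t}<\rho$, and for such $t$ the composite analytic function $t\mapsto f(a(t))$ has Taylor series $f(a)$, and likewise $t\mapsto g_i(a(t))$ has Taylor series $g_i(a)$ for each $i$. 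Since $g_i(a)=0$ we get $g_i(a(t))=0$ for all $i$ and all $\abs{t}<\rho$, whence $f(a(t))=0$ for all such $t$ by the hypothesis of the corollary we are proving. Thus $f(a)\in\k\{t\}$ is the Taylor series of the zero function near $0$, so $f(a)=0$.

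Then Corollary~\ref{cor:Rueckert, 2} applies and produces $h_1,\dots,h_n\in\k\{X\}$ and $k\geq 1$ with $f^k=g_1h_1+\cdots+g_nh_n$ in $\k\{X\}$. Choosing an open neighborhood $V\subseteq U$ of $0$ small enough that $f,g_1,\dots,g_n,h_1,\dots,h_n$ are all represented on $V$ by the corresponding elements of $\k\{X\}$, this identity holds between analytic functions on $V$, which is what is required.

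Since the statement is essentially a routine ``germification'' of Corollary~\ref{cor:Rueckert, 2}, I do not expect a serious obstacle. The only points that need a little care, both in the second step, are the interchange of formal substitution and evaluation (that $f(a)(t)=f(a(t))$ and $g_i(a)(t)=g_i(a(t))$ on a sufficiently small disc about $0$, a standard fact about composition of convergent power series, already implicit in Corollary~\ref{cor:DL subst} for the convergent Weierstrass system) and the fact that a convergent one-variable power series over $\k$ which vanishes on a punctured neighborhood of $0$ must be $0$ (which over non-archimedean $\k$ follows from the dominant-term estimate $\abs{f(a)(t)}=\abs{c_Nt^N}$ near $0$, and over $\k=\C$ from the identity theorem).
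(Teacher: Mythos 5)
Your argument is correct and follows the same route as the paper: pass to germs, check the hypothesis of Corollary~\ref{cor:Rueckert, 2} by composing with convergent curves $a\in\k\{t\}^m$ vanishing at $0$ and using continuity, and then invoke that corollary. The extra care you take with interchanging formal substitution and evaluation, and with the vanishing of a one-variable convergent series that is zero near $0$, just makes explicit what the paper's one-line justification leaves implicit.
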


\begin{proof} 
Let $f,g_1,\dots,g_n\in \k\{X\}$ also denote the germs of the corresponding analytic 
functions $U\to\k$. Then
for all $a\in \k\{t\}^m$ with~${a(0)=0}$ we have 
$g_1(a)=\cdots=g_n(a)=0\Rightarrow f(a)=0$.
(By the hypothesis and the continuity of each representative of $a$ at $0$.)
Now Corollary~\ref{cor:Rueckert, 2}  yields the existence of $h_1,\dots,h_n$ and $k$ as claimed.
\end{proof}

\section{Risler's   Nullstellensatz}\label{sec:Risler}

\noindent
Let
$\mathcal L=\mathcal L_{\operatorname{OR},\preceq}=\{0,1,{-},{+},{\,\cdot\,},{\leq},{\preceq}\}$ be the  language $\mathcal L_{\operatorname{OR}}=\mathcal L_{\operatorname{R}}\cup\{\leq\}$ of ordered rings expanded by a new binary relation symbol  $\preceq$.
Let $\operatorname{RCVF}$ be the $\mathcal L$-theory whose models are the $\mathcal L$-structures $(K,{\preceq})$ where $K$ is a
real closed ordered field and~$\preceq$ is a non-trivial convex dominance relation on $K$.
Here, a dominance relation~$\preceq$ on an ordered integral domain $R$ is said to be {\it convex}\/ if
$0\leq r\leq s\Rightarrow r\preceq s$, for all~$r,s\in R$.
The analogue of Theorem~\ref{thm:Robinson} in this context is the following fact.
(For a self-contained proof  see \cite[Theorem~3.6.6]{ADH} or \cite[Theorem~4.5.1]{PrestelDelzell}.)

\begin{theorem}[{Cherlin-Dickmann \cite{CD}}] \label{thm:CD} $\operatorname{RCVF}$ has \textup{QE.}
\end{theorem}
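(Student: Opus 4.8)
The plan is to prove quantifier elimination for $\operatorname{RCVF}$ by the usual embedding criterion: it suffices to show that whenever $M,N\models\operatorname{RCVF}$ with $N$ $\abs{M}^+$-saturated, and $A$ is a common $\mathcal L$-substructure of $M$ and $N$, then each $b\in M$ realizes in $N$ over $A$ the same quantifier-free type it has in $M$. Since the $\mathcal L$-substructure of $M$ generated by $A\cup\{b\}$ is again a common substructure of $M$ and $N$ once $b$ has been matched, iterating this yields an $\mathcal L$-embedding $M\to N$ over $A$, and hence QE. So the whole problem reduces to this one-element step.

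First I would simplify the substructure $A$. An $\mathcal L$-substructure of a model of $\operatorname{RCVF}$ is an ordered integral domain carrying a convex dominance relation. The dominance relation extends uniquely to $\Frac(A)$ computed inside $M$, and the embedding $A\hookrightarrow N$ extends accordingly, so I may assume $A$ is a field. Next I pass to the relative algebraic closure $A^{\operatorname{rc}}$ of $A$ in $M$, which (as $M$ is real closed) is a real closure of $A$. By uniqueness of real closures, $N$ contains a copy of $A^{\operatorname{rc}}$ over $A$; and by Lemma~\ref{lem:unique convex ext} the convex dominance relation that $A^{\operatorname{rc}}$ inherits from $M$ and the one it inherits from $N$ are both \emph{the} convex extension of that of $A$, hence coincide. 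Thus I may assume $A$ is a real closed ordered field equipped with a convex valuation, with $A\subseteq M$ and $A\subseteq N$. I note for later that $\res(A)$ is real closed and the value group $v(A^\times)$ is divisible, and the same holds for $M$ and $N$.

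Now take $b\in M\setminus A$; as $A$ is real closed, $b$ is transcendental over $A$, and its quantifier-free type over $A$ is governed by the Dedekind cut it fills in the ordered field $A$ together with the behaviour of $v(b-a)$ as $a$ ranges over $A$. After finitely many translations and rescalings by elements of $A$—which I iterate until it terminates—one is reduced to exactly one of three cases: \emph{(i)} $v(b-a)=0$ for some $a\in A$ and $\overline{b-a}$ is transcendental over $\res(A)$; \emph{(ii)} $v(b-a)\notin v(A^\times)$ for some $a\in A$; \emph{(iii)} $\{v(b-a):a\in A\}$ has no greatest element, i.e.\ $b$ is a pseudo-limit of a pseudo-Cauchy sequence from $A$ with no pseudo-limit in $A$ (such a sequence is necessarily of transcendental type, since a real closed valued field admits no proper immediate algebraic extension). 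In each case the full quantifier-free $\mathcal L$-type of $b$ over $A$ is determined by the corresponding coarse data—the cut of $\overline{b-a}$ over the ordered field $\res(A)$ in case (i), the position of $v(b-a)$ over the ordered group $v(A^\times)$ in case (ii), the pseudo-Cauchy sequence in case (iii)—because over a real closed field the ordered-field cut of $b$ is itself pinned down by the valuation data together with finitely many sign conditions. To realize this type in $N$ I use saturation: since $\operatorname{RCF}$ and the theory of divisible ordered abelian groups each have QE, $\res(N)$ is an $\abs{M}^+$-saturated real closed field and $v(N^\times)$ an $\abs{M}^+$-saturated divisible ordered abelian group. In case (i) this provides a residue transcendental over $\res(A)$ in the correct cut, which lifts to the desired $b'$; in case (ii) it provides a value in the correct cut, realized by a suitable $b'$; in case (iii) saturation of $N$ supplies a pseudo-limit $b'\in N$ of the image pseudo-Cauchy sequence, which by the Kaplansky-type uniqueness for pseudo-limits of transcendental type has exactly the prescribed quantifier-free type over $A$.

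The hard part is the middle step: setting up this trichotomy cleanly and verifying, in each of the three cases, that the quantifier-free type of a one-element transcendental extension of a real closed valued field really is determined by the stated coarse data—in particular, controlling how the ordering and the convex valuation interact so that the ordered-field cut contributes nothing beyond finitely many sign conditions. For a fully detailed, self-contained treatment see \cite[Theorem~3.6.6]{ADH} or \cite[Theorem~4.5.1]{PrestelDelzell}.
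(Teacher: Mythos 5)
The paper gives no proof of this theorem—it is quoted from Cherlin--Dickmann with pointers to \cite[Theorem~3.6.6]{ADH} and \cite[Theorem~4.5.1]{PrestelDelzell}—and your outline is a correct sketch of exactly the standard argument carried out in those sources: the one-element embedding test, reduction to a real closed $A$ with its unique convex extension of the valuation, and the residue/value-group/immediate trichotomy for a transcendental $b$. Since you defer the genuinely technical step (that the coarse data in each case determines the full quantifier-free type) to the same references the paper cites, your proposal matches the paper's approach and is sound as far as it goes.
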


\noindent
Indeed, $\operatorname{RCVF}$ is the model completion of the $\mathcal L$-theory of ordered integral domains equipped with
a convex dominance relation \cite[Corollary~3.6.7]{ADH}.
Let    $W$ be a Weierstrass system over a euclidean field~$\k$  and   $K\models \operatorname{RCVF}_W$.
Also let~$X=(X_1,\dots,X_m)$ be a tuple of distinct indeterminates and $y=(y_1,\dots,y_n)$ be a tuple of distinct $\mathcal L$-variables.  Just like Theorem~\ref{thm:Robinson} gave rise to 
Proposition~\ref{prop:Rueckert spec}, Theorem~\ref{thm:CD} implies:

\begin{prop} \label{prop:Risler spec}
Let $\varphi(y)$ be an  $\mathcal L$-formula, $f=(f_1,\dots,f_n)\in \k\lfloor X\rfloor^n$, $\Omega\models\operatorname{RCVF}$,
and~$\sigma \colon \k\lfloor X\rfloor\to \mathcal O_\Omega$ 
be a local ring morphism 
such that $\Omega\models\varphi(\sigma(f))$. Then
$K\models\varphi(f(a))$ for some $a\in\smallo^m$.
\end{prop}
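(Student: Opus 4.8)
The plan is to transcribe the proof of Proposition~\ref{prop:Rueckert spec}, replacing Theorem~\ref{thm:Robinson} by Theorem~\ref{thm:CD} and carrying the ordering along; the one genuinely new ingredient is the sign bookkeeping described below. We induct on the length $m$ of $X$. For $m=0$ we have $\k\lfloor X\rfloor=W_0=\k$, and since $\sigma$ is local it maps $\k^{\times}$ into $\mathcal O_\Omega^{\times}$; so equipping the euclidean field $\k$ with its (unique) ordering and the trivial dominance relation makes $\sigma\colon\k\to\Omega$ an $\mathcal L$-embedding. Likewise $\phi_0\colon\k\to K$ is one, because $\phi_0$ induces an embedding $\k\to\res(K)$, whence $\phi_0(\k^{\times})\subseteq\mathcal O_K^{\times}$, and the ordering is preserved since $\k$ is euclidean. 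Thus $\k$ (with this ordering and dominance relation) is a common $\mathcal L$-substructure of $\Omega$ and $K$, both models of $\operatorname{RCVF}$, so $\Omega_{\k}\equiv K_{\k}$ by substructure completeness of $\operatorname{RCVF}$; now $\Omega\models\varphi(\sigma(f))$ yields $K\models\varphi(\phi_0(f))=\varphi(f(a))$, where $a$ is the unique point of $\smallo^0$.

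Suppose now $m\geq 1$. By Theorem~\ref{thm:CD} we may assume $\varphi$ quantifier-free, and then a conjunction of atomic and negated atomic formulas. As in the proof of Proposition~\ref{prop:Rueckert spec}, enlarging $f$ by adjoining the value at $f$ of each polynomial occurring in $\varphi$, and in addition the difference $P(f)-Q(f)$ for each equality- or order-comparison $P\,\square\,Q$ in $\varphi$, we arrange that every atomic subformula of $\varphi$ has one of the forms $y_j=0$, $y_j\leq 0$, $y_j<0$, $y_j\preceq y_k$, $y_j\prec y_k$ (or a negation thereof). As in that proof we may assume all $f_j\neq 0$, a shear $\tau=\tau_d$ (using Corollary~\ref{cor:shear}) lets us assume each $f_j$ regular in $X_m$, and Weierstrass Preparation (Lemma~\ref{lem:WP}) gives $f_j=u_jw_j$ with $u_j\in\k\lfloor X\rfloor^{\times}$ and $w_j=\sum_{k=0}^{e}w_{jk}X_m^{k}\in\k\lfloor X'\rfloor[X_m]$ monic, where $X'=(X_1,\dots,X_{m-1})$.

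Here the ordering must be handled with care: $\sigma(u_j)$ need not be $\sim 1$, but its sign is forced. Put $\epsilon_j:=\operatorname{sign}\bigl(u_j(0)\bigr)\in\{\pm 1\}$ and $\bar u_j:=\epsilon_j u_j$, so $\bar u_j(0)>0$; then by Lemma~\ref{lem:u>0}, applied to $\sigma$ and to each evaluation morphism $g\mapsto g(a)$ $(a\in\smallo^m)$, we have $\sigma(\bar u_j)>0$ and $\bar u_j(a)>0$, while locality gives $\sigma(\bar u_j)\asymp 1$ and $\bar u_j(a)\asymp 1$. Let $\varphi^{*}(y_1,\dots,y_n):=\varphi(\epsilon_1y_1,\dots,\epsilon_ny_n)$; this is again a conjunction of atomic subformulas of the above shapes (or negations), and every such subformula is insensitive to multiplying its arguments by positive units that are $\asymp 1$. (This is precisely why we first reduced each order-comparison to a sign condition: a bare inequality $y_j\leq y_k$ need not survive such a rescaling.) With $w:=(w_{jk})$ and $t_j(u,v)$ the $\mathcal L_{\operatorname{R}}$-terms such that $w_j=t_j(w,X_m)$, the hypothesis $\Omega\models\varphi(\sigma(f))$ reads $\Omega\models\varphi^{*}\bigl(\sigma(\bar u_1)\sigma(w_1),\dots,\sigma(\bar u_n)\sigma(w_n)\bigr)$, hence $\Omega\models\varphi^{*}(\sigma(w_1),\dots,\sigma(w_n))$, hence $\Omega\models\psi(\sigma(w))$ for the existential $\mathcal L$-formula $\psi(u):=\exists v\bigl(v\prec 1\wedge\varphi^{*}(t_1(u,v),\dots,t_n(u,v))\bigr)$, since $\sigma(X_m)\prec 1$. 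Applying the inductive hypothesis to $\psi$, $w\in\k\lfloor X'\rfloor^{n(e+1)}$, $\Omega$ and $\sigma|_{\k\lfloor X'\rfloor}$ yields $b\in\smallo^{m-1}$ with $K\models\psi(w(b))$, i.e. $K\models\varphi^{*}\bigl(w_1(a),\dots,w_n(a)\bigr)$ for some $a=(b,a_m)\in\smallo^m$; rescaling back by the positive units $\bar u_j(a)\asymp 1$ gives $K\models\varphi^{*}\bigl(\bar u_1(a)w_1(a),\dots,\bar u_n(a)w_n(a)\bigr)$, which is exactly $K\models\varphi(f(a))$. Undoing the earlier reductions (shear, adjunction of components, deletion of zero $f_j$'s) produces $a$ as required.

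The step I expect to be the main obstacle is exactly this interaction of Weierstrass Preparation with the ordering: one has to observe that the sign of a unit of $\k\lfloor X\rfloor$ under any ring morphism to an ordered field depends only on its constant term (the purpose of Lemma~\ref{lem:u>0}), absorb these known signs into the formula, and ensure beforehand that the only remaining order information lies in sign conditions on single variables — which, unlike arbitrary inequalities, are stable under multiplication by positive units that are merely $\asymp 1$. Apart from this, the proof is a routine transcription of that of Proposition~\ref{prop:Rueckert spec}.
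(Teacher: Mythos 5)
Your proof is correct and follows essentially the same route as the paper's. The only difference is cosmetic: the paper normalizes the Weierstrass unit $u_j$ to be a $1$-unit (absorbing the constant $u_j(0)$ into $w_j$), so that locality of $\sigma$ gives $\sigma(u_j)\sim 1$ and hence $\sigma(u_j)>0$ directly, which makes your $\epsilon_j$-bookkeeping and the appeal to Lemma~\ref{lem:u>0} unnecessary.
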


\begin{proof}
We proceed by induction on $m$ as in the proof of Proposition~\ref{prop:Rueckert spec}. 
Equipping~$\k$ with its unique ordering and with the trivial dominance relation,   the restriction of $\sigma$ to an embedding $\k\to\Omega$
as well as the natural inclusion $\k\to K$ are   $\mathcal L$-embeddings. 
Together with substructure completeness of $\operatorname{RCVF}$ this takes care of
the case~$m=0$. So suppose~$m\geq 1$. Using
Theorem~\ref{thm:CD}  we   arrange    $\varphi$ to have the form
$$P(y) = 0 \wedge \bigwedge_{i\in I} Q_i(y) > 0 \wedge \bigwedge_{j\in J} R_j(y)\, \square_j\, S_j(y)$$
where $I$, $J$ are finite index sets, $P,Q_i,R_j,S_j\in\Z[Y_1,\dots,Y_n]$, and each $\square_j$ is~$\preceq$ or~$\prec$.
As in  the proof of Proposition~\ref{prop:Rueckert spec} 
we also arrange
that the $P$, $Q_i$, $R_j$, $S_j$ are   distinct elements of~$\{Y_1,\dots,Y_n\}$ and 
$f_1,\dots,f_n$ are all regular in $X_m$. Take some $1$-unit $u_j\in\k\lfloor X\rfloor$ and   $w_j\in \k\lfloor X'\rfloor[X_m]$, where $X'=(X_1,\dots,X_{m-1})$, such that~$f_j=u_jw_j$. Then $\sigma(u_j)\sim 1$ since $\sigma$ is local, hence $\sigma(u_j)>0$. Thus we may arrange that each $f_j=w_j$ is a polynomial and conclude the argument as in the proof of 
Proposition~\ref{prop:Rueckert spec}.
\end{proof}

\noindent
From Proposition~\ref{prop:Risler spec} we obtain
 the completeness of the existential part of $\operatorname{RCVR}_W$ as in the proof of Corollary~\ref{cor:exists-complete}:

\begin{cor}\label{cor:exists-complete, real}
For each existential  $\mathcal L_W$-sentence $\theta$, we either   have
$\operatorname{RCVR}_W\models\theta$ or~$\operatorname{RCVR}_W\models\neg\theta$.
\end{cor}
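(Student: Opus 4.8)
The plan is to follow the proof of Corollary~\ref{cor:exists-complete} almost verbatim, with $\operatorname{RCVF}$, Theorem~\ref{thm:CD}, and Proposition~\ref{prop:Risler spec} taking over the roles played there by $\operatorname{ACVF}$, Theorem~\ref{thm:Robinson}, and Proposition~\ref{prop:Rueckert spec}. Given an existential $\mathcal L_W$-sentence $\theta$, I would first invoke the usual normal-form lemmas for existential sentences (exactly as in the proof of Corollary~\ref{cor:exists-complete}) to obtain a tuple $x=(x_1,\dots,x_m)$ of distinct new variables and a boolean combination $\psi(x)$ of unnested atomic $\mathcal L_W$-formulas with $\models\theta\leftrightarrow\exists x\,\psi$; separating off the applications of the $W$-function symbols then gives $\psi(x)=\varphi(x,f(x))$ for a quantifier-free $\mathcal L$-formula $\varphi(x,y)$, with $y=(y_1,\dots,y_n)$, and some $f=(f_1,\dots,f_n)\in\k\lfloor X\rfloor^n$. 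Since in every model of $\operatorname{RCVF}_W$ each symbol $f_j$ is interpreted by a function vanishing off $\smallo^m$, one has
$$\operatorname{RCVF}_W\models\theta\ \leftrightarrow\ \exists x\bigl((1\preceq x_1\vee\cdots\vee 1\preceq x_m)\wedge\varphi(x,0)\bigr)\ \vee\ \exists x\bigl(x_1\prec 1\wedge\cdots\wedge x_m\prec 1\wedge\psi\bigr),$$
so it suffices to show that each of the two disjuncts holds in all models of $\operatorname{RCVF}_W$ or in none.

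For the first disjunct, $\theta_1:=\exists x\bigl((1\preceq x_1\vee\cdots\vee 1\preceq x_m)\wedge\varphi(x,0)\bigr)$ is a parameter-free existential $\mathcal L$-sentence. Since $\k$ is euclidean it has characteristic $0$, so in any $K\models\operatorname{RCVF}_W$ every nonzero integer is a unit of the convex valuation ring $\mathcal O_K$; hence $(\Q,{\leq},{\preceq_{\operatorname{t}}})$ is a common $\mathcal L$-substructure of the $\mathcal L$-reducts of all models of $\operatorname{RCVF}_W$, and by the substructure completeness afforded by Theorem~\ref{thm:CD} these reducts are pairwise elementarily equivalent. In particular they all agree on $\theta_1$.

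For the second disjunct $\theta_2:=\exists x\bigl(x_1\prec 1\wedge\cdots\wedge x_m\prec 1\wedge\psi\bigr)$, suppose $M\models\operatorname{RCVF}_W$ satisfies $\theta_2$ and let $N\models\operatorname{RCVF}_W$ be arbitrary; I would show $N\models\theta_2$. Choose $a\in\smallo_M^m$ with $M\models\varphi(a,f(a))$, put $g:=(X_1,\dots,X_m,f_1,\dots,f_n)\in\k\lfloor X\rfloor^{m+n}$, and let $\chi(z_1,\dots,z_{m+n})$ be the quantifier-free $\mathcal L$-formula with $\chi(g(a'))=\varphi(a',f(a'))$ for all $a'\in\smallo^m$. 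Then $\Omega:=M\,|\,\mathcal L\models\operatorname{RCVF}$, and the evaluation map $\sigma\colon h\mapsto h(a)$ is a local ring morphism $\k\lfloor X\rfloor\to\mathcal O_\Omega$ with $\Omega\models\chi(\sigma(g))$. Applying Proposition~\ref{prop:Risler spec} with $\chi$ and $g$ in place of $\varphi$ and $f$, with $\Omega$ and $\sigma$ as just given, and with $N$ in the role of the ambient model, I obtain $a'\in\smallo_N^m$ with $N\models\chi(g(a'))$, i.e.\ $N\models\varphi(a',f(a'))$, and hence $N\models\theta_2$. Combining the two cases, $\theta$ holds either in all models of $\operatorname{RCVF}_W$ or in none, which is the assertion. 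The only places calling for any care are the syntactic reduction to the displayed disjunction and the (routine) fact that $\operatorname{RCVF}$ is complete; since both mirror the $\operatorname{ACVF}$ case exactly, I do not expect a genuine obstacle.
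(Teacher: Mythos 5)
Your proof is correct and is essentially the paper's own argument: the paper proves this corollary by repeating the proof of Corollary~\ref{cor:exists-complete} with Theorem~\ref{thm:CD} and Proposition~\ref{prop:Risler spec} replacing Theorem~\ref{thm:Robinson} and Proposition~\ref{prop:Rueckert spec}, which is exactly the decomposition into the two disjuncts and the transfer argument you spell out. Your explicit treatment of the first disjunct via completeness of $\operatorname{RCVF}$ (common substructure $\Q$ with trivial dominance relation plus QE) and of the second via Proposition~\ref{prop:Risler spec} applied to the augmented tuple $(X_1,\dots,X_m,f_1,\dots,f_n)$ is just the intended unpacking of that one-line reference.
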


\noindent
Next, an analogue of Corollary~\ref{cor:spec}; for this, we first note:

\begin{lemma}\label{lem:indets infinitesimal}
Let $\Omega$ be an ordered field and $\sigma\colon  \k\lfloor X\rfloor\to \Omega$ be a ring morphism.
Equip $\Omega$ with the dominance relation associated to the convex hull of $\sigma(\k)$ in $\Omega$.
Then for $g\in  \k\lfloor X\rfloor$  we have $\sigma\big(g- g(0)\big) \prec 1$.
\end{lemma}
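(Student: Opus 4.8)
The plan is to reduce to the key special case $g\in(X)$, where $g(0)=0$, and then observe that the statement becomes: $\sigma(g)\prec 1$, i.e.\ $\sigma(g)$ lies in the maximal ideal $\fm_\Omega$ of the convex hull $A:=\operatorname{conv}_\Omega(\sigma(\k))$. By Lemma~\ref{lem:convex hull} (applied with $R=\sigma(\k)$), we have $\fm_\Omega=\{a\in\Omega:\text{$a\prec 1/\varepsilon$ for all non-zero $\varepsilon\in\sigma(\k)$}\}$, so it suffices to show $\sigma(g)\cdot\varepsilon\prec 1$ for every non-zero $\varepsilon\in\sigma(\k)$; equivalently, since $\sigma(\k)\subseteq A^\times$ (every non-zero element of the field $\sigma(\k)$ is a unit of its own convex hull), it suffices to show $\sigma(g)\prec 1$ outright.

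The hard part is therefore to prove $g\in(X)\Rightarrow\sigma(g)\prec 1$. The idea is to exploit that $g$, having zero constant term, is ``small'' in $\k\lfloor X\rfloor$ in a way that survives any ring morphism to an ordered field once we measure size against $\sigma(\k)$. Concretely, for $g\in(X)$ and any $N\geq 1$ I would use a Weierstrass-type factorization to write $g$ in a form that exhibits an $N$-th root, or more directly: by Corollary~\ref{cor:roots} the group of $1$-units of $\k\lfloor X\rfloor$ is $k$-divisible for all $k$ (using $\operatorname{char}\k=0$, which holds since $\k$ is euclidean, hence of characteristic $0$), so $1+g$ is an $N$-th power in $\k\lfloor X\rfloor^\times$ for every $N$, say $1+g=u_N^N$. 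Then $\sigma(1+g)=\sigma(u_N)^N$ is an $N$-th power in $\Omega$, hence $\geq 0$; moreover $\sigma(u_N)$ is a $1$-unit image, but that alone is not quite enough.

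A cleaner route: fix a non-zero $\varepsilon\in\sigma(\k)$ and write $\varepsilon=\sigma(c)$ with $c\in\k^\times$. Then $c-g\in\k\lfloor X\rfloor$ has constant term $c\neq 0$, so $c-g\in\k\lfloor X\rfloor^\times$ with $(c-g)(0)=c$; similarly $c+g\in\k\lfloor X\rfloor^\times$. By Corollary~\ref{cor:roots} again (since $\operatorname{char}\k=0$ and $c$ is a nonzero element of the euclidean, hence real, field $\k$, so $\pm c$ is a square), one of $\pm(c-g)$ and one of $\pm(c+g)$ is a square in $\k\lfloor X\rfloor$; applying $\sigma$, the sign of $\sigma(c-g)$ and of $\sigma(c+g)$ in $\Omega$ is then forced to equal the sign of $c$ in $\k$ (by Lemma~\ref{lem:u>0}, which says precisely $u(0)>0\Leftrightarrow u\in\k\lfloor X\rfloor^\times$ and $\sigma(u)>0$). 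Thus if $c>0$ in $\k$ we get $\sigma(c)-\sigma(g)>0$ and $\sigma(c)+\sigma(g)>0$, i.e.\ $-\varepsilon<\sigma(g)<\varepsilon$; since this holds for every $\varepsilon\in\sigma(\k^>)$, we conclude $\sigma(g)\in\fm_\Omega$, that is, $\sigma(g)\prec 1$. Finally, for general $g\in\k\lfloor X\rfloor$, write $g-g(0)\in(X)$ and apply the special case. The main obstacle is making sure Corollary~\ref{cor:roots} and Lemma~\ref{lem:u>0} apply with the right hypotheses ($\operatorname{char}\k=0$, $\k$ euclidean so that $\pm c$ is always a square); once those are in place the argument is a short sign-chase.
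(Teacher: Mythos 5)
Your ``cleaner route'' is essentially the paper's own proof: after reducing to $g(0)=0$, one applies Lemma~\ref{lem:u>0} to the units $\varepsilon\pm g$ (or $c\pm g$) with positive constant term to get $-\sigma(\varepsilon)<\sigma(g)<\sigma(\varepsilon)$ for all $\varepsilon\in\k^>$, hence $\sigma(g)\prec 1$. The preliminary detour via $N$-th powers of $1$-units and the slightly roundabout discussion of the maximal ideal are unnecessary but harmless; the argument is correct.
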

\begin{proof}
Let $g\in  \k\lfloor X\rfloor$; replacing $g$ by $g-g(0)$ we arrange $g(0)=0$.
Let $\varepsilon\in\k^>$ and    consider $u=\varepsilon-g\in \k\lfloor X\rfloor$.
Then $\sigma(u)>0$ by Lemma~\ref{lem:u>0} and thus $\sigma(g)<\sigma(\varepsilon)$. Similarly one shows $-\sigma(\varepsilon)<\sigma(g)$.
Thus $\sigma(g)\prec 1$.
\end{proof}

\begin{cor}\label{cor:Risler spec}
Let $\varphi(y)$ be an $\mathcal L_{\operatorname{OR}}$-formula,  
     $f\in \k\lfloor X\rfloor^n$, and~$\sigma \colon \k\lfloor X\rfloor\to \Omega$ be a ring morphism  to a real
  closed field $\Omega$ such that~$\Omega\models\varphi(\sigma(f))$. Then
$K\models\varphi(f(a))$ for some $a\in\smallo^m$.
\end{cor}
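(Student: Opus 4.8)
The plan is to follow the proof of Corollary~\ref{cor:spec}, using Proposition~\ref{prop:Risler spec} in place of Proposition~\ref{prop:Rueckert spec}; note that $\varphi$, being an $\mathcal L_{\operatorname{OR}}$-formula, is in particular an $\mathcal L$-formula with $\mathcal L=\mathcal L_{\operatorname{OR},\preceq}$. So it suffices to equip $\Omega$ with a convex dominance relation making $\Omega\models\operatorname{RCVF}$ and $\sigma\colon\k\lfloor X\rfloor\to\mathcal O_\Omega$ a local ring morphism; Proposition~\ref{prop:Risler spec} will then produce $a\in\smallo^m$ with $K\models\varphi(f(a))$. The natural choice is the dominance relation associated to the convex hull $B$ of $\sigma(\k)$ in the ordered field $\Omega$: since $\sigma(\k)\subseteq B$ and, by Lemma~\ref{lem:indets infinitesimal}, $\sigma\big(g-g(0)\big)\prec 1$ for every $g\in\k\lfloor X\rfloor$, we get $\sigma(g)=\sigma(g(0))+\sigma(g-g(0))\in B=\mathcal O_\Omega$ for all $g$, and $\sigma(g)\in\smallo_\Omega$ whenever $g\in(X)$; hence $\sigma$ carries the maximal ideal of $\k\lfloor X\rfloor$ into $\smallo_\Omega$ and is local.

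The step I expect to require the most care is ensuring that this dominance relation is \emph{non-trivial}, i.e. that $B\neq\Omega$, as required for $\Omega\models\operatorname{RCVF}$. In the algebraically closed setting of Corollary~\ref{cor:spec} non-triviality was automatic as soon as the image of $\sigma$ failed to be a field; here this can fail, since a convex subring of $\Omega$ may be all of $\Omega$ (for instance when $\Omega=\R$) even though $\ker\sigma\neq(X)$. The remedy is to enlarge $\Omega$ first: replace it by the real closure $\Omega(t)^{\operatorname{rc}}$ of the ordered field $\Omega(t)$ of rational functions in an indeterminate $t$, ordered so that $t$ is infinite (i.e.\ $t>c$ for all $c\in\Omega$). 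Then $\Omega\subseteq\Omega(t)^{\operatorname{rc}}$ is an extension of real closed fields, so $\Omega\prec\Omega(t)^{\operatorname{rc}}$ by model completeness of the theory of real closed fields, and hence $\Omega(t)^{\operatorname{rc}}\models\varphi(\sigma(f))$ still holds; moreover $t$ exceeds every element of $\sigma(\k)\subseteq\Omega$, so the convex hull of $\sigma(\k)$ in $\Omega(t)^{\operatorname{rc}}$ omits $t$ and is a proper subring. (One could instead pass to any $\abs{\Omega}^+$-saturated elementary extension of $\Omega$.) After this reduction no case distinction on $\ker\sigma$ is needed.

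Assembling the pieces: with $\Omega$ so enlarged and equipped with the dominance relation associated to the (now proper) convex hull $B$ of $\sigma(\k)$, the field $\Omega$ is real closed with a non-trivial convex dominance relation, hence $\Omega\models\operatorname{RCVF}$, and $\sigma\colon\k\lfloor X\rfloor\to\mathcal O_\Omega$ is a local ring morphism with $\Omega\models\varphi(\sigma(f))$. Proposition~\ref{prop:Risler spec} then gives $a\in\smallo^m$ with $K\models\varphi(f(a))$, as desired. The remaining points---that $B$ is convex in the order-theoretic sense and therefore a valuation ring, that the chosen ordering makes $\Omega(t)$ an ordered field extension of $\Omega$, and that a ring morphism of local rings sending the maximal ideal into the maximal ideal is local---are routine and already implicit in the material of Section~\ref{sec:val th}.
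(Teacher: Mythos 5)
Your proposal is correct and follows essentially the same route as the paper: equip $\Omega$ with the dominance relation of the convex hull of $\sigma(\k)$, use Lemma~\ref{lem:indets infinitesimal} to see $\sigma$ is local, fix non-triviality by passing to a real closed extension via model completeness of real closed fields, and then apply Proposition~\ref{prop:Risler spec}. The only (inessential) differences are that the paper enlarges only in the case $\mathcal O_\Omega=\Omega$ and uses $\Omega(\!(t^\Q)\!)$ rather than the real closure of $\Omega(t)$ with $t$ infinite.
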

\begin{proof}
Let $\mathcal O_\Omega$ be the convex hull of $\sigma(\k)$ in $\Omega$.
 By Lemma~\ref{lem:indets infinitesimal}, $\sigma$ is a local ring morphism $\k\lfloor X\rfloor\to\mathcal O_\Omega$.
If $\Omega = \mathcal O_\Omega$ then 
model completeness of the $\mathcal L_{\operatorname{OR}}$-theory  of real closed ordered fields
allows us to replace  $\Omega$ 
 by a real closed ordered field extension of $\Omega$ (such as $\Omega(\!(t^\Q)\!)$) to arrange~$\Omega \neq \mathcal O_\Omega$;
 thus  $\Omega$ equipped with its dominance relation associated to~$\mathcal O_\Omega$  is a model of $\operatorname{RCVF}$.
Now the corollary 
follows from Proposition~\ref{prop:Risler spec}.
\end{proof}

\noindent
We now quickly deduce  Risler's analytic version of Hilbert's 17th Problem:

\begin{cor}\label{cor:Risler H17}
Let $f\in \k\lfloor X\rfloor$. Then $f(a)\geq 0$ for all $a\in\smallo^m$ iff
there are a non-zero $g\in\k\lfloor X\rfloor$ and $h_1,\dots,h_k\in\k\lfloor X\rfloor$ \textup{(}for some $k$\textup{)} such that
$fg^2 =  h_1^2+\cdots+h_k^2$.
\end{cor}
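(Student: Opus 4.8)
The plan is to reduce this to the specialization result Corollary~\ref{cor:Risler spec} applied to a suitable existential formula, exactly in the spirit of the passage from Corollary~\ref{cor:spec} to Theorem~\ref{thm:Rueckert}. The non-trivial direction is the left-to-right implication, so assume $f(a)\geq 0$ for all $a\in\smallo^m$; we must produce the Pythagorean identity $fg^2=h_1^2+\cdots+h_k^2$ with $g\neq 0$. The converse is immediate: if $fg^2=\sum h_i^2$ then for $a\in\smallo^m$ with $g(a)\neq 0$ we get $f(a)=\sum (h_i(a)/g(a))^2\geq 0$, and the set of such $a$ is dense in $\smallo^m$ by Lemma~\ref{lem:phim embedding} (the valuation of $K$ being non-trivial), while $a\mapsto f(a)$ takes values in $\mathcal O$ and is compatible with residues in the obvious sense, so $f(a)\geq 0$ extends to all $a\in\smallo^m$ by continuity/density — here one has to be slightly careful, but since $-f(a)>0$ on a non-empty open set would contradict density of $\{g(a)\neq 0\}$ once we know $f$ is a unit times a square there, this is routine.

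For the main direction I would argue by contraposition: suppose no such identity exists. Let $F:=\Frac(\k\lfloor X\rfloor)$; the hypothesis that $f$ is not of the form $h_1^2+\cdots+h_k^2$ divided by a square means precisely that $f$ is not a sum of squares in $F$. By the Artin–Schreier theory (the abstract positivstellensatz / the fact that an element not a sum of squares in a field lies below $0$ in some ordering), there is an ordering $\leq$ of $F$ with $f<0$. Let $\Omega$ be a real closure of $(F,\leq)$, and let $\sigma\colon\k\lfloor X\rfloor\to F\hookrightarrow\Omega$ be the inclusion. Then $\Omega\models \varphi(\sigma(f))$ where $\varphi(y)$ is the $\mathcal L_{\operatorname{OR}}$-formula $y<0$. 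Now Corollary~\ref{cor:Risler spec} gives an $a\in\smallo^m$ with $K\models f(a)<0$, contradicting the hypothesis that $f(a)\geq 0$ for all $a\in\smallo^m$. Hence $f$ is a sum of squares in $F$, which upon clearing denominators yields the desired identity $fg^2=h_1^2+\cdots+h_k^2$ with $g\in\k\lfloor X\rfloor$ non-zero and $h_i\in\k\lfloor X\rfloor$.

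The one point requiring a little care — and the main obstacle — is the passage from "sum of squares in $F=\Frac(\k\lfloor X\rfloor)$" to an identity with $g,h_i$ genuinely in $\k\lfloor X\rfloor$ rather than just in $F$: one writes $f=\sum_i (p_i/q)^2$ with $p_i,q\in\k\lfloor X\rfloor$, $q\neq 0$ (common denominator), and then $fq^2=\sum p_i^2$, so $g:=q$, $h_i:=p_i$ works. This is genuinely routine. A secondary subtlety is that $\k$ being euclidean (hence orderable, and its ordering extends to $F$) is what makes the Artin–Schreier step available and also underlies Corollary~\ref{cor:Risler spec}; I would simply invoke that $\k$ is euclidean as assumed in the ambient hypotheses of this section. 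Finally, one should double-check that $\varphi(y)\equiv (y<0)$ is legitimately an $\mathcal L_{\operatorname{OR}}$-formula — it is, being $\neg(0\leq y)\wedge\neg(y=0)$ or equivalently $0\leq y\to y=0$ negated — so Corollary~\ref{cor:Risler spec} applies directly.
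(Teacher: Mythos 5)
Your proof is correct and follows essentially the same route as the paper's: for the ``only if'' direction, the paper likewise passes from the non-existence of the identity to an ordering of $F=\Frac(\k\lfloor X\rfloor)$ with $f<0$, takes the real closure, and applies Corollary~\ref{cor:Risler spec} to the formula $y<0$; for ``if'' it likewise invokes Lemma~\ref{lem:phim embedding}. The only blemish is your parenthetical ``once we know $f$ is a unit times a square there,'' which is not the relevant point --- what is actually needed is that $\{a:f(a)<0\}$ is open (which follows from the identity $f(b)-f(a)=\sum_i u_i(a,b)(b_i-a_i)$ obtained by Weierstrass division, together with convexity of the valuation), after which density of $\{a:g(a)\neq 0\}$ gives the contradiction.
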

\begin{proof}
Suppose there are no such $g$ and $h_j$. This yields an ordering $\leq$ on $F:=\Frac(\k\lfloor X\rfloor)$ such that $f<0$.
(See, e.g.,~\cite[Corollary~1.1.11]{BCR}.)
Let $\Omega$ be the real closure of the ordered field $(F,{\leq})$ and $\sigma\colon \k\lfloor X\rfloor\to\Omega$ be the natural inclusion, and apply Corollary~\ref{cor:Risler spec} to the $\mathcal L_{\operatorname{OR}}$-formula $y<0$ (where $y$ is a single variable).
This shows the ``only if'' direction; for ``if'' use Lemma~\ref{lem:phim embedding}.
\end{proof}

\noindent
Next we give a general version of Risler's real Nullstellensatz analogous to Theorem~\ref{thm:Rueckert}. For this, we need to recall some definitions and basic facts from real algebra.
Let   $I$ be an ideal of a ring $R$. One says that $I$ is {\it real}\/ if for every sequence~$a_1,\dots,a_k$ of elements
of $R$:
$$a_1^2+\cdots+a_k^2\in I \quad\Rightarrow\quad a_1,\dots,a_k\in I.$$
We have (cf.~\cite[Lemmas~4.1.5, 4.1.6]{BCR}):
\begin{enumerate}
\item If $I$ is real, then $I$ is radical;
\item if $I$ is real and $R$ is noetherian, then all minimal prime ideals of $R$ containing~$I$ are real; and
\item if $I$ is a prime ideal and $F:=\Frac(R/I)$, then $I$ is real iff $F$ is formally real (that is,  there is an ordering on $F$ making $F$ an ordered field).
\end{enumerate}
Next we define
$$\sqrt[\leftroot{2}\uproot{5}\operatorname{r}]{I} := \big\{ a\in R: \text{$a^{2k}+b_1^2+\cdots+b_l^2\in I$ for some $k\geq 1$  and $b_1,\dots,b_l\in R$}\big\}.$$
Then $\sqrt[\leftroot{2}\uproot{5}\operatorname{r}]{I}$ is the smallest real ideal of $R$ containing $I$, called the {\it real radical}\/ of $I$.
For $R=\k\lfloor X\rfloor$, we clearly have~$\operatorname{I}\!\big(\!\operatorname{Z}_K(I)\big)=\operatorname{I}\!\big(\!\operatorname{Z}_K(\sqrt[\leftroot{2}\uproot{5}\operatorname{r}]{I})\big)$.

\begin{theorem}[Risler's Nullstellensatz]\label{thm:Risler}
Let $I$ be an ideal of~$\k\lfloor X\rfloor$. Then
$$\operatorname{I}\!\big(\!\operatorname{Z}_K(I)\big)=\sqrt[\leftroot{2}\uproot{5}\operatorname{r}]{I}.$$
\end{theorem}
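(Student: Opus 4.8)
The plan is to follow the proof of Theorem~\ref{thm:Rueckert} almost verbatim, substituting the real radical for the radical and Corollary~\ref{cor:Risler spec} for Corollary~\ref{cor:spec}. For the inclusion $\sqrt[\leftroot{2}\uproot{5}\operatorname{r}]{I}\subseteq\operatorname{I}\!\big(\!\operatorname{Z}_K(I)\big)$ I would first observe that $\operatorname{I}\!\big(\!\operatorname{Z}_K(I)\big)$ is a \emph{real} ideal of $\k\lfloor X\rfloor$: if $a_1^2+\cdots+a_k^2\in\operatorname{I}\!\big(\!\operatorname{Z}_K(I)\big)$, then for each $p\in\operatorname{Z}_K(I)$ we have $a_1(p)^2+\cdots+a_k(p)^2=0$ in the real closed (hence formally real) field $K$, which forces $a_i(p)=0$ for all $i$, so each $a_i\in\operatorname{I}\!\big(\!\operatorname{Z}_K(I)\big)$. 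Since this real ideal contains $I$ and $\sqrt[\leftroot{2}\uproot{5}\operatorname{r}]{I}$ is by definition the smallest real ideal containing $I$, the desired inclusion follows.

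For the reverse inclusion $\operatorname{I}\!\big(\!\operatorname{Z}_K(I)\big)\subseteq\sqrt[\leftroot{2}\uproot{5}\operatorname{r}]{I}$, I would use the identity $\operatorname{I}\!\big(\!\operatorname{Z}_K(I)\big)=\operatorname{I}\!\big(\!\operatorname{Z}_K(\sqrt[\leftroot{2}\uproot{5}\operatorname{r}]{I})\big)$ recalled before the theorem to replace $I$ by its real radical and so assume $I$ is real. If $I=\k\lfloor X\rfloor$ the claim is trivial, so assume $I$ is a proper real ideal. Since $\k\lfloor X\rfloor$ is noetherian (Lemma~\ref{lem:W-Lemma}) and $I$ is real, hence radical, we have $I=P_1\cap\cdots\cap P_k$ where $P_1,\dots,P_k$ are the finitely many minimal primes of $\k\lfloor X\rfloor$ over $I$, and each $P_j$ is again real by fact~(2) in the list preceding the theorem. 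As $\operatorname{Z}_K(I)=\operatorname{Z}_K(P_1)\cup\cdots\cup\operatorname{Z}_K(P_k)$ and hence $\operatorname{I}\!\big(\!\operatorname{Z}_K(I)\big)=\bigcap_j\operatorname{I}\!\big(\!\operatorname{Z}_K(P_j)\big)$, it suffices to prove $\operatorname{I}\!\big(\!\operatorname{Z}_K(P)\big)\subseteq P$ for a real prime $P$.

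So let $P$ be a real prime ideal of $\k\lfloor X\rfloor$ and $f\in\k\lfloor X\rfloor\setminus P$. The integral domain $A:=\k\lfloor X\rfloor/P$ has formally real fraction field $F:=\Frac(A)$ (fact~(3) above), and the image of $f$ in $A$ is a nonzero element of $F$. I would fix any ordering of $F$, let $\Omega$ be its real closure, and let $\sigma\colon\k\lfloor X\rfloor\to\Omega$ be the composite of the residue map $\k\lfloor X\rfloor\to A$ with $A\hookrightarrow F\hookrightarrow\Omega$. Choosing generators $g_1,\dots,g_n$ of $P$, we then have $\Omega\models\varphi\big(\sigma(f),\sigma(g_1),\dots,\sigma(g_n)\big)$ for the $\mathcal L_{\operatorname{OR}}$-formula $\varphi(y_0,\dots,y_n)$ given by $y_0\neq 0\wedge y_1=\cdots=y_n=0$. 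Corollary~\ref{cor:Risler spec} then yields some $a\in\smallo^m$ with $f(a)\neq 0$ and $g_1(a)=\cdots=g_n(a)=0$, i.e.\ $a\in\operatorname{Z}_K(P)$ with $f(a)\neq 0$; thus $f\notin\operatorname{I}\!\big(\!\operatorname{Z}_K(P)\big)$, as required.

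Each step is a routine adaptation of the corresponding step in the proof of Theorem~\ref{thm:Rueckert}, with all the model-theoretic content already packaged into Corollary~\ref{cor:Risler spec}. The only places needing mild care are verifying that $\operatorname{I}\!\big(\!\operatorname{Z}_K(I)\big)$ is real—this is where real closedness of $K$ genuinely enters, in contrast to the R\"uckert case where one only uses that a zero set's vanishing ideal is radical—and invoking the standard facts~(1)--(3) on real ideals in order to carry out the reduction to real minimal primes. I do not expect any serious obstacle beyond this bookkeeping.
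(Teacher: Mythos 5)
Your proposal is correct and follows essentially the same route as the paper: reduce via the real radical and the noetherianity of $\k\lfloor X\rfloor$ to the case of a real prime ideal $P$, order the formally real fraction field of $\k\lfloor X\rfloor/P$, pass to its real closure, and apply Corollary~\ref{cor:Risler spec} to the formula $y_0\neq 0\wedge y_1=\cdots=y_n=0$. The only cosmetic difference is that the paper phrases the trivial case as ``no real prime ideal contains $I$'' rather than ``$\sqrt[\leftroot{2}\uproot{5}\operatorname{r}]{I}=\k\lfloor X\rfloor$'', which amounts to the same thing.
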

\begin{proof}
If we have some $a\in\operatorname{Z}_K(I)$, then some real prime ideal of~$\k\lfloor X\rfloor$ contains~$I$, namely the kernel of the
ring morphism $f\mapsto f(a)\colon\k\lfloor X\rfloor\to K$. Hence we may assume that some real prime ideal of $\k\lfloor X\rfloor$ contains $I$ and thus $\sqrt[\leftroot{2}\uproot{5}\operatorname{r}]{I}$. Then there are real prime ideals~$P_1,\dots,P_k$ ($k\geq 1$) of $\k\lfloor X\rfloor$
such that
$\sqrt[\leftroot{2}\uproot{5}\operatorname{r}]{I}=P_1\cap\cdots\cap P_k$.
We can arrange that   $I$ itself is a real prime ideal, and  need to show that then~$\operatorname{I}\!\big(\!\operatorname{Z}_K(I)\big)=I$. 
Equip the fraction field   of the integral domain $R:=\k\lfloor X\rfloor/I$ with some field ordering, 
let $\Omega$ be its real closure, and let $\sigma\colon \k\lfloor X\rfloor\to \Omega$ be the
composition of~$f\mapsto f+I\colon\k\lfloor X\rfloor\to R$ with the natural inclusions~$R\subseteq \Omega$.
Arguing as in the proof of Theorem~\ref{thm:Rueckert}, using Corollary~\ref{cor:Risler spec} instead of
Corollary~\ref{cor:spec},  yields~$\operatorname{I}\!\big(\!\operatorname{Z}_K(I)\big)\subseteq I$.
\end{proof}

\noindent
If $\k$ is real closed, then so is $\k(\!(t^*)\!)$; thus the
valued field $\k(\!(t^*)\!)$ is a model of~$\operatorname{RCVF}$. 
By the previous theorem we hence immediately obtain, in a similar way
as Theorem~\ref{thm:Rueckert} implied Corollary~\ref{cor:Rueckert, 1}:

\begin{cor} \label{cor:Risler, 1}
Suppose  $\k$ is real closed. Let $f, g_1,\dots,g_n\in \k[[X]]$ with~${f(a)=0}$ for all $a\in t\k[[t]]^m$ such that $g_1(a)=\cdots=g_n(a)=0$. Then there are $k\geq 1$ and~$b_1,\dots,b_l,h_1,\dots,h_n\in \k[[X]]$   such that $$f^{2k}+b_1^2+\cdots+b_l^2=g_1h_1+\cdots+g_nh_n.$$ 
\end{cor}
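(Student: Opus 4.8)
The plan is to mimic the passage from Theorem~\ref{thm:Rueckert} to Corollary~\ref{cor:Rueckert, 1}, with Theorem~\ref{thm:Risler} in place of R\"uckert's theorem. First I would take $W=\bigl(\k[[X_1,\dots,X_m]]\bigr)$ to be the Weierstrass system of all formal power series rings over $\k$, so that $\k\lfloor X\rfloor=\k[[X]]$, and work with $K:=\k(\!(t^*)\!)$ equipped with its $t$-adic valuation and the infinitesimal $W$-structure obtained by restricting the natural one on $\k(\!(t^\Q)\!)$ (Examples~(1) and~(2) in \S\ref{sec:W-systems}). Since $\k$ is real closed, $K$ is real closed and its $t$-adic dominance relation is non-trivial and convex, so the $\mathcal L$-reduct of $K$ is a model of $\operatorname{RCVF}$; hence $K\models\operatorname{RCVF}_W$, and its residue field is $\k$.

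Now set $I:=(g_1,\dots,g_n)\subseteq\k[[X]]$. By Theorem~\ref{thm:Risler}, $\operatorname{I}\!\big(\!\operatorname{Z}_K(I)\big)=\sqrt[\leftroot{2}\uproot{5}\operatorname{r}]{I}$, so it suffices to prove $f\in\operatorname{I}\!\big(\!\operatorname{Z}_K(I)\big)$: granting this, unwinding the definition of the real radical produces $k\geq 1$, elements $b_1,\dots,b_l\in\k[[X]]$, and $h_1,\dots,h_n\in\k[[X]]$ with $f^{2k}+b_1^2+\cdots+b_l^2=g_1h_1+\cdots+g_nh_n$, which is exactly the asserted identity.

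To prove $f\in\operatorname{I}\!\big(\!\operatorname{Z}_K(I)\big)$, i.e.\ $f(a)=0$ for all $a=(a_1,\dots,a_m)\in\smallo_K^m$ with $g_1(a)=\cdots=g_n(a)=0$, I would use the automorphisms $\theta_d\colon a(t)\mapsto a(t^d)$ ($d\geq 1$) of the $\mathcal L_W$-structure $K$ from the example in \S\ref{sec:inf W-structures as structures}. Given such an $a$, choose $d\geq 1$ with $a_j\in t^{1/d}\k[[t^{1/d}]]$ for each $j$; then $\theta_d(a)\in\bigl(t\k[[t]]\bigr)^m$. Since $\theta_d$ is an automorphism of the $\mathcal L_W$-structure $K$, it commutes with the interpretations of the function symbols $g_1,\dots,g_n,f$, so $g_i\bigl(\theta_d(a)\bigr)=\theta_d\bigl(g_i(a)\bigr)=0$ for every $i$. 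Applying the hypothesis of the corollary to the point $\theta_d(a)\in\bigl(t\k[[t]]\bigr)^m$ then yields $f\bigl(\theta_d(a)\bigr)=0$, and injectivity of $\theta_d$ gives $f(a)=0$, as needed.

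I expect no serious obstacle: the entire weight of the argument is carried by Theorem~\ref{thm:Risler}. The one step that deserves attention is the last paragraph, where the hypothesis is available only for points of $t\k[[t]]^m$ and must be propagated to all of $\smallo_K^m$; this works because $\smallo_K=\bigcup_{d\geq 1}t^{1/d}\k[[t^{1/d}]]$ and $\theta_d$ maps $t^{1/d}\k[[t^{1/d}]]$ into $t\k[[t]]$, and it is precisely the device already used to pass from Corollary~\ref{cor:Rueckert, 0} to Corollary~\ref{cor:Rueckert, 1}.
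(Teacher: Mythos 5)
Your proof is correct and is essentially the paper's own argument: the paper likewise takes $K=\k(\!(t^*)\!)\models\operatorname{RCVF}_W$ with the formal power series Weierstrass system, applies Theorem~\ref{thm:Risler}, and propagates the hypothesis from $t\k[[t]]^m$ to all of $\smallo_K^m$ via the automorphisms $a(t)\mapsto a(t^d)$, exactly as in the passage from Theorem~\ref{thm:Rueckert} to Corollary~\ref{cor:Rueckert, 1}. You have simply spelled out the details the paper leaves as ``in a similar way.''
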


\begin{remark}
If in the context of the previous corollary  $f$ and each $g_j$ are in $\k_0[[X]]$, 
where $\k_0$ is a euclidean subfield of $\k$, then we can take the $b_i$, $h_j$ in~$\k_0[[X]]$.
To see this apply Theorem~\ref{thm:Risler} to the Weierstrass system $(\k_0[[X]])$ instead of
the  Weierstrass system $(\k[[X]])$.
Similarly with 
$\k_0[[X]]^{\operatorname{a}}$ or
  $\k_0[[X]]^{\operatorname{da}}$ in place of $\k_0[[X]]$.
\end{remark}

\noindent
The field $\R\{\!\{t^*\}\!\}$ is  also real closed; hence we   obtain, similarly to Corollary~\ref{cor:Rueckert, 2}:

\begin{cor}  \label{cor:Risler, 2}
Let 
$f,g_1,\dots,g_n\in \R\{X\}$, and suppose    
 that for each $a\in \R\{t\}^m$ with~$a(0)=0$ we have $f(a)=0$ whenever $g_1(a)=\cdots=g_n(a)=0$. Then  there are~$k\geq 1$ and~$b_1,\dots,b_l,h_1,\dots,h_n\in \R\{X\}$   such that $$f^{2k}+b_1^2+\cdots+b_l^2=g_1h_1+\cdots+g_nh_n.$$ 
\end{cor}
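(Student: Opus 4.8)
The plan is to argue just as for Corollary~\ref{cor:Rueckert, 2}, but with Risler's Nullstellensatz (Theorem~\ref{thm:Risler}) in place of R\"uckert's, and with the real closed valued field of convergent Puiseux series over $\R$ in place of its algebraically closed counterpart. First I would put $K:=\R\{\!\{t^*\}\!\}$ and equip it with the distinguished valuation ring $\mathcal O:=\R[[t^*]]\cap K$. Since $\mathcal O$ contains $[0,1]$, it is the convex hull of $\R$ in the ordered field $K$, hence a convex valuation ring, and it is non-trivial because $vt>0$; as $\R\{\!\{t^*\}\!\}$ is real closed (recorded just before the statement), this gives $(K,{\preceq})\models\operatorname{RCVF}$. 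Taking $W$ to be the Weierstrass system of convergent power series over the euclidean field $\k=\R$, which carries a complete absolute value, the infinitesimal $W$-structure on $\k(\!(t^*)\!)$ restricts to one on $K$, so $K\models\operatorname{RCVF}_W$; note that here $\R\lfloor X\rfloor=\R\{X\}$.

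I would then apply Theorem~\ref{thm:Risler} to the ideal $I:=(g_1,\dots,g_n)$ of $\R\{X\}$, which yields $\operatorname{I}\!\big(\!\operatorname{Z}_K(I)\big)=\sqrt[\operatorname{r}]{I}$, and check that the hypothesis forces $f\in\operatorname{I}\!\big(\!\operatorname{Z}_K(I)\big)$. So let $a\in\operatorname{Z}_K(I)\subseteq\smallo_K^m$; arguing exactly as in the proofs of Corollaries~\ref{cor:Rueckert, 1} and~\ref{cor:Risler, 1}, there is some $d\geq 1$ such that the substitution $\theta\colon b(t)\mapsto b(t^d)$ --- an automorphism of the $\mathcal L_W$-structure $K$ by the discussion in Section~\ref{sec:inf W-structures as structures} --- maps $a$ into $\R\{t\}^m$, and since $\theta$ scales the valuation by the positive integer $d$ we still have $\theta(a)\in\smallo_K^m$, i.e.\ $\theta(a)(0)=0$. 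As $\theta$ is an $\mathcal L_W$-automorphism, $g_j(\theta(a))=\theta(g_j(a))=0$ for $j=1,\dots,n$, so the hypothesis gives $f(\theta(a))=0$, that is $\theta(f(a))=0$, hence $f(a)=0$ by injectivity of $\theta$. Thus $f\in\operatorname{I}\!\big(\!\operatorname{Z}_K(I)\big)=\sqrt[\operatorname{r}]{I}$, which by the definition of the real radical means precisely that $f^{2k}+b_1^2+\cdots+b_l^2\in I=(g_1,\dots,g_n)$ for some $k\geq 1$ and $b_1,\dots,b_l\in\R\{X\}$; writing out this membership produces the $h_1,\dots,h_n\in\R\{X\}$ demanded by the statement. (The reverse implication is immediate.)

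I do not expect a genuinely hard step here. The entire content is the verification that $K=\R\{\!\{t^*\}\!\}$, with its $t$-adic valuation and the convergent-power-series $W$-structure, is a model of $\operatorname{RCVF}_W$ --- which splits into the real closedness of the convergent Puiseux series field (cited), the convexity of its $t$-adic valuation ring, and the already-established fact that the infinitesimal $W$-structure of $\k(\!(t^*)\!)$ restricts to $\k\{\!\{t^*\}\!\}$ --- together with the automorphism bookkeeping ($t\mapsto t^d$) that transfers the zero-set condition from infinitesimal convergent Puiseux series to elements of $\R\{t\}^m$. Both ingredients are exactly those already used for Corollaries~\ref{cor:Rueckert, 1} and~\ref{cor:Risler, 1}, now run in the convergent analytic setting; if anything, the only point that needs a little care is checking that the valuation ring, the $W$-structure and the automorphism all genuinely restrict from $\k(\!(t^*)\!)$ down to the subfield $\R\{\!\{t^*\}\!\}$.
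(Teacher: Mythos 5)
Your proposal is correct and is essentially the paper's own argument: the paper obtains this corollary exactly by running the proof of Corollary~\ref{cor:Rueckert, 2} (itself modeled on Corollary~\ref{cor:Rueckert, 1}) with $\R\{\!\{t^*\}\!\}\models\operatorname{RCVF}_W$ and Theorem~\ref{thm:Risler} in place of $\operatorname{ACVF}_W$ and Theorem~\ref{thm:Rueckert}, including the $t\mapsto t^d$ automorphism bookkeeping. The points you flag as needing care (real closedness of $\R\{\!\{t^*\}\!\}$, convexity and non-triviality of its $t$-adic valuation ring, and the restriction of the infinitesimal $W$-structure) are precisely the ones the paper records beforehand.
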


\noindent
Finally, the previous corollary yields a version for germs of real analytic functions:

\begin{cor}\label{cor:Risler fns}
Let $f,g_1,\dots,g_n\colon U\to\R$ be real analytic functions, where $U$ is an open neighborhood of $0$ in $\R^m$, such that
  for all $a\in U$,
  $$g_1(a)=\cdots=g_n(a) \quad\Longrightarrow\quad f(a)=0.$$ Then there are real analytic functions $b_1,\dots,b_l,h_1,\dots,h_n\colon V\to\R$,
for some open neighborhood $V\subseteq U$ of $0$ in $\R^m$, and some $k\geq 1$ such that 
$$f^{2k}+b_1^2+\cdots+b_l^2=g_1h_1+\cdots+g_nh_n\qquad\text{ on $V$.}$$
\end{cor}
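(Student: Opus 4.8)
The plan is to reduce to Corollary~\ref{cor:Risler, 2} in exactly the way the proof of Corollary~\ref{cor:Rueckert fns} reduced to Corollary~\ref{cor:Rueckert, 2}. First I would pass to germs at the origin: writing $f,g_1,\dots,g_n\in\R\{X\}$ also for the Taylor series at $0$ of the given analytic functions, via the ring isomorphism identifying germs of real analytic functions at $0$ with $\R\{X\}$, it suffices to produce $k\geq 1$ and $b_1,\dots,b_l,h_1,\dots,h_n\in\R\{X\}$ with $f^{2k}+b_1^2+\cdots+b_l^2=g_1h_1+\cdots+g_nh_n$ in $\R\{X\}$; any such identity of germs is represented by an identity of analytic functions on a suitable open neighborhood $V\subseteq U$ of $0$.

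The one point that needs checking is that the hypothesis, which concerns genuine points of $U$, transfers to power-series points: namely that $g_1(a)=\cdots=g_n(a)=0\Rightarrow f(a)=0$ for every $a\in\R\{t\}^m$ with $a(0)=0$. Granting this, Corollary~\ref{cor:Risler, 2} applies directly. To verify it, fix such an $a=(a_1,\dots,a_m)$ and assume $g_j(a)=0$ in $\R\{t\}$ for $j=1,\dots,n$. Each $a_i$ converges on some interval $I_0$ around $0$ in $\R$ with $a_i(0)=0$, so $t\mapsto a(t)$ is a continuous map $I_0\to\R^m$ with $a(0)=0$; shrinking $I_0$ we arrange $a(I_0)\subseteq U$. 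Since composition of germs corresponds to composition of suitable representatives, the vanishing of the germ $g_j(a)$ means $t\mapsto g_j(a(t))$ is identically $0$ on some interval $I_1\subseteq I_0$ around $0$. Applying the hypothesis of the corollary to the point $a(t)\in U$ for each $t\in I_1$ gives $f(a(t))=0$ on $I_1$, hence $f(a)=0$ in $\R\{t\}$, as needed. (This is exactly the argument used in the proof of Corollary~\ref{cor:Rueckert fns}.)

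Finally, applying Corollary~\ref{cor:Risler, 2} gives $k$, $b_i$, $h_j\in\R\{X\}$ as required, and passing back to representatives of the germs $b_i$, $h_j$ on a common neighborhood $V\subseteq U$ of $0$ — shrinking $V$ so that the algebraic identity holds there as functions — completes the argument. I do not expect a genuine obstacle: the only care required is the bookkeeping with the nested intervals $I_1\subseteq I_0\subseteq\R$ and the final neighborhood $V$, ensuring that a convergent power-series zero of $g_1,\dots,g_n$ really does come from honest points of $U$ at which the given implication can be invoked.
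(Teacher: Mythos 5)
Your proposal is correct and is essentially the paper's own argument: the paper derives Corollary~\ref{cor:Risler fns} from Corollary~\ref{cor:Risler, 2} exactly as Corollary~\ref{cor:Rueckert fns} is derived from Corollary~\ref{cor:Rueckert, 2}, the only substantive point being the transfer of the pointwise hypothesis on $U$ to convergent power-series points $a\in\R\{t\}^m$ with $a(0)=0$ via continuity of representatives, which you verify correctly. No gaps.
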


\begin{remark}
In Corollary~\ref{cor:Risler fns},
if the germs of $f$ and of the $g_j$ at $0$ are algebraic (over~$\R[X]$), then we can choose the $b_i$, $h_j$ such that
their germs at $0$ are also algebraic. Similarly with ``differentially algebraic'' in place of ``algebraic''. (Cf.~the remark following
Corollary~\ref{cor:Risler, 1}.)
\end{remark}

\section{The Theory $p\operatorname{CVF}$}\label{sec:pCVF}

\noindent
Our aim here is to formulate and prove a theorem analogous to Theorem~\ref{thm:CD} in the $p$-adic context.
We first recall the basic algebraic and model-theoretic facts about $p$-adically closed fields.
{\it Throughout this section $K$ is a field of characteristic zero.}\/

\subsection{$p$-valued fields}
Recall that a valuation  on $K$ with residue field   $\F_p$  such that~$p$ is an element of smallest positive valuation is
called a {\it $p$-valuation}\/ on~$K$. The valuation ring of a $p$-valuation on $K$ is said to be a {\it $p$-valuation ring}\/ of $K$, and
a   field of characteristic zero equipped with one of its $p$-valuation rings is a {\it $p$-valued field.}\/
If $K$ is $p$-valued, then so is each valued subfield of $K$.
We also note:

\begin{lemma}\label{lem:p-val spec}
Let $K$ be a valued field and   $\Delta\neq\{0\}$ be a convex subgroup of its value group. Then
$K$ is $p$-valued iff the $\Delta$-specialization $\dot K$ of $K$ is $p$-valued, and
in this case the $\Delta$-coarsening of $K$ is not $p$-valued.
\end{lemma}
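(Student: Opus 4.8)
The plan is to track through the coarsening/specialization only two pieces of data: the residue field and the value $vp$. Write $\Gamma := v(K^\times)$. Since, as recalled in the discussion of coarsening and specialization, the map $\mathcal O\to\mathcal O_{\dot K}\to\res(\dot K)$ induces an isomorphism $\res(K)\cong\res(\dot K)$, the residue field of $K$ is $\F_p$ iff that of $\dot K$ is; as a $p$-valued field has residue field $\F_p$, I may assume this condition throughout. It gives $\bar p=0$, hence $vp>0$ in $\Gamma$. Also $p\in\dot{\mathcal O}$ automatically (as $\dot{\mathcal O}\supseteq\Z$), and the next step is to record the dichotomy: since $\Delta\neq\{0\}$ is convex and $vp>0$, either $vp\in\Delta$ or $vp>\Delta$. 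In the first case $p\notin\dot\smallo$, so the prime of $\dot K$ is $\dot p=p+\dot\smallo$, $\operatorname{char}\dot K=0$, and $v\dot p=vp$; in the second case $p\in\dot\smallo$, so $\operatorname{char}\dot K=p$ and $\dot K$ is not $p$-valued at all.

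For the forward implication (if $K$ is $p$-valued then so is $\dot K$): from $vp=\min\Gamma^{>0}$ and any chosen $\delta\in\Delta^{>0}$ (available as $\Delta\neq\{0\}$), convexity of $\Delta$ applied to $0<vp\leq\delta$ forces $vp\in\Delta$; then $v\dot p=vp$, and every $\delta\in\Delta^{>0}\subseteq\Gamma^{>0}$ satisfies $\delta\geq vp$, so $vp=\min\Delta^{>0}$, i.e.\ $\dot K$ is $p$-valued. Conversely, if $\dot K$ is $p$-valued then $\operatorname{char}\dot K=0$, so by the dichotomy $vp\in\Delta$ and $vp=v\dot p=\min\Delta^{>0}$; were there $\gamma\in\Gamma$ with $0<\gamma<vp$, convexity of $\Delta$ (using $0,vp\in\Delta$) would place $\gamma$ in $\Delta^{>0}$, contradicting minimality of $vp$ there. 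Hence $vp=\min\Gamma^{>0}$ and $K$ is $p$-valued.

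Finally, in the case where $K$ (equivalently $\dot K$) is $p$-valued, the argument above shows $vp\in\Delta$, so $\operatorname{char}\dot K=0$ and the field $\dot K$ contains a copy of $\Q$, hence is not $\F_p$. Since $\dot K$ is exactly the residue field of the $\Delta$-coarsening $(K,\dot{\mathcal O})$, that valued field has residue field different from $\F_p$, and so is not $p$-valued. The only point that needs genuine care is the repeated interplay between convexity of $\Delta$ and the minimality of $vp$, together with correctly deciding when $p$ lies in $\dot\smallo$; everything else is bookkeeping.
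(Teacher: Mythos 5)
Your proof is correct and follows essentially the same route as the paper, whose entire proof is the one-line observation that $\dot K$ has value group $\Delta$ and residue field isomorphic to $\res(K)$; your convexity/minimality bookkeeping for $vp$ (and the characteristic computation for $\dot K$) is exactly what that one-liner leaves implicit.
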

\begin{proof}
The valued field
$\dot K$ has value group $\Delta$, and its residue field is isomorphic to~$\res(K)$.
\end{proof}


\subsection{$p$-adically closed fields}
One says that a $p$-valued field  is {\it $p$-adically closed}
if it  has no proper algebraic $p$-valued field extension.
The   fundamental result about $p$-adically closed fields is the following (see \cite[Theorem~3.1]{PR}): 

\begin{theorem}\label{thm:AKE}
A $p$-valued field is $p$-adically closed iff it is henselian and its value group is a $\Z$-group.
\end{theorem}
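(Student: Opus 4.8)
The plan is to prove the two implications separately, using that a valuation on our characteristic-zero field $K$ is a $p$-valuation precisely when its residue field is $\F_p$ and $v(p)$ is the smallest positive element of the value group; in particular $\Z v(p)\cong\Z$ is then the smallest nonzero convex subgroup of that group. For the first direction, suppose $K$ is $p$-adically closed. The henselization $K^h$ of $(K,\mathcal O)$ is a separable algebraic \emph{immediate} extension, hence again $p$-valued, so $K^h=K$: thus $K$ is henselian. To see that $\Gamma:=vK$ is a $\Z$-group I would invoke Lemma~\ref{lem:Z-gps} with $\Delta=\Z v(p)$, reducing to showing $\Gamma/\Z$ divisible. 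If it is not, fix a prime $\ell$ and $\gamma\in\Gamma^{>}$ with $\gamma\notin\ell\Gamma+\Z$, pick $a\in K^\times$ with $va=\gamma$, and put $L:=K(a^{1/\ell})$; since $a\notin K^{\ell}$ we have $[L:K]=\ell$, and I extend $v$ to $L$ (uniquely, $K$ being henselian). The fundamental inequality forces $[\Gamma_L:\Gamma]=\ell$ and $\res L=\F_p$, while a short computation from $\gamma\notin\ell\Gamma+\Z$ (and convexity of $\Z v(p)$) shows that no element of $\Gamma_L=\Gamma+\Z(\gamma/\ell)$ lies strictly between $0$ and $v(p)$. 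So $L$ is a proper algebraic $p$-valued extension of $K$, a contradiction.

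For the converse, suppose $K$ is henselian and $p$-valued with $\Gamma:=vK$ a $\Z$-group. Any proper algebraic $p$-valued extension contains a proper \emph{finite} one (adjoin a single element and restrict the valuation), so it is enough to rule out a proper finite $p$-valued extension $L\supseteq K$; by henselianity its $p$-valuation is the unique valuation of $L$ extending that of $K$. First, $L/K$ is immediate: $\res L=\F_p=\res K$ since both are $p$-valued, and given $\delta\in\Gamma_L$ with $n\delta\in\Gamma$, write $n\delta=i+n\varepsilon$ with $0\le i<n$ and $\varepsilon\in\Gamma$ (possible since $\Gamma$ is a $\Z$-group); then $n(\delta-\varepsilon)=i\cdot v(p)$, and as $\Gamma_L$ is ordered and torsion-free with $v(p)$ least positive this forces $i=0$ and $\delta=\varepsilon\in\Gamma$, so $\Gamma_L=\Gamma$.

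It remains to show that a henselian valued field with a $\Z$-group value group has no proper finite immediate extension — the analogue of the defectlessness of $\Q_p$, and the crux. I would coarsen. Let $\Delta=\Z v(p)$ be the smallest nonzero convex subgroup of $\Gamma=\Gamma_L$ and pass to the $\Delta$-coarsenings $(K,\mathcal O_\Delta)\subseteq(L,\mathcal O_{L,\Delta})$; these are henselian (coarsenings of henselian valued fields) and of residue characteristic $0$ (their residue fields $\dot K\subseteq\dot L$ carry $p$-valuations, hence have characteristic $0$). Since $L/K$ is immediate, $\Gamma_L/\Delta=\Gamma/\Delta$, so the coarsened extension has ramification index $1$; and every finite extension of a henselian valued field of residue characteristic $0$ is defectless, whence $[L:K]=[\dot L:\dot K]$. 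Now $\dot K$ is henselian and $p$-valued with value group $\Delta\cong\Z$, and $\dot L/\dot K$ is again immediate; writing $\dot L=\dot K(a)$ with $v(a)>0$, $a\ne0$ (arranged by scaling and translating within $\dot K$, using $e=f=1$), its minimal polynomial $f\in\mathcal O_{\dot K}[X]$ is monic with $f'(a)\ne0$, and since $\{v(a-b):b\in\dot K\}$ is an unbounded-above subset of $\Z$ one may choose $b\in\mathcal O_{\dot K}$ with $v(a-b)$ exceeding $v(f'(a))$ and large enough that the higher Taylor terms of $f$ at $a$ are negligible, so that $v(f(b))=v(f'(a))+v(a-b)$ and $v(f'(b))=v(f'(a))$, hence $v(f(b))>2v(f'(b))$. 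Newton's Lemma then produces a root of $f$ in $\dot K$, contradicting $\deg f=[\dot L:\dot K]\ge2$; hence $[\dot L:\dot K]=1$, so $L=K$.

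The genuine difficulty sits in this last paragraph: the $\Z$-group hypothesis gets used twice — through "$v(p)$ least in $\Gamma_L$" to force $L/K$ immediate, and through "$\Delta$ archimedean, residue characteristic $0$ after coarsening" to make the Hensel/Newton estimates available — and it is precisely the recognition that these two uses together replace the classical defectlessness argument that is the main point. The auxiliary facts invoked above (henselization is immediate; coarsenings and specializations of henselian valued fields are henselian; residue characteristic $0$ implies defectless; Newton's Lemma in henselian fields) are all standard, e.g.\ from \cite{ADH}; alternatively one may simply quote \cite[Theorem~3.1]{PR}.
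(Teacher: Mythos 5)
Your argument is correct, but it cannot be ``the same as the paper's'': the paper does not prove Theorem~\ref{thm:AKE} at all, it simply quotes it from Prestel--Roquette \cite[Theorem~3.1]{PR}. So what you have produced is a self-contained proof (modulo standard valuation theory) of a result the paper treats as a black box, and the structure you chose is sound. In the forward direction, the henselization being an immediate (hence $p$-valued) algebraic extension forces henselianity, and your Kummer construction is fine: for prime $\ell$, $a\notin K^\ell$ does give irreducibility of $X^\ell-a$, the fundamental inequality yields $e=\ell$, $f=1$, and your computation from $\gamma\notin\ell\Gamma+\Z$ (using that $\Z v(p)$ is convex, which indeed follows from $v(p)$ being the least positive element) shows $v(p)$ stays least positive in $\Gamma+\Z(\gamma/\ell)$, contradicting $p$-adic closedness; combined with Lemma~\ref{lem:Z-gps} this gives the $\Z$-group property. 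In the converse, the reduction to a finite extension, the immediacy argument via the $\Z$-group division-with-remainder, and the coarsening at $\Delta=\Z v(p)$ are all correct, and the key point is exactly the one you isolate: after coarsening you are in equicharacteristic zero (so Ostrowski kills the defect and $[L:K]=[\dot L:\dot K]$), and in the specialization the value group is $\Z$, where ``$v(a-\dot K)$ has no maximum'' upgrades to ``cofinal'', making the Newton/Hensel--Rychlik approximation available; that is precisely what replaces the classical completeness/defectlessness of $\Q_p$. Two cosmetic remarks: the ``scaling and translating'' of $a$ is unnecessary (all you need is a primitive element $a\in\mathcal O_{\dot L}\setminus\dot K$, whose minimal polynomial lies in $\mathcal O_{\dot K}[X]$ by henselianity), and you should say explicitly that $v_L(p)=v_K(p)$ is least positive in $\Gamma_L$ because $L$ is assumed $p$-valued --- that is where the hypothesis on $L$ enters the immediacy step.
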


\noindent
Using the   Ax-Kochen/Er\v{s}ov Theorem for unramified henselian valued fields of mixed characteristic
(cf., e.g.,~\cite[Theorem~7.1]{vdD}) this implies:

\begin{cor}\label{cor:pCF complete}
Any two $p$-adically closed valued fields \textup{(}viewed as $\mathcal L_{\preceq}$-structures as usual\textup{)} are elementarily equivalent.
\end{cor}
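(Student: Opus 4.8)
The plan is to combine Theorem~\ref{thm:AKE} with the Ax-Kochen/Er\v{s}ov transfer principle, exactly as signalled in the text preceding the statement. Let $K$ and $L$ be two $p$-adically closed valued fields. By Theorem~\ref{thm:AKE}, each of them is a henselian $p$-valued field whose value group $\Gamma_K$, respectively $\Gamma_L$, is a $\Z$-group. Being $p$-valued, each has characteristic zero and residue field (isomorphic to) $\F_p$, so each is a henselian valued field of mixed characteristic $(0,p)$; moreover $p$ has smallest positive valuation in each, which is precisely the condition that the valuation be \emph{unramified}. Thus both $K$ and $L$ satisfy the hypotheses of the Ax-Kochen/Er\v{s}ov theorem for unramified henselian valued fields of mixed characteristic (e.g.\ \cite[Theorem~7.1]{vdD}).

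That theorem reduces the elementary equivalence of $K$ and $L$ as valued fields to the elementary equivalence of their residue fields (in the language of rings) and of their value groups (in the language $\{0,{-},{+},{\leq}\}$ of ordered abelian groups). The residue-field condition is trivial here, since $\res(K)\cong\F_p\cong\res(L)$. For the value groups, recall from Section~\ref{sec:val th} that every $\Z$-group is elementarily equivalent to $\Z$ in the language of ordered abelian groups; hence $\Gamma_K\equiv\Z\equiv\Gamma_L$. Therefore $K\equiv L$ as $\mathcal L_{\preceq}$-structures, which is the assertion.

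The only point requiring a little care is matching our formalization with the one used for the cited AKE theorem: as noted in Section~\ref{sec:val th}, viewing a valued field as an $\mathcal L_{\preceq}$-structure via its dominance relation is interdefinable with the valuation-divisibility formalization of \cite[Chapter~4]{PrestelDelzell} and with the other standard choices, so elementary equivalence in one framework transfers to the others and we may invoke \cite[Theorem~7.1]{vdD} directly. I expect this bookkeeping about languages---together with checking that ``$p$ of smallest positive value'' is exactly the unramifiedness hypothesis of the cited theorem---to be the only genuinely delicate part of the argument; everything else is immediate from Theorem~\ref{thm:AKE} and the standard facts about $\Z$-groups.
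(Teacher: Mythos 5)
Your proof is correct and is exactly the argument the paper intends: the corollary is derived from Theorem~\ref{thm:AKE} plus the Ax-Kochen/Er\v{s}ov transfer for unramified henselian valued fields of mixed characteristic, using that all residue fields are $\F_p$ and all value groups are $\Z$-groups, hence elementarily equivalent to $\Z$. The paper gives no further detail, so your write-up matches (and slightly elaborates) its one-line justification.
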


\noindent
We also recall a well-known fact:

\begin{lemma}\label{lem:def val}
Let $\mathcal O$ be a $p$-valuation ring of $K$, and set $\varepsilon:=3$ if~$p=2$ and~$\varepsilon:=2$ otherwise. Then 
$$\mathcal O\ \supseteq\ \mathcal O_0:=\big\{ a\in K: \text{there is some $b\in K$ with $1+pa^\varepsilon=b^\varepsilon$}\big\},$$ 
with $\mathcal O=\mathcal O_0$ if the valued field $(K,\mathcal O)$ is henselian.  
\end{lemma}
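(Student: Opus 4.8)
The plan is to establish the two inclusions separately. The only properties of $\mathcal O$ that I expect to need are that $v=v_K$ is a $p$-valuation --- so $vp$ is the \emph{least} element of $\Gamma^{>0}$, where $\Gamma$ is the value group of $K$ --- together with the numerical observation that $p\nmid\varepsilon$: if $p=2$ then $\varepsilon=3$, and if $p\neq 2$ then $\varepsilon=2$, so in every case $\varepsilon\in\{2,3\}$ is coprime to $p$; consequently $\varepsilon\in\mathcal O^\times$ (its image in $\res(K)$, a field of characteristic $p$, is nonzero). The inclusion $\mathcal O_0\subseteq\mathcal O$ will use neither henselianity nor the fact that $\res(K)=\F_p$; only the equality $\mathcal O=\mathcal O_0$ in the henselian case will invoke Hensel's Lemma (and, through it, $\varepsilon\in\mathcal O^\times$).

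For $\mathcal O_0\subseteq\mathcal O$, I would take $a,b\in K$ with $1+pa^\varepsilon=b^\varepsilon$ and argue by contradiction, assuming $va<0$. Since $vp$ is minimal in $\Gamma^{>0}$ and $-va>0$, we get $va\leq -vp$, hence $v(pa^\varepsilon)=vp+\varepsilon\,va\leq(1-\varepsilon)vp\leq -vp<0$ because $\varepsilon\geq 2$. As $v(1)=0$, the ultrametric inequality forces $v(b^\varepsilon)=v(1+pa^\varepsilon)=vp+\varepsilon\,va$, i.e. $vp=\varepsilon(vb-va)$ with $vb-va>0$. Thus $vp$ is a proper positive multiple of $\varepsilon\geq 2$ in $\Gamma$ (namely $0<vb-va<vp$), contradicting the minimality of $vp$. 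Hence $va\geq 0$, that is, $a\in\mathcal O$.

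For the reverse inclusion, assuming $(K,\mathcal O)$ henselian, I would fix $a\in\mathcal O$ and apply Hensel's Lemma to the monic polynomial $P(T):=T^\varepsilon-(1+pa^\varepsilon)\in\mathcal O[T]$ with approximate root $T=1$. Indeed $v(pa^\varepsilon)=vp+\varepsilon\,va\geq vp>0$, so $1+pa^\varepsilon\in\mathcal O$ and $P(1)=-pa^\varepsilon\in\smallo$, while $P'(1)=\varepsilon\in\mathcal O^\times$ by the observation above; so the reduction $\bar P$ has $\bar 1$ as a simple zero in $\res(K)$. Henselianity then produces $b\in\mathcal O$ with $b^\varepsilon=1+pa^\varepsilon$, whence $a\in\mathcal O_0$.

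I do not anticipate a genuine obstacle: the statement is classical, and both directions are short. The two points that do require care are the exceptional case $p=2$ --- where $\varepsilon=2$ would fail ($\varepsilon\equiv 0$ in $\res(K)$), which is precisely the reason for the case split in the definition of $\varepsilon$ --- and the bookkeeping with value-group inequalities in the first inclusion, where the minimality of $vp$ in $\Gamma^{>0}$ is used twice (once to deduce $va\leq -vp$, and once to derive the final contradiction).
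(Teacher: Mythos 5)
Your proof is correct, and both halves are the standard argument: the minimality of $vp$ in $\Gamma^{>0}$ (used exactly at the two places you flag) for $\mathcal O_0\subseteq\mathcal O$, and Hensel's Lemma applied to $T^\varepsilon-(1+pa^\varepsilon)$ at the simple residue root $\bar 1$ (where the choice of $\varepsilon$ coprime to $p$ makes $P'(1)=\varepsilon$ a unit) for the reverse inclusion. The paper states this lemma as a well-known fact without proof, so there is nothing to contrast with; your write-up supplies precisely the expected argument.
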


\noindent
In particular, if $(K,\mathcal O)$ is $p$-adically closed, then   $\mathcal O$ is the unique $p$-valuation ring of~$K$.
Theorem~\ref{thm:AKE} in combination with
Lemmas~\ref{lem:Z-gps},~\ref{lem:p-val spec}, and  \cite[Lemma~3.4.2]{ADH}
also yields a specialization result for $p$-adic closedness:

\begin{lemma}\label{lem:p-ad closed spec}
Let $K$, $\Delta$ be as in Lemma~\ref{lem:p-val spec}.
Then $K$ is $p$-adically closed iff the $\Delta$-coarsening  of $K$ is henselian with divisible value group   and the $\Delta$-specialization~$\dot K$ of $K$ is $p$-adically closed.
\end{lemma}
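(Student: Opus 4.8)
The plan is to reduce both implications to Theorem~\ref{thm:AKE}, which characterizes $p$-adic closedness of a $p$-valued field by two separate conditions---being henselian and having a $\Z$-group as value group---and then to transfer each of these conditions between $K$, its $\Delta$-coarsening $(K,\dot{\mathcal O})$, and its $\Delta$-specialization $\dot K$. First I would record the structural data that the argument rests on: the coarsening $(K,\dot{\mathcal O})$ has value group $\Gamma/\Delta$; the specialization $\dot K$ has value group $\Delta$ and residue field $\res(K)$; and, by \cite[Lemma~3.4.2]{ADH}, $(K,\mathcal O)$ is henselian if and only if both $(K,\dot{\mathcal O})$ and $\dot K$ are henselian. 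Note also that Lemma~\ref{lem:Z-gps} applies here because $\Delta\neq\{0\}$.

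For the forward direction I would assume $K$ is $p$-adically closed, so by Theorem~\ref{thm:AKE} $K$ is $p$-valued, henselian, and $\Gamma$ is a $\Z$-group. Then Lemma~\ref{lem:p-val spec} gives that $\dot K$ is $p$-valued; the henselianity transfer gives that $(K,\dot{\mathcal O})$ and $\dot K$ are both henselian; and Lemma~\ref{lem:Z-gps} gives that $\Delta$ is a $\Z$-group and that $\Gamma/\Delta$ is divisible. Since $\dot K$ is a henselian $p$-valued field whose value group $\Delta$ is a $\Z$-group, Theorem~\ref{thm:AKE} makes $\dot K$ $p$-adically closed, and we have simultaneously seen that $(K,\dot{\mathcal O})$ is henselian with divisible value group $\Gamma/\Delta$. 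For the converse I would assume $(K,\dot{\mathcal O})$ is henselian with divisible value group and $\dot K$ is $p$-adically closed. Then $\dot K$ is $p$-valued, hence $K$ is $p$-valued by Lemma~\ref{lem:p-val spec}; $\dot K$ is henselian, hence $(K,\mathcal O)$ is henselian by the cited transfer; and the value group $\Delta$ of $\dot K$ is a $\Z$-group by Theorem~\ref{thm:AKE}, so, $\Gamma/\Delta$ being divisible, $\Gamma$ is a $\Z$-group by Lemma~\ref{lem:Z-gps}. Theorem~\ref{thm:AKE} then concludes that $K$ is $p$-adically closed.

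I do not expect a genuine obstacle here: the argument is essentially bookkeeping built on top of Theorem~\ref{thm:AKE}. The only point that needs care is keeping straight which value group belongs to which of the three fields $K$, $(K,\dot{\mathcal O})$, $\dot K$, so that Lemmas~\ref{lem:Z-gps} and~\ref{lem:p-val spec} and Theorem~\ref{thm:AKE} are each invoked for the correct field; once the dictionary between coarsening, specialization, and their value groups is fixed, everything falls into place.
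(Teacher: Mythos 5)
Your proof is correct and is exactly the argument the paper intends: the paper states this lemma as following from Theorem~\ref{thm:AKE}, Lemmas~\ref{lem:Z-gps} and~\ref{lem:p-val spec}, and the henselianity transfer of \cite[Lemma~3.4.2]{ADH}, and your write-up just carries out that bookkeeping in both directions. Nothing is missing.
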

 
\noindent
In the next lemma and its corollary we let $L\supseteq K$ be an extension of $p$-valued fields. 

\begin{lemma}\label{lem:alg closure in p-adically closed}
If $L$ is henselian, then
$$\text{$K$ is algebraically closed in $L$}\quad \Longleftrightarrow\quad\text{$K$ is henselian and $\Gamma_L/\Gamma$ is torsion-free.}$$
\end{lemma}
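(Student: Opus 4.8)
The plan is to prove the two implications separately; both rely on the fact that a henselian $p$-valued field $K$ is \emph{defectless}, i.e.\ $[F:K]=e(F/K)\cdot f(F/K)$ for every finite valued field extension $F/K$. I would check this first. Since $v_K(p)$ is the least positive element of $\Gamma_K$, the convex hull $\Delta:=\Z v_K(p)$ of $v_K(p)$ in $\Gamma_K$ equals $\Z v_K(p)$ itself; the $\Delta$-coarsening of $K$ is then a henselian valued field of residue characteristic $0$ (hence defectless), and its residue field --- the $\Delta$-specialization of $K$, which is $p$-valued by Lemma~\ref{lem:p-val spec} --- is a \emph{discretely} $p$-valued henselian field, with completion $\Q_p$, hence defectless (a standard fact for complete, so also for henselian, discretely valued fields). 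Multiplicativity of the defect along the coarsening/specialization tower gives the claim.

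For the implication $(\Leftarrow)$: assume $K$ is henselian and $\Gamma_L/\Gamma_K$ is torsion-free, and let $a\in L$ be algebraic over $K$. Then $K(a)$ is $p$-valued (being a valued subfield of $L$), so $\res(K(a))=\F_p=\res(K)$ and $f(K(a)/K)=1$; moreover $\Gamma_{K(a)}/\Gamma_K$ is a torsion group embedding into the torsion-free group $\Gamma_L/\Gamma_K$, hence trivial, so $e(K(a)/K)=1$. By defectlessness of $K$ we get $[K(a):K]=1$, i.e.\ $a\in K$.

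For the implication $(\Rightarrow)$: assume $K$ is algebraically closed in $L$. The henselization $K^{\mathrm h}$ embeds over $K$ into the henselian valued field $L$ and is algebraic over $K$, so $K^{\mathrm h}=K$ and $K$ is henselian. Suppose now $\gamma\in\Gamma_L\setminus\Gamma_K$ with $n\gamma\in\Gamma_K$ for a prime $n$; replacing $\gamma$ by $-\gamma$ if necessary we may take $\gamma>0$. As $v_K(p)$ is the least positive element of \emph{both} $\Gamma_K$ and $\Gamma_L$, the group $\Delta:=\Z v_K(p)$ is a convex subgroup of both; coarsening $K$ and $L$ by $\Delta$ thus gives valued subfields $(K,\dot v)\subseteq(L,\dot w)$ of residue characteristic $0$, still henselian, with $K$ still algebraically closed in $L$, and --- since $\Delta\subseteq\Gamma_K$ and $\gamma\notin\Gamma_K$ --- with the image $\bar\gamma$ of $\gamma$ still a torsion element of order $n$ in the value-group quotient of $(L,\dot w)$ over $(K,\dot v)$. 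I would then produce a root in $L$ of a polynomial $X^{n}-a'$ with $a'\in K$ and $\dot v(a')=n\bar\gamma$: its $\dot w$-value is $\bar\gamma$, which is not in the value group of $(K,\dot v)$, so the root lies in $L\setminus K$ while being algebraic over $K$, a contradiction. To find $a'$: fix $b\in L$ with $\dot w(b)=\bar\gamma$; as $a'$ runs over the coset $a_0\,\dot{\mathcal O}_K^{\times}$ of $K$-units (with $\dot v(a_0)=n\bar\gamma$), the residue of $a'/b^{\,n}$ in $\dot L^{\times}$ runs over a coset of $\dot K^{\times}$, where $\dot K\subseteq\dot L$ are the $\Delta$-specializations. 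Since $\dot K\subseteq\dot L$ is an extension of discretely $p$-valued henselian fields, both have completion $\Q_p$, so $\dot K$ is dense in $\dot L$; hence one can choose $a'$ so that the residue of $a'/b^{\,n}$ becomes an $n$-th power in $\dot L$ (normalise modulo $\dot\smallo$ by an integer, and, when $n=p$, then approximate inside the open subgroup of $p$-th powers). Since the residue characteristic of $(L,\dot w)$ is $0$, its $1$-units are $n$-divisible (Hensel), so $a'/b^{\,n}$ is itself an $n$-th power in $L$, which yields the root $bt$ with $t\in L$, $\dot w(t)=0$.

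I expect the real obstacle to lie in the wild case $n=p$: directly over $K$, a Kummer/Hensel argument collapses, because the $1$-units of $L$ need not be $p$-divisible and no $a'\in K$ of value $p\gamma$ need make $X^{p}-a'$ split in $L$. The device that resolves this is the passage to a residue-characteristic-zero coarsening, which is harmless for henselianity and for being algebraically closed in $L$, but forces one to check that the torsion element survives the coarsening --- precisely where it matters that $v_K(p)$ is the least positive element of both value groups. Two standard inputs above should be cited rather than reproved: that a discretely valued henselian field with perfect residue field is defectless, and that the defect is multiplicative along coarsening/specialization towers.
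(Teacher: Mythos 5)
The paper itself gives no proof of this lemma: it simply refers to \cite[Lemma~4.2]{PR}, so there is no in-paper argument to compare with, and what you supply is a self-contained proof in the spirit of that reference. Your argument is correct. The decomposition of the $p$-valuation along $\Delta=\Z v(p)$ into a residue-characteristic-zero coarsening and a discrete specialization with residue field $\F_p$, the resulting defectlessness of henselian $p$-valued fields, and the deduction of $(\Leftarrow)$ from $e=f=1$ all work; in $(\Rightarrow)$ the henselization argument for henselianity and the Kummer--Hensel argument after coarsening are sound, and the passage to the coarsening is indeed exactly what neutralizes the wild case $n=p$. Two points should be made explicit when you write this up. First, that $\Z v(p)$ really is a convex subgroup of both $\Gamma$ and $\Gamma_L$ (this is where ``$v(p)$ is the least positive element'' of each group is used: if $0<\delta\le n\,v(p)$ then $\delta\in\Z v(p)$ by an easy induction), so the two fields are coarsened by the \emph{same} subgroup and the quotient of the coarsened value groups is canonically $\Gamma_L/\Gamma$, which is what makes ``the torsion element survives'' precise. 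Second, the density of $\dot K$ in $\dot L$ should not be inferred merely from ``both have completion $\Q_p$'' --- abstractly isomorphic completions do not by themselves yield density --- but from the fact that $\dot L/\dot K$ is an \emph{immediate} extension of discretely valued fields (both have value group $\Z v(p)$ and residue field $\F_p$), and immediate extensions of discretely valued fields are dense (equivalently, complete discretely valued fields admit no proper immediate extensions, so the two completions coincide inside the completion of $\dot L$). With these two remarks added, and the two standard inputs you propose to cite (henselian fields of residue characteristic zero, and henselian discretely valued fields of characteristic zero, are defectless; the defect is multiplicative for composite valuations), the proof is complete and correct.
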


\noindent
For a proof of this lemma see  \cite[Lemma~4.2]{PR}.
Lemma~\ref{lem:alg closure in p-adically closed} in combination with Lemma~\ref{lem:Z-subgp} and Theorem~\ref{thm:AKE} implies:

\begin{cor}\label{cor:alg closure in p-adically closed}
If $L$ is $p$-adically closed,
then the following are equivalent:
\begin{enumerate}
\item $K$ is $p$-adically closed;
\item $K$ is algebraically closed in $L$;
\item $K$ is henselian and $\Gamma_L/\Gamma$ is torsion-free.
\end{enumerate}
\end{cor}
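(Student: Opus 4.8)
The plan is to obtain all three equivalences by assembling the three earlier results: Theorem~\ref{thm:AKE} (the Ax--Kochen/Er\v{s}ov-style characterization of $p$-adically closed fields), Lemma~\ref{lem:alg closure in p-adically closed}, and Lemma~\ref{lem:Z-subgp}. First I would record the structural facts that make the machinery apply. Since $L$ is $p$-adically closed, Theorem~\ref{thm:AKE} gives that $L$ is henselian and $\Gamma_L$ is a $\Z$-group. Because $K\subseteq L$ is an extension of $p$-valued fields, $\Gamma=\Gamma_K$ is identified with an ordered subgroup of $\Gamma_L$, and the value $v(p)$ of the prime $p$—which lies in $\Gamma$ since $p\in K$—is simultaneously the smallest positive element of $\Gamma$ and of $\Gamma_L$, by the very definition of a $p$-valuation. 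Thus $\Gamma$ is a subgroup of the $\Z$-group $\Gamma_L$ containing the least element of $\Gamma_L^>$, which is exactly the hypothesis needed to apply Lemma~\ref{lem:Z-subgp} with ambient group $\Gamma_L$ and $\Delta:=\Gamma$.

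With these preliminaries in place, the equivalence (2)$\Leftrightarrow$(3) is precisely Lemma~\ref{lem:alg closure in p-adically closed}, whose standing hypothesis ``$L$ is henselian'' we have just verified. For (1)$\Leftrightarrow$(3): Theorem~\ref{thm:AKE} says $K$ is $p$-adically closed iff $K$ is henselian and $\Gamma$ is a $\Z$-group, while Lemma~\ref{lem:Z-subgp} says $\Gamma$ is a $\Z$-group iff $\Gamma_L/\Gamma$ is torsion-free. Chaining these two biconditionals yields (1)$\Leftrightarrow$(3), and together with (2)$\Leftrightarrow$(3) this closes the cycle of equivalences.

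I do not expect a real obstacle here: the argument is a formal concatenation of the quoted statements. The one point deserving a line of care is the claim that $v(p)$ is the smallest positive element of both $\Gamma$ and $\Gamma_L$, so that Lemma~\ref{lem:Z-subgp} is genuinely applicable and so that the hypotheses of Theorem~\ref{thm:AKE} for $K$ make sense; this is immediate from the definition of a $p$-valuation and the fact that a valued subfield of a $p$-valued field is again $p$-valued (noted just before Lemma~\ref{lem:p-val spec}).
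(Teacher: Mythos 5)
Your proof is correct and follows exactly the route the paper intends: the paper derives this corollary by combining Lemma~\ref{lem:alg closure in p-adically closed} (giving (2)$\Leftrightarrow$(3)) with Theorem~\ref{thm:AKE} and Lemma~\ref{lem:Z-subgp} (giving (1)$\Leftrightarrow$(3)). Your extra remark verifying that $v(p)$ is the least positive element of both value groups, so that Lemma~\ref{lem:Z-subgp} applies, is the right point to check and is indeed immediate from the definition of a $p$-valuation.
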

 
\noindent
A  field which carries at least one $p$-valuation is said to be {\it formally $p$-adic.}\/ 
Every formally $p$-adic field has characteristic zero.
If $K$ is formally $p$-adic and satisfies one of the equivalent conditions in the next lemma, then $K$ is called {\it $p$-adically closed.}\/

\begin{lemma}
For a  formally $p$-adic field $K$, the following statements   are equivalent:
\begin{enumerate}
\item $K$   has no proper algebraic formally $p$-adic   field extension;
\item  each $p$-valuation ring of $K$ makes $K$ a $p$-adically closed valued field;
\item some valuation ring of $K$ makes $K$ a $p$-adically closed valued field.
\end{enumerate}
\end{lemma}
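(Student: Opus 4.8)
The plan is to prove the equivalences as the cycle $(1)\Rightarrow(2)\Rightarrow(3)\Rightarrow(1)$, unwinding the definition of ``$p$-adically closed valued field'' (no proper algebraic $p$-valued field extension) and using, as the one genuinely substantive input, the uniqueness of the $p$-valuation ring on a $p$-adically closed valued field recorded in the remark following Lemma~\ref{lem:def val}.

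For $(1)\Rightarrow(2)$ I would argue contrapositively. Fix a $p$-valuation ring $\mathcal O$ of $K$ and suppose $(K,\mathcal O)$ is \emph{not} $p$-adically closed as a valued field. By definition this means $(K,\mathcal O)$ admits a proper algebraic $p$-valued field extension $(L,\mathcal O_L)$. But then $L$, carrying the $p$-valuation ring $\mathcal O_L$, is a formally $p$-adic field, and $L$ is a proper algebraic extension of $K$; this contradicts $(1)$. The implication $(2)\Rightarrow(3)$ is immediate, since a formally $p$-adic field by definition has at least one $p$-valuation ring, so $(2)$ in particular produces one that works.

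For $(3)\Rightarrow(1)$, suppose some valuation ring $\mathcal O$ of $K$ makes $(K,\mathcal O)$ a $p$-adically closed valued field, and let $L\supseteq K$ be an algebraic field extension that is formally $p$-adic, witnessed by a $p$-valuation ring $\mathcal O_L$ of $L$. Then $(K,\mathcal O_L\cap K)$ is a valued subfield of the $p$-valued field $(L,\mathcal O_L)$, hence itself $p$-valued, so $\mathcal O_L\cap K$ is a $p$-valuation ring of $K$; by the uniqueness remark after Lemma~\ref{lem:def val} this forces $\mathcal O_L\cap K=\mathcal O$. Thus $(L,\mathcal O_L)$ is an algebraic $p$-valued field extension of the $p$-adically closed valued field $(K,\mathcal O)$, whence $L=K$. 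This shows $K$ has no proper algebraic formally $p$-adic field extension, i.e.\ $(1)$ holds.

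There is no real obstacle here beyond bookkeeping with the definitions; the only place something is actually used is the step in $(3)\Rightarrow(1)$ that passes from ``$K$ is $p$-adically closed for one valuation ring'' to ``$K$ has a \emph{unique} $p$-valuation ring'', which is exactly the content of the remark after Lemma~\ref{lem:def val} (using that $p$-adically closed valued fields are henselian, Theorem~\ref{thm:AKE}). Should one wish to avoid citing that remark, the same point can be proved directly: Lemma~\ref{lem:def val} gives $\mathcal O'\supseteq\mathcal O_0$ for every $p$-valuation ring $\mathcal O'$ of $K$ and $\mathcal O=\mathcal O_0$ when $(K,\mathcal O)$ is henselian, while a proper coarsening of a $p$-valuation ring has residue characteristic $0$ and so is not a $p$-valuation ring; hence $\mathcal O'=\mathcal O_0=\mathcal O$.
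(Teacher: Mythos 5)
Your proof is correct and follows essentially the same route as the paper: (1)$\Rightarrow$(2)$\Rightarrow$(3) by unwinding definitions, and (3)$\Rightarrow$(1) by intersecting a $p$-valuation ring of the extension with $K$ and invoking the uniqueness of the $p$-valuation ring on a $p$-adically closed field (the remark after Lemma~\ref{lem:def val}). The extra direct justification of that uniqueness at the end is a harmless elaboration of what the paper already records.
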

\begin{proof}
The implications (1)~$\Rightarrow$~(2)~$\Rightarrow$~(3) are clear. Let $\mathcal O$ be a valuation ring of~$K$ making
$K$ a $p$-adically closed valued field. Let $F$ be an algebraic  field extension of~$K$, and let
$\mathcal O_F$ be a $p$-valuation ring of $F$. Then~$\mathcal O_F\cap K$ is a $p$-valuation ring of~$K$, so $\mathcal O_F\cap K=\mathcal O$  and thus $F=K$ since~$(K,\mathcal O)$ is $p$-adically closed.
\end{proof}


\noindent
Below we will have occasion to discuss field equipped with two dependent valuations, one of which is a $p$-valuation.
To facilitate keeping them apart we now generally denote $p$-valuation rings of $K$ by $\mathcal O_p$, 
 with maximal ideal~$\smallo_p$,
valuation~$v_p\colon K^\times\to\Gamma_p$,   and associated dominance relation $\preceq_p$.
If we want to make the dependence on~$K$ explicit, we write $\mathcal O_{K,p}$ instead of $\mathcal O_p$, etc.
(Below, $K$ will often have a unique $p$-valuation ring, making these notations
unambiguous in this case.)

\begin{examples}
Let $\k$ be a $p$-adically
closed field. Let  $v_{\k}\colon\k^\times\to\Gamma_{\k}$ be the $p$-valuation of~$\k$,
with valuation ring $\mathcal O_{\k}$ and  maximal ideal $\smallo_{\k}$.
Consider the  Hahn field $K=\k(\!(t^\Gamma)\!)$ where
$\Gamma\neq\{0\}$ is divisible.
Then the $t$-adic valuation $v\colon K^\times\to\Gamma$ of $K$ is not a $p$-valuation.
However, the field $K$  is $p$-adically closed: 
Let  $\Gamma_p:=\Gamma\times \Gamma_{\k}$  equipped with the lexicographic ordering, and
for  $f=\sum_{\gamma\in\Gamma} f_\gamma\gamma\in K^\times$ ($f_\gamma\in\k$) set 
$$v_p(f) := \big(\delta,v_{\k}(f_{\delta})\big)  \text{ where $\delta=vf$;}$$
then $v_p\colon K^\times\to\Gamma_p$ is 
the   $p$-valuation of $K$. (Cf.~Lem\-ma~\ref{lem:p-ad closed spec}.)
The valuation ring of~$v_p$ is 
$\mathcal O_p = \mathcal O_{\k} + \smallo$, with maximal ideal~$\smallo_{\k}+\smallo$,
where~$\smallo$ is the maximal ideal of the valuation ring $\mathcal O=\k+\smallo$ of $v$.
For $\Gamma=\Q$,  the $p$-valuation on $\k(\!(t^{\Q})\!)$ restricts to a $p$-valuation
on its subfield~$\k(\!(t^*)\!)$, making $\k(\!(t^*)\!)$  a $p$-adically closed valued field.
For $\k=\Q_p$,   the $p$-valuation on
$\Q_p(\!(t^*)\!)$ further restricts to a $p$-valuation on its
subfield $\Q_p\{\!\{t^*\}\!\}$, with valuation ring $\Z_p+\smallo_t$ having maximal ideal $p\Z_p+\smallo_t$.
\end{examples}

\noindent
The subgroup $(K^\times)^n={\{a^n:a\in K^\times\}}$ of the multiplicative group of a $p$-adically closed field $K$ consisting of the $n$th powers of non-zero elements of $K$ plays an important role in the study of $K$. 
For $K=\Q_p$ we  recall a consequence of   compactness of $\Z_p$ and density of $\Z$ in $\Z_p$:

\begin{lemma}\label{lem:coset repres}
Let $n\geq 1$; then $\big[\Q_p^\times:(\Q_p^\times)^n\big]<\infty$, and each
coset of the subgroup~$(\Q_p^\times)^n$ of $\Q_p^\times$ has a representative in  $\Z$.
\end{lemma}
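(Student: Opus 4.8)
The plan is to split $\Q_p^\times$ into its rank-one value-group part and its compact unit part and treat the two separately. Writing each $a\in\Q_p^\times$ uniquely as $a=p^{v_p(a)}u$ with $u\in\Z_p^\times$ exhibits $\Q_p^\times$ as the internal direct product $p^{\Z}\times\Z_p^\times$, and since raising to the $n$th power respects this product we get $(\Q_p^\times)^n=p^{n\Z}\times(\Z_p^\times)^n$. Hence $[\Q_p^\times:(\Q_p^\times)^n]=[p^{\Z}:p^{n\Z}]\cdot[\Z_p^\times:(\Z_p^\times)^n]=n\cdot[\Z_p^\times:(\Z_p^\times)^n]$, so the problem reduces to the unit group.

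For the unit group I would first check that $(\Z_p^\times)^n$ is open in $\Z_p^\times$. Fix $N>2v_p(n)$ and let $u\in 1+p^N\Z_p$. Applying Hensel's Lemma to the polynomial $X^n-u\in\Z_p[X]$ at the point $1$---where the value of this polynomial at $1$ lies in $p^N\Z_p$, its derivative at $1$ equals $n$, and therefore the $p$-adic absolute value of the value is strictly smaller than the square of the $p$-adic absolute value of the derivative---produces a root $w\in\Z_p$, which lies in $\Z_p^\times$ since $w^n=u\in\Z_p^\times$. Thus $1+p^N\Z_p\subseteq(\Z_p^\times)^n$, so this subgroup contains an open neighbourhood of $1$ and is open. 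Since $\Z_p^\times=\Z_p\setminus p\Z_p$ is a compact subspace of $\Z_p$, the partition of $\Z_p^\times$ into cosets of the open subgroup $(\Z_p^\times)^n$ must be finite, giving $[\Z_p^\times:(\Z_p^\times)^n]<\infty$; combined with the index identity above, the first assertion follows.

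For the second assertion, given $a\in\Q_p^\times$, write $a=p^{k}u$ with $k=v_p(a)\in\Z$ and $u\in\Z_p^\times$, and pick $j\in\Z$ with $k+nj\geq 0$. The coset $u\,(\Z_p^\times)^n$ is a nonempty open subset of $\Z_p$ (it is open in $\Z_p^\times$, which is open in $\Z_p$), so by density of $\Z$ in $\Z_p$ there is an integer $c\in u\,(\Z_p^\times)^n$; such a $c$ is a unit of $\Z_p$, hence $p\nmid c$ and $u/c$ is the $n$th power of a unit. Then $b:=p^{k+nj}c$ is a nonzero integer with $a/b=p^{-nj}(u/c)\in(\Q_p^\times)^n$, i.e.\ $a\in b\,(\Q_p^\times)^n$, as desired. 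None of these steps is a genuine obstacle; the only point needing a little care is coordinating the unbounded factor $p^{\Z}$ and the compact factor $\Z_p^\times$ so that one and the same integer $b$ represents $a$ modulo $(\Q_p^\times)^n$.
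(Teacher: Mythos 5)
Your argument is correct and fleshes out exactly the route the paper gestures at (the paper gives no proof, merely noting the lemma is "a consequence of compactness of $\Z_p$ and density of $\Z$ in $\Z_p$"): you use Hensel's Lemma to show $(\Z_p^\times)^n$ is open, compactness of $\Z_p^\times$ to get finite index, the decomposition $\Q_p^\times=p^{\Z}\times\Z_p^\times$ to conclude finiteness of $[\Q_p^\times:(\Q_p^\times)^n]$, and density of $\Z$ in $\Z_p$ to produce integer coset representatives. All steps check out, including the bookkeeping that makes $b=p^{k+nj}c$ a single integer representing the coset of $a$.
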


\noindent
We now let $\mathcal L_{\operatorname{Mac}}$ be the expansion of the language $\mathcal L_{\operatorname{R}}$ of rings by distinct unary predicate
symbols~$\operatorname{P}_n$~(${n\geq 1}$), a binary relation symbol $\preceq_p$, and a unary function symbol  ${}^{-1}$.
In the following we construe a $p$-valued   field $K$  as an $\mathcal L_{\operatorname{Mac}}$-structure by interpreting $\operatorname{P}_n$ by the set $(K^\times)^n$, the binary relation symbol $\preceq_p$ by the valuation of~$K$, and 
 ${}^{-1}$ by the multiplicative inverse function $a\mapsto a^{-1}$ extended to a total
function~${K\to K}$ via $0^{-1}:=0$. 
Note that the class of  $p$-adically closed valued fields is $\mathcal L_{\operatorname{Mac}}$-axiomatizable thanks to
Theorem~\ref{thm:AKE}. 
We let~$p\operatorname{CF}$ be the $\mathcal L_{\operatorname{Mac}}$-theory of $p$-adically closed  fields.

\begin{theorem}[Macintyre]\label{thm:Macintyre}
$p\operatorname{CF}$  admits~\textup{QE}.
\end{theorem}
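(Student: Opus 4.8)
The plan is to verify the standard embedding criterion for quantifier elimination: it suffices to show that whenever $K,L\models p\operatorname{CF}$ with $L$ $\abs{K}^+$-saturated and $A$ is a common $\mathcal L_{\operatorname{Mac}}$-substructure of $K$ and $L$, the inclusion $A\hookrightarrow L$ extends to an $\mathcal L_{\operatorname{Mac}}$-embedding $K\to L$; and by a routine transfinite chain argument it is then enough to extend the embedding after adjoining a single element $b\in K$. First I would record that, since a substructure is closed under the total inverse operation $x\mapsto x^{-1}$, such an $A$ is a subfield of $K$, carrying the induced $p$-valuation $\mathcal O_{A,p}=\mathcal O_{K,p}\cap A$ (so $A$ is $p$-valued, with residue field $\F_p$) together with the predicates $\operatorname{P}_n^A=(K^\times)^n\cap A$, so that the $\mathcal L_{\operatorname{Mac}}$-structure on $A$ records precisely which elements of $A$ are $n$th powers in $K$.

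Two algebraic facts will power the extension step. First, a Hensel-type lemma: every model of $p\operatorname{CF}$ is henselian (Theorem~\ref{thm:AKE}), and a short computation with Hensel's Lemma shows that for each $n\geq 1$ there is $k=k(n,p)$ with $1+p^{k}\mathcal O_{p}\subseteq (K^\times)^{n}$; thus membership in $(K^\times)^n$ is unaffected by sufficiently small multiplicative perturbations. Second, $K^\times/(K^\times)^n$ is finite, and (because the residue field is $\F_p$ and the value group is a $\Z$-group) a transversal can be chosen that is controlled by the value group modulo $n$ together with a finite amount of unit data, exactly as for $\Q_p$ in Lemma~\ref{lem:coset repres}. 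Together these say that for $f\in A(X)$ the truth value of $\operatorname{P}_n(f(b))$ is governed by $v_p(f(b))\bmod n$ and finitely many further $\operatorname{P}_{n'}$-conditions on elements of $A(b)$.

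With these in hand I would split the extension step into two cases. If $b$ is transcendental over $A$, then its quantifier-free $\mathcal L_{\operatorname{Mac}}$-type over $A$ is determined by the cut that $\{v_p(b-a):a\in A\}$ makes in $\Gamma_{A,p}$ together with finitely many power-predicate constraints; since $\Gamma_{L,p}$ is a $\Z$-group extending $\Gamma_{A,p}$ and $L$ is saturated, this cut is realized by some $b'\in L$, and --- imposing the finitely many relevant $\operatorname{P}_n$-requirements and re-applying saturation, with the two lemmas above ensuring consistency --- one obtains an $\mathcal L_{\operatorname{Mac}}$-isomorphism $A(b)\to A(b')$ over $A$. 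If $b$ is algebraic over $A$, then $A(b)/A$ is finite with a known minimal polynomial, a known $p$-valuation, and a known power-predicate structure; Hensel's Lemma produces in $L$ a root $b'$ of the same minimal polynomial with $v_p(b'-a)=v_p(b-a)$ for all $a\in A$, and --- using the Hensel-openness lemma and applying Corollary~\ref{cor:alg closure in p-adically closed} (so that the relative algebraic closures of $A$ in $K$ and in $L$ are themselves $p$-adically closed) --- one checks that, once the value-group data agree, the correct $L$-conjugate of $b'$ is singled out by the predicates $\operatorname{P}_n$. Iterating yields the desired $K\to L$.

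The hard part will be the algebraic case, and inside it the verification that the family $(\operatorname{P}_n)_{n\geq 1}$ genuinely carries enough information to separate the finitely many extensions of $v_p|_A$ to a finite extension of $A$ and to pin down the right root: this is exactly the phenomenon that forces the power predicates into the language (ordinary valued-field data does not suffice, since $p$-adic closures are not unique --- cf.\ Lemma~\ref{lem:alg closure in p-adically closed}), and making it precise requires combining Hensel's Lemma, the structure of $K^\times/(K^\times)^n$, and the controlled behaviour of algebraic extensions recorded in Corollary~\ref{cor:alg closure in p-adically closed}.
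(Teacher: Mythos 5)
The paper does not actually prove this theorem: immediately after the statement it defers to Macintyre's original article and to \cite[Theorem~5.6]{PR}, so there is no in-paper argument to measure your proposal against. What you have written is a recognizable outline of the standard Prestel--Roquette route: reduce QE to the one-element embedding test over a common substructure into a saturated model, observe that (because ${}^{-1}$ is in $\mathcal L_{\operatorname{Mac}}$) substructures are $p$-valued subfields carrying the trace of the power predicates, and then drive the extension step with the two correct workhorse facts --- Hensel-openness of $(K^\times)^n$ (i.e.\ $1+p^k\mathcal O_p\subseteq(K^\times)^n$ for suitable $k$) and the finiteness of $K^\times/(K^\times)^n$ with a transversal controlled by the $\Z$-group structure of the value group. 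The split into an algebraic step (reducing to the case where $A$ is relatively algebraically closed, hence $p$-adically closed by Corollary~\ref{cor:alg closure in p-adically closed}) followed by a transcendental step is the right architecture.

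However, as a proof the proposal is incomplete precisely where you say it is. The algebraic step is not a routine verification: it is the Embedding Theorem for $p$-adically closed fields over a common $p$-valued substructure (\cite[Theorem~3.10]{PR}, which this paper itself invokes as a black box in the proof of Theorem~\ref{thm:Belair}), equivalently the uniqueness of $p$-adic closures as $\mathcal L_{\operatorname{Mac}}$-structures. Your phrase ``the correct $L$-conjugate of $b'$ is singled out by the predicates $\operatorname{P}_n$'' is the assertion to be proved, not a step in its proof: one must show that the $\mathcal L_{\operatorname{Mac}}$-data on $A$ alone determines, up to $A$-isomorphism, which of the finitely many extensions of $v_p|_A$ to $A(b)$ is the $p$-valuation induced from $K$, and this requires the Kochen-operator machinery (or an equivalent) rather than just Hensel's Lemma and coset counting. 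A second, smaller imprecision: in the transcendental case the quantifier-free type involves conditions $v_p(f(b))$ and $\operatorname{P}_n(f(b))$ for \emph{all} $f\in A[X]$, not finitely many; the reduction to the cut of $\{v_p(b-a):a\in A\}$ is legitimate only after $A$ has been arranged to be $p$-adically closed, and even then realizing the type needs a compactness argument over the full (infinite) family of conditions, with the finite-satisfiability check being where your two lemmas enter. So the proposal is a sound plan matching the cited literature, but the decisive lemma remains unproved.
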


\noindent
We refer to \cite{Macintyre} or \cite[Theorem~5.6]{PR} for    proofs of this theorem. 
For later use we note that the symbols $\preceq_p$ and ${}^{-1}$ are not actually necessary to achieve QE.
For this let $\mathcal L^*_{\operatorname{Mac}}$ be the reduct $\mathcal L_{\operatorname{R}}\cup\{\operatorname{P}_n:n\geq 1\}$
of $\mathcal L_{\operatorname{Mac}}$. The following lemma has already been noted, e.g., in \cite[Theorem~2.2]{Denef}. (Note that
the proof relies on the specific structure of terms in  $\mathcal L_{\operatorname{Mac}}$, and would break down, e.g.,
for the $(\mathcal L_{\operatorname{Mac}})_W$-theory~$p\operatorname{CF}_W$, where $W$ is the Weierstrass system of convergent $p$-adic power series, and the  sublanguage of
  $(\mathcal L_{\operatorname{Mac}})_W$ obtained by removing the symbols $\preceq_p$ and  ${}^{-1}$.)

\begin{lemma}\label{lem:Macintyre}
Every quantifier-free $\mathcal L_{\operatorname{Mac}}$-formula is $p\operatorname{CF}$-equivalent to a 
quan\-ti\-fier-free $\mathcal L^*_{\operatorname{Mac}}$-formula.
\end{lemma}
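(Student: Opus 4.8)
The plan is to reduce everything to eliminating the two ``extraneous'' symbols $\preceq_p$ and ${}^{-1}$ in favour of the Macintyre predicates $\operatorname{P}_n$, using standard facts about $p$-adically closed fields. Since every quantifier-free $\mathcal L_{\operatorname{Mac}}$-formula is a Boolean combination of atomic formulas, and Boolean combinations of $\mathcal L^*_{\operatorname{Mac}}$-formulas are again $\mathcal L^*_{\operatorname{Mac}}$-formulas, it suffices to handle each atomic $\mathcal L_{\operatorname{Mac}}$-formula. The atomic formulas fall into the types $t_1 = t_2$, $\operatorname{P}_n(t)$, and $t_1 \preceq_p t_2$, where $t_1, t_2, t$ are $\mathcal L_{\operatorname{Mac}}$-terms. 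The only obstacle to these already being $\mathcal L^*_{\operatorname{Mac}}$-formulas is the occurrence of the inversion symbol ${}^{-1}$ inside the terms and the dominance symbol $\preceq_p$.

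First I would eliminate ${}^{-1}$ from terms. Working modulo $p\operatorname{CF}$, each $\mathcal L_{\operatorname{Mac}}$-term $t(x)$ can, by repeatedly ``clearing'' innermost inversions, be written via a case split: on the region where all the relevant subterms that get inverted are nonzero, $t$ agrees with a rational function of $x$ (a quotient $P/Q$ of $\mathcal L_{\operatorname{R}}$-terms), and on each region where some such subterm vanishes one substitutes $0$ for that inverse and recurses on a term of strictly smaller inversion-depth. Formally this is an induction on the number of occurrences of ${}^{-1}$ in $t$: the statement to prove is that for every $\mathcal L_{\operatorname{Mac}}$-term $t(x)$ there are finitely many $\mathcal L_{\operatorname{R}}$-terms $P_i, Q_i$ and $\mathcal L^*_{\operatorname{Mac}}$-formulas $\theta_i(x)$ whose disjunction is $p\operatorname{CF}$-provable, such that $p\operatorname{CF} \models \forall x\,(\theta_i(x) \to Q_i(x)\cdot t(x) = P_i(x) \wedge Q_i(x) \ne 0)$, together with the extra case $Q_i \equiv 1$, $P_i \equiv 0$ for the ``degenerate'' branches. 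The base case (no inversions) is trivial, and the inductive step picks an innermost ${}^{-1}$-subterm $s^{-1}$, replaces it by a fresh quotient on $\{s \ne 0\}$ and by $0$ on $\{s = 0\}$, and applies the hypothesis to the two resulting terms of lower depth.

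Having normalized terms, the atomic formulas $t_1 = t_2$ become, on each branch $\theta_i$, equations $P/Q = P'/Q'$ between rational functions, i.e.\ (since $Q, Q' \ne 0$ there) polynomial equations $PQ' = P'Q$, which are $\mathcal L_{\operatorname{R}}$-formulas. Similarly $\operatorname{P}_n(t)$ becomes $\operatorname{P}_n(P/Q)$ on a branch; using that $(K^\times)^n$ is a subgroup one has $P/Q \in (K^\times)^n \Leftrightarrow PQ^{n-1} \in (K^\times)^n$, rewriting it as $\operatorname{P}_n(PQ^{n-1})$, an $\mathcal L^*_{\operatorname{Mac}}$-formula (the case $P = 0$ is handled separately and is just $P = 0 \wedge \text{false}$, since $0 \notin (K^\times)^n$; here one must be slightly careful about the convention, but $\operatorname{P}_n$ is interpreted as $(K^\times)^n$, so $\operatorname{P}_n(0)$ is false). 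The genuinely new ingredient is the elimination of $\preceq_p$: I claim that for a $p$-adically closed field, $a \preceq_p b$ is $\mathcal L^*_{\operatorname{Mac}}$-definable. The cleanest route is the classical fact (Macintyre; see also \cite[Theorem~5.6 and its proof]{PR}, or directly Lemma~\ref{lem:def val}) that the valuation ring is $\mathcal L^*_{\operatorname{Mac}}$-definable: indeed $a \in \mathcal O_p \Leftrightarrow 1 + p a^{\varepsilon} \in \operatorname{P}_\varepsilon \vee a = 0$ with $\varepsilon = 3$ for $p = 2$ and $\varepsilon = 2$ otherwise, which is exactly the $\mathcal O = \mathcal O_0$ statement of Lemma~\ref{lem:def val} for henselian $(K,\mathcal O_p)$, rewritten with the predicate $\operatorname{P}_\varepsilon$. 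Then $a \preceq_p b$, i.e.\ $v_p(a) \ge v_p(b)$, is equivalent to ``$b \ne 0 \wedge a/b \in \mathcal O_p$'' together with the case $b = 0$ (which forces $a = 0$); after the term-normalization above, $a/b$ is again a rational function, so this is an $\mathcal L^*_{\operatorname{Mac}}$-formula.

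Putting the pieces together: given a quantifier-free $\mathcal L_{\operatorname{Mac}}$-formula $\varphi(x)$, normalize all terms occurring in its atomic subformulas as above to obtain a finite case split $\bigvee_i \theta_i(x)$ into $\mathcal L^*_{\operatorname{Mac}}$-definable regions, on each of which every atomic subformula of $\varphi$ is $p\operatorname{CF}$-equivalent to an $\mathcal L^*_{\operatorname{Mac}}$-formula (polynomial equation, $\operatorname{P}_n$ of a polynomial, or $\mathcal O_p$-membership of a polynomial) by the three reductions just described; hence $\varphi$ itself is $p\operatorname{CF}$-equivalent on that region to an $\mathcal L^*_{\operatorname{Mac}}$-formula, and globally to $\bigvee_i (\theta_i \wedge \varphi_i)$, which is quantifier-free over $\mathcal L^*_{\operatorname{Mac}}$. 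The main obstacle is bookkeeping rather than conceptual: one has to set up the term-normalization induction carefully so that the degenerate branches (where some would-be-inverted subterm vanishes) are tracked, and one has to invoke the definability of $\mathcal O_p$ from Lemma~\ref{lem:def val}, which in turn relies on henselianity — legitimate here since all models of $p\operatorname{CF}$ are henselian by Theorem~\ref{thm:AKE}.
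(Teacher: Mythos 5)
Your proof is correct and follows the same overall strategy as the paper's: reduce to atomic formulas, eliminate $\preceq_p$ via the $\varepsilon$-th power criterion of Lemma~\ref{lem:def val} (legitimate because models of $p\operatorname{CF}$ are henselian with value group a $\Z$-group), and clear the inversions produced by ${}^{-1}$ using the multiplicative structure of $(K^\times)^n$. The one step where you diverge: to express $\operatorname{P}_n(P/Q)$ without a quotient, the paper takes integer coset representatives $\lambda_1,\dots,\lambda_N$ of $(\Q_p^\times)^n$ in $\Q_p^\times$ (Lemma~\ref{lem:coset repres} together with completeness of $p\operatorname{CF}$, Corollary~\ref{cor:pCF complete}) and rewrites the condition as $\bigvee_i\big(\operatorname{P}_n(\lambda_i P)\wedge\operatorname{P}_n(\lambda_i Q)\big)$, whereas you multiply through by $Q^{n-1}$ and use $P/Q\in(K^\times)^n\Leftrightarrow PQ^{n-1}\in(K^\times)^n$. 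Both are valid; yours is more self-contained (it needs only that $(K^\times)^n$ is a subgroup of $K^\times$, not the finiteness of the index or the density of $\Z$ in $\Z_p$), and it is in fact the same manipulation the paper uses later in the proof of Theorem~\ref{thm:Belair}. Your explicit induction on inversion depth, producing a case split into $\mathcal L^*_{\operatorname{Mac}}$-definable branches on which every term is a rational function, is bookkeeping that the paper compresses into ``this easily yields the claim,'' and your treatment of the degenerate cases ($Q=0$, $\operatorname{P}_n(0)$ being false, and $b=0$ forcing $a=0$ in $a\preceq_p b$) is careful and correct --- indeed more careful than the paper's displayed equivalence for $a\preceq_p b$, which as written mishandles the case $b=0$, $a\neq 0$.
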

\begin{proof}
Take   representatives~$\lambda_1,\dots,\lambda_N\in\Z$ ($N\geq 1$)    for the cosets of the subgroup~$(\Q_p^\times)^n$   of~$\Q_p^\times$.  
Let $K$ be a $p$-adically closed valued field.
Then $\lambda_1,\dots,\lambda_N$ are also representatives for the cosets of the subgroup~$(K^\times)^n$ of~$K^\times$, by completeness
of~$p\operatorname{CF}$ (Corollary~\ref{cor:pCF complete}), hence for $a,b\in K$, $b\neq 0$:
$$a/b \in (K^\times)^n \quad\Longleftrightarrow\quad  a\lambda_i, b\lambda_i\in  (K^\times)^n\text{ for some $i\in\{1,\dots,N\}$.}$$
Also, let $\varepsilon$ be as in Lemma~\ref{lem:def val}; then for  $a,b\in K$ of  we have
$$a \preceq_p b\quad\Longleftrightarrow\quad \text{$b=0$, or $b\neq 0$ and $\big(1+p(a/b)^\varepsilon\big)\in (K^\times)^\varepsilon$.}$$
This easily yields that $p\operatorname{CF}$ has closures of  $\mathcal L^*_{\operatorname{Mac}}$-substructures in the sense
of the remarks following~\cite[B.11.4]{ADH}, so the claim follows from loc.~cit.
\end{proof}

\noindent
Call a field $K$   {\it $p$-euclidean}\/ if it is formally $p$-adic  and $\big[K^\times:(K^\times)^n\big]=\big[\Q_p^\times:(\Q_p^\times)^n\big]$ for each $n\geq 1$.

\begin{lemma}\label{lem:p-euclidean}
Suppose $K$ is $p$-euclidean. Then $K$ has  
a unique expansion
to an $\mathcal L_{\operatorname{Mac}}$-structure which is a substructure of a model of $p\operatorname{CF}$.
\textup{(}In particular, $K$ has a unique $p$-valuation ring.\textup{)}
\end{lemma}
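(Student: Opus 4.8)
The plan is to determine, one symbol at a time, the $\mathcal L_{\operatorname{Mac}}$-structure that $K$ can carry. The interpretation of ${}^{-1}$ is already forced by the field structure (extend the multiplicative inverse by $0^{-1}:=0$), so the task is to show that each predicate $\operatorname{P}_n$ and the relation $\preceq_p$ are forced as well, and that a compatible choice exists. Existence will be quick: since $K$ is formally $p$-adic it carries a $p$-valuation ring $\mathcal O$, the $p$-valued field $(K,\mathcal O)$ has a $p$-adic closure $M$ (a maximal algebraic $p$-valued extension, which exists by the usual Zorn's lemma argument), and restricting the $\mathcal L_{\operatorname{Mac}}$-structure of $M\models p\operatorname{CF}$ to $K$ yields an $\mathcal L_{\operatorname{Mac}}$-expansion of $K$ that is a substructure of $M$. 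So the real work is uniqueness.

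The heart of the argument will be the claim that for \emph{every} field embedding $K\hookrightarrow M$ with $M\models p\operatorname{CF}$ and every $n\geq 1$ one has $(M^\times)^n\cap K=(K^\times)^n$. To prove it I would choose coset representatives $\lambda_1,\dots,\lambda_N\in\Z\setminus\{0\}$ for $(\Q_p^\times)^n$ in $\Q_p^\times$ (Lemma~\ref{lem:coset repres}), so $N=[\Q_p^\times:(\Q_p^\times)^n]$. Since $\Q_p\models p\operatorname{CF}$, completeness of $p\operatorname{CF}$ (Corollary~\ref{cor:pCF complete}) shows that $\lambda_1,\dots,\lambda_N$ remain coset representatives for $(M^\times)^n$ in $M^\times$; as they already lie in $K^\times$, the natural homomorphism $\theta\colon K^\times/(K^\times)^n\to M^\times/(M^\times)^n$ is surjective. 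Being $p$-euclidean means exactly that the domain has order $[K^\times:(K^\times)^n]=[\Q_p^\times:(\Q_p^\times)^n]=N$, which equals the order of the codomain; a surjection between finite sets of equal size is a bijection, so $\theta$ is injective. Since $\ker\theta=\big((M^\times)^n\cap K^\times\big)/(K^\times)^n$, this gives $(M^\times)^n\cap K^\times=(K^\times)^n$, and the claim follows as $0$ lies in neither $(M^\times)^n$ nor $(K^\times)^n$.

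Granting the claim, the $p$-valuation ring becomes unique and explicit, which makes everything canonical. Given any $p$-valuation ring $\mathcal O$ of $K$, I would embed $(K,\mathcal O)$ into a $p$-adic closure $(M,\mathcal O_{M,p})$, so $\mathcal O_{M,p}\cap K=\mathcal O$. Since $M$ is henselian, Lemma~\ref{lem:def val} (with $\varepsilon$ as there) gives $\mathcal O_{M,p}=\{b\in M: 1+pb^\varepsilon\in(M^\times)^\varepsilon\}$ (the possibility $1+pb^\varepsilon=0$ on the valuation ring is ruled out by a value computation), so for $a\in K$,
\[
a\in\mathcal O\ \Longleftrightarrow\ a\in\mathcal O_{M,p}\ \Longleftrightarrow\ 1+pa^\varepsilon\in(M^\times)^\varepsilon\ \Longleftrightarrow\ 1+pa^\varepsilon\in(K^\times)^\varepsilon,
\]
the last step by the claim with $n=\varepsilon$. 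Hence $\mathcal O=\{a\in K: 1+pa^\varepsilon\in(K^\times)^\varepsilon\}$, independent of $\mathcal O$ and $M$, so $K$ has a unique $p$-valuation ring, which we denote $\mathcal O_p$.

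Uniqueness of the $\mathcal L_{\operatorname{Mac}}$-expansion is then immediate. If $\mathbf K$ is an $\mathcal L_{\operatorname{Mac}}$-expansion of $K$ that is a substructure of some $M\models p\operatorname{CF}$, then ${}^{-1}$ is interpreted as the field inverse, each $\operatorname{P}_n$ as $(M^\times)^n\cap K=(K^\times)^n$ by the claim, and $\preceq_p$ as the restriction to $K$ of the dominance relation of $M$; this restriction is the dominance relation of the valuation ring $\mathcal O_{M,p}\cap K$, which is a $p$-valuation ring of $K$ and so equals $\mathcal O_p$, whence $\preceq_p^{\mathbf K}$ is determined as well. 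All interpretations being forced, the expansion is unique, which also re-proves the parenthetical statement. The one genuinely delicate point is the claim, and the insight there is that surjectivity of $\theta$ comes for free from the integer coset representatives, so that the index equality built into the definition of \emph{$p$-euclidean} is precisely what upgrades $\theta$ to an isomorphism.
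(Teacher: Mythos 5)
Your proposal is correct and follows essentially the same route as the paper: existence via an embedding into a $p$-adically closed field, the key identity $(M^\times)^n\cap K^\times=(K^\times)^n$ obtained from the integer coset representatives of Lemma~\ref{lem:coset repres} together with the index equality built into $p$-euclideanness, and recovery of the $p$-valuation ring from $(K^\times)^\varepsilon$ via Lemma~\ref{lem:def val}. Your explicit surjection-between-finite-groups-of-equal-order argument is exactly the step the paper leaves implicit.
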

\begin{proof}
Since $K$ is formally $p$-adic, it has a $p$-adically closed extension $F$, and this gives rise to an
expansion of $K$ to an $\mathcal L_{\operatorname{Mac}}$-substructure of $F$ viewed as an $\mathcal L_{\operatorname{Mac}}$-structure.
By  Lemma~\ref{lem:coset repres} and $\big[K^\times:(K^\times)^n\big]=\big[\Q_p^\times:(\Q_p^\times)^n\big]$ we have~${(F^\times)^n\cap K^\times}=(K^\times)^n$ for each $n\geq 1$.
Let $\mathcal O_p$ be the valuation ring of the $\mathcal L_{\operatorname{Mac}}$-structure $K$, and let
$\varepsilon$ and $\mathcal O_0$ be as in Lemma~\ref{lem:def val}. Then $\mathcal O_0\subseteq\mathcal O_p$ by that lemma, and we
claim $\mathcal O_p=\mathcal O_0$. For this, let $a\in\mathcal O_p$;
then   $1+pa^\varepsilon\in (F^\times)^\varepsilon\cap K^\times=(K^\times)^\varepsilon$,
so we get $a\in\mathcal O_0$.
\end{proof}

\noindent
A field which satisfies the conclusion of the last lemma called {\it weakly $p$-euclidean.}\/ We always construe a weakly
$p$-euclidean field as a valued field via its unique $p$-valuation ring.
Every $p$-adically closed field is $p$-euclidean; but~$\Q$ is not. However:

\begin{lemma}\label{lem:Q strongly p-euclidean}
The field $\Q$ is weakly $p$-euclidean.
\end{lemma}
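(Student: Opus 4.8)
The plan is to prove the two halves of ``weakly $p$-euclidean'' separately: that $\Q$ \emph{admits} an expansion to an $\mathcal L_{\operatorname{Mac}}$-structure which is a substructure of a model of $p\operatorname{CF}$, and that this expansion is \emph{unique}. For existence I would use the embedding $\Q\hookrightarrow\Q_p$. The field $\Q_p$ is $p$-adically closed (it is henselian with value group $\Z$, a $\Z$-group, so Theorem~\ref{thm:AKE} applies) and hence, equipped with $\operatorname{P}_n^{\Q_p}=(\Q_p^\times)^n$, with $\preceq_p$ its $p$-adic valuation, and with $^{-1}$ inversion extended by $0^{-1}:=0$, is a model of $p\operatorname{CF}$; since $\Q$ is closed under the ring operations and under $a\mapsto a^{-1}$, it is an $\mathcal L_{\operatorname{Mac}}$-substructure. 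This exhibits $\Q$ as a substructure of a model of $p\operatorname{CF}$, with $\operatorname{P}_n^{\Q}=(\Q_p^\times)^n\cap\Q^\times$ and $\preceq_p^{\Q}$ the $p$-adic dominance relation.

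For uniqueness, suppose $\Q$ is expanded to an $\mathcal L_{\operatorname{Mac}}$-structure that is a substructure of some $F\models p\operatorname{CF}$; I must show the interpretations of $^{-1}$, $\preceq_p$, and the $\operatorname{P}_n$ are then determined. The function $^{-1}$ is forced to be inversion (with $0\mapsto 0$) by the substructure condition. The relation $\preceq_p^{\Q}$ is the dominance relation of $\mathcal O_{F,p}\cap\Q$, which is a $p$-valuation ring of $\Q$ (since each valued subfield of a $p$-valued field is $p$-valued); but $\Z_{(p)}$ is the only $p$-valuation ring of $\Q$ (any nontrivial valuation on $\Q$ is $\ell$-adic with residue field $\F_\ell$, so residue field $\F_p$ forces the $p$-adic one), so $\preceq_p^{\Q}$ is the $p$-adic dominance relation, independently of $F$. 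Finally, $\operatorname{P}_n^{\Q}=(F^\times)^n\cap\Q^\times$, and the remaining task is to see this is independent of $F$, i.e.
$$(F^\times)^n\cap\Q^\times \;=\; (\Q_p^\times)^n\cap\Q^\times\qquad\text{for all }F\models p\operatorname{CF},\ n\geq 1.$$

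To establish this, given $a\in\Q^\times$ I would write $a=r/s$ with $r,s\in\Z\setminus\{0\}$ and reduce to integers via the equivalence $a\in(K^\times)^n\Leftrightarrow rs^{n-1}\in(K^\times)^n$, valid in any field $K$ (indeed $r/s=c^n$ iff $rs^{n-1}=(cs)^n$), so that it is enough to decide, for a non-zero integer $N$, whether $N\in(F^\times)^n$. Written with $N$ a ring-language numeral, ``$\operatorname{P}_n(N)$'' is an $\mathcal L_{\operatorname{Mac}}$-sentence, and $F\equiv\Q_p$ as $\mathcal L_{\operatorname{Mac}}$-structures: indeed $F\equiv\Q_p$ as $\mathcal L_{\preceq}$-structures by Corollary~\ref{cor:pCF complete}, and $\operatorname{P}_n$, $\preceq_p$, $^{-1}$ are all $\emptyset$-definable over the $\mathcal L_{\preceq}$-reduct of a model of $p\operatorname{CF}$ (for $\preceq_p$ use Lemma~\ref{lem:def val}). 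Hence $N\in(F^\times)^n$ iff $N\in(\Q_p^\times)^n$, which yields the display and so the uniqueness of the expansion.

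I expect this last step---the absoluteness of ``$a$ is an $n$-th power'' across models of $p\operatorname{CF}$---to be the only real obstacle. Note that the counting argument used for genuinely $p$-euclidean fields in the proof of Lemma~\ref{lem:p-euclidean} is unavailable here, since $[\Q^\times:(\Q^\times)^n]$ is infinite and Lemma~\ref{lem:coset repres} yields no control over $\Q$; one genuinely needs to know which rational numbers become $n$-th powers in a $p$-adically closed field, which is exactly what elementary equivalence of all models of $p\operatorname{CF}$ (ultimately the Ax--Kochen/Er\v{s}ov theorem underlying Corollary~\ref{cor:pCF complete}) delivers.
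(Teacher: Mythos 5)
Your proof is correct and takes essentially the same route as the paper: the paper's (very terse) proof likewise rests on the two facts that $\Z_{(p)}$ is the unique $p$-valuation ring of $\Q$ and that, by completeness of $p\operatorname{CF}$ (Corollary~\ref{cor:pCF complete}), one has $(K^\times)^n\cap\Q^\times=(\Q_p^\times)^n\cap\Q^\times$ for every $K\models p\operatorname{CF}$ and $n\geq 1$. Your reduction to numerals and the remark that $\operatorname{P}_n$ and $\preceq_p$ are $\emptyset$-definable from the field structure simply make explicit the appeal to elementary equivalence that the paper leaves implicit.
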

\begin{proof}
Clearly $\Q$ has only one $p$-valuation ring, namely 
$$\Z_{(p)}=\left\{\frac{a}{b}:a,b\in\Z,\ b\notin p\Z\right\},$$
 and thanks to
Corollary~\ref{cor:pCF complete}  we also have
$(\Q_p^\times)^n\cap\Q^\times = (K^\times)^n\cap\Q^\times$ for each~$K\models p\operatorname{CF}$ and~$n\geq 1$.
\end{proof}

\begin{question}
Are there   $p$-euclidean algebraic number fields?
\end{question}

\subsection{$p$-convex valuations}
Let $(K,\mathcal O_p)$ be a $p$-valued field.  A subring $\mathcal O$ of $K$ is said to be {\it $p$-convex}\/ if it is convex
in $(K,\mathcal O_p)$, that is, if $\mathcal O_p\subseteq \mathcal O$. (See Section~\ref{sec:convexity}.)  Every such $p$-convex subring of $K$ is a valuation ring of $K$. If $\preceq$ is the dominance relation on $K$ associated to a $p$-convex valuation ring  
of $K$, then 
$$a\preceq_p b \ \Longrightarrow\  a\preceq b,\qquad a\prec b   \ \Longrightarrow\  a\prec_p b \qquad \text{for all $a,b\in K$.}$$
If $K\subseteq L$ is an extension of $p$-valued fields and $\mathcal O_L$ is a $p$-convex subring of $L$, then~$\mathcal O_L\cap K$ is
a $p$-convex subring of $K$.
In the next lemma and its corollary we let $(K,\mathcal O)$ be an arbitrary valued field with value group~$\Gamma$ and residue field  $\k$.
Note that ``$\subset$'' means ``proper subset''.

\begin{lemma}\label{lem:char p-adically closed p-convex}
\begin{multline*}
\text{$K$ is a $p$-adically closed field and~$\mathcal O_p\subset \mathcal O$}\quad\Longleftrightarrow\quad \\
\text{$(K,\mathcal O)$ is henselian, $\Gamma$ is divisible, and  $\k$ is $p$-adically closed.}
\end{multline*}
\end{lemma}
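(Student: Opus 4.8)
The plan is to deduce both implications from Lemma~\ref{lem:p-ad closed spec}, applied not to the given valued field $(K,\mathcal O)$ itself but to a $p$-valuation ring of $K$ sitting inside $\mathcal O$, together with the fact---part of the circle of statements around Lemma~\ref{lem:def val}---that a $p$-adically closed field carries exactly one $p$-valuation ring.

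For the forward direction I would argue as follows. Suppose $K$ is a $p$-adically closed field and $\mathcal O_p\subset\mathcal O$; then $\mathcal O_p$ is the unique $p$-valuation ring of $K$ and $(K,\mathcal O_p)$ is a $p$-adically closed valued field. Since $\mathcal O$ is a valuation ring of $K$ properly containing $\mathcal O_p$, it is the valuation ring of the $\Delta$-coarsening of $(K,\mathcal O_p)$ for the convex subgroup $\Delta:=v_p(\mathcal O\setminus\{0\})$ of $\Gamma_p=v_p(K^\times)$, and $\Delta\neq\{0\}$ precisely because the inclusion is proper. Applying Lemma~\ref{lem:p-ad closed spec} to $(K,\mathcal O_p)$ and $\Delta$ then gives that the $\Delta$-coarsening $(K,\mathcal O)$ is henselian with divisible value group---which is $\Gamma=\Gamma_p/\Delta$---and that the $\Delta$-specialization of $(K,\mathcal O_p)$ is $p$-adically closed; since the underlying field of that specialization is $\mathcal O/\smallo=\k$, this says exactly that $\k$ is $p$-adically closed.

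For the converse, assume $(K,\mathcal O)$ is henselian, $\Gamma$ is divisible, and $\k$ is $p$-adically closed, and let $\mathcal O_{\k}$ be the $p$-valuation ring of $\k$; note $\mathcal O_{\k}\neq\k$ because $\k$ has characteristic zero while $\mathcal O_{\k}$ has residue field $\F_p$. I would then invoke Lemma~\ref{lem:lift val spec} to obtain the valuation ring $\mathcal O_0:=\{a\in\mathcal O:\bar a\in\mathcal O_{\k}\}$ of $K$, contained in $\mathcal O$, and a convex subgroup $\Delta_0$ of its value group with $\dot{\mathcal O_0}=\mathcal O$ and $\Delta_0$-specialization of $(K,\mathcal O_0)$ equal to $(\k,\mathcal O_{\k})$; here $\Delta_0\neq\{0\}$ since $\mathcal O_{\k}\neq\k$ forces $\mathcal O_0\subsetneq\mathcal O$. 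Because the $\Delta_0$-specialization $(\k,\mathcal O_{\k})$ is $p$-valued, Lemma~\ref{lem:p-val spec} makes $(K,\mathcal O_0)$ $p$-valued; and because $(K,\mathcal O)$ is henselian with divisible value group $\Gamma$ while $(\k,\mathcal O_{\k})$ is $p$-adically closed, Lemma~\ref{lem:p-ad closed spec} makes $(K,\mathcal O_0)$ $p$-adically closed. Hence $K$ is a $p$-adically closed field, its unique $p$-valuation ring $\mathcal O_p$ equals $\mathcal O_0$, and $\mathcal O_p=\mathcal O_0\subsetneq\mathcal O$.

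The only points that need care are bookkeeping ones: correctly identifying the $\Delta$-coarsening of $(K,\mathcal O_p)$ with $(K,\mathcal O)$ and its $\Delta$-specialization with a valued field whose underlying field is $\k$, and observing that the convex subgroups $\Delta$ and $\Delta_0$ are non-zero exactly when the corresponding inclusions of valuation rings are proper. Both are immediate from the discussion of coarsening and specialization and from Section~\ref{sec:convexity}, so I do not expect any substantial obstacle.
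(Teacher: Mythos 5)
Your proposal is correct and matches the paper's own argument essentially step for step: the forward direction coarsens the unique $p$-valuation $(K,\mathcal O_p)$ to $(K,\mathcal O)$ and applies Lemma~\ref{lem:p-ad closed spec}, and the converse lifts $\mathcal O_{\k}$ via Lemma~\ref{lem:lift val spec} to a valuation ring $\mathcal O_0\subset\mathcal O$ and applies Lemma~\ref{lem:p-ad closed spec} in the other direction. Your extra bookkeeping (why $\Delta_0\neq\{0\}$, and the identification $\mathcal O_p=\mathcal O_0$ at the end) only makes explicit what the paper leaves implicit.
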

\begin{proof}
Suppose the field $K$ is $p$-adically closed and~$\mathcal O_p\subset \mathcal O$. Let $\Delta$ be the convex subgroup of $\Gamma_p$
such that the $\Delta$-coarsening of $(K,\mathcal O_p)$ is $(K,\mathcal O)$.
Then $\Delta\neq\{0\}$,  and by Lemma~\ref{lem:p-ad closed spec}, $(K,\mathcal O)$ is henselian, its value group   is divisible, and 
the  $\Delta$-specialization  of $(K,\mathcal O_p)$ is $p$-adically closed; the underlying field of the latter is~$\k=\mathcal O/\smallo$.
Conversely, suppose $(K,\mathcal O)$ is henselian with divisible value group,
and let~$\mathcal O_{\k}$ be a $p$-valuation ring of  $\k$ making $\k$ a $p$-adically closed valued field.
Take a valuation ring~$\mathcal O_0$ of $K$ contained in $\mathcal O$ and a convex subgroup $\Delta_0$ of the value group of
$\mathcal O_0$ such that the $\Delta_0$-coarsening of $(K,\mathcal O_0)$ is $(K,\dot{\mathcal O_0})=(K,\mathcal O)$ and the $\Delta_0$-specialization of~$(K,\mathcal O_0)$ is~$(\k,\mathcal O_{\k})$.
Note that $\Delta_0\neq\{0\}$, so $\mathcal O_0\subset\mathcal O$. Also,  $(K,\dot{\mathcal O_0})=(K,\mathcal O)$ is henselian, its value group~$\Gamma_0/\Delta_0\cong\Gamma$ is divisible, and 
the
 $\Delta_0$-specialization of~$(K,\mathcal O_0)$ is $p$-adically closed; hence $(K,\mathcal O_0)$ is $p$-adically closed by Lemma~\ref{lem:p-ad closed spec}.
\end{proof}

\begin{cor}\label{cor:char p-adically closed p-convex}
 \begin{multline*}
\text{$K$ is   $p$-adically closed and $\mathcal O$ is a non-trivial $p$-convex subring of $K$}\ \Longleftrightarrow\ \\
\text{$(K,\mathcal O)$ is henselian, $\Gamma\neq\{0\}$ is divisible, and  $\k$ is $p$-adically closed.}
\end{multline*}
\end{cor}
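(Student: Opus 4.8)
The plan is to derive Corollary~\ref{cor:char p-adically closed p-convex} directly from Lemma~\ref{lem:char p-adically closed p-convex}, since the two statements differ only in that the corollary records the extra condition $\Gamma\neq\{0\}$ on the right and, on the left, replaces ``$\mathcal O_p\subset\mathcal O$'' by ``$\mathcal O$ is a non-trivial $p$-convex subring of $K$''. So the first thing I would do is unwind this last phrase: if $K$ is $p$-adically closed, then by Lemma~\ref{lem:def val} it has a unique $p$-valuation ring $\mathcal O_p$; by definition the $p$-convex subrings of $K$ are exactly the valuation rings containing $\mathcal O_p$, and the trivial ones among them are $\mathcal O_p$ and $K$. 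Hence ``$\mathcal O$ is a non-trivial $p$-convex subring of $K$'' says precisely that $\mathcal O_p\subset\mathcal O\subset K$ (both inclusions strict).

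Next I would record the elementary fact that a valuation ring equals its fraction field iff the valuation is trivial iff the value group is trivial, so that ``$\mathcal O\neq K$'' is the same as ``$\Gamma\neq\{0\}$''. With these two observations the corollary becomes a formal consequence of the lemma. For the forward direction: ``$\mathcal O$ non-trivial $p$-convex'' yields both $\mathcal O_p\subset\mathcal O$ --- to which Lemma~\ref{lem:char p-adically closed p-convex} applies, giving that $(K,\mathcal O)$ is henselian, $\Gamma$ is divisible, and $\k$ is $p$-adically closed --- and $\mathcal O\neq K$, i.e.\ $\Gamma\neq\{0\}$. For the converse: the right-hand side of the corollary contains the right-hand side of the lemma, so the lemma gives that $K$ is $p$-adically closed with $\mathcal O_p\subset\mathcal O$; hence $\mathcal O$ is $p$-convex and distinct from $\mathcal O_p$, and the extra hypothesis $\Gamma\neq\{0\}$ forces $\mathcal O\neq K$, so $\mathcal O$ is non-trivial.

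I do not anticipate a genuine obstacle here: all the substance is already contained in Lemma~\ref{lem:char p-adically closed p-convex} (and ultimately in Lemmas~\ref{lem:p-ad closed spec} and~\ref{lem:def val}), and what remains is pure bookkeeping. The only place where a little care is needed is making sure the degenerate cases are correctly handled --- in particular that ``non-trivial'' rules out $\mathcal O=\mathcal O_p$ (where the lemma would not apply, as then $\k\cong\F_p$ has positive characteristic) as well as $\mathcal O=K$ --- and that the equivalence of ``$\mathcal O\neq K$'' with ``$\Gamma\neq\{0\}$'' is invoked in both directions.
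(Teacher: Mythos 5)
Your derivation is correct and is exactly how the paper intends the corollary to be read: it is stated as an immediate consequence of Lemma~\ref{lem:char p-adically closed p-convex}, with ``non-trivial $p$-convex'' unwound as $\mathcal O_p\subset\mathcal O\subsetneq K$ (using uniqueness of the $p$-valuation ring of a $p$-adically closed field) and $\mathcal O\neq K$ matched with $\Gamma\neq\{0\}$. Your attention to the degenerate cases $\mathcal O=\mathcal O_p$ and $\mathcal O=K$ is precisely the bookkeeping the paper leaves to the reader.
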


\begin{example}
Let $\Gamma\neq\{0\}$ be a divisible ordered abelian group, written additively, and let $\k$ be a $p$-adically
closed field. Then the valuation ring of the $t$-adic valuation on the 
Hahn field $K=\k(\!(t^\Gamma)\!)$ is   a non-trivial $p$-convex subring of $K$.
Similarly, the $t$-adic valuation ring $\k[[t^*]]$ of  field $\k(\!(t^*)\!)$ of Puiseux series  over $\k$
 is   a non-trivial $p$-convex subring of $\k(\!(t^*)\!)$.
\end{example}

\noindent
If   $K$ is a $p$-valued field, we define the {\it $p$-convex hull}\/ (in $K$) of a subring $R$ of $K$ to be the convex hull of $R$ in the
valued field $K$.

\begin{lemma}\label{lem:p-convex hull}
Let $(K,\mathcal O_p)$ be a $p$-valued field and let $F$ be a subfield   of $K$, with $p$-convex hull $R$ in $K$.
Then $R\neq\mathcal O_p$, hence if some   $p$-convex subring $\mathcal O\neq K$ of $K$ contains~$F$, then $R$ is
a non-trivial $p$-convex subring of $K$.
\end{lemma}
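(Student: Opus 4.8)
The plan is to reduce everything to the single observation that a subfield of $K$ cannot be contained in $\mathcal O_p$. Recall from Lemma~\ref{lem:convex hull} (applied with $\mathcal O_p$ in place of $\mathcal O$) that the $p$-convex hull $R=\operatorname{conv}_K(F)=\mathcal O_p F$ is the smallest convex subring of the valued field $(K,\mathcal O_p)$ containing $F$; in particular $\mathcal O_p\subseteq R$ and $F\subseteq R$. So to obtain $R\neq\mathcal O_p$ it suffices to show $F\not\subseteq\mathcal O_p$.

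For this I would use that $F$, being a subfield of the characteristic-zero field $K$, contains the prime field $\Q$, so $1/p\in F$. Since $v_p$ is a $p$-valuation, $p$ has smallest positive value, hence $v_p(1/p)=-v_p(p)<0$, i.e.\ $1/p\notin\mathcal O_p$. Thus $F\not\subseteq\mathcal O_p$, and since $F\subseteq R$ this forces $R\neq\mathcal O_p$. Note that $R$ is a $p$-convex subring of $K$ in any case, as it contains $\mathcal O_p$ by construction.

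The ``hence'' clause is then immediate: if $\mathcal O\neq K$ is a $p$-convex subring of $K$ with $F\subseteq\mathcal O$, then $\mathcal O$ is a convex subring of $(K,\mathcal O_p)$ containing $F$, so minimality of $R$ (Lemma~\ref{lem:convex hull}) gives $R\subseteq\mathcal O\subsetneq K$, whence $R\neq K$. Combined with $R\neq\mathcal O_p$ this says exactly that $R$ is neither of the two trivial $p$-convex subrings $\mathcal O_p$ and $K$ of $K$; that is, $R$ is a non-trivial $p$-convex subring of $K$, as claimed.

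I do not expect a genuine obstacle here. The only point requiring any care is the direction of the inequality $v_p(1/p)<0$, which is just the defining feature of a $p$-valuation (that $p$ lies in, indeed realizes the smallest positive value in, the maximal ideal $\smallo_p$); after that the argument is a formal manipulation with $\operatorname{conv}_K$ as a closure operator satisfying $\operatorname{conv}_K(\mathcal O)=\mathcal O$ for convex $\mathcal O$.
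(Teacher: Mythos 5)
Your proof is correct and follows essentially the same route as the paper, which simply notes that $\mathcal O_p$ cannot contain a subfield of $K$ (your observation that $1/p\in\Q\subseteq F$ but $1/p\notin\mathcal O_p$ is exactly the reason) and derives the rest from that via the minimality of the convex hull.
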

\begin{proof}
For the first statement use that $\mathcal O_p$ does not contain a subfield of $K$; the rest follows from this.
\end{proof}
 
\noindent
We now let $\mathcal L=\mathcal L_{\operatorname{Mac},{\preceq}}$ be the expansion of the language $\mathcal L_{\operatorname{Mac}}$
of Macintyre by a single binary relation symbol $\preceq$ (not to be confused with the symbol $\preceq_p$ in $\mathcal L$, intended for the dominance
relation associated to a $p$-valuation).
Let  
$p\operatorname{CVF}$ be the $\mathcal L$-theory
expanding $p\operatorname{CF}$ by axioms which state that $\preceq$ is interpreted by
a dominance relation associated to a non-trivial  $p$-convex valuation ring.

\begin{theorem}\label{thm:Belair}
$p\operatorname{CVF}$ has \textup{QE}.
\end{theorem}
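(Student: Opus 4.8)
Proof plan for Theorem~\ref{thm:Belair} ($p\operatorname{CVF}$ has QE).

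The plan is to follow the standard recipe for quantifier elimination via the embedding test: it suffices to show that if $\boldsymbol K,\boldsymbol L\models p\operatorname{CVF}$ with $\boldsymbol L$ $|\boldsymbol K|^+$-saturated, and $A$ is a common $\mathcal L$-substructure, then there is an $\mathcal L$-embedding $A\to\boldsymbol L$ over $A$... more precisely, that every $\mathcal L$-embedding $A\to\boldsymbol L$ extends to an $\mathcal L$-embedding $\boldsymbol K\to\boldsymbol L$. Here $A$ carries a ring structure, the predicates $\operatorname{P}_n$, the inverse function, and \emph{two} dominance relations $\preceq_p$ and $\preceq$, the first associated to a $p$-valuation and the second to a $p$-convex coarsening. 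The key structural observation, which I would establish first, is that a model of $p\operatorname{CVF}$ is precisely a $p$-adically closed field $K$ together with a non-trivial convex subgroup $\Delta$ of its value group $\Gamma_p$ (equivalently, a non-trivial $p$-convex valuation ring $\mathcal O=\mathcal O_\Delta$); by Corollary~\ref{cor:char p-adically closed p-convex} this amounts to saying that $(K,\mathcal O)$ is henselian with divisible value group $\Gamma=\Gamma_p/\Delta\neq\{0\}$ and $p$-adically closed residue field $\dot K$. So a model of $p\operatorname{CVF}$ decomposes into the pair: the $p$-adically closed residue field $\dot K$ (with its unique $p$-valuation) and the divisible ordered abelian group $\Gamma$, glued together by the henselian coarsened valuation $v=v_\Delta$.

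The strategy is then to reduce QE for $p\operatorname{CVF}$ to three ingredients that are already available: QE for $p\operatorname{CF}$ (Macintyre, Theorem~\ref{thm:Macintyre}, in the reduced form of Lemma~\ref{lem:Macintyre}) applied to the residue field; QE for divisible ordered abelian groups applied to the value group of the coarsening; and a relative QE / Ax--Kochen--Er\v sov style argument for the henselian valued field $(K,\mathcal O)$ of equicharacteristic zero relating it to the pair $(\dot K,\Gamma)$. Concretely, given the common substructure $A\subseteq\boldsymbol K$ and an embedding into the saturated $\boldsymbol L$, I would: (i) pass to the coarse valuation $\preceq$ and its residue field, noting that the residue ring of $A$ embeds into the $p$-adically closed $\dot L$ and, using saturation and Macintyre's QE, extend this to an embedding $\dot K\to\dot L$; (ii) separately extend the value-group part using QE for divisible ordered abelian groups (here the $\Z$-group structure of $\Gamma_p$ is irrelevant since $\Delta\neq\{0\}$ forces $\Gamma$ divisible by Lemma~\ref{lem:Z-gps}); (iii) lift these two extensions through the henselian coarse valuation to an embedding $(\boldsymbol K,\preceq)\to(\boldsymbol L,\preceq)$, using that equicharacteristic-zero henselian valued fields admit such lifting of the residue-field-and-value-group data (the relevant back-and-forth is the classical one underlying the Ax--Kochen--Er\v sov theorem, cf.\ the references to \cite{vdD} in the excerpt); and finally (iv) check that the resulting embedding also respects $\preceq_p$, which follows because $\preceq_p$ is recoverable from $\preceq$ together with the $p$-valuation on the residue field $\dot K$ via the formula $\mathcal O_p=\{a\in\mathcal O:\bar a\in\mathcal O_{\dot K}\}$ of Lemma~\ref{lem:lift val spec}, and the $p$-valuation on $\dot K$ is in turn $\emptyset$-definable in $\dot K$ by Lemma~\ref{lem:def val}; hence preservation of $\preceq$ and of the ring structure on the residue field forces preservation of $\preceq_p$. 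Preservation of the predicates $\operatorname{P}_n$ on $\boldsymbol K$ is then handled as in Macintyre's theorem: a nonzero $a\in K$ lies in $(K^\times)^n$ iff $v(a)\in n\Gamma$ (automatic, $\Gamma$ divisible) and a unit adjustment of $\bar a$ lies in $(\dot K^\times)^n$, which is controlled by the residue-field embedding via Lemma~\ref{lem:Macintyre}; one uses Hensel's lemma in $(K,\mathcal O)$ to pass between $n$th powers in $K$ and in $\dot K$.

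The main obstacle I anticipate is step (iii)--(iv): carefully packaging the Ax--Kochen--Er\v sov lifting so that it simultaneously respects the ring structure, the fine $p$-valuation $\preceq_p$, \emph{and} the Macintyre predicates $\operatorname{P}_n$, all while only assuming the abstract QE results for the residue field and value group rather than redoing a back-and-forth from scratch. The cleanest route is probably to set up the argument as a relative quantifier elimination: prove that $p\operatorname{CVF}$ eliminates quantifiers \emph{relative to} the residue field (as a model of $p\operatorname{CF}$) and the value group (as a divisible ordered abelian group), in the spirit of Pas-type or Delon-type elimination for henselian fields of equicharacteristic zero, and then invoke Theorems~\ref{thm:Macintyre} and the classical divisible-ordered-abelian-group QE to absorb those two sorts. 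One must also verify at the outset that $p\operatorname{CVF}$ is consistent and that its models genuinely have the claimed structure---this is exactly Corollary~\ref{cor:char p-adically closed p-convex}, with the Hahn-field example $\k(\!(t^\Gamma)\!)$ ($\k\models p\operatorname{CF}$, $\Gamma\neq\{0\}$ divisible) showing consistency. Once the structural decomposition and the relative elimination are in place, all remaining verifications (that the $\mathcal L$-structure on a model is term-by-term recoverable from the three pieces, that embeddings of substructures transport correctly) are routine and parallel to the treatment of $\operatorname{RCVF}$ in~\cite{CD} and of $\operatorname{ACVF}$ recalled in Section~\ref{sec:Rueckert}.
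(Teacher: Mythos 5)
Your structural reading of $p\operatorname{CVF}$ is correct, and your strategy---a relative quantifier elimination over the residue field of the coarse valuation (a model of $p\operatorname{CF}$) and its value group (a divisible ordered abelian group), carried out by a back-and-forth into a saturated model---is essentially the route of Delon~\cite{Delon} and Weispfenning~\cite{Weispfenning84}, which the paper deliberately avoids. The paper splits the work differently: it first proves \emph{model completeness} of $p\operatorname{CVF}$ (Proposition~\ref{prop:Belair}) by quoting the Ax--Kochen/Er\v{s}ov theorem for elementary extensions (Theorem~\ref{thm:AKE preceq}) together with Theorem~\ref{thm:Macintyre} on the residue field and Corollary~\ref{cor:alg closure in p-adically closed} for the predicates $\operatorname{P}_n$; then, given a common substructure $K$ of two models $E,F$, it first arranges that the coarse valuation on $K$ is non-trivial, passes to the relative algebraic closure $L$ of $K$ in $E$ (which is $p$-adically closed by Corollary~\ref{cor:alg closure in p-adically closed}), and embeds $L$ into $F$ over $K$ using the Prestel--Roquette embedding theorem \cite[Theorem~3.10]{PR} together with the uniqueness of the convex extension of $\mathcal O_K$ (Lemma~\ref{lem:unique convex ext}); since $L\models p\operatorname{CVF}$, model completeness finishes. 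The paper's route outsources the entire back-and-forth to two quotable black boxes; yours, if completed, would be more self-contained but must redo that back-and-forth.

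The gap is that your steps (iii)--(iv), which you yourself flag as the main obstacle, are exactly the content of the theorem and are not carried out. Two concrete difficulties. First, the common substructure may have trivial coarse value group, in which case the residue-field/value-group decomposition says nothing about how the first element of nonzero coarse value interacts with the $\operatorname{P}_n$; the paper handles precisely this by adjoining $x=s_E(\alpha)$ via a cross-section of a saturated extension and computing, for $f=aT^m(1+gT)\in K[T]$, that $f(x)\sim ax^m$ and hence $f(x)\in(E^\times)^n\Leftrightarrow a\in(E^\times)^n$ by Lemma~\ref{lem:nth powers residue field}. Some such verification recurs at every stage of your back-and-forth where the coarse value group grows. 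Second, your reduction of $\operatorname{P}_n$ to the residue sort (``$a\in(K^\times)^n$ iff a unit adjustment of $a$ has residue in $(\dot K^\times)^n$'') requires an element $c$ with $nv(c)=v(a)$ \emph{inside the current partial substructure}, whose coarse value group need not be divisible; so the $\operatorname{P}_n$ bookkeeping cannot simply be delegated to Lemma~\ref{lem:Macintyre} on the residue field, and this is where a Pas-type elimination would normally introduce angular components, which your language lacks. Your step (iv) itself is sound: once the embedding respects $\preceq$ and induces an elementary $\mathcal L_{\operatorname{Mac}}$-embedding of residue fields, preservation of $\preceq_p$ follows from Lemmas~\ref{lem:lift val spec} and~\ref{lem:def val}, exactly as in the paper's proof of Proposition~\ref{prop:Belair}.
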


\noindent
This $p$-adic analogue of Theorem~\ref{thm:CD} is stated without proof in \cite[Theorem~5]{CD}; see also \cite[Corollaire~2.3(1)]{Belair} and~\cite[Lem\-ma~2.13]{Guzy}. For the proof, these sources  quote the unpublished thesis~\cite{Delon} or  the notationally dense and long paper~\cite{Weispfenning84} (which also yields
a primitive recursive algorithm for QE in $p\operatorname{CVF}$). For the convenience of the reader we include a deduction of the important Theorem~\ref{thm:Belair} below from   standard facts in the literature.  
We start with a somewhat weaker result:

\begin{prop}\label{prop:Belair}
$p\operatorname{CVF}$ is model complete.
\end{prop}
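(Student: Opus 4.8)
The plan is to prove model completeness by the standard criterion: given two models $K\subseteq L$ of $p\operatorname{CVF}$, I must show the extension is elementary. By the Tarski--Vaught test (or equivalently the criterion that it suffices to verify every existential $\mathcal L$-formula with parameters in $K$ that holds in $L$ also holds in $K$), it is enough to realize in $K$ any finitely generated substructure situation; concretely, I will show that every embedding of a model $K$ of $p\operatorname{CVF}$ into a model $L$ of $p\operatorname{CVF}$ is elementary by an iterated back-and-forth / diagram-chasing argument that reduces to the model completeness of two already-known theories: $p\operatorname{CF}$ (Macintyre, Theorem~\ref{thm:Macintyre}, which gives QE and hence model completeness) and $\operatorname{RCVF}$-style divisible-value-group coarsening statements. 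The key structural decomposition is the one recorded in Corollary~\ref{cor:char p-adically closed p-convex}: a model of $p\operatorname{CVF}$ is precisely a valued field $(K,\mathcal O)$ which is henselian, has divisible nonzero value group $\Gamma$, and has $p$-adically closed residue field $\k$, where moreover $\mathcal O$ is a coarsening of the unique $p$-valuation ring $\mathcal O_p$. So a model of $p\operatorname{CVF}$ carries two layers of structure: the coarse valued field $(K,\mathcal O)$ with divisible value group, and its $p$-adically closed residue field $\k$ (which is where the genuine $p$-adic content lives).

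The key steps, in order, are as follows. First, I would reduce to showing: if $(K,\mathcal O)\subseteq(L,\mathcal O_L)$ are models of $p\operatorname{CVF}$, then for any $\mathcal L$-formula $\varphi(x,y)$, tuple $a$ from $K$, and $b\in L$ with $L\models\varphi(a,b)$, there is $b'\in K$ with $K\models\varphi(a,b')$. Second, I would pass to sufficiently saturated elementary extensions so that back-and-forth systems can be built; here I use that $p\operatorname{CF}$ is complete (Corollary~\ref{cor:pCF complete}) and Macintyre's QE to control the residue-field sort. Third, the main work: given the decomposition above, an embedding of models of $p\operatorname{CVF}$ amounts to an embedding of the divisible-value-group henselian valued fields together with a compatible embedding of their $p$-adically closed residue fields. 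I would invoke the relative Ax--Kochen/Er\v{s}ov principle: for henselian valued fields of equicharacteristic zero, an extension is elementary as soon as it is elementary on the residue field and on the value group (this is exactly the tool cited after Theorem~\ref{thm:AKE}, in the unramified mixed-characteristic form, but here applied in the equicharacteristic-zero setting of the coarsened valuation $\mathcal O$). Since $\Gamma\subseteq\Gamma_L$ is an extension of nonzero divisible ordered abelian groups, it is automatically elementary; and $\k\subseteq\k_L$ is an extension of $p$-adically closed fields, hence elementary in $\mathcal L_{\operatorname{Mac}}$ by Macintyre's theorem. Therefore the coarse extension $(K,\mathcal O)\subseteq(L,\mathcal O_L)$ is elementary. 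Fourth, I must upgrade this to an elementary extension of the full $\mathcal L$-structures, i.e.\ incorporate the $p$-valuation predicate $\preceq_p$ and the power predicates $\operatorname{P}_n$ and the field inverse: but $\mathcal O_p$ is recovered from $(K,\mathcal O)$ and $\k$ via Lemma~\ref{lem:def val} applied inside the residue field and the relation $a\preceq_p b \Leftrightarrow \dot a \preceq_{p,\k}\dot b$ on units together with $\mathcal O\subseteq \mathcal O_p$-coarsening data, and likewise $\operatorname{P}_n$ on $K$ is determined by $\operatorname{P}_n$ on $\k$ (via Hensel/henselianity lifting of $n$th powers along the residue map, using divisibility of $\Gamma$) together with the value-group divisibility; all of these are $\emptyset$-definable in the two-sorted picture, so elementarity of the coarse extension and of the residue extension forces elementarity of the $\mathcal L$-extension.

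The main obstacle, and the step requiring the most care, is the fourth one: checking that the $p$-valuation $\mathcal O_p$, the predicates $\operatorname{P}_n$, and the inverse function on the whole field $K$ are recovered uniformly and in a first-order way from the coarsened valued field $(K,\mathcal O)$ together with its $p$-adically closed residue field $\k$ and the value group $\Gamma$ — in other words, that the two-sorted Ax--Kochen/Er\v{s}ov transfer actually transfers the full $\mathcal L$-theory and not just the $\mathcal L_{\preceq}$-theory. Concretely, one must verify that for $u\in\mathcal O^\times$ one has $u\preceq_p 1$ iff $\dot u$ lies in the $p$-valuation ring of $\k$ (immediate from the definition of the $p$-valuation as the composite, as in the Examples after Lemma~\ref{lem:char p-adically closed p-convex}), and that membership of an arbitrary nonzero $a\in K$ in $(K^\times)^n$ is equivalent to the conjunction ``$va\in n\Gamma$'' (true since $\Gamma$ is divisible) ``and, after dividing by an $n$th power to land in $\mathcal O^\times$, the residue lies in $(\k^\times)^n$'' (true by henselianity, using $\operatorname{char}\k=0$ so that $Y^n-c$ has a simple root lift). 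Once these definability facts are in hand, the argument closes: the coarse extension is elementary by relative AKE, the extra structure is pulled back along $\emptyset$-definable recipes, and hence every extension of models of $p\operatorname{CVF}$ is elementary, i.e.\ $p\operatorname{CVF}$ is model complete. (The subsequent QE in Theorem~\ref{thm:Belair} would then follow by the usual upgrade, using Lemma~\ref{lem:Macintyre} to reduce the relevant quantifier-free formulas to the $\mathcal L^*_{\operatorname{Mac}}$-language together with $\preceq$, plus a substructure-amalgamation argument.)
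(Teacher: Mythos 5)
Your proposal is correct and takes essentially the same route as the paper: characterize models of $p\operatorname{CVF}$ via Corollary~\ref{cor:char p-adically closed p-convex} as henselian valued fields with divisible non-zero value group and $p$-adically closed residue field, apply the Ax--Kochen/Er\v{s}ov principle for elementary substructures (Theorem~\ref{thm:AKE preceq}) using Macintyre's theorem on the residue fields and divisibility of the value groups, and then transfer $\operatorname{P}_n$, $\preceq_p$, and ${}^{-1}$ by their uniform definability (Lemma~\ref{lem:def val}). The only deviations are cosmetic: the saturation/back-and-forth scaffolding is unnecessary for model completeness, and where you secure the $n$th-power compatibility by henselian lifting and divisibility of $\Gamma$, the paper uses relative algebraic closedness of $K$ in $E$ (Corollary~\ref{cor:alg closure in p-adically closed}) -- both amount to the same thing.
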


\noindent
This follows from the Ax-Kochen/Er\v{s}ov Theorem for elementary substructures. Here, valued fields
are viewed as $\mathcal L_{\preceq}$-structures where $\mathcal L_{\preceq}$ is the language described in Section~\ref{sec:mtvf}.

\begin{theorem}\label{thm:AKE preceq}
Let $K\subseteq E$ be an extension of henselian valued fields of equicharacteristic zero. Then 
$$K\preceq E \quad\Longleftrightarrow\quad \text{$\res(K)\preceq\res(E)$ and $\Gamma\preceq\Gamma_E$.}$$
\end{theorem}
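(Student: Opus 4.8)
I will treat the case relevant for Proposition~\ref{prop:Belair}, namely that $K\subseteq E$ are models of $p\operatorname{CVF}$. Write $(K,\mathcal O)$, $(E,\mathcal O_E)$ for the underlying valued fields, $\mathcal O$ being the $p$-convex valuation ring interpreting $\preceq$, and $\mathcal O_p$, $\mathcal O_{E,p}$ for the $p$-valuation rings interpreting $\preceq_p$; thus $\mathcal O_p\subseteq\mathcal O$, and $\res(K)=\mathcal O/\smallo$, $\Gamma=K^\times/\mathcal O^\times$ refer to the coarsening $\mathcal O$. By Corollary~\ref{cor:char p-adically closed p-convex}, $(K,\mathcal O)$ and $(E,\mathcal O_E)$ are henselian, their value groups $\Gamma\subseteq\Gamma_E$ are divisible, and their residue fields $\res(K)\subseteq\res(E)$ are $p$-adically closed; in particular $\operatorname{char}\res(K)=0$, so $(K,\mathcal O)$ is henselian of equicharacteristic zero. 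Conceptually, a model of $p\operatorname{CVF}$ is an equicharacteristic-zero henselian valued field layered over a model of $p\operatorname{CF}$, and the plan is to exploit this: identify the $\mathcal L_{\operatorname{Mac},{\preceq}}$-structure on $K$ with a three-sorted structure built from the valued field $(K,\mathcal O)$, the ordered abelian group $\Gamma$, and the $\mathcal L_{\operatorname{Mac}}$-structure on $\res(K)$, and then apply the Ax-Kochen/Er\v{s}ov theorem for elementary substructures of equicharacteristic-zero henselian valued fields (which is insensitive to any additional structure --- here, a further valuation --- imposed on the residue sort).

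The first and main step would be this recoding. The function ${}^{-1}$ is field inversion. By Lemma~\ref{lem:lift val spec}, together with Corollary~\ref{cor:char p-adically closed p-convex} and the uniqueness of the $p$-valuation ring of a $p$-adically closed field (Lemma~\ref{lem:def val}), the ring $\mathcal O_p$ is the preimage under the residue map $\mathcal O\to\res(K)$ of the $p$-valuation ring of $\res(K)$; hence $\preceq_p$ is definable in $K$, without parameters, from $\preceq$ and the $p$-valuation of $\res(K)$, once one separates for $a,b\in K^\times$ the cases $a/b\notin\mathcal O$, $a/b\in\smallo$, $a/b\in\mathcal O^\times$. For the predicates $\operatorname{P}_n$ I would use that $\Gamma$ is divisible and that, by henselianity and the fact that $n$ is invertible in $\res(K)$ (since $\operatorname{char}\res(K)=0$), a $\preceq$-unit of $K$ is an $n$-th power in $K$ exactly when its residue is an $n$-th power in $\res(K)$; consequently, for $a\in K^\times$, one has $a\in(K^\times)^n$ iff there is $c\in K^\times$ with $a/c^n\in\mathcal O^\times$ and $\overline{a/c^n}\in(\res(K)^\times)^n$, a condition independent of $c$ because any two admissible $c$ differ by a $\preceq$-unit. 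So $\operatorname{P}_n$ is definable from $\preceq$ and $\operatorname{P}_n$ on $\res(K)$, uniformly in $n$. The same recoding applies to $E$, compatibly with $K\subseteq E$.

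Granting the recoding, the implication $\Rightarrow$ is routine: $\Gamma=K^\times/\mathcal O^\times$ and $\res(K)=\mathcal O/\smallo$, equipped with the structures above, are interpreted in $K$ by a fixed parameter-free interpretation scheme, and such interpretations preserve elementary substructure. For $\Leftarrow$, by the recoding it suffices to prove the following classical statement: if $(K,\mathcal O)\subseteq(E,\mathcal O_E)$ are henselian of equicharacteristic zero, $\res(K)\preceq\res(E)$ in $\mathcal L_{\operatorname{Mac}}$, and $\Gamma\preceq\Gamma_E$, then $K\preceq E$ in the three-sorted language; see, e.g., \cite{vdD} or \cite{Chatzidakis}. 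Its proof first passes to a $\abs{K}^+$-saturated elementary extension $E^*\succ E$ and uses that $K\preceq E^*$ and $E\preceq E^*$ together with $K\subseteq E$ give $K\preceq E$, thereby reducing to the case of saturated $E$; it then shows that the inclusion $K\hookrightarrow E$ lies in a ``forth'' system of isomorphisms between small valued subfields of $K$ and $E$ whose residue fields and value groups embed elementarily into $\res(K),\res(E)$ and $\Gamma,\Gamma_E$, with the extension steps --- by a residue-field element (using $\res(K)\preceq\res(E)$ and equicharacteristic zero to lift), by a value-group element (using $\Gamma\preceq\Gamma_E$), and by an immediate, pseudo-Cauchy extension (using that henselian equicharacteristic-zero valued fields are algebraically maximal) --- exhausting all field extensions. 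Proposition~\ref{prop:Belair} then drops out: for $K\subseteq E$ models of $p\operatorname{CVF}$ the residue fields $\res(K)\subseteq\res(E)$ are models of $p\operatorname{CF}$, hence $\res(K)\preceq\res(E)$ by Theorem~\ref{thm:Macintyre}, and $\Gamma\subseteq\Gamma_E$ are divisible, hence $\Gamma\preceq\Gamma_E$, so $K\preceq E$ by Theorem~\ref{thm:AKE preceq}.

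The main obstacle, I expect, is the recoding step --- in particular the uniform definability of the $\operatorname{P}_n$ via the residue field, and taking care that ``$\res(K)\preceq\res(E)$'' in the hypothesis is read in the language $\mathcal L_{\operatorname{Mac}}$ (in which $p\operatorname{CF}$ is model complete, by Theorem~\ref{thm:Macintyre}) rather than in a bare valued-field language. The deep ingredient, the equicharacteristic-zero Ax-Kochen/Er\v{s}ov theorem for $\preceq$, is used as a black box.
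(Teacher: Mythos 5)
Your proposal is correct in substance but it is not the paper's argument, and it proves a variant rather than the statement itself. The paper's own proof is essentially a citation plus a bridge: it quotes the three-sorted Ax-Kochen/Er\v{s}ov theorem for henselian equicharacteristic-zero valued fields \emph{with an angular component map} \cite[Corollary~5.23]{vdD}, and reduces the one-sorted $\mathcal L_{\preceq}$-statement to it by replacing the pair $(E,K)$ by an $\aleph_1$-saturated elementary extension so that $E$ carries a cross-section restricting to one on $K$ (cf.\ \cite[Lemmas~3.3.39, 3.3.40]{ADH}); the Macintyre predicates are handled only later, in the proof of Proposition~\ref{prop:Belair}, via Corollary~\ref{cor:alg closure in p-adically closed} and Lemma~\ref{lem:def val}. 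You instead restrict to $K\subseteq E$ models of $p\operatorname{CVF}$, fold $\preceq_p$ and the $\operatorname{P}_n$ into the residue sort (your recoding is correct, and mirrors Lemma~\ref{lem:lift val spec} and Lemma~\ref{lem:nth powers residue field}), and invoke an AKE theorem allowing arbitrary extra structure on the residue sort. That relative form is true in equicharacteristic zero, but it is a strictly stronger black box than what \cite[Corollary~5.23]{vdD} states, and using the literal citation would force you to confront the angular-component compatibility issue whose resolution (saturation plus cross-sections) is the only non-quoted content of the paper's proof and is absent from yours. What your route buys is that Proposition~\ref{prop:Belair} drops out at once, without the separate algebraic-closedness step; what it costs is that Theorem~\ref{thm:AKE preceq} as stated, a statement about plain valued fields in $\mathcal L_{\preceq}$, is established only to the extent that it is already contained in your black box.
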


\noindent
For a version of this fact in a $3$-sorted context, for valued fields equipped with an angular component map,
see \cite[Corollary~5.23]{vdD}. One may reduce to this setting by replacing~$(E,K)$ by an $\aleph_1$-saturated elementary
extension, after which $E$ can be equipped with a cross-section which restricts to a cross-section of $K$
(cf.~\cite[Lemmas~3.3.39, 3.3.40]{ADH}).

\begin{proof}[Proof of Proposition~\ref{prop:Belair}]
Let $K\subseteq E$ be    models of $p\operatorname{CVF}$. 
By Corollary~\ref{cor:char p-adically closed p-convex}, the valued field $E$ is henselian, $\Gamma_E\neq\{0\}$ is divisible, and $\res(E)$ is $p$-adically closed; similarly
for $K$, $\Gamma$, $\res(K)$ in place of~$E$,~$\Gamma_E$,~$\res(E)$, respectively.
We have~$\res(K)\preceq\res(E)$ by Theorem~\ref{thm:Macintyre}, and also~$\Gamma\preceq\Gamma_E$ (see, e.g.,~\cite[Example~B.11.12]{ADH}). Hence~$K$ is an elementary $\mathcal L_{\preceq}$-substructure of $E$, by Theorem~\ref{thm:AKE preceq}.
By Corollary~\ref{cor:alg closure in p-adically closed}, $K$ is algebraically closed in $E$, so
 $(E^\times)^n\cap K=(K^\times)^n$ for each $n\geq 1$.
Hence also using Lemma~\ref{lem:def val} we obtain that
$K$ is  also
an elementary $\mathcal L$-substructure of $E$.
\end{proof}

\noindent
By Proposition~\ref{prop:Belair} we have 
$$\Q_p\{\!\{t^*\}\!\}\ \preceq\ \Q_p(\!(t^*)\!)\ \preceq\ \ \operatorname{cl}(\Q_p[t^{\Q}])\ \preceq\  \Q_p(\!(t^{\Q})\!)$$
as $\mathcal L$-structures.
In order to finish the proof of  Theorem~\ref{thm:Belair}, we recall a well-known fact about $n$th powers
in  henselian valued fields of equicharacteristic zero:

\begin{lemma}\label{lem:nth powers residue field}
Let $K$ be a henselian valued field of equicharacteristic zero, and suppose~$n\geq 1$. Then 
$$(\mathcal O^\times)^n = (K^\times)^n\cap\mathcal O^\times=\big\{a\in\mathcal O^\times:\overline{a}\in (\k^\times)^n\big\}.$$
Hence for $a,b\in K^\times$ with $a\sim b$ we have $a\in (K^\times)^n \Leftrightarrow b\in (K^\times)^n$.
\end{lemma}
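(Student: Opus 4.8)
The plan is to establish the two displayed equalities by a cycle of three inclusions and then read off the final assertion. The inclusion $(\mathcal O^\times)^n \subseteq (K^\times)^n \cap \mathcal O^\times$ is immediate, and $(K^\times)^n \cap \mathcal O^\times \subseteq \big\{a \in \mathcal O^\times : \bar{a} \in (\k^\times)^n\big\}$ follows because if $a = b^n$ with $b \in K^\times$ and $va = 0$, then $n\,vb = 0$ in the torsion-free ordered abelian group $\Gamma$, so $vb = 0$; hence $b \in \mathcal O^\times$ and $\bar{a} = \bar{b}^{\,n}$.

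The substantive inclusion is $\big\{a \in \mathcal O^\times : \bar{a} \in (\k^\times)^n\big\} \subseteq (\mathcal O^\times)^n$, and this is where henselianity and $\operatorname{char}\k = 0$ enter. Given such an $a$, write $\bar{a} = \bar{c}^{\,n}$ with $c \in \k^\times$, lift $c$ to some $\tilde{c} \in \mathcal O^\times$, and put $u := a\,\tilde{c}^{-n} \in \mathcal O^\times$, so $\bar{u} = 1$. The polynomial $P(Y) = Y^n - u \in \mathcal O[Y]$ reduces modulo $\smallo$ to $Y^n - 1 \in \k[Y]$, which has $1$ as a simple root since $P'(1) = n$ is nonzero in $\k$. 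Henselianity yields a root $y \in \mathcal O$ of $P$ with $\bar{y} = 1$, so $y \in \mathcal O^\times$ and $a = (\tilde{c}\,y)^n \in (\mathcal O^\times)^n$, as desired. This closes the cycle of inclusions and proves both equalities.

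Finally, for the ``Hence'' part: if $a, b \in K^\times$ satisfy $a \sim b$, then $1 - b/a \prec 1$, so $b/a \in 1 + \smallo \subseteq \mathcal O^\times$ with residue $1 = \bar{1}^{\,n} \in (\k^\times)^n$; by the equalities just proved, $b/a \in (\mathcal O^\times)^n \subseteq (K^\times)^n$, and since $(K^\times)^n$ is a subgroup of $K^\times$ we conclude $a \in (K^\times)^n \Leftrightarrow b \in (K^\times)^n$. I do not anticipate a genuine obstacle: the only point needing care is invoking henselianity in the right form---a Hensel-type lifting of a simple residual root---which is precisely what the hypothesis provides.
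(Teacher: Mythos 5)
Your proof is correct and is precisely the standard argument behind this well-known fact, which the paper states without proof: the nontrivial inclusion via Hensel lifting of the simple root $1$ of $Y^n-1$ (simple because $n\neq 0$ in $\k$), the reverse inclusion via torsion-freeness of the value group, and the ``Hence'' clause from $b/a\in 1+\smallo$. No gaps.
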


\begin{proof}[Proof of  Theorem~\ref{thm:Belair}] 
Let  $E, F \models p\operatorname{CVF}$ and  let~$K$ be a  common substructure of~$E$ and of~$F$;
we need to show $E_K \equiv F_K$; cf.~\cite[Corollary~B.11.6]{ADH}. The valuation ring $\mathcal O$ of $K$ is   $p$-convex. 
Note that $\res(K)=\mathcal O/\smallo\subseteq \res(E)$ has characteristic zero, thus~$\mathcal O_p\subset\mathcal O$.
We reduce to the case $\Gamma\neq\{0\}$.
Suppose $\Gamma=\{0\}$.
Replacing~$E$,~$F$ by elementary extensions we first arrange that
$E$, $F$ are $\aleph_1$-saturated, hence we have cross-sections
$s_E\colon \Gamma_E\to E^\times$ of $E$ and $s_F\colon \Gamma_F\to F^\times$ of~$F$.
Take any $\alpha\in\Gamma_E^>$ and set~$x:=s_E(\alpha)\in E^\times$; 
then $x$ is transcendental over~$K$ by~\cite[Co\-rol\-lary~3.1.8]{ADH}.
Since~$\Gamma_E$ is divisible, we have~$x\in (E^\times)^n$ for each $n\geq 1$. 
Similarly we take any~$\beta\in\Gamma_F^>$ and set~$y:=s_F(\beta)\in F^\times$;
then $y$ is transcendental over~$K$ and~$y\in (F^\times)^n$ for each~$n\geq 1$.
The field isomorphism $\sigma\colon K(x)\to K(y)$ over $K$ with~$\sigma(x)=y$ is a valued field isomorphism
\cite[Lemma~3.1.30]{ADH}.
Let $0\neq f\in K[T]$, where $T$ is a single variable over $K$, and $n\geq 1$. 
Let $a\in K^\times$, $g\in K[T]$, and $m$ be such that~$f=aT^m(1+gT)$. Then
$f(x) \sim ax^m$, hence by Lemma~\ref{lem:nth powers residue field}:
$$f(x)\in (E^\times)^n\quad\Longleftrightarrow\quad ax^m \in (E^\times)^n
\quad\Longleftrightarrow\quad a\in (E^\times)^n.$$
Similarly we obtain $f(y)\in (F^\times)^n \Leftrightarrow a\in (F^\times)^n$.
Since $K$ is an $\mathcal L_{\operatorname{Mac}}$-sub\-structure of both $E$ and $F$, this yields
$f(x)\in (E^\times)^n\Leftrightarrow f(y)\in (F^\times)^n$. 
If also~$0\neq g\in K[T]$ and $h:=f/g\in K(T)$, then 
$$h(x)\in (E^\times)^n \ \Longleftrightarrow\  (fg^{n-1})(x) / g(x)^n \in (E^\times)^n \ \Longleftrightarrow\   
(fg^{n-1})(x)\in (E^\times)^n,$$
and similarly with $F$ and $y$ in place of $E$ and $x$, respectively. Hence by the above applied to $fg^{n-1}\in K[T]$ in place of $f$ we obtain
$h(x)\in (E^\times)^n \Leftrightarrow h(y)\in (F^\times)^n$.
Using Lemma~\ref{lem:def val}, this also implies
$h(x)\preceq_p 1 \Leftrightarrow h(y) \preceq_p 1$.
Hence $\sigma$ is   an isomorphism $K(x)\to K(y)$ between   $\mathcal L_{\operatorname{Mac}}$-sub\-structures of $E$
and of $F$, respectively. Now identify $K(x)$ with its image under~$\sigma$ and replace~$K$ by $K(x)$ to arrange $\Gamma\neq\{0\}$.

Let~$L$ be the algebraic closure of $K$ inside $E$. Then $L$  
is $p$-adically closed, by Corollary~\ref{cor:alg closure in p-adically closed}. 
Now \cite[Theorem~3.10]{PR} yields a $K$-embedding~$j$ of the $p$-valued field~$L$ into $F$, and 
 $j$ is also an embedding of  
 $\mathcal L_{\operatorname{Mac}}$-structures. 
By Lemma~\ref{lem:unique convex ext}, there is a unique
$p$-convex subring of $L$ lying over~$\mathcal O$, hence $j$ is also an embedding of $\mathcal L$-structures.
Identifying $L$ with its image under $j$ we first arrange that~$L$ is a common $\mathcal L$-substructure of $E$ and   $F$, and
replacing $K$ by $L$ we  then arrange that~$K\models p\operatorname{CVF}$. Then
$K\preceq E$ and $K\preceq F$ by Proposition~\ref{prop:Belair} and thus   $E\equiv_K F$.
\end{proof}

\noindent
Let $\mathcal L^*:=\mathcal L^*_{\operatorname{Mac}}\cup\{\preceq\}$ be the expansion of the language of rings
$\mathcal L_{\operatorname{R}}$ by
the unary relation symbols~$\operatorname{P}_n$~({$n\geq 1$}) and the binary relation symbol $\preceq$ (a reduct of $\mathcal L$).

\begin{cor}\label{cor:Belair}
Each   $\mathcal L$-formula is $p\operatorname{CVF}$-equivalent to a 
quan\-ti\-fier-free $\mathcal L^*$-for\-mu\-la.
\end{cor}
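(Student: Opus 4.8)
The plan is, first, to invoke Theorem~\ref{thm:Belair}: every $\mathcal L$-formula is $p\operatorname{CVF}$-equivalent to a quantifier-free $\mathcal L$-formula. Since $\mathcal L_{\operatorname R}$, the predicates $\operatorname P_n$ ($n\geq 1$), and the relation symbol $\preceq$ all belong to both $\mathcal L$ and $\mathcal L^*$, it then remains only to eliminate the two symbols $\preceq_p$ and $ {}^{-1}$ from atomic (hence from arbitrary quantifier-free) $\mathcal L$-formulas, modulo $p\operatorname{CVF}$. This runs parallel to the proof of Lemma~\ref{lem:Macintyre}, now carried out over $p\operatorname{CVF}$ instead of $p\operatorname{CF}$; the only new feature is the presence of $\preceq$, which for this purpose is handled exactly as $=$ is.

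Next I would eliminate $\preceq_p$. If $K\models p\operatorname{CVF}$, then the underlying field of $K$ is $p$-adically closed, so its $p$-valuation ring $\mathcal O_p$ is henselian, and Lemma~\ref{lem:def val} gives $\mathcal O_p=\mathcal O_0=\{a\in K:1+pa^\varepsilon=b^\varepsilon\text{ for some }b\in K\}$, with $\varepsilon=3$ if $p=2$ and $\varepsilon=2$ otherwise. A short computation then yields, for all $a,b\in K$,
$$a\preceq_p b \quad\Longleftrightarrow\quad (a=0\wedge b=0)\ \vee\ \big(b\neq 0\wedge \operatorname{P}_\varepsilon(b^\varepsilon+pa^\varepsilon)\big):$$
when $b\neq 0$ one has $a\preceq_p b$ iff $a/b\in\mathcal O_p$ iff $1+p(a/b)^\varepsilon=(b^\varepsilon+pa^\varepsilon)/b^\varepsilon\in(K^\times)^\varepsilon$, and $b^\varepsilon+pa^\varepsilon\neq 0$ in that case, since otherwise $(a/b)^\varepsilon=-1/p$ would have value $-1\notin\varepsilon\Gamma_p$, impossible as $\Gamma_p$ is a $\Z$-group. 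Replacing each subformula $t_1\preceq_p t_2$ by the right-hand side above leaves a quantifier-free formula over $\mathcal L_{\operatorname R}\cup\{{}^{-1}\}\cup\{\operatorname P_n:n\geq 1\}\cup\{\preceq\}$.

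Finally I would eliminate $ {}^{-1}$. By induction on term complexity, using the convention $0^{-1}=0$, each $\bigl(\mathcal L_{\operatorname R}\cup\{{}^{-1}\}\bigr)$-term $t$ can be assigned finitely many pieces, each cut out by $\mathcal L_{\operatorname R}$-conditions of the form (polynomial $=0$) or (polynomial $\neq 0$), together exhausting all tuples, on each of which $t$ coincides with a quotient $P/Q$ of $\mathcal L_{\operatorname R}$-terms with $Q$ nonvanishing there. Passing to the common refinement of these families over all terms occurring in the formula, on a given piece every atomic subformula becomes, modulo $p\operatorname{CVF}$, a quantifier-free $\mathcal L^*$-formula: $P_1/Q_1=P_2/Q_2$ becomes $P_1Q_2=P_2Q_1$; $P_1/Q_1\preceq P_2/Q_2$ becomes $P_1Q_2\preceq P_2Q_1$ after multiplying through by the nonzero element $Q_1Q_2$ (axiom (D5)); and $\operatorname P_n(P/Q)$ becomes $\operatorname P_n(PQ^{n-1})$, since $P/Q=PQ^{n-1}/Q^n$ with $Q^n\in(K^\times)^n$. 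Disjoining over the pieces produces the desired quantifier-free $\mathcal L^*$-formula equivalent over $p\operatorname{CVF}$ to the original one.

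I do not expect a genuine obstacle: the substantive input is the quantifier elimination already supplied by Theorem~\ref{thm:Belair}, and everything after that is bookkeeping. The only places needing a little care are the case analysis forced by the partial inverse $0^{-1}=0$, the use of (D5) to clear denominators out of $\preceq$, and the small valuation-theoretic remark that $b^\varepsilon+pa^\varepsilon\neq 0$ whenever $b\neq 0$ and $a\preceq_p b$.
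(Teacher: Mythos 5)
Your proposal is correct and follows essentially the paper's route: quantifier elimination via Theorem~\ref{thm:Belair}, followed by the elimination of $\preceq_p$ and ${}^{-1}$ from quantifier-free formulas, which is exactly the content of Lemma~\ref{lem:Macintyre} (extended to atoms involving $\preceq$, where clearing denominators by (D5) is indeed all that is needed). The only difference is cosmetic bookkeeping: you remove divisions inside $\operatorname{P}_n$ via $\operatorname{P}_n(P/Q)\leftrightarrow\operatorname{P}_n(PQ^{n-1})$ and an explicit piecewise decomposition of ${}^{-1}$-terms, whereas the paper's lemma uses the coset representatives of Lemma~\ref{lem:coset repres}; both work.
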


\noindent
This follows from Lemma~\ref{lem:Macintyre} and Theorem~\ref{thm:Belair}.

\begin{cor}
$p\operatorname{CVF}$ is complete:  $K\equiv\Q_p(\!(t^*)\!)$ for each $K\models p\operatorname{CVF}$.
\end{cor}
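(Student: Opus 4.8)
The plan is to deduce completeness directly from the quantifier elimination just established (Theorem~\ref{thm:Belair}): a theory with \textup{QE} is substructure complete, so it suffices to exhibit a single $\mathcal L$-structure that embeds into every model of $p\operatorname{CVF}$, since then all models are elementarily equivalent (the truth value of a quantifier-free $\mathcal L$-sentence depends only on the substructure generated by $\emptyset$). As $\Q_p(\!(t^*)\!)$, equipped with its $t$-adic valuation ring, is a model of $p\operatorname{CVF}$ --- its value group $\Q$ is non-trivial and divisible and its residue field $\Q_p$ is $p$-adically closed, so Corollary~\ref{cor:char p-adically closed p-convex} applies --- the stated form $K\equiv\Q_p(\!(t^*)\!)$ then follows from this.

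The structure to use is the prime field $\Q$ with the structure induced from a model $K\models p\operatorname{CVF}$. Because $\mathcal L$ contains the inversion symbol ${}^{-1}$, the $\mathcal L$-substructure of $K$ generated by $\emptyset$ is exactly $\Q$, so I would just check that the induced $\mathcal L$-structure on $\Q$ is the same for all $K$. For the $\mathcal L_{\operatorname{Mac}}$-part this is Lemma~\ref{lem:Q strongly p-euclidean}: $\Q$ is weakly $p$-euclidean, hence has a \emph{unique} $\mathcal L_{\operatorname{Mac}}$-expansion embeddable in a model of $p\operatorname{CF}$, which by uniqueness must be the one induced on $\Q$ by the $\mathcal L_{\operatorname{Mac}}$-reduct of $K$. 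It remains to pin down the dominance relation $\preceq$ that $K$ induces on $\Q$, and here is the one genuinely new point: the prime field $\Q$ lies inside the distinguished (non-trivial $p$-convex) valuation ring $\mathcal O$ of $K$. Indeed, $\mathcal O$ is the valuation ring of the $\Delta$-coarsening of the $p$-valuation $v_{K,p}$ of $K$ for some convex subgroup $\Delta\neq\{0\}$ of $\Gamma_{K,p}$; since $K$ is $p$-adically closed, $\Gamma_{K,p}$ is a $\Z$-group, so its least positive element $v_{K,p}(p)$ lies in $\Delta$, whence $p\in\mathcal O^\times$. Combined with $\Z_{(p)}\subseteq\mathcal O_{K,p}\subseteq\mathcal O$ (recall $\Z_{(p)}$ is the unique $p$-valuation ring of $\Q$) this gives $\Q=\Z_{(p)}[1/p]\subseteq\mathcal O$, hence $\mathcal O\cap\Q=\Q$, so the dominance relation induced on $\Q$ is the trivial one $\preceq_{\operatorname{t}}$, again independently of $K$.

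Assembling these: the $\mathcal L$-structure $\Q$ --- with $\mathcal L_{\operatorname{Mac}}$-structure furnished by Lemma~\ref{lem:Q strongly p-euclidean} and with $\preceq$ interpreted as $\preceq_{\operatorname{t}}$ --- is, up to isomorphism, the substructure generated by $\emptyset$ in every model of $p\operatorname{CVF}$; by substructure completeness (Theorem~\ref{thm:Belair}) this forces all models of $p\operatorname{CVF}$ to be elementarily equivalent, and applying this to $\Q_p(\!(t^*)\!)$ yields $K\equiv\Q_p(\!(t^*)\!)$. I do not expect any real obstacle beyond Theorem~\ref{thm:Belair} itself: the remaining work is the short valuation-theoretic observation that $p$ --- and hence all of $\Q$ --- is a unit in every non-trivial $p$-convex valuation ring, together with the standard ``\textup{QE} plus a common prime substructure implies completeness'' mechanism.
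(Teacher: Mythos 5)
Your proof is correct and follows essentially the same route as the paper: exhibit the prime field $\Q$ (with its unique $\mathcal L_{\operatorname{Mac}}$-structure from Lemma~\ref{lem:Q strongly p-euclidean} and the trivial dominance relation $\preceq$) as a common substructure of all models, then invoke substructure completeness coming from Theorem~\ref{thm:Belair}. Your explicit verification that $p$, and hence all of $\Q$, is a unit in every non-trivial $p$-convex valuation ring is a point the paper leaves implicit, but it is the same argument.
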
 
\begin{proof}
Expand the field $\Q$ to an $\mathcal L$-structure by interpreting $\preceq_p$ by the $p$-adic dominance relation on $\Q$, $\preceq$ by the trivial dominance relation on $\Q$,
and~$\operatorname{P}_n$ by~$(\Q_p^\times)^n\cap\Q^\times$  ($n\geq 1$). 
Note that $(\Q_p^\times)^n\cap\Q = (K^\times)^n\cap\Q$ for each~$K\models p\operatorname{CF}$ and~$n\geq 1$, thanks to
Corollary~\ref{cor:pCF complete}. This $\mathcal L$-structure embeds into each model of~$p\operatorname{CVF}$
(see Lemma~\ref{lem:Q strongly p-euclidean}), 
  so the corollary follows from Theorem~\ref{thm:Belair}.
\end{proof}

\noindent
In \cite{Belair88}, B\'elair gives an explicit axiomatization of the universal part
$p\operatorname{CF}_\forall$ of the $\mathcal L_{\operatorname{Mac}}$-theory 
$p\operatorname{CF}$. We note:

\begin{cor}
The $\mathcal L$-theory $p\operatorname{CVF}_\forall$ is axiomatized by 
$$p\operatorname{CF}_\forall \cup 
\big\{ 1\preceq n\cdot 1  :n\geq 1 \big\} \cup
\big\{\forall x\forall y(x\preceq_p y\rightarrow x\preceq y)\big\}.$$
\end{cor}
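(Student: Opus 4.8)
The strategy is to verify the two inclusions between $p\operatorname{CVF}_\forall$ and the $\mathcal L$-theory
$$T := p\operatorname{CF}_\forall \cup \big\{ 1\preceq n\cdot 1 : n\geq 1\big\}\cup\big\{\forall x\forall y(x\preceq_p y\rightarrow x\preceq y)\big\}.$$
One inclusion is immediate: every model of $p\operatorname{CVF}$ satisfies $p\operatorname{CF}$, satisfies the $\mathcal L_{\operatorname{Mac}}$-sentences $1\preceq n\cdot 1$ since the distinguished $p$-convex valuation ring $\mathcal O$ contains $\mathcal O_p$ (hence contains $\Z$, so $v(n\cdot 1)\leq 0$, i.e.\ $1\preceq n\cdot 1$), and satisfies $\forall x\forall y(x\preceq_p y\rightarrow x\preceq y)$ precisely because $\mathcal O$ is $p$-convex (the displayed implications in the discussion of $p$-convexity). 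Since these are universal consequences of $p\operatorname{CVF}$, we get $p\operatorname{CVF}_\forall\models T$; and as $T$ is a set of universal sentences, $p\operatorname{CVF}_\forall\supseteq T^\models$, i.e.\ $T\subseteq p\operatorname{CVF}_\forall$.

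For the reverse inclusion I would use the standard criterion: a universal theory $T$ satisfies $T=S_\forall$ for a theory $S$ if and only if every model of $T$ embeds into a model of $S$ (cf.\ \cite[Corollary~B.11.8]{ADH} or the analogous statement); here $S=p\operatorname{CVF}$. So, given a model $M\models T$, I must build an $\mathcal L$-embedding of $M$ into some model of $p\operatorname{CVF}$. First, $M\models p\operatorname{CF}_\forall$, so by B\'elair's result \cite{Belair88} (or directly, since $p\operatorname{CF}$ is model complete with a recursive axiomatization of its universal part) there is a $p$-adically closed field $F\models p\operatorname{CF}$ with an $\mathcal L_{\operatorname{Mac}}$-embedding $M\hookrightarrow F$; identify $M$ with its image, so $\mathcal O_{M,p}=\mathcal O_{F,p}\cap M$ and $(F^\times)^n\cap M^\times=(M^\times)^n$ for all $n$. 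The remaining data on $M$ is the dominance relation $\preceq$, giving a valuation ring $\mathcal O=\{a\in M: a\preceq 1\}$ of $M$; by the axiom $1\preceq n\cdot 1$ we have $n\cdot 1\in\mathcal O^\times$ for all $n\geq 1$, so $\Z\subseteq\mathcal O$ and hence $\mathcal O$ has residue characteristic zero, and by the last axiom $\mathcal O_{M,p}\subseteq\mathcal O$, i.e.\ $\mathcal O$ is $p$-convex in $M$.

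Now I need to produce a model $N\models p\operatorname{CVF}$ containing $F$ as an $\mathcal L_{\operatorname{Mac}}$-substructure and equipped with a $p$-convex dominance relation $\preceq_N$ that restricts to $\preceq$ on $M$. The natural candidate is to take $\mathcal O_F:=\operatorname{conv}_F(\mathcal O)$, the convex hull of $\mathcal O$ in the $p$-valued field $F$ (equivalently, the smallest $p$-convex subring of $F$ containing $\mathcal O$, cf.\ Lemma~\ref{lem:convex hull} applied in $(F,\mathcal O_{F,p})$). One checks $\mathcal O_F\cap M=\mathcal O$: indeed $\mathcal O_F\cap M$ is a $p$-convex subring of $M$ containing $\mathcal O$, and conversely any element of $\mathcal O_F\cap M$ is dominated by some element of $\mathcal O$, forcing it into $\mathcal O$ since $\mathcal O$ is a valuation ring of $M$. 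If $\mathcal O_F\neq F$ we are essentially done after passing to a model $N$ of $p\operatorname{CVF}$ extending $(F,\mathcal O_F)$ — here I invoke that $p\operatorname{CF}$-models with a $p$-convex valuation have $p\operatorname{CVF}$-extensions, e.g.\ via $F(\!(t^{\Q})\!)$-style coarsened constructions or directly Corollary~\ref{cor:char p-adically closed p-convex} after forcing the value group to be nontrivial and divisible; the original dominance relation sits inside this as required. The one degenerate case is $\mathcal O_F=F$, equivalently $\mathcal O=M$ (the trivial dominance relation on $M$): then simply embed $M$ into $\Q_p(\!(t^*)\!)$ (possible since $M\hookrightarrow F$ and $F$ embeds into a suitable $p\operatorname{CVF}$-model by completeness of $p\operatorname{CF}$) with $\preceq$ going to the trivial relation inside the nontrivial $p$-convex valuation — again a valid $\mathcal L$-embedding into a model of $p\operatorname{CVF}$.

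The main obstacle is the bookkeeping in the last paragraph: ensuring that the $p$-convexity of $\mathcal O$ in $M$ genuinely propagates to a $p$-convex valuation on an ambient model of $p\operatorname{CVF}$ while simultaneously keeping the $n$th-power predicates coherent (so that the map stays an $\mathcal L_{\operatorname{Mac}}$-embedding, not merely a valued-field embedding). This is handled by Lemma~\ref{lem:nth powers residue field} together with the fact that passing from $F$ to a coarsening does not change $(F^\times)^n$ on the residue field of characteristic zero, but it must be spelled out carefully; everything else is routine.
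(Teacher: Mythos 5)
Your proposal is correct and is essentially the paper's own argument: after the easy inclusion, extend the $\mathcal L_{\operatorname{Mac}}$-reduct of a model of the displayed theory to a model of $p\operatorname{CF}$, transport $\preceq$ via the $p$-convex hull of $\mathcal O$ (checking that the hull meets the original model in $\mathcal O$), and handle the trivial-hull case by passing to a Hahn-type extension equipped with the $t$-adic valuation. Three small repairs for the write-up: the axiom $1\preceq n\cdot 1$ holds in models of $p\operatorname{CVF}$ because $n$ lies outside the maximal ideal of the non-trivial coarsening $\mathcal O$ (the inclusion $\Z\subseteq\mathcal O$ only gives $n\preceq 1$, not $1\preceq n$); when $\mathcal O_F\neq F$ the structure $(F,\mathcal O_F)$ is already a model of $p\operatorname{CVF}$, with no further extension or ``forcing'' of the value group needed, since $p\in\mathcal O^\times$ (from $1\preceq p$ and $p\preceq_p 1\Rightarrow p\preceq 1$) rules out $\mathcal O_F=\mathcal O_{F,p}$ and Corollary~\ref{cor:char p-adically closed p-convex} supplies divisibility automatically; and in the degenerate case the target must be $F(\!(t^{\Q})\!)$ (or a similar extension of $F$) with the $t$-adic valuation ring rather than $\Q_p(\!(t^*)\!)$, into which a general $M$ need not embed, the Macintyre predicates being preserved because $F$ is algebraically closed in this extension (Corollary~\ref{cor:alg closure in p-adically closed}), so the appeal to Lemma~\ref{lem:nth powers residue field} is unnecessary.
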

\begin{proof}
Clearly $p\operatorname{CVF}_\forall$ contains the displayed $\mathcal L$-theory.
Conversely, let $K$ be a model of the latter. Then the valuation ring $\mathcal O$
associated to the dominance relation~$\preceq$ on $K$ is $p$-convex, and its residue field has characteristic zero.
Take an extension $K_1$ of the $\mathcal L_{\operatorname{Mac}}$-reduct of $K$ to a model of
$p\operatorname{CF}$, and expand it to an $\mathcal L$-structure by
interpreting $\preceq$ by the dominance relation associated to the $p$-convex hull
of $\mathcal O$ in $K_1$. Then $K\subseteq K_1$ as $\mathcal L$-structures, so
replacing~$K$ by~$K_1$ we can   arrange that $K\models p\operatorname{CF}$. 
If the $p$-convex subring $\mathcal O$ of $K$ is non-trivial, then we are done.
Suppose otherwise; then $\mathcal O=K$.  
Then  by the example following Corollary~\ref{cor:char p-adically closed p-convex},
the valuation ring of the $t$-adic valuation on the 
Hahn field~$L:=K(\!(t^\Q)\!)$ is   a non-trivial $p$-convex subring of $L$, and
accordingly expanding~$L$ to an $\mathcal L$-structure
yields
 an extension of $K$ to  a model of $p\operatorname{CVF}$ as required. 
\end{proof}

\section{The $p$-adic Analytic Nullstellensatz}\label{sec:p-adic NS}

\noindent
{\it In this section we 
let $K$ be a field of characteristic zero.}\/  
We first recall the definition and a few basic facts about the $p$-adic Kochen operator $\gamma=\gamma_p$,
the $p$-adic Kochen ring~$\Lambda_K$, and generalize some auxiliary results about $p$-adic ideals from~\cite{Srhir01, Srhir03}.  
In Proposition~\ref{prop:p-adic spec} we   establish a specialization result analogous
to Propositions~\ref{prop:Rueckert spec} and~\ref{prop:Risler spec}, from which a general form of the 
$p$-adic analytic version of Hilbert's 17th Problem (Corollary~\ref{cor:p-adic H17}) and the
$p$-adic analytic Nullstellensatz
(Theorem~\ref{thm:p-adic NSS}) follow. Theorems A,~B, and~C from the introduction are then easy consequences.
(See Corollaries~\ref{cor:p-adic NSS, 1}--\ref{cor:p-adic H17 fns}.)

\subsection{The Kochen ring}
We set~$K_\infty:=K\cup\{\infty\}$ where $\infty\notin K$.  
For $f\in K$ we put $\wp(f):=f^p-f$ and
$$\gamma(f) := \frac{1}{p} \frac{\wp(f)}{\wp(f)^2-1} \in K_\infty\quad\text{where $\gamma(f):=\infty$ if $\wp(f)=\pm 1$.}$$ 
For the following see \cite[Lemma~6.1]{PR}:

\begin{lemma}\label{lem:p-val gamma}
Let $\mathcal O$ be a valuation ring of $K$. Then
$\mathcal O$ is a  $p$-valuation ring of $K$ iff $p\notin\mathcal O^\times$ and $\gamma(f) \in \mathcal O$ for all $f\in K$.
\end{lemma}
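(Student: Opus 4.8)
The statement characterizes $p$-valuation rings via the Kochen operator $\gamma$. The strategy is to prove both implications, using Lemma~\ref{lem:p-val spec} for the reduction to the case where $\mathcal O$ has rank one (or using the residue field directly), together with the elementary properties of $\wp$ and $\gamma$. First I would recall that $\mathcal O$ is a $p$-valuation ring exactly when $\res(K)=\mathcal O/\smallo\cong\F_p$ and $p$ has minimal positive value, i.e.\ $p\in\smallo$ and $\smallo$ is generated (among values) so that the residue field is $\F_p$; equivalently $p\notin\mathcal O^\times$ and every element of $\mathcal O$ is congruent mod $\smallo$ to one of $0,1,\dots,p-1$.

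\textbf{The ``only if'' direction.} Assume $\mathcal O$ is a $p$-valuation ring. Then $p\notin\mathcal O^\times$ since $p\in\smallo$. For the second condition, fix $f\in K$ and distinguish two cases according to whether $f\in\mathcal O$ or not. If $f\notin\mathcal O$, then $v(f)<0$, so $v(\wp(f))=v(f^p-f)=pv(f)<0$ (the term $f^p$ dominates), hence $v(\wp(f)^2-1)=2pv(f)<0$, and $v(\gamma(f))=v(1/p)+v(\wp(f))-v(\wp(f)^2-1)=-v(p)-pv(f)>0$ provided $p\cdot v(f)<-v(p)$; more carefully, $v(\gamma(f))=-v(p)+pv(f)-2pv(f)=-v(p)-pv(f)$, which is $\geq 0$ since $v(f)<0$ and $v(p)>0$, giving $-pv(f)\geq -v(f)\geq v(p)$ once $v(f)\leq -1$ in the $\Z$-group sense — but in general one just notes $-pv(f)\geq v(p)$ because $pv(f)\leq v(f)<0\leq -v(p)$ fails; the clean computation is $v(\gamma f) = -v(p) + v(\wp f) - v((\wp f)^2-1)$ and when $v(\wp f)<0$ this equals $-v(p) - v(\wp f) = -v(p) - p v(f) \geq 0$ as $-pv(f)\geq v(p)$ whenever $v(f)\leq 0$ and $p\geq 2$, using $v(p)>0$. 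If $f\in\mathcal O$, then $\wp(f)=f^p-f\in\mathcal O$; I claim $\wp(f)\equiv 0\pmod{\smallo}$, which holds because the residue field is $\F_p$ so $\bar f^p=\bar f$. Hence $\wp(f)\in\smallo$, so $\wp(f)^2-1$ is a unit of $\mathcal O$ (its residue is $-1\neq 0$), and therefore $\gamma(f)=\frac1p\cdot\frac{\wp(f)}{\wp(f)^2-1}$ has $v(\gamma f)=-v(p)+v(\wp f)\geq -v(p)+v(p)=0$ since $\wp(f)\in\smallo$ with $p\in\smallo$ minimal means $v(\wp f)\geq v(p)$. (Here I use that $p$ has \emph{smallest} positive value.) Also $\gamma(f)\neq\infty$ since $\wp(f)=\pm1$ would force $\pm1\in\smallo$. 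Thus $\gamma(f)\in\mathcal O$ in all cases.

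\textbf{The ``if'' direction.} Conversely, suppose $p\notin\mathcal O^\times$ and $\gamma(f)\in\mathcal O$ for all $f\in K$. Since $p\notin\mathcal O^\times$ and $p\neq 0$, we have $p\in\smallo$ (if $p\notin\mathcal O$ then $1/p\in\smallo$, but then apply the hypothesis to suitable $f$ to derive a contradiction — or simply note $v(p)\neq 0$ and we must rule out $v(p)<0$; this is handled by taking $f=0$: $\gamma(0)=0\in\mathcal O$, uninformative, so instead use $f$ with $\wp(f)$ a unit, e.g.\ if $p\notin\mathcal O$ pick any $f\in\mathcal O^\times$ with $\bar f\neq 0,1$ in $\res(K)$ — but we don't yet know $\res(K)$; the cleanest argument: if $v(p)<0$, take $f\in K$ with $v(f)$ large positive, then $\wp(f)\sim -f$, so $\gamma(f)\sim \frac{1}{p}\cdot\frac{-f}{-1}=f/p$ has $v(f/p)=v(f)-v(p)$ which can be made negative, contradicting $\gamma(f)\in\mathcal O$; hence $v(p)\geq 0$, and combined with $p\notin\mathcal O^\times$ we get $p\in\smallo$). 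Next I show $\res(K)=\F_p$: for any $f\in\mathcal O$, we have $\gamma(f)\in\mathcal O$; if $\wp(f)\notin\smallo$, i.e.\ $\bar f^p\neq\bar f$ in $\res(K)$, then $\wp(f)$ is a unit (it's in $\mathcal O$ with nonzero residue) and $\wp(f)^2-1\in\mathcal O$, so $v(\gamma f)=-v(p)+v(\wp f)-v(\wp(f)^2-1)$; if additionally $\wp(f)^2-1$ is a unit this is $-v(p)<0$, contradiction — so we would need $\wp(f)^2\equiv 1$, i.e.\ $\bar f^p-\bar f=\pm 1$ in $\res(K)$. This shows every element $\bar a$ of $\res(K)$ satisfies $\bar a^p-\bar a\in\{0,1,-1\}$. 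Over a field this polynomial identity (for \emph{all} $\bar a$) forces $\res(K)$ to be finite of small size; a short argument (the Artin–Schreier polynomial $\wp$ is additive with kernel $\F_p$, and $\wp(\res(K))\subseteq\{0,\pm1\}$ implies $\res(K)/\F_p$ embeds into a $3$-element set, so $[\res(K):\F_p]\leq 1$, i.e.\ $\res(K)=\F_p$; one must also check $1=-1$ is impossible unless handled, but in $\F_p$ for $p=2$ we have $\pm1$ coincide and $\wp(\F_2)=\{0\}$ anyway) gives $\res(K)=\F_p$. Finally, to see $p$ has \emph{smallest} positive value: suppose $0<v(\pi)<v(p)$ for some $\pi$; then applying the hypothesis to $f=1/\pi$ gives $\wp(f)\sim f^p=\pi^{-p}$, $v(\wp f)=-pv(\pi)$, so $v(\gamma f)=-v(p)+(-pv(\pi))-(-2pv(\pi))=-v(p)+pv(\pi)$; this is negative iff $pv(\pi)<v(p)$, which we can arrange by a different choice, or rather: choose $f$ with $v(f)<0$; then $v(\gamma f)=-v(p)-pv(f)$ and this is $<0$ iff $-pv(f)<v(p)$; but $v(f)<0$ means $-v(f)>0$, and whether $-pv(f)<v(p)$ depends on the group — if there is $\pi$ with $0<v(\pi)<v(p)/p$... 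The honest version: the standard fact (see \cite[Lemma~6.1]{PR}, whose proof I am reconstructing) is that $\gamma(f)\in\mathcal O$ for all $f$ forces both $\res(K)\subseteq\F_p$ and $v(p)$ minimal positive, and I would simply cite the cited lemma or follow its proof, invoking \cite{Kochen} or \cite[\S6]{PR} for the number-theoretic identity $\gamma_p$ is $p$-integral-definite and its converse.

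\textbf{Main obstacle.} The genuinely delicate point is the ``if'' direction's claim that $\gamma(f)\in\mathcal O$ for \emph{all} $f$ forces $p$ to have the \emph{smallest} positive value (not merely positive value) and that the residue field is exactly $\F_p$ (not a larger finite or infinite field); this is where the specific form of $\gamma_p$ — designed precisely so that its failure to be integral detects non-$p$-valuations — does the work, and a careful case analysis on $v(\wp(f))$ for cleverly chosen $f$ is needed. Since the excerpt explicitly points to \cite[Lemma~6.1]{PR} for this lemma, the cleanest route in the paper is to cite that reference; I would present the ``only if'' direction in full (it is the elementary computation above) and either cite \cite{PR} for the converse or reproduce its short argument using the additivity of $\wp$ on the residue field.
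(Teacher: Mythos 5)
The paper does not actually prove this lemma; it is quoted verbatim from Prestel--Roquette \cite[Lemma~6.1]{PR}, so simply citing that reference (as you suggest at the end) is exactly what the paper does. Your reconstruction of the ``only if'' direction is essentially correct: for $f\notin\mathcal O$ the minimality of $v(p)$ gives $-v(f)\geq v(p)$, hence $v(\gamma(f))=-v(p)-pv(f)\geq (p-1)v(p)\geq 0$, and for $f\in\mathcal O$ the residue field $\F_p$ forces $\wp(f)\in\smallo$, so $\wp(f)^2-1$ is a unit and $v(\gamma(f))=v(\wp(f))-v(p)\geq 0$. (Do state explicitly that both inequalities use that $v(p)=\min\Gamma^{>}$; ``$v(p)>0$'' alone is not enough, and the value group need not be a $\Z$-group here.)

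Your ``if'' direction, however, has genuine gaps, and both can be repaired more simply than you attempt. First, $p\in\mathcal O$ is automatic: every valuation ring contains the prime ring $\Z$, so $p\notin\mathcal O^\times$ already gives $v(p)>0$; your attempted contradiction for $v(p)<0$ has a sign error ($v(f)-v(p)>0$ when $v(f)>0$ and $v(p)<0$) but is unnecessary. Second, for the residue field: if $f\in\mathcal O$ and $\wp(f)\in\mathcal O^\times$, then $v\big(\wp(f)/(\wp(f)^2-1)\big)=-v(\wp(f)^2-1)\leq 0$, so $v(\gamma(f))\leq -v(p)<0$ \emph{regardless} of whether $\wp(f)^2-1$ is a unit or not. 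Hence $\wp(f)\in\smallo$ for \emph{all} $f\in\mathcal O$, i.e.\ $\bar f{}^p=\bar f$ for every residue, so $\res(K)=\F_p$ outright. Your weaker conclusion $\bar f{}^p-\bar f\in\{0,\pm 1\}$ followed by the additive-subgroup count is not only roundabout but actually fails: for $p=2$ one has $\wp(\F_4)=\{0,1\}\subseteq\{0,\pm1\}$, so that argument cannot exclude $\res(K)=\F_4$. Third, for minimality of $v(p)$: if $0<v(\pi)<v(p)$, test $f=\pi$ itself (not $1/\pi$). Then $v(\wp(\pi))=v(\pi)$, $\wp(\pi)\in\smallo$ so $\wp(\pi)^2-1\in\mathcal O^\times$, and $v(\gamma(\pi))=v(\pi)-v(p)<0$, a contradiction. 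Your choice $f=1/\pi$ gives $v(\gamma(f))=pv(\pi)-v(p)$, which need not be negative, which is why that computation stalls. With these three observations the converse is a short self-contained argument, should you prefer it to the citation.
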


\noindent
By convention, for $R\subseteq K$ we set 
$\gamma(R):=\big\{\gamma(f): f\in R\big\}\setminus\{\infty\}$.
If $K$ is formally $p$-adic, then $\gamma(f)\neq\infty$ for each $f\in K$, by the previous lemma, so in this case $\gamma(R)$ indeed agrees with the image of the restriction of the map $\gamma$ to a map $R\to K_\infty$. For a proof of the following see \cite[Theorem~6.4]{PR}:

\begin{prop}\label{prop:6.4}
Let $(K,\mathcal O)$ be $p$-valued field and $L$ be a field extension of $K$.
Then there is a $p$-valuation ring  of $L$ lying over $\mathcal O$ iff $p\notin\mathcal O[\gamma(L)]^\times$.
\end{prop}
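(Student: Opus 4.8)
The plan is to establish the two implications separately, the forward one being an immediate consequence of Lemma~\ref{lem:p-val gamma} and the backward one being a construction via Krull's extension theorem together with that same lemma. Throughout, write $R:=\mathcal O[\gamma(L)]$ for the subring of $L$ generated by $\mathcal O$ together with the values $\gamma(f)$ with $f\in L$ and $\wp(f)\neq\pm1$ (so that $\gamma(f)\in L$). Since $L$ has characteristic zero, $p\neq 0$ in $R$, so the ideal $pR$ of $R$ is proper precisely when $p\notin R^\times$.

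For the forward implication, suppose $\mathcal O_L$ is a $p$-valuation ring of $L$ lying over $\mathcal O$; then $\mathcal O\subseteq\mathcal O_L$, and Lemma~\ref{lem:p-val gamma} (applied with $L$, $\mathcal O_L$ in place of $K$, $\mathcal O$) gives $p\notin\mathcal O_L^\times$ and $\gamma(f)\in\mathcal O_L$ for all $f\in L$. Hence $R\subseteq\mathcal O_L$, and if $p\in R^\times$ we would get $p^{-1}\in R\subseteq\mathcal O_L$, contradicting $p\notin\mathcal O_L^\times$. So $p\notin R^\times$.

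For the converse, assume $p\notin R^\times$. First I would dispose of a preliminary point, which I expect to be the main obstacle: one must check that $\wp(f)\neq\pm1$ for every $f\in L$, since otherwise the value $\gamma(f)=\infty$ would block the application of Lemma~\ref{lem:p-val gamma}. This is not automatic (such $f$ exists, for instance, in an unramified degree-$p$ extension of $\Q_p$), but if $\wp(f)=\pm1$ for some $f\in L$, then a direct computation with the Kochen operator on the subfield $\Q(f)$ of $L$ — a small special case of Kochen's theorem, cf.~\cite{PR} — shows that $p^{-1}\in\Z[\gamma(\Q(f))]\subseteq R$, contrary to hypothesis. Granting this, $\gamma$ maps $L$ into $L$ and $R$ is as above.

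The rest is the construction. Since $pR\neq R$, choose a maximal ideal $\fM$ of $R$ with $p\in\fM$, and set $A:=R_{\fM}$; this is a local subring of $L$ whose maximal ideal contains $p$. By Krull's theorem (see the discussion preceding Lemma~\ref{lem:existence of places}, applied with $L$ in place of $K$) there is a valuation ring $\mathcal O_L$ of $L$ lying over $A$. Then $p$ lies in the maximal ideal $\smallo_L$ of $\mathcal O_L$, so $p\notin\mathcal O_L^\times$, while $\gamma(f)\in R\subseteq A\subseteq\mathcal O_L$ for every $f\in L$; hence $\mathcal O_L$ is a $p$-valuation ring of $L$ by Lemma~\ref{lem:p-val gamma}. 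Finally I would verify that $\mathcal O_L$ lies over $\mathcal O$. Since $\mathcal O\subseteq A\subseteq\mathcal O_L$, the ring $\mathcal O_1:=\mathcal O_L\cap K$ is a valuation ring of $K$ containing $\mathcal O$, hence $\mathcal O_1$ is the valuation ring of the $\Delta_1$-coarsening of $(K,\mathcal O)$ for some convex subgroup $\Delta_1$ of the value group $\Gamma$ of $(K,\mathcal O)$. Now $p$ lies in $\smallo_L\cap K$, which is the maximal ideal of $\mathcal O_1$, so $p$ is not a unit of $\mathcal O_1$; but if $\Delta_1\neq\{0\}$ then $\Delta_1$ contains the least positive element $v(p)$ of $\Gamma$ (by convexity of $\Delta_1$), which forces $p\in\mathcal O_1^\times$ — a contradiction. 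Therefore $\Delta_1=\{0\}$, i.e., $\mathcal O_1=\mathcal O$, and $\mathcal O_L$ is a $p$-valuation ring of $L$ lying over $\mathcal O$, as desired.
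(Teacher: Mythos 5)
Your statement is one the paper itself does not prove: it is quoted from \cite[Theorem~6.4]{PR}, so your argument has to stand on its own. Its skeleton does stand, and is essentially the Prestel--Roquette proof: the forward direction via Lemma~\ref{lem:p-val gamma} is correct, and in the converse the choice of a maximal ideal $\fM$ of $R=\mathcal O[\gamma(L)]$ containing $p$, the localization $A=R_\fM$, the Krull--Chevalley domination by a valuation ring $\mathcal O_L$ of $L$, the application of Lemma~\ref{lem:p-val gamma}, and the use of the minimality of $v(p)$ in the value group of $(K,\mathcal O)$ to show $\mathcal O_L\cap K=\mathcal O$ are all sound.

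The genuine gap is exactly the point you yourself flag as the main obstacle and then dispose of by assertion: the possibility of some $f\in L$ with $\wp(f)=\pm1$. You claim that a ``direct computation \dots a small special case of Kochen's theorem, cf.~\cite{PR}'' yields $p^{-1}\in\Z[\gamma(\Q(f))]$. No computation is given, and the suggested justification is circular: the statement ``$F$ not formally $p$-adic $\Rightarrow p\in\Z_{(p)}[\gamma(F)]^\times$'' is, both in \cite{PR} and in this paper (Corollary~\ref{cor:6.4}, Theorem~\ref{thm:holomorphy}), a \emph{consequence} of Proposition~\ref{prop:6.4} applied to $\Q\subseteq F$; nothing available before the proposition gives it, and it is not an elementary identity. (The claim is presumably true, but its only proof at this stage is the proposition itself.) Fortunately the step can be replaced by a short non-circular argument, provided you run the construction \emph{first}: build $\mathcal O_L$ as you do, with valuation $v$, so that $v(p)>0$ and $\gamma(L)\subseteq R\subseteq\mathcal O_L$. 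Now suppose $e\in L$ with $\wp(e)=\varepsilon\in\{1,-1\}$. Comparing valuations in $\wp(e)=\varepsilon$ forces $v(e)=0$, so $e\in\mathcal O_L$. For an integer $k$ one has $\wp(e+k)=\varepsilon+ph_k$ with $h_k\in\Z[e]\subseteq\mathcal O_L$, and since $t\mapsto\wp(e+t)$ is a polynomial of degree $p$, each of the values $\pm1$ is attained for at most $p$ integers; choose $k\in\{0,\dots,2p\}$ with $\wp(f)\neq\pm1$ for $f:=e+k$. Then $v(\wp(f))=0$, while $\wp(f)^2-1=ph_k(2\varepsilon+ph_k)$ gives $v\big(\wp(f)^2-1\big)\geq v(p)>0$, so $v(\gamma(f))=v(\wp(f))-v(p)-v\big(\wp(f)^2-1\big)\leq-2v(p)<0$, contradicting $\gamma(f)\in\gamma(L)\subseteq\mathcal O_L$. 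Hence no such $e$ exists, all values $\gamma(f)$ ($f\in L$) are finite and lie in $\mathcal O_L$, and Lemma~\ref{lem:p-val gamma} together with your lying-over argument finishes the proof.
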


\noindent
Since $K$ is formally $p$-adic iff there is a $p$-valuation ring of $K$ which lies over the unique $p$-valuation ring 
$\Z_{(p)}$
of 
the subfield $\Q$ of $K$,
Proposition~\ref{prop:6.4}    implies:

\begin{cor}\label{cor:6.4}
 $K$ is formally $p$-adic $\Longleftrightarrow$~$p\notin \Z_{(p)}[\gamma(K)]^\times$.
\end{cor}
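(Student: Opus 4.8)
The plan is to deduce this directly from Proposition~\ref{prop:6.4}, applied with the $p$-valued field $(\Q,\Z_{(p)})$ in the role of $(K,\mathcal O)$ and with $K$ itself in the role of $L$. This substitution is legitimate: $\Z_{(p)}$ is a valuation ring of $\Q$ with residue field $\F_p$ in which $p$ has smallest positive value, so $(\Q,\Z_{(p)})$ is a $p$-valued field; and since $\operatorname{char}K=0$ we have $\Q\subseteq K$, so $K$ is a field extension of $\Q$. With the convention $\gamma(R)=\{\gamma(f):f\in R\}\setminus\{\infty\}$ one has $\mathcal O[\gamma(L)]=\Z_{(p)}[\gamma(K)]$, and Proposition~\ref{prop:6.4} then reads: there is a $p$-valuation ring of $K$ lying over $\Z_{(p)}$ if and only if $p\notin\Z_{(p)}[\gamma(K)]^\times$.

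It remains to identify ``$K$ is formally $p$-adic'' with ``there is a $p$-valuation ring of $K$ lying over $\Z_{(p)}$''. The direction ``$\Leftarrow$'' is immediate from the definition, since a field carrying a $p$-valuation is by definition formally $p$-adic. For ``$\Rightarrow$'', suppose $\mathcal O$ is a $p$-valuation ring of $K$. Since the restriction of a $p$-valuation to a subfield is again a $p$-valuation, $\mathcal O\cap\Q$ is a $p$-valuation ring of $\Q$. But $\Z_{(p)}$ is the \emph{unique} $p$-valuation ring of $\Q$: the valuation rings of $\Q$ are $\Q$ itself and the localizations $\Z_{(q)}$ at primes $q$, and among these only $\Z_{(p)}$ has residue characteristic $p$. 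Hence $\mathcal O\cap\Q=\Z_{(p)}$, i.e., $\mathcal O$ lies over $\Z_{(p)}$. Chaining the two equivalences gives the corollary.

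I do not expect any genuine obstacle here: all the substance is carried by Proposition~\ref{prop:6.4}, and one need only verify its hypotheses for the pair $(\Q,\Z_{(p)})$ and invoke the standard fact that $\Z_{(p)}$ is the sole $p$-valuation ring of $\Q$.
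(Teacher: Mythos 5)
Your proposal is correct and matches the paper's own argument: the paper likewise deduces the corollary from Proposition~\ref{prop:6.4} applied to the extension $\Q\subseteq K$, using exactly the observation that $K$ is formally $p$-adic iff some $p$-valuation ring of $K$ lies over $\Z_{(p)}$, the unique $p$-valuation ring of $\Q$. Your filling-in of the details (restriction of a $p$-valuation to $\Q$, uniqueness of $\Z_{(p)}$) is the intended justification.
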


\noindent
The {\em $p$-adic Kochen ring}\/ of~$K$ is the subring 
$$\Lambda_K := \left\{ \frac{f}{1-pg}:f,g\in \Z[\gamma(K)],\ pg\neq 1 \right\}$$
of $K$. 
It is easy to see that
$$\Lambda_K = \left\{ \frac{f}{1-pg}:f,g\in \Z_{(p)}[\gamma(K)],\ pg\neq 1 \right\}.$$
If  $K$ is formally $p$-adic then by the previous corollary we can omit here the condition~``$pg\neq 1$''.
The next result is shown in \cite[Theorem~6.14]{PR}:

\begin{theorem}\label{thm:holomorphy}
$\Lambda_K = \bigcap \big\{\mathcal O: \text{$\mathcal O$ is a $p$-valuation ring of $K$} \big\}$.
\end{theorem}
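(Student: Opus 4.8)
The plan is to prove the two inclusions separately. We may assume $K$ is formally $p$-adic, the remaining case being degenerate: if $K$ is not formally $p$-adic, then by Corollary~\ref{cor:6.4} $p$ is a unit in $R:=\Z_{(p)}[\gamma(K)]$, so $\{1-pg:g\in R\}=R$, whence $\Lambda_K=\Frac(R)$, which equals $K$ (a separate verification), while $\bigcap\emptyset=K$ as well. For the inclusion ``$\subseteq$'', let $\mathcal O$ be any $p$-valuation ring of $K$, with maximal ideal $\smallo$. By Lemma~\ref{lem:p-val gamma}, $\gamma(K)\subseteq\mathcal O$, hence $\Z[\gamma(K)]\subseteq\mathcal O$; and $p\in\smallo$, so $1-pg\in 1+\smallo\subseteq\mathcal O^\times$ for each $g\in\Z[\gamma(K)]$. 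Therefore $f/(1-pg)\in\mathcal O$ whenever $f,g\in\Z[\gamma(K)]$, i.e.\ $\Lambda_K\subseteq\mathcal O$; intersecting over all such $\mathcal O$ gives ``$\subseteq$''.

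For ``$\supseteq$'' I would argue contrapositively: given $a\in K\setminus\Lambda_K$ (necessarily $a\neq0$, as $0\in\Lambda_K$), I would construct a $p$-valuation ring of $K$ not containing $a$. First I record the elementary fact that $1-p\lambda\in\Lambda_K^\times$ for every $\lambda\in\Lambda_K$: writing $\lambda=f/(1-pg)$ with $f,g\in R$, one has $1-p\lambda=(1-p(f+g))/(1-pg)$, and $1-p(f+g)\neq0$ because $p\notin R^\times$ by Corollary~\ref{cor:6.4}, so $(1-p\lambda)^{-1}=(1-pg)/(1-p(f+g))\in\Lambda_K$. The one substantial input I would then invoke is that $\Lambda_K$ is integrally closed in $K$ --- more precisely, that for a formally $p$-adic $K$ the Kochen ring $\Lambda_K$ is a Pr\"ufer domain with fraction field $K$. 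Granting this, set $B:=\Lambda_K[a^{-1}]\subseteq K$ and consider the ideal $I:=pB+a^{-1}B$ of $B$. If $I=B$, write $1=p\beta+a^{-1}\beta'$ with $\beta,\beta'\in B$, express $\beta,\beta'$ as polynomials in $a^{-1}$ of degree $\leq N$ over $\Lambda_K$, multiply through by $a^{N+1}$, and move the (unique) term in $a^{N+1}$ to the left: this reads $(1-p\lambda_0)\,a^{N+1}=$ (a $\Lambda_K$-linear combination of $1,a,\dots,a^{N}$), where $\lambda_0\in\Lambda_K$ is the constant term of $\beta$. Dividing by the unit $1-p\lambda_0$ exhibits $a$ as integral over $\Lambda_K$, so $a\in\Lambda_K$, a contradiction; hence $I\subsetneq B$.

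Since $I$ is a proper ideal of $B$, it lies in some maximal ideal $\fm$ of $B$, and by Zorn's lemma together with Krull's theorem (recalled just before Lemma~\ref{lem:existence of places}) there is a valuation ring $\mathcal O$ of $K$ lying over the local ring $B_{\fm}$, so that $\smallo\cap B_{\fm}=\fm B_{\fm}$. Then $\gamma(K)\subseteq\Lambda_K\subseteq\mathcal O$, while $p\in I\subseteq\fm B_{\fm}\subseteq\smallo$, so $p\notin\mathcal O^\times$; by Lemma~\ref{lem:p-val gamma}, $\mathcal O$ is a $p$-valuation ring of $K$. Finally $a^{-1}\in\fm B_{\fm}\subseteq\smallo$ forces $a\notin\mathcal O$, as required. (Alternatively, once $\Lambda_K$ is known to be Pr\"ufer with $\Frac(\Lambda_K)=K$, one can skip $B$ entirely: then $\Lambda_K=\bigcap_{\fm}(\Lambda_K)_{\fm}$ over the maximal ideals $\fm$ of $\Lambda_K$, and each $(\Lambda_K)_{\fm}$ is a valuation ring of $K$ containing $\gamma(K)$ in which $p$ is a non-unit --- since $p$ lies in the Jacobson radical of $\Lambda_K$, by the displayed unit fact --- hence a $p$-valuation ring of $K$ by Lemma~\ref{lem:p-val gamma}; so $\bigcap\{p\text{-valuation rings of }K\}\subseteq\bigcap_{\fm}(\Lambda_K)_{\fm}=\Lambda_K$.)

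The main obstacle is precisely the fact quoted without proof above: that $\Lambda_K$ is integrally closed in $K$ (equivalently, Pr\"ufer). This is where the genuine arithmetic of the Kochen operator is unavoidable --- one must establish identities for $\gamma$, such as $1+4p^2\gamma(f)^2$ being a square in $K$ and, more generally, relations exhibiting suitable rational functions of a given $a\in K$ as ratios of elements of $\Z[\gamma(K)]$, and deduce from them that every finitely generated ideal of $\Lambda_K$ is invertible. Everything else in the argument is bookkeeping with the definition of $\Lambda_K$ and the results already assembled in the excerpt (Lemma~\ref{lem:p-val gamma}, Corollary~\ref{cor:6.4}, and Krull's existence theorem).
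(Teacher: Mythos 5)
The paper gives no proof of this theorem: it is imported verbatim from Prestel--Roquette (\cite[Theorem~6.14]{PR}), so there is nothing internal to compare against. Your easy inclusion $\Lambda_K\subseteq\mathcal O$ is correct, and the skeleton of the hard inclusion is the standard one and is executed correctly as far as it goes: the observation that $1-p\lambda\in\Lambda_K^\times$ for $\lambda\in\Lambda_K$ (hence $p\in\operatorname{Jac}(\Lambda_K)$), the computation showing that $pB+a^{-1}B=B$ would exhibit $a$ as a root of a monic polynomial over $\Lambda_K$, and the appeal to Chevalley's extension theorem to produce a valuation ring of $K$ dominating $B_{\fm}$, which is then a $p$-valuation ring by Lemma~\ref{lem:p-val gamma} and excludes $a$ because $a^{-1}$ lies in its maximal ideal.

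The problem is the input you grant yourself, namely that $\Lambda_K$ is integrally closed in $K$ (equivalently, Pr\"ufer with fraction field $K$). This is not a quotable side fact; it is essentially the content of the theorem. In the source the paper cites, the Pr\"ufer and B\'ezout properties of $\Lambda_K$ are \cite[Corollary~6.16, Theorem~6.17]{PR}, deduced \emph{from} Theorem~6.14 via the general principle that an intersection of valuation rings of $K$ is integrally closed in $K$ --- the paper itself says as much in the sentence following the theorem. So, relative to the available literature, your argument is circular: to break the circle you would have to prove integral closedness of $\Lambda_K$ directly from the presentation $\big\{f/(1-pg)\big\}$, and that is precisely where the arithmetic identities of the Kochen operator (the $p$-adic analogue of $x\mapsto x/(1+x^2)$ being ``bounded'') must be deployed. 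You correctly identify this as the obstacle but do not clear it, so the proof is incomplete at its crux. The same caveat applies to your degenerate case: the equality $\Lambda_K=K$ for $K$ not formally $p$-adic is itself normally obtained as a consequence of the theorem (empty intersection), and the ``separate verification'' that $\Frac\big(\Z_{(p)}[\gamma(K)]\big)=K$ is not supplied and is not obvious from $p\in\Z_{(p)}[\gamma(K)]^\times$ alone.
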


\noindent
By Theorem~\ref{thm:holomorphy} and the  proof of Lemma~\ref{lem:p-euclidean}, with $\mathcal O_0$ as in Lemma~\ref{lem:def val}, we have~$\mathcal O_0=\Lambda_K$ whenever $K$ is $p$-euclidean. Also,
$K$ is formally $p$-adic iff $\Lambda_K\neq K$;
moreover:

\begin{cor}\label{cor:formally p-adic p nonunit}
$K$ is   formally $p$-adic $\Longleftrightarrow$  $p\notin\Lambda_K^\times$.
\end{cor}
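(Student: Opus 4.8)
The plan is to prove the two implications separately, using Theorem~\ref{thm:holomorphy} for the forward direction and Corollary~\ref{cor:6.4} for the converse. The preliminary remark, which I would record first, is that $p\in\Lambda_K$ in all cases: taking $f=p$ and $g=0$ in the defining expression $f/(1-pg)$ exhibits $p$ as an element of $\Lambda_K$. Consequently the condition ``$p\notin\Lambda_K^\times$'' is equivalent to ``$p^{-1}\notin\Lambda_K$'', which is the form I will actually verify.

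For the direction ``$K$ formally $p$-adic $\Rightarrow p\notin\Lambda_K^\times$'', I would fix a $p$-valuation ring $\mathcal O$ of $K$, which exists precisely because $K$ is formally $p$-adic. Since a $p$-valuation has $p$ as an element of smallest positive value, $p\notin\mathcal O^\times$ (this is also part of Lemma~\ref{lem:p-val gamma}), hence $p^{-1}\notin\mathcal O$. Theorem~\ref{thm:holomorphy} gives $\Lambda_K\subseteq\mathcal O$, so $p^{-1}\notin\Lambda_K$, i.e.\ $p\notin\Lambda_K^\times$. For the converse I would argue contrapositively: if $K$ is not formally $p$-adic, then Corollary~\ref{cor:6.4} yields $p\in\Z_{(p)}[\gamma(K)]^\times$, so $p^{-1}\in\Z_{(p)}[\gamma(K)]\subseteq\Lambda_K$ (using once more the defining description of $\Lambda_K$ with $g=0$); combined with $p\in\Lambda_K$ this gives $p\in\Lambda_K^\times$. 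Alternatively, the converse can be read off the equivalence ``$K$ is formally $p$-adic iff $\Lambda_K\neq K$'' already recorded just above the statement: if $p\notin\Lambda_K^\times$ then, since $p\in\Lambda_K$, the ring $\Lambda_K$ has a nonzero non-unit, so $\Lambda_K\neq K$ and $K$ is formally $p$-adic.

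I expect no genuine obstacle here, since the statement is essentially a repackaging of Theorem~\ref{thm:holomorphy} (together with Corollary~\ref{cor:6.4}). The only points deserving a moment's care are that $p$ really does lie in $\Lambda_K$ — so that being a unit of $\Lambda_K$ is literally ``$p^{-1}\in\Lambda_K$'' — and that membership in $\Lambda_K^\times$ is a property of the ring $\Lambda_K$ itself rather than of some overring; both are disposed of by unwinding the definition of $\Lambda_K$, as indicated above.
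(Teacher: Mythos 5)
Your proof is correct, and the forward direction takes a genuinely different route from the paper's. The paper argues by contradiction with a two-line computation: if $p\in\Lambda_K^\times$, write $p^{-1}=f/(1-pg)$ with $f,g\in\Z[\gamma(K)]$, clear denominators to get $p^{-1}=f+g\in\Z[\gamma(K)]$, and contradict Corollary~\ref{cor:6.4}. You instead invoke Theorem~\ref{thm:holomorphy}: $\Lambda_K$ is contained in any $p$-valuation ring $\mathcal O$, and $p\notin\mathcal O^\times$, so $p^{-1}\notin\Lambda_K$. Both are sound; the paper's computation stays entirely at the level of Corollary~\ref{cor:6.4} (the same external input it used to define $\Lambda_K$ without the side condition $pg\neq 1$), while yours leans on the deeper holomorphy-ring characterization but makes the conceptual point more transparent — $p$ is a non-unit in every $p$-valuation ring, hence in their intersection. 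For the converse, the paper's proof environment is silent (it follows from the remark ``$K$ is formally $p$-adic iff $\Lambda_K\neq K$'' stated just above), whereas you spell it out via Corollary~\ref{cor:6.4} and also note the shortcut through that remark; your observation that $p\in\Lambda_K$ (take $f=p$, $g=0$), so that ``$p\in\Lambda_K^\times$'' really means ``$p^{-1}\in\Lambda_K$'', is exactly the small point worth making explicit and is implicitly used by the paper as well.
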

\begin{proof}
Suppose $K$ is formally $p$-adic, and towards a contradiction assume $p\in\Lambda_K^\times$. 
Take $f,g\in\Z[\gamma(K)]$ with  $p^{-1}=f/(1-pg)$; then $p^{-1}=f+g\in\Z[\gamma(K)]$, contradicting
Corollary~\ref{cor:6.4}.
\end{proof}

\noindent
Theorem~\ref{thm:holomorphy} implies that $\Lambda_K$ is a Pr\"ufer domain (every localization of $\Lambda_K$ at a maximal ideal
of $\Lambda_K$ is a valuation ring of $K$); see \cite[Corollary~6.16]{PR}.
In fact, $\Lambda_K$ is even a B\'ezout domain (every finitely generated ideal of $\Lambda_K$ is principal) with fraction field $K$; cf.
 \cite[Theorem~6.17]{PR}. Hence every subring of $K$ containing $\Lambda_K$ is also a B\'ezout domain. 
In particular, given a subring $R$ of $K$, the subring
$$\Lambda_KR := \left\{ \frac{f}{1-pg}:f\in R[\gamma(K)],\ g\in \Z[\gamma(K)] , \ pg\neq 1 \right\}.$$ 
  of $K$ generated by~$\Lambda_K$ and~$R$ is a B\'ezout domain. (We will not use this fact below.)
  
\subsection{$p$-adic ideals and $p$-adic radical}
{\it In this subsection we let  $R$ be an integral domain with fraction field $K$.}\/ 
We say that an ideal $I$ of $R$ is   {\em $p$-adic}\/  
if 
$$I = \sqrt{I\,  \Lambda_KR} \cap R,$$
that is, if $I$ contains each $r\in R$ such that $r^k\in I\,\Lambda_KR$ for some $k\geq 1$.
(This notion was introduced in \cite{Srhir01} and further studied in \cite{FS, Srhir03}.)
If~$I$ is $p$-adic, then~$I$ is radical. 
The trivial ideals $I=\{0\}$ and $I=R$ of $R$ clearly are   $p$-adic.
This notion of $p$-adic ideal is mainly of interest if $K$ is formally $p$-adic: 
otherwise we have $\Lambda_K=K$, hence~$R$ has no non-trivial  $p$-adic ideals.

\begin{lemma}\label{lem:p-adic down}
Let $R^*$ be an integral domain extending $R$  such that
$R^*$ is  faithfully flat over $R$, and 
let $I$ be an ideal of $R$ such that the ideal $IR^*$ of $R^*$ is   $p$-adic. Then~$I$ is $p$-adic.
\end{lemma}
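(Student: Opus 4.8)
The plan is to reduce the statement to the elementary fact that for a faithfully flat ring extension $R\subseteq R^*$ one has $IR^*\cap R=I$, after checking that the Kochen ring of $R$ embeds into that of $R^*$.

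First I would set up notation and compare the relevant Kochen rings. Put $K^*:=\Frac(R^*)$. Since $R^*$ is an integral domain extending $R$, the inclusion $R\hookrightarrow R^*$ extends uniquely to a field embedding $K=\Frac(R)\hookrightarrow K^*$, and I identify $K$ with a subfield of $K^*$ via this embedding. Because $\gamma$ is defined by a rational expression in the field operations (and $\gamma(f)=\infty$ iff $\wp(f)=\pm1$, a condition on $f$ alone), for $f\in K$ the element $\gamma(f)$ computed in $K^*$ agrees with the one computed in $K$; hence $\gamma(K)\subseteq\gamma(K^*)$ as subsets of $K^*$ (recall $\gamma(R):=\{\gamma(f):f\in R\}\setminus\{\infty\}$). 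Reading off the definitions, this gives $\Z[\gamma(K)]\subseteq\Z[\gamma(K^*)]$ and $R[\gamma(K)]\subseteq R^*[\gamma(K^*)]$, and therefore $\Lambda_K\subseteq\Lambda_{K^*}$ and $\Lambda_K R\subseteq\Lambda_{K^*}R^*$ as subrings of $K^*$. Consequently the ideal $I\Lambda_K R$ of $\Lambda_K R$ is contained in the ideal $(IR^*)\Lambda_{K^*}R^*$ of $\Lambda_{K^*}R^*$ (note $R^*\subseteq\Lambda_{K^*}R^*$, so the ideal of $\Lambda_{K^*}R^*$ generated by $IR^*$ is the same as the one generated by $I$).

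The main step is then immediate. The inclusion $I\subseteq\sqrt{I\Lambda_K R}\cap R$ is trivial, so let $r\in\sqrt{I\Lambda_K R}\cap R$; thus $r^k\in I\Lambda_K R$ for some $k\geq1$. By the previous paragraph $r^k\in(IR^*)\Lambda_{K^*}R^*$, hence $r\in\sqrt{(IR^*)\Lambda_{K^*}R^*}\cap R^*=IR^*$ because $IR^*$ is $p$-adic. Since $R^*$ is faithfully flat over $R$ we have $IR^*\cap R=I$, and as $r\in R$ this gives $r\in I$. Therefore $\sqrt{I\Lambda_K R}\cap R=I$, i.e.\ $I$ is $p$-adic.

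I do not anticipate any real obstacle. The only two points that require (mild) care are the comparison $\Lambda_K R\subseteq\Lambda_{K^*}R^*$, which is just a matter of unwinding the definitions using $K\subseteq K^*$, and the classical fact $IR^*\cap R=I$ for faithfully flat extensions (which follows, e.g., from the injectivity of $R/I\to R^*\otimes_R R/I=R^*/IR^*$). If one prefers, the argument can be phrased without ever introducing $K^*$: a relation $r^k=\sum_i a_i\lambda_i$ with $a_i\in I$ and $\lambda_i\in\Lambda_K R$ is in particular such a relation over $\Lambda_{K^*}R^*$, which is exactly what is used.
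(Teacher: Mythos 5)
Your proof is correct and follows essentially the same route as the paper: note $\Lambda_K\subseteq\Lambda_{K^*}$ (so $I\Lambda_KR\subseteq I\Lambda_{K^*}R^*$), use $p$-adicness of $IR^*$ to get $r\in IR^*$, and conclude $r\in I$ by the faithfully flat fact $IR^*\cap R=I$. You merely spell out the details (the comparison of Kochen rings and the flatness fact) that the paper handles with a one-line remark and a citation to Matsumura.
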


\begin{proof}
Let $f\in \sqrt{I\,  \Lambda_KR} \cap R$. This yields $k\geq 1$ with
$f^k\in I\,  \Lambda_KR$. Now with $K^*:=\Frac(R^*)$ we have $\Lambda_K\subseteq\Lambda_{K^*}$, hence
$f^k\in I\, \Lambda_{K^*}R^*$. Since $IR^*$ is $p$-adic we get~$f\in IR^*$ and thus $f\in I$ because $R^*$ is faithfully flat over $R$ \cite[Theorem~7.5(ii)]{Matsumura}.
\end{proof}

\begin{lemma}\label{lem:p-adic min primes}
If $R$ is noetherian, then all minimal prime ideals of~$R$ containing a given $p$-adic ideal of $R$ are $p$-adic.
\end{lemma}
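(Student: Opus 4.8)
The plan is to use localization at a minimal prime to linearize the problem. Fix a $p$-adic ideal $I$ of the Noetherian domain $R$, put $K:=\Frac(R)$, and let $P$ be a minimal prime of $R$ over $I$; we must show $P=\sqrt{P\,\Lambda_KR}\cap R$, that is, that every $r\in R$ with $r^k\in P\,\Lambda_KR$ for some $k\geq 1$ already lies in $P$ (the inclusion $P\subseteq\sqrt{P\,\Lambda_KR}\cap R$ being automatic). The key input I would extract first is that $P$ being a minimal prime over $I$ yields, by Noetherianity, an element $s\in R\setminus P$ and an $N\geq 1$ with $sP^N\subseteq I$: since $P$ is minimal over $I$ we have $PR_P=\sqrt{IR_P}$ in the Noetherian local ring $R_P$, so $(PR_P)^N\subseteq IR_P$ for some $N$; as $P^N$ is finitely generated, clearing the (finitely many) denominators that witness membership of the generators of $P^N$ in $IR_P$ produces such an $s$.

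Granting this, suppose $r\in R$ with $r^k\in P\,\Lambda_KR$. Raising to the $N$th power inside the ideal $P\,\Lambda_KR$ of $\Lambda_KR$, and using that $(P\,\Lambda_KR)^N=P^N\Lambda_KR$, gives $r^{kN}\in P^N\Lambda_KR$, and hence $sr^{kN}\in (sP^N)\Lambda_KR\subseteq I\,\Lambda_KR$. Since also $sr^{kN}\in R$, this shows $sr^{kN}\in\sqrt{I\,\Lambda_KR}\cap R$, which equals $I$ because $I$ is $p$-adic — this is the only place the hypothesis on $I$ enters. Thus $sr^{kN}\in I\subseteq P$, and since $P$ is prime and $s\notin P$ we get $r^{kN}\in P$ and therefore $r\in P$, as required.

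I do not expect a serious obstacle here: the one point that needs care is the reduction in the first paragraph — equivalently, the fact that $PR_P$ is nilpotent modulo $IR_P$ — and this is exactly where the Noetherian hypothesis is used; the rest is formal manipulation of extended ideals. It is perhaps worth remarking that the naive approach of mimicking the real-radical analogue recalled above, via a minimal prime decomposition $\sqrt I=P_1\cap\cdots\cap P_n$, stumbles on the failure of ideal extension to commute with finite intersections, which is precisely what passing through $R_P$ (or, as above, the single denominator $s$) avoids; note also that, in contrast with other parts of the paper, this argument makes no use of $\Lambda_KR$ being a B\'ezout or Pr\"ufer domain — only of $R\subseteq\Lambda_KR\subseteq K$.
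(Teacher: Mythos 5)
Your proof is correct and follows essentially the same route as the paper: the paper deduces the statement from the general ring-theoretic Lemma~\ref{lem:rad min prime} (with $A=R$, $B=\Lambda_KR$), which it leaves unproved as "easy," and your argument — localize at $P$ to get $s\in R\setminus P$ and $N$ with $sP^N\subseteq I$, then push $r^{kN}$ into $I\Lambda_KR$ and use primality of $P$ — is precisely the standard proof of that lemma. Your closing observation that only $R\subseteq\Lambda_KR$ is used matches the generality in which the paper states its auxiliary lemma.
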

  
\noindent
This follows from an easy ring-theoretic lemma, applied to $A=R$, $B=\Lambda_KR$:

\begin{lemma}\label{lem:rad min prime}
Let $A\subseteq B$ be a ring extension   where $A$ is noetherian. Let $I$ be an ideal of $A$ such that $I=\sqrt{IB}\cap A$. Then for each minimal prime ideal $P$ of $A$ containing $I$ we also have $P=\sqrt{PB}\cap A$.
\end{lemma}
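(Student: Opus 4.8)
The plan is to reduce the statement to a routine localization argument, exploiting that in a noetherian ring the minimal primes over an ideal are finitely many and can be separated by elements of $A$. First I would fix a minimal prime $P$ of $A$ containing $I$; the inclusion $P \subseteq \sqrt{PB}\cap A$ is trivial, so the content is the reverse inclusion $\sqrt{PB}\cap A \subseteq P$. Let $P = P_1, P_2, \dots, P_r$ be the (finitely many, by noetherianity) minimal primes of $A$ over $I$. Since $P$ is minimal, for each $j \geq 2$ we may pick $s_j \in P_j \setminus P$, and then $s := s_2 \cdots s_r$ lies in $P_2 \cap \cdots \cap P_r$ but $s \notin P$ because $P$ is prime. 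Note $\sqrt{I} = P_1 \cap \cdots \cap P_r$, so $sP \subseteq P_1 \cap \cdots \cap P_r = \sqrt I$, and hence some power $(sP)^{[n]}$—more precisely, for each $a \in P$ there is $n$ with $(sa)^n \in I$—lies in $I$.

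Now suppose $f \in A$ with $f^k \in PB$ for some $k \geq 1$. I would argue that $sf \in \sqrt{I B}$: writing $f^k = \sum p_i b_i$ with $p_i \in P$, $b_i \in B$, one gets $s^k f^k = \sum (s^k p_i) b_i$, and each $s^k p_i \in P \cap (sB\text{-stuff})$; more directly, since $sP \subseteq \sqrt I$, for suitable exponents each $s p_i$ satisfies $(s p_i)^{m} \in I$, so a high enough power of $sf$ lies in the ideal generated by products of such elements with elements of $B$, i.e.\ in $IB$. (The cleanest route: $s f^k \in sPB \subseteq \sqrt{I}\,B \subseteq \sqrt{IB}$, hence $(sf)^N \in IB$ for some $N$, using that $\sqrt I$ is generated by finitely many elements each of which has a power in $I$.) Therefore $sf \in \sqrt{IB} \cap A = I \subseteq P$. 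Since $s \notin P$ and $P$ is prime, this forces $f \in P$, which is exactly $\sqrt{PB} \cap A \subseteq P$, completing the proof.

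The main obstacle—really the only point requiring care—is the bookkeeping with exponents in the step $s f^k \in sPB \subseteq \sqrt{IB}$: one must be sure that multiplying the representation $f^k = \sum p_i b_i$ by an appropriate power of $s$ (using that $sp_i \in \sqrt I$ for all $i$, or more safely $s \in \bigcap_{j\geq 2} P_j$ together with $f^k \in PB$, so $s f^k \in (P \cap P_2 \cap \cdots \cap P_r)B = \sqrt I\, B$) genuinely lands in $\sqrt{IB}$ rather than merely in $\sqrt{I}B$. This is handled by observing $\sqrt I\, B \subseteq \sqrt{IB}$: if $x \in \sqrt I$ then $x^m \in I$ for some $m$, so $(xb)^m = x^m b^m \in IB$ for any $b \in B$, hence $xb \in \sqrt{IB}$, and $\sqrt{IB}$ is an ideal. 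With that inclusion in hand the rest is formal. Finally, the application to Lemma \ref{lem:p-adic min primes} is immediate: take $A = R$ noetherian, $B = \Lambda_K R$, and $I$ a $p$-adic ideal of $R$, so $I = \sqrt{I\,\Lambda_K R}\cap R$; the lemma then yields $P = \sqrt{P\,\Lambda_K R}\cap R$ for each minimal prime $P$ of $R$ over $I$, which is precisely the assertion that $P$ is $p$-adic.
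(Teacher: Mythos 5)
Your proof is correct; note that the paper itself offers no argument for this lemma (it is dismissed as ``an easy ring-theoretic lemma''), so there is no proof to compare against, and your write-up in effect supplies the missing details. The route you take is the natural one: separate the fixed minimal prime $P=P_1$ from the other minimal primes $P_2,\dots,P_r$ of $I$ by an element $s\in (P_2\cap\cdots\cap P_r)\setminus P$, observe $sP\subseteq P_1\cap\cdots\cap P_r=\sqrt{I}$ and $\sqrt{I}\,B\subseteq\sqrt{IB}$, and then feed $sf^k\in sPB\subseteq\sqrt{IB}$ into the hypothesis $I=\sqrt{IB}\cap A$ to get $sf\in I\subseteq P$, whence $f\in P$ by primality. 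The only step you leave slightly informal is passing from $sf^k\in\sqrt{IB}$ to $sf\in\sqrt{IB}$, but this is immediate: if $(sf^k)^N\in IB$ then $(sf)^{kN}=s^{(k-1)N}\,(s^Nf^{kN})\in IB$, so the bookkeeping you flag as the main obstacle does close. (An equally standard alternative would be to use that $P$ is minimal over $I$ to get $s\notin P$ and $n$ with $sP^n\subseteq I$, via the noetherian local ring $A_P/IA_P$; both arguments use noetherianity only to produce the multiplier $s$.) Your final sentence also matches exactly how the paper applies the lemma, with $A=R$ and $B=\Lambda_K R$, to obtain Lemma~\ref{lem:p-adic min primes}.
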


\noindent
In the next lemma and its corollary we assume that  $R$ is  local with maximal ideal~$\fm$ and residue field~$\k:=R/\fm$ of characteristic zero,
and we let $r\mapsto \bar{r}\colon R\to \k$ be the residue morphism.  

\begin{lemma}
Let $\alpha\in\gamma(\k)$; then there is some $a\in\gamma(R)\cap R$ such that $\alpha=\overline{a}$.
\end{lemma}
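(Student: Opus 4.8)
The plan is to lift directly: write $\alpha$ as the $\gamma$-image of an element of $\k$, lift that element to $R$, and check that applying $\gamma$ in $R$ already produces the required $a$. First I would write $\alpha=\gamma(\bar f)$ for some $\bar f\in\k$; since $\alpha\neq\infty$, this means $\wp(\bar f)\neq\pm 1$, i.e.\ $\wp(\bar f)^2-1\neq 0$ in $\k$. Using surjectivity of the residue morphism $R\to\k$, pick $c\in R$ with $\bar c=\bar f$.

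The one point that needs care is to check that $\gamma(c)$ actually lies in $R$, and this is exactly where the hypotheses enter. Since $\operatorname{char}\k=0$, every integer $n\geq 1$ satisfies $\bar n\neq 0$, hence $n\notin\fm$, hence $n\in R^\times$ as $R$ is local; in particular $p$ is invertible in $R$. Moreover $\wp(c)^2-1\in R$ has residue $\wp(\bar f)^2-1\neq 0$, so $\wp(c)^2-1\notin\fm$ and is therefore a unit of $R$ as well. Consequently
$$a:=\gamma(c)=\frac{1}{p}\cdot\frac{\wp(c)}{\wp(c)^2-1}$$
is a well-defined element of $R$, and no value $\infty$ occurs since $\wp(c)^2-1\neq 0$; thus $a\in\gamma(R)\cap R$.

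It then remains to apply the ring homomorphism $R\to\k$ to this formula. It sends $p^{-1}\mapsto p^{-1}$ and $(\wp(c)^2-1)^{-1}\mapsto(\wp(\bar c)^2-1)^{-1}$, so $\bar a=\gamma(\bar c)=\gamma(\bar f)=\alpha$, which is what we want. I do not expect a genuine obstacle here: the argument is a routine lifting, and the only subtlety is the verification that $\gamma(c)\in R$, which relies on both $p\in R^\times$ (a consequence of $\operatorname{char}\k=0$) and $\wp(c)^2-1\in R^\times$ (a consequence of $\alpha\neq\infty$).
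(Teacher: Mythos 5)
Your proof is correct and follows essentially the same route as the paper: lift a $\gamma$-preimage of $\alpha$ to $c\in R$, observe that $p$ and $\wp(c)^2-1$ are units of the local ring $R$ because their residues are nonzero (using $\operatorname{char}\k=0$ and $\alpha\neq\infty$), and conclude that $\gamma(c)\in R$ with residue $\alpha$. The paper phrases the last step slightly differently (it also lifts $\alpha$ to some $a\in R$ and checks $a-\gamma(c)\in\fm$), but this is only a cosmetic variation of your direct computation with the residue morphism.
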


\begin{proof}
Take $\beta\in\k$ such that $\alpha=\gamma(\beta)$, so 
$$\alpha p  (\delta^2-1) = \delta \qquad\text{where $\delta:=\wp(\beta)\in\k\setminus\{\pm 1\}$.}$$
Next let $a,b\in R$ with $\overline{a}=\alpha$, $\overline{b}=\beta$. Then $\overline{d}=\delta$ for $d:=\wp(b)$ and so $d\neq\pm 1$, and
$ap(d^2-1)-d\in\fm$. Now $p(d^2-1)\in R^\times$ and thus $a-\gamma(b)=a- \frac{d}{p(d^2-1)}\in\fm$. Hence $\gamma(b)\in R$ and $\alpha=\overline{\gamma(b)}$.
\end{proof}

\begin{cor} \label{cor:local p-adic}
Suppose $K$ is formally $p$-adic. If $\fm$ is $p$-adic, then $\k$ is formally $p$-adic.
\end{cor}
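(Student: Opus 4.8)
The plan is to prove the contrapositive: granting that $K$ is formally $p$-adic, I will show that if $\k$ is \emph{not} formally $p$-adic, then $\fm$ is not a $p$-adic ideal of $R$. Since $\fm$ is maximal in $R$ and $\sqrt{\fm\Lambda_KR}\cap R$ is an ideal of $R$ containing $\fm$, it is enough to produce one element of $\sqrt{\fm\Lambda_KR}$ lying outside $\fm$; concretely I will aim to show $1\in\fm\Lambda_KR$, whence $\sqrt{\fm\Lambda_KR}\cap R=R\neq\fm$ and $\fm$ fails to be $p$-adic.

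Starting from the assumption that $\k$ is not formally $p$-adic, the first step is to apply Corollary~\ref{cor:6.4} to the characteristic-zero field $\k$, which yields $p^{-1}\in\Z_{(p)}[\gamma(\k)]$; so I may write $p^{-1}=P\big(\gamma(\beta_1),\dots,\gamma(\beta_s)\big)$ for suitable $\beta_1,\dots,\beta_s\in\k$ and a polynomial $P$ over $\Z_{(p)}$. Next I lift this identity into $R$: by the lemma immediately preceding the corollary, each $\gamma(\beta_i)$ has the form $\overline{a_i}$ with $a_i\in\gamma(R)\cap R$, and hence $a_i\in\gamma(K)$ since $R\subseteq K$. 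Since $\operatorname{char}\k=0$, every non-zero integer lies in $R\setminus\fm=R^\times$, so $\Z_{(p)}\subseteq R$ and the residue morphism fixes $\Z_{(p)}$ pointwise. Therefore $c:=P(a_1,\dots,a_s)$ lies in $R$ \emph{and} in $\Z_{(p)}[\gamma(K)]\subseteq\Lambda_K$, and it satisfies $\overline{c}=P\big(\gamma(\beta_1),\dots,\gamma(\beta_s)\big)=p^{-1}$, so $1-pc\in\fm$.

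To finish I convert $1-pc\in\fm$ into $1\in\fm\Lambda_KR$. First note $pc\neq 1$: otherwise $p^{-1}=c\in\Z_{(p)}[\gamma(K)]$, contradicting, via Corollary~\ref{cor:6.4}, that $K$ is formally $p$-adic. Hence $d:=1/(1-pc)$ is defined, and it lies in $\Lambda_K\subseteq\Lambda_KR$ by the description of the Kochen ring (take $f=1$, $g=c$ there). Then $1=(1-pc)\,d$ exhibits $1$ as a product of an element of $\fm$ and an element of $\Lambda_KR$, so $1\in\fm\Lambda_KR$, as desired.

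The one genuinely delicate point is the construction of $c$: I need the lift of $p^{-1}$ from $\k$ to be simultaneously an element of $R$ — so that its residue is literally $p^{-1}$ — and an element of the Kochen ring $\Lambda_K$ — so that $1/(1-pc)$ belongs to $\Lambda_K$ and therefore $1-pc$ generates the unit ideal of $\Lambda_KR$. The lemma preceding the corollary is precisely what makes this possible, by arranging that the lifts $a_i$ are themselves $\gamma$-values; the hypothesis $\operatorname{char}\k=0$ enters only to secure $\Z_{(p)}\subseteq R$, so that the coefficients of $P$ cause no trouble. Everything else is bookkeeping.
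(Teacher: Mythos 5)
Your proof is correct and follows essentially the same route as the paper's: argue the contrapositive, lift an expression of $p^{-1}$ over $\gamma(\k)$ to an element of $\Z_{(p)}[\gamma(K)]\cap R$ via the preceding lemma, and conclude that $1-pc\in\fm$ has inverse in $\Lambda_K$, whence $1\in\fm\Lambda_KR$ and $\fm$ is not $p$-adic. The only (cosmetic) difference is that you invoke Corollary~\ref{cor:6.4} with $\Z_{(p)}$-coefficients where the paper uses Corollary~\ref{cor:formally p-adic p nonunit} and $\Z[\gamma(\k)]$, lifting two elements $\alpha,\beta$ instead of your single $c$.
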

\begin{proof}
Suppose $\k$ is not formally $p$-adic. 
Corollary~\ref{cor:formally p-adic p nonunit}
then yields~$\alpha,\beta\in \Z[\gamma(\k)]$,
$p\beta\neq 1$, such that
$$p^{-1}=\frac{\alpha}{1-p\beta}\quad\text{and hence}\quad 1-p(\alpha+\beta)=0.$$
The  lemma above yields $a,b\in \Z[\gamma(K)] \cap R$ with $\alpha=\bar{a}$, $\beta=\bar{b}$.
We have $1\neq p(a+b)$ since $K$ is formally $p$-adic.
Thus~$1-p(a+b)\in \fm\cap\Lambda_K^\times$ and so $1\in \fm \Lambda_K R\cap R$.  
Hence $\fm$ is not $p$-adic. \end{proof}
 
\noindent
Next we let $P$ be a prime ideal of $R$ and $F$ be the fraction field of $R/P$.
We let 
$$R_P:=\big\{r/s:r,s\in R,\ s\notin P\big\}\subseteq K$$ be the localization of $R$ at $P$.
The residue morphism $R\to R/P$ extends uniquely to a surjective ring morphism $R_P\to F$ with kernel $PR_P$.

\begin{lemma}\label{lem:prime p-adic}
$P$ is $p$-adic iff the maximal ideal $PR_P$ of $R_P$ is also $p$-adic.
\end{lemma}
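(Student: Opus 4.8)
The plan is to exploit that $R_P$ is a localization of the domain $R$ and hence has the same fraction field $K$; consequently the Kochen ring attached to $R_P$ is again $\Lambda_K$, and the defining condition for ``$p$-adic ideal'' applied to $P\subseteq R$ and applied to $PR_P\subseteq R_P$ will differ only by localizing at the multiplicative set $S:=R\setminus P$.

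Concretely, I would set $B:=\Lambda_K R$ and first check, working inside the ambient domain $K$, that $\Lambda_K R_P=B_S$: the subring of $K$ generated by $\Lambda_K$ and $R_P$ equals $(\Lambda_K R)[1/s:s\in S]=B_S$. Since $R_P\subseteq B_S$ one has $PR_P\cdot\Lambda_K R_P=PB_S=(PB)_S$, and as forming radicals commutes with localization, $\sqrt{PR_P\cdot\Lambda_K R_P}=(\sqrt{PB})_S$. The one computational ingredient is the elementary identity $I_S\cap R_P=(I\cap R)_S$, valid for every ideal $I$ of $B$ (the inclusion $\supseteq$ is immediate; for $\subseteq$, if $b/s=r/s'$ in $K$ with $b\in I$, $r\in R$, $s,s'\in S$, then $rs=bs'\in I\cap R$ and $b/s=(rs)/(ss')\in(I\cap R)_S$). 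Applying this with $I:=\sqrt{PB}$ and writing $J:=\sqrt{PB}\cap R$---an ideal of $R$ containing $P$, so that $P$ is $p$-adic exactly when $P=J$---we get $\sqrt{PR_P\cdot\Lambda_K R_P}\cap R_P=(\sqrt{PB})_S\cap R_P=JR_P$. Hence $PR_P$ is $p$-adic precisely when $PR_P=JR_P$.

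It then remains to observe that, for a prime ideal $P\subseteq J$ of $R$, one has $P=J$ if and only if $PR_P=JR_P$: one direction is trivial, and conversely, if $x\in J\subseteq JR_P=PR_P$ then $sx\in P$ for some $s\in R\setminus P$, whence $x\in P$ by primality of $P$. Chaining the two equivalences gives the lemma. I anticipate no genuine obstacle; the only point needing care is keeping an ideal of $B$, its localization in $B_S$, and its contraction to $R_P$ apart---which becomes transparent once everything is regarded as sitting inside $K$, where all these localizations are literal rings of fractions.
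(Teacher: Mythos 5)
Your proof is correct and rests on the same mechanism as the paper's: clearing denominators to pass between $P\,\Lambda_KR_P$ and $P\,\Lambda_KR$, and using primality of $P$ (via $PR_P\cap R=P$) to descend at the end. The paper carries this out as a direct two-way element chase, while you package it as the identity $\sqrt{PR_P\cdot\Lambda_KR_P}\cap R_P=\bigl(\sqrt{P\,\Lambda_KR}\cap R\bigr)R_P$; the content is the same, and all your intermediate identities ($\Lambda_KR_P=(\Lambda_KR)_S$, radicals commuting with localization, $I_S\cap R_P=(I\cap R)_S$) check out.
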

\begin{proof}
Suppose $P$ is $p$-adic.
Let $f\in R_P$ and $k\geq 1$ be such that
$f^k\in P\Lambda_K R_P$; we need to show that~$f\in PR_P$. Multiplying $f$ by a suitable unit of $R_P$ we arrange~$f\in R$.
Take~$a\in R\setminus P$ such that $af^k\in P\Lambda_K R$.
Then~$af\in \sqrt{P\Lambda_KR}\cap R=P$, thus~$f\in PR_P$ as claimed.
Conversely, if $PR_P$ is $p$-adic, and   $f\in R$, $k\geq 1$ with~$f^k\in P\Lambda_K R$, then $f\in PR_P\cap R=P$, showing that
$P$ is $p$-adic.
\end{proof}

\noindent
Combining Corollary~\ref{cor:local p-adic} and Lemma~\ref{lem:prime p-adic} yields:

\begin{cor}\label{cor:P p-adic => F form p-adic}
If $K$ is formally $p$-adic and $P$ is $p$-adic, then  $F$ is formally $p$-adic.
\end{cor}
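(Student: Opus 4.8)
The plan is to reduce the statement to the local case, which is exactly Corollary~\ref{cor:local p-adic}, by passing from $R$ to the localization $R_P$. Recall that $R_P$ is a local ring with maximal ideal $PR_P$ and residue field $F=\Frac(R/P)$, and that its fraction field $\Frac(R_P)$ is again $K$, since $R\subseteq R_P\subseteq K$. In the setting where these notions are applied, $R/P$ (hence $F$) has characteristic zero, so $R_P$ falls under the standing hypotheses of Corollary~\ref{cor:local p-adic}: it is a local domain whose residue field has characteristic zero and whose fraction field $K$ is formally $p$-adic.

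First I would invoke Lemma~\ref{lem:prime p-adic}: since $P$ is a $p$-adic prime ideal of $R$, the maximal ideal $PR_P$ of $R_P$ is a $p$-adic ideal of $R_P$. Then I would apply Corollary~\ref{cor:local p-adic} with $R_P$, $PR_P$, and $F$ in place of $R$, $\fm$, and $\k$, respectively: since $\Frac(R_P)=K$ is formally $p$-adic and $PR_P$ is $p$-adic, the residue field $F$ of $R_P$ is formally $p$-adic. This is precisely the assertion of the corollary.

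There is no substantial obstacle here: the corollary is a formal consequence of the two cited results, and the only thing to check is that passing to $R_P$ genuinely transfers the hypotheses of Corollary~\ref{cor:local p-adic} --- namely that $R_P$ is local with residue field $F$, that $\Frac(R_P)=K$ remains formally $p$-adic, and that $F$ has characteristic zero --- all of which is immediate from the basic properties of localization at a prime ideal.
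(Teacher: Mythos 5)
Your proof is correct and is exactly the paper's argument: the paper obtains this corollary by ``combining'' Corollary~\ref{cor:local p-adic} and Lemma~\ref{lem:prime p-adic}, i.e., by localizing at $P$ and identifying the residue field of $R_P$ with $F$, precisely as you do. Your attention to the standing characteristic-zero hypothesis on the residue field in Corollary~\ref{cor:local p-adic} is warranted: that hypothesis is \emph{not} a formal consequence of ``$K$ formally $p$-adic and $P$ $p$-adic'' (for $R=\Z$, $K=\Q$, $P=p\Z$ one checks $P$ is $p$-adic and $\Q$ is formally $p$-adic, yet $F=\F_p$ is not), so it genuinely must be supplied by the ambient setting, as you say --- the paper itself passes over this point silently, and in its applications $R$ is always a $\k$-algebra over a field $\k$ of characteristic zero, so all residue fields do have characteristic zero.
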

 
\noindent
Here is a partial converse of the implication in
the previous corollary:

\begin{prop}\label{prop:p-adic prime ideals}
Suppose $R$ is regular and $F=\Frac(R/P)$ is formally $p$-adic. Then $K$ is formally $p$-adic and  $P$ is $p$-adic.
\end{prop}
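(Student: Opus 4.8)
The plan is to produce, from the hypothesis that $F=\Frac(R/P)$ is formally $p$-adic, a $p$-valuation ring of $K$ that witnesses both conclusions at once. First I would invoke Lemma~\ref{lem:existence of places}: since $R$ is regular and $K=\Frac(R)$, there is a valuation ring $\mathcal O$ of $K$ lying over the local ring $R_P$ (which is regular, being a localization of a regular ring) such that the inclusion $R_P\to\mathcal O$ induces an isomorphism $F\cong\res(\mathcal O)$ on residue fields. Now choose a $p$-valuation ring $\mathcal O_{F,p}$ of $F$, which exists because $F$ is formally $p$-adic; by Lemma~\ref{lem:lift val spec} (applied with $\k=\res(\mathcal O)=F$) the set $\mathcal O_p:=\{a\in\mathcal O:\bar a\in\mathcal O_{F,p}\}$ is a valuation ring of $K$, and it is a $p$-valuation ring: its residue field is $\res(\mathcal O_{F,p})=\F_p$, $p$ has minimal positive value because that holds already in $\mathcal O_{F,p}$, and one checks the value group is as required by Lemma~\ref{lem:lift val spec}. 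In particular $K$ carries a $p$-valuation, so $K$ is formally $p$-adic. This handles the first assertion.

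For the second assertion — that $P$ is $p$-adic — I would use Lemma~\ref{lem:prime p-adic} to reduce to showing that the maximal ideal $\fm:=PR_P$ of the local ring $R_P$ is $p$-adic, i.e.\ that $1\notin \fm\,\Lambda_K R_P$, equivalently (since $\fm$ is maximal and $\Lambda_K R_P$ is the relevant subring) that $\fm\,\Lambda_K R_P\neq\Lambda_K R_P$. The idea is that the $p$-valuation ring $\mathcal O_p$ constructed above contains $\Lambda_K$ (by Theorem~\ref{thm:holomorphy}, $\Lambda_K$ is the intersection of all $p$-valuation rings of $K$) and contains $R_P$ (since $\mathcal O_p\subseteq\mathcal O$ and $\mathcal O$ lies over $R_P$), hence $\mathcal O_p\supseteq\Lambda_K R_P$. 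Moreover $\fm=PR_P$ is carried into the maximal ideal of $\mathcal O_p$: an element $r/s$ with $r\in P$, $s\notin P$ has $\overline{r/s}=0$ in $F$, so $r/s\in\mathcal O$ reduces to $0$, hence lies in $\smallo_p$ by definition of $\mathcal O_p$. Therefore every element of $\fm\,\Lambda_K R_P$ lies in $\smallo_p\cdot\mathcal O_p\subseteq\smallo_p$, so $1\notin\fm\,\Lambda_K R_P$. Spelling this out carefully — tracking that elements of $\Lambda_K R_P$ really are of the stated fractional form $f/(1-pg)$ and land in $\mathcal O_p$, and that the product with $\fm$ lands in $\smallo_p$ — gives $\fm=\sqrt{\fm\,\Lambda_K R_P}\cap R_P$, since any $f\in R_P$ with some power in $\fm\,\Lambda_K R_P\subseteq\smallo_p$ has $f\in\smallo_p$, hence $\bar f=0$ in $F$, hence $f\in\fm$.

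The main obstacle I anticipate is the second step: making precise that $\fm\,\Lambda_K R_P$ consists of elements of positive $p$-valuation. One has to be slightly careful because $\Lambda_K R_P$ is defined as a set of fractions $f/(1-pg)$ with $f\in R_P[\gamma(K)]$, $g\in\Z[\gamma(K)]$, and one must check both that the denominators $1-pg$ are units in $\mathcal O_p$ (true since $pg\in\smallo_p$, as $p\in\smallo_p$ and $\gamma(K)\subseteq\mathcal O_p$ by Lemma~\ref{lem:p-val gamma}) and that $R_P[\gamma(K)]\subseteq\mathcal O_p$ (combine $R_P\subseteq\mathcal O_p$ with $\gamma(K)\subseteq\mathcal O_p$). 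Once these containments are in hand, the ideal-membership bookkeeping is routine. I would also remark that the first conclusion, $K$ formally $p$-adic, is in fact forced by the second via Corollary~\ref{cor:P p-adic => F form p-adic} read in reverse — but it is cleaner to extract it directly from the valuation ring $\mathcal O_p$ as above, and indeed $\mathcal O_p$ does double duty here.
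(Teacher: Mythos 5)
You construct the same objects as the paper does: the valuation ring $\mathcal O$ of $K$ lying over $R_P$ with residue field $F$ (Lemma~\ref{lem:existence of places}), and the composite ring $\mathcal O_p=\{a\in\mathcal O:\bar a\in\mathcal O_{F,p}\}$ (Lemma~\ref{lem:lift val spec}, Lemma~\ref{lem:p-val spec}); your first conclusion, that $K$ is formally $p$-adic, is correct. But the second half has a genuine gap. The claim that $\mathcal O_p$ contains $R_P$ ``since $\mathcal O_p\subseteq\mathcal O$ and $\mathcal O$ lies over $R_P$'' is a non sequitur, and it is in fact always false in this situation: since $F$ has characteristic zero we have $p\notin P$, so $1/p\in R_P$, while its residue $1/p\in F$ does not lie in $\mathcal O_{F,p}$, so $1/p\notin\mathcal O_p$. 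Hence $\Lambda_K R_P\not\subseteq\mathcal O_p$, and the key containment $\fm\,\Lambda_K R_P\subseteq\smallo_p$ is unjustified. The closing inference is also wrong on its own terms: $f\in\smallo_p$ only gives $\bar f\in\smallo_{F,p}$, not $\bar f=0$ in $F$ (take $f=p$: then $p\in\smallo_p$ but $\bar p=p\neq 0$), so even granting the containment you could not conclude $f\in\fm$.

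The repair is to do the ideal bookkeeping in the coarse ring $\mathcal O$ rather than in $\mathcal O_p$, which is exactly what the paper does. The ring $\mathcal O_p$ is needed only to see that $K$ is formally $p$-adic and that $\Lambda_K\subseteq\mathcal O_p\subseteq\mathcal O$ (Theorem~\ref{thm:holomorphy}). Since $\mathcal O$ lies over $R_P$ we have $R_P\subseteq\mathcal O$ and $\fm\subseteq\smallo$, hence $\fm\,\Lambda_K R_P\subseteq\fm\,\mathcal O\subseteq\smallo$. Now if $f\in R_P$ and $f^k\in\fm\,\Lambda_K R_P$ for some $k\geq 1$, then $f^k\in\smallo$, so $f\in\smallo$ because $\smallo$ is prime, and thus $f\in\smallo\cap R_P=\fm$ because $\mathcal O$ lies over $R_P$. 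This gives $\sqrt{\fm\,\Lambda_K R_P}\cap R_P=\fm$, and your reduction via Lemma~\ref{lem:prime p-adic} then finishes the argument as in the paper.
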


\begin{proof}
The local integral domain $R_P$ is   regular  and its
  residue field is isomorphic to~$F$.
Hence localizing at $P$ we can arrange that~$R$ is local with maximal ideal~$P$,
by Lemma~\ref{lem:prime p-adic}.
Lemma~\ref{lem:existence of places} yields a valuation ring $\mathcal O$ of~$K$ lying over $R$
such that the natural inclusion $R\to\mathcal O$ induces an isomorphism~$F=R/P\to\k=\mathcal O/\smallo$; thus~$\k$ is formally $p$-adic. Let~$\mathcal O_{\k}$ be a $p$-valuation ring of $\k$ and
take a valuation ring~$\mathcal O_0$ of~$K$ with~$\mathcal O_0\subseteq\mathcal O$ and a convex subgroup~$\Delta_0$
of its value group such that the $\Delta_0$-specialization of $(K,\mathcal O_0)$ is~$(\k,\mathcal O_{\k})$. (See Lemma~\ref{lem:lift val spec}.)
Then $\mathcal O_0$ is a $p$-valuation ring of $K$ by Lemma~\ref{lem:p-val spec},
hence~$\Lambda_{K}\subseteq\mathcal O_0\subseteq\mathcal O$ by Theorem~\ref{thm:holomorphy}. This yields  $P\Lambda_KR \cap R \subseteq P\mathcal O\cap R=P$ and thus~$\sqrt{P\Lambda_KR}\cap R = P$  as required.
\end{proof}

\noindent
It would be interesting to weaken the regularity hypothesis in Proposition~\ref{prop:p-adic prime ideals}.
Let now~$\sigma\colon R\to L$ be a ring morphism. The previous proposition immediately implies:






\begin{cor}\label{cor:p-adic prime ideals}
If $R$  is regular and $L$ is  formally $p$-adic, then $K$ is formally $p$-adic and $\ker\sigma$ is a $p$-adic prime ideal of $R$.
\end{cor}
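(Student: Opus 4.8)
The statement to prove is Corollary~\ref{cor:p-adic prime ideals}: if $R$ is regular and $L$ is formally $p$-adic, then $K=\Frac(R)$ is formally $p$-adic and $\ker\sigma$ is a $p$-adic prime ideal of $R$. The natural approach is to simply reduce this to Proposition~\ref{prop:p-adic prime ideals} by way of the factorization $R\to R/\ker\sigma\hookrightarrow L$. First I would set $P:=\ker\sigma$. Since $\sigma\colon R\to L$ is a ring morphism into a field, $R/P$ embeds into $L$, so $R/P$ is a domain and hence $P$ is a prime ideal of $R$; moreover $F:=\Frac(R/P)$ embeds into $L$ as well. A subfield of a formally $p$-adic field carries the restriction of any $p$-valuation ring and hence is again formally $p$-adic (this is recorded right after the definition of $p$-valued field in Section~\ref{sec:pCVF}: every valued subfield of a $p$-valued field is $p$-valued). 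Therefore $F$ is formally $p$-adic.

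Now $R$ is regular by hypothesis and $F=\Frac(R/P)$ is formally $p$-adic, so Proposition~\ref{prop:p-adic prime ideals} applies verbatim and yields that $K$ is formally $p$-adic and that $P=\ker\sigma$ is a $p$-adic prime ideal of $R$. That is exactly the assertion.

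\textbf{The main obstacle.}
There is essentially no obstacle here: all the real content sits in Proposition~\ref{prop:p-adic prime ideals} (which uses Lemma~\ref{lem:existence of places} to produce a valuation ring of $K$ lying over the local ring $R_P$ with residue field $F$, then lifts a $p$-valuation of $F$ to a coarsening via Lemma~\ref{lem:lift val spec}, concludes $p$-valuedness of $K$ from Lemma~\ref{lem:p-val spec}, and finally invokes Theorem~\ref{thm:holomorphy} to get $\Lambda_K\subseteq\mathcal O_0\subseteq\mathcal O$ and hence $\sqrt{P\Lambda_KR}\cap R=P$). The only points to check in the corollary itself are the two elementary facts that $\ker\sigma$ is prime (because $L$ is a domain) and that a subfield of a formally $p$-adic field is formally $p$-adic. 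So the proof is a one-line deduction, and I would present it as such: "$P:=\ker\sigma$ is a prime ideal of $R$, and $\Frac(R/P)$ embeds into the formally $p$-adic field $L$, hence is formally $p$-adic; now apply Proposition~\ref{prop:p-adic prime ideals}."
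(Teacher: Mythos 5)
Your proposal is correct and matches the paper's argument: the paper derives this corollary directly from Proposition~\ref{prop:p-adic prime ideals} ("The previous proposition immediately implies"), exactly as you do, with the only added observations being that $\ker\sigma$ is prime and that $\Frac(R/\ker\sigma)$, embedding into the formally $p$-adic field $L$, is itself formally $p$-adic.
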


\noindent
We also note another artifact of the proof of Proposition~\ref{prop:p-adic prime ideals}.

\begin{lemma}\label{lem:p-adic prime ideals}
Suppose $R$ is regular, and let $f,g\in R$. Then $$f\in g\Lambda_K\ \Longrightarrow\ \sigma(f)\in \sigma(g)\Lambda_L.$$
\end{lemma}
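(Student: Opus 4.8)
The plan is to reproduce the valuation-theoretic core of the proof of Proposition~\ref{prop:p-adic prime ideals}. First I would handle degenerate cases: if $g=0$ then $f\in g\Lambda_K=\{0\}$ forces $f=0$, and the conclusion is clear; so assume $g\neq 0$. Also, if $L$ is not formally $p$-adic then $\Lambda_L=L$, so I would assume henceforth that $L$ \emph{is} formally $p$-adic (as in the setting of Corollary~\ref{cor:p-adic prime ideals}); then $F:=\Frac(R/\ker\sigma)$, being a subfield of $L$, is formally $p$-adic as well, since a valued subfield of a $p$-valued field is $p$-valued.

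Next I would reduce to the local case. Set $P:=\ker\sigma$, which is a prime ideal of $R$ because $L$ is a field. As $\sigma$ sends $R\setminus P$ into $L^\times$, it extends uniquely to a ring morphism $R_P\to L$; and $R_P$ is again regular, has fraction field $K$ and residue field $F=R/P$, while $f,g\in R\subseteq R_P$. Replacing $R$, $\sigma$ by $R_P$ and this extension, I may therefore assume $R$ is regular local with maximal ideal $P$ and that $\sigma$ is the composite of the residue map $R\twoheadrightarrow R/P=F$ with an embedding $F\hookrightarrow L$. Writing $r\mapsto\bar r$ for $R\to F$, and using that $\gamma(F)\subseteq\gamma(L)$ (as $\gamma$ is defined field-intrinsically via $\wp$), whence $\Lambda_F\subseteq\Lambda_L$, it then suffices to prove $\bar f\in\bar g\,\Lambda_F$.

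For this I would invoke Theorem~\ref{thm:holomorphy}. Since $F$ is formally $p$-adic, $\Lambda_F=\bigcap\mathcal O_F$ over the nonempty family of $p$-valuation rings $\mathcal O_F$ of $F$; and $K$ is formally $p$-adic by Proposition~\ref{prop:p-adic prime ideals} (applied to the prime $P$ of the regular ring $R$), so $\Lambda_K=\bigcap\mathcal O$ over the $p$-valuation rings $\mathcal O$ of $K$. Fix a $p$-valuation ring $\mathcal O_F$ of $F$. By Lemma~\ref{lem:existence of places} there is a valuation ring $\mathcal O$ of $K$ lying over $R$ such that the inclusion $R\hookrightarrow\mathcal O$ induces an isomorphism $F=R/P\to\mathcal O/\smallo$; transporting $\mathcal O_F$ through this isomorphism and applying Lemmas~\ref{lem:lift val spec} and~\ref{lem:p-val spec}, the subring $\mathcal O_0:=\{a\in\mathcal O:\bar a\in\mathcal O_F\}$ is a $p$-valuation ring of $K$ contained in $\mathcal O$. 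Hence $\Lambda_K\subseteq\mathcal O_0\subseteq\mathcal O$. Now write $f=g\lambda$ with $\lambda\in\Lambda_K$; then $\lambda\in\mathcal O_0$, and reducing the equation $f=g\lambda$ modulo $\smallo$ gives $\bar f=\bar g\,\bar\lambda$ in $F$ with $\bar\lambda\in\mathcal O_F$, so $\bar f\in\bar g\,\mathcal O_F$. As $\mathcal O_F$ was an arbitrary $p$-valuation ring of $F$, intersecting over all of them (and noting that $\bar g=0$ forces $\bar f=0$) yields $\bar f\in\bar g\,\Lambda_F$, as required.

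The main obstacle — essentially the only step beyond bookkeeping — is the valuation-theoretic construction lifted from the proof of Proposition~\ref{prop:p-adic prime ideals}: given a $p$-valuation ring $\mathcal O_F$ of the residue field $F=R/P$, producing a compatible valuation ring $\mathcal O$ of $K$ lying over $R$ (Lemma~\ref{lem:existence of places}) together with a lifted $p$-valuation ring $\mathcal O_0\subseteq\mathcal O$ of $K$ (Lemmas~\ref{lem:lift val spec} and~\ref{lem:p-val spec}), so that $\Lambda_K\subseteq\mathcal O_0\subseteq\mathcal O$ permits pushing the factorization $f=g\lambda$ down to $F$. Some minor care is also needed with the degenerate situations ($g=0$, $\bar g=0$, and $L$ not formally $p$-adic).
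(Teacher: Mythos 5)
Your proof is correct and follows essentially the same route as the paper's: reduce to the case where $R$ is local with maximal ideal $P=\ker\sigma$ and $\sigma$ factors through the residue field $F=\Frac(R/P)$, then, for a given $p$-valuation ring of $F$, use Lemma~\ref{lem:existence of places} together with Lemmas~\ref{lem:lift val spec} and~\ref{lem:p-val spec} to produce a $p$-valuation ring $\mathcal O_0\subseteq\mathcal O$ of $K$ lying over $R$, so that $\Lambda_K\subseteq\mathcal O_0$ by Theorem~\ref{thm:holomorphy} and the relation $f=g\lambda$ can be pushed into $F$; finally use $\Lambda_F\subseteq\Lambda_L$. Your only deviation is a small streamlining: by reducing $f=g\lambda$ modulo $\smallo$ you avoid the paper's case distinction between $g\in P$ (where it invokes $P\Lambda_KR\cap R=P$) and $g\notin P$ (where it normalizes to $g=1$).

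One caution: the opening claim that the case where $L$ is not formally $p$-adic is ``clear'' because $\Lambda_L=L$ is not right, since when $\sigma(g)=0$ the desired conclusion is $\sigma(f)=0$, and this can fail: take $R=\R[x]$, $\sigma$ the evaluation at $0$, $f=1$, $g=x$; here $K=\R(x)$ is not formally $p$-adic (a $p$-valuation of $K$ would restrict to one of $\R$, which has none), so $\Lambda_K=K$ and $f\in g\Lambda_K$, yet $\sigma(f)=1\notin\{0\}=\sigma(g)\Lambda_L$. So the lemma genuinely requires $L$ to be formally $p$-adic; this is the tacit standing hypothesis inherited from the setting of Corollary~\ref{cor:p-adic prime ideals}, which the paper's own proof also uses, so your main argument is unaffected---just state that hypothesis rather than argue the other case is automatic.
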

\begin{proof}
We may arrange that $L=F=\Frac(R/P)$ and $\sigma$ is the composition of the residue morphism
$R\to R/P$ with the natural inclusion $R/P\to F$. Replacing~$R$,~$P$ by $R_P$, $PR_P$, respectively, we can also arrange that $R$ is local with maximal ideal~$P$ and residue field $L=R/P$, and $\sigma\colon R\to L$ is the residue morphism.
Suppose~$f\in g\Lambda_K$, and let $\mathcal O_L$ be a $p$-valuation ring of~$L$, with associated dominance relation~$\preceq_L$ on $L$; we need to show $\sigma(f)\preceq_L \sigma(g)$, by Theorem~\ref{thm:holomorphy}.
Take $\mathcal O$
as in the proof of Proposition~\ref{prop:p-adic prime ideals} and let $\mathcal O_{\k}$ be the image of~$\mathcal O_L$ under the isomorphism $F\to\k$. Next take $\mathcal O_0$ as in the proof of Proposition~\ref{prop:p-adic prime ideals},
so~$\mathcal O_0=\big\{h\in\mathcal O:\sigma(h)\preceq_L 1\big\}$. If~$g\in P$, then $f\in P$ since $P\Lambda_KR\cap R=P$, and hence clearly  $\sigma(f) \preceq_L \sigma(g)$. Suppose $g\notin P$; then $g\in R^\times$, hence we may replace~$f$,~$g$ by $f/g$, $1$ to arrange~$g=1$. Then $f\in\Lambda_K$ and so $f\in\mathcal O_0$ by Theorem~\ref{thm:holomorphy}, thus $\sigma(f)\preceq_L 1$.
\end{proof}

\noindent
Let $I$ be an ideal of $R$. We define the {\em $p$-radical of $I$} as
$$\sqrt[\leftroot{2}\uproot{5}p]{I} := \sqrt{I\Lambda_KR}\cap R=\big\{ r\in R: \text{$r^k\in I\Lambda_KR$ for some $k\geq 1$} \big\}.$$
Clearly $\sqrt[\leftroot{2}\uproot{5}p]{I}$ is the smallest $p$-adic ideal of $R$ containing $I$.
If $J$ is an ideal of $R$, then~$I\subseteq J\Rightarrow  \sqrt[\leftroot{2}\uproot{5}p]{I} \subseteq\sqrt[\leftroot{2}\uproot{5}p]{J}$.

\subsection{$p$-adic ideals under completion}
In this subsection we show a $p$-adic  analogue of \cite[Proposition~6.3]{Risler76}. Let  $W$ be a Weierstrass system over a formally $p$-adic field~$\k$.
Let $I$ be an ideal of~$R:=W_m=\k\lfloor X\rfloor$. If the ideal $I\hat R$ of $\hat R:=\k[[X]]$ generated by $I$
is $p$-adic, then so is $I$, by Corollary~\ref{cor:fflat} and Lemma~\ref{lem:p-adic down}. The converse also holds:

\begin{prop}\label{prop:IhatR}
Suppose $I$ is $p$-adic; then so is $I\hat R$.
\end{prop}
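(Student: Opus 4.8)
The plan is to reduce the statement to the case of a prime ideal and then transfer $p$-adicity from $R$ to $\hat R$ by combining Nagata's extension-of-primes lemma with Proposition~\ref{prop:p-adic prime ideals}; the only genuinely non-formal input will be the Artin approximation property of Weierstrass systems.

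First I would dispose of the trivial cases $I=\{0\}$ and $I=R$, and so assume that $I$ is a non-trivial $p$-adic --- hence radical --- ideal; then $K=\Frac(R)$ is necessarily formally $p$-adic, for otherwise $\Lambda_K=K$ and $I=R$. Since $R$ is noetherian (Lemma~\ref{lem:W-Lemma}), write $I=P_1\cap\cdots\cap P_k$ with $P_1,\dots,P_k$ the minimal primes of $R$ over $I$; each $P_i$ is again $p$-adic by Lemma~\ref{lem:p-adic min primes}. Faithful flatness of $\hat R$ over $R$ (Corollary~\ref{cor:fflat}) gives $I\hat R=P_1\hat R\cap\cdots\cap P_k\hat R$, and each $P_i\hat R$ is prime by Lemma~\ref{lem:Nagata} (the field $\k$, being formally $p$-adic, has characteristic $0$ and is hence perfect). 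As a finite intersection of $p$-adic ideals of $\hat R$ is readily seen to be $p$-adic, it suffices to prove: \emph{if $P$ is a $p$-adic prime of $R$, then $P\hat R$ is $p$-adic.}

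Fix such a $P$; we may assume $P\neq\{0\}$, and put $A:=R/P$ and $\hat A:=\hat R/P\hat R$. By Corollary~\ref{cor:P p-adic => F form p-adic} the field $F:=\Frac(A)$ is formally $p$-adic, and by Lemma~\ref{lem:Nagata} the ring $\hat A$ is a domain; note that $\hat A$ is the completion of the noetherian local domain $A$ and has the same residue field $\k$. The crux is to show that $\hat F:=\Frac(\hat A)$ is again formally $p$-adic; once this is known, Proposition~\ref{prop:p-adic prime ideals}, applied to the regular ring $\hat R$ and its prime $P\hat R$ (so that $\hat R/P\hat R=\hat A$), gives that $P\hat R$ is $p$-adic, completing the proof. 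To see that $\hat F$ is formally $p$-adic I argue by contraposition via Corollary~\ref{cor:6.4}: if $\hat F$ is not formally $p$-adic, then $1/p=H\bigl(\gamma(c_1),\dots,\gamma(c_n)\bigr)$ for suitable $c_1,\dots,c_n\in\hat F$ with each $\gamma(c_i)\neq\infty$ and a polynomial $H\in\Z_{(p)}[T_1,\dots,T_n]$. Writing $c_i=a_i/b_i$ with $a_i,b_i\in\hat A$, $b_i\neq 0$, and clearing denominators, this becomes a polynomial equation over $R$ (after introducing a finite list of extra unknowns to express membership in $P\hat R$) which is solved by lifts of the $a_i,b_i$ in $\hat R=\k[[X]]$, subject to the non-degeneracy conditions that the $b_i$ and the denominators occurring in $\gamma(c_i)$ do not vanish. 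By the Artin Approximation Theorem for Weierstrass systems \cite{DL}, this equation has a solution in $R$ agreeing with the given one to arbitrarily high order in $(X)$; since $\hat A$ is separated, $P\hat R\cap R=P$, and $P$ is prime, a sufficiently good approximation preserves the non-degeneracy conditions modulo $P$. Reducing the new solution modulo $P$ and reading the identity backwards exhibits $c_1'',\dots,c_n''\in F$ with $1/p=H\bigl(\gamma(c_1''),\dots,\gamma(c_n'')\bigr)\in\Z_{(p)}[\gamma(F)]$, so $F$ is not formally $p$-adic by Corollary~\ref{cor:6.4} --- contradicting Corollary~\ref{cor:P p-adic => F form p-adic}.

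The main obstacle is exactly this last step: propagating ``formally $p$-adic'' from $\Frac(R/P)$ to $\Frac(\hat R/P\hat R)$. It cannot be done by elementary extension of $p$-valuations --- the fraction field of the completion is usually an infinite extension of $\Frac(R/P)$, and formal $p$-adicity fails to be inherited even by finite field extensions (the obstruction being growth of the residue field) --- so an approximation principle, used to pull a hypothetical ``$p$-adic defect'' of the completion back to $R/P$ itself, appears to be essential here. Everything else amounts to bookkeeping: primary decomposition, faithful flatness, Nagata's lemma, and Proposition~\ref{prop:p-adic prime ideals}.
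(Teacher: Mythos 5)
Your proof is correct and follows essentially the same route as the paper's: reduction to the case of a prime ideal via minimal primes and faithful flatness, primality of $P\hat R$ via Lemma~\ref{lem:Nagata}, transfer of formal $p$-adicity from $\Frac(R/P)$ to $\Frac(\hat R/P\hat R)$ by the Artin Approximation Theorem of \cite[Theorem~1.1]{DL}, and conclusion via Proposition~\ref{prop:p-adic prime ideals}. The only difference is that the paper merely cites the approximation theorem for the key transfer step, whereas you have (correctly) spelled out how a witness to $p^{-1}\in\Z_{(p)}[\gamma(\hat F)]$ is pulled back to $F$.
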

\begin{proof}
We may assume $I\neq R$.
Note that since we have a ring morphism
$f\mapsto f(0)\colon \hat R\to\k$ from the regular local ring $\hat R$ to the formally $p$-adic field $\k$,
the field~$\hat K:=\Frac(\hat R)$ is formally $p$-adic by Corollary~\ref{cor:p-adic prime ideals}. Let $P_1,\dots,P_k$ ($k\geq 1$) be the minimal prime ideals of $I$. Then $P_1,\dots,P_k$ are $p$-adic by Lemma~\ref{lem:p-adic min primes},
and~$I\hat R=P_1\hat R\cap\cdots\cap P_k\hat R$ by \cite[Theorem~7.4(ii)]{Matsumura}.
Hence it is enough to show the proposition in the case where $I$ is prime. In this case,
$I\hat R$ is then also prime,
by Lemma~\ref{lem:Nagata}. Since we have~$I\hat R\cap R=I$~\cite[Theorem~7.5(ii)]{Matsumura},
  we may view~$F:=\Frac(R/I)$ as a subfield of~$\hat F:=\Frac(\hat R/I\hat R)$.
Now by Corollary~\ref{cor:6.4}, $F$ is formally $p$-adic iff~$p^{-1}\notin \Z_{(p)}[\gamma(F)]$,
and similarly with~$\hat F$ in place of $F$.
By Corollary~\ref{cor:P p-adic => F form p-adic}, $F$ is formally $p$-adic, hence so is $\hat F$ by
the version of the Artin Approximation Theorem for $R\subseteq\hat R$ from~\cite[Theorem~1.1]{DL}. Thus $I\hat R$ is $p$-adic by Proposition~\ref{prop:p-adic prime ideals}.
\end{proof}

\begin{cor}\label{cor:IhatR}
$\sqrt[\leftroot{2}\uproot{5}p]{I\hat R}=\sqrt[\leftroot{2}\uproot{5}p]{I}\hat R$. 
\end{cor}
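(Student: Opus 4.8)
The plan is to derive Corollary~\ref{cor:IhatR} directly from Proposition~\ref{prop:IhatR}, using only the definition of the $p$-radical and the basic flatness facts already recorded for the extension $R=\k\lfloor X\rfloor\subseteq\hat R=\k[[X]]$ (Corollary~\ref{cor:fflat}). Recall that for an ideal $J$ of $\hat R$ we have $\sqrt[\leftroot{2}\uproot{5}p]{J}=\sqrt{J\Lambda_{\hat K}\hat R}\cap\hat R$, where $\hat K=\Frac(\hat R)$, and similarly $\sqrt[\leftroot{2}\uproot{5}p]{I}=\sqrt{I\Lambda_K R}\cap R$ with $K=\Frac(R)$. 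The two containments will be proved separately.

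For the inclusion $\sqrt[\leftroot{2}\uproot{5}p]{I}\hat R\subseteq\sqrt[\leftroot{2}\uproot{5}p]{I\hat R}$: by Proposition~\ref{prop:IhatR} the ideal $\sqrt[\leftroot{2}\uproot{5}p]{I}\hat R$ is $p$-adic, because $\sqrt[\leftroot{2}\uproot{5}p]{I}$ is a $p$-adic ideal of $R$ and the proposition says that $p$-adic ideals of $R$ extend to $p$-adic ideals of $\hat R$. Since $I\subseteq\sqrt[\leftroot{2}\uproot{5}p]{I}$ gives $I\hat R\subseteq\sqrt[\leftroot{2}\uproot{5}p]{I}\hat R$, and $\sqrt[\leftroot{2}\uproot{5}p]{I\hat R}$ is by definition the smallest $p$-adic ideal of $\hat R$ containing $I\hat R$, we get $\sqrt[\leftroot{2}\uproot{5}p]{I\hat R}\subseteq\sqrt[\leftroot{2}\uproot{5}p]{I}\hat R$; this is the nontrivial direction and it is essentially immediate once Proposition~\ref{prop:IhatR} is granted.

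For the reverse inclusion $\sqrt[\leftroot{2}\uproot{5}p]{I\hat R}\supseteq\sqrt[\leftroot{2}\uproot{5}p]{I}\hat R$, i.e.\ $\sqrt[\leftroot{2}\uproot{5}p]{I}\hat R\subseteq\sqrt[\leftroot{2}\uproot{5}p]{I\hat R}$: it suffices to show each generator $f\in\sqrt[\leftroot{2}\uproot{5}p]{I}$ lies in $\sqrt[\leftroot{2}\uproot{5}p]{I\hat R}$. By definition there is $k\geq 1$ with $f^k\in I\Lambda_K R$. Since $K\subseteq\hat K$ we have $\Lambda_K\subseteq\Lambda_{\hat K}$ (an inclusion already used in the proof of Lemma~\ref{lem:p-adic down}), hence $\Lambda_K R\subseteq\Lambda_{\hat K}\hat R$ and therefore $f^k\in I\Lambda_{\hat K}\hat R\subseteq (I\hat R)\Lambda_{\hat K}\hat R$. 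Thus $f\in\sqrt{(I\hat R)\Lambda_{\hat K}\hat R}\cap\hat R=\sqrt[\leftroot{2}\uproot{5}p]{I\hat R}$, as desired.

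Combining the two inclusions yields $\sqrt[\leftroot{2}\uproot{5}p]{I\hat R}=\sqrt[\leftroot{2}\uproot{5}p]{I}\hat R$. The main (and only real) obstacle is already absorbed into Proposition~\ref{prop:IhatR}; given that proposition, the corollary is a short formal argument, so I would present it as a brief paragraph rather than a displayed computation.
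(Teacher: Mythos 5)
Your proof is correct and is essentially the derivation the paper intends (the corollary is stated there without proof, immediately after Proposition~\ref{prop:IhatR}): one containment by applying the proposition to the $p$-adic ideal $\sqrt[\leftroot{2}\uproot{5}p]{I}$ and using that $\sqrt[\leftroot{2}\uproot{5}p]{I\hat R}$ is the smallest $p$-adic ideal of $\hat R$ containing $I\hat R$, the other from $\Lambda_K\subseteq\Lambda_{\hat K}$. The only blemish is presentational: the inclusion announced at the start of your first paragraph is the one actually proved in your second paragraph and vice versa, so the two headers should be swapped.
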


\subsection{The $p$-adic analytic Nullstellensatz}
{\it In this subsection we let   $\mathcal L=\mathcal L_{\operatorname{Mac},{\preceq}}$. We also let  $W$ be a Weierstrass system over a weakly $p$-euclidean field~$\k$, and we let~${K\models p\operatorname{CVF}_W}$.}\/ (Note that the infinitesimal $W$-structure on $K$ is with respect
to the distinguished $p$-convex valuation ring of $K$,   not with respect to the $p$-valuation ring of $K$.)
We also let~$X=(X_1,\dots,X_m)$ be a tuple of distinct indeterminates over~$\k$ and~$y=(y_1,\dots,y_n)$ be a tuple of distinct $\mathcal L$-variables.  

\begin{prop}\label{prop:p-adic spec}
For each $\mathcal L$-formula $\varphi(y)$ and each $f\in \k\lfloor X\rfloor^n$,
if $\Omega\models\varphi(\sigma(f))$ for some $\Omega\models p\operatorname{CVF}$ and local ring morphism~${\sigma \colon \k\lfloor X\rfloor\to \mathcal O_\Omega}$, then
$K\models\varphi(f(a))$ for some $a\in\smallo^m$.
\end{prop}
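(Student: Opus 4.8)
The plan is to imitate the proofs of Propositions~\ref{prop:Rueckert spec} and~\ref{prop:Risler spec}, now using quantifier elimination for $p\operatorname{CVF}$ (Theorem~\ref{thm:Belair}, in the sharpened form of Corollary~\ref{cor:Belair}) in place of Theorems~\ref{thm:Robinson} and~\ref{thm:CD}. First I would argue by induction on the length $m$ of $X$. For $m=0$ we have $W_0=\k$, so $f\in\k^n$ and $\sigma$ is a ring embedding $\k\to\mathcal O_\Omega$. Since $\sigma(\k)$ is a subfield of $\mathcal O_\Omega$ and $\phi_0(\k)$ is a subfield of $\mathcal O_K$, the dominance relation $\preceq$ induced on $\k$ from $\Omega$ (via $\sigma$) and from $K$ (via the inclusion $\k\subseteq K$) is in both cases the trivial one; and since $\k$ is weakly $p$-euclidean, the $\mathcal L_{\operatorname{Mac}}$-structure it inherits from either $p$-adically closed overfield is the unique one provided by weak $p$-euclideanness. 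Hence these two copies of $\k$ agree as $\mathcal L$-substructures, and the substructure completeness of $p\operatorname{CVF}$ (Theorem~\ref{thm:Belair}) turns $\Omega\models\varphi(\sigma(f))$ into $K\models\varphi(f)$, i.e.\ $K\models\varphi(f(a))$ with $a=0$.

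Now suppose $m\geq 1$. Using Corollary~\ref{cor:Belair} I would assume $\varphi$ is a quantifier-free $\mathcal L^*$-formula, and after passing to disjunctive normal form and to one disjunct, a conjunction of literals built from atomic formulas $P(y)=0$, $\operatorname{P}_n(P(y))$, and $P(y)\preceq Q(y)$ with $P,Q\in\Z[Y]$. Since $\mathcal L^*$-terms are integer polynomials and $P(f)\in\k\lfloor X\rfloor$, $P(\sigma(f))=\sigma(P(f))$, $P(f(a))=P(f)(a)$, I enlarge $n$ and adjust $f$ so that each atomic subformula has the shape $y_i=0$, $\operatorname{P}_n(y_i)$, or $y_i\preceq y_j$ (a negated $y_i\preceq y_j$ being rewritten as $y_j\prec y_i$, and several $y_i\neq 0$ combined into one $Q(y)\neq 0$), exactly as in the proofs of Propositions~\ref{prop:Rueckert spec} and~\ref{prop:Risler spec}. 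I also discard those $f_j$ equal to $0$ together with the literals involving them (which are then automatically true in $K$), so that all $f_j$ are non-zero.

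Next, pick $d\geq 1$ with $\tau:=\tau_d$ making each $\tau(f_j)$ regular in $X_m$, and replace each $f_j$ by $\tau(f_j)$ and $\sigma$ by $\sigma\circ\tau^{-1}$ (using Corollary~\ref{cor:shear} and that $\tau$ permutes $\smallo^m$); thus each $f_j$ is now regular in $X_m$. Weierstrass Preparation (Lemma~\ref{lem:WP}) gives $f_j=u_jw_j$ with $u_j$ a $1$-unit of $\k\lfloor X\rfloor$ and $w_j\in\k\lfloor X'\rfloor[X_m]$ monic, $X'=(X_1,\dots,X_{m-1})$; since $\sigma$ is local, $\sigma(u_j)\sim 1$, and likewise $u_j(a)\in 1+\smallo$, so $u_j(a)\sim 1$, for all $a\in\smallo^m$. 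I then replace each $f_j$ by $w_j$: the literals $y_i=0$ and $y_i\preceq y_j$ behave as in the earlier proofs, and the only genuinely new point is that the literals $\operatorname{P}_n(y_i)$ are also unaffected, which is precisely the last assertion of Lemma~\ref{lem:nth powers residue field} (applicable since $\Omega$ and $K$ are henselian of equicharacteristic zero by Corollary~\ref{cor:char p-adically closed p-convex}): from $\sigma(f_j)=\sigma(u_j)\sigma(w_j)\sim\sigma(w_j)$ we get $\sigma(f_j)\in(\Omega^\times)^n\Leftrightarrow\sigma(w_j)\in(\Omega^\times)^n$, and from $f_j(a)\sim w_j(a)$ we get $f_j(a)\in(K^\times)^n\Leftrightarrow w_j(a)\in(K^\times)^n$ (the cases of a vanishing coordinate being trivial).

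Finally, write $w_j=\sum_{k=0}^e w_{jk}X_m^k$ with $w_{jk}\in\k\lfloor X'\rfloor$, set $w:=(w_{jk})$, $t_j(u,v):=\sum_k u_{jk}v^k$, and $\psi(u):=\exists v\,(v\prec 1\wedge\varphi(t_1(u,v),\dots,t_n(u,v)))$. Then $\sigma(X_m)\prec 1$ gives $\Omega\models\psi(\sigma(w))$, and the inductive hypothesis, applied to $\psi$, to the tuple $w$ over $\k\lfloor X'\rfloor$, and to $\sigma|_{\k\lfloor X'\rfloor}$, yields $b\in\smallo^{m-1}$ with $K\models\psi(w(b))$; a witness $a_m\in\smallo$ for $v$ then satisfies $K\models\varphi(f(a))$ for $a:=(b,a_m)\in\smallo^m$. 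The whole argument is thus a transcription of the earlier specialization proofs; the only real work lies in handling the Macintyre predicates $\operatorname{P}_n$, namely in the base case (where weak $p$-euclideanness of $\k$ pins down the $\mathcal L_{\operatorname{Mac}}$-structure) and under shears and Weierstrass preparation (where Lemma~\ref{lem:nth powers residue field} does the job).
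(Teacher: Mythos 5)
Your proof is correct and follows the paper's architecture (induction on $m$, reduction to a quantifier-free $\mathcal L^*$-formula via Corollary~\ref{cor:Belair}, the base case via weak $p$-euclideanness and substructure completeness of $p\operatorname{CVF}$, then shear, Weierstrass Preparation, and the coefficient-encoding formula $\psi$). Where you genuinely diverge is in the treatment of the Macintyre predicates. The paper first massages the formula into an all-positive normal form, replacing each negated $\operatorname{P}_k$ by a disjunction of positive $\operatorname{P}_k(\lambda_i\,\cdot\,)$'s with integer coset representatives $\lambda_i$ (Lemma~\ref{lem:coset repres}), and then disposes of the $1$-units $u_j$ by noting that every $1$-unit of $\k\lfloor X\rfloor$ is a $k$th power of a $1$-unit (Corollary~\ref{cor:roots}), so that both $\sigma(u_j)$ and $u_j(a)$ are $k$th powers; you instead leave the literals as they come and argue on the target side, using $\sigma(u_j)\sim 1$, $u_j(a)\sim 1$ together with Lemma~\ref{lem:nth powers residue field} in the henselian equicharacteristic-zero valued fields $(\Omega,\preceq)$ and $(K,\preceq)$ (justified by Corollary~\ref{cor:char p-adically closed p-convex} and the henselianity of models of $T_W$). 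Both routes are sound here: the paper's source-side argument needs $\operatorname{char}\k=0$ (automatic, as $\k$ is formally $p$-adic), your target-side argument needs the residue fields of $\preceq$ to have characteristic zero (automatic, as they are $p$-adically closed). Since your preservation statements are equivalences, they handle negated $\operatorname{P}_n$-literals as well, so skipping the coset-representative step costs nothing; the paper's normalization is mainly a matter of mirroring the positive normal forms used in the ACVF and RCVF cases.

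One small inaccuracy: when you discard the $f_j$ that are $0$, the literals involving them are not all ``automatically true in $K$.'' A literal $y_i\preceq y_j$ with $f_j=0\neq f_i$ forces $\sigma(f_i)=0$ and must be retained in the form $y_i=0$; likewise $y_j\prec y_i$ must be retained as $y_i\neq 0$ (the remaining cases either cannot occur at $\sigma(f)$ or are indeed automatic). This is the routine simplification hidden in the paper's ``we may clearly arrange that $f_1,\dots,f_n$ are all non-zero,'' so it is a slip of phrasing rather than a gap in the argument.
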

\begin{proof}
The proof is completely analogous to that of Proposition~\ref{prop:Risler spec}. Using Corollary~\ref{cor:Belair}  instead of Theorem~\ref{thm:CD} we   first arrange that $\varphi$ is a quantifier-free $\mathcal L^*$-formula.
If $k\geq 1$ and $\lambda_1,\dots,\lambda_N\in\Z$ ($N\geq 1$) are representatives for the cosets~$\neq \Q_p^\times$ of the subgroup
$(\Q_p^\times)^k$ of $\Q_p^\times$  (Lemma~\ref{lem:coset repres}), then for each   single variable $v$ we have
$$p\operatorname{CF}\models \neg \operatorname{P}_k(v) \leftrightarrow \operatorname{P}_k(\lambda_1 v) \vee \cdots \vee \operatorname{P}_k(\lambda_N v).$$
Hence we  can further arrange that~$\varphi$ has the form
$$P(y) = 0 \wedge \bigwedge_{i\in I} \operatorname{P}_{k_i}\!\big(Q_i(y)\big)  \wedge \bigwedge_{j\in J} R_j(y)\, \square_j\, S_j(y)$$
where $I$, $J$ are finite, $P,Q_i,R_j,S_j\in\Z[Y_1,\dots,Y_n]$,  $k_i\geq 1$, and each symbol~$\square_j$ is~$\preceq$ or $\prec$.
For each $k\geq 1$, every $1$-unit of $\k\lfloor X\rfloor$ is a $k$th power of a $1$-unit of  $\k\lfloor X\rfloor$, by Corollary~\ref{cor:roots}; using
this fact   instead of Lemma~\ref{lem:u>0} we now finish the argument as in the proof of Proposition~\ref{prop:Risler spec}.
\end{proof}

\noindent
As before, the  $p$-adic version of Corollary~\ref{cor:exists-complete, real} follows:

\begin{cor}\label{cor:exists-complete, p-adic}
For each   existential  $\mathcal L_W$-sentence $\theta$, we either  have
$p\operatorname{CVF}_W\models\theta$ or~$p\operatorname{CVF}_W\models\neg\theta$.
\end{cor}

\noindent
Next an analogue of Lemma~\ref{lem:indets infinitesimal}:

\begin{lemma}\label{lem:indets infinitesimal, p-adic}
Let $\Omega$ be a   $p$-valued  field and $\sigma\colon  \k\lfloor X\rfloor\to \Omega$ be a ring morphism.
Let $\preceq$ be the dominance relation on $\Omega$ whose valuation ring is the $p$-convex hull of~$\sigma(\k)$ in
$\Omega$. Then for each
$g\in  \k\lfloor X\rfloor$  we have $\sigma\big(g- g(0)\big) \prec 1$.
\end{lemma}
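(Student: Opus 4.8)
The plan is to follow the proof of Lemma~\ref{lem:indets infinitesimal} almost verbatim, with Corollary~\ref{cor:roots} (divisibility of the group of $1$-units of $\k\lfloor X\rfloor$) playing the role that Lemma~\ref{lem:u>0} played in the euclidean case; Corollary~\ref{cor:roots} applies here because $\k$, being weakly $p$-euclidean, has characteristic~$0$.

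First I would unwind the conclusion. Write $\mathcal O=\mathcal O_{\Omega,p}\,\sigma(\k)$ for the $p$-convex hull of $\sigma(\k)$ in $\Omega$ and $\fm$ for its maximal ideal; then $\sigma(g-g(0))\prec 1$ is just the assertion $\sigma(g-g(0))\in\fm$. Since $\sigma$ restricts to a field embedding $\k\to\Omega$, the image $\sigma(\k)$ is a subfield of $\Omega$, so Lemma~\ref{lem:convex hull}, applied in the $p$-valued field $\Omega$ with $R=\sigma(\k)$, gives $\fm=\{a\in\Omega:\ a\,\sigma(c)\prec_p 1\text{ for all }c\in\k^\times\}=\{a\in\Omega:\ a\,\sigma(c)\in\smallo_{\Omega,p}\text{ for all }c\in\k^\times\}$. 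Because $\sigma\big(c\,(g-g(0))\big)=\sigma(c)\,\sigma(g-g(0))$ and $c\,(g-g(0))$ lies in the maximal ideal $(X)$ of $\k\lfloor X\rfloor$ for every $c\in\k^\times$, the lemma reduces to the following claim: $\sigma(h)\in\smallo_{\Omega,p}$ for every $h\in(X)$.

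To prove the claim I would argue by contradiction, assuming $v_{\Omega,p}(\sigma(h))\leq 0$ (the case $\sigma(h)=0$ being trivial). The point is that $\k\lfloor X\rfloor$ contains, for every $N\geq 1$, the $1$-unit $1+p^{-N}h$ (here $p^{-1}\in\k$), and by Corollary~\ref{cor:roots} each of these is an $n$th power of a $1$-unit of $\k\lfloor X\rfloor$ for every $n\geq 1$. Applying $\sigma$, which sends units of $\k\lfloor X\rfloor$ to nonzero --- hence invertible --- elements of the field $\Omega$, we get $1+p^{-N}\sigma(h)\in(\Omega^\times)^n$, so that $v_{\Omega,p}\big(1+p^{-N}\sigma(h)\big)\in n\Gamma_{\Omega,p}$ for all $n,N\geq 1$. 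Under the contradiction hypothesis $p^{-N}\sigma(h)$ has strictly negative $p$-valuation, whence $v_{\Omega,p}\big(1+p^{-N}\sigma(h)\big)=v_{\Omega,p}(\sigma(h))-N\,v_{\Omega,p}(p)$; subtracting the instances $N=1$ and $N=2$, which both lie in the subgroup $n\Gamma_{\Omega,p}$, yields $v_{\Omega,p}(p)\in n\Gamma_{\Omega,p}$ for every $n\geq 1$. Since $\mathcal O_{\Omega,p}$ is a $p$-valuation ring, $v_{\Omega,p}(p)$ is the smallest positive element of $\Gamma_{\Omega,p}$, so the case $n=2$ exhibits a strictly smaller positive element --- a contradiction. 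Hence $v_{\Omega,p}(\sigma(h))>0$, i.e.\ $\sigma(h)\in\smallo_{\Omega,p}$, which proves the claim and with it the lemma.

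The main obstacle is exactly this last step: showing that $\sigma$ carries the maximal ideal $(X)$ into the $p$-adic maximal ideal $\smallo_{\Omega,p}$ of $\Omega$. There is no such bound for a ring morphism into an arbitrary ring; what saves the day (and the place where the hypothesis on $\k$ really enters, through characteristic~$0$ and Corollary~\ref{cor:roots}) is that $\k\lfloor X\rfloor$ is large enough to contain all the $1$-units $1+p^{-N}h$, whose unbounded divisibility forces the $p$-valuations of their images to be infinitely divisible, which is incompatible with $v_{\Omega,p}(p)$ being the least positive value unless $\sigma(h)$ is already $p$-adically infinitesimal. This is the $p$-adic substitute for the positivity-via-squares argument used in the euclidean setting.
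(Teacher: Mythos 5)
Your proof is correct, and its skeleton matches the paper's: reduce, via the description of the maximal ideal of the convex hull in Lemma~\ref{lem:convex hull}, to showing that $\sigma$ maps $(X)$ (and hence each $c\,(g-g(0))$, $c\in\k^\times$) into $\smallo_{\Omega,p}$, and then deduce that from the $k$-divisibility of the $1$-units of $\k\lfloor X\rfloor$ (Corollary~\ref{cor:roots}). Where you diverge is in how that last deduction is carried out. The paper uses a single witness: $1+pg^2$ is a square in $\k\lfloor X\rfloor$, hence $1+p\sigma(g)^2$ is a square in $\Omega$, which forces $\sigma(g)\preceq_p 1$ by the Kochen/Macintyre-style description of $p$-valuation rings via $\varepsilon$-th powers (cf.\ Lemma~\ref{lem:def val}); it then scales by $c^{-1}$ exactly as you do. You instead use the whole family $1+p^{-N}h$ ($N\geq 1$), note that each is an $n$th power of a unit for every $n$, and show that if $v_{\Omega,p}(\sigma(h))\leq 0$ then $v_{\Omega,p}(p)$ would lie in $n\Gamma_{\Omega,p}$ for all $n$, contradicting its minimality among positive elements. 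Your variant is slightly longer but self-contained at the level of value groups: it avoids invoking the power-predicate characterization of the $p$-valuation ring and in particular never has to distinguish $p=2$ (squares versus cubes). Both arguments are valid; the paper's is shorter because it leans on machinery already set up in Section~\ref{sec:pCVF}.
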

\begin{proof}
Replacing $g$ by $g-g(0)$ we arrange $g(0)=0$.
Then $1+pg^2$ is a square in~$\k\lfloor X\rfloor$, by Corollary~\ref{cor:roots};
hence $1+p\sigma(g)^2$ is a square in~$\Omega$, so necessarily $\sigma(g)\preceq_p 1$.  Since this also holds for $gc^{-1}$ in place of $g$, for each~$c\in\k^\times$,
we get~$\sigma(g) \preceq_p \sigma(c)$ for all $c\in\k^\times$. 
Thus $\sigma(g)\prec 1$ by Lemma~\ref{lem:convex hull}.
\end{proof}

\begin{cor}\label{cor:p-adic spec}
Let $\varphi(y)$ be an $\mathcal L_{\operatorname{Mac}}$-formula, $f\in  \k\lfloor X\rfloor^n$, and
$\sigma\colon  \k\lfloor X\rfloor\to\Omega$ be a ring morphism to a  $p$-adically closed field $\Omega$
such that~$\Omega\models\varphi(\sigma(f))$. Then~$K\models\varphi(f(a))$ for some $a\in\smallo^m$.
\end{cor}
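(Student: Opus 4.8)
The plan is to transcribe the proof of Corollary~\ref{cor:Risler spec} into the $p$-adic setting, replacing each real ingredient by its $p$-adic counterpart: Proposition~\ref{prop:Risler spec} becomes Proposition~\ref{prop:p-adic spec}, Lemma~\ref{lem:indets infinitesimal} becomes Lemma~\ref{lem:indets infinitesimal, p-adic}, and model completeness of the theory of real closed ordered fields is replaced by QE for $p\operatorname{CF}$ (Theorem~\ref{thm:Macintyre}). Since $\sigma$ is a ring morphism out of the field $\k$, its restriction to $\k$ is injective, so $\sigma(\k)$ is a subfield of $\Omega$, and we may speak of its $p$-convex hull.

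First I would equip $\Omega$ (a $p$-adically closed, hence $p$-valued field, with its unique $p$-valuation ring) with the dominance relation $\preceq$ whose valuation ring $\mathcal O_\Omega$ is the $p$-convex hull of $\sigma(\k)$ in $\Omega$. By Lemma~\ref{lem:indets infinitesimal, p-adic}, $\sigma\bigl(g-g(0)\bigr)\prec 1$ for every $g\in\k\lfloor X\rfloor$; since $\sigma(g(0))\in\sigma(\k)\subseteq\mathcal O_\Omega$, it follows that $\sigma$ takes values in $\mathcal O_\Omega$ and carries the maximal ideal $(X)$ of $\k\lfloor X\rfloor$ (Lemma~\ref{lem:W-Lemma}) into the maximal ideal of $\mathcal O_\Omega$. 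Hence $\sigma\colon\k\lfloor X\rfloor\to\mathcal O_\Omega$ is a local ring morphism.

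Next I would dispose of the degenerate case $\mathcal O_\Omega=\Omega$. If this occurs, replace $\Omega$ by the Hahn field $\Omega(\!(t^\Q)\!)$, which is again $p$-adically closed by Corollary~\ref{cor:char p-adically closed p-convex}; its $t$-adic valuation ring is a non-trivial $p$-convex subring containing $\Omega$, hence $\sigma(\k)$, so the $p$-convex hull of $\sigma(\k)$ in $\Omega(\!(t^\Q)\!)$ is now a proper subring. The inclusion $\Omega\hookrightarrow\Omega(\!(t^\Q)\!)$ is an embedding of $\mathcal L_{\operatorname{Mac}}$-structures (the $p$-valuation restricts correctly by the Example in Section~\ref{sec:pCVF}, and the power predicates restrict correctly because $\Omega$ is algebraically closed in the Hahn field by Corollary~\ref{cor:alg closure in p-adically closed}), hence elementary by QE for $p\operatorname{CF}$ (Theorem~\ref{thm:Macintyre}); so $\Omega(\!(t^\Q)\!)\models\varphi(\sigma(f))$, and we may work in this overfield. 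Thus we may assume $\mathcal O_\Omega\neq\Omega$.

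Finally, since $\mathcal O_\Omega$ contains the subfield $\sigma(\k)$ and $\mathcal O_\Omega\neq\Omega$, Lemma~\ref{lem:p-convex hull} shows $\mathcal O_\Omega$ is a non-trivial $p$-convex subring of the $p$-adically closed field $\Omega$; hence $(\Omega,{\preceq})\models p\operatorname{CVF}$ as an $\mathcal L_{\operatorname{Mac},{\preceq}}$-structure. Viewing the $\mathcal L_{\operatorname{Mac}}$-formula $\varphi$ as an $\mathcal L_{\operatorname{Mac},{\preceq}}$-formula (its interpretation is unchanged), we have $(\Omega,{\preceq})\models\varphi(\sigma(f))$, and Proposition~\ref{prop:p-adic spec} applied to $\varphi$, $f$, $(\Omega,{\preceq})$, $\sigma$ produces $a\in\smallo^m$ with $K\models\varphi(f(a))$. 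The argument is essentially mechanical given the cited results; the only spot needing a little care is the degenerate case, namely checking that passing to a suitable $p$-adically closed overfield of $\Omega$ is $\mathcal L_{\operatorname{Mac}}$-elementary, which is where QE for $p\operatorname{CF}$ enters.
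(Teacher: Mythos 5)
Your proposal is correct and follows essentially the same route as the paper: pass (via model completeness/QE of $p\operatorname{CF}$) to the Hahn field $\Omega(\!(t^\Q)\!)$ so that the $p$-convex hull of $\sigma(\k)$ becomes a non-trivial $p$-convex subring (Lemma~\ref{lem:p-convex hull}), observe via Lemma~\ref{lem:indets infinitesimal, p-adic} that $\sigma$ is then a local morphism into that valuation ring of a model of $p\operatorname{CVF}$, and apply Proposition~\ref{prop:p-adic spec}. The only difference is cosmetic (you treat the degenerate case $\mathcal O_\Omega=\Omega$ conditionally and spell out why the extension is $\mathcal L_{\operatorname{Mac}}$-elementary, which the paper compresses into one appeal to model completeness).
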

\begin{proof}
The $p$-convex hull of $\sigma(\k)$ in the $p$-adically closed field extension
$\Omega(\!(t^\Q)\!)$ of~$\Omega$ is a non-trivial $p$-convex subring of $K$, by Lemma~\ref{lem:p-convex hull}.
Hence  using model completeness of
the  $\mathcal L_{\operatorname{Mac}}$-theory $p\operatorname{CF}$ we may replace $\Omega$ by $\Omega(\!(t^\Q)\!)$ 
to arrange that  the $p$-convex hull
 $\mathcal O_\Omega$   of $\sigma(\k)$ in $\Omega$ satisfies $\Omega\neq\mathcal O_\Omega$. 
  Then $\Omega$ equipped with the dominance relation associated to $\mathcal O_\Omega$
is a model of $p\operatorname{CVF}$.
 By Lemma~\ref{lem:indets infinitesimal, p-adic},
$\sigma$ is a local ring morphism  $\k\lfloor X\rfloor\to\mathcal O_\Omega$.
Hence the corollary follows from Proposition~\ref{prop:p-adic spec}.
\end{proof}

\noindent
Recall that the  ring~$\k\lfloor X\rfloor$ is regular (Lemma~\ref{lem:W-Lemma}),
hence $F:=\Frac(\k\lfloor X\rfloor)$ is formally $p$-adic.
We  obtain an analytic version of Kochen's $p$-adic formulation of Hilbert's 17th Problem.

\begin{cor}\label{cor:p-adic H17}
$$\Lambda_F = \left\{ \frac{f}{g}:f,g\in \k\lfloor X\rfloor,\ g\neq 0,\ f(a) \preceq_p g(a) \text{ for all $a\in\smallo^m$} \right\}.$$
\end{cor}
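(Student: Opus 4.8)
The plan is to prove the stated equality by two inclusions, each a short consequence of results already established.

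\medskip\noindent
First, for ``$\subseteq$'': given $h\in\Lambda_F$, I would write $h=f/g$ with $f,g\in\k\lfloor X\rfloor$ and $g\neq 0$, so that $f\in g\Lambda_F$, fix $a\in\smallo^m$, and apply Lemma~\ref{lem:p-adic prime ideals} to the regular ring $\k\lfloor X\rfloor$ (regular by Lemma~\ref{lem:W-Lemma}), whose fraction field is $F$, together with the ring morphism $f\mapsto f(a)\colon\k\lfloor X\rfloor\to K$. This gives $f(a)\in g(a)\Lambda_K$. Since $K$ is $p$-adically closed, $\mathcal O_p$ is its unique $p$-valuation ring, so $\Lambda_K=\mathcal O_p$ by Theorem~\ref{thm:holomorphy}; hence $f(a)\in g(a)\mathcal O_p$, i.e.\ $f(a)\preceq_p g(a)$. (The degenerate case $g(a)=0$ forces $f(a)=0\preceq_p 0$ and needs no separate treatment.) As $a\in\smallo^m$ was arbitrary, $h$ lies in the right-hand side.

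\medskip\noindent
Second, for ``$\supseteq$'': let $f,g\in\k\lfloor X\rfloor$ with $g\neq 0$ and $f(a)\preceq_p g(a)$ for all $a\in\smallo^m$, and suppose toward a contradiction that $h:=f/g\notin\Lambda_F$. By Theorem~\ref{thm:holomorphy} there is a $p$-valuation ring $\mathcal O_0$ of $F$ with $h\notin\mathcal O_0$. I would then take $\Omega$ to be a $p$-adic closure of the $p$-valued field $(F,\mathcal O_0)$, with $p$-valuation ring $\mathcal O_\Omega$; since $\mathcal O_\Omega\cap F=\mathcal O_0$ and $h\in F$, we get $h\notin\mathcal O_\Omega$, which (writing $\preceq_p$ for the dominance relation of $\mathcal O_\Omega$) means $\Omega\models\neg(\sigma(f)\preceq_p\sigma(g))$ for the natural inclusion $\sigma\colon\k\lfloor X\rfloor\hookrightarrow\Omega$. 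Corollary~\ref{cor:p-adic spec}, applied to the $\mathcal L_{\operatorname{Mac}}$-formula $\varphi(y_1,y_2):=\neg(y_1\preceq_p y_2)$, then produces some $a\in\smallo^m$ with $K\models\neg(f(a)\preceq_p g(a))$, contradicting the hypothesis; hence $h\in\Lambda_F$.

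\medskip\noindent
All the substance is contained in the cited results: ``$\subseteq$'' is just the downward transfer of the Kochen ring (Lemma~\ref{lem:p-adic prime ideals}) combined with the holomorphy description (Theorem~\ref{thm:holomorphy}), while ``$\supseteq$'' is the specialization Corollary~\ref{cor:p-adic spec} applied to a single binary $\preceq_p$-inequality. The only points requiring attention are keeping track of $p$-valuations versus their associated dominance relations and choosing the right $p$-adically closed test field; the lone external ingredient is the existence of $p$-adic closures of $p$-valued fields (standard; cf.~\cite{PR}). I anticipate no genuine obstacle beyond assembling these pieces.
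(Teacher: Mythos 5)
Your proof is correct and follows essentially the same route as the paper: the inclusion $\supseteq$ via Theorem~\ref{thm:holomorphy}, a $p$-adically closed valued field extension of $(F,\mathcal O_0)$, and Corollary~\ref{cor:p-adic spec} applied to the formula $\neg(y_1\preceq_p y_2)$; the inclusion $\subseteq$ via Lemma~\ref{lem:p-adic prime ideals} applied to the evaluation morphisms $h\mapsto h(a)$. You merely spell out the identification $\Lambda_K=\mathcal O_{K,p}$ (via uniqueness of the $p$-valuation on the $p$-adically closed field $K$ and Theorem~\ref{thm:holomorphy}), which the paper leaves implicit.
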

\begin{proof}
Let $f,g\in \k\lfloor X\rfloor$, $g\neq 0$, with $f/g\notin \Lambda_F$. By
Theorem~\ref{thm:holomorphy} take a $p$-valuation ring $\mathcal O_p$ of $F$ with $f/g\notin\mathcal O_p$.
Let $\Omega$ be a $p$-adically closed valued field extension of~$(F,\mathcal O_p)$, with natural inclusion $\sigma\colon \k\lfloor X\rfloor\to\Omega$. Then Corollary~\ref{cor:p-adic spec} yields some~$a\in\smallo^m$ with $f(a) \not\preceq_p g(a)$. This shows the inclusion $\supseteq$.
The reverse inclusion  follows from Lemma~\ref{lem:p-adic prime ideals}.
\end{proof}

\noindent
Next we prove the $p$-adic analytic Nullstellensatz. First, a preliminary observation:
 
\begin{lemma}\label{lem:zeroes p-adic radical}
Let $I$ be an ideal of $\k\lfloor X\rfloor$. Then $\operatorname{Z}_K(I) = \operatorname{Z}_K(\sqrt[\leftroot{2}\uproot{5}p]{I})$,
and if $\operatorname{Z}_K(I)\neq\emptyset$ then some $p$-adic prime ideal of $\k\lfloor X\rfloor$ contains $I$.
\end{lemma}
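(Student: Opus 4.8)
The plan is to prove the two assertions in turn. For the first, the inclusion $\operatorname{Z}_K(I)\subseteq\operatorname{Z}_K(\sqrt[\leftroot{2}\uproot{5}p]{I})$ is what requires work; the reverse is immediate since $I\subseteq\sqrt[\leftroot{2}\uproot{5}p]{I}$. So let $a\in\operatorname{Z}_K(I)$ and let $f\in\sqrt[\leftroot{2}\uproot{5}p]{I}=\sqrt{I\Lambda_F\k\lfloor X\rfloor}\cap\k\lfloor X\rfloor$, where $F=\Frac(\k\lfloor X\rfloor)$; I must show $f(a)=0$. As observed just before the statement, the evaluation map $g\mapsto g(a)\colon\k\lfloor X\rfloor\to K$ is a ring morphism whose kernel $P_a$ is a $p$-adic prime ideal of $\k\lfloor X\rfloor$, and it contains $I$ since $a\in\operatorname{Z}_K(I)$. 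The key point is that $P_a$ being $p$-adic means $P_a=\sqrt{P_a\Lambda_F\k\lfloor X\rfloor}\cap\k\lfloor X\rfloor$, so $\sqrt[\leftroot{2}\uproot{5}p]{P_a}=P_a$; combined with $I\subseteq P_a$ and monotonicity of $\sqrt[\leftroot{2}\uproot{5}p]{\phantom{I}}$ (noted at the end of the subsection on $p$-radicals), we get $\sqrt[\leftroot{2}\uproot{5}p]{I}\subseteq\sqrt[\leftroot{2}\uproot{5}p]{P_a}=P_a$, hence $f\in P_a$, i.e.\ $f(a)=0$. This proves $\operatorname{Z}_K(I)\subseteq\operatorname{Z}_K(\sqrt[\leftroot{2}\uproot{5}p]{I})$ and therefore equality.

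For the second assertion, suppose $\operatorname{Z}_K(I)\neq\emptyset$ and pick any $a\in\operatorname{Z}_K(I)$. Then $P_a$ is a $p$-adic prime ideal of $\k\lfloor X\rfloor$ containing $I$, exactly as in the previous paragraph: $P_a$ is prime as the kernel of a morphism to the field $K$, it is $p$-adic by Corollary~\ref{cor:p-adic prime ideals} applied to the (regular, by Lemma~\ref{lem:W-Lemma}) ring $\k\lfloor X\rfloor$ and the $p$-adically closed field $K$, and it contains $I$ because $f(a)=0$ for every $f\in I$. This is precisely what is claimed.

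I expect no serious obstacle here: the statement is essentially a bookkeeping consequence of two facts already in place — that evaluation kernels $P_a$ are $p$-adic prime ideals (Corollary~\ref{cor:p-adic prime ideals}, using regularity of $\k\lfloor X\rfloor$ from Lemma~\ref{lem:W-Lemma} and that $K\models p\operatorname{CVF}$ is $p$-adically closed), and that $\sqrt[\leftroot{2}\uproot{5}p]{\phantom{I}}$ is the smallest $p$-adic ideal containing its argument and is monotone. The only mild subtlety is to make sure $F=\Frac(\k\lfloor X\rfloor)$ is formally $p$-adic so that the $p$-radical is a genuinely meaningful operation (not all of $\k\lfloor X\rfloor$), but this too is recorded in Corollary~\ref{cor:p-adic prime ideals}.
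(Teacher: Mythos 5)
Your proof is correct and follows the same route as the paper: both arguments reduce everything to the fact that the evaluation kernel $P_a$ is a $p$-adic prime ideal containing $I$ (hence containing $\sqrt[\leftroot{2}\uproot{5}p]{I}$, the smallest $p$-adic ideal containing $I$), which gives both assertions at once. The extra checks you record (regularity of $\k\lfloor X\rfloor$, formal $p$-adicity of $F$) are exactly the ingredients the paper sets up in the remark preceding the lemma.
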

\begin{proof} For each $a\in\smallo^m$, the
  kernel of the ring morphism $f\mapsto f(a)\colon \k\lfloor X\rfloor \to K$ is a $p$-adic prime ideal $P_a$ of $\k\lfloor X\rfloor$,  by Corollary~\ref{cor:p-adic prime ideals}.
Let $f\in\sqrt[\leftroot{2}\uproot{5}p]{I}$ and
  $a\in\operatorname{Z}_K(I)$. 
  Then $P_a$   contains~$I$ and thus
also $\sqrt[\leftroot{2}\uproot{5}p]{I}$; in particular $f\in P_a$ and so $f(a)=0$.
\end{proof}

\begin{theorem}[$p$-adic analytic Nullstellensatz]\label{thm:p-adic NSS}
Let $I$ be an ideal of $\k\lfloor X\rfloor$. Then
$$\operatorname{I}\!\big(\!\operatorname{Z}_K(I)\big)=\sqrt[\leftroot{2}\uproot{5}p]{I}.$$
\end{theorem}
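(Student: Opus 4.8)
The plan is to mirror the proofs of Theorems~\ref{thm:Rueckert} and~\ref{thm:Risler}, with the $p$-radical $\sqrt[\leftroot{2}\uproot{5}p]{I}$ playing the role of the radical (resp.\ the real radical) and Corollary~\ref{cor:p-adic spec} in place of Corollary~\ref{cor:spec} (resp.\ Corollary~\ref{cor:Risler spec}); recall that throughout this subsection $K\models p\operatorname{CVF}_W$. The inclusion $\sqrt[\leftroot{2}\uproot{5}p]{I}\subseteq\operatorname{I}\!\big(\!\operatorname{Z}_K(I)\big)$ is immediate: by Lemma~\ref{lem:zeroes p-adic radical} we have $\operatorname{Z}_K(I)=\operatorname{Z}_K(\sqrt[\leftroot{2}\uproot{5}p]{I})$, and trivially $\sqrt[\leftroot{2}\uproot{5}p]{I}\subseteq\operatorname{I}\!\big(\!\operatorname{Z}_K(\sqrt[\leftroot{2}\uproot{5}p]{I})\big)$. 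So the content is the reverse inclusion $\operatorname{I}\!\big(\!\operatorname{Z}_K(I)\big)\subseteq\sqrt[\leftroot{2}\uproot{5}p]{I}$.

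First I would handle the case in which no $p$-adic prime ideal of $\k\lfloor X\rfloor$ contains $I$: then $\operatorname{Z}_K(I)=\emptyset$ by Lemma~\ref{lem:zeroes p-adic radical}, so $\operatorname{I}\!\big(\!\operatorname{Z}_K(I)\big)=\k\lfloor X\rfloor$; and since $\k\lfloor X\rfloor$ is noetherian (Lemma~\ref{lem:W-Lemma}), a proper $\sqrt[\leftroot{2}\uproot{5}p]{I}$ would have a minimal prime over it, which would be $p$-adic by Lemma~\ref{lem:p-adic min primes}, contrary to assumption; hence $\sqrt[\leftroot{2}\uproot{5}p]{I}=\k\lfloor X\rfloor$ as well. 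So I may assume some $p$-adic prime ideal contains $I$, hence also contains $\sqrt[\leftroot{2}\uproot{5}p]{I}$. Using noetherianity once more, write $\sqrt[\leftroot{2}\uproot{5}p]{I}=P_1\cap\cdots\cap P_k$ with $P_1,\dots,P_k$ the (finitely many) minimal primes over $\sqrt[\leftroot{2}\uproot{5}p]{I}$; these are $p$-adic by Lemma~\ref{lem:p-adic min primes}. Since $\operatorname{Z}_K$ takes finite intersections of ideals to unions of zero sets and $\operatorname{Z}_K(I)=\operatorname{Z}_K(\sqrt[\leftroot{2}\uproot{5}p]{I})$, we get $\operatorname{I}\!\big(\!\operatorname{Z}_K(I)\big)=\operatorname{I}\!\big(\!\operatorname{Z}_K(P_1)\big)\cap\cdots\cap\operatorname{I}\!\big(\!\operatorname{Z}_K(P_k)\big)$, so it is enough to prove $\operatorname{I}\!\big(\!\operatorname{Z}_K(P)\big)=P$ for an arbitrary $p$-adic prime ideal $P$ of $\k\lfloor X\rfloor$.

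The inclusion $P\subseteq\operatorname{I}\!\big(\!\operatorname{Z}_K(P)\big)$ is trivial, so I would fix $f\in\k\lfloor X\rfloor\setminus P$ and produce some $a\in\operatorname{Z}_K(P)$ with $f(a)\neq 0$. Let $F:=\Frac(\k\lfloor X\rfloor/P)$; by Corollary~\ref{cor:P p-adic => F form p-adic} this field is formally $p$-adic, so it has a $p$-adically closed field extension $\Omega$. Let $\sigma\colon\k\lfloor X\rfloor\to\Omega$ be the composition of the residue map $\k\lfloor X\rfloor\to\k\lfloor X\rfloor/P$ with the natural inclusions $\k\lfloor X\rfloor/P\hookrightarrow F\hookrightarrow\Omega$, so that $\ker\sigma=P$. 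Picking generators $g_1,\dots,g_n$ of $P$, we have $\Omega\models\varphi\big(\sigma(f),\sigma(g_1),\dots,\sigma(g_n)\big)$ where $\varphi(y_0,y_1,\dots,y_n)$ is the quantifier-free $\mathcal L_{\operatorname{Mac}}$-formula $y_0\neq 0\wedge y_1=\cdots=y_n=0$. Corollary~\ref{cor:p-adic spec} then yields $a\in\smallo^m$ with $K\models\varphi\big(f(a),g_1(a),\dots,g_n(a)\big)$, i.e.\ $a\in\operatorname{Z}_K(P)$ and $f(a)\neq 0$; hence $f\notin\operatorname{I}\!\big(\!\operatorname{Z}_K(P)\big)$, so $\operatorname{I}\!\big(\!\operatorname{Z}_K(P)\big)\subseteq P$ and we are done.

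I do not expect a genuine obstacle here: the substantive work is already packaged in the preliminaries. The step meriting the most attention is the reduction to $p$-adic prime ideals---one must check that noetherianity of $\k\lfloor X\rfloor$ together with the persistence of $p$-adicness under passage to minimal primes (Lemma~\ref{lem:p-adic min primes}) really does reduce to the prime case---and the use of Corollary~\ref{cor:P p-adic => F form p-adic}, which guarantees that $\Frac(\k\lfloor X\rfloor/P)$ is formally $p$-adic and hence that the $p$-adically closed field $\Omega$ to specialize into actually exists. (The formula-level bookkeeping and the application of Corollary~\ref{cor:p-adic spec} are then exactly as in the earlier cases.)
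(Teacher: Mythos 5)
Your proposal is correct and follows essentially the same route as the paper: dispose of the case where no $p$-adic prime contains $I$ via Lemmas~\ref{lem:zeroes p-adic radical} and~\ref{lem:p-adic min primes}, reduce to a $p$-adic prime $P$ by writing $\sqrt[\leftroot{2}\uproot{5}p]{I}$ as an intersection of its ($p$-adic) minimal primes, and then specialize from a $p$-adically closed extension of $\Frac(\k\lfloor X\rfloor/P)$ (formally $p$-adic by Corollary~\ref{cor:P p-adic => F form p-adic}) using Corollary~\ref{cor:p-adic spec}. The only point you gloss over—also implicit in the paper—is that to get $\Omega$ one first fixes a $p$-valuation on $F$ and then takes a $p$-adic closure of that $p$-valued field; this is harmless.
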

\begin{proof}
If no $p$-adic prime ideal of  $\k\lfloor X\rfloor$ contains $I$, then 
 $\operatorname{Z}_K(I)=\emptyset$ by Lemma~\ref{lem:zeroes p-adic radical} and thus
 $\operatorname{I}\!\big(\!\operatorname{Z}_K(I)\big)=\k\lfloor X\rfloor=
\sqrt[\leftroot{2}\uproot{5}p]{I}$
by Lemma~\ref{lem:p-adic min primes}.
Hence we can assume
that some $p$-adic prime ideal of $\k\lfloor X\rfloor$ contains $I$.
Lemma~\ref{lem:p-adic min primes} then yields $p$-adic prime ideals~$P_1,\dots,P_k$ ($k\geq 1$) of $\k\lfloor X\rfloor$
with
$\sqrt[\leftroot{2}\uproot{5}p]{I}=P_1\cap\cdots\cap P_k$.
Using Lemma~\ref{lem:zeroes p-adic radical} we     arrange that   $I$ itself is a $p$-adic prime ideal;
we need to show that then~$\operatorname{I}\!\big(\!\operatorname{Z}_K(I)\big)=I$. 
The fraction field $F$ of~$R:=\k\lfloor X\rfloor/I$ 
is formally $p$-adic, by Corollary~\ref{cor:P p-adic => F form p-adic}.
Equip  $F$   with some  $p$-valuation,
let $\Omega$ be a $p$-adically closed extension of this $p$-valued field, and let $\sigma\colon \k\lfloor X\rfloor\to \Omega$ be the
composition of~$f\mapsto f+I\colon\k\lfloor X\rfloor\to R$ with the natural inclusion~$R\subseteq \Omega$.
Arguing as in the proof of Theorem~\ref{thm:Rueckert}, using Corollary~\ref{cor:p-adic spec} instead of
Corollary~\ref{cor:spec},  then yields~$\operatorname{I}\!\big(\!\operatorname{Z}_K(I)\big)\subseteq I$.
\end{proof}

\noindent
Recall that if $\k$ is $p$-adically closed, then the $t$-adically valued field $\k(\!(t^*)\!)$ is a model of $p\operatorname{CVF}$. Hence in a similar way as Theorem~\ref{thm:Rueckert} implied Corollary~\ref{cor:Rueckert, 1} or
  Theorem~\ref{thm:Risler} implied Corollary~\ref{cor:Risler, 1}, from Theorem~\ref{thm:p-adic NSS} we get the Nullstellensatz for formal power series over $p$-adically closed fields (Theorem~C from the introduction):
  
\begin{cor} \label{cor:p-adic NSS, 1}
Suppose $\k$ is $p$-adically closed, and let $R:=\k [[ X ]]$ and~$F:=\Frac(\k [[ X ]])$. 
Let $f, g_1,\dots,g_n\in \k[[X]]$ be such that for all $a\in t\k[[t]]^m$:
$$ g_1(a)=\cdots=g_n(a)=0 \quad\Longrightarrow\quad f(a)=0.$$ Then there are $k\geq 1$, $g\in\Z[\gamma(F)]$, and~$h_1,\dots,h_n\in R[\gamma(F)]$,    such that 
$$f^{k}(1-pg)=g_1h_1+\cdots+g_nh_n.$$ 
\end{cor}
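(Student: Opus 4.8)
The plan is to deduce this from Theorem~\ref{thm:p-adic NSS}, applied to the $t$-adically valued field $K:=\k(\!(t^*)\!)$ and to the Weierstrass system $W$ of formal power series over $\k$, so that $\k\lfloor X\rfloor = \k[[X]] = R$. Since $\k$ is $p$-adically closed it is $p$-euclidean, hence weakly $p$-euclidean by Lemma~\ref{lem:p-euclidean}, and $K$ is a model of $p\operatorname{CVF}$ with its natural infinitesimal $W$-structure (restricted from the Hahn field $\k(\!(t^\Q)\!)$); thus $K\models p\operatorname{CVF}_W$. Its maximal ideal is $\smallo=\bigcup_{d\geq 1}t^{1/d}\k[[t^{1/d}]]$. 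Set $I:=(g_1,\dots,g_n)$, the ideal of $R$ generated by the $g_i$, so that $\operatorname{Z}_K(I)=\{a\in\smallo^m: g_1(a)=\cdots=g_n(a)=0\}$.

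First I would check that $f\in\operatorname{I}\!\big(\!\operatorname{Z}_K(I)\big)$. Let $a\in\operatorname{Z}_K(I)$; I want $f(a)=0$. Pick $d\geq 1$ with each coordinate of $a$ lying in $t^{1/d}\k[[t^{1/d}]]$, and let $\theta_d$ be the automorphism $b(t)\mapsto b(t^d)$ of $\k(\!(t^*)\!)$. Being induced by $t\mapsto t^d$, it commutes with the evaluation maps $h\mapsto h(a)$ for $h\in W_m$ (cf.\ the Example in Section~\ref{sec:inf W-structures as structures}), and $\theta_d(a)\in t\k[[t]]^m$. Hence $g_i(\theta_d(a))=\theta_d(g_i(a))=0$ for all $i$, so by hypothesis $f(\theta_d(a))=0$, and applying $\theta_d^{-1}$ gives $f(a)=0$, as wanted. (Note this argument is vacuous, and harmless, when $\operatorname{Z}_K(I)=\emptyset$.)

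By Theorem~\ref{thm:p-adic NSS} we then have $f\in\operatorname{I}\!\big(\!\operatorname{Z}_K(I)\big)=\sqrt[\leftroot{2}\uproot{5}p]{I}=\sqrt{I\Lambda_F R}\cap R$, so $f^k\in I\Lambda_F R$ for some $k\geq 1$; write $f^k=\sum_{i=1}^n g_i\lambda_i$ with $\lambda_i=u_i/(1-pg_i')\in\Lambda_F R$, where $u_i\in R[\gamma(F)]$ and $g_i'\in\Z[\gamma(F)]$, $pg_i'\neq 1$. The product $\prod_{j=1}^n(1-pg_j')$ is nonzero (as $F$ is a field) and equals $1-pg$ for some $g\in\Z[\gamma(F)]$, since expanding it gives $1$ plus terms each divisible by $p$. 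Putting the $\lambda_i$ over this common denominator yields $f^k(1-pg)=\sum_{i=1}^n g_i h_i$ with $h_i:=u_i\prod_{j\neq i}(1-pg_j')\in R[\gamma(F)]$, which is the asserted identity.

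The only substantive point beyond quoting Theorem~\ref{thm:p-adic NSS} is the automorphism bookkeeping in the second paragraph, which converts the hypothesis — phrased over $t\k[[t]]^m$ — into the statement $f\in\operatorname{I}\!\big(\!\operatorname{Z}_K(I)\big)$ about the zero set inside $\smallo^m$; the clearing-of-denominators step is purely formal. I do not expect any real obstacle here, since this mirrors exactly how Theorem~\ref{thm:Rueckert} yielded Corollary~\ref{cor:Rueckert, 1} and Theorem~\ref{thm:Risler} yielded Corollary~\ref{cor:Risler, 1}.
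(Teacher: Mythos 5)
Your proposal is correct and follows exactly the route the paper intends (and only sketches): apply Theorem~\ref{thm:p-adic NSS} to $K=\k(\!(t^*)\!)\models p\operatorname{CVF}_W$ with $W$ the formal power series system, transfer the hypothesis from $t\k[[t]]^m$ to $\smallo^m$ via the $\mathcal L_W$-automorphisms $b(t)\mapsto b(t^d)$, and then clear denominators in the description of $\Lambda_F R$. The paper's own proof is just the remark that this works ``in a similar way as Theorem~\ref{thm:Rueckert} implied Corollary~\ref{cor:Rueckert, 1}'', so you have simply filled in the same argument in detail.
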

 
\begin{remarks} \mbox{}
Let $\k$, $R$, $F$ be as in the previous corollary.
\begin{enumerate}
\item Let $\k_0$ be a weakly $p$-euclidean subfield of $\k$ (such as $\k_0=\Q$).
If $f$ and each~$g_j$ 
in Corollary~\ref{cor:p-adic NSS, 1}
are in $\k_0[[X]]$, 
then we can take $g\in\Z[\gamma(F_0)]$ and~$h_j\in R_0[\gamma(F_0)]$, where $R_0:= \k_0[[X]]$ and $F_0:=\Frac(R_0)$.
Similarly with other Weierstrass systems over $\k_0$  in place of $\k_0[[X]]$, like
$\k_0[[X]]^{\operatorname{a}}$ or~$\k_0[[X]]^{\operatorname{da}}$.
\item With $\operatorname{Hom}_{\k}(\k[[X]],\k[[t]])$ denoting the set of $\k$-algebra morphisms $\k[[X]]\to\k[[t]]$,
 Corollary~\ref{cor:p-adic NSS, 1} can also be phrased succinctly as follows: for each ideal~$I$ of~$\k[[X]]$,
$$\sqrt[\leftroot{2}\uproot{5}p]{I} = \bigcap \big\{ \ker \lambda: \lambda\in\operatorname{Hom}_{\k}(\k[[X]],\k[[t]]),\ I\subseteq\ker\lambda\big\}.$$
\end{enumerate}
\end{remarks}

\noindent
In the rest of this subsection we let  $A:=\Q_p \{ X \}$,~$F:=\Frac(\Q_p \{ X \})$.  
Combining Corollaries~\ref{cor:IhatR} and \ref{cor:p-adic NSS, 1} yields:

\begin{cor}
Let   $g_1,\dots,g_n\in A$ and 
$$Z:=\big\{ a\in t\Q_p[[t]]^m:g_1(a)=\cdots=g_n(a)=0\big\}.$$ 
Then the ideal of all $f\in\Q_p[[X]]$ such that $f(a)=0$ for all $a\in Z$
is generated by   power series in $A=\Q_p\{X\}$.
\end{cor}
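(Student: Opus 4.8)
The plan is to identify the ideal in question with the extension to $\Q_p[[X]]$ of a $p$-radical ideal of $A=\Q_p\{X\}$, and then read off the generators. Put $\widehat A:=\Q_p[[X]]$; by Lemma~\ref{lem:W-Lemma} (applied to the Weierstrass system $W=(\Q_p\{X_1,\dots,X_m\})$ of convergent power series over $\Q_p$, whose $m$-th ring $W_m$ is $A$), the ring $\widehat A$ is the completion of $A$. Let $I:=(g_1,\dots,g_n)A$ and $J:=(g_1,\dots,g_n)\widehat A=I\widehat A$, and let
$$V:=\big\{f\in\widehat A : f(a)=0\text{ for all }a\in Z\big\}$$
be the ideal to be described. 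The goal is to prove $V=\sqrt[\leftroot{2}\uproot{5}p]{I}\,\widehat A$; since $A$ is noetherian (Lemma~\ref{lem:W-Lemma}), the ideal $\sqrt[\leftroot{2}\uproot{5}p]{I}$ of $A$ is then generated by finitely many $f_1,\dots,f_r\in A=\Q_p\{X\}$, whence $V=(f_1,\dots,f_r)\widehat A$ is generated by convergent power series, as required. (Even without noetherianity, $V$ is generated over $\widehat A$ by the subset $\sqrt[\leftroot{2}\uproot{5}p]{I}$ of $A$.)

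The first main step is to show $V=\sqrt[\leftroot{2}\uproot{5}p]{J}$, the $p$-radical of $J$ in $\widehat A=\Q_p[[X]]$. First I would observe that every $\Q_p$-algebra homomorphism $\lambda\colon\Q_p[[X]]\to\Q_p[[t]]$ is local: if $\lambda(X_i)=a_i$ had nonzero constant term $c$, then $X_i-c$ would be a unit of $\widehat A$ while $\lambda(X_i-c)=a_i-c$ is a non-unit of $\Q_p[[t]]$, a contradiction. Hence $\lambda$ is the substitution $f\mapsto f(a)$ for a unique $a=(a_1,\dots,a_m)\in(t\Q_p[[t]])^m$, and $J\subseteq\ker\lambda$ precisely when $a\in Z$. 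Applying Remark~(2) following Corollary~\ref{cor:p-adic NSS, 1} (with $\Q_p$ in the role of $\k$) to the ideal $J$ of $\Q_p[[X]]$ then gives
$$V=\bigcap\big\{\ker\lambda : \lambda\in\operatorname{Hom}_{\Q_p}(\Q_p[[X]],\Q_p[[t]]),\ J\subseteq\ker\lambda\big\}=\sqrt[\leftroot{2}\uproot{5}p]{J}.$$
Alternatively one can argue the two inclusions separately: $V\subseteq\sqrt[\leftroot{2}\uproot{5}p]{J}$ is exactly the content of Corollary~\ref{cor:p-adic NSS, 1}, while $\sqrt[\leftroot{2}\uproot{5}p]{J}\subseteq V$ follows by taking $K:=\Q_p(\!(t^*)\!)$ — which is a model of $p\operatorname{CVF}_W$ for $W$ the Weierstrass system of formal power series over $\Q_p$ — noting $Z\subseteq\operatorname{Z}_K(J)$ and invoking Theorem~\ref{thm:p-adic NSS}.

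The second step is to descend the $p$-radical through the completion: applying Corollary~\ref{cor:IhatR} to the Weierstrass system $W$ of convergent power series over $\Q_p$ and the ideal $I$ of $A=W_m$ yields $\sqrt[\leftroot{2}\uproot{5}p]{J}=\sqrt[\leftroot{2}\uproot{5}p]{I\widehat A}=\sqrt[\leftroot{2}\uproot{5}p]{I}\,\widehat A$. Combined with the first step this gives $V=\sqrt[\leftroot{2}\uproot{5}p]{I}\,\widehat A$, and the reduction in the opening paragraph finishes the proof. I do not expect a genuine obstacle: the only non-bookkeeping point is the equality $V=\sqrt[\leftroot{2}\uproot{5}p]{J}$, one inclusion of which is precisely Corollary~\ref{cor:p-adic NSS, 1} and the other simply records that $Z$ is ``large enough'', exactly as encoded by the $\operatorname{Hom}$-description in the remark after that corollary. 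The descent isomorphism $\sqrt[\leftroot{2}\uproot{5}p]{I\widehat A}=\sqrt[\leftroot{2}\uproot{5}p]{I}\,\widehat A$ of Corollary~\ref{cor:IhatR} — which ultimately rests on Artin approximation for $A\subseteq\widehat A$ — is what turns the conclusion into a statement about generators lying in $\Q_p\{X\}$.
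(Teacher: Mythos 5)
Your argument is correct and is exactly the paper's intended proof: the paper derives this corollary by "combining" Corollary~\ref{cor:IhatR} with Corollary~\ref{cor:p-adic NSS, 1}, i.e.\ by identifying the vanishing ideal of $Z$ in $\Q_p[[X]]$ with $\sqrt[\leftroot{2}\uproot{5}p]{I\,\Q_p[[X]]}=\sqrt[\leftroot{2}\uproot{5}p]{I}\,\Q_p[[X]]$ for $I=(g_1,\dots,g_n)A$, which is precisely your two steps. Your spelled-out details (locality/continuity behind the $\operatorname{Hom}$-description, or the alternative inclusion via Theorem~\ref{thm:p-adic NSS} over $\Q_p(\!(t^*)\!)$, and noetherianity of $A$ for finite generation) are all sound and consistent with the paper's framework.
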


\noindent
Since the $t$-adically valued field $\Q_p\{\!\{t^*\}\!\}$ is a model of $p\operatorname{CVF}$, arguing as in the proof of Corollary~\ref{cor:Rueckert, 1} we also obtain:

\begin{cor}  \label{cor:p-adic NSS, 2}
Let 
$f,g_1,\dots,g_n\in A$, and suppose    
 that for each $a\in \Q_p\{t\}^m$ with~$a(0)=0$ we have $$ g_1(a)=\cdots=g_n(a)=0 \quad\Longrightarrow\quad f(a)=0.$$ 
Then
  there are~$k\geq 1$, $g\in\Z[\gamma(F)]$,~$h_1,\dots,h_n\in A[\gamma(F)]$,    such that 
$$f^{k}(1-pg)=g_1h_1+\cdots+g_nh_n.$$ 
\end{cor}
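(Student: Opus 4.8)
The plan is to derive this directly from the $p$-adic analytic Nullstellensatz (Theorem~\ref{thm:p-adic NSS}), exactly as Corollary~\ref{cor:p-adic NSS, 1} was, and in parallel with the deduction of Corollary~\ref{cor:Rueckert, 2} from Theorem~\ref{thm:Rueckert}. I would apply Theorem~\ref{thm:p-adic NSS} with $W$ the Weierstrass system $(\Q_p\{X_1,\dots,X_m\})$ of convergent power series over $\k=\Q_p$ (which is $p$-adically closed, hence weakly $p$-euclidean) and with $K=\Q_p\{\!\{t^*\}\!\}$, carrying its $t$-adic valuation ring as the interpretation of $\preceq$, the $p$-valuation ring $\Z_p+\smallo_t$ as $\preceq_p$, the power predicates, and the infinitesimal $W$-structure restricted from $\Q_p(\!(t^*)\!)$ described in Section~\ref{sec:W-systems}. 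Since its $\mathcal L$-reduct is a model of $p\operatorname{CVF}$ (cf.~Proposition~\ref{prop:Belair}), we have $K\models p\operatorname{CVF}_W$; here $\k\lfloor X\rfloor=\Q_p\{X\}=A$, the maximal ideal of $K$ is $\smallo=\smallo_t$, and $F=\Frac(A)$.

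The substantive step is to check that, under the hypothesis, $f\in\operatorname{I}\!\big(\!\operatorname{Z}_K(I)\big)$ for $I:=g_1A+\dots+g_nA$. So let $a\in\operatorname{Z}_K(I)\subseteq\smallo_t^m$, that is $g_1(a)=\dots=g_n(a)=0$; I must show $f(a)=0$. This is the only real work, and it goes just as in the proofs of Corollaries~\ref{cor:Rueckert, 1} and~\ref{cor:Rueckert, 2}, via the automorphisms $\rho_d\colon b(t)\mapsto b(t^d)$ ($d\geq1$) of the $\mathcal L_W$-structure $\Q_p\{\!\{t^*\}\!\}$ recorded in the Example of Section~\ref{sec:inf W-structures as structures}. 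Choosing $d$ so large that every coordinate $a_i$ lies in $\Q_p\{\!\{t^{1/d}\}\!\}$, the tuple $\rho_d(a)$ lies in $(t\Q_p\{t\})^m$, hence is a tuple of convergent power series in the single variable $t$ vanishing at $0$. Applying $\rho_d$ (which commutes with the interpretations of the $W$-functions) to the equations $g_i(a)=0$ gives $g_i(\rho_d(a))=0$ for $i=1,\dots,n$, so the hypothesis yields $f(\rho_d(a))=0$, that is $\rho_d(f(a))=0$, whence $f(a)=0$ since $\rho_d$ is injective.

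With $f\in\operatorname{I}\!\big(\!\operatorname{Z}_K(I)\big)$ established, Theorem~\ref{thm:p-adic NSS} gives $f\in\sqrt[\leftroot{2}\uproot{5}p]{I}=\sqrt{I\Lambda_F A}\cap A$, so $f^k\in I\Lambda_F A$ for some $k\geq1$; that is, $f^k=g_1h_1'+\dots+g_nh_n'$ with $h_i'\in\Lambda_F A$. Writing $h_i'=u_i/(1-pg^{(i)})$ with $u_i\in A[\gamma(F)]$ and $g^{(i)}\in\Z[\gamma(F)]$, and clearing denominators by putting $1-pg:=\prod_{i=1}^n(1-pg^{(i)})$ --- which is again of the form $1-pg$ with $g\in\Z[\gamma(F)]$ and $pg\neq1$, since $\Z[\gamma(F)]$ is a ring and each factor is nonzero --- one obtains $f^k(1-pg)=g_1h_1+\dots+g_nh_n$ with $h_i:=u_i\prod_{j\neq i}(1-pg^{(j)})\in A[\gamma(F)]$, as desired. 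The one point needing care is the passage in the second paragraph from zeros lying in $\Q_p\{t\}^m$ with $a(0)=0$ (the hypothesis) to arbitrary zeros in $\smallo_t^m$: the substitution $t\mapsto t^d$ clears the denominators appearing in the exponents of a convergent Puiseux series, which is exactly what makes this work.
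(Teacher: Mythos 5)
Your proposal is correct and follows essentially the same route as the paper, which deduces this corollary from Theorem~\ref{thm:p-adic NSS} applied to $K=\Q_p\{\!\{t^*\}\!\}$ (a model of $p\operatorname{CVF}$ by Corollary~\ref{cor:char p-adically closed p-convex}) with the Weierstrass system of convergent power series, using the automorphisms $b(t)\mapsto b(t^d)$ to pass between zeros in $\smallo_t^m$ and zeros in $\Q_p\{t\}^m$ vanishing at $0$, exactly as in Corollaries~\ref{cor:Rueckert, 1} and~\ref{cor:Rueckert, 2}. The only nitpick is that the modelhood of $\Q_p\{\!\{t^*\}\!\}$ comes from Corollary~\ref{cor:char p-adically closed p-convex} rather than from Proposition~\ref{prop:Belair} (which gives model completeness); everything else, including the clearing of denominators in $\Lambda_F A$, is as in the paper.
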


\noindent
In the same way that Corollary~\ref{cor:Rueckert, 1} gave rise to Corollary~\ref{cor:Rueckert fns},
the last corollary also yields a version for $p$-adic analytic functions near $0$; this is Theorem~A in the introduction:

\begin{cor}\label{cor:p-adic NSS fns}
Let $f,g_1,\dots,g_n\colon U\to\Q_p$ be  analytic functions, where $U$ is an open neighborhood of $0$ in $\Q_p^m$, such that
for all $a\in U$:
$$ g_1(a)=\cdots=g_n(a)=0 \quad\Longrightarrow\quad f(a)=0.$$  
Then there are $k\geq 1$, $g\in\Z[\gamma(F)]$, $h_1,\dots,h_n\in A[\gamma(F)]$, such that in $F$,
with the germs of~$f,g_1,\dots,g_n$ at $0$ denoted by the same symbols:
\begin{equation}\label{eq:p-adic NSS fns}
f^{k}(1-pg)=g_1h_1+\cdots+g_nh_n.
\end{equation}
\end{cor}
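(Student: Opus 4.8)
The plan is to deduce this just as Corollaries~\ref{cor:Rueckert fns} and~\ref{cor:Risler fns} were deduced from their formal-power-series counterparts: pass to germs and invoke Corollary~\ref{cor:p-adic NSS, 2}. First I would let $f,g_1,\dots,g_n$ also denote the Taylor series at $0$ of the given analytic functions, viewed as elements of $A=\Q_p\{X\}$; then~\eqref{eq:p-adic NSS fns} is precisely the conclusion of Corollary~\ref{cor:p-adic NSS, 2} for these power series, so everything comes down to verifying the hypothesis of that corollary.

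The key step is therefore to show that for every $a=(a_1,\dots,a_m)\in\Q_p\{t\}^m$ with $a(0)=0$,
\[g_1(a)=\dots=g_n(a)=0\ \Longrightarrow\ f(a)=0.\]
Given such an $a$ with $g_j(a)=0$ for all $j$, I would pick $r>0$ small enough that each $a_i$ converges on the disc $\{c\in\Q_p:\abs{c}_p\leq r\}$, that $a(c)$ lies in $U$ and in the common domain of convergence of $f,g_1,\dots,g_n$ for all such $c$ (possible because $a(0)=0$, $U$ is open around $0$, and $a$ is continuous near $0$), and then use the compatibility of substitution with evaluation for convergent power series: for $\abs{c}_p\leq r$ one has $g_j(a)(c)=g_j(a(c))$, which vanishes because $g_j(a)=0$, so the hypothesis yields $f(a(c))=f(a)(c)=0$. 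Thus the convergent power series $f(a)\in\Q_p\{t\}$ vanishes on the whole disc $\abs{c}_p\leq r$; but if $f(a)\neq 0$, writing $f(a)=\sum_{j\geq d}c_jt^j$ with $c_d\neq 0$, one gets $f(a)(c)/c^d\to c_d\neq 0$ as $\abs{c}_p\to 0$, so $f(a)(c)\neq 0$ for all sufficiently small non-zero $c$ --- a contradiction. Hence $f(a)=0$, and Corollary~\ref{cor:p-adic NSS, 2} now provides $k\geq 1$, $g\in\Z[\gamma(F)]$, and $h_1,\dots,h_n\in A[\gamma(F)]$ satisfying~\eqref{eq:p-adic NSS fns}.

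This is a routine translation, with no serious obstacle; the only points needing a little attention are the simultaneous choice of the radius $r$ (so that the $a_i$ converge, the image $a(c)$ stays inside $U$, and $f,g_1,\dots,g_n$ converge at $a(c)$) and the elementary one-variable fact that a convergent $p$-adic power series vanishing on a disc about $0$ is identically zero.
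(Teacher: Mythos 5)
Your proposal is correct and is essentially the paper's intended argument: the paper derives this corollary from Corollary~\ref{cor:p-adic NSS, 2} exactly as Corollary~\ref{cor:Rueckert fns} is derived from Corollary~\ref{cor:Rueckert, 2}, by passing to germs and checking the hypothesis for $a\in\Q_p\{t\}^m$ with $a(0)=0$ via continuity and the one-variable identity principle. You have merely spelled out the details (choice of radius, compatibility of substitution with evaluation, and the vanishing argument) that the paper leaves implicit.
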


\begin{remark}  
Let   $f,g_1,\dots,g_n\colon U\to\Q_p$ be as in Corollary~\ref{cor:p-adic NSS fns}.
If $r,s\colon U\to\Q_p$ are analytic functions and
 $Z:=s^{-1}(0)$,
 then $\gamma\big(r(a)/s(a)\big)\in\Z_p$ for each~$a\in U\setminus Z$, by Lemma~\ref{lem:p-val gamma}, so we have a   continuous  function
 $$\gamma(r/s)\colon U\setminus Z \to\Z_p\quad\text{with $\gamma(r/s)(a) = \gamma\big(r(a)/s(a)\big)$ for $a\in U\setminus Z$.}$$
Hence if $k\geq 1$, $g\in\Z[\gamma(F)]$, and $h_1,\dots,h_n\in A[\gamma(F)]$ are as in
  the previous corollary, then   $g,h_1,\dots,h_n$
give rise to functions $g,h_1,\dots,h_n\colon U\setminus Z\to\Q_p$, where~$Z=s^{-1}(0)$   for some analytic $s\colon U\to\Q_p$ with non-zero germ at $0$,
such that the identity~\eqref{eq:p-adic NSS fns} above holds in the the ring of continuous functions  $U\setminus Z\to\Q_p$.
\end{remark}

\noindent
Finally, we obtain Theorem~B, a $p$-adic analytic version of Hilbert's 17th Problem:

\begin{cor}\label{cor:p-adic H17 fns}
Let $f,g\colon U\to\Q_p$ be  analytic functions, where $U$ is an open neighborhood of $0$ in $\Q_p^m$, such that
$\abs{f(a)}_p \leq \abs{g(a)}_p$ for all $a\in U$.
Then, with the germs of~$f$,~$g$ at~$0$ denoted by the same symbols, we have $f\in g\Lambda_F$.
\end{cor}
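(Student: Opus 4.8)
The plan is to derive this from the algebraic version Corollary~\ref{cor:p-adic H17}, applied to the particular model $K:=\Q_p\{\!\{t^*\}\!\}$ of $p\operatorname{CVF}_W$, where $W$ is the Weierstrass system of convergent power series rings over $\Q_p$ (as recalled in Section~\ref{sec:W-systems}, this $K$ carries an infinitesimal $W$-structure). For this $K$ the distinguished valuation ring is the $t$-adic ring $\Q_p[[t^*]]\cap K$, so $\smallo=\smallo_t$, while the $p$-valuation ring is $\Z_p+\smallo_t$; by the Examples in Section~\ref{sec:pCVF} the corresponding $p$-valuation $v_p$ is the lexicographic composite of the $t$-adic valuation with the $p$-adic valuation of the leading (lowest-order) coefficient. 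First I would replace $f$ and $g$ by their germs at $0$, regarded as elements of $A:=\Q_p\{X\}$, and put $F:=\Frac(A)$; if $g=0$ the hypothesis forces $f\equiv 0$ near $0$, hence $f=0\in g\Lambda_F$, so we may assume $g\neq 0$. By Corollary~\ref{cor:p-adic H17} it then suffices to prove that $f(a)\preceq_p g(a)$ for every $a\in\smallo_t^m$.

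Assume toward a contradiction that $f(a)\not\preceq_p g(a)$ for some $a=(a_1,\dots,a_m)\in\smallo_t^m$. Choose $d\geq 1$ with all $a_i\in\Q_p\{\!\{t^{1/d}\}\!\}$ and apply the automorphism $\phi\colon c(t)\mapsto c(t^d)$ of the $\mathcal L_W$-structure $K$ from the Example in Section~\ref{sec:inf W-structures as structures}; since $K$ is $p$-adically closed it has a unique $p$-valuation ring (Lemma~\ref{lem:def val}), which the field automorphism $\phi$ necessarily fixes, so $\phi$ also respects $\preceq_p$ (and the predicates $\operatorname{P}_n$). Replacing $a$ by $\phi(a)$ we arrange that each $a_i\in\Q_p\{t\}$ with $a_i(0)=0$, i.e.\ that $a$ is an analytic curve through the origin; by substitution (Corollary~\ref{cor:DL subst}, its $W$-structure version Corollary~\ref{cor:W-structure subst}) $f(a)$ and $g(a)$ then lie in $\Q_p\{t\}$, and still $f(a)\not\preceq_p g(a)$. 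Writing the leading terms $f(a)=c\,t^\mu+\cdots$ and $g(a)=d'\,t^\nu+\cdots$ (with $c\neq 0$, since $f(a)\not\preceq_p g(a)$ forces $f(a)\neq 0$, and $\nu:=\infty$ when $g(a)=0$), the inequality $f(a)\not\preceq_p g(a)$ unwinds, by the shape of $v_p$ above, into: either $\mu<\nu$, or $\mu=\nu$ and $|c|_p>|d'|_p$.

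Next I would substitute a small scalar. For $\tau\in\Q_p$ with $|\tau|_p$ sufficiently small, the point $a(\tau)\in\Q_p^m$ lies in $U$ (as $a$ is continuous with $a(0)=0$), and $f(a(\tau))=f(a)(\tau)$, $g(a(\tau))=g(a)(\tau)$ since composition of convergent $p$-adic power series commutes with evaluation near $0$. As the lowest-order term of a non-zero convergent one-variable power series dominates at small argument, $|f(a(\tau))|_p=|c|_p\,|\tau|_p^{\mu}$ and $|g(a(\tau))|_p=|d'|_p\,|\tau|_p^{\nu}$ (the latter being $0$ if $g(a)=0$). When $\mu<\nu$ the quotient $|c|_p|\tau|_p^{\mu}/\bigl(|d'|_p|\tau|_p^{\nu}\bigr)$ tends to $\infty$ as $\tau\to 0$, so for $|\tau|_p$ small we get $|f(a(\tau))|_p>|g(a(\tau))|_p$; when $\mu=\nu$, the inequality $|c|_p>|d'|_p$ gives the same for every $\tau\neq 0$. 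Either way $a(\tau)$ is a point of $U$ with $|f(a(\tau))|_p>|g(a(\tau))|_p$, contradicting the hypothesis. Hence $f(a)\preceq_p g(a)$ for all $a\in\smallo_t^m$, and Corollary~\ref{cor:p-adic H17} yields $f/g\in\Lambda_F$, i.e.\ $f\in g\Lambda_F$.

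The conceptual content is entirely in Corollary~\ref{cor:p-adic H17} (which rests on the quantifier elimination for $p\operatorname{CVF}$); the only mildly delicate point is the transfer in the last two paragraphs from a point of $\smallo_t^m$ in the Puiseux-series model down to an honest point of $U\subseteq\Q_p^m$. This uses the explicit lexicographic description of the $p$-valuation on $\Q_p\{\!\{t^*\}\!\}$ together with the elementary leading-term estimate for convergent $p$-adic power series, and requires a little care with the degenerate cases $f(a)=0$ (which is excluded) and $g(a)=0$. Beyond this bookkeeping I anticipate no real obstacle.
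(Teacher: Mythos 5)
Your proposal is correct and follows essentially the route the paper intends (it leaves the verification implicit): apply Corollary~\ref{cor:p-adic H17} to the model $K=\Q_p\{\!\{t^*\}\!\}$ of $p\operatorname{CVF}_W$ for the convergent Weierstrass system, reducing Puiseux points of $\smallo_t^m$ to analytic curves via the automorphisms $c(t)\mapsto c(t^d)$ and then comparing leading terms under evaluation at small $\tau\in\Q_p$, exactly as the paper passes from Corollary~\ref{cor:p-adic NSS, 2} to Corollary~\ref{cor:p-adic NSS fns}. The only cosmetic remark is that you need not invoke uniqueness of the $p$-valuation ring: the automorphism $c(t)\mapsto c(t^d)$ visibly maps $\Z_p+\smallo_t$ onto itself, so it preserves $\preceq_p$ directly.
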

\begin{proof}
If $g=0$,  then also $f=0$, hence we may assume $g\neq 0$.
The hypothesis yields $\abs{f(a)}_p \leq \abs{g(a)}_p$ for all $a\in t\Q_p\{t\}^m$  and so
$\abs{f(a)}_p \leq \abs{g(a)}_p$ for all $a\in\smallo^m$ where $\smallo=\bigcup_{d\geq 1}t^{1/d}\Q_p\{\!\{t^{1/d}\}\!\}$, the maximal ideal of $\Q_p\{\!\{t^*\}\!\}$. Hence $f/g\in\Lambda_F$ by Corollary~\ref{cor:p-adic H17}.
\end{proof}

\noindent
In the next section we show how to improve the preceding   corollaries \ref{cor:p-adic NSS fns} and~\ref{cor:p-adic H17 fns} by replacing     $A=\Q_p\{X\}$   above
  with the ring $\Q_p\langle X\rangle$ of restricted power series.

\section{Nullstellens\"atze for Restricted Power Series} \label{sec:rest}

\noindent
{\it In this section we let $\k$ be a field and  
$\abs{\,\cdot\,}\colon\k\to\R^{\geq}$ be a complete ultrametric absolute value
on~$\k$.}\/ We
let $R:=\{a\in\k:\abs{a}\leq 1\}$ and~$\fm:={\{a\in\k:\abs{a}<1\}}$ be the valuation ring
of $\abs{\,\cdot\,}$  and its maximal ideal, respectively. The residue field of~$R$ is denoted by
$\bar{R}:=R/\fm$, with residue morphism
$a\mapsto\bar{a}\colon R\to\bar{R}$.  Whenever appropriate we view $\k$ as an $\mathcal L_{\preceq}$-structure
by equipping it with the dominance relation $\preceq$ associated to $R$: $a\preceq b:\Leftrightarrow \abs{a}\leq\abs{b}$,
for $a,b\in\k$. 

\subsection{Restricted power series}
The set of all formal power series
$$f = \sum_\alpha f_\alpha X^\alpha \in \k[[X]] \quad \text{such that  
    $\abs{f_\alpha}\to 0$ as $\abs{\alpha}\to\infty$}$$
forms a $\k[X]$-subalgebra of
$\k[[X]]=\k[[X_1,\dots,X_m]]$, called the ring of {\it
  restricted power series} with coefficients in $\k$.  Here, as earlier, $\alpha=(\alpha_1,\dots,\alpha_m)$ ranges over all multiindices in~$\N^m$, and~$\abs{\alpha}=\alpha_1+\cdots+\alpha_m$.
 The Gauss norm on
$\k[X]$ extends to an ultrametric absolute value on the domain $\kX$  by setting
$$\abs{f}:=\max_\alpha\ \abs{f_\alpha} \qquad\text{for   $f\neq 0$ as above.}$$
(See \cite[\S{}1.5.3, Corollary~2]{BGR}.) The extension of this ultrametric absolute value on~$\kX$ 
to an  ultrametric absolute value on $\Frac(\kX)$ is also denoted by $\abs{\,\cdot\,}$.

The   $f\in\kX$ with $\abs{f}\leq 1$ form an $R[X]$-sub\-al\-ge\-bra~$\RX$ of
$\kX$, the ring of restricted power series with coefficients in~$R$.
This is the completion of its subring~$R[X]$ with respect to the $\fm R[X]$-adic topology on $R[X]$. (Cf., e.g., \cite[\S{}3]{GS}.)
The natural inclusion $R[X]\to\RX$ induces an embedding $\overline{R}[X]\to \RX/\frak m\RX$, via which we
identify $\overline{R}[X]$ with a subring of~$\RX/\frak m\RX$.
For each $f\in\RX$ there is some~$d\in\N$ such that $\abs{f_\alpha}<1$ for~$\abs{\alpha}>d$; hence~$\overline{R}[X] = \RX/\frak m\RX$. We  denote the image of $f$ under the natural
surjection~$\RX\to \bar{R}[X]$ by $\bar{f}$.

\subsection{Substitution}
Let $Y=(Y_1,\dots,Y_n)$ be a tuple of distinct indeterminates and~$g=(g_1,\dots,g_m)\in R\langle Y\rangle^m$. We set $g^\alpha:=g_1^{\alpha_1}\cdots g_m^{\alpha_m}\in R\langle Y\rangle$  for each~$\alpha$. Then for $f\in\k\langle X\rangle$ as above we have $f_\alpha g^\alpha\to 0$ as $\abs{\alpha}\to\infty$, hence we may define
\begin{equation}\label{eq:comp}
f(g) = f(g_1,\dots,g_m) := \sum_\alpha f_\alpha g^\alpha\in\k\langle Y\rangle.
\end{equation}
The map $f\mapsto f(g)$ is the unique $\k$-algebra morphism $\kX\to\k\langle Y\rangle$ with $X_i\mapsto g_i$ for $i=1,\dots,m$;
it restricts to an $R$-algebra morphism $\RX\to R\langle Y\rangle$.
In particular, for $a\in R^m$ we have a $\k$-algebra morphism $f\mapsto f(a)\colon\k\langle X\rangle\to \k$, which restricts to
an $R$-algebra morphism $f\mapsto f(a)\colon \RX\to R$. 
Let $f\in\k\langle X\rangle$; then~$\abs{f(a)} \leq \abs{f}$ for each  $a\in R^m$, and moreover:

\begin{lemma}\label{lem:max principle}
Suppose $\bar{R}$ is infinite; then  
$\abs{f}=\max_{a\in R^m}  \abs{f(a)}$.
\end{lemma}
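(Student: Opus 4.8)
The plan is to reduce to the normalized case $\abs f=1$ and then argue modulo $\fm$. If $f=0$ both sides of the asserted identity are $0$, so I may assume $f\neq 0$ and fix a multiindex $\beta$ with $\abs{f_\beta}=\abs f$; then $f_\beta^{-1}f\in\RX$ has Gauss norm $1$, and $\abs{(f_\beta^{-1}f)(a)}=\abs{f_\beta}^{-1}\abs{f(a)}$ for every $a\in R^m$, so after replacing $f$ by $f_\beta^{-1}f$ it suffices to prove $\max_{a\in R^m}\abs{f(a)}=1$ whenever $f\in\RX$ with $\abs f=1$. Since $\abs{f(a)}\leq\abs f=1$ already holds for all $a\in R^m$, I only need to exhibit one $a\in R^m$ with $f(a)\in R^\times$. (This is the maximum modulus principle for Tate algebras over a field with infinite residue field.)

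For this, note that $\abs f=1$ means precisely that $\bar f\neq 0$ in $\bar R[X]=\RX/\fm\RX$. For any $a\in R^m$ the series $f(a)=\sum_\alpha f_\alpha a^\alpha$ converges in $R$, and reduction modulo $\fm$ annihilates all but the finitely many terms with $\abs{f_\alpha}=1$, so $\overline{f(a)}=\bar f(\bar a)$, where $\bar a:=(\bar{a_1},\dots,\bar{a_m})\in\bar R^m$. Hence it remains to find some $\bar a\in\bar R^m$ with $\bar f(\bar a)\neq 0$ and lift it coordinatewise to $a\in R^m$. Since $\bar R$ is a field — the residue field of the valuation ring $R$ — and is infinite by hypothesis, the nonzero polynomial $\bar f\in\bar R[X_1,\dots,X_m]$ does not vanish identically on $\bar R^m$ (a standard fact, by induction on $m$). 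Choosing such a $\bar a$ and a lift $a$, I get $\overline{f(a)}=\bar f(\bar a)\neq 0$, so $f(a)\notin\fm$, i.e.\ $\abs{f(a)}=1=\abs f$, as needed.

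No genuine difficulty arises here: the only two points requiring (entirely routine) verification are that reduction modulo $\fm$ commutes with evaluation at a point of $R^m$ — immediate once one observes that $\bar f$ is a polynomial, so the relevant sum is finite — and the elementary nonvanishing statement for nonzero multivariate polynomials over an infinite field.
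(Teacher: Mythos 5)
Your proof is correct and follows essentially the same route as the paper's: normalize to $\abs{f}=1$, observe that $\bar f\in\bar R[X]$ is then a nonzero polynomial, use that reduction mod $\fm$ commutes with evaluation and that a nonzero polynomial over the infinite field $\bar R$ has a nonvanishing point, and lift. The paper's version is merely terser, leaving the identity $\overline{f(a)}=\bar f(\bar a)$ and the nonvanishing fact implicit.
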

\begin{proof}
This is clear if $f=0$, so suppose $f\neq 0$.
Taking $b\in\k$ with $\abs{b}=\abs{f}$ and replacing $f$ by $b^{-1}f$ we arrange $\abs{f}=1$, so $\overline{f}\in\overline{R}[X]$ is
non-zero. Since $\bar{R}$ is infinite, we obtain an $a\in R^m$ such that $\overline{f(a)}=\overline{f}(\overline{a})\neq 0$, so $\abs{f(a)}=1=\abs{f}$.
\end{proof}

\noindent
Hence if $\bar{R}$ is infinite and $f,g\in\kX$ satisfy $\abs{f(a)}\leq\abs{g(a)}$ for every $a\in R^m$, then~$\abs{f}\leq\abs{g}$; similarly with $<$ in place of $\leq$.

\subsection{Weierstrass Division in $\kX$} This works a bit differently   than in $\k[[X]]$. To explain this,
suppose $m\geq 1$ and let $X'=(X_1,\dots,X_{m-1})$.
Every   $f\in\RX$
can be written uniquely in the form 
\begin{equation}\label{eq:f as power series over DXprime}
f=\sum_{i=0}^\infty f_iX_m^i
\quad\text{with $f_i(X')\in\RXprime$ for all $i\in\N$,}
\end{equation}
where the infinite sum
converges with respect to the  Gauss norm on $\kX$. An element 
$f$ of $\RX$, expressed as in \eqref{eq:f as power series over DXprime},  is called {\it regular in $X_m$ of degree $d\in\N$}
if~$c:=\bar{f_d}\in \overline{R}{}^\times$ and 
$\overline{f_i}=0$ for all $i>d$, so
$c^{-1}\overline{f}\in\overline{R}[X]$ is monic in $X_m$ of degree~$d$.
If~$f=f_1\cdots f_n$ where
$f_1,\dots,f_n\in \RX$, then $f$ is regular in $X_m$ of degree $d$ iff   $f_i$ is regular in $X_m$ of degree $d_i$ for $i=1,\dots,n$,
with $d_1+\cdots+d_n=d$.
Let~$d\in\N$, $d\geq 1$. The $\k$-algebra automorphism $\tau_d$ of $\k[[X]]$ with
$X_i \mapsto X_i+X_m^{d^{m-i}}$ for~$1\leq i<m$ and $X_m \mapsto X_m$
then restricts to an $R$-algebra automorphism of~$\RX$.
  
\begin{lemma}\label{NoetherZpX}
Let $d>1$ and  $f\in\RX$ be such that $\bar{f}\in\bar{R}[X]$ is non-zero of 
degree~$<d$. 
Then for some $e<d^m$ and some $u\in R^\times$,
$$\tau_d(f)\equiv uX_m^e + \text{terms of lower degree in $X_m$} \mod \fm\RX.$$
\textup{(}In particular,  
$\tau_d(f)$ is regular in $X_m$ of degree $<d^m$.\textup{)}
\end{lemma}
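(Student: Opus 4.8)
The plan is to reduce the statement about $\tau_d(f)$ modulo $\fm\RX$ to a purely combinatorial statement about polynomials over the residue field $\bar R$, and then invoke the classical fact (already recalled in the formal power series setting via the automorphisms $\tau_d$ of $\k[[X]]$, cf.\ \cite[I, \S4, Satz~3]{GR}) that a suitable shear turns a nonzero polynomial of bounded degree into one that is, up to a unit leading coefficient, monic in the last variable of controlled degree. First I would pass to residues: since $\tau_d$ restricts to an $R$-algebra automorphism of $\RX$ and reduces modulo $\fm\RX$ to the $\bar R$-algebra automorphism $\bar\tau_d$ of $\bar R[X]$ sending $X_i\mapsto X_i+X_m^{d^{m-i}}$ for $1\le i<m$ and $X_m\mapsto X_m$, we have $\overline{\tau_d(f)}=\bar\tau_d(\bar f)$ in $\bar R[X]$. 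So it suffices to show: if $\bar f\in\bar R[X]\setminus\{0\}$ has total degree $<d$, then $\bar\tau_d(\bar f)$, as a polynomial in $X_m$ with coefficients in $\bar R[X']$, has degree some $e<d^m$ in $X_m$, and its coefficient of $X_m^e$ lies in $\bar R^\times$ (indeed is a nonzero scalar in $\bar R$, hence a unit since $\bar R$ is a field); lifting that scalar to a unit $u\in R^\times$ then gives the displayed congruence.

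The key computation is the standard one: write $\bar f=\sum_{\abs\alpha<d} c_\alpha X^\alpha$ with $c_\alpha\in\bar R$; applying $\bar\tau_d$ replaces $X^\alpha=X_1^{\alpha_1}\cdots X_m^{\alpha_m}$ by $\prod_{i<m}(X_i+X_m^{d^{m-i}})^{\alpha_i}\cdot X_m^{\alpha_m}$, and the top $X_m$-degree contribution of this monomial is $X_m^{w(\alpha)}$ where $w(\alpha):=\alpha_m+\sum_{i<m}\alpha_i d^{m-i}$. The point is that $\alpha\mapsto w(\alpha)$ is injective on the set $\{\alpha:\alpha_i<d\ \text{for all }i\}$ — this is just uniqueness of base-$d$ representations, reading $(\alpha_1,\dots,\alpha_m)$ as digits — and every $\alpha$ with $\abs\alpha<d$ satisfies $\alpha_i<d$ for each $i$, so $w$ is injective on the support of $\bar f$. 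Hence among the monomials $X^\alpha$ appearing in $\bar f$ there is a unique one, say $X^{\alpha^*}$, maximizing $w(\alpha)$; set $e:=w(\alpha^*)$. Then the coefficient of $X_m^e$ in $\bar\tau_d(\bar f)$ is exactly $c_{\alpha^*}\in\bar R\setminus\{0\}$ (no other monomial can contribute a term of this $X_m$-degree, by injectivity of $w$ and since all the binomial expansions have $X_m$-degree $\le w(\alpha)$), and all higher powers of $X_m$ have coefficient $0$. Finally $e=w(\alpha^*)\le (d-1)(1+d+\cdots+d^{m-1})<d^m$, giving the bound. Choosing any $u\in R^\times$ with $\bar u=c_{\alpha^*}$ and observing that $\bar\tau_d(\bar f)$ is monic-up-to-the-unit $c_{\alpha^*}$ in $X_m$ of degree $e$, the parenthetical regularity claim follows: $\overline{\tau_d(f)}=\bar\tau_d(\bar f)$ has invertible coefficient $c_{\alpha^*}$ on $X_m^e$ and vanishing coefficients in higher $X_m$-degree, which is precisely the definition of $\tau_d(f)$ being regular in $X_m$ of degree $e<d^m$.

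The only mild subtlety — the part I would be most careful about — is bookkeeping: making sure that $\abs\alpha<d$ really forces every coordinate $\alpha_i<d$ (immediate, since $\alpha_i\le\abs\alpha<d$), that the weight function $w$ genuinely separates all multiindices whose coordinates are $<d$ (this is where the exponents $d^{m-i}$ in the definition of $\tau_d$ are used, and why one needs the hypothesis $d>1$ — for $d=1$ the ``digits'' collapse), and that reduction mod $\fm\RX$ commutes with $\tau_d$ and with the decomposition \eqref{eq:f as power series over DXprime}. None of these is deep; the argument is essentially a translation of the equicharacteristic Weierstrass-shear trick into the residue ring $\bar R[X]$, so I expect no real obstacle, only the need to state the base-$d$ injectivity cleanly.
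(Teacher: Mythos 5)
Your argument is correct and is essentially the standard proof the paper defers to (\cite[\S 5.2.4]{BGR}): reduce modulo $\fm\RX$ and use the injectivity of the base-$d$ weight $\alpha\mapsto\sum_i\alpha_i d^{m-i}$ on multiindices with all coordinates $<d$. The bookkeeping points you flag (each $\alpha_i\le\abs{\alpha}<d$, uniqueness of base-$d$ digits, compatibility of $\tau_d$ with reduction mod $\fm\RX$) all check out, and the bound $e\le(d-1)(1+d+\cdots+d^{m-1})=d^m-1$ is exactly right.
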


\noindent
For a proof of this and the following  standard facts  see, e.g.,~\cite{BGR}.
Here now is the fundamental property of  $\RX$:

\begin{theorem}[Weierstrass Division Theorem for $\RX$]\label{WD}
Let $g\in\RX$ be regular in $X_m$ of degree $d$. Then for each $f\in\RX$
there are uniquely determined elements~$q\in \RX$ and $r\in\RXprime[X_m]$
with $\deg_{X_m} r<d$ such that $f=qg+r$.
\end{theorem}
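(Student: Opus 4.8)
The plan is to reduce the Weierstrass Division Theorem for $\RX$ to a statement about the residue ring $\bar R[X]$ together with a successive-approximation argument that converges in the $\fm\RX$-adic (equivalently, Gauss-norm) topology on $\RX$. First I would normalize: after scaling $g$ by a unit of $R$ we may assume $\abs{g}=1$, so that $\bar g\in\bar R[X]$ is monic in $X_m$ of degree $d$. Uniqueness is the easy half: if $qg+r=q'g+r'$ with $\deg_{X_m}r,\deg_{X_m}r'<d$, then $(q-q')g=r'-r$; passing to $\bar R[X]$ and using that $\bar g$ is monic of degree $d$ in $X_m$ (a nonzerodivisor), one gets $\overline{q-q'}=0$ and $\overline{r'-r}=0$, i.e. $q-q'\in\fm\RX$ and $r'-r\in\fm\RXprime[X_m]$; dividing by a generator of $\fm$ and iterating shows $q=q'$, $r=r'$ since $\bigcap_k\fm^k\RX=0$ (the Gauss norm is a genuine absolute value).

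For existence I would set up the obvious $\bar R$-linear ``division operator.'' Write any $h\in\RX$ as $h=\sum_{i\ge 0}h_iX_m^i$ with $h_i\in\RXprime$; let $P(h):=\sum_{i<d}h_iX_m^i$ (the ``polynomial part'' of degree $<d$ in $X_m$) and $Q(h):=\sum_{i\ge d}h_iX_m^{i-d}$, so $h=P(h)+X_m^d\,Q(h)$ and $\abs{P(h)},\abs{Q(h)}\le\abs h$. Since $\bar g$ is monic of degree $d$ in $X_m$, we have $X_m^d\equiv \bar g-(\text{lower-degree terms})\pmod{\fm\RX}$; concretely, writing $g=X_m^d+g^-$ where $g=\sum g_iX_m^i$, $g^-=\sum_{i<d}g_iX_m^i+\sum_{i>d}g_iX_m^i+(g_d-1)X_m^d$, one checks $\abs{g^-}\le 1$ and, crucially, that the operator $T\colon\RX\to\RX$ defined by $T(h):=Q(h)\cdot g^- $ (or the appropriate variant forcing the correction to land in $\fm\RX$ — here one uses $\abs{g_i}<1$ for $i>d$ and $\abs{g_d-1}<1$, which is exactly regularity) is a contraction for the Gauss norm, say $\abs{T(h)}\le c\,\abs h$ with $c<1$, or at least $T(\RX)\subseteq\fm\RX$ and $T(\fm^k\RX)\subseteq\fm^{k+1}\RX$. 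Given $f$, I would then solve $f=qg+r$ by the Neumann-type series: set $q:=\sum_{k\ge 0}(-1)^k\,Q\big(T^k\text{-iterate applied to }f\big)$ and $r:=P$ of the corresponding residual, making this precise by the standard bookkeeping $f_0:=f$, $q_0:=Q(f_0)$, $r_0:=P(f_0)$, $f_{k+1}:=f_k-(q_k g+r_k)\in\fm^{k+1}\RX$, and $q:=\sum q_k$, $r:=\sum r_k$. Completeness of $\RX$ in the Gauss norm (it is the $\fm R[X]$-adic completion of $R[X]$, tensored up to $\k$) guarantees convergence, and $\abs q,\abs r\le\abs f$.

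I expect the main obstacle to be pinning down the contraction estimate cleanly — i.e., verifying that subtracting $q_k g$ really does push the remainder into the next power of $\fm$, which is precisely where \emph{regularity} of $g$ in $X_m$ (not mere nonvanishing of $\bar g$) is used: one needs $\overline g\in\bar R[X]$ to be \emph{monic} of degree $d$ in $X_m$ so that multiplication by $\bar g$ modulo $\fm$ is, on the relevant complementary $\bar R[X]$-submodule, the identity up to lower-order and higher-$\fm$ terms. The scaling normalization $\abs g=1$ is what makes ``monic mod $\fm$'' available. Everything else — uniqueness, the bound $\abs q,\abs r\le\abs f$, and the descent $f=f(X',g)$-style substitutions used later (Lemma~\ref{NoetherZpX} puts any $f$ into the regular case after a shear $\tau_d$) — is routine once the contraction is in hand. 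I would cite \cite{BGR} for the ambient normed-ring facts rather than reprove them.
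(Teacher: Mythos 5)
Your existence argument has a genuine gap exactly at the step you flagged. Writing $g=X_m^d+g^-$, your correction term is $f_{k+1}=f_k-(q_kg+r_k)=-Q(f_k)\,g^-$, and $g^-$ contains the coefficients $g_i$ with $i<d$, about which regularity says \emph{nothing}: they can have absolute value $1$. (Example: $m=1$, $g=X_1+1\in\RX$ is regular of degree $1$, but $g^-=1$.) So neither of your proposed justifications holds: $T(h)=Q(h)g^-$ does not satisfy $\abs{T(h)}\leq c\abs h$ with $c<1$, and $T(\fm^k\RX)\subseteq\fm^{k+1}\RX$ fails already for $k=0$. Regularity only makes the \emph{tail} small: $g_2:=\sum_{i>d}g_iX_m^i$ has $\abs{g_2}=\max_{i>d}\abs{g_i}<1$ (the max is attained since $\abs{g_i}\to 0$), while the low-order part is merely bounded by $1$. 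The standard repair (this is the proof in [BGR, \S{}5.2.1--5.2.2], which is what the paper cites instead of giving its own proof) is a two-step argument: first prove \emph{exact} division with the bounds $\abs q,\abs r\leq\abs f$ by the polynomial $g_1:=\sum_{i\leq d}g_iX_m^i$, whose leading coefficient $g_d$ is a unit of $\RXprime$ with $\abs{g_d}=\abs{g_d^{-1}}=1$ (divide each $X_m^i$, $i\geq d$, by $g_1$ and sum, using $\abs{f_i}\to0$); then iterate only against the genuinely small tail, via $f=q_1g_1+r_1=q_1g+r_1-q_1g_2$, so the residual $-q_1g_2$ shrinks by the factor $\abs{g_2}<1$ and the geometric series converges in the Gauss-norm-complete ring $\kX$, landing in $\RX$ by the norm bounds. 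Your one-step operator mixes the two roles, and the convergence it would need cannot be justified by a contraction or $\fm$-adic estimate.

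Your uniqueness argument also needs repair for the fields this paper actually uses: when the value group of $\k$ is dense (e.g.\ $\k$ algebraically closed, or the Levi-Civita field), $\fm$ has no generator and $\fm^2=\fm$, so $\bigcap_k\fm^k\RX=\fm\RX\neq 0$ and ``divide by a generator of $\fm$ and iterate'' breaks down. The correct argument is by scaling with the Gauss norm, which is multiplicative: if $qg+r=0$ with $\deg_{X_m}r<d$ and $(q,r)\neq(0,0)$, then $\abs q=\abs{qg}=\abs r$; dividing by $c\in\k^\times$ with $\abs c=\abs q$ (such $c$ exists since the Gauss norm is a max of attained coefficient values) we may assume $\abs q=\abs r=1$, and reducing mod $\fm$ gives $\bar q\bar g=-\bar r$ in $\bar R[X]$ with the left side of $X_m$-degree $\geq d$ and the right side of degree $<d$, a contradiction. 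With these two repairs your outline becomes the standard proof.
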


\noindent
Applying this theorem with $f=X_m^d$, we obtain:

\begin{cor}[Weierstrass Preparation Theorem for $\RX$]\label{WP}
Let $g\in\RX$ be regular in $X_m$ of degree $d$. There are a unique
monic polynomial   $w\in\RXprime[X_m]$ of degree $d$ and a unique  
$u\in \RX^\times$ such that $g=u  w$.
\end{cor}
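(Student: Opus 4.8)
The plan is to deduce Corollary~\ref{WP} directly from the Weierstrass Division Theorem for $\RX$ (Theorem~\ref{WD}), as indicated just before the statement: apply that theorem to the dividend $f:=X_m^d$ and the divisor $g$, which is legitimate since $g$ is regular in $X_m$ of degree $d$. This yields the unique $q\in\RX$ and $r\in\RXprime[X_m]$ with $\deg_{X_m}r<d$ such that $X_m^d=qg+r$. I would then set $w:=X_m^d-r\in\RXprime[X_m]$; since $\deg_{X_m}r<d$, this $w$ is monic in $X_m$ of degree $d$, and $gq=X_m^d-r=w$. So, granting that $q$ is a unit of $\RX$, we obtain $g=uw$ with $u:=q^{-1}$, which settles the existence part.

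The only point that needs a short argument is that $q\in\RX^\times$. First I would reduce $gq=w$ modulo $\fm\RX$, using the identification $\RX/\fm\RX=\bar R[X]=\bar R[X'][X_m]$. As $\bar R$ is a field, $\bar R[X']$ is an integral domain, hence so is $\bar R[X'][X_m]$, where the degree in $X_m$ is additive. Now $\bar g$ is monic in $X_m$ of degree $d$ (this is precisely what regularity of $g$ in $X_m$ of degree $d$ means), and $\bar w=X_m^d-\bar r$ is likewise monic in $X_m$ of degree $d$; comparing $X_m$-degrees in $\bar g\,\bar q=\bar w$ forces $\deg_{X_m}\bar q=0$, i.e.\ $\bar q\in\bar R[X']$, and then comparing leading coefficients in $X_m$ forces $\bar q=1$. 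Thus $q\in 1+\fm\RX$, and every such element is a unit of $\RX$: each element of $\fm\RX$ has Gauss norm $<1$, hence is topologically nilpotent, and $\RX$ is complete for the Gauss norm. (Alternatively, one can bypass the reduction modulo $\fm\RX$ and instead invoke the multiplicativity of the regularity degree for products recorded before Lemma~\ref{NoetherZpX}: from $w=qg$ with $w$ and $g$ both regular in $X_m$ of degree $d$ it follows that $q$ is regular in $X_m$ of degree $0$, whence a unit, cf.\ \cite{BGR}.)

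For uniqueness, suppose $g=u_1w_1=u_2w_2$ with $u_1,u_2\in\RX^\times$ and $w_1,w_2\in\RXprime[X_m]$ monic of degree $d$. The idea is to recognize $u_i^{-1}$ and $X_m^d-w_i$ as the quotient and remainder of a Weierstrass division of $X_m^d$ by $g$: indeed $X_m^d=u_i^{-1}g+(X_m^d-w_i)$, and $X_m^d-w_i\in\RXprime[X_m]$ has degree $<d$ in $X_m$ because $w_i$ is monic of degree $d$. The uniqueness assertion in Theorem~\ref{WD} then yields $u_1^{-1}=u_2^{-1}$ and $X_m^d-w_1=X_m^d-w_2$, that is, $u_1=u_2$ and $w_1=w_2$.

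I do not expect a genuine obstacle here, since this is a routine corollary of Theorem~\ref{WD}; the sole step deserving a moment's care is the passage from ``$\bar q=1$ in $\bar R[X]$'' to ``$q$ is a unit of $\RX$,'' which rests on the elementary fact $1+\fm\RX\subseteq\RX^\times$ (equivalently, the standard description of the units of the affinoid ring $\RX$) rather than on anything proved earlier in the excerpt.
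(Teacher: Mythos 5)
Your proof is correct and is exactly the route the paper intends: the text derives Corollary~\ref{WP} by the one-line remark ``applying Theorem~\ref{WD} with $f=X_m^d$,'' and your argument (set $w:=X_m^d-r$, reduce $qg=w$ modulo $\fm\RX$ to get $\bar q=1$, invert $q$ via $1+\fm\RX\subseteq\RX^\times$, and read off uniqueness from the uniqueness clause of Weierstrass Division) is the standard way to fill in that remark. The only cosmetic point is that the description $\RX^\times=R^\times(1+\fm\RX)$ you need is in fact recorded in the paper immediately after the corollary, so your Gauss-norm/completeness justification of it is consistent with, not external to, the text.
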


\noindent
We remark here that the group of units of $\RX$ is  
$$
\RX^\times=R^\times\big(1+\fm\RX\big)=\big\{ f\in \RX :\bar{f}\in \bar{R}[X]^\times=\bar{R}{}^\times \big\},
$$
hence $\kX^\times = \k^\times\big(1+\fm\RX\big)$.
The elements of the subgroup $1+\fm\RX$ of~$\RX^\times$ are called the {\it $1$-units}\/ of $\RX$.
For each $1$-unit $u$ of $\RX$ and  $a\in R^m$ we have $u(a)\sim 1$ (in the valued field $\k$).

\medskip
\noindent 
A non-zero element $f\in\kX$ is called {\em regular in $X_m$ of degree
  $d\in\N$} if there exists some $a\in \k$ such that $af\in \RX$ and $af$
is regular in $X_m$ of degree $d$ as defined above. (Note that then necessarily $\abs{a}=\abs{f}^{-1}$.) 
We denote the  unique extension of the  $R$-algebra automorphism $\tau_d$ of $\RX$  from Lem\-ma~\ref{NoetherZpX}
to a $\k$-algebra automorphism of $\kX$ also  by~$\tau_d$. By that lemma, for each non-zero $f\in\kX$  there is some   $d\in\N$, $d>1$, such that $\tau_d(f)$ is regular in $X_m$ of degree $<d^m$.

\medskip
\noindent 
From Theorem~\ref{WD} and Corollary~\ref{WP} we get:

\begin{cor}\label{WDforFX}\textup{(Weierstrass Division and Preparation 
      for $\kX$.)}  Let~$g\in\kX$ be regular in $X_m$ of degree
  $d$. Then every $f\in\kX$ can be uniquely written as~$f=qg+r$ with
  $q\in\kX$ and $r\in \k\langle X'\rangle[X_m]$, $\deg_{X_m} r < d$. In
  particular, there are a unique monic polynomial
  $w\in\RXprime[X_m]$ of degree $d$ and a unique   $u\in\kX^\times$ such
  that $g=u  w$.
\end{cor}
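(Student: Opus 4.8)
The plan is to bootstrap from the restricted case: Theorem~\ref{WD} and Corollary~\ref{WP} already provide Weierstrass division and preparation over $\RX$, and the passage from $\RX$ to $\kX$ should be nothing more than rescaling by a suitable nonzero constant of $\k$. Two elementary facts about the Gauss norm $\abs{\,\cdot\,}$ on the domain $\kX$ will be used repeatedly: it is multiplicative and takes values in $\abs{\k^\times}$, and $\abs{f}\leq 1$ for $f\in\kX$ just means that all coefficients of $f$ lie in $R$ (so that a polynomial in $\kXprime[X_m]$ of Gauss norm $\leq 1$ automatically has its coefficients in $\RXprime$).

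For the division statement, suppose $g\in\kX$ is regular in $X_m$ of degree $d$; by definition there is $a\in\k^\times$ with $\abs{a}=\abs{g}^{-1}$ such that $g_0:=ag\in\RX$ is regular in $X_m$ of degree $d$. Given $f\in\kX$, write $f=bf_0$ with $b\in\k$ and $f_0\in\RX$ ($b=0$ if $f=0$), and apply Theorem~\ref{WD} to $f_0$ and $g_0$: there are $q_0\in\RX$ and $r_0\in\RXprime[X_m]$ of $X_m$-degree $<d$ with $f_0=q_0g_0+r_0$, and then $f=(ab\,q_0)\,g+b\,r_0$ is the desired decomposition. For uniqueness it is enough to see that $q=q'$ in any two decompositions $f=qg+r=q'g+r'$: otherwise $h:=q-q'\neq 0$ and $s:=r'-r$ satisfy $hg=s$, and multiplying this relation by an appropriate nonzero constant of $\k$ (this is where multiplicativity of the Gauss norm and its value group being $\abs{\k^\times}$ enter) turns it into a relation $Hg_0=S$ with $H\in\RX\setminus\{0\}$ and $S\in\RXprime[X_m]$ of $X_m$-degree $<d$. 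But then $S=Hg_0=0\cdot g_0+S$ are two Weierstrass divisions of $S$ by $g_0$ in $\RX$, so the uniqueness part of Theorem~\ref{WD} forces $H=0$, a contradiction. Hence $q=q'$, and then $r=r'$.

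Preparation runs along the same lines. With $g_0=ag$ as above, Corollary~\ref{WP} gives a monic $w\in\RXprime[X_m]$ of degree $d$ and $u_0\in\RX^\times$ with $g_0=u_0w$; then $g=(a^{-1}u_0)\,w$ with $a^{-1}u_0\in\kX^\times$, proving existence. For uniqueness, from $g=uw=u'w'$ with $u,u'\in\kX^\times$ and $w,w'\in\RXprime[X_m]$ monic of degree $d$, rewrite $w'=\big((u')^{-1}u\big)\,w$: since $w$ is regular in $X_m$ of degree $d$, this is a Weierstrass division of $w'$ by $w$ in $\kX$ with remainder $0$, while $w'=1\cdot w+(w'-w)$ is another one (the two monic leading terms cancel, so $w'-w\in\RXprime[X_m]$ has $X_m$-degree $<d$). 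The uniqueness of division just established then gives $(u')^{-1}u=1$ and $w'=w$, whence also $u=u'$.

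I do not anticipate a genuine obstacle here: all the content sits in Theorem~\ref{WD} and Corollary~\ref{WP}. The only point requiring a little care is the bookkeeping with the Gauss norm in the division-uniqueness argument: one must choose the rescaling so that the resulting polynomial lands in $\RXprime[X_m]$ rather than merely in $\kXprime[X_m]$, since it is the uniqueness clause of the \emph{restricted} division theorem that is being invoked, and this is exactly what multiplicativity of the Gauss norm provides.
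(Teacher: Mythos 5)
Your proof is correct and takes exactly the route the paper intends: the paper derives Corollary~\ref{WDforFX} from Theorem~\ref{WD} and Corollary~\ref{WP} without spelling out the details, and your rescaling argument---including the Gauss-norm bookkeeping needed to land in $\RXprime[X_m]$ when invoking the uniqueness clause of the restricted division theorem---supplies precisely those details.
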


\noindent
Corollary~\ref{WDforFX} and the remark preceding it imply that $\kX$ is
noetherian. 
(If~$R$ is a discrete valuation ring, then $\RX$ is
also noetherian.) 
Euclidean Division in the polynomial ring~$\kXprime[X_m]$ and
the uniqueness part of Weierstrass Division  also  have another useful consequence:

\begin{cor}\label{cor:ED}
Let $w\in \RXprime[X_m]$ be monic; then the inclusion $\kXprime[X_m]\subseteq\kX$ induces a $\kXprime$-algebra isomorphism
$$\kXprime[X_m]/w \kXprime[X_m] \xrightarrow{\ \cong\ } \kX/w\kX.$$
\end{cor}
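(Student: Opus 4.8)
The plan is to deduce this directly from Weierstrass Division and Preparation for $\kX$ (Corollary~\ref{WDforFX}) together with ordinary Euclidean division by the monic polynomial $w$ in the polynomial ring $\kXprime[X_m]$. Set $d:=\deg_{X_m}w$. Since $w$ is monic of degree $d$, its reduction $\bar w\in\bar R[X]$ is monic in $X_m$ of degree $d$, so $w$ is regular in $X_m$ of degree $d$ as an element of $\kX$; thus Corollary~\ref{WDforFX} applies with $g=w$. (If $d=0$ then $w=1$ and both sides are the zero ring, so we may as well assume $d\geq 1$, but the argument below is uniform.)

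First I would observe that the composite of the inclusion $\kXprime[X_m]\hookrightarrow\kX$ with the quotient map $\kX\to\kX/w\kX$ is a $\kXprime$-algebra morphism which kills $w\kXprime[X_m]$, hence factors through a $\kXprime$-algebra morphism $\iota\colon\kXprime[X_m]/w\kXprime[X_m]\to\kX/w\kX$. It then remains to show that $\iota$ is bijective, equivalently that $\kXprime[X_m]+w\kX=\kX$ (surjectivity) and $w\kX\cap\kXprime[X_m]=w\kXprime[X_m]$ (injectivity).

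Surjectivity is immediate from Weierstrass Division: given $f\in\kX$, Corollary~\ref{WDforFX} yields $q\in\kX$ and $r\in\kXprime[X_m]$ with $\deg_{X_m}r<d$ and $f=qw+r$, so $f\equiv r\pmod{w\kX}$. For injectivity, let $f\in w\kX\cap\kXprime[X_m]$, say $f=hw$ with $h\in\kX$. Euclidean division of $f$ by the monic polynomial $w$ inside $\kXprime[X_m]$ gives $q_0,r_0\in\kXprime[X_m]$ with $\deg_{X_m}r_0<d$ and $f=q_0w+r_0$; then $r_0=f-q_0w=(h-q_0)w$ in $\kX$. Now $r_0=(h-q_0)w+0$ and $r_0=0\cdot w+r_0$ are two Weierstrass divisions of $r_0$ by $w$ (the remainders $0$ and $r_0$ both having $X_m$-degree $<d$), so the uniqueness assertion of Corollary~\ref{WDforFX} forces $h=q_0$ and $r_0=0$. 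Hence $f=q_0w\in w\kXprime[X_m]$, as desired.

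No step here presents a genuine obstacle; the only point requiring care is the correct invocation of the uniqueness clause of Corollary~\ref{WDforFX} in order to identify the Weierstrass quotient $h-q_0$ with $0$. I note in passing that the same argument, with Theorem~\ref{WD} in place of Corollary~\ref{WDforFX}, yields the analogous isomorphism $\RXprime[X_m]/w\RXprime[X_m]\xrightarrow{\ \cong\ }\RX/w\RX$, though that is not needed here.
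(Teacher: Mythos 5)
Your argument is correct and is exactly the route the paper indicates: the sentence introducing Corollary~\ref{cor:ED} says it follows from ``Euclidean Division in the polynomial ring $\kXprime[X_m]$ and the uniqueness part of Weierstrass Division,'' which is precisely your surjectivity-via-Weierstrass-division and injectivity-via-Euclidean-division-plus-uniqueness argument. Nothing to add.
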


\noindent
For future use we  note an application of the previous corollary, where we assume~${m=1}$ and write $X=X_1$.

\begin{cor}\label{cor:div poly}
Let $f,g\in \k[X]$ with $f\in g\RX$. Then $f\in gR[X](1+\fm R[X])^{-1}$.
\end{cor}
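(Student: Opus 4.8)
Let $m=1$, write $X=X_1$. Let $f,g\in\k[X]$ with $f\in g\RX$. Then $f\in gR[X]\bigl(1+\fm R[X]\bigr)^{-1}$.

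The plan is to start from the hypothesis $f=gh$ with $h\in\RX$ and show that $h$ can in fact be taken to be of the special form $r/(1+\varepsilon)$ with $r\in R[X]$ and $\varepsilon\in\fm R[X]$. First I would reduce to the case where $g$ is a unit times a monic polynomial. Since we are in one variable, the Gauss norm makes $\k[X]$ and $\kX$ into normed rings; dividing $f$ and $g$ by a scalar of suitable absolute value, I may assume $\abs{g}=1$, so $g\in R[X]$ and $\bar g\in\bar R[X]$ is non-zero. If $\bar g$ is already monic of some degree $d$ (after a scalar adjustment so that its leading coefficient is a unit), Corollary~\ref{WP} gives $g=uw$ with $w\in R[X]$ monic of degree $d$ and $u\in\RX^\times$. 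The unit $u$ has the form $u=c(1+\varepsilon_0)$ with $c\in R^\times$ and $\varepsilon_0\in\fm\RX$; but one checks that $u\in\k[X]$ forces $u\in\k[X]\cap\RX^\times$, and by Corollary~\ref{cor:div poly}'s companion facts about units, $u=c(1+\eta)$ with $c\in R^\times$, $\eta\in\fm R[X]$. (If $\bar g$ is not monic, I would not need Weierstrass at all: then $g$ itself, up to the unit $c$, already has leading coefficient in $R^\times$ — wait, that is exactly monic-up-to-unit — so this case does not arise separately once we normalize $\abs g=1$ and pass to the leading term; more carefully, write $g=c X^d(1+\cdots)$ only if $\bar g$ is a monomial, and in general $\bar g$ is some polynomial whose leading coefficient is a unit, hence $g$ is $R^\times$ times a monic polynomial plus $\fm$-corrections, which is precisely what Weierstrass Preparation packages.)

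With $g=c(1+\eta)w$, $w\in R[X]$ monic of degree $d$, I now invoke Corollary~\ref{cor:ED} (the case $m=1$, so $X'$ is empty and $\kXprime=\k$): the inclusion $\k[X]\subseteq\kX$ induces an isomorphism $\k[X]/w\k[X]\xrightarrow{\ \cong\ }\kX/w\kX$. The hypothesis $f\in g\RX\subseteq w\kX$ says $f\equiv 0$ in $\kX/w\kX$; since $f\in\k[X]$, the isomorphism forces $f\in w\k[X]$, say $f=wq$ with $q\in\k[X]$. Then from $f=gh=c(1+\eta)wh$ and $f=wq$, cancelling the non-zero-divisor $w$ in the domain $\kX$ gives $q=c(1+\eta)h$, so $h=q/\bigl(c(1+\eta)\bigr)$, hence $f=gh$ with $h\in\k[X]\cdot\RX^\times{}^{-1}$ — but I need $h$ written with numerator in $R[X]$. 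The remaining point is a norm estimate: since $\abs{h}\le?$ — actually $h=f/g$ in $\Frac(\kX)$ and $\abs{\,\cdot\,}$ is multiplicative-ish (it is an absolute value on $\Frac(\kX)$), so $\abs h=\abs f/\abs g=\abs f$. Thus $\abs h\le\abs f$; if $\abs f\le1$ we get $h\in\RX$, and combined with $h=q/\bigl(c(1+\eta)\bigr)$ and clearing denominators inside $R[X]$, one extracts $f\in gR[X](1+\fm R[X])^{-1}$. If $\abs f>1$, I would instead not normalize $g$ but $f$: the statement is homogeneous enough that scaling $f$ by a scalar of absolute value $\abs f^{-1}$ reduces to $\abs f\le1$, and the target set $gR[X](1+\fm R[X])^{-1}$ is a $\k$-subspace-like object stable under $\k^\times$-scaling of $f$ — more precisely $g\cdot\k^\times\cdot R[X](1+\fm R[X])^{-1}=g\cdot\bigl(\k\text{-span}\bigr)$, and scaling back at the end is harmless because $\k[X]=\bigcup_{n}\pi^{-n}R[X]$ for a uniformizer-like $\pi$ when the valuation is discrete, or more cleanly: write $f=\lambda f_0$ with $\abs{f_0}=1$, run the argument for $f_0$ to get $h_0=r_0/(1+\varepsilon_0)$, then $h=\lambda h_0$ and $\lambda r_0\in\k[X]$, and a final scalar bookkeeping puts it in the required form since $g$ absorbs the scalar.

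The main obstacle I anticipate is the bookkeeping around \emph{units and normalization}: making sure that the $\RX^\times$-factor produced by Weierstrass Preparation, once it is known to lie in $\k[X]$, genuinely has the shape $c(1+\eta)$ with $c\in R^\times$ and $\eta\in\fm R[X]$ (not merely $\eta\in\fm\RX$), and then handling the scalar $\lambda$ when $\abs f\ne1$ so that the numerator lands in $R[X]$ rather than just $\k[X]$. I expect the unit-shape claim to follow from the description $\RX^\times=R^\times(1+\fm\RX)$ together with the observation that a polynomial in $1+\fm\RX$ has all its non-leading... no — a \emph{polynomial} lying in $1+\fm\RX$ must have coefficients in $R$ with constant term in $1+\fm$ and all others in $\fm$, i.e.\ it lies in $1+\fm R[X]$ automatically because $\k[X]\cap\fm\RX=\fm R[X]$ (a power series of bounded degree is restricted iff its finitely many coefficients are bounded, and it is in $\fm\RX$ iff they are all in $\fm$). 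That identity $\k[X]\cap\fm\RX=\fm R[X]$ is the small lemma carrying the argument, and it is immediate from the definitions. Everything else is routine manipulation in the domain $\kX$ using that $w$ is a non-zero-divisor and that $\abs{\,\cdot\,}$ extends to an absolute value on $\Frac(\kX)$.
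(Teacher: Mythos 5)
Your argument follows essentially the same route as the paper's: normalize, apply Weierstrass Preparation to write $g=uw$ with $w\in R[X]$ monic, use Corollary~\ref{cor:ED} to transfer divisibility by $w$ from $\kX$ down to $\k[X]$, and finish with the identity $\k[X]\cap\fm\RX=\fm R[X]$ to control the unit. One step is asserted rather than proved: that the Weierstrass unit $u$ lies in $\k[X]$. This is true, but it is itself an instance of Corollary~\ref{cor:ED} (equivalently, of the uniqueness in Weierstrass Division: since $g\in\k[X]$ and $w$ is monic, the Weierstrass quotient of $g$ by $w$ must coincide with the Euclidean quotient in $\k[X]$); the paper in fact invokes Corollary~\ref{cor:ED} twice, once on $g$ to get $u\in R[X]$ and once on $f$ to get $uh\in R[X]$, and your argument needs both invocations as well. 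Finally, the closing worries about norms and about the case $\abs{f}>1$ are unnecessary: once $\abs{g}=1$, the hypothesis $f=gh$ with $h\in\RX$ gives $\abs{f}=\abs{g}\,\abs{h}\leq 1$ automatically, and the numerator $q=uh$ lies in $\RX\cap\k[X]=R[X]$ directly, with no norm estimate or rescaling of $f$ required.
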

\begin{proof}
This is clear if $g=0$, so we may assume $g\neq 0$.  Weierstrass Preparation then yields
$u\in R^\times (1+\fm\RX)$ and a monic $w\in R[X]$ such that $g=uw$. Then~$u\in R[X]$ by Corollary~\ref{cor:ED}.
Let $h\in\RX$ with $f=gh$. 
Then Corollary~\ref{cor:ED} also yields~$uh\in R[X]$.
Thus $h\in R[X](1+\fm R[X])^{-1}$ as required.
\end{proof}

\subsection{Restricted Weierstrass systems}
We now adapt Definition~\ref{def:DL} to the setting of restricted power series:

\begin{definition}\label{def:restricted W-system}
 A {\em restricted Weierstrass system over $\k$}   
is a family of rings~$(W_m)$ 
such that for all $m$ we have, with $X=(X_1,\dots,X_m)$:
\begin{itemize}
\item[(RW1)] $\k[X] \subseteq  W_m \subseteq \kX$;
\item[(RW2)] for each
permutation $\sigma$ of $\{1,\dots,m\}$,   the $\k$-algebra automorphism
$f(X) \mapsto  f(X_{\sigma(1)},\dots,X_{\sigma(m)})$ of $\kX$ maps $W_m$ onto itself;
\item[(RW3)] 
$W_m \cap \k\langle X'\rangle = W_{m-1}$ for $X'=(X_1,\dots,X_{m-1})$, $m\geq 1$;
\item[(RW4)] if $g\in W_m$ is regular of degree $d$ in $X_m$ ($m\geq 1$),
 then for every $f\in 
W_m$ there are~$q\in W_m$ and
  $r\in W_{m-1}[X_{m}]$ of degree $<d$ (in $X_{m}$) with
$f=qg+r$.  
\end{itemize}
\end{definition}

\noindent
Clearly the system 
$\bigl(\kX\bigr)$
consisting of all restricted power series rings with coefficients in $\k$ is a restricted Weierstrass system over $\k$.
In \cite{ideal4} it is shown that when $\operatorname{char}\k=0$, then
the rings~$\kX^{\operatorname{a}} := \kX\cap \k[[X]]^{\operatorname{a}}$ also
form a restricted Weierstrass system over $\k$.

\medskip
\noindent
In the rest of this section we let $W=(W_m)$ be a restricted Weierstrass system over~$\k$.
For an arbitrary tuple~$Y=(Y_1,\dots,Y_m)$   of distinct indeterminates,   we
let 
$$\k\lfloor Y\rfloor:= \big\{f\in \k[[ Y ]]: f(X)\in W_m\big\} \subseteq \k\langle Y\rangle, \qquad R\lfloor Y\rfloor:=R\langle Y\rangle\cap \k\lfloor Y\rfloor.$$
In the following we let  $Y=(Y_1,\dots,Y_n)$ be a tuple of distinct indeterminates disjoint from~$X=(X_1,\dots,X_m)$.
The elements of the subgroup $1+\fm R\lfloor X\rfloor$ of~$R\lfloor X\rfloor$ are called {\em $1$-units}\/ of
$\k\lfloor X\rfloor$. We have ${W_m \cap \kX^\times =W_m^\times}$. (Similar argument as in  the proof of Lemma~\ref{lem:units}.) Also using (RW1) this yields~$R\lfloor X\rfloor^\times=R^\times({1+\fm R\lfloor X\rfloor})$, 
 so~$\k\lfloor X\rfloor^\times=\k^\times(1+\fm R\lfloor X\rfloor)$.

\medskip
\noindent
As in the proof of Lemma~\ref{lem:DL subst} one shows, using (RW4) instead of (W4):

\begin{lemma}\label{lem:DL rest subst}
Let $f\in \k\lfloor X,Y\rfloor$ and $g \in R\lfloor X\rfloor^n$.
Then $f(X,g(X))\in \k\lfloor X\rfloor$.
\end{lemma}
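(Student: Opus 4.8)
The plan is to follow the template of the proof of Lemma~\ref{lem:DL subst} almost verbatim, substituting (RW5) for (W5) at each step. First I would write $g=(g_1,\dots,g_n)$ with each $g_i\in R\lfloor X\rfloor$, and observe that in order to apply Weierstrass Division in $\k\lfloor X,Y\rfloor$ I need a divisor that is regular in one of the $Y$-variables. The natural divisors here are $Y_i-g_i(X)$: as an element of $R\langle X,Y\rangle$, its reduction mod $\fm R\langle X,Y\rangle$ is $Y_i-\bar{g_i}(X)\in\bar R[X][Y_i]$, which is monic of degree $1$ in $Y_i$, so $Y_i-g_i(X)$ is regular in $Y_i$ of degree $1$. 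Moreover $Y_i-g_i(X)\in W_{m+n}$ since $Y_i\in\k[X,Y]\subseteq W_{m+n}$ by (RW1) and $g_i(X)\in W_m\subseteq W_{m+n}$ by (RW3) (after renaming variables via (RW2) if one is fussy about which block of indeterminates $g_i$ lives in). Thus (RW5) applies with this divisor.

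Then I would carry out the telescoping exactly as in the displayed computation in the proof of Lemma~\ref{lem:DL subst}: repeated application of (RW5) in $\k\lfloor X,Y\rfloor$ yields
\[
f(X,Y)=u_n(X,Y)(Y_n-g_n(X))+\cdots+u_1(X,Y)(Y_1-g_1(X))+R_n(X),
\]
with each $u_i\in\k\lfloor X,Y\rfloor$ and $R_n\in\k\lfloor X\rfloor$ (using (RW3) to see that the final remainder, having degree $0$ in the last $Y$-variable and lying in $\k\lfloor X,Y_1,\dots,Y_{n-1}\rfloor$, eventually descends to $\k\lfloor X\rfloor$). Substituting $Y_i=g_i(X)$ kills every term except the last, giving $f(X,g(X))=R_n(X)\in\k\lfloor X\rfloor$. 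One subtlety to spell out: the substitution $f(X,g(X))$ is \emph{a priori} only known to make sense as an element of $\k\langle X\rangle$ (this is the content of the map $f\mapsto f(g)$ from the Substitution subsection, applied with the first $m$ coordinates sent to the $X_i$ and the last $n$ sent to the $g_i$); the point of the lemma is precisely that it actually lands in the subring $\k\lfloor X\rfloor$, which is exactly what the telescoping identity provides once we plug in.

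I do not expect any genuine obstacle: the only thing to be slightly careful about is the bookkeeping in the inductive application of (RW5) — namely that at stage $i$ one divides the current remainder by $Y_i-g_i(X)$ and that the degree-zero-in-$Y_i$ remainder still lies in the relevant restricted Weierstrass ring, which is guaranteed by (RW5) producing $r\in W_{m+i-1}[Y_i]$ of degree $<1$, i.e.\ $r\in W_{m+i-1}=\k\lfloor X,Y_1,\dots,Y_{i-1}\rfloor$. If anything deserves a sentence of justification, it is that $R\lfloor X\rfloor\subseteq\k\lfloor X\rfloor$ and that $g_i\in R\lfloor X\rfloor$ guarantees $g_i\in\k\lfloor X\rfloor\subseteq\k\langle X\rangle$, so the composite substitution is well-defined in $\k\langle X\rangle$ to begin with. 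Accordingly the proof can be given in two or three lines by simply referring back to the proof of Lemma~\ref{lem:DL subst} with (RW5) in place of (W5) and with the divisors $Y_i-g_i(X)$, which are regular in $Y_i$ because the $g_i$ have coefficients in $R$.

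\begin{proof}
Since $R\lfloor X\rfloor\subseteq\k\lfloor X\rfloor\subseteq\k\langle X\rangle$, the element $f(X,g(X))\in\k\langle X\rangle$ is defined as in \eqref{eq:comp}. Write $g=(g_1,\dots,g_n)$. For each $i$ the element $Y_i-g_i(X)$ lies in $W_{m+n}$ by (RW1), (RW2), (RW3), and its reduction modulo $\fm R\langle X,Y\rangle$ is monic of degree $1$ in $Y_i$ because $g_i\in R\langle X\rangle$; hence $Y_i-g_i(X)$ is regular in $Y_i$ of degree $1$. As in the proof of Lemma~\ref{lem:DL subst}, repeated application of (RW5) in $\k\lfloor X,Y\rfloor$ (using (RW3) at the last step) yields $u_1,\dots,u_n\in\k\lfloor X,Y\rfloor$ and $R_n\in\k\lfloor X\rfloor$ with
\[
f(X,Y)=u_n(X,Y)(Y_n-g_n(X))+\cdots+u_1(X,Y)(Y_1-g_1(X))+R_n(X).
\]
Substituting $Y=g(X)$ gives $f(X,g(X))=R_n(X)\in\k\lfloor X\rfloor$.
\end{proof}
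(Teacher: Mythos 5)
Your proof is correct and is essentially the paper's own argument: the paper proves this lemma exactly by repeating the telescoping Weierstrass division from Lemma~\ref{lem:DL subst} with (RW5) in place of (W5), the divisors $Y_i-g_i(X)$ being regular in $Y_i$ of degree $1$ precisely because $g_i\in R\lfloor X\rfloor$. Your extra remarks (membership of $Y_i-g_i(X)$ in $W_{m+n}$ via (RW1)--(RW3), and that the substitution is a priori defined in $\k\langle X\rangle$) are accurate and harmless elaborations of the same proof.
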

 
\noindent
Using (RW2), (RW3) in place of (W2), (W3), respectively, this yields:
 
\begin{cor}\label{cor:DL rest subst}
For all $f\in \k\lfloor Y\rfloor$ and $g\in R\lfloor X\rfloor^n$ we have
$f(g(X))\in \k\lfloor X\rfloor$.
\end{cor}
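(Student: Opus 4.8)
The statement to prove, Corollary~\ref{cor:DL rest subst}, is the restricted-Weierstrass-system analogue of Corollary~\ref{cor:DL subst}, and its proof should mirror that one almost verbatim. Let me sketch the parallel structure.

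The plan is to follow the proof of Corollary~\ref{cor:DL subst} essentially verbatim, substituting (RW2), (RW3) for (W2), (W3) and Lemma~\ref{lem:DL rest subst} for Lemma~\ref{lem:DL subst}. First I would set $f':=f(X_1,\dots,X_n)$; iterating (RW3) gives $W_n\subseteq W_{m+n}$, so $f'\in\k\lfloor X_1,\dots,X_{m+n}\rfloor$. Next, picking a permutation $\sigma$ of $\{1,\dots,m+n\}$ with $\sigma(i)=m+i$ for $i=1,\dots,n$, I would put $h:=f'(X_{\sigma(1)},\dots,X_{\sigma(m+n)})$, which lies in $\k\lfloor X_1,\dots,X_{m+n}\rfloor$ by (RW2); by construction $h$, regarded as a power series in $X=(X_1,\dots,X_m)$ and $Y=(X_{m+1},\dots,X_{m+n})$, is just $f(Y)$.

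Then I would apply Lemma~\ref{lem:DL rest subst} to $h$ in place of $f$ and to $Y=(X_{m+1},\dots,X_{m+n})$, obtaining $h(X,g(X))\in\k\lfloor X\rfloor$; since $h(X,g(X))=f(g(X))$, this is the desired conclusion. The only thing that needs a word of justification is that $f(g(X))$ is meaningful and coincides with $h(X,g(X))$, which is immediate from \eqref{eq:comp}: the components of $g$ lie in $R\langle X\rangle$, so $\abs{g^\alpha}\leq 1$ and hence $f_\alpha g^\alpha\to 0$ as $\abs{\alpha}\to\infty$.

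I do not expect any genuine obstacle: the argument is purely formal bookkeeping, and all the substantive content---Weierstrass division with a coefficient tuple drawn from $R\langle X\rangle$---has already been dealt with in Lemma~\ref{lem:DL rest subst}.
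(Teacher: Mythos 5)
Your proposal is correct and matches the paper's intended argument: the paper simply notes that the corollary follows from Lemma~\ref{lem:DL rest subst} ``using (RW2), (RW3) in place of (W2), (W3)'', i.e.\ exactly the transcription of the proof of Corollary~\ref{cor:DL subst} that you carry out. Your extra remark that $f(g(X))$ is well defined via \eqref{eq:comp} and equals $h(X,g(X))$ is a harmless (and welcome) bit of added detail.
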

 
\noindent
In particular, the $\k$-algebra automorphism $\tau_d$ of $\kX$  maps the subrings~$\k\lfloor X\rfloor$ and~$R\lfloor X\rfloor$ of~$\kX$ onto themselves.

\begin{lemma}\label{lem:W-Lemma, rest}
The integral domain
$\k\lfloor X\rfloor$ is   noetherian and factorial.
Moreover, suppose $R$ is a DVR;  then
$R\lfloor X\rfloor$ is also  noetherian and factorial,
 $\RX$ is faithfully flat over $R\lfloor X\rfloor$, and thus $R\lfloor X\rfloor$ is regular.
\end{lemma}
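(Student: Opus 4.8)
The statement has two parts: first, $\k\lfloor X\rfloor$ is noetherian and factorial; second, under the additional hypothesis that $R$ is a DVR, $R\lfloor X\rfloor$ is noetherian and factorial, $\RX$ is faithfully flat over $R\lfloor X\rfloor$, and hence $R\lfloor X\rfloor$ is regular. The natural strategy is an induction on $m$ using Weierstrass Division (RW5), exactly mirroring the classical proofs for Tate algebras in \cite{BGR} and the earlier treatment of $\k\lfloor X\rfloor$ in the non-restricted case (Lemma~\ref{lem:W-Lemma}). I would handle noetherianity and factoriality together, since both run through the same Weierstrass-theoretic bookkeeping.

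\emph{Noetherianity.} Proceed by induction on $m$; the case $m=0$ is clear since $W_0$ is a field (for $\k\lfloor X\rfloor$) or equals $R$ (for $R\lfloor X\rfloor$, assuming $R$ a DVR, hence noetherian). For $m\geq 1$, let $I\neq\{0\}$ be an ideal of $\k\lfloor X\rfloor$ and pick a nonzero $g\in I$. Apply the shear automorphism $\tau_d$ (which preserves $\k\lfloor X\rfloor$ by Corollary~\ref{cor:DL rest subst}) so that $\tau_d(g)$ is regular in $X_m$; replacing $I$ by $\tau_d(I)$ and $g$ by $\tau_d(g)$, we may assume $g$ is regular in $X_m$ of some degree $d$. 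By Weierstrass Division (RW5), every $f\in\k\lfloor X\rfloor$ is $\equiv r\pmod{g}$ for some $r\in\k\lfloor X'\rfloor[X_m]$ of degree $<d$; thus $\k\lfloor X\rfloor/(g)$ is a finitely generated module over the noetherian ring $\k\lfloor X'\rfloor$ (noetherian by the inductive hypothesis), hence noetherian, and since $I/(g)$ is an ideal of it, $I$ is finitely generated. The same argument works for $R\lfloor X\rfloor$ using (RW5) for $R\lfloor X\rfloor$ and the inductive hypothesis that $R\lfloor X'\rfloor$ is noetherian. The one subtlety here is reducing to the case where $g$ has a $\tau_d$-shear that is \emph{regular} with unit leading coefficient in $R^\times$ rather than merely $\overline{R}{}^\times$; this is exactly the content of Lemma~\ref{NoetherZpX}, so I would cite it.

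\emph{Factoriality.} Again induct on $m$; the base case is clear. For $m\geq 1$, the standard approach is to show $\k\lfloor X\rfloor$ is a UFD by checking that every irreducible element is prime. Given an irreducible $g\in\k\lfloor X\rfloor$, after a shear we may assume $g$ is regular in $X_m$, and by Weierstrass Preparation (which follows from (RW5) applied with $f=X_m^d$, as in Corollary~\ref{WP}) we may write $g=uw$ with $u$ a unit and $w\in\k\lfloor X'\rfloor[X_m]$ monic; so it suffices to treat monic $w$. By Corollary~\ref{cor:ED} (or its analogue inside a restricted Weierstrass system, which follows from the uniqueness in (RW5) together with Euclidean division in $\k\lfloor X'\rfloor[X_m]$), we have $\k\lfloor X\rfloor/w\k\lfloor X\rfloor\cong\k\lfloor X'\rfloor[X_m]/w\k\lfloor X'\rfloor[X_m]$, reducing the primality of $w$ in $\k\lfloor X\rfloor$ to its primality in the polynomial ring $\k\lfloor X'\rfloor[X_m]$; since $\k\lfloor X'\rfloor$ is a UFD by the inductive hypothesis, so is $\k\lfloor X'\rfloor[X_m]$ by Gauss's lemma, and the claim follows. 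For $R\lfloor X\rfloor$ with $R$ a DVR one argues identically, noting that $R$ itself is a UFD and using the Weierstrass tools for $R\lfloor X\rfloor$; here one must be a little careful because a monic $w\in R\lfloor X'\rfloor[X_m]$ need not be irreducible in $R\lfloor X\rfloor$ even when it is, but the Gauss-lemma argument over the UFD $R\lfloor X'\rfloor$ handles this.

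\emph{Faithful flatness and regularity.} Assume $R$ is a DVR. For faithful flatness of $\RX$ over $R\lfloor X\rfloor$, I would follow the same pattern as Corollary~\ref{cor:fflat}: $R\lfloor X\rfloor$ is noetherian (just shown), $\RX$ is noetherian (remark after Corollary~\ref{WDforFX}, valid when $R$ is a DVR), the inclusion $R\lfloor X\rfloor\hookrightarrow\RX$ induces an isomorphism on completions with respect to the ideal generated by $\fm$ and $X_1,\dots,X_m$ (this uses (RW1)--(RW5) to see that Weierstrass data and hence the associated graded rings agree, analogous to Lemma~\ref{lem:W-Lemma}), and then \cite[Theorem~8.14]{Matsumura} gives flatness, with faithfulness because the map hits the maximal ideal. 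Finally, regularity of $R\lfloor X\rfloor$ follows: $\RX$ is regular (it is a quotient of a Tate algebra over a DVR, or directly a regular ring by the restricted Weierstrass theory), and regularity descends along faithfully flat morphisms of noetherian local rings by \cite[Theorem~23.7]{Matsumura} — or one can simply note $R\lfloor X\rfloor$ is a noetherian normal domain (being factorial) of the right dimension. \textbf{The main obstacle} I anticipate is the completion-isomorphism step underpinning faithful flatness: one must verify carefully that the associated graded ring of $R\lfloor X\rfloor$ with respect to $(\fm, X_1,\dots,X_m)$ is the polynomial ring $\overline{R}[\text{gr}]$, i.e.\ that the restricted Weierstrass axioms force enough rigidity; this is the genuinely technical point, whereas everything else is a routine induction on $m$ via Weierstrass Division.
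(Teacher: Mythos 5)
Your treatment of noetherianity and factoriality is in substance what the paper does: its proof simply says these follow ``as in \cite[\S\S{}5.2.5, 5.2.6]{BGR} using Lemma~\ref{NoetherZpX} and (RW5)'', and your induction on $m$ via shearing, Weierstrass Division and Gauss's lemma is exactly that argument, so no complaint there.

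The faithful-flatness step, however, has a genuine gap: you complete at the wrong ideal. With respect to $(\fm,X_1,\dots,X_m)$ the completion of $R\lfloor X\rfloor$ (and of $\RX$, and of $R[X]$) is $R[[X]]$, not $\RX$; already $\RX/(\fm,X)^n\cong R[X]/(\fm,X)^n$. So the ``isomorphism on completions'' you invoke only identifies both completions with $R[[X]]$ and says nothing about the inclusion $R\lfloor X\rfloor\subseteq\RX$ itself. Moreover the faithfulness clause of \cite[Theorem~8.14]{Matsumura} requires the ideal to lie in the Jacobson radical, which $(\fm,X)$ does not: $1+X_1$ is not a unit of $\RX$, since its formal inverse $\sum_i(-X_1)^i$ is not restricted; and neither ring is local, so ``the map hits the maximal ideal'' has no meaning here. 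Nor can you descend through $R[[X]]$: $R[[X]]$ is not faithfully flat over $\RX$, because the maximal ideal $(\pi,X_1-1)$ of $\RX$ (with $\pi$ a uniformizer of $R$) generates the unit ideal of $R[[X]]$, as $1-X_1\in R[[X]]^\times$.

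The repair is to use the $\fm$-adic topology instead, and then your anticipated ``main obstacle'' about associated graded rings evaporates. Every $f\in\RX$ is congruent modulo $\pi^n$ to a polynomial, so $R[X]$, hence $R\lfloor X\rfloor$, is $\fm$-adically dense in $\RX$; and $\fm^n\RX\cap R\lfloor X\rfloor=\fm^n R\lfloor X\rfloor$, since $f=\pi^n g$ with $f\in R\lfloor X\rfloor$, $g\in\RX$ forces $g=\pi^{-n}f\in\k\lfloor X\rfloor\cap\RX=R\lfloor X\rfloor$. Thus $\RX$ is the $\fm$-adic completion of the (by the first part, noetherian) ring $R\lfloor X\rfloor$, and $\fm R\lfloor X\rfloor$ lies in the Jacobson radical because $1+\fm R\lfloor X\rfloor\subseteq R\lfloor X\rfloor^\times$ by (RW1) and (RW4). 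Now \cite[Theorem~8.14]{Matsumura} does give faithful flatness; the paper itself just cites \cite[Theorem~4.9]{GS} for this. Two smaller points: regularity of $\RX$ is not ``because it is a quotient of a Tate algebra'' (quotients of regular rings need not be regular) --- the paper deduces it from regularity of the DVR $R$ via \cite[Theorem~8.10]{GS}; and ``noetherian normal domain of the right dimension'' does not imply regular, so you should keep the descent of regularity along the faithfully flat map (\cite[Theorem~8.7]{GS}, or \cite[Theorem~23.7]{Matsumura}), which is also the paper's route.
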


\begin{proof}
The noetherianity and factoriality statements follow as in  \cite[\S\S{}5.2.5, 5.2.6]{BGR} using Lemma~\ref{NoetherZpX} and~(RW4).
Suppose $R$ is a DVR. Then $R$ is regular, hence so is~$\RX$, by~\cite[Theorem~8.10]{GS}.
By \cite[Theorem~4.9]{GS}, $\RX$ is faithfully flat over~$R\lfloor X\rfloor$;
thus $R\lfloor X\rfloor$ is also regular by \cite[Theorem~8.7]{GS}.
 \end{proof}

\begin{remark}
The ring $\kX$ is regular; cf., e.g., \cite[\S{}7.1.1, Proposition~3]{BGR}.
We will not try to prove here that $\k\lfloor X\rfloor$ is regular in general (though this seems plausible).
For this it would be enough to see that  $\kX$ is faithfully flat over $\k\lfloor X\rfloor$; this can probably be shown
using Hermann's method in $\kX$ as described in \cite{ideal}.
\end{remark}

\noindent
As usual, (RW4) implies: 

\begin{lemma}[Weierstrass Preparation]\label{lem:WP rest}
Suppose $m\geq 1$.
Let $g\in  \k\lfloor X\rfloor$ be regular in $X_m$ of order~$d$, and set $X':=(X_1,\dots,X_{m-1})$. Then there are a unit $u$ of $\k\lfloor X\rfloor$
and a polynomial
$$w=X_m^d+w_1 X_m^{d-1} + \cdots + w_d\qquad\text{where $w_1,\dots,w_d\in R\lfloor X'\rfloor$}$$
such that $g=uw$.
\end{lemma}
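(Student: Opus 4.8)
The plan is to reduce Weierstrass Preparation in $\k\lfloor X\rfloor$ to Weierstrass Preparation in $\kX$ (Corollary~\ref{WP}), and then show that the unit and the monic polynomial produced there actually lie in the restricted Weierstrass system. First I would apply Corollary~\ref{WDforFX} (or equivalently Corollary~\ref{WP}) to $g\in\k\lfloor X\rfloor\subseteq\kX$, which is regular in $X_m$ of degree $d$ as an element of $\kX$: this yields a unique monic polynomial $w\in\RXprime[X_m]$ of degree $d$ and a unique $u\in\kX^\times$ with $g=uw$. The task is then to check that $u\in\k\lfloor X\rfloor^\times$ and that the coefficients $w_1,\dots,w_d$ of $w$ lie in $R\lfloor X'\rfloor = R\langle X'\rangle\cap\k\lfloor X'\rfloor$; note they already lie in $\RXprime$ by Corollary~\ref{WP}, so only membership in $\k\lfloor X'\rfloor$ needs proof.

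The key step is to rerun the Weierstrass Division inside the system $W$ rather than in $\kX$. Concretely, apply (RW5) to $g\in W_m$ (regular of degree $d$ in $X_m$) with $f:=X_m^d\in W_m$: this gives $q\in W_m$ and $r\in W_{m-1}[X_m]$ of degree $<d$ with $X_m^d=qg+r$, hence $qg=X_m^d-r$. By the uniqueness clause of Weierstrass Division in $\kX$ (Corollary~\ref{WDforFX}), the pair $(q,-r)$ obtained this way must coincide with the pair coming from dividing $X_m^d$ by $g$ in $\kX$; and one checks in the standard way (as in the proof of Corollary~\ref{WP}) that $q\in\kX^\times$ with $q=u^{-1}$ and $w=X_m^d-r$, i.e.\ $w_i=-r_i\in W_{m-1}$. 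Thus $w\in W_{m-1}[X_m]$, and combined with $w_i\in\RXprime$ (from Corollary~\ref{WP}) we get $w_i\in W_{m-1}\cap\RXprime=R\lfloor X'\rfloor$ as desired. Finally, from $u=q^{-1}$ with $q\in W_m$ and $q\in\kX^\times$, axiom (RW4) gives $q\in W_m^\times$, hence $u=q^{-1}\in W_m^\times=\k\lfloor X\rfloor^\times$.

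The main obstacle — really the only point requiring care — is justifying that the quotient $q$ produced by (RW5) is in fact a unit, so that the identity $qg=X_m^d-r$ can be rewritten as $g=q^{-1}w$. This is where one invokes regularity of $g$ in $X_m$: writing $g\equiv c X_m^d+\text{lower order terms}\pmod{\fm\RX}$ (after scaling $g$ into $\RX$) with $\overline c\in\overline R^\times$, comparing the coefficient of $X_m^d$ on both sides of $X_m^d=qg+r$ modulo $\fm$ forces $\overline{q}\in\overline{R}[X]$ to be a unit, i.e.\ $\bar q\in\bar R^\times$, so that $q\in\kX^\times$ by the description of $\kX^\times$ recalled after Corollary~\ref{WP}; then (RW4) upgrades this to $q\in W_m^\times$. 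Everything else is a routine translation of the argument for Lemma~\ref{lem:WP} in the restricted setting, using (RW5) in place of (W5) and Corollary~\ref{WDforFX} for the uniqueness of Weierstrass data.
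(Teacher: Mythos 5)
Your proposal is correct and is essentially the proof the paper has in mind: Lemma~\ref{lem:WP rest} is obtained "as usual" from (RW5) by dividing $X_m^d$ by $g$ inside the system and identifying the resulting pair, via the uniqueness clause of Corollary~\ref{WDforFX}, with the Weierstrass data $u\in\kX^\times$, $w\in\RXprime[X_m]$, so that (RW4) gives $u\in\k\lfloor X\rfloor^\times$ and $W_{m-1}\cap\RXprime=R\lfloor X'\rfloor$ places the coefficients $w_i$. Your extra mod-$\fm$ leading-coefficient argument for the unit-ness of $q$ is redundant once the uniqueness identification is made, but it is not wrong.
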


\noindent
Consider now pairs $(A,M)$ where $A$ is a ring and $M$ is a proper ideal of $A$.
Let~$Y$ be a single indeterminate over $A$.
A {\em henselian polynomial}\/ in $(A,M)$ is one of the form~$1+Y+c_2Y^2+\cdots+c_dY^d$
where $c_2,\dots,c_d\in M$, $d\geq 2$.
 We say that $(A,M)$ is {\em henselian}\/  if every henselian polynomial in $(A,M)$ has a zero in $A$.  
 A valued field~$K$   is henselian iff the pair $(\mathcal O,\smallo)$ is  henselian.
If $A$ is complete and separated in its $M$-adic topology (i.e., the natural morphism $A\to \lim\limits_{\longleftarrow} A/M^n$ is an isomorphism), then $A$ is henselian with respect to~$M$ 
(Hensel's Lemma). See, e.g.,~\cite[(2.9)]{vdD81} for a proof of this fact;
we also recall from \cite[(2.10)]{vdD81}:

\begin{lemma}\label{lem:hens=>newt}
Suppose $(A,M)$ is henselian. Then for   
each $P\in A[Y]$, $z\in A$ and~$h\in M$ such that
$P(z) = h P'(z)^2$,  there is some $y\in A$ such that $P(y)=0$ and~$y\equiv z\bmod P'(z)hA$.
\end{lemma}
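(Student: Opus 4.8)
The plan is to reduce the statement to the defining property of the henselian pair $(A,M)$ by means of a single Newton-type substitution. First I would dispose of the degenerate cases. If $P'(z)=0$ (in particular if $P$ is constant), then the hypothesis $P(z)=hP'(z)^2$ forces $P(z)=0$, so $y:=z$ works and trivially $y\equiv z\bmod P'(z)hA$. If $\deg P=1$, say $P(Y)=aY+b$ with $a=P'(z)$, then $P(z)=hP'(z)^2=ha^2$ gives $az+b=ha^2$, so $y:=z-ha$ satisfies $P(y)=az+b-ha^2=0$ and $y-z=-ha\in P'(z)hA$. So from now on assume $d:=\deg P\geq 2$, and write $u:=P'(z)$.

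Next I would use the Taylor expansion of $P$ at $z$: there are $b_0,\dots,b_d\in A$ with $P(z+S)=\sum_{i=0}^{d}b_iS^i$ in $A[S]$, where $b_0=P(z)$ and $b_1=P'(z)=u$. The key move is the substitution $S=uhW$ for a new indeterminate $W$. Substituting, and using the hypothesis $P(z)=hu^2$ (so $b_0=hu^2$ and $b_1uh=u^2h=hu^2$), the constant and linear terms combine to $hu^2(1+W)$, while for $i\geq 2$ the $W^i$-coefficient is $b_iu^ih^i=hu^2\cdot(b_iu^{i-2}h^{i-1})$. Since $i-1\geq 1$ and $h\in M$, each $c_i:=b_iu^{i-2}h^{i-1}$ lies in $M$. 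Hence, in $A[W]$,
$$P(z+uhW)=hu^2\,Q(W),\qquad Q(W):=1+W+\sum_{i=2}^{d}c_iW^i,\quad c_i:=b_iu^{i-2}h^{i-1}\in M,$$
so $Q$ is a henselian polynomial in $(A,M)$.

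Finally, since $(A,M)$ is henselian, $Q$ has a zero $w_0\in A$. I would then set $y:=z+uhw_0=z+P'(z)h\,w_0\in A$. By construction $y-z=P'(z)h\,w_0\in P'(z)hA$, which is the asserted congruence, and $P(y)=P(z+uhw_0)=hu^2\,Q(w_0)=0$.

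The argument is elementary, so I do not anticipate a serious obstacle; the points that need attention are (i) the boundary cases $\deg P\leq 1$ treated above, and (ii) the exponent bookkeeping showing that every coefficient of $Q$ past the linear term really lands in $M$ — this is exactly where the factor $h^{i-1}$ (with $i\geq 2$) is used. Note also that the computation never divides by $u=P'(z)$ or by $h$, so it remains valid when $A$ is not a domain and $P'(z)$ or $h$ is a zero-divisor; keeping the identity $P(z+uhW)=hu^2Q(W)$ at the level of polynomials in $A[W]$, rather than passing to a localization, is what makes this automatic.
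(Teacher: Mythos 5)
Your proof is correct. The paper itself gives no proof of this lemma (it only cites \cite[(2.10)]{vdD81}), and your argument is precisely the standard one behind that reference: the Taylor expansion at $z$ followed by the substitution $S=P'(z)hW$, which factors out $hP'(z)^2$ and turns $P(z+P'(z)hW)$ into $hP'(z)^2$ times a henselian polynomial in the sense of the paper's definition. The bookkeeping (the factor $h^{i-1}$ with $i\geq 2$ placing each higher coefficient in $M$, the polynomial identity in $A[W]$ avoiding any division by $P'(z)$ or $h$, and the degenerate cases $\deg P\leq 1$) is all handled correctly.
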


\noindent
We now show:

\begin{lemma}
The pair $\big(R\lfloor X\rfloor, \fm R\lfloor X\rfloor\big)$ is henselian.
\end{lemma}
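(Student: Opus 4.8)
The plan is to mimic the proof of henselianity of $\k\lfloor X\rfloor$ in the formal setting (Lemma~\ref{lem:henselian}), but to replace the Implicit Function Theorem used there by a single, degree-one application of Weierstrass Preparation for restricted power series (Lemma~\ref{lem:WP rest}). Concretely, I would let
$$P(Y)=1+Y+c_2Y^2+\cdots+c_dY^d,\qquad d\geq 2,\ c_2,\dots,c_d\in\fm R\lfloor X\rfloor,$$
be an arbitrary henselian polynomial in $\big(R\lfloor X\rfloor,\fm R\lfloor X\rfloor\big)$, with $Y$ a single new indeterminate disjoint from $X=(X_1,\dots,X_m)$, and aim to produce a zero $y\in R\lfloor X\rfloor$ of $P$.

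First I would regard $P$ as an element of $R\lfloor X\rfloor[Y]\subseteq\k\lfloor X,Y\rfloor$, which is legitimate by (RW1) and (RW3). The crucial observation is that, since each $c_i$ lies in $\fm R\lfloor X\rfloor$, its image in $\bar R[X]$ vanishes, so the image of $P$ in $\bar R[X,Y]$ equals $1+Y$, which is monic of degree $1$ in $Y$; hence $P$ is regular in $Y$ of degree $1$ in the sense of the restricted Weierstrass theory. Then Lemma~\ref{lem:WP rest}, applied with $Y$ in the role of the last variable and $X'=X$, yields a unit $u$ of $\k\lfloor X,Y\rfloor$ and an element $w_1\in R\lfloor X\rfloor$ with
$$P = u\cdot(Y+w_1)\qquad\text{in }\k\lfloor X,Y\rfloor.$$
Setting $y:=-w_1\in R\lfloor X\rfloor$ and substituting $Y=y$—which is a ring homomorphism $\k\lfloor X,Y\rfloor\to\k\lfloor X\rfloor$ by Corollary~\ref{cor:DL rest subst}—then gives $P(y)=u(X,y)\cdot(y+w_1)=0$, so $y$ is the desired root.

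I do not expect a genuine obstacle; the argument is short once Weierstrass Preparation is in place, and in particular no preliminary shift of $Y$ (as in the formal-power-series proof) is needed, because the restricted notion of regularity refers to residues rather than to vanishing at the origin. The only points that must be handled with care are the bookkeeping around regularity: one checks that $P$ really lies in $\k\lfloor X,Y\rfloor$ (from (RW1)--(RW3)), that its residue is $1+Y$ so that $P$ is regular in $Y$ of degree $1$, and—so that the root lands in $R\lfloor X\rfloor$ rather than merely in $\k\lfloor X\rfloor$—that the monic factor produced by Lemma~\ref{lem:WP rest} has its coefficients in $R\lfloor X\rfloor$, as that lemma records. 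Everything else follows formally from the fact that the substitution homomorphism of Corollary~\ref{cor:DL rest subst} commutes with the factorization $P=u\cdot(Y+w_1)$.
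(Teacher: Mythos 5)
Your proof is correct and follows essentially the same route as the paper: observe that a henselian polynomial has residue $1+Y$, hence is regular in $Y$ of degree $1$, apply Weierstrass Preparation (Lemma~\ref{lem:WP rest}) to write $P=u\cdot(Y+w_1)$ with $w_1\in R\lfloor X\rfloor$, and substitute $y=-w_1$. The extra bookkeeping you mention (that $P\in\k\lfloor X,Y\rfloor$ and that substitution is a ring morphism) is harmless but not an essential addition.
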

\begin{proof}
Let $P(X,Y)\in R\lfloor X\rfloor[Y]$ be henselian in $\big(R\lfloor X\rfloor, \fm R\lfloor X\rfloor\big)$; we need to find a $y \in R\lfloor X\rfloor$ with $P(X,y) = 0$.
By our assumption, $P$ is regular in~$Y$ of degree~$1$, so   
Lemma~\ref{lem:WP rest}
gives a $u\in R\lfloor X,Y\rfloor^\times$ and a monic~$w\in R\lfloor X\rfloor[Y]$ of degree $1$  with~$P = uw$. Now $w = Y - y$ where~$y\in R\lfloor X\rfloor$, and $y$ has the required property.
\end{proof}

\begin{cor}\label{cor:roots, rest}
Let $f\in\k\lfloor X\rfloor$ and $k\geq 1$ such that $k\notin\fm$. Then 
$$\text{$f=g^k$ for some $g\in\k\lfloor X\rfloor^\times$} \quad\Longleftrightarrow\quad f\in (\k^\times)^k(1+\fm R\lfloor X\rfloor).$$
In particular, the group of $1$-units of $\k\lfloor X\rfloor$ is $k$-divisible.
\end{cor}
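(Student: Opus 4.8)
The plan is to follow the same pattern as for the unrestricted statement in Corollary~\ref{cor:roots}, with henselianity of the pair $\bigl(R\lfloor X\rfloor,\fm R\lfloor X\rfloor\bigr)$ (the previous lemma) and Lemma~\ref{lem:hens=>newt} taking over the roles played by Lemma~\ref{lem:henselian} and the Implicit Function Theorem there. Throughout I would use that $\k\lfloor X\rfloor^\times=\k^\times(1+\fm R\lfloor X\rfloor)$ and that $1+\fm R\lfloor X\rfloor$ is a subgroup of $R\lfloor X\rfloor^\times$ (both recorded in the text).

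For the implication ``$\Rightarrow$'' I would write $g=au$ with $a\in\k^\times$ and $u\in 1+\fm R\lfloor X\rfloor$; then $f=g^k=a^ku^k$ with $u^k\in 1+\fm R\lfloor X\rfloor$, so $f\in (\k^\times)^k(1+\fm R\lfloor X\rfloor)$. No hypothesis on $k$ is needed here. For ``$\Leftarrow$'', write $f=a^kv$ with $a\in\k^\times$ and $v=1+h$, $h\in\fm R\lfloor X\rfloor$; it suffices to produce a $1$-unit $w$ of $\k\lfloor X\rfloor$ with $w^k=v$, for then $g:=aw\in\k\lfloor X\rfloor^\times$ does the job. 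I would apply Lemma~\ref{lem:hens=>newt} with $A:=R\lfloor X\rfloor$, $M:=\fm R\lfloor X\rfloor$, the polynomial $P(Y):=Y^k-v\in A[Y]$, and $z:=1$. This is the one place $k\notin\fm$ enters: $k\cdot 1_\k$ is a sum of units of $R$, hence lies in $R$, and $k\notin\fm$ then forces $\abs{k\cdot 1_\k}=1$, i.e.\ $k\in R^\times$ (in particular $k\cdot 1_\k\neq 0$; in characteristic $p$ this just says $p\nmid k$, the absolute value being trivial on the prime field). Consequently $P'(1)=k\in A^\times$, and $P(1)=-h=h'\,P'(1)^2$ with $h':=-k^{-2}h\in M$. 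Lemma~\ref{lem:hens=>newt} then yields $w\in A$ with $P(w)=0$, i.e.\ $w^k=v$, and $w\equiv 1\bmod kh'A\subseteq M$, so $w\in 1+\fm R\lfloor X\rfloor$, as wanted. The final clause is immediate by taking $a=1$: a $1$-unit $f$ of $\k\lfloor X\rfloor$ lies in $(\k^\times)^k(1+\fm R\lfloor X\rfloor)$, and the construction above then gives a $1$-unit $g=w$ with $g^k=f$.

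I do not expect a genuine obstacle. The only points that need a little care are the uniform-in-characteristic verification that $k\notin\fm$ implies $k\in R^\times$ (via the ultrametric inequality in characteristic $0$, and triviality of $\abs{\,\cdot\,}$ on the prime field in positive characteristic), and recording that $1+\fm R\lfloor X\rfloor$ is closed under products — and indeed a subgroup — so that both sides of the asserted equivalence live in the stated sets and the ``in particular'' clause really does express $k$-divisibility of the group of $1$-units.
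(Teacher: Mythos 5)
Your proposal is correct and follows exactly the paper's argument: reduce the backward direction to a $1$-unit $v=1+h$ and apply Lemma~\ref{lem:hens=>newt} to $P(Y)=Y^k-v$, $z=1$ in the henselian pair $\bigl(R\lfloor X\rfloor,\fm R\lfloor X\rfloor\bigr)$, using $k\notin\fm\Rightarrow k\in R^\times$ to get $P(1)=h'P'(1)^2$ with $h'\in\fm R\lfloor X\rfloor$. You merely spell out the details (the unit decomposition $\k\lfloor X\rfloor^\times=\k^\times(1+\fm R\lfloor X\rfloor)$ and the verification that $k$ is a unit of $R$) that the paper leaves implicit.
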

\begin{proof}
The forward direction is clear, and for the backward direction we may assume~$f\in 1+\fm R\lfloor X\rfloor$, and we need to show that then $f\in (R\lfloor X\rfloor^\times)^k$.
To see this consider the polynomial $P=Y^k-f$ and $z=1$ in Lemma~\ref{lem:hens=>newt} applied to
the henselian pair $\big(R\lfloor X\rfloor, \fm R\lfloor X\rfloor\big)$.
\end{proof}

\subsection{Zero sets and vanishing ideals}
Let $I$ be an ideal of $\k\lfloor X\rfloor$. We then let
$$\operatorname{Z}(I) := \big\{ a\in R^m: \text{$f(a)=0$ for all $f\in I$} \big\},$$
the zero set of $I$.  If $J$ is another ideal of $\k\lfloor X\rfloor$, then~$\operatorname{Z}(I)\subseteq \operatorname{Z}(J)$ if
$I\supseteq J$, and
$$\operatorname{Z}(IJ)=\operatorname{Z}(I\cap J)=\operatorname{Z}(I)\cup \operatorname{Z}(J), \qquad
\operatorname{Z}(I+J)=\operatorname{Z}(I)\cap \operatorname{Z}(J).$$
Let also $S$ be a subset of $R^m$. The {\it vanishing ideal}\/ of $S$ is the ideal
$$\operatorname{I}(S) := \big\{ f\in \k\lfloor X\rfloor: \text{$f(a)=0$ for all $a\in S$} \big\}$$
 of $\k\lfloor X\rfloor$. If also $T\subseteq R^m$, then~$\operatorname{I}(S)\subseteq  \operatorname{I}(T)$ if $S\supseteq T$, and~$\operatorname{I}(S\cup T) = \operatorname{I}(S)\cap \operatorname{I}(T)$.
 Clearly 
$\operatorname{I}\!\big(\!\operatorname{Z}_K(I)\big)\supseteq\sqrt{I}$ and~$\operatorname{Z}_K(I) = \operatorname{Z}_K(\sqrt{I})$.
(Since we   only need to consider zeros of power series $f\in \k\lfloor X\rfloor$ in $R^m$, and not in the valuation ring of an extension
of $\k$, we do not require  ``restricted $W$-structures'' in analogy to Definition~\ref{def:W-structure}.)

\subsection{The Nullstellensatz for the Tate algebra}
{\it In this subsection we let $\mathcal L=\mathcal L_{\preceq}$, and we
assume that~$\k$ is algebraically closed unless noted otherwise.}\/ Thus $\k$ (viewed as an $\mathcal L$-structure) is a model
of $\operatorname{ACVF}$.
Let~$y=(y_1,\dots,y_n)$ be a tuple of distinct $\mathcal L$-variables.
Here is an analogue of Proposition~\ref{prop:Rueckert spec}:
 
\begin{prop} \label{prop:Rueckert spec, rest}
Let $\varphi(y)$ be an $\mathcal L$-formula,  $f=(f_1,\dots,f_n)\in \k\lfloor X\rfloor^n$,
$\Omega\models\operatorname{ACVF}$,
and~$\sigma \colon \k\lfloor X\rfloor\to \Omega$ be a   ring morphism
such that $\sigma( R\lfloor X\rfloor)\preceq 1$, $\sigma(\fm)\prec   1$,  and~$\Omega\models\varphi(\sigma(f))$. Then
$\k\models\varphi(f(a))$ for some $a\in R^m$.
\end{prop}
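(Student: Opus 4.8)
The plan is to mimic the proof of Proposition~\ref{prop:Rueckert spec} essentially verbatim, proceeding by induction on the length $m$ of the tuple $X$, with the new ingredient being that the "infinitesimal" condition $x\prec 1$ on the evaluation point is replaced by $x\preceq 1$, matching the switch from $\smallo$ to $R$ (the valuation ring of $\k$) as the domain of evaluation. Since $\k$ is now assumed algebraically closed and carries the dominance relation associated to $R$, and since the hypothesis $\sigma(R\lfloor X\rfloor)\preceq 1$, $\sigma(\fm)\prec 1$ says exactly that $\sigma$ is compatible with the respective valuation rings (it sends units of $R\lfloor X\rfloor$ to units of $\mathcal O_\Omega$, and sends $1$-units to elements $\sim 1$), we are in a position to invoke substructure completeness of $\operatorname{ACVF}$ at the base case.

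First I would handle the case $m=0$: here $\k\lfloor X\rfloor=W_0$, and (RW1) gives $\k\subseteq W_0\subseteq \k$, so $W_0=\k$. The map $\sigma\colon\k\to\Omega$ is then a field embedding, and the hypotheses on $\sigma$ force it to be an $\mathcal L_{\preceq}$-embedding (it respects $\preceq$ because $\sigma(R)\preceq 1$ and $\sigma(\fm)\prec 1$, so the valuation ring of $\k$ pulls back correctly). The natural inclusion $\k\hookrightarrow\k$ is trivially also such an embedding, so substructure completeness of $\operatorname{ACVF}$ (from QE, Theorem~\ref{thm:Robinson}) gives $\k\equiv_{\k}\Omega$, and since $\Omega\models\varphi(\sigma(f))$ with $f\in\k^n$, we conclude $\k\models\varphi(f)$, i.e.\ we may take $a$ to be the empty tuple (equivalently $0\in R^0$).

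For the inductive step $m\ge 1$, I would follow the template of Proposition~\ref{prop:Rueckert spec} step by step. By QE for $\operatorname{ACVF}$, reduce $\varphi$ to quantifier-free, then to a conjunction of atomic and negated atomic formulas in the language $\mathcal L_{\preceq}$; by absorbing polynomial expressions in the $f_j$ into new coordinates (using that $P(f)\in\k\lfloor X\rfloor$ and $P(\sigma(f))=\sigma(P(f))$, $P(f(a))=P(f)(a)$ for $P\in\Z[Y]$), arrange that the atoms involve only the variables $y_1,\dots,y_n$ directly. Arrange the $f_j$ are all nonzero, take $d$ with $\tau:=\tau_d$ making each $\tau(f_j)$ regular in $X_m$ (of degree $<d^m$) via Lemma~\ref{NoetherZpX}, replace $f_j$ by $\tau(f_j)$ and $\sigma$ by $\sigma\circ\tau^{-1}$ — here I must check $\tau_d$ maps $R\lfloor X\rfloor\to R\lfloor X\rfloor$ (Corollary~\ref{cor:DL rest subst}) so the compatibility hypotheses on $\sigma$ survive. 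Then apply Weierstrass Preparation in $\k\lfloor X\rfloor$ (Lemma~\ref{lem:WP rest}) to write $f_j=u_jw_j$ with $u_j$ a unit and $w_j\in R\lfloor X'\rfloor[X_m]$ monic; since $u_j\in R\lfloor X\rfloor^\times=R^\times(1+\fm R\lfloor X\rfloor)$ and $\sigma$ respects this, $\sigma(u_j)$ is a unit of $\Omega$ (indeed $\sigma(u_j)\asymp 1$), so we may replace $f_j$ by the polynomial $w_j$. Writing $w_j=\sum_{k=0}^e w_{jk}X_m^k$ with $w_{jk}\in R\lfloor X'\rfloor$, introduce variables $u=(u_{jk})$ and $v$, set $\psi(u):=\exists v\,(v\preceq 1\wedge \varphi(t_1(u,v),\dots,t_n(u,v)))$ where $t_j(u,v)=\sum_k u_{jk}v^k$, note $\Omega\models\psi(\sigma(w))$, apply the inductive hypothesis to $\psi$, the tuple $w=(w_{jk})\in\k\lfloor X'\rfloor$, and $\sigma|_{\k\lfloor X'\rfloor}$ — whose compatibility hypotheses hold since $R\lfloor X'\rfloor\subseteq R\lfloor X\rfloor$ and $\fm\subseteq\fm$ — obtaining $b\in R^{m-1}$ with $\k\models\psi(w(b))$, i.e.\ some $a_m\in R$ with $\k\models\varphi(f(b,a_m))$; set $a:=(b,a_m)\in R^m$.

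**The main obstacle** I anticipate is verifying at each manipulation that the compatibility hypotheses "$\sigma(R\lfloor X\rfloor)\preceq 1$ and $\sigma(\fm)\prec 1$" are preserved — this is the restricted-setting analogue of "$\sigma$ local" in the original proof, and it is slightly more delicate because there is no maximal ideal generated by the $X_i$; rather, $R\lfloor X\rfloor$ and its ideal $\fm R\lfloor X\rfloor$ play that role. The key facts needed are that $\tau_d$ and the coordinate permutations and the passage to $X'$ all restrict to maps of $R\lfloor X\rfloor$ into itself (Corollaries~\ref{cor:DL rest subst} and (RW2), (RW3)), and that $R\lfloor X\rfloor^\times=R^\times(1+\fm R\lfloor X\rfloor)$ so that a unit of $R\lfloor X\rfloor$ maps under $\sigma$ to an element $\asymp 1$ in $\Omega$ — this last point is what lets us discard the unit factor $u_j$ from $f_j$. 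Once these bookkeeping checks are in place, the argument is structurally identical to Proposition~\ref{prop:Rueckert spec}.
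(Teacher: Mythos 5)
Your route is the paper's own: induction on $m$, QE for $\operatorname{ACVF}$ to reduce $\varphi$ to atoms in distinct variables, $\tau_d$ to make the $f_j$ regular in $X_m$, Weierstrass Preparation, and then the inductive step via $\psi(u):=\exists v\,(v\preceq 1\wedge\varphi(t_1(u,v),\dots,t_n(u,v)))$, with the correct substitutions of $R$ for $\smallo$ and $v\preceq 1$ for $v\prec 1$; the base case and the checks that $\tau_d$ preserves $R\lfloor X\rfloor$ and that $\sigma|_{\k\lfloor X'\rfloor}$ inherits the hypotheses are all fine.

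The one step that fails as written is your treatment of the Weierstrass unit. Lemma~\ref{lem:WP rest} gives $f_j=u_jw_j$ with $u_j\in\k\lfloor X\rfloor^\times=\k^\times\bigl(1+\fm R\lfloor X\rfloor\bigr)$ and $w_j$ monic over $R\lfloor X'\rfloor$; your claim that $u_j\in R\lfloor X\rfloor^\times$, hence $\sigma(u_j)\asymp 1$, is false unless $\abs{f_j}=1$. For instance $f_j=cX_m$ with $0<\abs{c}<1$ yields $u_j=c$, and then $\sigma(u_j)\prec 1$ and $u_j(a)=c\not\asymp 1$. Since $\varphi$ may contain $\preceq$- or $\prec$-atoms comparing different coordinates, discarding such a $u_j$ can change the truth value of $\varphi$ both at $\sigma(f)$ in $\Omega$ and at $f(a)$ in $\k$, so the reduction to polynomials breaks down; nor can you rescale each $f_j$ to Gauss norm $1$ separately, as independent rescaling also disturbs the cross-comparisons. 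The repair (and what the paper does) is to write $u_j=c_j\epsilon_j$ with $c_j\in\k^\times$ and $\epsilon_j$ a $1$-unit, absorb $c_j$ into $w_j$ — so $w_j\in\k\lfloor X'\rfloor[X_m]$, no longer monic over $R\lfloor X'\rfloor$, which is harmless because the inductive hypothesis only needs coefficients in $\k\lfloor X'\rfloor$ — and discard only $\epsilon_j$, justified by $\sigma(\epsilon_j)\sim 1$ (from $\sigma(\fm R\lfloor X\rfloor)\prec 1$) and by $\epsilon_j(a)\sim 1$ for all $a\in R^m$. Note that this check is needed on the $\k$-side at the points $a$ as well, not only under $\sigma$; your write-up addresses only the $\Omega$-side.
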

\begin{proof}
We proceed by induction on $m$. 
The case $m=0$ follows from model completeness of $\operatorname{ACVF}$
since $\sigma$ restricts to an $\mathcal L$-embedding  $\k\to\Omega$. Suppose~$m\geq 1$. Using
Theorem~\ref{thm:Robinson}, as in the proof of Proposition~\ref{prop:Rueckert spec} we first arrange that $\varphi$ is quantifier-free of the form
$$ \bigwedge_{i\in I} P_i(y) = 0 \wedge Q(y)\neq 0 \wedge \bigwedge_{j\in J} R_j(y)\, \square_j\, S_j(y)$$
where $I$, $J$ are finite index sets, $P_i,Q,R_j,S_j\in\Z[Y_1,\dots,Y_n]$, and each $\square_j$ is one of the symbols $\preceq$ or $\prec$.
We also arrange
that~$P_i$, $Q$, $R_j$, $S_j$ are   distinct elements of~$\{Y_1,\dots,Y_n\}$, and  $f_1,\dots,f_n\neq 0$. Take some $d\in\N$, $d>1$ such that  $\tau_d(f_j)$ is regular in $X_m$, for $j=1,\dots,n$, and
replace~$f_j$ by $\tau_d(f_j)$ ($j=1,\dots,n$) and~$\sigma$ by~$\sigma\circ\tau_d^{-1}$ to further arrange that
$f_j$ is regular in $X_m$, for~$j=1,\dots,n$.
As usual let~$X'=(X_1,\dots,X_{m-1})$. Lem\-ma~\ref{lem:WP rest}  then yields some $1$-unit $u_j$   in~$\k\lfloor X\rfloor$ as well as a   polynomial~$w_j\in\mathcal \k\lfloor X'\rfloor[X_m]$ such that~$f_j=u_j w_j$, for~$j=1,\dots,n$. 
In~$\Omega$ we have $\sigma(u_j)\sim 1$  since $\sigma(\fm R\lfloor X\rfloor)\prec 1$, and in $\k$ we have $u_j(a)\sim 1$  for each $a\in R^m$.
Hence we may replace each~$f_j$ by $w_j$ to arrange that $f_j=w_j$ is a polynomial. Now
argue as in the proof of Proposition~\ref{prop:Rueckert spec} with ``$v\prec 1$'' in the definition of $\psi$ replaced by~``$v\preceq 1$''
and $K$, $\smallo$ replaced by $\k$, $R$, respectively.
\end{proof}

\begin{cor}\label{cor:spec, rest}
Let $\varphi(y)$ be an $\mathcal L_{\operatorname{R}}$-formula, 
     $f\in \k\lfloor X\rfloor^n$, and~$\sigma \colon \k\lfloor X\rfloor\to \Omega$ be a ring morphism  to an algebraically
  closed field  with~$\Omega\models\varphi(\sigma(f))$. Then~$\k\models\varphi(f(a))$ for some $a\in R^m$.
\end{cor}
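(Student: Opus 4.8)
The plan is to mimic the proof of Corollary~\ref{cor:spec} in the unrestricted setting, now using Proposition~\ref{prop:Rueckert spec, rest} in place of Proposition~\ref{prop:Rueckert spec}. The task is to produce, from a bare ring morphism $\sigma\colon\k\lfloor X\rfloor\to\Omega$ into an algebraically closed field, a suitable valued-field target to which Proposition~\ref{prop:Rueckert spec, rest} applies; the key point is to equip $\Omega$ with a dominance relation so that $\sigma$ sends $R\lfloor X\rfloor$ into the valuation ring and $\fm R\lfloor X\rfloor$ into its maximal ideal.

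First I would consider the image $A:=\sigma(\k\lfloor X\rfloor)\subseteq\Omega$, which is a subring of $\Omega$; more relevantly, $\sigma(R\lfloor X\rfloor)$ is a subring of $\Omega$ containing $\sigma(R)$. Since $R\lfloor X\rfloor$ is a local ring with maximal ideal $\fm R\lfloor X\rfloor$ (here I would invoke that $R\lfloor X\rfloor^\times=R^\times(1+\fm R\lfloor X\rfloor)$, as noted after Corollary~\ref{cor:DL rest subst}, which makes $\fm R\lfloor X\rfloor$ the unique maximal ideal once $\fm$ is maximal in $R$; if $\fm=\{0\}$, i.e.\ $\k$ is trivially valued, one reduces directly to the polynomial case as in the first paragraph of the proof of Corollary~\ref{cor:spec}), the image $\sigma(R\lfloor X\rfloor)$ is again local with maximal ideal $\sigma(\fm R\lfloor X\rfloor)$ (or a field, the degenerate case). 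Now take any valuation ring $\mathcal O_\Omega$ of $\Omega$ lying over the local ring $\sigma(R\lfloor X\rfloor)$—such a ring exists by Krull's theorem (see the remarks before Lemma~\ref{lem:existence of places})—and equip $\Omega$ with the associated dominance relation $\preceq$. Then $\Omega\models\operatorname{ACVF}$ provided the valuation is non-trivial; if $\mathcal O_\Omega=\Omega$, one first passes to the extension $\Omega(\!(t^\Q)\!)$ using model completeness of the $\mathcal L_{\operatorname R}$-theory of algebraically closed fields (exactly as in the proof of Corollary~\ref{cor:spec}) to arrange $\mathcal O_\Omega\neq\Omega$, so that $\Omega$ becomes a model of $\operatorname{ACVF}$.

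With this setup, $\sigma(R\lfloor X\rfloor)\subseteq\mathcal O_\Omega$ gives $\sigma(R\lfloor X\rfloor)\preceq 1$, and since $\mathcal O_\Omega$ lies over $\sigma(R\lfloor X\rfloor)$, the maximal ideal $\smallo_\Omega$ of $\mathcal O_\Omega$ contracts to $\sigma(\fm R\lfloor X\rfloor)$, whence $\sigma(\fm)\subseteq\sigma(\fm R\lfloor X\rfloor)\subseteq\smallo_\Omega$, i.e.\ $\sigma(\fm)\prec 1$. Thus all hypotheses of Proposition~\ref{prop:Rueckert spec, rest} are met for the given quantifier-free $\mathcal L_{\operatorname R}$-formula $\varphi(y)$ (viewed as an $\mathcal L$-formula), and we conclude $\k\models\varphi(f(a))$ for some $a\in R^m$, as desired.

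The main obstacle I anticipate is the bookkeeping around the degenerate case $\fm=\{0\}$: when $\k$ carries the trivial absolute value, $R\lfloor X\rfloor$ need not be local in the intended sense and one must instead argue directly. In that case $R=\k$, $\RX=\kX=\k[X]$ after completing (the Gauss norm is trivial), $\k\lfloor X\rfloor$ is a localization/subring of a polynomial ring, and one either takes $a:=0\in R^m$ (when $\ker\sigma$ contains $(X)$) or, as in the first paragraph of the proof of Corollary~\ref{cor:spec}, one still produces a non-trivial valuation on $\Omega$ over the non-field image and applies Proposition~\ref{prop:Rueckert spec, rest}. Apart from this case distinction the argument is a routine adaptation, so I would keep that paragraph short and refer back to the proof of Corollary~\ref{cor:spec} for the parallel reasoning.
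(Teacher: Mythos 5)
Your overall strategy---equip $\Omega$ with a dominance relation under which $\sigma(R\lfloor X\rfloor)\preceq 1$ and $\sigma(\fm)\prec 1$, enlarge $\Omega$ via model completeness of algebraically closed fields to arrange $\Omega\models\operatorname{ACVF}$, and then quote Proposition~\ref{prop:Rueckert spec, rest}---is exactly the paper's. But the step producing the valuation ring has a genuine gap: $R\lfloor X\rfloor$ is \emph{not} a local ring with maximal ideal $\fm R\lfloor X\rfloor$ when $m\geq 1$. The identity $R\lfloor X\rfloor^\times=R^\times(1+\fm R\lfloor X\rfloor)$ only says that the units are the elements whose reduction modulo $\fm R\lfloor X\rfloor$ is a nonzero constant; the quotient is (contained in) the polynomial ring $\bar{R}[X]$, not a field, and for instance $X_1$ and $1-X_1$ are both non-units whose sum is a unit. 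Consequently $\sigma(R\lfloor X\rfloor)$ need not be local (take $\sigma$ injective), Krull's theorem on dominating a local subring does not apply as you invoke it, and the assertion that the maximal ideal of $\mathcal O_\Omega$ ``contracts to $\sigma(\fm R\lfloor X\rfloor)$'' is unjustified: a valuation ring merely containing $\sigma(R\lfloor X\rfloor)$ can make elements of $\sigma(\fm)$ units, and then the hypothesis $\sigma(\fm)\prec 1$ of Proposition~\ref{prop:Rueckert spec, rest} fails.

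The repair is what the paper does, and it is where the fact about $1$-units is really needed: with $A:=\sigma(R\lfloor X\rfloor)$ one has $1\notin\fm A$ (otherwise $\sigma$ would annihilate a $1$-unit, which is a unit of $\k\lfloor X\rfloor$), so there is a maximal ideal $M$ of $A$ containing $\fm A$; now dominate the \emph{local} ring $A_M$ by a valuation ring of $\Omega$ (Krull). This gives $\sigma(R\lfloor X\rfloor)\preceq 1$ and $\sigma(\fm)\subseteq M\subseteq\smallo_\Omega$, i.e.\ $\sigma(\fm)\prec 1$, after which your concluding steps (pass to an extension to arrange $\Omega\models\operatorname{ACVF}$, then apply Proposition~\ref{prop:Rueckert spec, rest}) are correct. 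The digression about $\fm=\{0\}$ is unnecessary: in this subsection $\k$ is assumed to be a model of $\operatorname{ACVF}$, so its absolute value is non-trivial.
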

\begin{proof}
Note that $\sigma$ restricts to a field embedding~$\k\to\Omega$;   identify~$\k$ with its image under $\sigma$.
Let $A:=\sigma(R\lfloor X\rfloor)$; then $R\subseteq A$ and
$1\notin\fm A$. Let~$M\supseteq \fm A$ be a maximal ideal of $A$; then the local subring $A_M$ of $\Omega$ lies over~$R$.
Take a valuation ring of $\Omega$ which lies over $A_M$, and hence over~$R$,
and equip $\Omega$ with the associated dominance relation. Then $\sigma(\RX)\preceq 1$, $\sigma(\fm)\prec 1$. 
Model completeness of
 the $\mathcal L_{\operatorname{R}}$-theory of algebraically closed fields allows us to replace~$\Omega$ by a suitable extension  
 to arrange~$\Omega\models\operatorname{ACVF}$. Now use Proposition~\ref{prop:Rueckert spec, rest}. 
\end{proof}

\noindent
Arguing as in the proof of Theorem~\ref{thm:Rueckert}, using
Corollary~\ref{cor:spec, rest} in place of Corollary~\ref{cor:spec}, we obtain the familiar Nullstellensatz for $\kX$. (Cf.~\cite[\S{}7.1.2, Theorem~3]{BGR}. This is a special case of   a more general Nullstellensatz for power series rings over algebraically closed valued fields from~\cite[Proposition~5.2.7]{CL}.)

\begin{cor}\label{cor:Rueckert, rest}
Let $I$ be an ideal of~$\k\lfloor X\rfloor$; then
$\operatorname{I}\!\big(\!\operatorname{Z}(I)\big)=\sqrt{I}$.
\end{cor}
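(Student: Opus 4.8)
The plan is to mimic the proof of Theorem~\ref{thm:Rueckert} (R\"uckert's Nullstellensatz in the formal/convergent setting) line by line, substituting Corollary~\ref{cor:spec, rest} for Corollary~\ref{cor:spec} and $\operatorname{Z}(I)$, $\operatorname{I}(S)$ for the $K$-versions. First I would dispose of the trivial direction: if $a\in\operatorname{Z}(I)$ then the kernel of the evaluation ring morphism $f\mapsto f(a)\colon\k\lfloor X\rfloor\to\k$ is a prime ideal containing $I$, so $\operatorname{I}(\operatorname{Z}(I))\supseteq\sqrt I$ always holds (this is already recorded in the ``Zero sets and vanishing ideals'' subsection for $\RX$), and if $\operatorname{Z}(I)=\emptyset$ then the empty intersection gives $\operatorname{I}(\operatorname{Z}(I))=\k\lfloor X\rfloor$, whereas some prime contains $I$ forces nothing — so I instead observe that if $\operatorname{Z}(I)=\emptyset$ we are reduced to showing $\sqrt I=\k\lfloor X\rfloor$, and conversely any $a\in\operatorname{Z}(I)$ produces a prime over $I$. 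So assume $I\neq\k\lfloor X\rfloor$ and that some prime of $\k\lfloor X\rfloor$ contains $I$.

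Next I would use that $\k\lfloor X\rfloor$ is noetherian (Lemma~\ref{lem:W-Lemma, rest}) to write $\sqrt I=P_1\cap\cdots\cap P_k$ with $P_i$ prime, $k\geq 1$; since $\operatorname{Z}(I)=\operatorname{Z}(\sqrt I)$ and $\operatorname{I}$ turns finite unions into intersections, it suffices to treat the case $I=P$ prime. So assume $I$ is a prime ideal of $\k\lfloor X\rfloor$ and show $\operatorname{I}(\operatorname{Z}(P))\subseteq P$. Let $\Omega$ be an algebraic closure of $\Frac(\k\lfloor X\rfloor/P)$ and let $\sigma\colon\k\lfloor X\rfloor\to\Omega$ be the composite of the residue map with the inclusion. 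For $f\in\k\lfloor X\rfloor\setminus P$ and generators $g_1,\dots,g_n$ of $P$, we have $\Omega\models\varphi(\sigma(f),\sigma(g_1),\dots,\sigma(g_n))$ for the quantifier-free $\mathcal L_{\operatorname{R}}$-formula $y_0\neq0\wedge y_1=\cdots=y_n=0$, and Corollary~\ref{cor:spec, rest} delivers an $a\in R^m$ with $f(a)\neq0$, $g_1(a)=\cdots=g_n(a)=0$, i.e.\ $a\in\operatorname{Z}(P)$ and $f\notin\operatorname{I}(\operatorname{Z}(P))$. Hence $\operatorname{I}(\operatorname{Z}(P))\subseteq P$, and combined with the trivial inclusion this gives equality.

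The one genuinely new wrinkle compared to the proof of Theorem~\ref{thm:Rueckert} is the handling of the case $\operatorname{Z}(I)=\emptyset$: over $K$ (a model of $\operatorname{ACVF}_W$ with $\smallo\neq\{0\}$) the Rabinowitsch-type argument never produces an empty zero set when $I$ is proper, but here $R^m$ could in principle contain no zero of a proper ideal — this, however, cannot actually happen, because applying Corollary~\ref{cor:spec, rest} with $\sigma$ the composite $\k\lfloor X\rfloor\to\k\lfloor X\rfloor/P\hookrightarrow\Omega$ for any prime $P\supseteq I$ (which exists since $I\neq\k\lfloor X\rfloor$) and the formula $y_1=\cdots=y_n=0$ already yields a common zero in $R^m$, so $\operatorname{Z}(I)\neq\emptyset$ automatically. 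I expect this is the only place requiring care; everything else is a verbatim transcription, and since $\k$ is algebraically closed the evaluation maps land in $\k$ itself so no passage to an elementary extension is needed. The main obstacle, such as it is, is simply bookkeeping: making sure that in the reduction to $I=P$ prime one uses $\operatorname{Z}(I)=\operatorname{Z}(\sqrt I)$ (clear from the definitions) and that the generators $g_1,\dots,g_n$ of $P$ lie in $\k\lfloor X\rfloor$, which they do by noetherianity.
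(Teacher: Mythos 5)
Your proposal is correct and is exactly the argument the paper intends: the paper's proof of Corollary~\ref{cor:Rueckert, rest} is literally ``argue as in the proof of Theorem~\ref{thm:Rueckert}, using Corollary~\ref{cor:spec, rest} in place of Corollary~\ref{cor:spec}'', which is what you carry out. The worry about $\operatorname{Z}(I)=\emptyset$ is already absorbed by the main step (for prime $P\neq\k\lfloor X\rfloor$, the specialization argument applied to any $f\notin P$ produces a point of $\operatorname{Z}(P)$), so no extra care is needed there.
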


\noindent 
We also note that Proposition~\ref{prop:Rueckert spec, rest} easily yields a fact about rational domains (cf.~\cite[\S{}7.2.3]{BGR})
which is in analogy to Corollary~\ref{cor:p-adic H17} above:

\begin{prop}\label{prop:int valued}
Let $f,g\in \k\lfloor X\rfloor$. Then   
$$\text{$\abs{f(a)} \leq \abs{g(a)}$ for all $a\in R^m$}\quad\Longleftrightarrow\quad f\in g R\lfloor X\rfloor.$$
\end{prop}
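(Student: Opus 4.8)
The plan is to mimic the structure of Corollary~\ref{cor:p-adic H17}, replacing the Kochen-ring characterization of $p$-valuation rings by the elementary fact that in an algebraically closed valued field the intersection of all nontrivial valuation rings lying over a given subring is the integral closure. First I would dispose of the trivial direction: if $f = gh$ with $h \in R\lfloor X\rfloor$, then for each $a \in R^m$ we have $h(a) \in R$ (since $h(\smallo$-substitution sends $R\lfloor X\rfloor$ into $R$, by the remarks on substitution preceding Lemma~\ref{lem:max principle} and the fact $R\lfloor X\rfloor \subseteq \RX$), hence $\abs{f(a)} = \abs{g(a)}\abs{h(a)} \le \abs{g(a)}$. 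For the forward direction, assume $f \notin g\,R\lfloor X\rfloor$. The goal is to produce $a \in R^m$ with $\abs{f(a)} > \abs{g(a)}$, i.e.\ $g(a) \ne 0$ and $f(a)/g(a) \notin R$ — no, more precisely $f(a) \not\preceq g(a)$.

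The key step is a localization-and-valuation argument. If $g = 0$ then $f \notin R\lfloor X\rfloor \cdot 0 = \{0\}$ forces $f \ne 0$, and since the valuation of $\k$ is nontrivial (it is algebraically closed, hence not trivially valued), Lemma~\ref{lem:phim embedding} — or rather its restricted-power-series analogue via Lemma~\ref{lem:max principle}, once one knows $\bar R$ is infinite (true since $\k$ is algebraically closed) — gives $a$ with $f(a) \ne 0$, so $\abs{f(a)} > 0 = \abs{g(a)}$. So assume $g \ne 0$. Then $f \notin g\,R\lfloor X\rfloor$ means the element $f/g$ of $\Frac(\k\lfloor X\rfloor)$ does not lie in the localization-type subring $R\lfloor X\rfloor_{(g)} := \{h/g^k : \ldots\}$; more usefully, I would show that there is a valuation ring $\mathcal O_\Omega$ of an algebraically closed valued field $\Omega \models \operatorname{ACVF}$ together with a ring morphism $\sigma \colon \k\lfloor X\rfloor \to \Omega$ with $\sigma(R\lfloor X\rfloor) \preceq 1$, $\sigma(\fm) \prec 1$, and $\sigma(f) \not\preceq \sigma(g)$. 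Granting this, Proposition~\ref{prop:Rueckert spec, rest} applied to the quantifier-free $\mathcal L$-formula $\varphi(y_1,y_2) \colon \neg(y_1 \preceq y_2)$ and $(f,g) \in \k\lfloor X\rfloor^2$ yields $a \in R^m$ with $\k \models \neg(f(a) \preceq g(a))$, i.e.\ $\abs{f(a)} > \abs{g(a)}$, as desired.

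To construct $\sigma$ and $\mathcal O_\Omega$: consider the subring $B := R\lfloor X\rfloor[1/g]$ of $\Frac(\k\lfloor X\rfloor)$ (localizing at the multiplicative set $\{g^k\}$) together with the ideal $\fm B$. Since $f \notin g\,R\lfloor X\rfloor$, one checks that $f/g \notin B$: indeed $f/g \in B$ would give $f/g = h/g^k$ with $h \in R\lfloor X\rfloor$, whence $f g^{k-1} = h$ in $\k\lfloor X\rfloor$, and after re-running Weierstrass division (Corollary~\ref{WDforFX} / the factoriality of $\k\lfloor X\rfloor$ from Lemma~\ref{lem:W-Lemma, rest}) to cancel the factor $g^{k-1}$ one gets $f \in g\,R\lfloor X\rfloor$, a contradiction. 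Now $\fm B \ne B$ (else $1 \in \fm B$, which after multiplying by a power of $g$ would say $g^k \in \fm R\lfloor X\rfloor$, impossible since $\bar g \ne 0$ in $\bar R[X]$), so pick a maximal ideal $M \supseteq \fm B$ of $B$; the local ring $B_M$ lies over $R$. Because $f/g \notin B \supseteq B_M$ is not integral over $B_M$ (it is not even in the integral closure: here I invoke that a valuation ring maximal among those lying over a local ring separates non-integral elements, using Krull as recalled before Lemma~\ref{lem:existence of places}), there is a valuation ring $\mathcal V$ of $K_0 := \Frac(\k\lfloor X\rfloor)$ lying over $B_M$ with $f/g \notin \mathcal V$. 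Embed $K_0$ into an algebraically closed valued field $\Omega$ extending $(K_0, \mathcal V)$, let $\sigma$ be the inclusion $\k\lfloor X\rfloor \hookrightarrow \Omega$; then $\sigma(R\lfloor X\rfloor) \subseteq \mathcal V \preceq 1$, and $\sigma(\fm) \subseteq M \cdot B_M \subseteq \smallo_{\mathcal V}$ so $\sigma(\fm) \prec 1$, and $\sigma(f)/\sigma(g) \notin \mathcal O_\Omega$ i.e.\ $\sigma(f) \not\preceq \sigma(g)$.

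The main obstacle I expect is the cancellation step: showing cleanly that $fg^{k-1} = h \in R\lfloor X\rfloor$ (an identity in the coefficient ring $R\lfloor X\rfloor$, with $f,g \in \k\lfloor X\rfloor$ possibly of Gauss norm $> 1$) actually forces $f \in g\,R\lfloor X\rfloor$. This needs a little care with Gauss norms and with Weierstrass preparation to reduce $g$ to a monic polynomial in $X_m$ after a shear $\tau_d$, then applying Corollary~\ref{cor:ED} and the factoriality of $\k\lfloor X\rfloor$; one must track the unit factors to stay inside $R\lfloor X\rfloor$ rather than merely $\k\lfloor X\rfloor$. The analogous cancellation for the $m=1$ polynomial case is exactly Corollary~\ref{cor:div poly}, which suggests the general argument should go through with induction on $m$ via Weierstrass division, and this is the part of the write-up that will require the most work.
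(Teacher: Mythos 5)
Your overall architecture is the right one and matches the paper's: reduce to producing a model $\Omega\models\operatorname{ACVF}$ and a ring morphism $\sigma\colon\k\lfloor X\rfloor\to\Omega$ with $\sigma(R\lfloor X\rfloor)\preceq 1$, $\sigma(\fm)\prec 1$ and $\sigma(f)\not\preceq\sigma(g)$, then invoke Proposition~\ref{prop:Rueckert spec, rest}. But the construction of that valuation has a genuine gap, and it sits exactly at the step you yourself flagged as delicate. The implication ``$f\notin g\,R\lfloor X\rfloor\Rightarrow f/g\notin B:=R\lfloor X\rfloor[1/g]$'' is false: take $f=1$ and $g=p$ (or any non-zero $g\in\fm$); then $f/g=p/p^2\in B$, yet $1\notin p\,R\lfloor X\rfloor$. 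Your proposed cancellation --- from $fg^{k-1}=h\in R\lfloor X\rfloor$ deduce $f\in g\,R\lfloor X\rfloor$ --- is precisely what breaks here ($h=p\in R\lfloor X\rfloor$ but $1\notin pR\lfloor X\rfloor$), and no amount of Weierstrass division will repair it, because at this point your argument uses only the algebraic hypothesis $f\notin gR\lfloor X\rfloor$ and not the pointwise hypothesis at all. The missing idea is to bring in the Gauss norm: by Lemma~\ref{lem:max principle} (with $\bar R$ infinite since $\k$ is algebraically closed) the pointwise hypothesis gives $\abs{f}\leq\abs{g}$, and since the Gauss norm is multiplicative, $f=gh$ with $h\in\k\lfloor X\rfloor$ forces $\abs{h}\leq 1$, i.e.\ $h\in R\langle X\rangle\cap\k\lfloor X\rfloor=R\lfloor X\rfloor$. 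So it suffices to prove $f\in g\,\k\lfloor X\rfloor$, i.e.\ one should negate divisibility in the \emph{full} ring $\k\lfloor X\rfloor$, not in $R\lfloor X\rfloor$.

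This reduction also repairs the second gap in your construction. To separate $f/g$ from a subring by a valuation ring of $F=\Frac(\k\lfloor X\rfloor)$ you need $f/g$ to be non-integral over that subring (the intersection of all valuation rings containing a subring is its integral closure), and mere non-membership in $B_M$ does not give this; you invoke the separation without justifying non-integrality. The paper's route avoids the issue: $A=\k\lfloor X\rfloor$ is factorial (Lemma~\ref{lem:W-Lemma, rest}), hence integrally closed in $F$, so $h=f/g\in F\setminus A$ yields a valuation ring $\mathcal O_F\supseteq A$ of $F$ with $h\notin\mathcal O_F$ by \cite[Theorem~10.4]{Matsumura}. One then refines $\mathcal O_F$ to a valuation ring $\mathcal O\subseteq\mathcal O_F$ lying over $R$ by passing to the residue field of $\mathcal O_F$, taking a maximal ideal of $\pi(R\lfloor X\rfloor)$ containing $\fm\pi(R\lfloor X\rfloor)$, and lifting via Lemma~\ref{lem:lift val spec} --- this last part is essentially the maximal-ideal construction you describe, and it is sound; it is only the first half of your argument that needs to be replaced by the Gauss-norm reduction and the integral-closedness of $\k\lfloor X\rfloor$.
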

\begin{proof}
We only need to show ``$\Rightarrow$''. Thus suppose  $\abs{f(a)} \leq \abs{g(a)}$ for all $a\in R^m$; then $\abs{f}\leq\abs{g}$
by Lemma~\ref{lem:max principle}.
To show $f\in g R\lfloor X\rfloor$ we can assume~$g\neq 0$, and it then suffices to show
$f\in g\k\lfloor X\rfloor$. Suppose otherwise, and
let $A:=\k\lfloor X\rfloor$, $F:=\Frac(A)$; then~$h:=f/g\in F\setminus A$. Now the integral domain $A$ is factorial by Lemma~\ref{lem:W-Lemma, rest} and
so integrally closed in $F$ (see, e.g., \cite[Lemma~1.3.11]{ADH}). Hence~\cite[Theorem~10.4]{Matsumura} yields a valuation ring~$\mathcal O_F$ of $F$ containing $A$ but not $h$.
The residue morphism~$\pi\colon\mathcal O_F\to\k_F:=\mathcal O_F/\smallo_F$
restricts to a field embedding $\k\to\k_F$, via which we identify $\k$ with a subfield of~$\k_F$.
Let $B:=\pi(R\lfloor X\rfloor)$; then $R \subseteq B$ and~$1\notin\fm B$.
Let $M$ be a maximal ideal of~$B$ containing~$\fm B$; then $B_M$ is a local subring of $\k_F$ lying over
$R$. Now \cite[Pro\-po\-si\-tion~3.1.13]{ADH}  (applied to $B_M$, $\k_F$ in place of~$A$,~$K$)   and Lemma~\ref{lem:lift val spec}       yield
a valuation ring $\mathcal O$ of  $F$ with $R\lfloor X\rfloor\subseteq\mathcal O\subseteq\mathcal O_F$ which
lies over the valuation ring $R$ of $\k$. Let $\Omega$ be   some algebraically closed  field extension  of~$F$ equipped with the dominance relation associated to a valuation ring of $\Omega$
lying over~$\mathcal O$; then $\Omega\models\operatorname{ACVF}$ and $R\lfloor X\rfloor\preceq 1$, $\fm\prec 1$.
Proposition~\ref{prop:Rueckert spec, rest} now yields some~$a\in R^m$ with~$\abs{f(a)} > \abs{g(a)}$, a contradiction. 
\end{proof}

\begin{remark}
The analogue of the previous proposition with $f,g\in\k[X]$ was studied in~\cite{PrRi}.
If the   equivalence in Proposition~\ref{prop:int valued} holds for all $f,g\in\k[X]$, even just when~$m=1$,
then $\k$   necessarily is algebraically closed.
This follows from \cite[Theorem~3.1]{PrRi} and Corollary~\ref{cor:div poly}.
\end{remark}

\begin{cor}\label{cor:int valued}
Let $f,g\in \k\lfloor X\rfloor$. Then   
$$\text{$\abs{f(a)} < \abs{g(a)}$ for all $a\in R^m$}\quad\Longleftrightarrow\quad f\in g \fm R\lfloor X\rfloor.$$
\end{cor}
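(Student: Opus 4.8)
The plan is to derive the strict statement from Proposition~\ref{prop:int valued} together with one elementary observation about the ideal $\fm R\lfloor X\rfloor$: for $h\in R\lfloor X\rfloor$ one has $h\in\fm R\lfloor X\rfloor$ if and only if $\abs h<1$. First I would prove this auxiliary fact. The forward direction is immediate, since every element of $\fm R\lfloor X\rfloor$ is a \emph{finite} sum $\sum_i\varepsilon_ih_i$ with $\varepsilon_i\in\fm$ and $h_i\in R\lfloor X\rfloor$, so $\abs{\sum_i\varepsilon_ih_i}\le\max_i\abs{\varepsilon_i}\abs{h_i}\le\max_i\abs{\varepsilon_i}<1$. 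For the converse, write $h=\sum_\alpha h_\alpha X^\alpha$; from $\abs h<1$ we get $h_\alpha\in\fm$ for all $\alpha$, and from $h\in R\langle X\rangle$ we get $\abs{h_\alpha}\to 0$. Fixing any nonzero $\varepsilon\in\fm$ and choosing $d$ with $\abs{h_\alpha}\le\abs\varepsilon$ for $\abs\alpha>d$, I would write $h=p+\varepsilon h'$ where $p:=\sum_{\abs\alpha\le d}h_\alpha X^\alpha\in\fm R[X]\subseteq\fm R\lfloor X\rfloor$ and $h':=\varepsilon^{-1}(h-p)$; the point is that $h'\in R\langle X\rangle$ (its coefficients $h_\alpha/\varepsilon$ for $\abs\alpha>d$ lie in $R$ and tend to $0$) while also $h'\in\k\lfloor X\rfloor$ (as $\k\lfloor X\rfloor$ is a $\k$-algebra and $h-p\in\k\lfloor X\rfloor$), so $h'\in R\langle X\rangle\cap\k\lfloor X\rfloor=R\lfloor X\rfloor$ and hence $h\in\fm R\lfloor X\rfloor$. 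In particular an $h\in R\lfloor X\rfloor$ with $h\notin\fm R\lfloor X\rfloor$ has $\abs h=1$, so $\bar h\ne 0$ in $\bar R[X]$, and then Lemma~\ref{lem:max principle} yields some $a\in R^m$ with $\abs{h(a)}=\abs h=1$ (here $\bar R$ is infinite since $\k$ is algebraically closed in this subsection).

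With this in hand the corollary follows quickly. The implication ``$\Leftarrow$'' is clear: if $f=gh$ with $h\in\fm R\lfloor X\rfloor$, then $\abs{h(a)}\le\abs h<1$ for all $a\in R^m$, so $\abs{f(a)}=\abs{g(a)}\abs{h(a)}<\abs{g(a)}$ whenever $g(a)\ne 0$. For ``$\Rightarrow$'' I would argue by contraposition: suppose $f\notin g\fm R\lfloor X\rfloor$. If $f\notin gR\lfloor X\rfloor$, then Proposition~\ref{prop:int valued} already produces an $a\in R^m$ with $\abs{f(a)}>\abs{g(a)}$, so the strict inequality fails at $a$. Otherwise $g\ne 0$ and $f=gh$ with $h\in R\lfloor X\rfloor$ (uniquely determined); since $f\notin g\fm R\lfloor X\rfloor$ we must have $h\notin\fm R\lfloor X\rfloor$, so by the auxiliary fact there is $a\in R^m$ with $\abs{h(a)}=1$, whence $\abs{f(a)}=\abs{g(a)}$ and again the strict inequality fails at $a$. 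This proves ``$\Rightarrow$''.

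I do not expect a genuine obstacle here; the corollary is a routine refinement of Proposition~\ref{prop:int valued}. The only mildly technical point is the identification $\fm R\lfloor X\rfloor=\{h\in R\lfloor X\rfloor:\abs h<1\}$, whose only structural inputs are that $W$ is a restricted Weierstrass system (so $\k\lfloor X\rfloor$ is a $\k$-algebra with $R\lfloor X\rfloor=R\langle X\rangle\cap\k\lfloor X\rfloor$) and the maximum principle of Lemma~\ref{lem:max principle}.
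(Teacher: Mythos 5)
Your proof is correct and takes essentially the same route as the paper, whose entire proof of this corollary is the one-liner ``combine the remark after Lemma~\ref{lem:max principle} with Proposition~\ref{prop:int valued}'': from $\abs{f(a)}<\abs{g(a)}$ for all $a$ one gets $f=gh$ with $h\in R\lfloor X\rfloor$ by the proposition, and $\abs{f}<\abs{g}$ by the maximum principle, whence $\abs{h}<1$ by multiplicativity of the Gauss norm, and then $h\in\fm R\lfloor X\rfloor$. That last step is exactly your auxiliary identity $\fm R\lfloor X\rfloor=\{h\in R\lfloor X\rfloor:\abs{h}<1\}$, which the paper leaves implicit; for the Tate algebra itself it is the statement $\bar h=0\Leftrightarrow h\in\fm\RX$, but for a general restricted Weierstrass system one does need your extra observation that the divided series lands back in $R\langle X\rangle\cap\k\lfloor X\rfloor=R\lfloor X\rfloor$ (your truncation $h=p+\varepsilon h'$, or more simply dividing by a coefficient of maximal absolute value, which is attained since $\abs{h_\alpha}\to 0$). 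So you have filled in a detail rather than opened a gap, and your contrapositive organization of ``$\Rightarrow$'' is just a repackaging of the same maximum-principle argument.

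The one caveat is the one your hedge ``whenever $g(a)\neq 0$'' already points at: the implication ``$\Leftarrow$'' as literally stated fails at zeros of $g$. If $g(a_0)=0$ for some $a_0\in R^m$ and $f\in g\,\fm R\lfloor X\rfloor$, then $f(a_0)=0$ too and $\abs{f(a_0)}<\abs{g(a_0)}$ is false; e.g.\ $m=1$, $g=X_1$, $f=\varepsilon X_1$ with $0\neq\varepsilon\in\fm$. This is a defect of the corollary as stated (the paper's own one-line proof, the final corollary of the real subsection, and the second equivalence of Theorem~E all share it), not something your argument introduces: the equivalence is correct exactly when one either assumes $g$ has no zero in $R^m$ or requires the strict inequality only off $\operatorname{Z}(g)$. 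Your ``$\Rightarrow$'' direction is complete and unaffected, and your ``$\Leftarrow$'' proves all that can be proved.
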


\noindent
This follows from the remark after Lemma~\ref{lem:max principle} combined with Proposition~\ref{prop:int valued}.
We finish with another application of Proposition~\ref{prop:Rueckert spec, rest}, a generalization of 
\cite[Proposition~5.1]{HY}.
Here it is convenient to switch from the absolute value $\abs{\,\cdot\,}$ of $\k$ to the valuation $$a\mapsto va:=-\log\,\abs{a}\colon\k^\times\to\Gamma=v(\k^\times)\,\subseteq\,\R$$ on $\k$. For $a=(a_1,\dots,a_m)\in\k^m$ we set $va:=\big(v(a_1),\dots,v(a_m)\big)\in \Gamma^m$.
 
\begin{prop}\label{prop:closure}
Let $\varphi(y)$ be an $\mathcal L_{\operatorname{R}}$-formula, $f\in\k\lfloor X\rfloor^n$, and
$$S:=\big\{ a\in R^m:  \k\models\varphi(f(a)) \big\}.$$
Then $vS :=\{ va:a\in S \}\subseteq \Gamma^m$ is closed.
\end{prop}

\noindent
For the proof,
 similarly to Section~\ref{sec:inf W-structures as structures}, we let $\mathcal L_W$ be the expansion of $\mathcal L$ by a new $n$-ary function symbol $g$
 for each $n$ and $g\in W_n$, and expand $\k$ to an $\mathcal L_W$-structure by interpreting each such function symbol $g$
 by the function $\k^n\to\k$ which agrees with $a\mapsto g(a)$ on $R^n$ and is identically zero on $\k^n\setminus R^n$.
 
 \begin{proof}[Proof of Proposition~\ref{prop:closure}]
Let $\beta$ be an element of the closure of $vS$.  
 Let $\k^*$ be an elementary extension of the $\mathcal L_W$-structure $\k$,
 with valuation $v^*\colon (\k^*)^\times\to\Gamma^*$, such that $\Gamma^*$ contains
 some positive element   smaller  than every positive element of $\Gamma$.
 This yields a non-zero convex subgroup $\Delta^*$ of $\Gamma^*$ such that $\Gamma\cap\Delta^*=\{0\}$.
 Now $\beta$ is contained in the closure of $v^*(S^*)$ where $S^*$ is defined like $S$ with $\k$, $R$ replaced by~$\k^*$ and its valuation ring $R^*$, respectively. Thus we obtain some $a^*\in S^*$ such that~$v^*(a^*)-\beta\in(\Delta^*)^m$.
 Let $\Omega$ be the   field $\k^*$ equipped with the valuation ring of the $\Delta^*$-coarsening  of the
 valuation of $\k^*$.
Then $\Omega\models\operatorname{ACVF}$, and the   $\mathcal L$-reduct of~$\k$ is a substructure of
$\Omega$ with  $v_\Omega(a^*)=\beta$. Let $\sigma\colon \k\lfloor X\rfloor\to\Omega$ be the ring morphism~$g\mapsto g^*(a^*)$, where $g^*$ denotes   the interpretation of the
function symbol~$g$ in the $\mathcal L_W$-structure $\k^*$. Then $\sigma(R\lfloor X\rfloor)\preceq 1$, $\sigma(\fm)\prec 1$.
Let $b\in\k^m$ with $vb=\beta$.
Then  the tuple $(X,b,f)\in \k\lfloor X\rfloor^{m+m+n}$ satisfies~$\sigma(X,b,f)=\big(a^*,b,f^*(a^*)\big)$,
and hence
$\Omega\models\psi\big(\sigma(X,b,f)\big)$ where $\psi=\psi(w,x,y)$ is the $\mathcal L$-formula
$$\varphi(y)\wedge w_1 \asymp x_1 \wedge\cdots\wedge w_m\asymp x_m.$$
Proposition~\ref{prop:Rueckert spec, rest} now yields $a\in R^m$ with $\k\models\psi\big(a,b,f(a)\big)$,
so $\beta=va\in vS$.
 \end{proof}
 
\subsection{The real holomorphy ring}
For use in the next subsection we recall the analogue of the Kochen ring    in
the real setting. {\it Throughout this section we let $K$ be a field.}\/
A valuation ring of $K$ is said to be {\it real}\/
if its residue field is formally real. If~$\mathcal O$ is a real valuation ring of $K$ then some ordering of $K$ makes $K$ an ordered field such that $\mathcal O$ is convex. (Baer-Krull  \cite[Theorem~10.1.10]{BCR}.) Hence   every
subring of~$K$ containing  a real valuation ring of $K$ is also a real valuation ring of $K$. Moreover: 

\begin{lemma}\label{lem:real val}
Let $\mathcal O$ be a valuation ring of $K$  and $\Delta$ be a convex subgroup of its value group. Then $\mathcal O$ is real iff the
valuation ring  of the $\Delta$-specialization   of~$(K,\mathcal O)$ is real.
\end{lemma}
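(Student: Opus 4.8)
The plan is to reduce the statement to the residue-field identification already built into the notion of $\Delta$-specialization, so that no real work is needed. Recall from the discussion of coarsening and specialization in Section~\ref{sec:val th} that the $\Delta$-specialization of $(K,\mathcal O)$ is the valued field $\dot K=(\dot K,\mathcal O_{\dot K})$, and that the composed map $\mathcal O\to\mathcal O_{\dot K}\to\res(\dot K)$ has kernel $\smallo$, hence induces a field isomorphism $\res(K)\xrightarrow{\ \cong\ }\res(\dot K)$. In particular $\res(K)$ is formally real if and only if $\res(\dot K)$ is.

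From this the lemma is immediate. By definition $\mathcal O$ is real precisely when $\res(K)$ is formally real, and the valuation ring $\mathcal O_{\dot K}$ of the $\Delta$-specialization $\dot K$ is real precisely when its residue field $\res(\dot K)$ is formally real; by the previous paragraph these two conditions coincide. I expect no obstacle here: the argument is the ``real'' analogue of the residue-field part of the proof of Lemma~\ref{lem:p-val spec}, and it also covers the degenerate case $\Delta=\{0\}$, in which $\dot K=\res(K)$ carries the trivial valuation (whose residue field is again $\res(K)$). The only point requiring a little care is to invoke the residue field of the $\Delta$-specialization rather than that of the $\Delta$-\emph{coarsening}, which is in general a different (and typically not formally real) field.
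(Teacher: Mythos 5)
Your proof is correct and is exactly the argument the paper intends (the lemma is stated without proof, in direct analogy with Lemma~\ref{lem:p-val spec}, whose proof is the same one-line observation): the residue field of the $\Delta$-specialization is canonically isomorphic to $\res(K)$, and ``real'' is a property of the residue field. Your remark distinguishing the specialization from the coarsening is an apt caution, and the degenerate case $\Delta=\{0\}$ is handled correctly.
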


\noindent
Suppose $K$ is formally real (equivalently,  has a real valuation ring).
Let $H_K$ be the {\it real holomorphy ring}\/ of $K$, that is, the intersection of all real valuation rings of $K$. 
 Sch\"ul\-ting~\cite{Schuelting} showed that
 $H_K$ is the subring of~$K$   generated by the elements~$\frac{1}{1+f}$ where~$f$ is a sum of squares in $K$: $f=g_1^2+\cdots+g_n^2$~($g_i\in K$);
 alternatively, $H_K$ consists of all $h\in K$ such that for some $n$, both $n+h$ and~$n-h$ are a sum of squares in $K$.
 (Here as everywhere in this paper, $n$ denotes a natural number.)
For a subring $A$ of $K$,   the subring $H_KA$ of $K$ generated by $H_K$ and~$A$ is the intersection of all real valuation 
rings of $K$ containing~$A$~\cite[Lem\-ma~1.12]{Becker}.

\begin{lemma}\label{lem:HK}
Let $\sigma\colon A\to L$ be a ring morphism to
a formally real field where $A$ is a regular integral domain with fraction field $K$. Let $f,g\in A$. Then 
$$f\in gH_K\ \Longrightarrow\ \sigma(f)\in \sigma(g)H_L.$$
\end{lemma}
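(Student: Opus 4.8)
The plan is to mirror the proof of Lemma~\ref{lem:p-adic prime ideals}, with the $p$-adic Kochen ring replaced by the real holomorphy ring $H_K$ and $p$-valuation rings replaced by real valuation rings, exploiting Sch\"ulting's description of $H_K$ as the intersection of all real valuation rings of $K$ together with Lemmas~\ref{lem:existence of places}, \ref{lem:lift val spec} and~\ref{lem:real val}.

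First I would reduce to the case where $L=\Frac(R/P)$ with $P=\ker\sigma$ and $\sigma$ the composite of the residue map $R\to R/P$ with the inclusion $R/P\hookrightarrow L$. Indeed $\sigma$ factors as $R\to\Frac(R/P)\hookrightarrow L$, the field $\Frac(R/P)$ is formally real as a subfield of $L$, and $H_{\Frac(R/P)}\subseteq H_L$ (any generator $1/(1+h)$ of $H_{\Frac(R/P)}$ with $h$ a sum of squares in $\Frac(R/P)$ is also such a generator for $L$); so it suffices to prove the implication with $\Frac(R/P)$ in place of $L$. Next I would localize: replacing $R$ by $R_P$ and $P$ by $PR_P$ leaves $K=\Frac(R)$ and hence $H_K$ unchanged, keeps $R$ a regular integral domain, and makes $R$ local with maximal ideal $P$ and residue field $L=R/P$, with $\sigma$ the residue morphism. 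We are thus reduced to: $R$ is a regular local domain with maximal ideal $P$ and residue field $L=R/P$, $\sigma\colon R\to L$ is the residue morphism, and $f=gh$ for some $h\in H_K$; the goal is $\sigma(f)\in\sigma(g)H_L$.

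Now by Lemma~\ref{lem:existence of places} there is a valuation ring $\mathcal O$ of $K$ lying over $R$ such that $R\to\mathcal O$ induces an isomorphism $L=R/P\xrightarrow{\ \cong\ }\k:=\mathcal O/\smallo$; in particular $P=\smallo\cap R$ and the residue field $\k\cong L$ is formally real, so $\mathcal O$ is a real valuation ring of $K$, whence $K$ is formally real and $H_K\subseteq\mathcal O$. If $g\in P$, then $g\in\smallo$ and $h\in H_K\subseteq\mathcal O$, so $f=gh\in\smallo\cap R=P$ and therefore $\sigma(f)=0\in\sigma(g)H_L$. If $g\notin P$, then $g\in R^\times$, so after dividing by $g$ we may assume $g=1$ and $f=h\in H_K\cap R$, and it remains to show that $\sigma(f)$ lies in every real valuation ring $\mathcal O_L$ of $L$. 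Fix such an $\mathcal O_L$; transporting it along $L\cong\k$ yields a real valuation ring $\mathcal O_{\k}$ of $\k$, and by Lemma~\ref{lem:lift val spec} the subring $\mathcal O_0:=\{a\in\mathcal O:\bar a\in\mathcal O_{\k}\}$ is a valuation ring of $K$ whose $\Delta_0$-specialization is $(\k,\mathcal O_{\k})$; hence $\mathcal O_0$ is real by Lemma~\ref{lem:real val}. Therefore $f\in H_K\subseteq\mathcal O_0$, so $\bar f\in\mathcal O_{\k}$ by the very definition of $\mathcal O_0$, that is, $\sigma(f)\in\mathcal O_L$, as required.

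I expect no serious obstacle here: the only slightly delicate points are the bookkeeping in the reduction to the residue morphism and the invocation of Lemma~\ref{lem:existence of places} to pin down the residue field of $\mathcal O$ to be exactly $L$, so that real valuation rings of $L$ can be lifted to $K$; everything else is routine and parallels Lemma~\ref{lem:p-adic prime ideals}.
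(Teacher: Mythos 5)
Your proof is correct and follows essentially the same route as the paper: reduce to the residue morphism of the localization $R_P$, use Lemma~\ref{lem:existence of places} to produce a valuation ring $\mathcal O$ over $R$ with residue field $L$, lift a given real valuation ring of $L$ to a real valuation ring $\mathcal O_0\subseteq\mathcal O$ of $K$ via Lemmas~\ref{lem:lift val spec} and~\ref{lem:real val}, and conclude from $H_K\subseteq\mathcal O_0$. You merely spell out a few reductions (the passage from $L$ to $\Frac(R/P)$ via $H_{\Frac(R/P)}\subseteq H_L$, and the case split on $g\in P$) that the paper handles by reference to the proof of Lemma~\ref{lem:p-adic prime ideals}.
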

\begin{proof}
This is similar to the proof of Lemma~\ref{lem:p-adic prime ideals}, with $A$, $H_K$, $H_L$ in place of $R$, $\Lambda_K$, $\Lambda_L$, respectively.
Let $P:=\ker\sigma$, a real prime ideal of $A$. 
As in the proof of that lemma we   arrange that $A$ is local with maximal ideal~$P$,~$L=A/P$,
and $\sigma\colon A\to L$ is the residue morphism.
 Suppose $f\in gH_K$. Let~$\mathcal O_L$ be a real valuation ring of $L$, with associated dominance relation $\preceq_L$ on $L$;
we need to show that~$\sigma(f)\preceq_L \sigma(g)$. Take a valuation ring~$\mathcal O$ of~$K$ lying over $R$ such that the
natural inclusion~$A\to\mathcal O$ induces an isomorphism~$L=A/P\to\k=\mathcal O/\smallo$.  (Lemma~\ref{lem:existence of places}.)
 Let $\mathcal O_{\k}$ be the image of $\mathcal O_L$ under the isomorphism~$L\to\k$. By Lemma~\ref{lem:lift val spec} take 
 a valuation ring~$\mathcal O_0$ of~$K$ with~$\mathcal O_0\subseteq\mathcal O$ and a convex subgroup~$\Delta_0$
of its value group such that the $\Delta_0$-specialization of~$(K,\mathcal O_0)$ is~$(\k,\mathcal O_{\k})$. 
So~$\mathcal O_0=\big\{h\in\mathcal O:\sigma(h)\preceq_L 1\big\}$, and $\mathcal O_0$ is real by Lemma~\ref{lem:real val}, hence~$H_K\subseteq\mathcal O_0\subseteq\mathcal O$.
Arguing as at the end of the proof of Lemma~\ref{lem:p-adic prime ideals} with~$H_K$ in place of  $\Lambda_K$,
we conclude $\sigma(f)\preceq_L\sigma(g)$ as required.
\end{proof}

\subsection{The Nullstellensatz for real restricted analytic functions}
{\it In this subsection we assume  $\k$ to be real closed.}\/ Examples include each Hahn field~${\k=\R(\!(t^\Gamma)\!)}$ where $\Gamma$ is a divisible ordered subgroup
of the ordered additive group of reals, equipped with the absolute value
$f\mapsto \abs{f}\colon \k\to\R^{\geq}$ with $\abs{f}=\operatorname{e}^{-vf}$ for~$f\in\k^\times$,
or the completion~$\operatorname{cl}(\R[t^\Gamma])$
of the $\R$-subalgebra $\R[t^\Gamma]$ of $\k$, equipped with the restriction of the absolute value of $\k$.
(See the examples after Corollary~\ref{cor:phin(f)(0)}; here  $\operatorname{cl}(\R[t^\Q])$ is the Levi-Civita field mentioned in the introduction.) Note that since~$R$ is henselian, it is a proper convex subring of $\k$. 
(See \cite[Corollary~3.3.5 and Lemma~3.5.14]{ADH}.)
Thus equipping~$\k$ with the dominance relation associated to $R$ makes~$\k$ a model of~$\operatorname{RCVF}$.
Let $\mathcal L=\mathcal L_{\operatorname{OR},\preceq}$ and $y=(y_1,\dots,y_n)$ be a tuple of distinct $\mathcal L$-variables.

\begin{prop} \label{prop:Risler spec, rest}
Let $\varphi(y)$ be an $\mathcal L$-formula, $f\in \k\lfloor X\rfloor^n$, and 
 $\sigma \colon \k\lfloor X\rfloor\to   \Omega$ be a ring morphism, where 
 $\Omega\models\operatorname{RCVF}$, such that   $\sigma(R\lfloor X\rfloor)\preceq 1$, $\sigma(\fm)\prec 1$, and~$\Omega\models\varphi(\sigma(f))$. Then
$\k\models\varphi(f(a))$ for some $a\in R^m$.
\end{prop}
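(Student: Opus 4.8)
The plan is to follow the proofs of Propositions~\ref{prop:Rueckert spec, rest} and~\ref{prop:Risler spec} almost verbatim, using Theorem~\ref{thm:CD} where the former uses Theorem~\ref{thm:Robinson}, and to proceed by induction on $m$. For $m=0$ we have $\k\lfloor X\rfloor=\k$ and $R\lfloor X\rfloor=R$, so the hypotheses $\sigma(R)\preceq 1$ and $\sigma(\fm)\prec 1$ force $\sigma^{-1}(\mathcal O_\Omega)=R$ (if $a\in\k\setminus R$ then $a^{-1}\in\fm$, whence $\sigma(a)^{-1}=\sigma(a^{-1})\in\smallo_\Omega$). Thus $\sigma$ is an $\mathcal L$-embedding $\k\to\Omega$: it respects $\preceq$ by the above, and it respects the ordering because a field embedding between real closed fields preserves the (definable) ordering. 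Since $\k$ equipped with the dominance relation of $R$ is a model of $\operatorname{RCVF}$ (as recalled before the proposition) and $\operatorname{RCVF}$ is model complete by Theorem~\ref{thm:CD}, $\sigma$ is elementary, so $\k\models\varphi(f)$ as desired.

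For the inductive step $m\geq 1$, I would first use Theorem~\ref{thm:CD} to arrange that $\varphi$ is quantifier-free of the form
$$P(y)=0\ \wedge\ \bigwedge_{i\in I}Q_i(y)>0\ \wedge\ \bigwedge_{j\in J}R_j(y)\,\square_j\,S_j(y)$$
with $P,Q_i,R_j,S_j\in\Z[Y_1,\dots,Y_n]$ and each $\square_j$ equal to $\preceq$ or $\prec$, and then, exactly as in the proofs of Propositions~\ref{prop:Rueckert spec} and~\ref{prop:Rueckert spec, rest}, absorb these polynomials into the tuple $f$ (possibly enlarging $n$) so that $P,Q_i,R_j,S_j$ become distinct coordinate variables and $f_1,\dots,f_n$ are all nonzero. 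Applying a shear $\tau_d$ (which maps $R\lfloor X\rfloor$ onto itself, so that the hypotheses on $\sigma$ are preserved once $\sigma$ is replaced by $\sigma\circ\tau_d^{-1}$) I make each $f_j$ regular in $X_m$, and Weierstrass Preparation in $\k\lfloor X\rfloor$ (Lemma~\ref{lem:WP rest}) then yields a $1$-unit $u_j$ of $\k\lfloor X\rfloor$ and a monic polynomial $w_j\in R\lfloor X'\rfloor[X_m]$ with $f_j=u_jw_j$, where $X'=(X_1,\dots,X_{m-1})$. Since $\sigma(\fm R\lfloor X\rfloor)\prec 1$ we get $\sigma(u_j)\sim 1$, hence $\sigma(u_j)>0$ by convexity of $\mathcal O_\Omega$ in the ordered field $\Omega$; likewise $u_j(a)\sim 1$, hence $u_j(a)>0$, for every $a\in R^m$. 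Consequently replacing each $f_j$ by $w_j$ leaves the truth of every conjunct of $\varphi$ (in particular the positivity conditions, and the dominance conjuncts since $\sigma(u_j)\asymp 1$) unchanged, so we may assume that each $f_j$ is a monic polynomial in $X_m$ with coefficients in $R\lfloor X'\rfloor$.

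To conclude I would collect the coefficients of the $w_j$ into a single tuple $w$ of elements of $R\lfloor X'\rfloor$, let $t_j(u,v)$ be the $\mathcal L_{\operatorname{R}}$-term expressing $w_j$ through these coefficients and a fresh variable $v$, and consider the existential $\mathcal L$-formula
$$\psi(u):=\exists v\big(v\preceq 1\ \wedge\ \varphi(t_1(u,v),\dots,t_n(u,v))\big).$$
Then $\Omega\models\psi(\sigma(w))$, witnessed by $v:=\sigma(X_m)$, which satisfies $\sigma(X_m)\preceq 1$ since $X_m\in R\lfloor X\rfloor$. Applying the inductive hypothesis to $\psi$, to the tuple $w$, and to the restriction $\sigma|_{\k\lfloor X'\rfloor}$ (which still satisfies $\sigma(R\lfloor X'\rfloor)\preceq 1$ and $\sigma(\fm)\prec 1$) produces $b\in R^{m-1}$ with $\k\models\psi(w(b))$; extracting the witness $a_m\in R$ for $v$ and putting $a:=(b,a_m)\in R^m$ gives $t_j(w(b),a_m)=w_j(a)=f_j(a)$, hence $\k\models\varphi(f(a))$.

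I do not expect a genuine obstacle here: the argument is a straightforward blend of two proofs already present in the paper. The only points needing care are bookkeeping ones—verifying that the shearing automorphism $\tau_d$ and the Weierstrass factorization are compatible with the restricted hypotheses $\sigma(R\lfloor X\rfloor)\preceq 1$ and $\sigma(\fm)\prec 1$, so that the inductive hypothesis can be applied to $\sigma|_{\k\lfloor X'\rfloor}$, and observing that in the restricted setting the condition in $\psi$ must read ``$v\preceq 1$'' rather than ``$v\prec 1$'' (as in Proposition~\ref{prop:Risler spec}), because here the last coordinate $X_m$ ranges over the valuation ring $R$ and not over $\smallo$.
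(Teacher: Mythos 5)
Your proposal is essentially the paper's own proof: induction on $m$ with the base case handled by model completeness of $\operatorname{RCVF}$, then QE via Theorem~\ref{thm:CD}, reduction to coordinate variables, the shear $\tau_d$ (which preserves $R\lfloor X\rfloor$), restricted Weierstrass Preparation with $\sigma(u_j)\sim 1$ forcing $\sigma(u_j)>0$, and the existential formula with ``$v\preceq 1$'' in place of ``$v\prec 1$'', the inductive hypothesis being applied to $\sigma|_{\k\lfloor X'\rfloor}$. One small correction: Lemma~\ref{lem:WP rest} does not give a $1$-unit $u_j$ \emph{and} a monic $w_j\in R\lfloor X'\rfloor[X_m]$ simultaneously (e.g.\ $f_j=cX_m$ with $\abs{c}>1$ forbids this); write the unit as $c_j(1+\varepsilon_j)$ with $c_j\in\k^\times$ and absorb $c_j$ into $w_j\in\k\lfloor X'\rfloor[X_m]$, which is all that your subsequent steps (and the paper's) actually use.
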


\begin{proof} 
Induction on $m$ as in the proof of Proposition~\ref{prop:Rueckert spec, rest}. 
The case $m=0$ follows from model completeness of $\operatorname{RCVF}$.
Suppose~$m\geq 1$.  By
Theorem~\ref{thm:CD} we   arrange that  $\varphi$ has the form
$$P(y) = 0 \wedge \bigwedge_{i\in I} Q_i(y) > 0 \wedge \bigwedge_{j\in J} R_j(y)\, \square_j\, S_j(y)$$
where $I$, $J$ are finite, $P,Q_i,R_j,S_j\in\Z[Y_1,\dots,Y_n]$, and each $\square_j$ is~$\preceq$ or~$\prec$.
As in  the proof of Proposition~\ref{prop:Rueckert spec, rest} 
we also arrange
that $P$, $Q_i$, $R_j$, $S_j$ are   distinct elements of~$\{Y_1,\dots,Y_n\}$ and 
$f_1,\dots,f_n$ are all regular in $X_m$. Take a $1$-unit $u_j\in R\lfloor X\rfloor$ and some $w_j\in \k\lfloor X'\rfloor[X_m]$, where $X'=(X_1,\dots,X_{m-1})$, such that~$f_j=u_jw_j$. Then~$\sigma(u_j)\sim 1$ since $\sigma(\fm R\lfloor X\rfloor)\prec 1$, hence
also $\sigma(u_j)>0$. Thus we may arrange that each $f_j=w_j$ is a polynomial and argue as in the proof of 
Proposition~\ref{prop:Rueckert spec, rest}.
\end{proof}

\noindent
In Lemma~\ref{lem:sigma(fm)prec1} and its corollary, $\Omega$ is an ordered field and $\sigma\colon \k\lfloor X\rfloor\to\Omega$ is a ring morphism.

\begin{lemma}\label{lem:sigma(fm)prec1}
Equip $\Omega$ with the dominance relation associated to the convex hull of 
  $\sigma(R\lfloor X\rfloor)$ in~$\Omega$. Then $\sigma(\fm)\prec 1$.
\end{lemma}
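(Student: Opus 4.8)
The plan is to unwind the definitions and reduce everything to a single squaring trick. Write $A:=\operatorname{conv}_\Omega\big(\sigma(R\lfloor X\rfloor)\big)$ for the convex hull of $\sigma(R\lfloor X\rfloor)$ in the ordered field $\Omega$, so that (as recalled in Section~\ref{sec:convexity}) $A$ is a valuation ring of $\Omega$, the dominance relation $\preceq$ on $\Omega$ is the one associated to $A$, and the maximal ideal of $A$ is $\fm_A=\{a\in\Omega:\abs{a}\le 1/s\text{ for all }s\in\sigma(R\lfloor X\rfloor)^{>}\}$. Since $\sigma(\varepsilon)\prec 1$ means precisely $\sigma(\varepsilon)\in\fm_A$, and since every element of $\sigma(R\lfloor X\rfloor)^{>}$ has the form $\sigma(g)$ with $g\in R\lfloor X\rfloor$, it will suffice to prove the following: for each $\varepsilon\in\fm$ and each $g\in R\lfloor X\rfloor$ one has $\abs{\sigma(\varepsilon g)}\le 1$ in $\Omega$. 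Indeed, granting this, for $s=\sigma(g)\in\sigma(R\lfloor X\rfloor)^{>}$ we get $\abs{\sigma(\varepsilon)}\cdot s=\abs{\sigma(\varepsilon g)}\le 1$, hence $\abs{\sigma(\varepsilon)}\le 1/s$, and as $s$ is arbitrary this puts $\sigma(\varepsilon)$ in $\fm_A$.

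To establish the bound $\abs{\sigma(\varepsilon g)}\le 1$ I would produce $1-(\varepsilon g)^2$ as a square in $\k\lfloor X\rfloor$ and then apply the ring morphism $\sigma$. Note that $\fm\subseteq R\subseteq R\lfloor X\rfloor$ by (RW1), so $h:=\varepsilon g$ lies in the ideal $\fm R\lfloor X\rfloor$ of $R\lfloor X\rfloor$, and therefore so does $h^2$; consequently $1-h^2$ is a $1$-unit of $\k\lfloor X\rfloor$, i.e.\ $1-h^2\in 1+\fm R\lfloor X\rfloor\subseteq(\k^\times)^2\,(1+\fm R\lfloor X\rfloor)$. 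Since $\k$ is real closed we have $2\notin\fm$ (for instance because $1/2\in R$, $R$ being a convex subring of $\k$), so Corollary~\ref{cor:roots, rest} applies with $k=2$ and yields $1-h^2=u^2$ for some $u\in\k\lfloor X\rfloor^\times$. Applying $\sigma$ gives $1-\sigma(h)^2=\sigma(u)^2\ge 0$ in the ordered field $\Omega$, whence $\sigma(h)^2\le 1$ and $\abs{\sigma(\varepsilon g)}=\abs{\sigma(h)}\le 1$, as required.

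I do not expect a genuine obstacle here: the statement is just the restricted‑analytic counterpart of Lemma~\ref{lem:indets infinitesimal}, with Corollary~\ref{cor:roots, rest} (the fact that $1$-units of $\k\lfloor X\rfloor$ are squares) taking over the role that Lemma~\ref{lem:u>0} — squares of constants — plays in that proof. The only points deserving a word of care are the verifications that $\varepsilon g$ indeed lies in $\fm R\lfloor X\rfloor$ (so that it is in the domain of $\sigma$ and $1-(\varepsilon g)^2$ is a $1$-unit) and that the hypothesis ``$k\notin\fm$'' of Corollary~\ref{cor:roots, rest} holds for $k=2$, both of which are immediate from the standing assumptions that $W$ is a restricted Weierstrass system and that $\k$ is real closed with valuation ring $R$.
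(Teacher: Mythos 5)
Your proof is correct and follows essentially the same route as the paper's: the paper takes $a\in\fm$, $a>0$, and $f\in R\lfloor X\rfloor$, observes that $1-af$ is a $1$-unit and hence a square by Corollary~\ref{cor:roots, rest}, deduces $\sigma(f)<\sigma(a^{-1})$, and concludes via the description of the maximal ideal of the convex hull. Your variant with $1-(\varepsilon g)^2$ is the same squaring trick, and your side remarks (that $\varepsilon g\in\fm R\lfloor X\rfloor$ and that $2\notin\fm$) are both valid.
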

\begin{proof}
Let $a\in\fm$, $a>0$, and $f\in  R\lfloor X\rfloor$. Then $g:=1-af\in 1+\fm R\lfloor X\rfloor$ is 
a square in $R\lfloor X\rfloor$ by Corollary~\ref{cor:roots, rest}, thus $\sigma(g)>0$
and so $\sigma(f)<\sigma(a^{-1})$. 
Since this holds for all $f\in  R\lfloor X\rfloor$, we obtain $1\prec\sigma(a^{-1})$ and thus $\sigma(a)\prec 1$ as required.
\end{proof}

\begin{cor}\label{cor:Risler spec, rest}
Let $\varphi(y)$ be an $\mathcal L_{\operatorname{OR}}$-formula, 
     $f\in \k\lfloor X\rfloor^n$, and~$\Omega$ be real closed with~$\Omega\models\varphi(\sigma(f))$.
Then~$\k\models\varphi(f(a))$ for some $a\in R^m$.
\end{cor}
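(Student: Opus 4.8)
The plan is to deduce Corollary~\ref{cor:Risler spec, rest} from Proposition~\ref{prop:Risler spec, rest} exactly as Corollary~\ref{cor:Risler spec} was deduced from Proposition~\ref{prop:Risler spec} in the germ case (and as Corollary~\ref{cor:spec, rest} was deduced from Proposition~\ref{prop:Rueckert spec, rest}), the only genuinely new input being Lemma~\ref{lem:sigma(fm)prec1} in place of Lemma~\ref{lem:indets infinitesimal}. First I would note that $\sigma$ restricts to a field embedding $\k\to\Omega$ (a ring morphism out of a field is injective), so that everything reduces to equipping $\Omega$ with a dominance relation making it a model of $\operatorname{RCVF}$ for which the hypotheses of Proposition~\ref{prop:Risler spec, rest} are satisfied.

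The main step: equip $\Omega$ with the dominance relation $\preceq$ associated to the convex hull of $\sigma(R\lfloor X\rfloor)$ in the (uniquely) ordered field $\Omega$. Then $\sigma(R\lfloor X\rfloor)\preceq 1$ by construction, $\sigma(\fm)\prec 1$ by Lemma~\ref{lem:sigma(fm)prec1}, and $\preceq$ is convex, being associated to a convex subring of an ordered field. If $\preceq$ is non-trivial we are done with the setup; otherwise, precisely as in the proofs of Corollaries~\ref{cor:Risler spec} and~\ref{cor:spec, rest}, I would invoke model completeness of the $\mathcal L_{\operatorname{OR}}$-theory of real closed ordered fields to replace $\Omega$ by the real closed extension $\Omega(\!(t^\Q)\!)$ (composing $\sigma$ with the inclusion), re-equipped with the dominance relation associated to the convex hull of $\sigma(R\lfloor X\rfloor)$ computed there. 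That convex hull is contained in the convex hull of the subfield $\Omega$, which does not contain $t^{-1}$, so the new dominance relation is non-trivial and hence (being convex) makes $\Omega(\!(t^\Q)\!)$ a model of $\operatorname{RCVF}$; the conditions $\sigma(R\lfloor X\rfloor)\preceq 1$ and $\sigma(\fm)\prec 1$ persist (the latter again by Lemma~\ref{lem:sigma(fm)prec1}), and $\Omega\models\varphi(\sigma(f))$ is preserved because $\varphi$ is an $\mathcal L_{\operatorname{OR}}$-formula and the passage is elementary in $\mathcal L_{\operatorname{OR}}$.

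With $\Omega\models\operatorname{RCVF}$, $\sigma(R\lfloor X\rfloor)\preceq 1$, $\sigma(\fm)\prec 1$, and $\Omega\models\varphi(\sigma(f))$ all in hand, Proposition~\ref{prop:Risler spec, rest} applied to $\varphi$ (viewed as an $\mathcal L$-formula) immediately yields some $a\in R^m$ with $\k\models\varphi(f(a))$, which is the assertion. I do not anticipate a real obstacle here; the only point requiring care, as in every analogous corollary above, is disposing of the degenerate case in which the natural dominance relation on $\Omega$ is trivial, which is handled by the standard device of passing to a real closed extension with room for a non-trivial convex valuation.
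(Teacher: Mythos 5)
Your proposal is correct and follows essentially the same route as the paper: equip $\Omega$ with the dominance relation associated to the convex hull of $\sigma(R\lfloor X\rfloor)$, invoke Lemma~\ref{lem:sigma(fm)prec1} for $\sigma(\fm)\prec 1$, pass to a real closed extension via model completeness of real closed fields to make the convex valuation non-trivial, and then apply Proposition~\ref{prop:Risler spec, rest}. The extra details you supply (injectivity of $\sigma$ on $\k$, the explicit choice $\Omega(\!(t^\Q)\!)$, and the persistence of the hypotheses) are exactly what the paper leaves implicit.
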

\begin{proof}
Equip $\Omega$ with the   dominance relation associated to the  convex hull of the subring~$\sigma(R\lfloor X\rfloor)$ in $\Omega$. Then  $\sigma(\fm)\prec 1$ by the   lemma above. Replace~$\Omega$ by a suitable extension 
(using model completeness of the $\mathcal L_{\operatorname{OR}}$-theory  of real closed fields) to arrange 
$\Omega\models\operatorname{RCVF}$.   Now the corollary follows from Proposition~\ref{prop:Risler spec, rest}. 
\end{proof}

\noindent
As in the proof of Corollary~\ref{cor:Risler H17} respectively Theorem~\ref{thm:Risler},
using Corollary~\ref{cor:Risler spec, rest} instead of 
Corollary~\ref{cor:Risler spec}, 
we now obtain  Theorem~D from the introduction:

\begin{cor}[real restricted analytic Positivstellensatz]
For each $f\in \k\lfloor X\rfloor$ we have $f(a)\geq 0$ for all $a\in R^m$ iff
there are  $g\in\k\lfloor X\rfloor\setminus\{0\}$ and $h_1,\dots,h_k\in\k\lfloor X\rfloor$   such that
$fg^2 =  h_1^2+\cdots+h_k^2$.
\end{cor}

\begin{cor}[real restricted analytic Nullstellensatz]
Let $I$ be an ideal of $\k\lfloor X\rfloor$; then $\operatorname{I}\!\big(\!\operatorname{Z}(I)\big)=\sqrt[\leftroot{2}\uproot{5}\operatorname{r}]{I}$. 
\end{cor}

\noindent
As in \cite[Proposition~6.4]{Risler76}, the previous corollary implies a \L{}ojasiewicz Inequality:

\begin{cor}[]
Let $f,g\in\k\lfloor X\rfloor$ such that $\operatorname{Z}(f)\supseteq\operatorname{Z}(g)$.
Then there are $c\in\R^{\geq}$ and $k\geq 1$ such that $\abs{f(a)}^k\leq c\abs{g(a)}$ for each $a\in R^m$.
\end{cor}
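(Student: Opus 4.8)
The plan is to deduce the \L{}ojasiewicz Inequality from the preceding Nullstellensatz (the Corollary asserting $\operatorname{I}\!\big(\!\operatorname{Z}(I)\big)=\sqrt[\leftroot{2}\uproot{5}\operatorname{r}]{I}$ for ideals $I$ of $\k\lfloor X\rfloor$) applied to the principal ideal $I=g\k\lfloor X\rfloor$. First I would note that $\operatorname{Z}(g)\subseteq\operatorname{Z}(f)$ exactly says $f\in\operatorname{I}\!\big(\!\operatorname{Z}(g)\big)=\operatorname{I}\!\big(\!\operatorname{Z}(g\k\lfloor X\rfloor)\big)$. By the Nullstellensatz this equals $\sqrt[\leftroot{2}\uproot{5}\operatorname{r}]{g\k\lfloor X\rfloor}$, so there are $k\geq 1$ and $b_1,\dots,b_l\in\k\lfloor X\rfloor$ such that
\[
f^{2k}+b_1^2+\cdots+b_l^2 \in g\,\k\lfloor X\rfloor,
\]
i.e.\ $f^{2k}+b_1^2+\cdots+b_l^2 = g\,h$ for some $h\in\k\lfloor X\rfloor$.

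Next I would evaluate at an arbitrary $a\in R^m$, using that $\k$ is real closed (so squares are $\geq 0$ and sums of squares are $\geq 0$). This gives
\[
\abs{f(a)}^{2k}\ \leq\ f(a)^{2k}+b_1(a)^2+\cdots+b_l(a)^2\ =\ g(a)h(a)\ \leq\ \abs{g(a)}\cdot\abs{h(a)}.
\]
The remaining point is to bound $\abs{h(a)}$ uniformly over $a\in R^m$. Since $h\in\k\lfloor X\rfloor\subseteq\kX$, we have $\abs{h(a)}\leq\abs{h}$ (the Gauss norm) for all $a\in R^m$, as recorded just before Lemma~\ref{lem:max principle}. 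Thus $\abs{f(a)}^{2k}\leq\abs{h}\cdot\abs{g(a)}$ for all $a\in R^m$. Setting $c:=\abs{h}\in\R^{\geq}$ (or rather its value under $\abs{\,\cdot\,}$, which lands in $\R^{\geq}$ since the absolute value is real-valued) and replacing the exponent $2k$ by $k':=2k\geq 1$ gives $\abs{f(a)}^{k'}\leq c\,\abs{g(a)}$ for each $a\in R^m$, as claimed. A small bookkeeping remark: if $c=0$ then $h=0$, forcing $f^{2k}+\sum b_i^2=0$, hence $f=0$ on $R^m$ (as $\k$ is real), and the inequality holds trivially with any $c$; if $g=0$ then $\operatorname{Z}(g)=R^m\subseteq\operatorname{Z}(f)$ forces $f=0$ on $R^m$ as well, again trivial.

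I do not anticipate a genuine obstacle here: the only slightly delicate part is making sure the constant $c$ is a genuine real number rather than an element of the value group, which is handled by taking $c=\abs{h}$ directly (the absolute value $\abs{\,\cdot\,}\colon\k\to\R^{\geq}$ is real-valued by hypothesis), together with the uniform bound $\abs{h(a)}\leq\abs{h}$ on $R^m$. One should also double-check the degenerate cases $g=0$ and $h=0$ as above. Everything else is a direct citation of the Nullstellensatz for $\k\lfloor X\rfloor$ and the norm inequality for restricted power series, exactly paralleling the deduction of the \L{}ojasiewicz Inequality from the real Nullstellensatz in \cite[Proposition~6.4]{Risler76}.
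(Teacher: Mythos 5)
Your proposal is correct and follows essentially the same route as the paper: apply the real Nullstellensatz to $I=g\k\lfloor X\rfloor$ to get $f^{2k}+b_1^2+\cdots+b_l^2=gh$, deduce $0\leq f(a)^{2k}\leq g(a)h(a)$ pointwise, and conclude via $\abs{h(a)}\leq\abs{h}=:c$. The only cosmetic issue is that your displayed chain mixes elements of $\k$ with real numbers; the intended step is of course $0\leq f(a)^{2k}\leq g(a)h(a)$ in $\k$ followed by monotonicity of $\abs{\,\cdot\,}$ on nonnegative elements (convexity of $R$), which is exactly what the paper writes.
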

\begin{proof}
The previous corollary applied to $I=g\k\lfloor X\rfloor$ yields $b_1,\dots,b_l,h\in \k\lfloor X\rfloor$ and~$k\geq 1$ such that
$f^{2k}+b_1^2+\cdots+b_l^2=gh$. So  for each $a\in R^m$ we have~$g(a)h(a)\geq f^{2k}(a)\geq 0$ and thus $c\abs{g(a)}\geq\abs{g(a) h(a)}\geq\abs{f(a)}^{2k}$  for $c:=\abs{h}$.
\end{proof}

\noindent
To show Theorem~E, we restrict to the case $\k\lfloor X\rfloor=\kX$. We let  $F:=\Frac(\kX)$ and consider the real holomorphy ring $H_F$ of $F$.
The value group of $\k$ being archimedean, we have $R=H_{\k}\subseteq H_F$. We also have $\fm\RX \subseteq H_F$, since
for~$f\in\fm\RX$, $1\pm f$ is a square in $\RX$ by  Corollary~\ref{cor:roots, rest}. Hence $\RX^\times\subseteq H_F^\times$.

\medskip
\noindent
Let also $\mathcal O_F:=\{f\in F:  \abs{f}\leq 1 \}$ be the valuation ring of the Gauss norm on~$F$,
with maximal ideal $\smallo_F=\{f\in F:\abs{f}<1\}$.  The residue field of $\mathcal O_F$ is the formally real field $\overline{R}(X)$;
hence $H_F\subseteq\mathcal O_F$. The following proposition characterizes the elements of the subring~$H_F\RX$
of $\mathcal O_F$:

\begin{prop}\label{prop:int valued, kX}
Let $f,g\in \kX$. Then   
$$\text{$\abs{f(a)} \leq \abs{g(a)}$ for all $a\in R^m$}\quad\Longleftrightarrow\quad   f\in gH_F\RX.$$
\end{prop}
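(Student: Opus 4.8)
The plan is to prove Proposition~\ref{prop:int valued, kX} following the same strategy as Proposition~\ref{prop:int valued} and its proof via Proposition~\ref{prop:Rueckert spec, rest}, but now replacing the role of the regular (factorial, integrally closed) ring $R\lfloor X\rfloor$ with the ring $H_F\langle X\rangle$ and using real valuation rings in place of arbitrary ones. The direction ``$\Leftarrow$'' is routine: if $f\in gH_F\RX$, write $f=g\sum_i h_i r_i$ with $h_i\in H_F$, $r_i\in\RX$; since each $h_i$ is a bounded element of every ordering of $F$ (equivalently lies in every real valuation ring), and each $r_i$ satisfies $\abs{r_i(a)}\leq 1$, one checks $\abs{f(a)}\leq\abs{g(a)}$ for all $a\in R^m$ by evaluating and using the ultrametric inequality together with $\abs{h_i(a)}\leq 1$—but more carefully this must be justified: $h_i(a)\in\k$ makes sense only if the denominator does not vanish at $a$, so one argues instead that $f/g\in H_F\RX\subseteq\mathcal O_F$ has Gauss norm $\leq 1$ and then invokes Lemma~\ref{lem:max principle} (valid since $\bar R$ is infinite, $\k$ being real closed hence infinite residue field) after clearing the issue that $f/g$ need not itself be restricted. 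Actually the cleanest route for ``$\Leftarrow$'' is: $H_F\RX\subseteq\mathcal O_F$, so $f\in gH_F\RX$ gives $f=gh$ with $h\in\mathcal O_F$, i.e.\ $\abs{h}\leq 1$, and then evaluate; but one still needs $\abs{h(a)}\leq 1$, which follows from Lemma~\ref{lem:max principle} applied to $h$ (after noting $h\in\kX$, which holds because $h=f/g$ and we may reduce to the case this is restricted, or directly because $H_F\RX\subseteq\mathcal O_F\cap\kX=\RX$... this needs the observation $\mathcal O_F\cap F\langle X\rangle$-stuff). I would state ``$\Leftarrow$'' precisely by: $f\in gH_F\RX\Rightarrow f=gh$ with $h\in H_F\RX\subseteq\mathcal O_F$, and since $H_F\RX\subseteq\RX$ (as $H_F\subseteq\mathcal O_F$ restricts appropriately) we get $h\in\RX$, whence $\abs{h(a)}\leq\abs{h}\leq 1$ and $\abs{f(a)}=\abs{g(a)}\abs{h(a)}\leq\abs{g(a)}$.

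For the main direction ``$\Rightarrow$'', suppose $\abs{f(a)}\leq\abs{g(a)}$ for all $a\in R^m$. By Lemma~\ref{lem:max principle} this forces $\abs{f}\leq\abs{g}$, so we may assume $g\neq 0$ and it suffices to prove $f\in gH_F\langle X\rangle$. Assume not. Then $h:=f/g\in F$ does not lie in the subring $H_F\RX$ of $F$, which is the intersection of all real valuation rings of $F$ containing $\RX$ (by Sch\"ulting/Becker, \cite[Lemma~1.12]{Becker}, quoted before Lemma~\ref{lem:HK}). Hence there is a real valuation ring $\mathcal O_F'$ of $F$ with $\RX\subseteq\mathcal O_F'$ but $h\notin\mathcal O_F'$. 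Since $\mathcal O_F'$ is real, by Baer–Krull there is an ordering of $F$, and hence of the residue field $\k_F':=\mathcal O_F'/\smallo_F'$, making everything compatible; more to the point, the residue field $\k_F'$ is formally real. Now I want to descend a real valuation ring to get a $\operatorname{RCVF}$-situation with $\k$ embedded and the indeterminates small: the embedding $\k\to F\to\mathcal O_F'$ (well-defined since $\k$ is a field contained in $\mathcal O_F'$, as $\RX\supseteq\k$ and these are bounded—here one uses that $\k$ is real closed so its elements are bounded in any real valuation ring of $F$, equivalently $\k\subseteq H_F\subseteq\mathcal O_F'$; actually $R=H_\k\subseteq H_F$ and $\k$ real closed means $\k=\Frac(R)$ with $R$ a valuation ring, and since $F/\k$... one should check $\k\subseteq\mathcal O_F'$: indeed $R\subseteq H_F\subseteq\mathcal O_F'$ and also $R^{-1}$ issues—but $\k$ real closed, $R$ convex, so a real valuation ring of $F$ restricted to $\k$ is a real valuation ring of $\k$ containing $R$, hence is $R$ or $\k$; if it is $\k$ then $\k\subseteq\mathcal O_F'$, if it is $R$ then... hmm, $\k\not\subseteq\mathcal O_F'$ is possible). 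This is the delicate point.

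To handle it cleanly I would follow the Proposition~\ref{prop:int valued} proof verbatim in spirit: take a real valuation ring $\mathcal O_F'$ of $F$ containing $\RX$ but not $h$; its residue morphism $\pi\colon\mathcal O_F'\to\k_F':=\mathcal O_F'/\smallo_F'$ restricts to a field embedding $\k\to\k_F'$ precisely when $\k\cap\smallo_F'=\{0\}$, i.e.\ when $\mathcal O_F'\cap\k$ is not all of $\k$... no—$\k\to\k_F'$ is always at least a ring map on $\mathcal O_F'\cap\k$; if $\mathcal O_F'\cap\k=\k$ then $\smallo_F'\cap\k$ is a proper ideal of the field $\k$, hence $\{0\}$, giving the embedding. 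If $\mathcal O_F'\cap\k=R\subsetneq\k$, replace $\mathcal O_F'$ by a smaller real valuation ring: since the real valuation rings of $F$ containing $\RX$ form a chain-like structure and we really want one whose restriction to $\k$ is all of $\k$; alternatively, and this is the fix used in Proposition~\ref{prop:int valued}, pass to $B:=\pi(\RX)$ inside $\k_F'$, note $R\subseteq B$ and $1\notin\fm B$, pick a maximal ideal $M\supseteq\fm B$ of $B$, so $B_M$ is local lying over $R$, apply \cite[Proposition~3.1.13]{ADH} and Lemma~\ref{lem:lift val spec} to get a real valuation ring $\mathcal O$ of $F$ with $\RX\subseteq\mathcal O\subseteq\mathcal O_F'$ lying over the valuation ring $R$ of $\k$ (realness of $\mathcal O$ via Lemma~\ref{lem:real val}). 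Then let $\Omega$ be a real closed field extension of $F$ with the dominance relation from a valuation ring lying over $\mathcal O$; then $\Omega\models\operatorname{RCVF}$, $\sigma(\RX)\preceq 1$, $\sigma(\fm)\prec 1$ where $\sigma\colon\kX\to\Omega$ is the inclusion, and $h=\sigma(f)/\sigma(g)\not\preceq 1$ gives $\Omega\models\varphi(\sigma(f),\sigma(g))$ for $\varphi(y_1,y_2):\ \neg(y_1\preceq y_2)\wedge y_2\neq 0$. Proposition~\ref{prop:Risler spec, rest} then produces $a\in R^m$ with $\abs{f(a)}>\abs{g(a)}$, contradicting the hypothesis. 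Hence $f\in gH_F\RX$. The main obstacle, as in the unrestricted case, is exactly this descent step ensuring the real valuation ring restricts correctly to $\k$ while staying inside the chosen $\mathcal O_F'$—but it is handled word-for-word as in the proof of Proposition~\ref{prop:int valued}, now invoking Lemma~\ref{lem:real val} to preserve realness along coarsenings/specializations and \cite[Lemma~1.12]{Becker} for the characterization $H_F\RX=\bigcap\{\text{real valuation rings of }F\text{ containing }\RX\}$.

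\begin{remark}
In writing this up, I would explicitly record that $H_F\RX\subseteq\RX$: since $H_F\subseteq\mathcal O_F$ and every element of $H_F$ is a ratio of elements of $\RX$ with restricted quotient, one has $H_F\RX\subseteq\mathcal O_F\cap\Frac(\RX)$, and a standard argument (Weierstrass Preparation plus Corollary~\ref{cor:ED}, as in Corollary~\ref{cor:div poly}) shows any such quotient lying in $\mathcal O_F$ already lies in $\RX$; this is what makes the evaluation argument for ``$\Leftarrow$'' legitimate.
\end{remark}
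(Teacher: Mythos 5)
Both directions of your argument contain genuine gaps, even though the overall skeleton (Becker's characterization of $H_F\RX$ as the intersection of the real valuation rings of $F$ containing $\RX$, followed by Proposition~\ref{prop:Risler spec, rest}) is the right one. For ``$\Leftarrow$'': the inclusion $H_F\RX\subseteq\RX$ on which your final version of this direction (and your closing remark) rests is false. For example, $\frac{1}{1+X_1^2}$ is one of Sch\"ulting's generators of $H_F$ and has Gauss norm $1$, so it lies in $\mathcal O_F$; but its expansion $\sum_n(-1)^nX_1^{2n}$ is not a restricted power series, so it does not lie in $\kX$ at all. Thus $\mathcal O_F\cap\Frac(\RX)=\mathcal O_F\not\subseteq\RX$, and the ``standard argument'' you invoke (which does prove Corollary~\ref{cor:div poly}, but only for univariate polynomials via Corollary~\ref{cor:ED}) cannot deliver what you want. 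The denominator problem you correctly raised at the outset is real and must be faced head-on: the paper does so with Lemma~\ref{lem:HK}, applied to the evaluation morphisms $h\mapsto h(a)\colon\kX\to\k$ (legitimate because $\kX$ is regular), which shows that $f\in gH_F$ specializes to $f(a)\in g(a)H_{\k}=g(a)R$ without ever evaluating an element of $H_F$ pointwise; combined with $\abs{r(a)}\leq 1$ for $r\in\RX$ this gives $\abs{f(a)}\leq\abs{g(a)}$.

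For ``$\Rightarrow$'': the delicate point you flag---that the separating real valuation ring $\mathcal O_F'\supseteq\RX$ with $h\notin\mathcal O_F'$ need not contain $\k$---is \emph{not} handled ``word-for-word'' as in Proposition~\ref{prop:int valued}. In that proof the separating valuation ring contains all of $\k\lfloor X\rfloor$, hence all of $\k$, so the residue morphism $\pi$ embeds $\k$ and $B=\pi(R\lfloor X\rfloor)$ contains $R$. Here $\mathcal O_F'$ is only guaranteed to contain $\RX$, and the case $\mathcal O_F'\cap\k=R$ genuinely occurs (e.g.\ for any convex hull of $\RX$); in that case $\fm\subseteq\smallo_{F}'$, so $\pi$ collapses $R$ onto $\bar R$, the claim $R\subseteq B$ fails, and the lifting via \cite[Proposition~3.1.13]{ADH} and Lemma~\ref{lem:lift val spec} no longer produces a valuation ring lying over $R$. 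The paper's fix is both necessary and much simpler: since $\mathcal O_1$ is real, Baer--Krull yields an ordering of $F$ making $\mathcal O_1$ convex; the convex hull $\mathcal O$ of $\RX$ in this ordered field is a valuation ring with $\RX\subseteq\mathcal O\subseteq\mathcal O_1$, so $h\notin\mathcal O$; one then gets $\fm\prec 1$ from Lemma~\ref{lem:sigma(fm)prec1} (which uses Corollary~\ref{cor:roots, rest}, not any lying-over construction), extends to some $\Omega\models\operatorname{RCVF}$, and applies Proposition~\ref{prop:Risler spec, rest} to reach the contradiction $\abs{f(a)}>\abs{g(a)}$. Your detour through residue fields, Lemma~\ref{lem:real val}, and coarsening/specialization is not needed and, as set up, does not close the gap you yourself identified.
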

\begin{proof}
Since   $\kX$ is regular (see the remark after Lemma~\ref{lem:W-Lemma, rest}),  
the backward direction follows from 
Lemma~\ref{lem:HK} applied to the ring morphisms $h\mapsto h(a)\colon \kX\to\k$ ($a\in R^m$), and the fact that $\abs{h(a)}\leq 1$
for each $h\in\RX$, $a\in R^m$.
For the forward direction we argue as in the proof of Proposition~\ref{prop:int valued}. Suppose~$\abs{f(a)} \leq \abs{g(a)}$ for all $a\in R^m$
but $f\notin g H_F\RX$.
Then~$g\neq 0$. Take a real valuation ring~$\mathcal O_1$ of $F$ containing $\RX$
but not~$h:=f/g$. Turn~$F$ into an ordered field such that~$\mathcal O_1$ is convex.
Let~$\mathcal O$ be the convex hull of $\RX$ in~$F$; then~$\mathcal O\subseteq\mathcal O_1$, hence $h\notin\mathcal O$.
Equip~$F$ with the dominance relation associated to~$\preceq$, and
take $\Omega\models\operatorname{RCVF}$ extending $F$. Then in~$\Omega$ we have~$\RX\preceq 1$, and~$\fm\prec 1$ by Lemma~\ref{lem:sigma(fm)prec1}, so 
  Proposition~\ref{prop:Risler spec, rest}   yields an~$a\in R^m$ with~$\abs{f(a)} > \abs{g(a)}$, a contradiction. 
\end{proof}



\noindent
With Proposition~\ref{prop:Risler spec, rest} in place, we can  also use it in the same
way as Proposition~\ref{prop:Rueckert spec, rest} to prove the following real analogue of Proposition~\ref{prop:closure}.
As in that proposition   let $v\colon\k^\times\to\Gamma$ be the valuation on $\k$
associated to~$\abs{\,\cdot\,}$.

\begin{prop}\label{prop:closure, real}
Let $\varphi(y)$ be an $\mathcal L_{\operatorname{R}}$-formula, $f\in\k\lfloor X\rfloor^n$, and
$$S:=\big\{ a\in R^m:  \k\models\varphi(f(a)) \big\}.$$
Then $vS :=\{ va:a\in S \}\subseteq \Gamma^m$ is closed.
\end{prop}

\noindent
See \cite[Proposition~5.3]{HY} for a weaker result.
The preceding proposition now yields a version of Corollary~\ref{cor:int valued} in the present setting:

\begin{cor}\label{cor:closure, real}
Let $f,g\in\kX$. Then $\abs{f(a)}<\abs{g(a)}$ for every $a\in R^m$ iff~$f\in g \fm H_F\RX$.
\end{cor}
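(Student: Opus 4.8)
The statement to prove is: for $f,g\in\kX$, one has $\abs{f(a)}<\abs{g(a)}$ for all $a\in R^m$ if and only if $f\in g\,\fm H_F\RX$. The plan is to mirror exactly the structure of Corollary~\ref{cor:int valued}, which deduced the strict-inequality statement for $\k\lfloor X\rfloor$ from the non-strict one (Proposition~\ref{prop:int valued}) via the remark after Lemma~\ref{lem:max principle}. Here the analogous non-strict statement is Proposition~\ref{prop:int valued, kX}, which says $\abs{f(a)}\leq\abs{g(a)}$ for all $a\in R^m$ iff $f\in g H_F\RX$.

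\emph{Backward direction.} Suppose $f\in g\,\fm H_F\RX$. Write $f = g\varepsilon h$ with $\varepsilon\in\fm$, $h\in H_F\RX$. For each $a\in R^m$, applying the ring morphism $u\mapsto u(a)\colon\kX\to\k$ and using that $H_F\RX\subseteq\mathcal O_F$ together with $\abs{u(a)}\leq 1$ for $u\in\RX$, $a\in R^m$ (as noted before Lemma~\ref{lem:max principle} and in the proof of Proposition~\ref{prop:int valued, kX}), we get $\abs{h(a)}\leq 1$, hence $\abs{f(a)} = \abs{\varepsilon}\,\abs{g(a)}\,\abs{h(a)} \leq \abs{\varepsilon}\,\abs{g(a)} < \abs{g(a)}$ whenever $g(a)\neq 0$ (and the inequality $\abs{f(a)}<\abs{g(a)}$ is trivial when $g(a)=0$ forces $f(a)=0$, so one checks $f(a)=0$ in that case — which holds since $f=g\varepsilon h$).

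\emph{Forward direction.} Suppose $\abs{f(a)}<\abs{g(a)}$ for all $a\in R^m$. Then in particular $\abs{f(a)}\leq\abs{g(a)}$ for all $a\in R^m$, so by Proposition~\ref{prop:int valued, kX} we have $f\in g H_F\RX$; write $f = g h$ with $h\in H_F\RX$. It remains to show $h\in\fm H_F\RX$. The cleanest route is to invoke Proposition~\ref{prop:closure, real}: apply it with the $\mathcal L$-formula $\varphi(y_1,y_2)$ given by $y_1\asymp y_2$ (i.e.\ $y_1\preceq y_2 \wedge y_2\preceq y_1$) — wait, rather take $\varphi$ expressing $y_1 = y_2$ is too coarse; instead one wants to analyze the set where $\abs{f(a)} = \abs{g(a)}$. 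Concretely, let $S:=\{a\in R^m : \abs{f(a)} = \abs{g(a)}\}$; this is $\{a\in R^m: \k\models \psi(f(a),g(a))\}$ for the $\mathcal L$-formula $\psi(y_1,y_2)$ equal to $y_1\asymp y_2$, so $vS$ is closed in $\Gamma^m$ by Proposition~\ref{prop:closure, real} applied to the tuple $(f,g)\in\kX^{2}$. By hypothesis $S=\emptyset$, so there is no obstruction from closure per se; the real point is to produce a uniform bound. Here the argument should be: if $h\notin\fm H_F\RX$, then since $H_F\RX$ is a Pr\"ufer (hence B\'ezout-like, integrally closed) domain, there is a real valuation ring $\mathcal O_1$ of $F$ with $\RX\subseteq\mathcal O_1$ and $h\in\mathcal O_1^\times$ (i.e.\ $h$ is a unit there — so $1/h\in\mathcal O_1$ too, giving $\abs{f(a)}\not<\abs{g(a)}$ in a suitable model). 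Turn $F$ into an ordered field making $\mathcal O_1$ convex, let $\mathcal O$ be the convex hull of $\RX$ in $F$ (so $\mathcal O\subseteq\mathcal O_1$ and $h\in\mathcal O^\times$), equip $F$ with the associated dominance relation, and take $\Omega\models\operatorname{RCVF}$ extending $F$; then $\RX\preceq 1$ and $\fm\prec 1$ by Lemma~\ref{lem:sigma(fm)prec1}, and $h\asymp 1$ in $\Omega$. Applying Proposition~\ref{prop:Risler spec, rest} to the formula $y_1\asymp y_2$ and the tuple $(f,g)$ yields $a\in R^m$ with $\abs{f(a)}=\abs{g(a)}$, contradicting $S=\emptyset$. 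Hence $h\in\fm H_F\RX$ and $f\in g\,\fm H_F\RX$.

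\emph{Main obstacle.} The delicate point is the claim, in the forward direction, that $h\notin\fm H_F\RX$ produces a real valuation ring of $F$ containing $\RX$ in which $h$ is a \emph{unit}: one needs that $\fm H_F\RX$ is (contained in) the Jacobson-type radical measuring "infinitesimality" relative to all real valuation rings containing $\RX$, so that $h\notin\fm H_F\RX$ means $h^{-1}$ (or at least $h$ itself bounded away from $\fm$) survives in some such $\mathcal O_1$. This should follow from the description of $H_F\RX$ as the intersection of all real valuation rings of $F$ containing $\RX$ (Schülting / \cite[Lemma~1.12]{Becker}, cited before Lemma~\ref{lem:HK}) combined with the fact that $\fm\RX\subseteq H_F$ and $\RX^\times\subseteq H_F^\times$ (noted just before Proposition~\ref{prop:int valued, kX}): indeed $\fm H_F\RX = \bigcap\{\,\smallo_{1}\cap(\text{stuff}) : \mathcal O_1 \text{ real}, \RX\subseteq\mathcal O_1\,\}$ in an appropriate sense, so $h\notin\fm H_F\RX$ exactly when $h\notin\smallo_1$ for some such $\mathcal O_1$, i.e.\ $h\in\mathcal O_1\setminus\smallo_1=\mathcal O_1^\times$ after possibly enlarging to make $h$ a unit. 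Getting this bookkeeping precisely right — and ensuring the chosen $\mathcal O_1$ is genuinely a real valuation ring to which Proposition~\ref{prop:Risler spec, rest} applies — is where the care is needed; everything else is a direct transcription of the proofs of Corollary~\ref{cor:int valued} and Proposition~\ref{prop:int valued, kX}.
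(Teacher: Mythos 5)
The backward direction is essentially fine (modulo the $g(a)=0$ case, which your parenthetical gets wrong --- if $g(a)=0$ then $f(a)=0$ and the strict inequality fails rather than being ``trivial''; the paper's ``clear'' glosses over the same point), but your forward direction has a genuine gap, located exactly at the step you yourself flagged, plus one further faulty step. First, the claim that $h\notin\fm H_F\RX$ produces a real valuation ring $\mathcal O_1$ of $F$ with $\RX\subseteq\mathcal O_1$ and $h\in\mathcal O_1^\times$ is, as you state it, vacuous: the trivial valuation ring $\mathcal O_1=F$ is real and contains $\RX$, and every nonzero $h$ is a unit in it, so this version of the claim carries no information. What your argument actually needs is an $\mathcal O_1$ which in addition lies over $R$, i.e.\ $\fm\subseteq\smallo_{\mathcal O_1}$ --- both to make the claim non-trivial and to verify the hypothesis ``$\sigma(\fm)\prec 1$'' of Proposition~\ref{prop:Risler spec, rest} --- and extracting such an $\mathcal O_1$ from $h\notin\fm H_F\RX$ is essentially the content of the corollary; it does not follow merely from Becker's description of $H_F\RX$ as an intersection of real valuation rings (one would have to apply that description to $\RX[1/h]$ and combine it with integral closedness of $H_F\RX$ and existence of roots in the real closed field $\k$ to exclude $1/\varepsilon\in H_F\RX[1/h]$ for all $\varepsilon\in\fm$). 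Second, the step ``let $\mathcal O$ be the convex hull of $\RX$ in $F$, so $\mathcal O\subseteq\mathcal O_1$ and $h\in\mathcal O^\times$'' is a non sequitur: from $\mathcal O\subseteq\mathcal O_1$ one gets $\smallo_{\mathcal O_1}\subseteq\smallo_{\mathcal O}$, so a unit of the coarser ring $\mathcal O_1$ may well lie in the maximal ideal of $\mathcal O$; and if you keep $\mathcal O_1$ itself instead, Lemma~\ref{lem:sigma(fm)prec1} no longer applies, since it concerns only the convex hull. Finally, your invocation of Proposition~\ref{prop:closure, real} with $S=\{a\in R^m:\abs{f(a)}=\abs{g(a)}\}=\emptyset$ does no work at all.

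The paper's proof is different and much shorter: it applies Proposition~\ref{prop:closure, real} to $S:=\{(a,b)\in R^m\times R: f(a)=g(a)b\}$ to conclude that the set $\{\abs{f(a)/g(a)}:a\in R^m\}$ is closed; since all these values are $<1$, they are bounded away from $1$, which gives a single $\varepsilon\in\fm$ with $\abs{f(a)}\leq\abs{\varepsilon g(a)}$ for all $a\in R^m$, and then one application of Proposition~\ref{prop:int valued, kX} to the pair $(f,\varepsilon g)$ yields $f\in\varepsilon g\,H_F\RX\subseteq g\,\fm H_F\RX$. So the missing idea in your write-up is precisely the uniform bound: either produce the uniform $\varepsilon$ (the paper's route via the closedness proposition), or, on your valuation-theoretic route, actually produce a real valuation ring of $F$ containing $\RX$, lying over $R$, in which $h$ is a unit; as written you have done neither.
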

 \begin{proof}
 The backwards direction here is clear from that in Proposition~\ref{prop:int valued, kX}.
 For the converse
 suppose $\abs{f(a)}<\abs{g(a)}$ for all $a\in R^m$. Applying the  proposition above
to 
 $$S:=\big\{ (a,b)\in R^m\times R: f(a)=g(a)b  \big\}$$ implies that   $\big\{\abs{f(a)/g(a)}:a\in R^m\big\}\subseteq \abs{\k}$  is closed. 
 This yields an~$\varepsilon\in\fm$
such that~$\abs{f(a)}\leq\abs{\varepsilon g(a)}$ for all~$a\in R$. Then
  $f\in  \varepsilon g H_F\RX$ by Proposition~\ref{prop:int valued, kX}.
 \end{proof}

\noindent
The combination of Proposition~\ref{prop:int valued, kX} and Corollary~\ref{cor:closure, real} now yields Theorem~E from the introduction.
 
\subsection{The Nullstellensatz for $p$-adic restricted analytic functions}
{\it In this subsection we  assume $\k=\Q_p$.}\/ We first note:

\begin{lemma}\label{lem:indets infinitesimal, p-adic, rest}
Let $\Omega$ be a   $p$-valued  field and $\sigma\colon  \Q_p\lfloor X\rfloor\to \Omega$ be a ring morphism.
Then $\sigma(\Z_p\lfloor X\rfloor)\preceq_p 1$ and~$\sigma(p)\prec_p 1$.
\end{lemma}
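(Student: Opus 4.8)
The plan is to deduce both assertions from Lemma~\ref{lem:def val} applied to the $p$-valuation ring $\mathcal{O}_{\Omega,p}$ of $\Omega$, using only the inclusion $\mathcal{O}_{\Omega,p}\supseteq\mathcal{O}_0$ (which holds with no henselianity hypothesis on $\Omega$), together with the $\varepsilon$-divisibility of $1$-units recorded in Corollary~\ref{cor:roots, rest}. Here, as $\k=\Q_p$, we have $R=\Z_p$ and $\fm=p\Z_p$, so $\Z_p\lfloor X\rfloor=R\lfloor X\rfloor$.

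First I would dispose of $\sigma(p)\prec_p 1$: since $\sigma$ is a ring morphism, $\sigma(p)=p\cdot 1_\Omega$, and as $\Omega$ is a $p$-valued field it has characteristic zero, so $\sigma(p)\neq 0$; by the definition of a $p$-valuation, $v_p(\sigma(p))=v_p(p)>0$, i.e.\ $\sigma(p)\prec_p 1$. Next, fix $f\in\Z_p\lfloor X\rfloor$ and let $\varepsilon:=3$ if $p=2$ and $\varepsilon:=2$ otherwise, as in Lemma~\ref{lem:def val}; then $p\nmid\varepsilon$, so $\varepsilon\notin\fm$ and Corollary~\ref{cor:roots, rest} applies with $k=\varepsilon$. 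Since $f^\varepsilon\in R\lfloor X\rfloor$ and $p\in\fm$, the element $u:=1+pf^\varepsilon$ lies in $1+\fm R\lfloor X\rfloor$, hence is a $1$-unit of $\Q_p\lfloor X\rfloor$; by Corollary~\ref{cor:roots, rest} the group of $1$-units is $\varepsilon$-divisible, so $u=g^\varepsilon$ for some $1$-unit $g\in 1+\fm R\lfloor X\rfloor\subseteq\Z_p\lfloor X\rfloor$. Applying $\sigma$ and using $\sigma(p)=p$ gives $1+p\,\sigma(f)^\varepsilon=\sigma(g)^\varepsilon$ in $\Omega$, so $\sigma(f)$ lies in the subring $\mathcal{O}_0$ of $\Omega$ from Lemma~\ref{lem:def val}. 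By that lemma $\mathcal{O}_0\subseteq\mathcal{O}_{\Omega,p}$, whence $\sigma(f)\preceq_p 1$, which is the desired conclusion.

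I do not expect a genuine obstacle: the argument is short once one observes that the $1$-unit $1+pf^\varepsilon$ has an $\varepsilon$-th root already inside $\Z_p\lfloor X\rfloor$. The only points requiring a moment's care are (i) that $\varepsilon$ is prime to $p$, so that both Corollary~\ref{cor:roots, rest} is applicable and $\varepsilon\notin\fm$, and (ii) that $\Omega$ is assumed only $p$-valued rather than $p$-adically closed, which suffices because we invoke only the inclusion half of Lemma~\ref{lem:def val}, not the equality $\mathcal{O}=\mathcal{O}_0$.
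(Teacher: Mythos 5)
Your proof is correct and follows essentially the same route as the paper: the paper also obtains $\sigma(p)\prec_p 1$ from the embedding of $\Q_p$ into $\Omega$, and derives $\sigma(g)\preceq_p 1$ by noting that $1+pg^2$ is a square (for odd $p$) resp.\ $1+pg^3$ is a cube (for $p=2$) in $\Q_p\lfloor X\rfloor$ via Corollary~\ref{cor:roots, rest}. Your only deviations are cosmetic: you unify the two cases through the $\varepsilon$ of Lemma~\ref{lem:def val} and make explicit the inclusion $\mathcal O_0\subseteq\mathcal O_{\Omega,p}$ that the paper leaves implicit.
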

\begin{proof}
Note that $\sigma$ restricts to an embedding of $\Q_p$ into $\Omega$; hence the $p$-valuation of~$\Omega$
restricts to the unique $p$-valuation of $\sigma(\Q_p)$, so
$\sigma(p)\prec_p 1$.
Now let $g\in\Z_p\lfloor X\rfloor$. Suppose first that $p$ is odd.
Then $1+pg^2$ is a square in $\Q_p\lfloor X\rfloor$, by Corollary~\ref{cor:roots, rest},
hence  $1+p\sigma(g)^2$ is a square in~$\Omega$, so  $\sigma(g)\preceq_p 1$.
For $p=2$ we note that $1+pg^3$ is a cube in $\Q_p\lfloor X\rfloor$, and as before this
yields $\sigma(g)\preceq_p 1$.
\end{proof}

\noindent
We also let $\mathcal L=\mathcal L_{\operatorname{Mac}}$ and~$y=(y_1,\dots,y_n)$ be a tuple of distinct $\mathcal L$-variables.  We now have  a restricted analytic analogue of
Proposition~\ref{prop:p-adic spec}:

\begin{prop}\label{prop:p-adic spec, rest}
Let $\varphi(y)$ be an $\mathcal L$-formula,   $f=(f_1,\dots,f_n)\in \Q_p\lfloor X\rfloor^n$, $\Omega\models p\operatorname{CF}$,
and~$\sigma \colon \Q_p\lfloor X\rfloor\to  \Omega$  be a   ring morphism 
such that $\Omega\models\varphi(\sigma(f))$. Then~$\Q_p\models\varphi(f(a))$ for some $a\in \Z_p^m$.
\end{prop}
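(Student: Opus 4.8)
The plan is to run the induction on $m$ from the proof of Proposition~\ref{prop:p-adic spec}, with the restricted-analytic ingredients (Lemmas~\ref{NoetherZpX}, \ref{lem:WP rest}, Corollaries~\ref{cor:DL rest subst}, \ref{cor:roots, rest}, and Lemma~\ref{lem:indets infinitesimal, p-adic, rest}) replacing their formal counterparts, just as Proposition~\ref{prop:Rueckert spec, rest} adapts the induction of Proposition~\ref{prop:Rueckert spec}. For $m=0$ the map $\sigma$ restricts to a field embedding $\Q_p\to\Omega$; since $\Omega\models p\operatorname{CF}$, any such embedding is an $\mathcal L_{\operatorname{Mac}}$-embedding (the $p$-valuation of $\Q_p$ is unique, and $(\Omega^\times)^n\cap\Q_p=(\Q_p^\times)^n$ because $\Q_p$ is algebraically closed in $\Omega$ by Corollary~\ref{cor:alg closure in p-adically closed}), so $\Q_p\preceq\Omega$ by Theorem~\ref{thm:Macintyre} and the claim holds with $a$ the single point of $\Z_p^0$. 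Suppose $m\geq 1$. By Lemma~\ref{lem:indets infinitesimal, p-adic, rest}, $\sigma(\Z_p\lfloor X\rfloor)\preceq_p 1$ and $\sigma(p)\prec_p 1$. By Theorem~\ref{thm:Macintyre} and Lemma~\ref{lem:Macintyre} we may take $\varphi$ quantifier-free in $\mathcal L^*_{\operatorname{Mac}}$; passing to a disjunct of its disjunctive normal form (it suffices to satisfy one), writing $t\neq 0$ as $\operatorname{P}_1(t)$, and replacing each $\neg\operatorname{P}_k(t)$ by $t=0\vee\bigvee_s\operatorname{P}_k(\lambda_st)$ with coset representatives $\lambda_s\in\Z$ (Lemma~\ref{lem:coset repres}) followed by another distribution, we arrange that $\varphi(y)$ is a conjunction of equations $P_r(y)=0$ and power conditions $\operatorname{P}_{k_i}(Q_i(y))$ with $P_r,Q_i\in\Z[Y_1,\dots,Y_n]$, $k_i\geq 1$. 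Absorbing the $P_r,Q_i$ into $f$ (increasing $n$) reduces us to the case where they are distinct variables among $Y_1,\dots,Y_n$, and we may assume all $f_j\neq 0$. A shear $\tau_d$ for suitable $d>1$ (Lemma~\ref{NoetherZpX}; $\tau_d$ is a bijection of $\Z_p^m$ and, by Corollary~\ref{cor:DL rest subst}, an automorphism of $\Q_p\lfloor X\rfloor$), applied by replacing $f,\sigma$ with $\tau_d(f),\sigma\circ\tau_d$, makes each $f_j$ regular in $X_m$; Weierstrass Preparation (Lemma~\ref{lem:WP rest}) then gives $f_j=u_jw_j$ with $u_j\in\Q_p\lfloor X\rfloor^\times=\Q_p^\times\bigl(1+p\Z_p\lfloor X\rfloor\bigr)$ and $w_j\in\Z_p\lfloor X'\rfloor[X_m]$ monic, $X'=(X_1,\dots,X_{m-1})$. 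Write $u_j=c_j(1+ph_j)$, $c_j\in\Q_p^\times$, $h_j\in\Z_p\lfloor X\rfloor$.

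The aim of the inductive step is to replace each $f_j$ by a polynomial in $X_m$ over $\Q_p\lfloor X'\rfloor$ without altering the truth of any atom of $\varphi$, over $\Omega$ and at every $a\in\Z_p^m$ simultaneously. Since $\operatorname{P}_k=\operatorname{P}_{p^s}\cap\operatorname{P}_m$ for $k=p^sm$ with $p\nmid m$, we may treat the cases $p\nmid k_i$ and $k_i=p^s$ separately. If $p\nmid k_i$, then $1+ph_j$ is the $k_i$th power of a $1$-unit of $\Q_p\lfloor X\rfloor$ (Corollary~\ref{cor:roots, rest}); as its image under $\sigma$ lies in $(\Omega^\times)^{k_i}$ and its value at each $a\in\Z_p^m$ lies in $(\Q_p^\times)^{k_i}$, the atom $\operatorname{P}_{k_i}(y_j)$ is unchanged on replacing $f_j=u_jw_j$ by $c_jw_j\in\Q_p\lfloor X'\rfloor[X_m]$. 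Once every $f_j$ has been brought to the form $f_j=\sum_l w_{jl}X_m^l$ with $w_{jl}\in\Q_p\lfloor X'\rfloor$, put $t_j(w,v):=\sum_l w_{jl}v^l$ and $\psi(w):=\exists v\bigl(v\preceq_p 1\wedge\varphi(t_1(w,v),\dots,t_n(w,v))\bigr)$, an $\mathcal L_{\operatorname{Mac}}$-formula in $w=(w_{jl})$. With $v:=\sigma(X_m)\preceq_p 1$ we get $\Omega\models\psi(\sigma(w))$, so the inductive hypothesis applied to $\psi$, to $w$ (a tuple over $\Q_p\lfloor X'\rfloor$), and to $\sigma|_{\Q_p\lfloor X'\rfloor}$ yields $b\in\Z_p^{m-1}$ with $\Q_p\models\psi(w(b))$, hence $a_m\in\Z_p$ with $\Q_p\models\varphi\bigl(f(b,a_m)\bigr)$, as required.

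The main obstacle is the remaining case $k_i=p^s$ with $s\geq 1$. In the formal setting of Proposition~\ref{prop:p-adic spec} the group of $1$-units of $\k\lfloor X\rfloor$ is $k$-divisible for \emph{every} $k$ (Corollary~\ref{cor:roots}, using $\operatorname{char}\k=0$), so the Weierstrass unit $u_j$ drops out against all power predicates; in the restricted setting Corollary~\ref{cor:roots, rest} gives $k$-divisibility only for $p\nmid k$, and indeed $1+pX_m$ is not a $p$th power in $\Q_p\lfloor X\rfloor$. I expect to get around this as follows: because $h_j\in\Z_p\lfloor X\rfloor$ is \emph{restricted} in $X_m$, its $X_m$-coefficients lie in $p^s\Z_p\lfloor X'\rfloor$ for all large $l$, so $h_j=\widetilde h_j+p^sh_j'$ with $\widetilde h_j\in\Z_p\lfloor X'\rfloor[X_m]$ a polynomial and $h_j'\in\Z_p\lfloor X\rfloor$; thus $1+ph_j=(1+p\widetilde h_j)(1+p^{s+1}r_j)$ for some $r_j\in\Z_p\lfloor X\rfloor$, and a successive-approximation argument (using $p$-adic completeness, with the familiar modification for $p=2$) shows $1+p^{s+1}\Z_p\lfloor X\rfloor\subseteq\bigl(1+p\Z_p\lfloor X\rfloor\bigr)^{p^s}$, so the factor $1+p^{s+1}r_j$ is a $p^s$th power of a $1$-unit and drops out, letting $f_j$ be replaced by the polynomial $c_j(1+p\widetilde h_j)w_j\in\Q_p\lfloor X'\rfloor[X_m]$. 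With this in hand the argument concludes exactly as in the preceding paragraph.
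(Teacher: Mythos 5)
Your overall scheme --- induction on $m$, the case $m=0$ via model completeness of $p\operatorname{CF}$, reduction of $\varphi$ to a quantifier-free positive form by Macintyre's theorem and the coset trick, shearing, Weierstrass preparation, and finishing with the existential formula $\psi(w)=\exists v\bigl(v\preceq_p 1\wedge\cdots\bigr)$ applied to the $X_m$-coefficients --- is exactly the paper's. Where you genuinely diverge is in how the Weierstrass unit is removed from the predicates $\operatorname{P}_{p^s}$. The paper proves no $p$-power divisibility at all: it keeps $\preceq_p$-atoms in the normal form, splits $\operatorname{P}_k(u_jw_j)$ into conditions on $u_j$ and $w_j$ separately via integer coset representatives, and then, since $u_j\asymp_p 1$ both under $\sigma$ and at every point of $\Z_p^m$, replaces $\operatorname{P}_k(u_j)$ by a finite disjunction of congruences $(u_j-\mu_i)\preceq_p p^K$; restrictedness is then used to swap $u_j$ for a polynomial congruent to it modulo $p^K$. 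Your route instead discards the deep part of the unit by showing it is literally a $p^s$-th power in $\Q_p\lfloor X\rfloor$; if that is proved, the argument works just as well (and a single replacement $c_j(1+p\widetilde h_j)w_j$ then serves all atoms simultaneously, since the discarded factor is also a $k$-th power for every $k$ prime to $p$ by Corollary~\ref{cor:roots, rest}).

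The one genuine gap is your justification of $1+p^{s+1}\Z_p\lfloor X\rfloor\subseteq(1+p\Z_p\lfloor X\rfloor)^{p^s}$ by ``successive approximation using $p$-adic completeness'': for a general restricted Weierstrass system the ring $\Z_p\lfloor X\rfloor$ is not $p$-adically complete --- for instance, for $W_m=\QpX^{\operatorname{a}}$ (one of the two cases the paper applies this proposition to) the $p$-adic completion of $\Z_p\lfloor X\rfloor$ is all of $\ZpX$ --- so the limit of your approximants need not lie in the ring. The inclusion is nevertheless provable from the tools of this section: the pair $(\Z_p\lfloor X\rfloor,p\Z_p\lfloor X\rfloor)$ is henselian, and for odd $p$ and $t\geq 1$ the substitution $Y=1+p^tZ$ turns $Y^p=1+p^{t+1}a$ into an equation $Z+pT(Z)=a$ with $T\in\Z_p\lfloor X\rfloor[Z]$, which Lemma~\ref{lem:hens=>newt} solves (take $z=a$, so $P(z)\in p\Z_p\lfloor X\rfloor$ and $P'(z)$ is a unit); iterating over $t=s,s-1,\dots,1$ yields the inclusion. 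For $p=2$ the inclusion as you state it is already false in $\Z_2$ (e.g.\ $5\notin(\Z_2^\times)^2$); the usable statement is $1+2^{s+2}\Z_2\lfloor X\rfloor\subseteq(1+4\Z_2\lfloor X\rfloor)^{2^s}$, so you must truncate $h_j$ modulo $2^{s+1}$ rather than $2^s$. Two smaller points: that the truncation $\widetilde h_j$ has coefficients in $\Z_p\lfloor X'\rfloor$ and that $h_j'\in\Z_p\lfloor X\rfloor$ is not automatic for an abstract system $W$ and should be extracted from (RW5) with divisor $X_m^D$ together with the uniqueness of Weierstrass division in $\QpX$; and after shearing, $\sigma$ must be replaced by $\sigma\circ\tau_d^{-1}$, not $\sigma\circ\tau_d$. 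With these repairs your proof goes through as a legitimate alternative to the paper's.
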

\begin{proof}
By induction on $m$ in a similar way as in the proof of Proposition~\ref{prop:Rueckert spec, rest}. 
The case $m=0$ holds by model completeness of $p\operatorname{CF}$. 
Suppose $m\geq 1$. As in the proof of Proposition~\ref{prop:p-adic spec}, using Theorem~\ref{thm:Macintyre} and 
the argument to eliminate ${}^{-1}$ in the proof of Lemma~\ref{lem:Macintyre} in place of Corollary~\ref{cor:Belair},
 we   arrange that~$\varphi$ has the form
$$P(y) = 0 \wedge \bigwedge_{i\in I} \operatorname{P}_{k_i}\!\big(Q_i(y)\big) \wedge  \bigwedge_{j\in J} R_j(y)\, \preceq_p\, S_j(y)$$
where $I$, $J$ are  finite, $P,Q_i,R_j,S_j\in\Z[Y_1,\dots,Y_n]$,  $k_i\geq 1$.
As in the proof of Proposition~\ref{prop:Rueckert spec, rest} we arrange
that~$P_i$, $Q$, $R_j$, $S_j$ are   distinct elements of~$\{Y_1,\dots,Y_n\}$, and  $f_1,\dots,f_n$ are regular in $X_m$.
 Lem\-ma~\ref{lem:WP rest}  then yields~$u_j\in \Z_p\lfloor X\rfloor^\times$ and~$w_j\in\mathcal \Q_p\lfloor X'\rfloor[X_m]$ such that~$f_j=u_j w_j$~($j=1,\dots,n$). 
 
Let~$k\geq 1$ and $\lambda_1,\dots,\lambda_N\in\Z$  ($N\geq 1$) be representatives for the cosets  of the subgroup
$(\Q_p^\times)^k$ of $\Q_p^\times$; then for distinct variables $u$, $w$  we have
$$p\operatorname{CF}\models \operatorname{P}_k(uw) \leftrightarrow \bigvee_{\lambda_i\lambda_j\in (\Q_p^\times)^n} 
\big(\operatorname{P}_k(\lambda_i u) \wedge \operatorname{P}_k(\lambda_j w)\big).$$
Moreover,    $\sigma(u_j)\sim 1$ in~$\Omega$,  since $\sigma(p\ZpX)\prec 1$ (Lemma~\ref{lem:indets infinitesimal, p-adic, rest}), and~${u_j(a)\sim 1}$  for~$a\in \Z_p^m$.
Hence we can further arrange that for each $j=1,\dots,n$, either~$f_j\in\ZpX^\times$ or~$f_j\in\mathcal \Q_p\lfloor X'\rfloor[X_m]$.
Next, let $k\geq 1$; if we take $K\in\N$  so large that~$1+p^K\Z_p\subseteq (\Q_p^\times)^k$,
and let $\mu_1,\dots,\mu_M\in\Z$ ($M\geq 1$) be representatives for all congruence classes modulo $p^K$ of elements of $(\Z_p^\times)^k$,
then
$$p\operatorname{CF}\models \big(u\asymp_p 1 \wedge \operatorname{P}_k(u)\big) \leftrightarrow \bigvee_{i} (u - \mu_i)\preceq_p p^K.$$
Using this remark we   finally arrange that each $f_j$ is a polynomial in~$\mathcal \Q_p\lfloor X'\rfloor[X_m]$.
We now finish the inductive step as in 
the proof of Proposition~\ref{prop:Rueckert spec, rest}.  
\end{proof}

\noindent
By Lemma~\ref{lem:W-Lemma, rest},  $\Q_p\lfloor X\rfloor$ is regular, so
  $F:=\Frac(\Q_p\lfloor X\rfloor)$ is formally $p$-adic, and for each $a\in \Z_p^m$, the
  kernel of  $f\mapsto f(a)\colon \Q_p\lfloor X\rfloor \to \Q_p$ is a $p$-adic prime ideal  of~$\Q_p\lfloor X\rfloor$.
  (Corollary~\ref{cor:p-adic prime ideals}.)
 We  obtain a restricted analytic version of
Corollary~\ref{cor:p-adic H17}:

\begin{cor}[restricted $p$-adic analytic  Hilbert's 17th Problem]\label{cor:p-adic H17 rest}
$$\Lambda_F = \left\{ \frac{f}{g}:f,g\in \Q_p\lfloor X\rfloor,\ g\neq 0,\ \abs{f(a)}_p \leq \abs{g(a)}_p \text{ for all $a\in\Z_p^m$} \right\}.$$
\end{cor}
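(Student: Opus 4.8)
The plan is to mimic exactly the proof of Corollary~\ref{cor:p-adic H17}, substituting the restricted-analytic ingredients that have been developed in this subsection. The statement asserts the equality of two subrings of $F = \Frac(\Q_p\lfloor X\rfloor)$, and, as in the germ case, one inclusion is the ``easy'' algebraic direction and the other uses the specialization proposition.

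First I would prove the inclusion $\supseteq$. Suppose $f,g\in\Q_p\lfloor X\rfloor$ with $g\neq 0$ and $\abs{f(a)}_p\leq\abs{g(a)}_p$ for all $a\in\Z_p^m$, i.e.\ $f(a)\preceq_p g(a)$ for all $a\in\Z_p^m$. By Theorem~\ref{thm:holomorphy} it suffices to show $f/g\in\mathcal O_p$ for every $p$-valuation ring $\mathcal O_p$ of $F$. Suppose not: take a $p$-valuation ring $\mathcal O_p$ of $F$ with $f/g\notin\mathcal O_p$, embed $(F,\mathcal O_p)$ into a $p$-adically closed valued field $\Omega$, and let $\sigma\colon\Q_p\lfloor X\rfloor\to\Omega$ be the natural inclusion. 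Then $\Omega\models\varphi(\sigma(f),\sigma(g))$ where $\varphi(y_1,y_2)$ is the $\mathcal L_{\operatorname{Mac}}$-formula $\neg(y_1\preceq_p y_2)$ (equivalently $y_2\prec_p y_1$). By Proposition~\ref{prop:p-adic spec, rest} there is some $a\in\Z_p^m$ with $\Q_p\models\varphi(f(a),g(a))$, i.e.\ $g(a)\prec_p f(a)$, contradicting our hypothesis. Hence $f/g\in\Lambda_F$. (Note that this is slightly cleaner than the germ case since Proposition~\ref{prop:p-adic spec, rest} deals directly with models of $p\operatorname{CF}$, so no passage to a $p$-convex coarsening is needed.)

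For the reverse inclusion $\subseteq$, let $h\in\Lambda_F$; write $h=f/g$ with $f,g\in\Q_p\lfloor X\rfloor$, $g\neq 0$. Since $\Q_p\lfloor X\rfloor$ is regular (Lemma~\ref{lem:W-Lemma, rest}) and the ring morphisms $\operatorname{ev}_a\colon\Q_p\lfloor X\rfloor\to\Q_p$, $\operatorname{ev}_a(q)=q(a)$, have $\Q_p$ formally $p$-adic, Lemma~\ref{lem:p-adic prime ideals} applied with $R=\Q_p\lfloor X\rfloor$, $\sigma=\operatorname{ev}_a$, $L=\Q_p$ gives: $f\in g\Lambda_F$ implies $f(a)\in g(a)\Lambda_{\Q_p}=g(a)\Z_p$, for each $a\in\Z_p^m$; that is, $\abs{f(a)}_p\leq\abs{g(a)}_p$ for all $a\in\Z_p^m$. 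This shows $h=f/g$ lies in the right-hand set, completing the proof.

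I do not expect a serious obstacle: everything needed has been set up. The one point requiring a little care is applicability of Proposition~\ref{prop:p-adic spec, rest} with $\varphi$ literally a quantifier-free $\mathcal L_{\operatorname{Mac}}$-formula in the variables $y_1,y_2$ ($\neg(y_1\preceq_p y_2)$ is such a formula), and verifying that the evaluation maps $\operatorname{ev}_a$ are exactly the ring morphisms to which Lemma~\ref{lem:p-adic prime ideals} applies (they are, by the regularity of $\Q_p\lfloor X\rfloor$ and Corollary~\ref{cor:p-adic prime ideals}, whose hypotheses are met since $\Q_p$ is formally $p$-adic). The remaining steps are routine, and the whole argument can be compressed to a few lines.
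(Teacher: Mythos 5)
Your proposal is correct and is essentially the paper's own argument: the paper proves Corollary~\ref{cor:p-adic H17 rest} exactly as Corollary~\ref{cor:p-adic H17}, substituting Proposition~\ref{prop:p-adic spec, rest} for Corollary~\ref{cor:p-adic spec} (via Theorem~\ref{thm:holomorphy} for the inclusion $\supseteq$) and using Lemma~\ref{lem:p-adic prime ideals} together with the regularity of $\Q_p\lfloor X\rfloor$ for the inclusion $\subseteq$. Your observation that no passage to a $p$-convex coarsening is needed is also accurate, since Proposition~\ref{prop:p-adic spec, rest} applies directly to arbitrary ring morphisms into models of $p\operatorname{CF}$.
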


\noindent
This is proved just like  Corollary~\ref{cor:p-adic H17}, using Proposition~\ref{prop:p-adic spec, rest} in place
of Corollary~\ref{cor:p-adic spec}.
Similarly to Theorem~\ref{thm:p-adic NSS}, using again 
Proposition~\ref{prop:p-adic spec, rest} instead 
of Corollary~\ref{cor:p-adic spec}, we also show:

\begin{cor}[restricted $p$-adic analytic Nullstellensatz]\label{cor:p-adic NS rest}
Let $I$ be an ideal of~$\Q_p\lfloor X\rfloor$; then $\operatorname{I}\!\big(\!\operatorname{Z}(I)\big)=\sqrt[\leftroot{2}\uproot{5}p]{I}$.
\end{cor}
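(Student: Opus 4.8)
The strategy is the same as in Theorems~\ref{thm:Rueckert}, \ref{thm:Risler}, and~\ref{thm:p-adic NSS}: reduce to the case where $I$ is a $p$-adic prime ideal by means of the primary-type decomposition of the $p$-radical, and then produce a single point $a\in\Z_p^m$ witnessing non-membership by specializing a generic $p$-adically closed realization. First I would dispose of the case where no $p$-adic prime ideal of $\Q_p\lfloor X\rfloor$ contains $I$: then, as in the proof of Theorem~\ref{thm:p-adic NSS} (using the restricted-analytic analogues of Corollary~\ref{cor:p-adic prime ideals} and Lemma~\ref{lem:zeroes p-adic radical}, which go through verbatim since $\Q_p\lfloor X\rfloor$ is regular by Lemma~\ref{lem:W-Lemma, rest} and for each $a\in\Z_p^m$ the kernel of $f\mapsto f(a)$ is a $p$-adic prime ideal), one has $\operatorname{Z}(I)=\emptyset$ and hence $\operatorname{I}\big(\operatorname{Z}(I)\big)=\Q_p\lfloor X\rfloor=\sqrt[\leftroot{2}\uproot{5}p]{I}$, the last equality by the restricted analogue of Lemma~\ref{lem:p-adic min primes}.

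So assume some $p$-adic prime contains $I$. Then $\sqrt[\leftroot{2}\uproot{5}p]{I}$ is an intersection $P_1\cap\cdots\cap P_k$ of $p$-adic prime ideals $P_i$ of $\Q_p\lfloor X\rfloor$ (again by the $p$-adic minimal-primes lemma), and since $\operatorname{Z}$ turns intersections into unions and $\operatorname{Z}(I)=\operatorname{Z}(\sqrt[\leftroot{2}\uproot{5}p]{I})$, it suffices to treat a single $p$-adic prime $I=P$; I must then show $\operatorname{I}\big(\operatorname{Z}(P)\big)\subseteq P$. The quotient $R:=\Q_p\lfloor X\rfloor/P$ is a regular local-ish domain (a localization of a regular ring) whose fraction field $F_P=\Frac(R)$ is formally $p$-adic by Corollary~\ref{cor:P p-adic => F form p-adic}. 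Given $f\in\Q_p\lfloor X\rfloor\setminus P$, pick some $p$-valuation on $F_P$, let $\Omega$ be a $p$-adically closed extension of that $p$-valued field, and let $\sigma\colon\Q_p\lfloor X\rfloor\to R\hookrightarrow\Omega$ be the composed ring morphism, noting $\sigma(f)\neq 0$ while $\sigma$ kills every generator of $P$. Applying Proposition~\ref{prop:p-adic spec, rest} to the $\mathcal L_{\operatorname{Mac}}$-formula $\varphi(y_0,y_1,\dots,y_n)$ given by $y_0\neq 0\wedge y_1=\cdots=y_n=0$ (with $g_1,\dots,g_n$ generating $P$) produces $a\in\Z_p^m$ with $f(a)\neq 0$ and $g_i(a)=0$ for all $i$, so $a\in\operatorname{Z}(P)$ yet $f\notin\operatorname{I}\big(\operatorname{Z}(P)\big)$. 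This gives the required inclusion; the reverse inclusion $\operatorname{I}\big(\operatorname{Z}(P)\big)\supseteq\sqrt[\leftroot{2}\uproot{5}p]{P}=P$ holds because each $P_a$ is $p$-adic and contains $P$.

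The only real content beyond bookkeeping is Proposition~\ref{prop:p-adic spec, rest}, which is already established in the excerpt, so the main obstacle here is purely a matter of confirming that the auxiliary lemmas feeding Theorem~\ref{thm:p-adic NSS} transfer to the restricted setting: specifically, that $\Q_p\lfloor X\rfloor$ is regular (done: Lemma~\ref{lem:W-Lemma, rest}), that Corollary~\ref{cor:p-adic prime ideals} applies to give that vanishing-point kernels are $p$-adic primes and $F:=\Frac(\Q_p\lfloor X\rfloor)$ is formally $p$-adic, and that the $p$-adic minimal-primes lemma (Lemma~\ref{lem:p-adic min primes}) and the identity $\operatorname{Z}(I)=\operatorname{Z}(\sqrt[\leftroot{2}\uproot{5}p]{I})$ hold over $\Q_p\lfloor X\rfloor$ — all of which are formal consequences of regularity and noetherianity and were already recorded or are immediate. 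Thus the proof is: reduce to $I=P$ prime $p$-adic exactly as in Theorem~\ref{thm:p-adic NSS}, then invoke Proposition~\ref{prop:p-adic spec, rest} in place of Corollary~\ref{cor:p-adic spec}.
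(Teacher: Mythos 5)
Your proposal is correct and is exactly the argument the paper intends: it repeats the proof of Theorem~\ref{thm:p-adic NSS} verbatim (reduction via the $p$-adic minimal-primes lemma and the fact that each point kernel $P_a$ is a $p$-adic prime of the regular ring $\Q_p\lfloor X\rfloor$), substituting Proposition~\ref{prop:p-adic spec, rest} for Corollary~\ref{cor:p-adic spec} in the specialization step. The only blemish is the parenthetical describing $\Q_p\lfloor X\rfloor/P$ as ``a localization of a regular ring'' --- irrelevant to the argument, since all you need is that $\Frac(\Q_p\lfloor X\rfloor/P)$ is formally $p$-adic, which Corollary~\ref{cor:P p-adic => F form p-adic} gives directly.
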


\noindent
The previous two corollaries apply to $\Q_p\lfloor X\rfloor=\QpX$ and $\Q_p\lfloor X\rfloor=\QpX^{\operatorname{a}}$.
For each $f\in\Q_p\{X\}$   there is some $k\geq 1$ such that $f(p^kX)\in\QpX$. Hence from the case 
$\Q_p\lfloor X\rfloor=\QpX$ of Corollaries~\ref{cor:p-adic H17 rest} and~\ref{cor:p-adic NS rest} we also deduce once again Theorems~A and~B from the introduction (but not Theorem~C, about $\Q_p[[X]]$, the proof of which seems to crucially require the use of $p\operatorname{CVF}$).
We finish with an immediate application of Lemma~\ref{lem:p-adic prime ideals} and Corollary~\ref{cor:p-adic NS rest}: a special case of the $p$-adic analytic
\L{}ojasiewicz Inequality of Denef and van den Dries~\cite[Theorem~3.37]{DD}:

\begin{cor}
Let $f,g\in\QpX$ be such that $\operatorname{Z}(f)\supseteq\operatorname{Z}(g)$. Then there is some~$k\geq 1$
such that $\abs{f(a)}_p^k \leq \abs{g(a)}_p$ for all $a\in\Z_p^m$.
\end{cor}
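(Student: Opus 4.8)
The plan is to deduce this from the restricted $p$-adic analytic Nullstellensatz (Corollary~\ref{cor:p-adic NS rest}) applied to the principal ideal $I = g\QpX$, in exactly the same way the classical \L ojasiewicz inequality is derived from the real Nullstellensatz earlier in this section. First I would observe that $\operatorname{Z}(f)\supseteq\operatorname{Z}(g)$ means precisely $f\in\operatorname{I}\!\big(\!\operatorname{Z}(g\QpX)\big)$, since $\operatorname{Z}(g\QpX)=\operatorname{Z}(g)$. By Corollary~\ref{cor:p-adic NS rest} applied to $I=g\QpX$ (with $\Q_p\lfloor X\rfloor=\QpX$), this common zero-set condition yields
$$f\in\operatorname{I}\!\big(\!\operatorname{Z}(g\QpX)\big)=\sqrt[\leftroot{2}\uproot{5}p]{g\QpX}=\sqrt{g\,\Lambda_F\QpX}\cap\QpX,$$
where $F=\Frac(\QpX)$. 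Hence there is some $k\geq 1$ with $f^k\in g\,\Lambda_F\QpX$; writing this out, $f^k = g h$ for some $h\in\Lambda_F\QpX$.

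The key remaining point is to bound $\abs{f(a)}_p^k$ in terms of $\abs{g(a)}_p$ for $a\in\Z_p^m$, using the factorization $f^k=gh$ with $h\in\Lambda_F\QpX$. For this I would invoke the restricted analytic Hilbert's 17th Problem, Corollary~\ref{cor:p-adic H17 rest}: every element of $\Lambda_F\QpX$ can be written as a quotient $h = h_1/h_2$ with $h_1,h_2\in\QpX$, $h_2\neq0$, and—after absorbing denominators and using that $\Lambda_F$ consists of elements $\preceq_p 1$ pointwise (Corollary~\ref{cor:p-adic H17 rest} describes $\Lambda_F$ as the $p$-integral-definite quotients) together with the fact that restricted power series are bounded, i.e. $\abs{r(a)}_p\leq\abs{r}$ for all $a\in\Z_p^m$, $r\in\QpX$—one gets that for $a\in\Z_p^m$ with $g(a)\neq0$,
$$\abs{f(a)}_p^k=\abs{g(a)}_p\cdot\abs{h(a)}_p\leq\abs{g(a)}_p\cdot C$$
for a fixed constant $C$ depending only on a bounded representation of $h$; on the zero set of $g$, the inequality $\abs{f(a)}_p^k\leq\abs{g(a)}_p$ holds trivially since then $f(a)=0$ as well (because $f^k(a)=g(a)h(a)=0$ and $\Z_p$ is a domain, so $f(a)=0$).

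Since the target inequality is stated up to a multiplicative constant in the valuation group $p^{\Z}$, and $C$ can be taken to be a power of $p$, one can enlarge $k$ (replacing $k$ by $k'\geq k$ and noting $\abs{f(a)}_p\leq1$ forces $\abs{f(a)}_p^{k'}\leq\abs{f(a)}_p^k$) to absorb the constant $C$ entirely, yielding $\abs{f(a)}_p^k\leq\abs{g(a)}_p$ for all $a\in\Z_p^m$ with the new $k$. I expect the main obstacle to be the bookkeeping in this last step: carefully controlling the denominator $1-pg$ occurring in elements of $\Lambda_F\QpX$ (it is a $1$-unit in the relevant completed sense, so $\abs{(1-pg)(a)}_p=1$ for $a\in\Z_p^m$, which is exactly what makes the bound work) and making sure that increasing $k$ genuinely absorbs $C$ rather than merely a single power of $p$. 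Everything else is a direct quotation of results already established.
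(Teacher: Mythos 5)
Your first two steps are exactly the intended route: Corollary~\ref{cor:p-adic NS rest} applied to $I=g\QpX$ gives $f^k\in g\,\Lambda_F\QpX$ for some $k\geq 1$, and the Kochen ring then controls $\abs{f(a)}_p^k$ by $\abs{g(a)}_p$ up to a constant. A cleaner way to package your second step, avoiding both the case distinction on $g(a)=0$ and the question of evaluating rational functions where denominators vanish, is to note that $\ZpX\subseteq\Lambda_F$ (by Corollary~\ref{cor:p-adic H17 rest} with denominator $1$) and $\QpX=\bigcup_N p^{-N}\ZpX$, so that $\Lambda_F\QpX=\bigcup_N p^{-N}\Lambda_F$; hence $p^Nf^k\in g\Lambda_F$ for some $N$, and applying Lemma~\ref{lem:p-adic prime ideals} to the evaluation morphisms $h\mapsto h(a)$ (using that $\QpX$ is regular, Lemma~\ref{lem:W-Lemma, rest}) yields $\abs{f(a)}_p^k\leq p^N\abs{g(a)}_p$ for \emph{all} $a\in\Z_p^m$ in one stroke. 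Up to this point you have reproved, in essentially the paper's way, the inequality with a multiplicative constant $c=p^N$.

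The final step --- absorbing the constant by enlarging $k$ --- is a genuine gap, and it cannot be repaired. Your absorption needs $\abs{f(a)}_p\leq p^{-1}$ uniformly in $a$: merely $\abs{f(a)}_p\leq 1$ is useless at a point where $\abs{f(a)}_p=1$, since raising $k$ then changes nothing, and in any case elements of $\QpX$ need not map $\Z_p^m$ into $\Z_p$ at all (only $\abs{f(a)}_p\leq\abs{f}$ holds, and $\abs{f}$ may exceed $1$). Indeed the constant-free inequality is false in general: take $f=1$ and $g=p$, so that $\operatorname{Z}(f)=\operatorname{Z}(g)=\emptyset$ and the hypothesis holds, yet $\abs{f(a)}_p^k=1>p^{-1}=\abs{g(a)}_p$ for every $k$ and every $a$. (Equivalently, $1\in p\,\Lambda_F\QpX$ because $p^{-1}\in\QpX$, but $1\notin p\,\Lambda_F$.) What your argument actually establishes is the \L{}ojasiewicz inequality in the form $\abs{f(a)}_p^k\leq c\,\abs{g(a)}_p$ with $c$ a power of $p$ --- which is how the real analogue earlier in this section and \cite[Theorem~3.37]{DD} are stated, and is the form in which the present corollary should be read; the constant disappears only under an extra hypothesis forcing $\abs{f(a)}_p<1$ on $\Z_p^m$.
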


\bibliographystyle{amsplain}

\end{document}